\theoremstyle{thmstyleone}%
\newtheorem{theorem}{Theorem}
\newtheorem{proposition}[theorem]{Proposition}%
\theoremstyle{thmstyletwo}%
\theoremstyle{thmstylethree}%
\newtheorem{corollary}[theorem]{Corollary}
\newtheorem{lemma}[theorem]{Lemma}
\newtheorem{fact}[theorem]{Fact}
\newtheorem*{fact*}{Fact}
\theoremstyle{definition}
\newtheorem{definition}[theorem]{Definition}
\newtheorem{problem}[theorem]{Problem}
\newtheorem*{claim}{Claim}
\newtheorem{remark}[theorem]{Remark}
\numberwithin{equation}{section}
\newtheorem{algorithm}[theorem]{Algorithm}
\newcommand{\bfs}{\boldsymbol}
\newcommand{\N}{\mathbb N}
\newcommand{\Z}{\mathbb Z}
\newcommand{\A}{\mathbb A}
\newcommand{\F}{\mathbb F}
\newcommand{\Q}{\mathbb Q}
\newcommand{\SO}{\mathcal{O}^{\sim}}
\newcommand{\fp}{\F_{\hskip-0.7mm p}}
\newcommand{\fpe}{\F_{\hskip-0.7mm p^e}}
\newcommand{\fq}{\F_{\hskip-0.7mm q}}
\newcommand{\fqe}{\F_{\hskip-0.7mm q^e}}
\newcommand{\cfq}{\overline{\F}_{\hskip-0.7mm q}}
\newcommand{\cfp}{\overline{\F}_{\hskip-0.7mm p}}
\newcommand{\ck}{\overline{k}}
\def\ifm#1#2{\relax \ifmmode#1\else#2\fi}
\newcommand{\klk}    {\ifm {,\ldots,} {$,\ldots,$}}
\newcommand{\plp}    {\ifm {+\cdots+} {$+\ldots+$}}
\newcommand{\om}[2]   {{#1}_1 \klk {#1}_{#2}}
\newcommand{\xon}    {\ifm {\om X n} {$\om X n$}}
\begin{document}

\title[A Kronecker algorithm]{A Kronecker algorithm for locally closed sets
over a perfect field}


\author[1,2]{\fnm{Nardo} \sur{Gim\'enez}}\email{nardo.gimenez@unahur.edu.ar}

\author[2,3]{\fnm{Joos} \sur{Heintz}}\email{joos@dc.uba.ar}

\author*[2,4]{\fnm{Guillermo} \sur{Matera}}\email{gmatera@ungs.edu.ar}

\author[5]{\fnm{Luis Miguel} \sur{Pardo}}\email{luis.m.pardo@gmail.com}

\author[1,2]{\fnm{Mariana} \sur{P\'erez}}\email{mariana.perez@unahur.edu.ar}

\author[1,2]{\fnm{Melina} \sur{Privitelli}}\email{melina.privitelli@unahur.edu.ar}

\affil[1]{\orgdiv{Instituto de Tecnolog\'ia e
Ingenier\'ia}, \orgname{Universidad Nacional de Hurlingham}, \orgaddress{\street{Av. Gdor. Vergara 2222}, 
\city{Villa Tesei}, \postcode{B1688GEZ}, \state{Buenos Aires}, \country{Argentina}}}

\affil[2]{\orgname{National Council of Science and Technology (CONICET)}, \orgaddress{\country{Argentina}}}

\affil[3]{\orgdiv{Departamento de Computaci\'on}, \orgname{Facultad de Ciencias Exactas y
Naturales, Universidad de Buenos Aires}, \orgaddress{\street{Ciudad Univ., Pab. I}, \postcode{1428}, \state{Buenos Aires}, 
\country{Argentina}}}

\affil*[4]{\orgdiv{Instituto del Desarrollo Humano}, \orgname{Universidad Nacional de Gene\-ral Sarmiento}, 
\orgaddress{\street{J.M. Guti\'errez 1150}, \city{Los Polvorines}, \postcode{B1613GSX}, 
\state{Buenos Aires}, \country{Argentina}}}

\affil[5]{\orgname{Professor of Algebra (ret.)}, \orgaddress{\street{Paseo de Altamira, n. 83, 1-A}, \postcode{39006}, 
\state{Santander}, \country{Spain}}}

\abstract{We develop a probabilistic algorithm of Kronecker type for computing
a Kronecker representation of a zero-dimensional linear section of
an algebraic variety $V$ defined over a perfect field $k$. The
variety $V$ is the Zariski closure of the set of common zeros
$\{F_1=0,\ldots,F_r=0,G\not=0\}$ of multivariate polynomials
$F_1,\ldots,F_r\in k[X_1,\ldots,X_n]$ outside a prescribed
hypersurface $\{G=0\}$. We assume that $F_1,\ldots,F_r$ satisfy
natural geometric conditions, such as regularity and radicality, in
the local ring $k[X_1,\ldots,X_n]_G$. Our approach combines
homotopic deformation techniques with symbolic Newton-Hensel lifting
and elimination. We discuss the concept of lifting curves as
intermediate geometric objects that enable efficient computation.

The complexity of the algorithm is expressed in terms of the degrees and
arithmetic size of the input and achieves soft-quadratic complexity in these
parameters. We provide detailed complexity analyses for arbitrary perfect
fields, as well as for two important cases in computer algebra: finite fields
and the field of rational numbers. For each case, we obtain sharp bounds on
the size of the base field or required primes.
}

\keywords{Polynomial systems, perfect fields, locally closed sets, complexity}



\maketitle

\section{Introduction}This research was promoted, supported, and occasionally conducted in
collaboration with Joos Heintz during his final years. We consider
it his posthumous research work, which is why we have included his
name as one of the co-authors.

Our main objective is to present a version of an algorithm for
solving multivariate polynomial systems that works over a general
perfect field $k$. This algorithm belongs to a family of algorithms
known as ``Kronecker'' algorithms. Therefore, we begin by discussing
the general concept of a Kronecker algorithm. More precisely, we
will see that a Kronecker algorithm can be viewed as a procedure
consisting of a sequence of {\em iterative deformations within a
solution variety}.

We believe it is useful to contextualize the main contributions of
this work by providing an overview of the TERA-Kronecker algorithm
suite and illustrating where our contributions fit within it. We do
not present the algorithm in the customary way. On the contrary, we
have attempted to offer an unconventional presentation (probably not
in accordance with Heintz's taste), intended not only to serve the
aforementioned purpose but also provides a more didactic view ---at
least as much as possible--- of how and why these algorithms differ
from others in the standard mathematical literature and why they are
more efficient.

We aim to present the algorithm in a high-level language,
emphasizing the geometric and algebraic elements involved in its
iterations, while relegating the more technical details to the
remainder of the manuscript. This approach should also offer the
reader a clearer view of the specific contributions of the
TERA-Kronecker suite to the problem of solving systems of polynomial
equations.
%
%
\subsection{Outline of the TERA-Kronecker suite}
To simplify the discussion, throughout this section we consider an
algebraically closed field $\overline{k}$. We denote by $\A^n$ the
$n$-dimensional affine space $\overline{k}{}^n$. We begin with a
generalization of the notion of {\em solution variety} of
\cite{ShSm93} to the field $\overline{k}$.
\subsubsection{The solution variety}
\label{subsec: solution variety} For any pair $(n,d)$ of positive
integers, let
$$\mathcal{P}_d(X_1,\ldots,X_n):=\overline{k}[X_1,\ldots,X_n]_d$$
be the $\overline{k}$-vector space of polynomials in the variables
$X_1,\ldots,X_n$ with coefficients in $\overline{k}$ and degree at
most $d$. This vector space has dimension
$$\dim_{\overline{k}}(\mathcal{P}_d(X_1,\ldots,X_n))=
\binom{d+n}{n}.$$
We denote by $\mathcal{P}_{d,\bfs 0}(X_1,\ldots,X_n)$ the vector
subspace of polynomials of $\mathcal{P}_d(X_1,\ldots,X_n)$ that
vanish at the origin $\bfs 0:=(0,\ldots,0)\in\A^n$, namely:
$$\mathcal{P}_{d,\bfs 0}(X_1,\ldots,X_n):=
\{F\in \mathcal{P}_d(X_1,\ldots,X_n):F(\bfs 0)=0\}.$$
Clearly, $\mathcal{P}_{d,\bfs 0}(X_1,\ldots,X_n)$ is a subspace of
codimension 1 of $\mathcal{P}_{d}(X_1,\ldots,X_n)$.

For every $r$-tuple of degrees $\bfs d_r:=(d_1,\ldots,d_r)$, we
consider the following $\overline{k}$-vector spaces:
\begin{align*}
\mathcal{P}_{\bfs d_r}(X_1,\ldots,X_n) &:=
\prod_{i=1}^r\mathcal{P}_{d_i}(X_1,\ldots,X_n), \\
\mathcal{P}_{\bfs d_r,\bfs 0}(X_1,\ldots,X_n) &:=
\prod_{i=1}^r\mathcal{P}_{d_i,\bfs 0}(X_1,\ldots,X_n).
\end{align*}
For simplicity, we fix $n$ and drop the variables from the notation,
writing $\mathcal{P}_{\bfs d_r}$ and $\mathcal{P}_{\bfs d_r,\bfs 0}$
instead. It is clear that $\mathcal{P}_{\bfs d_r,\bfs 0}$ is
$\overline{k}$-vector subspace of $\mathcal{P}_{\bfs d_r}$ of
codimension $r$. The dimension of $\mathcal{P}_{\bfs d_r}$ is
$$N_{\bfs d_r}:=\dim_{\overline{k}}\mathcal{P}_{\bfs d_r}=
\sum_{i=1}^{r}\binom{d_i+n}{n},$$
so that
$$\dim_{\overline{k}}\mathcal{P}_{\bfs d_r,\bfs 0}=N_{\bfs d_r}-r.$$

With these notations, we define the solution variety over
$\overline{k}$ associated with the degree sequence $\bfs d_r$ in $n$
variables as follows:
$$V_{\bfs d_r}:=V_{\bfs d_r}^{\overline{k}}(n):=
\{(\bfs x,\bfs F_r)\in\A^n\times \mathcal{P}_{\bfs d_r}: \bfs
F_r(\bfs x)=\bfs 0\}.$$
Next, we examine some elementary properties of $V_{\bfs d_r}$.
Consider the {\em evaluation map}
\begin{align*}
  ev_{\bfs d_r}:\A^n\times\mathcal{P}_{\bfs d_r} &\to \A^r, \\
  (\bfs x, \bfs F_r) & \mapsto \bfs F_r(\bfs x).
\end{align*}
Observe that $V_{\bfs d_r}$ is the fiber $ev_{\bfs d_r}^{-1}(\{\bfs
0\})$. Moreover, it is easy to see that $ev_{\bfs d_r}$ is
surjective. In fact, this follows from the alternative expression
$$V_{\bfs d_r}=\{(\bfs x,\bfs w,\bfs G)\in\A^n\times\A^r\times\mathcal{P}_{\bfs d_r,\bfs 0}:
\bfs G(\bfs x)+\bfs w=\bfs 0\}.$$
This identity shows that $V_{\bfs d_r}$ is, up to a permutation of
coordinates, the graph of the polynomial map
\begin{align*}
  \varphi_{\bfs d_r}:\A^n\times\mathcal{P}_{\bfs d_r,\bfs 0} & \to\A^r, \\
  (\bfs x,\bfs G) & \mapsto \bfs G(\bfs x).
\end{align*}
Thus, there is a biregular isomorphism between $V_{\bfs d_r}$ and
the affine space $\A^n\times \mathcal{P}_{\bfs d_r,\bfs 0}$. In
particular, we obtain the following:
\begin{remark}
$V_{\bfs d_r}$ is a smooth, irreducible affine variety of Krull dimension
$$\dim V_{\bfs d_r}=N_{\bfs d_r}+n-r.$$
\end{remark}

Next we consider the canonical projections
$$\xymatrix{
 & V_{\bfs d_r}\ar[ld]_{\Pi_1} \ar[rd]^{\Pi_2} & \\
\mathcal{P}_{\bfs d_r}  & & \A^n} \quad\xymatrix{
 & (\bfs x,\bfs F_r)\ar[ld]_{\Pi_1} \ar[rd]^{\Pi_2} & \\
\bfs F_r  & & \bfs x}$$
The projection $\Pi_2$ is an onto mapping whose fiber at a given point $\bfs
x\in\A^n$ is the set
of $r$-tuples of polynomials vanishing on $\bfs
x$. More precisely, given $W\subset\A^n$, let $I(W)\subset
\overline{k}[X_1,\ldots,X_n]$ be the vanishing ideal of
$W$, that is, the ideal of all polynomials in
$\overline{k}[X_1,\ldots,X_n]$ vanishing at all the points of $W$.
Then the fiber $\Pi_2^{-1}(W)$ is
$$\Pi_2^{-1}(W)=\mathcal{P}_{\bfs d_r}\cap (I(W)^r),$$
where $I(W)^r:=\prod_{i=1}^rI(W)$. In the case that $W$ is the
singleton $W=\{\bfs x\}$, we denote its vanishing ideal as
$\mathfrak{m}_{\bfs x}:=I(\{\bfs x\})$ and the corresponding fiber
is
$$\Pi_2^{-1}(\bfs x)=\mathcal{P}_{\bfs d_r}\cap (\mathfrak{m}_{\bfs x}^r)=
\prod_{i=1}^r(\mathcal{P}_{d_i}\cap \mathfrak{m}_{\bfs x}).$$
Observe that $\mathcal{P}_{d_i}\cap \mathfrak{m}_{\bfs x}$ is a
vector subspace of codimension 1 of $\mathcal{P}_{d_i}$. Hence, the
fiber at any point $\bfs x\in\A^n$ can be identified with a vector
subspace of $\mathcal{P}_{\bfs d_r}$ of codimension $r$, i.e.,
$$\dim\big(\Pi_2^{-1}(\{\bfs x\})\big)=N_{\bfs d_r}-r.$$
%

Finally, we may define the problem of solving a system of $r$
polynomial equations of given degrees in terms of the projections $\Pi_1$
and $\Pi_2$.
\begin{problem}[Solving $r$ Polynomial Equations in $n$ Variables]
Given an algebraically closed field $\overline{k}$ and an $r$-tuple
$\bfs F_r\in\mathcal{P}_{\bfs d_r}$ of polynomials in $n$ variables
of degrees bounded by the list $\bfs d_r:=(d_1,\ldots,d_r)$, compute
$\Pi_2\big(\Pi_1^{-1}(\{\bfs F_r\})\big)$.
\end{problem}

A problem with solving in this sense is that the mapping $\Pi_1$ is
not onto for $r\ge 2$ as it can be easily verified. Nevertheless,
there is a Zariski open subset
$\mathcal{U}_0\subset\mathcal{P}_{\bfs d_r}$ such that for any $\bfs
F_r\in \mathcal{U}_0$, the fiber $\Pi_1^{-1}(\{\bfs F_r\})$ is not
empty. This may be proved considering multivariate resultants or the
effective Nullstellensatz, which we omit. In particular, the mapping
$\Pi_1$ is dominant and we always assume that $\bfs F_r\in
\mathcal{U}_0\subset\Pi_1(V_{\bfs d_r})$.

In \cite{GiMaPePr23}, the set of ``defective'' $r$-tuples $\bfs
F_r\in \mathcal{P}_{\bfs d_r}$ with $r\le n$ are studied. In
particular, \cite[Theorem 3.6]{GiMaPePr23} shows that there exists a
nonempty Zariski open subset $\mathcal{U}_1\subset \mathcal{U}_0$
with the following property: for any $\bfs F_r:=(F_1,\ldots,F_r)\in
\mathcal{U}_1$,
\begin{itemize}
  \item[$({\sf H}_1)$] the polynomials $F_1,\ldots,F_r$ form a regular sequence,
  \item[$({\sf H}_2)$] the ideal $(F_1,\ldots,F_r)$ is radical.
\end{itemize}
In other words, any $\bfs F_r\in\mathcal{U}_1$ defines an
ideal-theoretic complete intersection. Due to the fact that these
conditions hold for a generic $\bfs F_r\in\mathcal{P}_{\bfs d_r}$,
we now restrict our discussion about solving to this case.
\begin{problem}[Solving a System Defining an Ideal-Theoretic Complete Intersection]
Given an algebraically closed field $\overline{k}$ and an $r$-tuple
$\bfs F_r\in\mathcal{P}_{\bfs d_r}$ of polynomials in $n$ variables
of degrees bounded by the sequence $\bfs d_r:=(d_1,\ldots,d_r)$,
defining an ideal-theoretic complete-intersection (i.e., satisfying
$({\sf H}_1)$-$({\sf H}_2)$), compute $\Pi_2\big(\Pi_1^{-1}(\{\bfs
F_r\})\big)$.
\end{problem}
%
%
\subsubsection{The left action of $GL(n,\overline{k})$ on the solution
variety}
\label{subsec: left action GL}
Let $r,n$ be positive integers with $1\le r\le n$, and let $\bfs
d_r:=(d_1,\ldots,d_r)$ be a degree sequence with $d_i\ge 1$ for
$1\le i\le r$. Let $V_{\bfs d_r}$ be the solution variety associated
with $\bfs d_r$ over $\overline{k}$. Let $\mathcal{U}_{\bfs d_r}$ be
a nonempty Zariski open subset of $\mathcal{P}_{\bfs d_r}$ such that
any $\bfs F_r\in\mathcal{U}_{\bfs d_r}$ defines an ideal-theoretic
complete intersection.

Now consider the general linear group $GL(n,\overline{k})$ and its
left action, given by
\begin{align*}
  \mathcal{L}:GL(n,\overline{k})\times V_{\bfs d_r}&\to V_{\bfs d_r},\\
  \big(\bfs\lambda,(\bfs x,\bfs F_r)\big)&\mapsto (\bfs\lambda\cdot\bfs x, \bfs F_r\circ \bfs\lambda^{-1}).
\end{align*}
%
%
Since any $\bfs F_r\in\mathcal{U}_{\bfs d_r}$ defines an
ideal-theoretic complete intersection, the Noether normalization
theorem guarantees the existence of a nonempty Zariski open subset
$\mathcal{U}(\bfs F_r)\subset GL(n,\overline{k})$ such that for any
$\bfs\lambda\in\mathcal{U}(\bfs F_r)$, the following map is a finite
morphism:
\begin{align}
  \pi_{\bfs\lambda}:V(\bfs F_r)&\to \A^{n-r},\label{eq: Noether normalization}\\
  \bfs x&\mapsto (\bfs\lambda\cdot\bfs x)_{n-r},\notag
\end{align}
where $V(\bfs F_r)$ denotes the zero set of $F_1,\ldots,F_r$ in $\A^n$
and $(\bfs\lambda\cdot\bfs x)_{n-r}$ denotes the first $n-r$
coordinates of $\bfs\lambda\cdot\bfs x$. In algebraic terms, let
$Y_1,\ldots,Y_n$ be new variables defined via
$$\left(
    \begin{array}{c}
      Y_1 \\
      \vdots \\
      Y_n \\
    \end{array}
  \right):=\bfs\lambda
\left(
  \begin{array}{c}
    X_1 \\
    \vdots \\
    X_n \\
  \end{array}
\right),
$$
and let $\overline{k}[V(\bfs F_r)]$ be the $\overline{k}$-algebra of
polynomial functions defined on $V(\bfs F_r)$. Then we have an
integral ring extension
\begin{equation}\label{eq: integral ring extension}
R_r:=\overline{k}[Y_1,\ldots,Y_{n-r}]\hookrightarrow \overline{k}[V(\bfs
F_r)].
\end{equation}
We will show that, for a given $\bfs d_r$, there exists a finite,
effectively computable subset
$\mathcal{S}:=\mathcal{S}(\bfs d_r)\subset \overline{k}$
such that, for every
$\bfs F_r\in \mathcal{U}_{\bfs d_r}$ and for most choices of
$\bfs\lambda\in\mathcal{S}^{n\times n}$, the inclusion above
is an integral ring extension.

Now consider the $r\times r$ Jacobian matrix
$$J_r\bfs F_r:=\left(
               \begin{array}{ccc}
                 \frac{\partial F_1}{\partial Y_{n-r+1}} & \cdots & \frac{\partial F_1}{\partial Y_n} \\
                 \vdots &  & \vdots \\
                 \frac{\partial F_r}{\partial Y_{n-r+1}} & \cdots & \frac{\partial F_r}{\partial Y_n} \\
               \end{array}
             \right)\in(\overline{k}[V(\bfs F_r)])^{r\times r}.
$$
If $\bfs F_r\in\mathcal{U}_{\bfs d_r}$ and
$\bfs\lambda\in\mathcal{U}(\bfs F_r)$, then the determinant
$\det(J_r\bfs F_r)$ is not a zero-divisor in $\overline{k}[V(\bfs
F_r)]$. In particular, when $r=n$, the zero-dimensional variety
$V(\bfs F_n)\subset \A^n$ is smooth.

Moreover, $\overline{k}[V(\bfs F_r)]$ is a Cohen-Macaulay ring, and
a free $R_r$-module of finite rank (see \cite{Iversen73}, or the
inductive argument using the Quillen-Suslin theorem of
\cite{GiHeSa93}). Therefore, by localizing at the multiplicative
set $S:=R_r\setminus\{0\}$, we obtain an integral
ring extension of function fields
$$K_r:=\overline{k}(Y_1,\ldots,Y_{n-r})\hookrightarrow K_r
\otimes_{R_r}\overline{k}[V(\bfs
F_r)]=S^{-1}\overline{k}[V(\bfs F_r)],$$
where $S^{-1}\overline{k}[V(\bfs F_r)]$ is a reduced, smooth Artin
ring (i.e., it has Krull dimension 0). Additionally, under these
conditions, we have
$$\mathrm{rank}_{R_r}\big(\overline{k}
[V(\bfs F_r)]\big)=\dim_{K_r}\big(
S^{-1}\overline{k}[V(\bfs F_r)]\big).$$
According to \cite{Heintz83}, these quantities are bounded by the
degree of $V(\bfs F_r)$, that is,
$$\mathrm{rank}_{R_r}\big(\overline{k}
[V(\bfs F_r)]\big)\le \deg\big(V(\bfs F_r)\big)\le \mathcal{B}_{\bfs
d_r}:=\prod_{i=1}^rd_i.$$
Here $\mathcal{B}_{\bfs d_r}$ is the {\em B\'ezout number associated
with the degree sequence $\bfs d_r$}. Furthermore,
\cite{Heintz83} shows that for generic $\lambda\in\mathcal{U}(\bfs F_r)$, the
first inequality is an equality. More precisely, there is a nonempty
Zariski open subset $\mathcal{U}'(\bfs F_r)\subset \mathcal{U}(\bfs
F_r)$ such that, for any $\bfs\lambda\in \mathcal{U}'(\bfs F_r)$,
$$\mathrm{rank}_{R_r}\big(\overline{k}
[V(\bfs F_r)]\big)=\deg\big(V(\bfs F_r)\big).$$

The projection $\pi_{\bfs\lambda}:V(\bfs F_r)\to\A^{n-r}$ of
\eqref{eq: Noether normalization} is generically unramified, and for
a generic point $\bfs y\in\A^{n-r}$, the fiber
$\pi_{\bfs\lambda}^{-1}(\{\bfs y\})$  is a smooth, zero-dimensional
variety. Moreover, by \cite{Heintz83},
$$\#\big(\pi_{\bfs\lambda}^{-1}(\{\bfs y\})\big)=
\deg\big(\pi_{\bfs\lambda}^{-1}(\{\bfs y\})\big)=\deg (V(\bfs F_r)).$$

Later, we will exhibit bounds (and probabilistic algorithms) to find
a ${\bfs\lambda}\in GL(n,\overline{k})$ and $\bfs y\in\A^{n-r}$
such that $\bfs\lambda\in\mathcal{U}(\bfs F_r)$ and the fiber
$\pi_{\bfs\lambda}^{-1}(\{\bfs y\})$ is generically unramified.
Specifically, we will show that there exists a finite set
$\mathcal{S}:=\mathcal{S}(\bfs d_r)\subset \overline{k}$ such that,
for any $\bfs
F_r\in\mathcal{U}_{\bfs d_r}\subset\mathcal{P}_{\bfs d_r}$,
most choices of $\bfs\lambda\in\mathcal{S}^{n\times n}$ and $\bfs y\in
\mathcal{S}^{n-r}$ satisfy:
\begin{itemize}
  \item The extension $R_r\subset\overline{k}
[V(\bfs F_r)]$ is integral, $\overline{k} [V(\bfs F_r)]$ is a free
$R_r$-module, $\det J_r\bfs F_r$ is not a zero-divisor in
$\overline{k} [V(\bfs F_r)]$ and the following inequality holds:
$$\mathrm{rank}_{R_r}\big(\overline{k}
[V(\bfs F_r)]\big)\le \deg\big(V(\bfs F_r)\big).$$
\item The fiber $\pi_{\bfs\lambda}^{-1}(\{\bfs y\})$ is a smooth,
zero-dimensional variety, whose ideal is defined by the polynomials
$F_1(\bfs y, Y_{n-r+1},\ldots,Y_n),\ldots,F_r(\bfs y,
Y_{n-r+1},\ldots,Y_n)$, and is unramified, with
$$\#\big(\pi_{\bfs\lambda}^{-1}(\{\bfs y\})\big)=\deg\big(\pi_{\bfs\lambda}^{-1}(\{\bfs y\})\big)
=\mathrm{rank}_{R_r}\big(\overline{k}
[V(\bfs F_r)]\big).$$
\end{itemize}
%
%
\subsubsection{First approach: a procedure by jumping through solution
varieties}
\label{subsec: first approach}
With notations as before, given a degree sequence $\bfs
d_r:=(d_1,\ldots,d_r)$, we consider the solution variety $V_{\bfs
d_r}$ and the canonical projections
$$\xymatrix{
 & V_{\bfs d_r}\ar[ld]_{\Pi_1} \ar[rd]^{\Pi_2} & \\
\mathcal{P}_{\bfs d_r}  & & \A^n}$$
Let $\mathcal{U}_{\bfs d_r}\subset\mathcal{P}_{\bfs d_r}$ be the
nonempty Zariski open set such that any $\bfs
F_r\in\mathcal{U}_{\bfs d_r}$ defines an ideal-theoretic complete
intersection (i.e., satisfies $({\sf H}_1)$ and $({\sf H}_2)$). We
now discuss a first approach to solving the system $V(\bfs
F_r)=\Pi_2\big(\Pi_1^{-1}(\{\bfs F_r\})\big)$ by means of an
iterative algorithm.

To introduce the algorithm, we further assume that the list of
polynomials $\bfs F_r:=(F_1,\ldots,F_r)$ satisfies:
\begin{itemize}
  \item[$({\sf H}_3)$]  For $1\le s\le r$, the ideal $(F_1,\ldots,F_s)$ satisfies
 $({\sf H}_1)$ and $({\sf H}_2)$.
\end{itemize}
Note that $({\sf H}_3)$ implies both $({\sf H}_1)$ and $({\sf
H}_2)$; that is, we assume that the list $\bfs
F_r:=(F_1,\ldots,F_r)$ forms a {\em smooth regular sequence}, as in
\cite{GiHeMoPa95} or \cite{Pardo95}. This assumption is not overly
restrictive. As shown in \cite{KrPa96} (see also
\cite{Jouanolou83}), given $\bfs F_r\in\mathcal{U}_{\bfs d_r}$, one
can iteratively  obtain linear combinations $G_1,\ldots,G_{r-1}$ of
$F_1,\ldots,F_r$ such that the sequence $G_1,\ldots,G_{r-1},F_r$
satisfies $({\sf H}_3)$. Nevertheless, we avoid such a construction
and simply assume from the outset that  $\bfs F_r$ satisfies $({\sf
H}_3)$.

We introduce the following notations to express the iterative
structure of the algorithm.
\begin{itemize}
  \item For $1\le s\le r$, let $\bfs d_s:=(d_1,\ldots,d_s)$
  be the truncated degree sequence.
  \item Let $\mathcal{P}_{\bfs d_s}:=\prod_{i=1}^s
  \mathcal{P}_{d_i}$ be
  the $\overline{k}$-vector space of polynomial lists  of length
  $s$.
  \item For $\bfs F_r:=(F_1,\ldots,F_r)\in\mathcal{P}_{\bfs d_r}$,
  denote $\bfs F_s:=(F_1,\ldots,F_s)$.
  \item Let $V_{\bfs d_s}\subset\mathcal{P}_{\bfs d_s}$ be the
  corresponding solution variety.
  \item Let $\mathcal{U}_{\bfs d_s}\subset\mathcal{P}_{\bfs d_s}$ be the nonempty
  Zariski open set of lists satisfying hypothesis
  $({\sf H}_3)$.
\end{itemize}

We can now formulate our goal in this context as follows:
\begin{problem}[Solving a System Defining a Smooth Regular Sequence]
Given an algebraically closed field $\overline{k}$ and an $r$-tuple
$\bfs F_r\in\mathcal{P}_{\bfs d_r}$ of polynomials in $n$ variables
of degrees bounded by $\bfs d_r:=(d_1,\ldots,d_r)$, defining a
smooth regular sequence (i.e., satisfying hypothesis $({\sf H}_3)$),
compute $$\Pi_2\big(\Pi_1^{-1}(\bfs F_r)\big).$$
\end{problem}

The approach proposed in \cite{GiHeMoPa95} and \cite{Pardo95}
rediscovered a forgotten idea originally introduced by L. Kronecker
in \cite{Kronecker82}. This idea can be illustrated using the
following diagram, which represents a sequence of jumps between
solution varieties:

$$\xymatrix{
V_{\bfs d_1}\ar@{->}@/^{5mm}/[r]^{+ F_2} & V_{\bfs d_2}\ar@{->}@/^{5mm}/[r]^{+ F_3} &\quad \cdots\quad
\ar@{->}@/^{5mm}/[r]^{+ F_r} & V_{\bfs d_r}\\
\Pi_1^{-1}(\bfs F_1)\ar@{->}@/^{5mm}/[r] \ar@{}[u]|-*[@]{\subseteq}
& \Pi_1^{-1}(\bfs F_2)\ar@{->}@/^{5mm}/[r]
\ar@{}[u]|-*[@]{\subseteq}& \quad \cdots\quad \ar@{->}@/^{5mm}/[r] &
\Pi_1^{-1}(\bfs F_r)\cong V(\bfs F_r)\ar@{}[u]|-*[@]{\subseteq} }$$

\bigskip
\noindent Each jump $\curvearrowright$ represents an ``elimination''
step, where a new equation is added to the system. That is, at the
$s$th step, we use the data computed for the variety $V(\bfs
F_s)\cong \Pi_1^{-1}(\bfs F_s)$, and add the equation defined by
$F_{s+1}$ to solve $\Pi_1^{-1}(\bfs F_{s+1})$. This process is what
we called an ``elimination step'' of the algorithm.

To perform such a step, we must first define what we mean by a
``solution'' of $V(\bfs F_s)\cong \Pi_1^{-1}(\bfs F_s)$. Our
presentation differs slightly from those of \cite{Kronecker82},
\cite{GiHeMoPa95} or \cite{Pardo95}, though the underlying concepts
remain consistent.

Since each sequence $\bfs F_s$ $(1\le s\le r)$ satisfies hypothesis
$({\sf H}_3)$, the previous arguments show the existence of a
nonempty Zariski open set $\widetilde{\mathcal{U}}(\bfs F_r)\subset
GL(n,\overline{k})$ such that for all $1\le s\le r$, and any
$\bfs\lambda\in \widetilde{\mathcal{U}}(\bfs F_r)$, we have an
integral ring extension
\begin{equation}\label{eq: integral ring extension recursive}
R_s:=\overline{k}[Y_1,\ldots,Y_{n-s}]\hookrightarrow \overline{k}[V(\bfs
F_s)]=\overline{k}[Y_1,\ldots,Y_{n}]/(F_1,\ldots,F_s).
\end{equation}
We say that the matrix $\bfs\lambda$ puts the variables into {\em
simultaneous Noether position} for all the sequences $\bfs F_s$
$(1\le s\le r)$. A probabilistic method for computing such a
$\bfs\lambda$ was introduced in \cite{KrPa96}. In Section
\ref{subsec: Noether normalization}, we present a refined version
with improved bounds.

Assuming such a $\bfs\lambda$ is available, we present the classical
notion of solving of \cite{Kronecker82}, and also
of \cite{GiHeMoPa95} and \cite{Pardo95}. {\em Solving a smooth
complete intersection} like $V(\bfs F_s)$ $(1\le s\le r)$ involves
computing:
\begin{itemize}
  \item[$({\sf S}_1)$] A linear change
  of variables $\bfs\lambda\in\widetilde{\mathcal{U}}(\bfs F_r)$ such that:
  \begin{itemize}
\item The variables are in
Noether position with respect to $V(\bfs F_s)$ for all $1\le s\le
r$.
\item The mapping $\Pi_{\bfs\lambda}^s:\Pi_1^{-1}(\bfs F_s) \to
H_s$, defined by $\Pi_{\bfs\lambda}^s(\bfs x, \bfs F_s):=
(y_1,\ldots,y_{n-s+1})$, is birational onto a hypersurface
$H_s\subset\A^{n-s+1}$, for all $1\le s\le r$:
\end{itemize}
  \item[$({\sf S}_2)$] The minimal polynomial $M_s\in \overline{k}[Y_1,\ldots,Y_{n-s+1}]$
  defining $H_s$.
  \item[$({\sf S}_3)$] The rational inverse
  $\Phi_s:\mathfrak{O}_s\to \Pi_1^{-1}(\bfs F_s)$
  of $\Pi_{\bfs\lambda}^s$, defined over a
  nonempty Zariski open subset $\mathfrak{O}_s\subset H_s$.
\end{itemize}

We show that there exists a nonempty Zariski open subset
$\mathcal{U}^*(\bfs F_r)\subset \widetilde{\mathcal{U}}(\bfs F_r)$
such that any $\bfs\lambda\in\mathcal{U}^*(\bfs F_r)$ satisfies
$({\sf S}_1)$. Furthermore, for any
$\bfs\lambda\in\mathcal{U}^*(\bfs F_r)$ there exists:
\begin{itemize}
  \item A nonzero polynomial
  $\rho_s\in\overline{k}[Y_1,\ldots,Y_{n-s}]$,
  \item Polynomials
$V_{n-s+2}^s,\ldots,V_n^s\in\overline{k}[Y_1,\ldots,Y_{n-s},T]$,
\end{itemize}
such that
\begin{itemize}
\item $\mathfrak{O}_s:=\mathfrak{O}(\rho_s):=\{(y_1,\ldots,y_{n-s+1})
\in H_s:\rho_s(y_1,\ldots,y_{n-s})\not=0\}$ is a distinguished
non-empty Zariski open subset, dense in $H_s$.
\item The inverse mapping $\Phi_s:\mathfrak{O}_s\to \Pi_1^{-1}(\bfs F_s)$
is given by:
$$\Phi_s(\bfs y)=\big(\bfs y,(\rho_s^{-1}\cdot V_{n-s+2}^s)(\bfs y),\ldots,
(\rho_s^{-1}\cdot V_n^s)(\bfs y),\bfs F_s\big).$$
\end{itemize}
The nonzero polynomial $\rho_s$ is the {\em discriminant} of $M_s$
with respect to $Y_{n-s+1}$.

This triple of data ---$M_s$, $\rho_s$, and
$V_{n-s+2}^s,\ldots,V_n^s$--- constitutes a {\em Kronecker
representation} (also called a {\em geometric solution}) of the
variety $V(\bfs F_s)$. Given such a representation for $V(\bfs F_s)$
and the polynomial $F_{s+1}$, via ``elementary'' linear algebra
computations in dimension $\deg(V(\bfs F_s))\cdot d_{s+1}$, one can
compute a Kronecker representation of $V(\bfs F_{s+1})$, completing
an elimination step, that jumps from fibers in $V_{\bfs d_s}$ to
fibers in $V_{\bfs d_{s+1}}$.

This process forms the basis of the following iterative algorithm.
\begin{algorithm}[Kronecker algorithm - first approach]\label{algo: Kronecker - first approach} ${}$
\begin{itemize}
\item[]{\bf Input:} A list $\bfs F_r:=(F_1,\ldots,F_r)\in \mathcal{U}_{\bfs
d_r}$, such that $\bfs F_s:=(F_1,\ldots,F_s)\in \mathcal{U}_{\bfs
d_s}$ for all $1\le s\le r$.
\item[]{\bf Output:} A Kronecker representation
of $V(\bfs F_r)$. \smallskip

\item[]{\bf 1} Set $s:=1$.
\item[]{\bf 2} Compute a Kronecker representation
of $V(\bfs F_1)$.
\item[]{\bf 3} For $s=2,\ldots,r$, do
  \begin{enumerate}
    \item[]{\bf 3.1} Use the representation of
$V(\bfs F_s)$ and the polynomial $F_{s+1}$,
    to compute a Kronecker
representation of $V(\bfs F_{s+1})$.
  \end{enumerate}
  \item[]{\bf 4} Return the Kronecker representation of $V(\bfs F_r)$.
\end{itemize}
\end{algorithm}

This was the approach in \cite{Kronecker82}, but it has a number of
drawbacks that we now discuss.
\begin{remark}
Let $K_s:=\overline{k}(Y_1,\ldots,Y_{n-s})$. The ``elementary''
linear algebra mentioned above involves a sequence of {\em tensor
matrices} $M_{X_{n-s+1}},\ldots,M_{X_n}\in M_{D_s}(K_s)$, associated
with the homothecies
\begin{align*}
\eta_{X_k}:=K_s \otimes_{R_s} \overline{k}[V(\bfs F_s)] & \to K_s
\otimes_{R_s} \overline{k}[V(\bfs
F_s)]\\
\overline{H}&\mapsto \overline{X_kH},\end{align*}
where $\delta_s:=\deg (V(\bfs F_s))$, for $n-s+1\le k\le n$. The
``elimination step'' consist in computing the minimal polynomial of
the matrix
$$M_{F_{s+1}}:=F_{s+1}(Y_1,\ldots,Y_{n-s},M_{X_{n-s+1}},\ldots,M_{X_n})
\in M_{\delta_s}(K_s).$$
\end{remark}

\begin{remark}\label{rem: cost kronecker elimination}
To carry out this ``elementary'' linear algebra computations, we
need the polynomials
\begin{itemize}
  \item $\rho_s\in K_s$,
  \item The minimal defining polynomial of $H_s$, and
  \item $V_{n-s+2}^s,\ldots,V_n^s\in K_s[T]$,
\end{itemize}
all represented by their coefficients in $K_s$. This corresponds to
\begin{itemize}
  \item $\mathcal{O}((s+1)\delta_s)$ polynomials of degree bounded by
  $\delta_s$;
  \item One polynomial of degree $\delta_s^2$.
\end{itemize}
If these polynomials in $K_s$ are written using dense encoding (i.e.,
as lists of all their coefficients in $\overline{k}$), the amount of
information becomes roughly
$$\mathcal{O}\left(s\cdot \binom{\delta_s^2+n-s}{n-s}\right)\cong
\mathcal{O}\left(s\cdot \delta_s^{2(n-s)}\right),$$
which is impracticable, and leads to algorithms with arithmetic
complexity at least
$$\Omega(s\cdot d^{2s(n-s)}),$$
where $d:=\max\{d_1,\ldots,d_s\}$. In the case $s=\theta(n)$, this yields
complexity greater than $d^{2n^2}$, which is unfeasible in practice.
This is the main reason why Kronecker's original approach was
ultimately abandoned---it is computationally intractable, even on
modern hardware.
\end{remark}

\begin{remark}
To address the issue discussed in Remark \ref{rem: cost kronecker
elimination}, the proposal in \cite{GiHeMoPa95} and \cite{Pardo95}
was to avoid dense representations, encoding instead polynomials in
$K_s:=\overline{k}[Y_1,\ldots,Y_{n-s}]$ using {\em division-free nonscalar straight-line
programs}. In modern terms, a division-free nonscalar straight-line program can be seen as a
{\em neural network} over $\overline{k}$ with activation function
$\varphi(T):=T^2$ (see \cite{PaSe26}). This representation was not new---it had already
been used in \cite{FiGiSm95} or \cite{KrPa96} for instance. In
\cite{KrPa96} a detailed discussion of such a {\em data structure} was
provided.
\end{remark}

\begin{remark}
While the use of straight-line programs allows a more efficient
implementation of the elimination step, it is still not sufficient
to make the Kronecker algorithm efficient overall. Assume that the
coefficients of the Kronecker representation of $V(\bfs F_s)$ in
$R_s$ are given by a straight-line program of size $L_s$. Then, in a
number of arithmetic operations polynomial in
$$L_s,n,d_{s+1},\delta_s,$$
we can compute the $\mathcal{O}((s+1)\delta_{s+1})$ coefficients of the
polynomials that form a Kronecker representation of $V(\bfs
F_{s+1})$, encoded as a straight-line program of size $L_{s+1}$.
Unfortunately, the size of the resulting straight-line programs
grows multiplicatively, i.e.,
$$L_{s+1}\cong (d_{s+1}\delta_s)^{\mathcal{O}(1)}L_s.$$
This phenomenon was called {\em nesting of straight-line programs}
in \cite{Pardo95}. In the zero-dimensional case (i.e., $s=n$), such
an approach leads to an algorithm with complexity
$d^{\mathcal{O}(n^2)}$, where $d:=\max\{d_1,\ldots,d_n\}$, which is
worse (in terms of the number of arithmetic operations) than those
in \cite{FiGiSm95} or \cite{KrPa96}, which work in time
$(nd)^{\mathcal{O}(n)}$.
\end{remark}

In conclusion, the approach based on jumping between solution
varieties has critical shortcomings:
\begin{itemize}
  \item Kronecker's naive method is too costly computationally,
  \item Using straight-line
programs simplifies individual steps but leads to exponential
blow-up due to nesting,
  \item As a result, the total complexity remains
  $d^{\mathcal{O}(n^2)}$,
  worse than competing algorithms with complexity $(nd)^{\mathcal{O}(n)}$.
\end{itemize}

In \cite{Pardo95} or \cite{GiHeMoPa95}, a first idea on how to break
this nesting was suggested: use a {\em Newton-Hensel lifting} in the
non-Archimedean case. This was made precise in
\cite{GiHaHeMoMoPa97}, \cite{GiHeMoPa97} and \cite{GiHeMoMoPa98}. We
discuss this idea in the next section.
%
%
\subsubsection{Second approach: deformations through zero-dimensional
smooth varieties}
\label{subsec: second approach}
The problem with the previous approach---jumping between solution
varieties by successively adding equations---is not only its high
complexity but its {\em somewhat unnatural} nature. In
\cite{GiHaHeMoMoPa97}, \cite{GiHeMoMoPa98} and \cite{GiHeMoPa97}, a
new technique was introduced that not only breaks the nesting
phenomenon but also transforms Algorithm \ref{algo: Kronecker -
first approach} into one that {\em deforms the input polynomial
system within the solution variety of zero-dimensional varieties}.

To explain how it works, we continue with notations and assumptions
from the previous sections. Let $r,n\in\N$ with $r\le n$, and let
$\bfs d_r:=(d_1,\ldots,d_r)$ be a degree sequence. Let $\bfs
F_r:=(F_1,\ldots,F_r)\in \mathcal{P}_{\bfs d_r}$ be a system that
defines a smooth complete intersection, namely, it satisfies
hypothesis $({\sf H}_3)$.

Let $\bfs\lambda\in\mathcal{\widetilde{U}}(\bfs F_r)$ be a linear
change of coordinates such that the new variables $\bfs
Y:=\bfs\lambda\cdot \bfs X$ ensure that the ring extension
\eqref{eq: integral ring extension recursive} is integral for all
$1\le s\le r$. We now search for a point $\bfs
p:=(p_1,\ldots,p_{n-1})\in\A^{n-1}$ and define the following
sections of the varieties associated with the input systems $\bfs
F_1,\ldots,\bfs F_r$ under the change of coordinates
$\bfs\lambda\in\mathcal{\widetilde{U}}(\bfs F_r)$:
\begin{equation}\label{eq: linear sections}
\begin{array}{l}
\bfs G_r^1:=(Y_1-p_1,\ldots,Y_{n-1}-p_{n-1},F_1)\in\mathcal{P}_{\bfs 1_{n-r},\bfs d_r}, \\[1ex]
\bfs G_r^2:=(Y_1-p_1,\ldots,Y_{n-2}-p_{n-2},F_1,F_2)\in\mathcal{P}_{\bfs 1_{n-r},\bfs d_r} \\
 \qquad \vdots \\
\bfs
G_r^r:=(Y_1-p_1,\ldots,Y_{n-r}-p_{n-r},F_1,\ldots,F_r)\in\mathcal{P}_{\bfs
1_{n-r},\bfs d_r},
\end{array}
\end{equation}
where ${\bfs 1_{n-r},\bfs d_r}:=(1,\ldots,1,d_1,\ldots,d_s)$ is the
combined degree sequence. We have $r$ linear sections of the input
varieties:
$$V(\bfs G_r^s):=V(Y_1-p_1,\ldots,Y_{n-s}-p_{n-s})\cap V(\bfs F_s)\quad (1\le s\le r),$$
which define $r$ jumps to finally compute a Kronecker representation of
the final section $V(\bfs G_r^r)$.

Under the given assumptions, it is easy to see that there is a
nonempty Zariski open set $\mathfrak{O}:=\mathfrak{O}(\bfs
F_r,\bfs\lambda)\subset\A^{n-1}$ such that the following hold for
$1\le s\le r$:
\begin{itemize}
  \item[$({\sf L}_1)$] $V(\bfs G_r^s)$ is a smooth zero-dimensional variety;
  \item[$({\sf L}_2)$] $\# V(\bfs G_r^s)=\deg V(\bfs G_r^s)
  =\mathrm{rank}_{R_s} \overline{k}[V(\bfs F_s)]$.
\end{itemize}
We will show that such a point $\bfs p$ can be found by a
probabilistic algorithm, which selects $\bfs p$ randomly in a
distinguished Zariski open set $\mathfrak{D}(H):=\{\bfs
p\in\A^{n-1}:H(\bfs p)\not=0\}\subset \mathfrak{O}$, with sharp
degree bounds on $H$.

It is worth noting that the solution variety $V_{\bfs 1_{n-r},\bfs
d_r}\subset\A^n\times \mathcal{P}_{\bfs 1_{n-r},\bfs d_r}$ has the
property that a generic fiber $\Pi_1^{-1}(\bfs G)$ is a smooth
zero-dimensional variety (outside the discriminant variety
$\Sigma_{\bfs 1_{n-r},\bfs d_r}$, or equivalently, for systems in
the open set $\mathcal{U}({\bfs 1_{n-r},\bfs d_r})$). This
corresponds to the claim of $({\sf L}_1)$--$({\sf L}_2)$.

The sequence of varieties $V(\bfs G_r^1),\ldots,V(\bfs G_r^r)$ can
be viewed as a homotopic deformation within the solution variety
$V_{\bfs 1_{n-r},\bfs d_r}$. We consider any curve $\{\bfs
G_r^{(t)}\in\mathcal{P}_{\bfs 1_{n-r},\bfs d_r}:t\in[0,1]\}$ such
that the following conditions hold:
$$
\begin{array}{l}
\bfs G_r^{(t_0)}:=G_r^1:=(Y_1-p_1,\ldots,Y_{n-1}-p_{n-1},F_1), \\
 \qquad \vdots \\
\bfs G_r^{(t_{s-1})}:=G_r^s:=(Y_1-p_1,\ldots,Y_{n-s}-p_{n-s},\bfs F_s)\textrm{ for }1\le s\le r-2, \\
 \qquad \vdots \\
\bfs G_r^{(t_{r-1})}:=\bfs
G_r^r:=(Y_1-p_1,\ldots,Y_{n-r}-p_{n-r},\bfs F_r),
\end{array}
$$
with $t_0:=0<t_1<\cdots<t_{r-2}<t_{r-1}:=1$. The curve may be chosen
as polygonal chain. We then have a sequence of zero-dimensional
varieties:

$$\xymatrix{
V_{\bfs 1_{n-r},\bfs d_r} & V_{\bfs 1_{n-r},\bfs d_r} &\quad
\cdots\quad
 & V_{\bfs 1_{n-r},\bfs d_r}\\
\Pi_1^{-1}(\bfs G_r^{(t_0)})\ar@{}[u]|-*[@]{\subseteq} &
\Pi_1^{-1}(\bfs G_r^{(t_1)})\ar@{}[u]|-*[@]{\subseteq}& \quad
\cdots\quad  & \Pi_1^{-1}(\bfs
G_r^{(1)})\ar@{}[u]|-*[@]{\subseteq}}$$
Using the parameter $t\in[0,1]$, this becomes a continuous path in
the solution variety $V_{\bfs 1_{n-r},\bfs d_r}$:

$$\xymatrix{
V_{\bfs 1_{n-r},\bfs d_r} \\
\{\Pi_1^{-1}(\bfs G_r^{(t)}):t\in[0,1]\}\ar@{}[u]|-*[@]{\subseteq} }
$$
The question becomes how to follow this path through a sequence of
iterations. In numerical settings, this is typically done using
Newton iterations (see \cite{ShSm93} or \cite{BePa09}, \cite{BePa11}
and the references therein). In our symbolic setting, we replace the
Newton method with a {\em non-Archimedean Newton-Hensel (NH) lifting
method}. In this vein, the sequence of varieties $V(\bfs
G_r^1),\ldots,V(\bfs G_r^r)$ are called {\em lifting fibers} in
\cite{GiHaHeMoMoPa97}.

Under the assumptions $({\sf H}_3)$,$({\sf L}_1)$ and $({\sf L}_2)$,
this lifting method works reliably. The process is visualized in the
following diagram:

$$\xymatrix{
V_{\bfs d_1}\ar@{->}[r] & V_{\bfs d_2}\ar@{->}[r] &\quad \cdots\quad
\ar@{->}[r] & V_{\bfs d_r}\\
\Pi_1^{-1}(\bfs F_1)\ar@{->}@/^{5mm}/[r]^{\textrm{Eliminate }F_2}
\ar@{}[u]|-*[@]{\subseteq} \ar@{->}[dr]^{\textrm{Intersect}}& \Pi_1^{-1}(\bfs
F_2)\ar@{->}@/^{5mm}/[r]^{\textrm{Eliminate }F_3}
\ar@{}[u]|-*[@]{\subseteq}& \quad
\cdots\quad\ar@{->}[dr]^{\textrm{Intersect}}
\ar@{->}@/^{5mm}/[r]^{\textrm{Eliminate }F_r} & \Pi_1^{-1}(\bfs
F_r)\cong V(\bfs F_r)
\ar@{}[u]|-*[@]{\subseteq}\\
\Pi_1^{-1}(\bfs G_r^1)\ar@{->}@/_{5mm}/[r]_{\textrm{Eliminate }F_2}
\ar@{->}[u]^{\textrm{NH lifting}} & \Pi_1^{-1}(\bfs
G_r^2)\ar@{->}@/_{5mm}/[r]_{\textrm{Eliminate
}F_3}\ar@{->}[u]_{\textrm{NH lifting}} &\quad
\cdots\quad\ar@{->}@/_{5mm}/[r]_{\textrm{Eliminate }F_r}
&\Pi_1^{-1}(\bfs
G_r^r) \ar@{->}[u]_{\textrm{NH lifting}}\\
V_{\bfs 1_{n-r},\bfs d_r}\ar@{->}[r] \ar@{}[u]|-*[@]{\supseteq} &
V_{\bfs 1_{n-r},\bfs d_r}\ar@{->}[r] \ar@{}[u]|-*[@]{\supseteq}&
\quad \cdots\quad\ar@{->}[r]
 & V_{\bfs 1_{n-r},\bfs
d_r}\ar@{}[u]|-*[@]{\supseteq}}$$

The Newton-Hensel lifting is detailed in Section \ref{subsec: NH
lifting}. In this diagram, the elimination step from $V(\bfs
F_s)\cong \Pi_1^{-1}(\bfs F_s)$ to $\Pi_1^{-1}(\bfs G_r^{s+1})$ is
done by evaluating the variables $Y_1,\ldots,Y_{n-s-1}$ in the
polynomials that form the Kronecker representation of
$\Pi_1^{-1}(\bfs F_s)$, and the polynomial $F_{s+1}$, at
$p_1,\ldots,p_{n-s-1}$, and performing the intersection step in
dimension one.

This approach has significant advantages:
\begin{itemize}
  \item It breaks the {\em nesting} of straight-line programs from
  the naive approach: at each step, coefficients become constants
in $\overline{k}$ since the lifting fibers are zero-dimensional.
\item The total complexity of the procedure is polynomial
in:
  $$n,d,\delta,L,$$
  where
\begin{itemize}
  \item $\delta:=\max\{\deg V(\bfs F_s):1\le s\le r\}\le \prod_{s=1}^rd_s$,
  \item $L$ is the complexity of evaluating the input polynomials $\bfs
  F_r$.
  \end{itemize}
\end{itemize}
Hence the overall complexity becomes $L(nd\delta)^{\mathcal{O}(1)}$,
which represents a substantial improvement over the naive Kronecker
approach. Many refinements of the bounds and algorithms appear in
the literature (see, e.g., \cite{GiLeSa01}, \cite{HeMaWa01},
\cite{Lecerf03}, \cite{CaMa06a}, \cite{DuLe08}, \cite{HoLe21}).

We note that in \cite{GiHaHeMoMoPa97}, \cite{GiHeMoMoPa98}, and
\cite{GiHeMoPa97}, the point $\bfs p$ is computed iteratively. A
probabilistic process for choosing a {\em single} point $\bfs p\in\A^{n-1}$
such that all $V(\bfs G_r^i)$ satisfy $({\sf L}_1)$ and $({\sf
L}_2)$ was first provided in \cite{HeMaWa01} (see also
\cite{CaMa06a} and \cite{GiMa19}). We present improved bounds and
algorithms in Section \ref{subsec: lifting fibers}.

There is, however, a drawback: the elimination step still jumps
between solution varieties, which is somewhat unnatural, as in the
naive approach. Additionally, the algorithm encodes multivariate
polynomials by straight-line programs, and not many readers like
this encoding. This was independently solved in \cite{GiLeSa01} and
\cite{HeMaWa01}, introducing intermediate curves and avoiding the
full jump between higher-dimensional varieties.
%
%
\subsubsection{Third approach: homotopic deformations by
smooth curves}\label{subsec: third approach inro}
In \cite{GiLeSa01} and \cite{HeMaWa01}, a new approach yields a
three level solution: it benefits from the existence of lifting
fibers, but these authors observed that performing the elimination
step to compute the next lifting fiber does not require
complete access to the full intermediate variety; {\em an
intermediate curve suffices}. These are the {\em lifting curves}
that we now introduce.

With the notations and assumptions from the previous sections, given
$r,n\in\N$ with $r\le n$ and a degree sequence $\bfs
d_r:=(d_1,\ldots,d_r)$, we fix an $r$-tuple $\bfs
F_r:=(F_1,\ldots,F_r)\in \mathcal{P}_{\bfs d_r}$ which defines a
smooth complete intersection, namely, which satisfies hypothesis
$({\sf H}_3)$. Let $\bfs\lambda\in\mathcal{\widetilde{U}}(\bfs F_r)$
be a linear change of coordinates such that the new variables $\bfs
Y:=\bfs\lambda\cdot \bfs X$ ensure that the ring extension
\eqref{eq: integral ring extension recursive} is integral for $1\le
s\le r$. Let $\bfs p:=(p_1,\ldots,p_{n-1})\in\A^{n-1}$ be a point
such that $V(\bfs G_r^1),\ldots,V(\bfs G_r^r)$ are lifting fibers of
$V(\bfs F_1),\ldots,V(\bfs F_r)$, where $\bfs G_r^1,\ldots,\bfs
G_r^r$ are defined as in \eqref{eq: linear sections}, and thus
satisfy assumptions $({\sf H}_3)$, $({\sf L}_1)$ and $({\sf L}_2)$.

Associated with the degree sequence $\bfs d_r$, we introduce a new
degree sequence of length $n-1$ in the following way:
$$(\bfs 1_{n-r-1},\bfs
d_r):=(\underbrace{1,\ldots,1}_{n-r-1},d_1,\ldots,d_r)\in\N^{n-1}.$$
We also consider the solution variety $V_{\bfs 1_{n-r-1},\bfs d_r}$
associated with this degree sequence. Further, we consider the
following sequence of $n-1$ polynomials of $\mathcal{P}_{\bfs
1_{n-r-1},\bfs d_r}$:
$$
\begin{array}{l}
\bfs H_r^1:=(Y_1-p_1,\ldots,Y_{n-2}-p_{n-2},F_1), \\[1ex]
\bfs H_r^2:=(Y_1-p_1,\ldots,Y_{n-3}-p_{n-3},\bfs F_2), \\
 \qquad \vdots \\
\bfs H_r^r:=(Y_1-p_1,\ldots,Y_{n-r-1}-p_{n-r-1},\bfs F_r).
\end{array}
$$
We denote $C_s(\bfs F_r):=V(\bfs H_r^s)\cong \Pi_1^{-1}(\bfs
H_r^s)$, where $\Pi_1:V_{\bfs 1_{n-r-1},\bfs d_r}\to
\mathcal{P}_{\bfs 1_{n-r-1},\bfs d_r}$ is the canonical projection.

One readily observes that $C_s(\bfs F_r)$ is a section of $V(\bfs
F_s)$ and, hence, its degree is bounded by $\deg V(\bfs F_s)$ for
$1\le s\le r$. On the other hand, $C_s(\bfs F_r)$ contains a lifting
fiber $V(\bfs G_r^s)$, and therefore
$$\deg C_s(\bfs F_r)=\deg V(\bfs G_r^s)=\deg V(\bfs F_s)$$
(see Section \ref{section: lifting curve properties} for details).
Moreover, as $V(\bfs G_r^s)$ contains
enough information to recover $V(\bfs F_s)$ through a Newton-Hensel
lifting, it also contains enough information to obtain a Kronecker
representation of $C_s(\bfs F_r)$, also using a Newton-Hensel
lifting. The variety $C_s(\bfs F_r)$ has pure dimension 1---namely
it is a curve---and
is therefore called a {\em lifting curve}. More importantly, the
lifting curve $C_s(\bfs F_r)$ not only contains information to recover $V(\bfs
F_s)$, but also to perform an elimination step--not for recovering
$V(\bfs F_{s+1})$ but for computing the next lifting fiber $V(\bfs
G_r^{s+1})$.

We also have a homotopic deformation in $\mathcal{P}_{\bfs
1_{n-r-1},\bfs d_r}$ whose intermediate steps are the systems
$H_r^1,H_r^2,\ldots,H_r^r$,
$$\{\bfs
H_r^{(t)}\in\mathcal{P}_{\bfs 1_{n-r-1},\bfs d_r}:t\in[0,1]\}.$$
Thus the algorithm takes the form shown in the following diagram:

$$\xymatrix{
V_{\bfs d_1}\ar@{->}[r] & V_{\bfs d_2}\ar@{->}[r] &\quad \cdots\quad
\ar@{->}[r] & V_{\bfs d_r}\\
\Pi_1^{-1}(\bfs F_1)\ar@{->}@/^{5mm}/[r]^{\textrm{Eliminate }F_2}
\ar@{}[u]|-*[@]{\subseteq} & \Pi_1^{-1}(\bfs
F_2)\ar@{->}@/^{5mm}/[r]^{\textrm{Eliminate }F_3}
\ar@{}[u]|-*[@]{\subseteq}& \quad \cdots\quad
\ar@{->}@/^{5mm}/[r]^{\textrm{Eliminate }F_r} & \Pi_1^{-1}(\bfs
F_r)\cong V(\bfs F_r)
\ar@{}[u]|-*[@]{\subseteq}\\
V_{\bfs 1_{n-r-1},\bfs d_r}\ar@{->}[r] \ar@{-->}[u] & V_{\bfs
1_{n-r-1},\bfs d_r}\ar@{-->}[u]\ar@{->}[r] &\quad \cdots\quad
\ar@{->}[r] & V_{\bfs 1_{n-r-1},\bfs d_r}\ar@{-->}[u]\\
\Pi_1^{-1}(\bfs H_r^1)\ar@{->}@/^{5mm}/[r]^{\textrm{Eliminate }F_2}
\ar@{}[u]|-*[@]{\subseteq}\ar@{->}[dr]^{\textrm{Intersect}} &
\Pi_1^{-1}(\bfs H_r^2)\ar@{->}@/^{5mm}/[r]^{\textrm{Eliminate }F_3}
\ar@{}[u]|-*[@]{\subseteq}& \quad
\cdots\quad\ar@{->}[dr]^{\textrm{Intersect}}
\ar@{->}@/^{5mm}/[r]^{\textrm{Eliminate }F_r} & \Pi_1^{-1}(\bfs
H_r^r)\ar@{}[u]|-*[@]{\subseteq}\\
\Pi_1^{-1}(\bfs G_r^1)\ar@{->}@/_{5mm}/[r]_{\textrm{Eliminate }F_2}
\ar@{->}[u]^{\textrm{NH lifting}} & \Pi_1^{-1}(\bfs
G_r^2)\ar@{->}@/_{5mm}/[r]_{\textrm{Eliminate
}F_3}\ar@{->}[u]_{\textrm{NH lifting}} &\quad \cdots\quad
\ar@{->}@/_{5mm}/[r]_{\textrm{Eliminate }F_r}
 & \Pi_1^{-1}(\bfs
G_r^r)\ar@{-->}[u]_{\textrm{NH lifting}}\\
V_{\bfs 1_{n-r},\bfs d_r}\ar@{->}[r] \ar@{}[u]|-*[@]{\supseteq} &
V_{\bfs 1_{n-r},\bfs d_r}\ar@{->}[r] \ar@{}[u]|-*[@]{\supseteq}&
\quad \cdots\quad \ar@{->}[r]
 & V_{\bfs 1_{n-r},\bfs d_r}\ar@{}[u]|-*[@]{\supseteq}}$$
The dotted arrows indicate that we could lift the information
contained in $V(\bfs H_r^s)$ to compute a Kronecker representation of the
corresponding variety $V(\bfs F_s)$. The point here is that we do
not do it, because we do not need to do it in order to compute the
next lifting fiber.

The final output of the algorithm is the lifting fiber $V(\bfs
G_r^r)$ given by a Kronecker representation (or by the van der
Waerden $U$-resultant), where coefficients are represented by
straight-line programs. If someone wants to compute a Kronecker
representation of $V(\bfs F_r)$, then a Newton-Hensel lifting from $V(\bfs
G_r^r)$ would allow obtaining such data.

Note that this also means that the Newton-Hensel lifting, plus the
knowledge of the $U$-resultant of $V(\bfs G_r^r)$, yields a sharp
approximation of the $U$-resultant of $V(\bfs F_r)$ with respect to
the standard non-Archimedean metric in the local ring $\overline{k}
[\![Y_1-p_1,\ldots,Y_{n-r}-p_{n-r}]\!]$. This approximation is a
nonscalar straight-line program with divisions of size $\mathcal{O}(\deg V(\bfs F_r))$ and depth
$\log_2\log_2(\deg V(\bfs F_r))$.

In conclusion, the final approach consists in following a
polygonal chain of homotopic deformations of smooth complete intersection
varieties of dimension 0 or 1, which is shown in the following
diagram that represents a single step in both homotopy paths:

\begin{equation}\label{eq: sth elimination step}
\xymatrix{
V_{\bfs 1_{n-r-1},\bfs d_r}\ar@{->}[r]  & V_{\bfs
1_{n-r-1},\bfs d_r} \\
\Pi_1^{-1}(\bfs
H_r^s)\ar@{->}[dr]^{\textrm{Intersect}}\ar@{->}@/^{5mm}/[r]^{\textrm{Eliminate
}F_s} \ar@{}[u]|-*[@]{\subseteq} & \Pi_1^{-1}(\bfs
H_r^{s+1}) \ar@{}[u]|-*[@]{\subseteq}\\
\Pi_1^{-1}(\bfs G_r^s)\ar@{->}@/_{5mm}/[r]_{\textrm{Eliminate }F_s}
\ar@{->}[u]^{\textrm{NH lifting}} & \Pi_1^{-1}(\bfs
G_r^{s+1})\ar@{->}[u]_{\textrm{NH lifting}} \\
V_{\bfs 1_{n-r},\bfs d_r}\ar@{->}[r] \ar@{}[u]|-*[@]{\supseteq} &
V_{\bfs 1_{n-r},\bfs d_r} \ar@{}[u]|-*[@]{\supseteq}}\end{equation}

To establish the correctness of all these steps and the selection of
the coordinates of the point $\bfs p\in\A^{n-1}$, and the matrix
$\bfs\lambda\in\A^{n\times n}$ such that the linear change of
variables $\bfs Y:= \bfs\lambda\cdot \bfs X$ defines a simultaneous
Noether normalization of all the varieties $V(\bfs F_s)$ $(1\le s\le
r)$, requires a detailed analysis. We have chosen to perform this
analysis with the most advanced techniques to exhibit the sharpest
bounds in the study with the most up-to-date methods.
%
%
\subsection{Main results and outline of the paper}
In what follows, we will consider an algorithmic problem that
slightly generalizes the one discussed in the previous
sections. We now describe it in detail.

Let $k$ be a perfect field\footnote{%
As a referee suggested, it might be possible to replace the
assumption that the base field~$k$ is perfect by a weaker condition
requiring that the Jacobian determinants of all intermediate systems
intersect their zero sets properly, so that the systems define
radical ideals over~$\overline{k}$. While this formulation could
indeed generalize our setting, it is considerably more technical and
less natural from an algebraic-geometric viewpoint. Moreover, it is
not clear that the degree bounds established in this paper for the
hypersurfaces governing the genericity conditions (both on the
linear change of variables and on the choice of lifting points)
would remain valid under that generalization. The assumption
that~$k$ is perfect provides a unified and simpler framework
encompassing the main cases treated in the literature ($k=\Q$ and
finite fields), and allows the relevant nonvanishing discriminant
conditions to be expressed in a clean and intrinsic way, avoiding
inseparable phenomena. Finally, some subroutines of the algorithm
require extracting $p$-th roots, an operation that is naturally well
defined and computationally manageable
when~$k$ is perfect.%
}, and let $\overline{k}$ be its algebraic closure. Given positive
integers $r,n$ with $r\le n$, let $X_1,\ldots,X_n$ be indeterminates
over $k$, and consider polynomials $F_1,\ldots,F_r,G\in
k[X_1,\ldots,X_n]$ satisfying the following conditions:
\begin{itemize}
  \item[$({\sf H}_1)$] The polynomials $F_1\klk F_r$ form a regular sequence in the
  localization $k[X_1\klk X_n]_G$,
  \item[$({\sf H}_2)$] For all $1\le s\le r$, the ideal $(F_1\klk F_s)_G$ is radical in $k[X_1\klk X_n]_G$.
\end{itemize}
We will be interested in the set of common zeros $V(\bfs
F_r)\subset\A^n$ of $F_1,\ldots,F_r$ {\em outside the hypersurface}
$\{G=0\}$. An example arises when $G$ is the determinant of an $(r\times
r)$-minor of the Jacobian matrix $J_r$ of $F_1,\ldots,F_r$: in such
a case, our goal is to compute the set of common zeros of
$F_1,\ldots,F_r$ that are regular with respect to certain linear projection.

More precisely, if $V$ denotes the Zariski closure of
$$V(\bfs F_r)\setminus\{G=0\},$$
our objective is to design and analyze a Kronecker-type algorithm
for computing a {\em Kronecker representation of a lifting fiber of $V$}.
This algorithm follows the methodology introduced in Section \ref{subsec: third
approach inro}: it proceeds via a sequence of jumps between
one-dimensional and zero-dimensional sections of the intermediate
varieties defined as
\begin{equation}\label{eq: definition V_s}
V_s:=\overline{V(\bfs F_s)\setminus\{G=0\}}:=\overline{\{F_1=0,\ldots,F_s=0,G\not=0\}}
\quad (1\le s\le r),
\end{equation}
where $\overline{W}$ denotes the Zariski closure of a
locally closed set $W\subset\A^n$. Since the input polynomials $F_1,\ldots,F_r,G$ are defined
over $k$, there exist Kronecker representations of
lifting fibers of $V:=V_r$ defined over $k$, except for the case when $k$ is a finite
field of small cardinality. Accordingly, we will compute a
Kronecker representation defined over $k$,
unless $k$ is a finite field of small cardinality---in
which case we will work over a suitable finite extension of $k$ with sufficiently
large cardinality.

We assume that the polynomials $F_1,\ldots,F_r,G$ are given by
a straight-line program in $k[X_1,\ldots,X_n]$. The cost of the
algorithm will be expressed in terms of the number of arithmetic
operations in $k$, along with two additional operations: identity tests and
extractions of $p$-th roots (the latter only relevant when
$\mathrm{char}(k)=p>0$).

We will then specialize to two important cases: when $k=\fq$, a finite field,
and when $k=\Q$, the field of rational
numbers. In both cases, we express the cost of the algorithm
in terms of {\em bit complexity}. However, neither case allows a trivial
adaptation: for $\fq$, one must determine the minimal value of $q$ for
which the algorithm succeeds directly over the base field;
for $k=\Q$, we analyze the growth of intermediate results and
find a prime $p$ such that the algorithm succeeds modulo $p$,
allowing a $p$-adic lifting step to recover a
Kronecker representation defined over $\Q$.

Now we summarize the structure of the paper. Section \ref{section:
locally closed sets} addresses the problem of computing a change of
coordinates $\bfs Y:=\bfs \lambda\cdot \bfs X$ such that the new
variables are in a simultaneous Noether position with respect to all
the varieties $V_s$ defined in \eqref{eq: definition V_s}, for $1\le
s\le r$. We also seek a {\em single point} $\bfs
p:=(p_1,\ldots,p_{n-1}) \in\A^{n-1}$ such that the associated
lifting fibers \eqref{eq: linear sections} satisfy properties $({\sf
L}_1)$--$({\sf L}_2)$ of Section \ref{subsec: second approach}.

An additional subtlety arises when choosing the point $\bfs p$: as
shown in Diagram \ref{eq: sth elimination step}, the algorithm
involves computing a Kronecker representation of the $(s+1)$th
lifting fiber $\Pi_1^{-1}(\bfs G_r^{s+1})$, via the intersection of
the $s$th lifting curve $C_s$ with the hypersurface defined by
$F_{s+1}(\bfs p^{s+1},Y_{n-s},\ldots,Y_n)$, where $\bfs
p^{s+1}:=(p_1,\ldots,p_{n-s-1})$. As discussed in Section
\ref{subsec: first approach}, the Kronecker representation of $C_s$
is only valid in a {\em nonempty Zariski open dense subset} of
$C_s$, namely, where the discriminant $\rho_s(\bfs p^{s+1},Y_{n-s})$
does not vanish.

Thus, it is essential that the hypersurface $\{F_{s+1}(\bfs
p^{s+1},Y_{n-s},\ldots,Y_n)=0\}$ {\em intersects properly the
discriminant locus} $\{\rho_s(\bfs p^{s+1},Y_{n-s})=0\}$ within
$C_s$ for $1\le s\le r-1$. We have the following result (see Theorem
\ref{th: preproc: all conditions} and Corollary \ref{coro: preproc:
all conditions for all s} for precise statements).
\begin{theorem}\label{th: preprocessing intro}
Let $\bfs \Lambda:=(\Lambda_{ij})_{1\le i,j\le n}$ be a matrix of
indeterminates over $\overline{k}$, and let
$\widetilde{\bfs Y}:=\bfs \Lambda \cdot \bfs X$. There exists a
nonzero polynomial $B\in\overline{k}[\bfs \Lambda,
\widetilde{Y}_1\klk\widetilde{Y}_{n-1}]$, of degree at most
$2n^2rd\delta^3$, where $\delta:=\max\{\deg V_1,\ldots,\deg V_r\}$ and
$d:=\max\{\deg F_1,\ldots,\deg F_r, \deg G\}$, such
that for any $(\bfs \lambda,\bfs p)\in\A^{n^2}\times \A^{n-1}$
with $B(\bfs \lambda,\bfs p)\not=0$, the following conditions are
satisfied:
\begin{enumerate}
  \item The variables $\bfs Y:=(Y_1\klk Y_{n}):=\bfs \lambda\cdot \bfs
X$ are in Noether position with
  respect to $V_s$ for $1\le s\le r$;
  \item The lifting fiber of $V_s$ defined by $\bfs p^{s}:=(p_1,\ldots,p_{n-s})$
  satisfies conditions $({\sf L}_1)$--$({\sf L}_2)$ for $1\le i\le s$;
  \item The hypersurface $\{F_{s+1}(\bfs p^{s+1},Y_{n-s},\ldots,Y_n)=0\}$
  intersects properly the discriminant locus $\{\rho_s(\bfs p^{s+1},Y_{n-s})=0\}$
within $C_s$ for $1\le s\le r-1$.
\end{enumerate}
\end{theorem}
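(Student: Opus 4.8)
\noindent\emph{Sketch of proof.} The plan is to treat the three conditions separately: for each of them I would produce a nonzero polynomial in $\overline{k}[\bfs\Lambda,\widetilde{Y}_1\klk\widetilde{Y}_{n-1}]$ whose non-vanishing at a point $(\bfs\lambda,\bfs p)$ forces that condition, and then take $B$ to be the product of all these polynomials. Since $\overline{k}[\bfs\Lambda,\widetilde{Y}_1\klk\widetilde{Y}_{n-1}]$ is an integral domain, such a product is again nonzero, and its degree is the sum of the degrees of the factors; so the whole task reduces to exhibiting the factors together with the claimed degree bounds, the nonvanishing of each factor being just the classical fact that a \emph{generic} choice of $(\bfs\lambda,\bfs p)$ satisfies the corresponding condition.

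For condition~(1) I would invoke the effective simultaneous Noether normalization of Section~\ref{subsec: Noether normalization} (a sharpened form of the construction of~\cite{KrPa96}): applied to the equidimensional varieties $V_1\klk V_r$ it yields a nonzero $B_1\in\overline{k}[\bfs\Lambda]$, of degree $\mathcal{O}(nr\delta)$, whose non-vanishing puts $\bfs Y=\bfs\lambda\cdot\bfs X$ in Noether position with respect to every $V_s$. For condition~(2), once Noether position holds, genericity of $\bfs p$ makes the linear section $V_s\cap\{Y_1=p_1\klk Y_{n-s}=p_{n-s}\}$ transversal and disjoint from the ramification locus of $\pi_{\bfs\lambda}$, so that it is a smooth $0$-dimensional variety with $\deg V_s$ points cut out by the expected equations; tracking the Jacobian determinant of $\bfs F_s$ and the degree of $V_s$ as in Section~\ref{subsec: lifting fibers} produces a nonzero $B_2\in\overline{k}[\bfs\Lambda,\widetilde{Y}_1\klk\widetilde{Y}_{n-1}]$ of degree $\mathcal{O}(nrd\delta^2)$ realizing $({\sf L}_1)$--$({\sf L}_2)$ for all $s$.

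The substantive step is condition~(3), and it is here that the genericity of the matrix $\bfs\lambda$ is used essentially, since $\rho_s$ --- the discriminant of the minimal polynomial $M_s$ with respect to $Y_{n-s+1}$ --- varies with $\bfs\lambda$. Fix $s\in\{1\klk r-1\}$. Because $\dim V_{s+1}=n-s-1<n-s=\dim V_s$, the hypersurface $\{F_{s+1}=0\}$ does not contain $V_s$, and for generic $\bfs\lambda$ neither does $\{\rho_s=0\}$; thus $\{F_{s+1}=0\}\cap V_s$ and $\{\rho_s=0\}\cap V_s$ are hypersurface sections of $V_s$ of dimension $n-s-1$. The key point is that for generic $\bfs\lambda$ these two sections meet \emph{properly}, i.e. $\dim\big(\{F_{s+1}=0\}\cap\{\rho_s=0\}\cap V_s\big)\le n-s-2$: this is obtained by moving the second section --- which depends on $\bfs\lambda$ through the geometric solution of $V_s$ --- into general position relative to the fixed first one, the resulting constraint on $\bfs\lambda$ being controlled through the $\bfs\Lambda$-degrees of the coefficients of the generic minimal polynomial $\widetilde{M}_s$ and of its discriminant. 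Granting this, the intersection has degree at most $\deg F_{s+1}\cdot\deg\rho_s\cdot\deg V_s\le d\cdot 2\delta^2\cdot\delta=2d\delta^3$ by the B\'ezout inequality of~\cite{Heintz83}, and a Bertini-type argument shows that for generic $\bfs p^{s+1}:=(p_1\klk p_{n-s-1})$ the further slice by $\{Y_1=p_1\klk Y_{n-s-1}=p_{n-s-1}\}$ --- precisely the one carving $C_s$ out of $V_s$ --- makes it empty, i.e. $\{F_{s+1}(\bfs p^{s+1},Y_{n-s}\klk Y_n)=0\}$ and $\{\rho_s(\bfs p^{s+1},Y_{n-s})=0\}$ have no common point on $C_s$. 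Eliminating $x$ from the incidence variety encoding the failure of this and bounding the degree of its projection to $(\bfs\lambda,\bfs p^{s+1})$-space by~\cite{Heintz83} together with B\'ezout yields a nonzero $B_{3,s}\in\overline{k}[\bfs\Lambda,\widetilde{Y}_1\klk\widetilde{Y}_{n-s-1}]$ of degree $\mathcal{O}(n^2 d\delta^3)$ enforcing condition~(3) for that $s$ (the bookkeeping of the $n$-powers being the delicate part); I then set $B_3:=\prod_{s=1}^{r-1}B_{3,s}$, of degree at most $2n^2(r-1)d\delta^3$.

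Finally I would put $B:=B_1B_2B_3$. It is a nonzero element of $\overline{k}[\bfs\Lambda,\widetilde{Y}_1\klk\widetilde{Y}_{n-1}]$, its non-vanishing at $(\bfs\lambda,\bfs p)$ forces (1)--(3) simultaneously, and, using $r\le n\le n\delta$ to absorb the lower-order contributions $\deg B_1,\deg B_2=\mathcal{O}(nrd\delta^2)$, one gets $\deg B=\deg B_1+\deg B_2+\deg B_3\le 2n^2rd\delta^3$. The main obstacle is the proper-intersection claim in condition~(3) with an explicit degree bound, because $\rho_s$ is only implicitly defined through the $\bfs\lambda$-dependent geometric solution of $V_s$; bounding how the locus $\{\rho_s=0\}$ moves with $\bfs\lambda$ --- equivalently, bounding the $\bfs\Lambda$-degrees of the generic minimal polynomial and of its discriminant --- is the technical core, and is exactly what Theorem~\ref{th: preproc: all conditions} and Corollary~\ref{coro: preproc: all conditions for all s} are designed to deliver.
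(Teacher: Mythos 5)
Your overall plan---produce a nonzero polynomial for each genericity condition, multiply, and control the total degree---is the same shape as the paper's argument, and you correctly single out condition~(3) and the $\bfs\lambda$-dependence of $\rho_s$ as the substantive difficulty. But the proposal has a genuine gap exactly there, and you tacitly acknowledge it: after sketching condition~(3) you write that its degree bound ``is exactly what Theorem~\ref{th: preproc: all conditions} and Corollary~\ref{coro: preproc: all conditions for all s} are designed to deliver.'' Since those two results \emph{are} the statement being proved, this defers the decisive step back to the theorem itself rather than establishing it.

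The gap is concrete. You propose to bound the degree of the eliminant $B_{3,s}$ by applying the degree theory of~\cite{Heintz83} and B\'ezout to the projection of an incidence variety to $(\bfs\lambda,\bfs p^{s+1})$-space. The relevant projection $\Phi_s:(\bfs\lambda,\bfs x)\mapsto(\bfs\lambda,\bfs\lambda^*\cdot\bfs x)$ is not a linear map of the combined variables---it is bilinear---so a na\"ive degree-of-image bound would incur a factor exponential in $\dim V_s^H$ and would certainly not give the advertised $\mathcal{O}(n^2d\delta^3)$. What actually makes the sharp bound work in the paper is Lemma~\ref{lemma: preproc: degree estimate}: one passes to the scalar extension $W_s$ of $V_s$ over the function field $k(\bfs\Lambda)$, notes that $W_s$ has height zero there, and then applies the arithmetic B\'ezout inequality of D'Andrea--Krick--Sombra (\cite[Theorem~3.1 and Corollary~2.20]{DaKrSo13}) to bound $\deg_{\widetilde{\bfs Y}^*}B_s^H$ and $\deg_{\bfs\Lambda}B_s^H$ \emph{separately}, with the favorable tradeoff $\deg_{\bfs\Lambda}B_s^H\le(D_{\bfs\Lambda}+(n-s)D_{\bfs X})\delta_s$. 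This device---trading a degree estimate in $\bfs\Lambda$ for a height estimate over $k(\bfs\Lambda)$---is the paper's key technical innovation, and it is precisely what is missing from your sketch. Without it you would recover roughly the weaker bounds of~\cite[Theorem~3]{HeMaWa01} and~\cite[Theorem~3.3]{CaMa06a}, which the paper is explicitly improving. A secondary, more cosmetic difference is organizational: you split $B=B_1B_2B_3$ by condition type, whereas the paper builds one factor $B_s$ per step $s$ (combining $A_s,A_{s+1},\rho_s,\rho_{s+1},B_s^D,B_{s+1}^D,B_s^G,B_{s+1}^G,\widetilde{B}_s,\widehat{B}_s$), which is the natural grouping because condition~(3) at level $s$ couples the data of $V_s$ and $V_{s+1}$; but that is a matter of bookkeeping, not of substance.
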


Similar results are proved in \cite[Theorem 3]{HeMaWa01}, for a
variety $V$ defined over the rationals $\Q$, and in
\cite[Theorem 3.3]{CaMa06a}, for a variety $V$ defined over a
finite field $\fq$. The new result contained in Theorem \ref{th: preproc: all conditions}
improves both results in two important aspects. The first
one is its generality, as Theorem \ref{th: preproc: all conditions} holds for varieties
defined over a general perfect field $k$. The second aspect
is that we obtain a significant improvement of the degree
estimates of \cite[Theorem 3]{HeMaWa01} and \cite[Theorem 3.3]{CaMa06a}.
This is critical in the case when $k$ is a finite field $\fq$,
because it determines the smallest size $q$ such that the
algorithm does not require working on a finite field extension
of $\fq$ (see Algorithm \ref{algo: main algorithm for fq}).  When one is interested in computing
a point {\em with coordinates in $\fq$} on the variety under
consideration, it is essential that the resulting Kronecker representation
of the algorithm is defined over $\fq$ (see \cite{CaMa06a} and
\cite{GiMaPePr23b}).

In view of this result, given a parameter $\varepsilon>0$, the first step of the algorithm
is to choose the entries
of $\bfs\lambda$ and $\bfs p$ randomly from a set $\mathcal{S}\subset k$ of size
at least $\varepsilon^{-1}2n^2rd\delta$, or from a finite field extension
of sufficiently large cardinality, if the base field $k$ does not contain
enough points. The well-known
Zippel-Schwartz lemma (see, e.g., \cite[Lemma 6.44]{GaGe99}) guarantees
that the probability of success of this random choice is at least $1-\varepsilon$.

Section \ref{sec: lifting fibers} studies the {\em lifting curves} $C_s$
$(1\le s\le r-1)$, which appear as intermediate objects in our algorithm,
and the procedure used to compute a Kronecker representation of a
lifting curve from a lifting fiber---namely, the Newton-Hensel lifting (the
arrow ``NH lifting''
in Diagram \ref{eq: sth elimination step}). This version of the
Newton-Hensel process is based on the approach introduced in \cite{GiLeSa01}. In
Proposition \ref{prop: main loop Newton lifting}, we provide
a detailed analysis of the correctness of the Newton-Hensel lifting,
and Theorem \ref{theo: correctedness: Newton lifting} establishes the
validity of the procedure
for computing a Kronecker representation of a lifting curve $C_s$.

Section \ref{sec: intersection step} is devoted to the other
critical component of the algorithm:
the ``intersection step''. This procedure takes as input a Kronecker representation
of a lifting curve $C_s$ and a straight-line program for $F_{s+1}$,
and outputs a Kronecker representation of the $(s+1)$-th lifting fiber
$\Pi^{-1}(\bfs G_r^{s+1})$ (as shown by the ``Intersect'' arrow
in Diagram \ref{eq: sth elimination step}). A suitable choice
of $\bfs \lambda$ and $\bfs p$, in accordance with Theorem \ref{th: preprocessing intro},
greatly simplifies this
step by ensuring that ``spurious'' points can be easily detected and
removed (see Proposition \ref{prop: C_s cap F_(s+1) and G has dim zero}
and Algorithm \ref{algo: computation projection}).

By combining the Newton-Hensel lifting and the intersection step,
Section \ref{sec: the whole algorithm} presents a full procedure for computing a
Kronecker representation of the final lifting fiber $\Pi^{-1}(\bfs G_r^r)$ of the
input variety $\Pi^{-1}(\bfs F_r)$. More precisely, we obtain
the following result (see Theorem \ref{th: main algo for k} for the precise statement).
\begin{theorem}\label{th: algo intro}
Let $k$ be a perfect field, and let $F_1,\ldots,F_r,G\in
k[X_1,\ldots,X_n]$ be polynomials satisfying $({\sf H}_1)$--$({\sf
H}_2)$, given by a straight-line program of length $L$. For $1\le
s\le r$, let $V_s:=\overline{V(F_1,\ldots,F_s)\setminus V(G)}$ and
define $\delta_s:=\deg V_s$. Let $\delta:=\max\{\delta_1,\ldots,
\delta_r\}$ and $d:=\max\{\deg F_1,\ldots,\deg F_r,\deg G\}$. Assume
that $\delta>d$. Then a Kronecker representation of a lifting fiber
of $V_r$ can be computed by a probabilistic algorithm using
$$\mathcal{O}\,\widetilde{\ }\big(n^4+r(L+n^2+r^4)\,d\,\delta^2\big)
$$
arithmetic operations in $k$,
$\mathcal{O}\,\widetilde{\ }\big((rL+rn^2+r^2)\,d\,\delta^2\big)$
identity tests in $k$, and $\mathcal{O}(r\delta)$
extractions of $p$-th roots in $k$  (if $\mathrm{char}(k)=p>0$),
where the soft-Oh notation $\mathcal{O}\,\widetilde{\ }$ ignores
logarithmic factors.
\end{theorem}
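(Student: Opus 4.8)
The plan is to obtain Theorem \ref{th: algo intro} (the high-level form of Theorem \ref{th: main algo for k}) as the correctness-and-cost analysis of the algorithm depicted in Diagram~\eqref{eq: sth elimination step}, which chains together the three ingredients built in Sections \ref{section: locally closed sets}, \ref{sec: lifting fibers} and \ref{sec: intersection step}. First I would run the preprocessing of Theorem \ref{th: preproc: all conditions} and Corollary \ref{coro: preproc: all conditions for all s}: draw the entries of $\bfs\lambda$ and of the point $\bfs p=(p_1,\ldots,p_{n-1})$ uniformly from a subset $\mathcal{S}\subseteq k$ (extended to a finite extension of $k$ of the required size when $k$ is too small), chosen large enough that, by the Zippel--Schwartz lemma applied to the polynomial $B$ of Theorem \ref{th: preprocessing intro}, of degree at most $2n^2rd\delta^3$, we have $B(\bfs\lambda,\bfs p)\ne 0$ with probability at least $1-\varepsilon$. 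Everything that follows is carried out under this event --- which is the only source of randomness, so the algorithm is Monte Carlo with error at most $\varepsilon$ --- and it guarantees that the variables $\bfs Y:=\bfs\lambda\cdot\bfs X$ are in simultaneous Noether position with respect to $V_1,\ldots,V_r$, that every lifting fiber $V(\bfs G_r^s)$ satisfies $({\sf L}_1)$--$({\sf L}_2)$, and that each hypersurface $\{F_{s+1}(\bfs p^{s+1},Y_{n-s},\ldots,Y_n)=0\}$ meets the discriminant locus $\{\rho_s(\bfs p^{s+1},Y_{n-s})=0\}$ properly inside the lifting curve $C_s$. Inverting $\bfs\lambda$, recoordinatizing the input straight-line program and setting up the linear sections of the $r$ stages amounts to the one-time $\mathcal{O}\,\widetilde{\ }(n^4)$ term.

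Next I would compute a Kronecker representation of the first lifting fiber $V(\bfs G_r^1)=V(Y_1-p_1,\ldots,Y_{n-1}-p_{n-1})\cap V_1$: evaluating $F_1$ and $G$ at $Y_i=p_i$ $(1\le i\le n-1)$ through the straight-line program gives univariate $h,g\in k[Y_n]$ of degree at most $d$, and removing from $h$ the factors it shares with powers of $g$ and passing to the squarefree part --- which in characteristic $p$ may cost $\mathcal{O}(d)$ extractions of $p$-th roots --- yields its minimal polynomial, the other coordinates being the constants $p_i$; this is $\mathcal{O}\,\widetilde{\ }(L+n+d)$ operations. Then, for $s=1,\ldots,r-1$, I would alternate the two procedures of Diagram~\eqref{eq: sth elimination step}. (i) From the Kronecker representation of $V(\bfs G_r^s)$ I run the Newton--Hensel lifting of Section \ref{sec: lifting fibers} (Proposition \ref{prop: main loop Newton lifting} and Theorem \ref{theo: correctedness: Newton lifting}) in the local parameter $Y_{n-s}-p_{n-s}$, up to the precision dictated by the degree bounds on $C_s$ established in Section \ref{section: lifting curve properties}, obtaining a Kronecker representation of the lifting curve $C_s=V(\bfs H_r^s)$; correctness uses the Noether position and $({\sf L}_1)$--$({\sf L}_2)$ secured above, and one such lifting costs $\mathcal{O}\,\widetilde{\ }\big((L+n^2)\,d\,\delta^2\big)$ arithmetic operations and $\mathcal{O}(\delta)$ $p$-th root extractions. (ii) Feeding that representation of $C_s$ and the straight-line program for $F_{s+1}$ to Algorithm \ref{algo: computation projection}, Proposition \ref{prop: C_s cap F_(s+1) and G has dim zero} shows that, thanks to the proper intersection with the discriminant locus, $C_s\cap\{F_{s+1}=0\}$ is zero-dimensional and its spurious points (those over $\{G=0\}$ or over $\{\rho_s=0\}$) are detected and discarded by identity tests alone; this produces a Kronecker representation of the next lifting fiber $V(\bfs G_r^{s+1})=\Pi_1^{-1}(\bfs G_r^{s+1})$, at a cost of $\mathcal{O}\,\widetilde{\ }\big((L+n^2+r^4)\,d\,\delta^2\big)$ arithmetic operations --- the factor $d$ entering through $\deg F_{s+1}$ --- and $\mathcal{O}\,\widetilde{\ }\big((L+n^2+r)\,d\,\delta^2\big)$ identity tests. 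The output of the last iteration is the Kronecker representation of $V(\bfs G_r^r)$, which by construction is a lifting fiber of $V_r=V$.

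Finally I would add the contributions: preprocessing and base case give $\mathcal{O}\,\widetilde{\ }(n^4+L)$ arithmetic operations, and each of the $r-1$ iterations gives $\mathcal{O}\,\widetilde{\ }\big((L+n^2+r^4)\,d\,\delta^2\big)$ arithmetic operations, $\mathcal{O}\,\widetilde{\ }\big((L+n^2+r)\,d\,\delta^2\big)$ identity tests and $\mathcal{O}(\delta)$ $p$-th root extractions; using the hypothesis $\delta>d$ to absorb the lower-order terms yields the stated bounds $\mathcal{O}\,\widetilde{\ }\big(n^4+r(L+n^2+r^4)\,d\,\delta^2\big)$, $\mathcal{O}\,\widetilde{\ }\big((rL+rn^2+r^2)\,d\,\delta^2\big)$ and $\mathcal{O}(r\delta)$.

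The main obstacle I expect is not this bookkeeping but controlling the degrees --- in both the primitive element and the free variable $Y_{n-s}$ --- of the bivariate polynomials occurring in a Kronecker representation of the lifting curve $C_s$, since these govern simultaneously the precision needed for the Newton--Hensel lifting in step (i) and the size of the elimination polynomial handled in step (ii); keeping them within $\mathcal{O}\,\widetilde{\ }(d\delta)$, and doing so \emph{uniformly over an arbitrary perfect field} --- where one must also control separability and the saturation that removes the components lying in $\{G=0\}$ --- is what keeps the per-step cost at $\mathcal{O}\,\widetilde{\ }\big((L+n^2+r^4)\,d\,\delta^2\big)$ and no worse. A secondary point, already isolated as condition (3) of Theorem \ref{th: preprocessing intro}, is ensuring that the spurious points removed in the intersection step are recognisable by identity tests alone, so that the algorithm never requires a polynomial factorization over $k$.
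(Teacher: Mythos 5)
Your proposal has the right overall architecture --- apply Theorem~\ref{th: preproc: all conditions} (via Corollary~\ref{coro: preproc: all conditions for all s}) to pick a ``lucky'' pair $(\bfs\lambda,\bfs p)$, then run $r-1$ rounds alternating the Newton--Hensel lifting of Section~\ref{sec: lifting fibers} with the intersection step of Section~\ref{sec: intersection step} as in Diagram~\eqref{eq: sth elimination step} --- and this matches Algorithm~\ref{algo: main algorithm for k} and the proof of Theorem~\ref{th: main algo for k}.

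There is, however, a genuine gap in the probabilistic accounting. You assert that the initial choice of $(\bfs\lambda,\bfs p)$ is ``the only source of randomness'', so the whole algorithm is Monte Carlo with error at most $\varepsilon$. That is not true of the paper's algorithm: the intersection step makes further random choices that cannot be removed. Algorithm~\ref{algo: computation projection} draws the $D_s+1$ interpolation points $\alpha_0,\ldots,\alpha_{D_s}$ at random and needs them to miss the zero set of $\rho_s$ (Lemma~\ref{lemma: correctness algo computation projection}); and Algorithm~\ref{algo: comput m_(s+1) and v_(n-s+1)} draws two more scalars $\lambda_1,\lambda_2$ to obtain separating linear forms for the bidimensional shape lemma (Lemma~\ref{lemma: correctness comput m_(s+1) and v_(n-s+1)}). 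These accumulate to the $1-8r\varepsilon$ success probability of Theorem~\ref{th: main algo - prob success}, and they are intrinsic: once one specializes $Y_{n-s}$, one is working in a fixed zero-dimensional fiber, and without randomness one cannot, in a factorization-free way, certify that a chosen linear form separates its points. Your argument does not engage with this, and consequently misses the $\mathcal{O}(r)$ factor in the failure probability, which also drives the required size of $\mathcal{S}$.

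A secondary issue is the placement of the various costs among the subroutines, which is not mere bookkeeping since each placement comes from a concrete lemma. The $\mathcal{O}(r\delta)$ $p$-th root extractions arise from Lecerf's squarefree-factorization step inside Algorithm~\ref{algo: computation projection} (step 3), i.e., inside the \emph{intersection} step --- the Newton--Hensel lifting of Theorem~\ref{th: cost Newton lifting} extracts none. Conversely, the $s^4$ factor you place in the intersection step actually comes from the Samuelson--Berkowitz computation of $\det J_s$ and $\mathrm{Adj}(J_s)$ inside the Newton--Hensel operator (Lemma~\ref{lemma: complexity det J and Adj J}), and the cost of the lifting is $\mathcal{O}\bigl((Ls+s^4+\log_2\delta_s)\,{\sf M}(\delta_s)^2\bigr)$, with \emph{no} factor of $d$; the factor $d$ enters only in the intersection step, through $D_s=d\delta_s$ bounding the degree of the constant term of the characteristic polynomial. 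Getting these sources right is what makes the per-iteration cost analysis go through in the paper.
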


This algorithm generalizes those of \cite{HeMaWa01} (for $k=\Q$),
\cite{CaMa06a} (for $k=\fq$, a finite field) and \cite{GiLeSa01}, \cite{DuLe08}
(for characteristic 0). This comes at the cost of relying
on operations such as identity tests and extractions of $p$-th roots in $k$,
whose cost must be specified for the field $k$ under consideration.

Given
a real number $\varepsilon$ with $0<\varepsilon<1/(16r)$, the algorithm succeeds
with probability at least
$1-8r\varepsilon$ (see Theorem \ref{th: main algo - prob success}). Moreover,
if $k$ has cardinality at least $N:=\varepsilon^{-1}2n^2rd\delta^2$, the
output is guaranteed to lie on $k$; otherwise, it lies in
a finite field extension of $k$ of cardinality at least $N$.

We then examine two classical settings in
symbolic computation: when $k$ is a finite field and when
$k=\Q$. In both, we analyze the bit
complexity of the algorithm. For a finite field $\fq$ with
$q>\varepsilon^{-1}2n^2rd\delta^2$, Theorem \ref{th: preprocessing intro}
ensures that a Kronecker representation of a lifting
fiber of $V_r$ exists over $\fq$. We provide a probabilistic
algorithm that computes such a Kronecker representation (see Theorem \ref{th: main algorithm
for fq} for a precise statement).
\begin{theorem}\label{th: main algorithm for fq intro}
Let $F_1\klk F_r,G\in\fq[X_1\klk X_n]$ be polynomials of degree at
most $d>0$, satisfying hypotheses $({\sf H}_1)$--$({\sf H}_2)$,
given by a straight--line program in $\fq[X_1\klk X_n]$ of length
$L$. For $1\le s\le r$, let
$V_s:=\overline{V(F_1,\ldots,F_s)\setminus V(G)}$ and
$\delta_s:=\deg V_s$. Let
$\delta:=\max\{\delta_1,\ldots,\delta_r\}$, and assume that
$\delta>d$. Given $0<\varepsilon<1/4$, if
$q>2\varepsilon^{-1}n^2rd\delta^3$, then a Kronecker representation
of a lifting fiber of $V_r$ defined over $\fq$ can be computed with
bit complexity in
$$\mathcal{O}\,\widetilde{\ }\big((n^4+r(L+n^2+r^4))\,d\,\delta^2\log_2q\big)$$
and probability of success at least $1-2\varepsilon$.
\end{theorem}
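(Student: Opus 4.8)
The plan is to obtain this statement as a specialization of the field‑agnostic algorithm of Theorem~\ref{th: algo intro} (stated precisely in Theorem~\ref{th: main algo for k}) to the case $k=\fq$, which is a perfect field, combined with the degree bound of Theorem~\ref{th: preprocessing intro}. Concretely, two things have to be established beyond a direct invocation: first, that the hypothesis $q>2\varepsilon^{-1}n^2rd\delta^3$ makes a passage to a proper extension of $\fq$ unnecessary while keeping the failure probability below $2\varepsilon$; and second, that the arithmetic bound of Theorem~\ref{th: algo intro} translates into the stated bit bound once each field operation over $\fq$ is costed.

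For the probability and the field‑extension question, recall that the only \emph{global} random choice made by the procedure is the pair $(\bfs\lambda,\bfs p)\in\A^{n^2}\times\A^{n-1}$, which is required to avoid the zero set of the polynomial $B$ of Theorem~\ref{th: preprocessing intro}, of degree at most $2n^2rd\delta^3$; the non‑vanishing of $B$ bundles into a single condition the Noether‑position property for all the $V_s$, the lifting‑fiber properties $(\mathsf L_1)$--$(\mathsf L_2)$, and the proper intersection of $\{F_{s+1}(\bfs p^{s+1},Y_{n-s},\ldots,Y_n)=0\}$ with the discriminant locus within $C_s$, across all $r$ elimination and $r-1$ lifting‑curve/intersection steps. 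Drawing the $n^2+n-1$ coordinates of $(\bfs\lambda,\bfs p)$ independently and uniformly from $\fq$, the Zippel--Schwartz lemma (\cite[Lemma 6.44]{GaGe99}) bounds the probability that $B(\bfs\lambda,\bfs p)=0$ by $\deg B/q\le 2n^2rd\delta^3/q<\varepsilon$. A second randomized sub‑step (the choice of a separating/primitive linear form used to assemble the Kronecker representations of the intermediate zero‑dimensional objects, governed by a polynomial of degree $\mathcal{O}(\delta^2)$) fails with probability $<\varepsilon$ under the same lower bound on $q$, so the total failure probability stays below $2\varepsilon<1/2$; this is why it suffices to require $0<\varepsilon<1/4$ and why no factor $r$ occurs, in contrast with the bound $1-8r\varepsilon$ of Theorem~\ref{th: main algo - prob success} valid over an arbitrary field (there the errors of the $r$ steps are counted separately, whereas here the factor $r$ already sits inside $\deg B$). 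Finally, since $\delta\ge 1$ we have $q>2\varepsilon^{-1}n^2rd\delta^3\ge \varepsilon^{-1}2n^2rd\delta^2=N$, the cardinality threshold of Theorem~\ref{th: algo intro}; hence the resulting Kronecker representation of a lifting fiber of $V_r$ is defined over $\fq$ itself.

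For the bit count, one arithmetic operation in $\fq$ costs $\mathcal{O}\,\widetilde{\ }\big(\log q\big)$ bit operations (via fast arithmetic in $\F_{p^e}$), an identity test is just a zero test and costs $\mathcal{O}(\log q)$, and a $p$‑th root in $\fq$ (only relevant when $\mathrm{char}(\fq)=p$) is computed as $x\mapsto x^{p^{e-1}}$ with $q=p^e$ by iterating Frobenius, at cost $\mathcal{O}\,\widetilde{\ }\big(\log^2 q\big)$ and, crucially, without leaving $\fq$ since Frobenius is bijective. Plugging $k=\fq$ into Theorem~\ref{th: algo intro}, the algorithm performs $\mathcal{O}\,\widetilde{\ }\big(n^4+r(L+n^2+r^4)d\delta^2\big)$ arithmetic operations, $\mathcal{O}\,\widetilde{\ }\big((rL+rn^2+r^2)d\delta^2\big)$ identity tests, and $\mathcal{O}(r\delta)$ extractions of $p$‑th roots. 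The identity‑test count is dominated by the arithmetic count (using $r^2\le r\cdot r^4$ and matching the other terms), and the $p$‑th‑root contribution $\mathcal{O}\,\widetilde{\ }\big(r\delta\log^2 q\big)=\mathcal{O}\,\widetilde{\ }\big(r\delta\log q\big)$ is absorbed as well. Multiplying the dominant arithmetic count by the per‑operation cost $\mathcal{O}\,\widetilde{\ }\big(\log q\big)$ yields the claimed bound $\mathcal{O}\,\widetilde{\ }\big((n^4+r(L+n^2+r^4))d\delta^2\log_2 q\big)$.

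The step I expect to be the main obstacle is the probability bookkeeping of the second paragraph: one must collect, over the $r-1$ elimination steps and the final primitive‑element construction, every hypersurface whose avoidance is needed for correctness, bound the total degree of their product so that it stays below (a constant multiple of) $2n^2rd\delta^3$, and thereby confirm that the single uniform threshold $q>2\varepsilon^{-1}n^2rd\delta^3$ does double duty—simultaneously keeping the failure probability below $2\varepsilon$ and forcing the output over $\fq$. Checking that the exponent $\delta^3$ in $\deg B$ is genuinely what governs the threshold (and is not diluted or improved by earlier intermediate estimates), and that the cheap operations (zero tests, Frobenius‑based $p$‑th roots) do not secretly dominate the bit complexity, is the remaining routine but necessary verification.
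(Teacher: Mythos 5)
Your proposal contains a genuine gap in the probability analysis, and it also misses a structural ingredient of the paper's argument.

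You treat the $\fq$ case as a direct specialization of the field-agnostic Algorithm~\ref{algo: main algorithm for k}, and you assert that the factor~$r$ appearing in the bound $1-8r\varepsilon$ of Theorem~\ref{th: main algo - prob success} is absent here ``because the factor $r$ already sits inside $\deg B$.'' This is a misattribution. The $r$ inside $\deg B\le 2n^2rd\delta^3$ only governs the \emph{single} random choice of $(\bfs\lambda,\bfs p)$, and that choice already fails with probability at most $\varepsilon$ in both settings. The factor $r$ in the general-field bound comes instead from the $r-1$ iterations of the main loop, each of which (via Algorithm~\ref{algo: comput m_(s+1) and v_(n-s+1)}) makes its own batch of random selections: six calls to Algorithm~\ref{algo: computation projection} and two separating linear forms for Algorithm~\ref{algo: bidimensional shape lemma}, each with failure probability of order $\varepsilon$ when drawn from a set of size $\Theta(\varepsilon^{-1}\delta^4)$. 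Your ``second randomized sub-step'' of degree $\mathcal{O}(\delta^2)$ undercounts this: there are $\mathcal{O}(r)$ such random choices, not one, and summing their failure probabilities over $\fq$ gives $\mathcal{O}(r\varepsilon)$, not $\varepsilon$. This is why the paper explicitly warns that the result is \emph{not} a straightforward application of Theorem~\ref{th: algo intro}.

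The mechanism the paper actually uses is Algorithm~\ref{algo: main algorithm for fq}: in step~{\bf 5} it fixes an extension degree $e:=\max\{1,\log_q(12r\varepsilon^{-1}\delta^4)\}$ and runs the inner loop over $k=\fqe$. This makes each per-iteration random choice fail with probability at most $\varepsilon/(12r)$ (projections) or $\varepsilon/(2r)$ (shape lemma), so the total over all $r-1$ iterations is at most $\varepsilon$. The crucial observation, recorded in Remarks~\ref{rem: k definability algo projection} and~\ref{rem: k definability comput m_(s+1) and v_(n-s+1)}, is that the intermediate computations in $\fqe$ produce an output that is nonetheless defined over $\fq$, because the lifting curve $C_s$ and the polynomials $F_i$, $G$ are defined over $\fq$. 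Your claim that ``a passage to a proper extension of $\fq$ is unnecessary'' therefore contradicts the paper's construction. The extra factor ${\sf M}(e)$ per arithmetic operation in $\fqe$ is logarithmic in $r,\varepsilon^{-1},\delta$ and is absorbed into the soft-Oh, so the bit-complexity bound is as claimed; but without the extension, either the failure probability deteriorates to $\mathcal{O}(r\varepsilon)$ or one must impose $q>\Omega(r\varepsilon^{-1}\delta^4)$, neither of which matches the statement you are asked to prove.

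Your costing of arithmetic, identity tests, and $p$-th roots in $\fq$ is fine (Frobenius-based root extraction, everything multiplied by $\mathcal{O}^{\sim}(\log q)$), and the reduction of $\deg B$ to the threshold on $q$ is correct. The fix to your proof is to insert the intermediate extension $\fqe$ in the main loop, redo the probability bookkeeping with per-iteration failure $\varepsilon/r$, and justify (as the paper does) that the output nevertheless lies in $\fq$.
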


We observe that this is not a straight-forward application of the algorithm in
Theorem \ref{th: algo intro} to the case $k=\fq$. To minimize requirements
on $q$, the algorithm incorporates steps over algebraic extensions of $\fq$,
ensuring high probability of success while keeping
the final output in the base field. This significantly
improves the bounds in \cite[Theorem 4.8]{CaMa06a},
which requires $q>5\,\varepsilon^{-1}n^4d\,\delta^4$, and also offers better
bit complexity.
We finally mention that the Kronecker method can also be adapted to the case
of dense polynomial systems over fields that allow fast multipoint evaluation
of multivariate polynomials. This situation has been analyzed in detail by
van der Hoeven and Lecerf \cite{HoLe21} in the setting of finite fields.
Nevertheless, their results concern a computational model and problem setting
different from ours, and thus do not overlap with the present contribution.

In Section \ref{sec: systems over Q} we address the case $k=\Q$. A direct application of
Theorem \ref{th: algo intro} may result in exponential growth
of the bit size of intermediate results. To avoid
this, the modular strategy of \cite{GiMa19} is adopted: the algorithm
is executed modulo a randomly chosen prime number $p$, followed
by $p$-adic lifting (originally introduced in \cite{GiLeSa01}).
If $p$ is ``lucky'', the $p$-adic lifting yields a Kronecker
representation of a lifting fiber over $\Q$ of the input variety.
The selection of $p$ relies on a combination of the techniques in
\cite{GiMa19} and the refined
bounds of Theorem \ref{th: preprocessing intro}. The result is the following
(see Theorem \ref{th: cost algorithm over Q} for a precise statement).
\begin{theorem}\label{th: algorithm over Q intro}
Let $F_1\klk F_r,G\in\Z[X_1\klk X_n]$ be polynomials of degree at
most $d>0$ and coefficients of bit length at most $h$ satisfying
hypotheses $({\sf H}_1)$-$({\sf H}_2)$, which are given by a
straight--line program in $\Z[X_1\klk X_n]$ of length $L$ and
parameters of bit length at most $h$. For $1\le s\le r$, let
$V_s:=\overline{V(F_1,\ldots,F_s)\setminus V(G)}$ and
$\delta_s:=\deg V_s$. Let
$\delta:=\max\{\delta_1,\ldots,\delta_s\}$, and assume that
$\delta>d$. Let $0<\varepsilon<1/18r$ and $m\in \mathbb{N}$ be
given. Then a Kronecker representation of a lifting fiber of $V_r$
can be computed with bit complexity
$$\mathcal{O}\,\widetilde{\ }\Big(n(L+n^4)\,d\,\delta^2\log_2(\varepsilon^{-1}) +
n^2d^rL\,\delta_r\big(h+\log_2(\varepsilon^{-1})\big)+\log_2^3(\varepsilon^{-1})\Big),$$
and probability of success at least $1-9r\varepsilon$.
\end{theorem}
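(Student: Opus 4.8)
The plan is to reduce the computation over $\Q$ to a computation modulo a suitably chosen prime $p$, followed by a $p$-adic lifting step, exactly along the lines of the modular strategy of \cite{GiMa19}. First I would fix the data: the input polynomials $F_1,\ldots,F_r,G\in\Z[X_1,\ldots,X_n]$, the straight-line program of length $L$ encoding them, and the degree and height parameters $d$, $h$. The first task is to produce, over $\Q$ (or over $\Z$), a candidate linear change of variables $\bfs\lambda$ and a lifting point $\bfs p$ such that the genericity hypotheses of Theorem \ref{th: preprocessing intro} hold, i.e.\ $B(\bfs\lambda,\bfs p)\neq0$ for the polynomial $B$ of degree at most $2n^2rd\delta^3$ produced there. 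I would pick the entries of $\bfs\lambda$ and $\bfs p$ at random from a set $\mathcal{S}\subset\Z$ of integers of controlled bit length (size at least $\varepsilon^{-1}2n^2rd\delta^3$, so bit length $O(\log(\varepsilon^{-1}n r d\delta))$); the Zippel--Schwartz lemma then guarantees $B(\bfs\lambda,\bfs p)\neq0$ with probability at least $1-\varepsilon$. This is the same choice made in the algorithm over a general perfect field (Theorem \ref{th: algo intro}), but now with the extra bookkeeping that the chosen constants are integers of bounded height, which matters for the modular reduction.

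Second, I would control the bit size of all intermediate and output quantities of the symbolic algorithm of Theorem \ref{th: algo intro} when run formally over $\Q$ with the chosen $\bfs\lambda,\bfs p$. The key ingredients here are: (i) arithmetic-height estimates for the coefficients of a Kronecker representation of each $V_s$ and of each lifting fiber/lifting curve, in terms of $h$, $d$, $\delta_s$, $n$, and $L$ --- these come from height theory for geometric solutions (Mahler-measure / arithmetic-intersection bounds, cf.\ the estimates underlying \cite{HeMaWa01}, \cite{GiMa19}); and (ii) a bound on the arithmetic height of the "bad prime" locus, namely the product of: the denominators appearing in the $\bfs\lambda$-construction, the leading coefficients and discriminants $\rho_s$ evaluated at $\bfs p$, the resultants governing the Noether position and the proper-intersection conditions of Theorem \ref{th: preprocessing intro}, and the pivots arising in the linear algebra of the intersection steps. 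Combining these, one gets an integer $N^\ast$ of bit length $\mathcal O\,\widetilde{\ }\big(n^2 d^r L\,\delta_r\,h + \ldots\big)$ such that any prime $p\nmid N^\ast$ is "lucky": reduction mod $p$ commutes with every step of the algorithm, so that running the algorithm of Theorem \ref{th: algo intro} over $\fp$ returns the reduction mod $p$ of a Kronecker representation of a lifting fiber of $V_r$ over $\Q$.

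Third, I would choose such a lucky prime $p$ probabilistically: sample $p$ from a range of primes whose count comfortably exceeds the number of prime divisors of $N^\ast$ (roughly $\log_2 N^\ast / \log\log_2 N^\ast$ of them), so that a random prime in a range of bit length $\mathcal O(\log_2\log_2 N^\ast + \log_2(\varepsilon^{-1}))$ is lucky with probability at least $1-\varepsilon$; primality can be tested cheaply. Run the finite-field algorithm modulo $p$ --- here one invokes (essentially) Theorem \ref{th: main algorithm for fq intro}, paying $\mathcal O\,\widetilde{\ }\big((n^4+r(L+n^2+r^4))d\delta^2\log_2 p\big)$ bit operations, with $\log_2 p = \mathcal O\,\widetilde{\ }(\log_2(\varepsilon^{-1}) + \log_2(nrd\delta))$; this accounts for the first and (part of the) third summand of the claimed complexity. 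Then lift the Kronecker representation $p$-adically: Hensel-lift the coefficients of $M_r$, $\rho_r$, and the parametrizations $V^r_j$ from $\fp$ to $\Z/p^{2^k}\Z$ for $k=\mathcal O(\log_2)$ of the target height bound --- this is the Newton--Hensel lifting already used as a subroutine (Section \ref{subsec: NH lifting}, following \cite{GiLeSa01}) --- and finally apply rational-number reconstruction. The number of lifting steps is logarithmic in the target bit size, each step costs $\mathcal O\,\widetilde{\ }$ of (number of coefficients)$\times$(current precision); since the number of coefficients of the output is $\mathcal O(n\delta_r)$ and the precision grows to $\mathcal O\,\widetilde{\ }(n^2 d^r L\,\delta_r\,h)$-ish, this contributes the middle summand $n^2 d^r L\,\delta_r(h+\log_2(\varepsilon^{-1}))$; the $\log_2^3(\varepsilon^{-1})$ term absorbs the cost of the prime search and arithmetic with the auxiliary constants. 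Summing over the $r$ elimination steps and collecting probabilities (each of the $O(r)$ genericity/luck events fails with probability at most $\varepsilon$), the algorithm succeeds with probability at least $1-9r\varepsilon$.

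The main obstacle is the second step: obtaining \emph{sharp enough} height bounds on the intermediate Kronecker representations and, crucially, on the product $N^\ast$ of bad primes, so that the resulting $\log_2 N^\ast$ feeds back into the complexity as the single factor $n^2 d^r L\,\delta_r(h+\log_2(\varepsilon^{-1}))$ rather than something larger. This requires tracking heights through (a) the Noether-normalization and lifting-point construction, where the refined degree bound $2n^2rd\delta^3$ of Theorem \ref{th: preprocessing intro} is exactly what keeps the height of the relevant discriminant/resultant polynomials under control; (b) the Newton--Hensel lifting of each lifting curve, where one must bound the height of the curve's Kronecker representation and of its discriminant $\rho_s$; and (c) the linear-algebra of each intersection step, where pivots and the characteristic polynomial of the multiplication matrix $M_{F_{s+1}}$ must be height-bounded. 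These are handled by standard but delicate arithmetic B\'ezout / Mahler-measure inequalities; the $d^r$ factor in the middle term is the price of iterating $r$ intersection steps in the worst case, and appears exactly as in \cite{HeMaWa01}, \cite{GiMa19}.
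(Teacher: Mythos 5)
Your high-level strategy matches the paper's Algorithm \ref{algo: kronecker for Q}: random choice of $(\bfs\lambda,\bfs p)$, selection of a lucky prime $p$, running the modular (finite-field) Kronecker algorithm, $p$-adic lifting via a global Newton iterator, and rational reconstruction. In that sense the proposal is on track. But there are two places where it diverges from what actually has to be done, and both are substantive.

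First, you use the polynomial $B$ of degree $\le 2n^2rd\delta^3$ from Theorem \ref{th: preprocessing intro} (Corollary \ref{coro: preproc: all conditions for all s}) as the genericity condition governing the choice of $(\bfs\lambda,\bfs p)$, and size your random set accordingly. This is not enough for the modular strategy. The condition $B(\bfs\lambda,\bfs p)\neq0$ guarantees the Noether-position, lifting-point, and proper-intersection properties over $\overline{\Q}$, but it says nothing about whether these properties survive reduction modulo any particular prime $p$. What the algorithm actually needs is the \emph{lucky pair} condition from \cite{GiMa19}: the nonvanishing of $\textsf{R}(\bfs\lambda)\cdot\textsf{N}_{\scriptscriptstyle\bfs\lambda}(\bfs p)$, whose degree is bounded by $D = r(n+1)\big((n+1)d\delta^2+2\delta^3+n^2 2^{n-1}d\delta^2\big)$ as in \eqref{eq: definition D}. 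This condition encodes simultaneously the genericity over $\overline{\Q}$ \emph{and} the fact that all the required properties persist modulo every prime $p$ not dividing the integer $\mathfrak{N}:=\det(\bfs\lambda)\textsf{N}_{\scriptscriptstyle\bfs\lambda}(\bfs p)$ (Theorems \ref{th:_lucky prime_a}--\ref{th:_lucky prime_b}). The polynomial $\textsf{R}$ is constructed differently from $B$ and has a different degree (with an extra $n^2 2^{n-1}$-type term), so the random set must be sized by $D$, not by $\deg B$; otherwise the probability estimate of Lemma \ref{lemma: lambda_and_p-prob} does not follow.

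Second, you conflate two distinct quantities in the height bookkeeping: the bit length of the bad-prime integer, and the height of the output Kronecker representation. The prime $p$ only needs to avoid dividing $\mathfrak{N}$, whose height $\mathfrak{H}$ satisfies $\log_2\mathfrak{H}\in\mathcal{O}^\sim\!\big(\log_2(d^r 2^n h\log_2\varepsilon^{-1})\big)$ (Proposition \ref{prop: eta_s_height_estimate}), so $p$ itself has bit length only $\mathcal{O}^\sim(\log_2\mathfrak{H}+\log_2\varepsilon^{-1})$; you instead assign $N^\ast$ a bit length of order $n^2 d^r L\,\delta_r\,h$, which would lead to a much larger prime than needed and would inflate the complexity of the modular phase. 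The quantity that scales like $nd^{r-1}(h+nd)$ is the height bound $\eta$ for the coefficients of the output polynomials $m_r,w^r_{n-r+2},\ldots,w^r_n$, and it governs the \emph{precision} of the $p$-adic lifting, not the size of the prime. Keeping these two bounds separate is essential to land on the claimed complexity $\mathcal{O}^\sim\big(n(L+n^4)\,d\,\delta^2\log_2(\varepsilon^{-1}) + n^2 d^r L\,\delta_r(h+\log_2\varepsilon^{-1})+\log_2^3\varepsilon^{-1}\big)$. (A third, more minor point: the paper lifts the Kronecker representation of the lifting fiber, via Algorithm \ref{algo: Global Newton Iterator}, not the curve data $M_r,\rho_r$ as you write; the correctness of this lifting further requires the radical-membership statement of Proposition \ref{prop:_lucky prime_c}, which your sketch does not mention.)
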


The sharpness of the probability estimates established for the
algorithms in Theorems \ref{th: algo intro}, \ref{th: main algorithm
for fq intro}, and \ref{th: algorithm over Q intro} remains unclear.
The random parameters chosen in these algorithms are required to
avoid certain algebraic hypersurfaces whose existence is proved and
whose degrees are explicitly bounded. These hypersurfaces are,
however, sufficiently intricate to make it difficult to identify
families of polynomial systems for which the bounds would be tight,
or to derive meaningful lower bounds for the success probabilities.
Regarding complexity, our algorithms have cost quadratic in the
maximum degree $\delta$ of the intermediate varieties. Within the
equation-by-equation framework of Kronecker-type methods,
subquadratic dependence on $\delta$ appears unlikely in the general
case, given the inherently dense nature of the computations
involving lifting curves and the structure of the lifting process.
Achieving algorithms whose complexity depends instead of the degree
of the output variety, rather than on $\delta$, would represent a
substantially more difficult challenge. Finally, we note that,
although checking whether the structural hypotheses $({\sf
H}_1)$-$({\sf H}_2)$ hold for a given input system is in general
computationally prohibitive, the algorithms may still be applied to
arbitrary inputs and are expected to fail gracefully when these
assumptions are not satisfied. Moreover, it remains computationally
feasible to verify a posteriori that the obtained output indeed
corresponds to solutions of the input system, thus ensuring
practical reliability even when the hypotheses are not explicitly
checked.
%
%
%
\section{Notions, notations and preliminary results}
\label{section: notation, notations}
We use standard notions and notations of commutative algebra and
algebraic geometry, as found, for example, in \cite{Harris92},
\cite{Kunz85}, and \cite{Shafarevich94}.

Let $k$ be a perfect field and $\ck$ its algebraic closure. We
denote by $\A^n$ the $n$--dimensional affine space $\ck{}^{n}$. 
%
An {\em affine variety of $\A^n$ defined over} $k$ (or an affine
$k$--variety) is the set of common zeros in $\A^n$ of polynomials
$F_1,\ldots, F_{m} \in
k[X_1,\ldots, X_{n}]$. 
We will often write $V(F_1\klk F_m)$ or $\{F_1=0\klk
F_m=0\}$ to denote this affine $k$--variety. For a polynomial
$G\in k[X_1\klk X_n]$, we denote by $\{G\not=0\}$ the complement in $\A^n$ of the
hypersurface $\{G=0\}$.

A $k$--variety $V$ is said to be $k$--{\em irreducible} if it cannot be
expressed as a finite union of proper $k$--subvarieties of $V$. Any
$k$--variety $V$ admits a unique (up to reordering) irredundant
decomposition $V=\mathcal{C}_1\cup \cdots\cup\mathcal{C}_s$, where each
$\mathcal{C}_i$ is  $k$--irreducible. These are
called the {\em irreducible} $k$--{\em components} of $V$.

For a $k$--variety $V$ of $\A^n$, we denote by $I(V)$ its {\em
defining ideal}, namely, the set of all polynomials in $k[X_1,\ldots,
X_n]$ that vanish on $V$. The {\em coordinate ring} $k[V]$ of $V$ is
defined as $k[X_1,\ldots,X_n]/I(V)$. When $V$ is irreducible, then
$k[V]$ is a domain, and its field of fractions
$k(V)$ is called the {\em field of rational functions} of $V$. The
{\em dimension} $\dim V$ of $V$ is the maximal length $r$ of
chains $V_0\varsubsetneq V_1 \varsubsetneq\cdots \varsubsetneq V_r$
of nonempty irreducible $k$--varieties contained in $V$. We say that
$V$ has {\em pure dimension} $r$ if all its irreducible
$k$--components are of dimension $r$.

The {\em degree} $\deg V$ of an irreducible $k$--variety $V$ is the
maximum number of points in the intersection $V\cap L$, where
$L$ is a linear space of codimension $\dim V$ such that $V\cap L$ is a
finite set. More generally, following \cite{Heintz83} (see also
\cite{Fulton84}), if $V=\mathcal{C}_1\cup\cdots\cup \mathcal{C}_s$
is the decomposition of $V$ into irreducible $k$--components, we
define the degree of $V$ as
$$\deg V:=\sum_{i=1}^s\deg \mathcal{C}_i.$$
We will frequently use the following {\em B\'ezout inequality} (see
\cite{Heintz83}, \cite{Fulton84}, \cite{Vogel84}): if $V$ and $W$
are $k$--varieties of $\A^n$, then
\begin{equation}\label{eq: Bezout}
\deg (V\cap W)\le \deg V \cdot \deg W.
\end{equation}

%
%

Let $V\subset\A^n$ be a $k$--variety with defining ideal $I(V)\subset k[X_1,\ldots,
X_n]$. For a point $\bfs x\in V$, the {\em
dimension} $\dim_{\bfs x}V$ {\em of} $V$ {\em at} $\bfs x$ is the maximum
dimension of the irreducible $k$--components of $V$ containing
$\bfs x$. If $I(V)=(F_1,\ldots, F_m)$, the {\em tangent space} $T_{\bfs x}V$
{\em to $V$ at $\bfs x$} is the kernel of the Jacobian matrix $(\partial
F_i/\partial X_j)_{1\le i\le m,1\le j\le n}(\bfs x)$ of
$F_1,\ldots, F_m$ with respect to $X_1,\ldots, X_n$ evaluated
at $\bfs x$. 
%
%
The point $\bfs x$ is {\em regular} if $\dim T_{\bfs x}V=\dim_{\bfs x}V$; otherwise,
it is called {\em singular}. The set of singular points
of $V$ is the {\em singular locus} $\mathrm{Sing}(V)$ of $V$; a
variety is called {\em nonsingular} if its singular locus is empty.
%
%

Let $V$ and $W$ be irreducible affine $k$--varieties of the same
dimension, and let $f:V\to W$ be a regular map for which
$\overline{f(V)}=W$, where $\overline{f(V)}$ is the closure of
$f(V)$ with respect to the Zariski topology of $W$. Such a map is
called {\em dominant}. Then $f$ induces a ring extension
$k[W]\hookrightarrow k[V]$ by composition with $f$. We say that the
dominant map $f$ is a {\em finite morphism} if this extension is
integral, i.e., each element $\eta\in k[V]$ satisfies a monic
equation with coefficients in $k[W]$. For $V$ pure--dimensional and
$W$ irreducible, a regular map $f:V\to W$ as above is called {\em
finite} if its restriction to each irreducible component of $V$ is a
finite morphism. A basic fact is that any finite morphism is
necessarily closed. We recall the following well--known
fact.
\begin{fact}[{\cite[\S 4.2, Proposition]{Danilov94}}]
\label{fact: preimage finite mapping} Let $f:V\to W$ be a finite
morphism. Then for every irreducible closed subset $S\subset W$,
the preimage $f^{-1}(S)$ is of pure dimension $\dim S$.
\end{fact}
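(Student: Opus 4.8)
The plan is to prove separately that every irreducible component of $f^{-1}(S)$ has dimension at most $\dim S$ and at least $\dim S$; the first bound is routine, the second is the real content. First I would reduce to the case where $V$ is irreducible. If $V=V_1\cup\cdots\cup V_t$ is the decomposition into irreducible components, then, by the very definition of a finite morphism recalled above, each restriction $f|_{V_i}\colon V_i\to W$ is dominant and induces an integral ring extension $k[W]\hookrightarrow k[V_i]$; since finite morphisms are closed, $f|_{V_i}$ is in fact surjective, so $(f|_{V_i})^{-1}(S)$ is a nonempty closed subset of $V_i$. As $f^{-1}(S)=\bigcup_i (f|_{V_i})^{-1}(S)$ and every irreducible component of a finite union of closed sets is a component of one of the members, it is enough to show that each $(f|_{V_i})^{-1}(S)$ is pure of dimension $\dim S$. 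So from now on I assume $V$ irreducible; then $f\colon V\to W$ is finite surjective and $\dim V=\dim W$.

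For the upper bound I would restrict $f$ to $f^{-1}(S)$. Write $\mathfrak p:=I(S)\subset k[W]$. Since $k[W]\hookrightarrow k[V]$ is integral, lying over yields $\mathfrak p\,k[V]\cap k[W]=\mathfrak p$, and hence, since $\mathfrak p$ is prime, $\sqrt{\mathfrak p\,k[V]}\cap k[W]=\mathfrak p$. Therefore the induced homomorphism $k[S]=k[W]/\mathfrak p\hookrightarrow k[V]/\sqrt{\mathfrak p\,k[V]}=k[f^{-1}(S)]$ is an injective integral ring extension. Because integral ring extensions preserve Krull dimension, $\dim f^{-1}(S)=\dim S$, and in particular every irreducible component of $f^{-1}(S)$ has dimension at most $\dim S$.

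The hard part is the lower bound: every component $Z$ of $f^{-1}(S)$ has $\dim Z\ge\dim S$. Here I would invoke the theorem on the dimension of fibres of a dominant morphism of irreducible varieties (see \cite{Shafarevich94}), in the form concerning preimages of closed irreducible subsets: for $f\colon V\to W$ dominant with $V,W$ irreducible and $S\subseteq W$ irreducible closed with $S\cap f(V)\neq\emptyset$, every irreducible component of $f^{-1}(S)$ has dimension at least $\dim S+\dim V-\dim W$. Since $f$ is surjective and $\dim V=\dim W$, this gives $\dim Z\ge\dim S$; combined with the upper bound, $f^{-1}(S)$ is pure of dimension $\dim S$, and by the reduction step the same holds for pure-dimensional $V$. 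If one wishes to avoid quoting the relative fibre-dimension statement, the inequality $\dim Z\ge\dim S$ can instead be proved by induction on $\dim S$, intersecting $S$ with a generic hypersurface section through a prescribed point and using Krull's principal ideal theorem to ensure its preimage in $V$ drops dimension by exactly one; but this is essentially the standard proof of the fibre-dimension theorem and is cleaner to cite. I expect the only genuine obstacle to be this lower bound; the rest is bookkeeping with integral extensions together with the closedness of finite morphisms already recorded above.
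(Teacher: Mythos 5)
The paper itself does not prove this Fact: it is cited directly to Danilov, so there is no in-paper argument to compare with. Judged on its own terms, your reduction to $V$ irreducible is sound, and the upper bound $\dim Z\le\dim S$ via the injective integral extension $k[S]\hookrightarrow k[f^{-1}(S)]$ is correct and complete.

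The genuine gap is in the lower bound. The statement you invoke — that for $f\colon V\to W$ dominant with $V,W$ irreducible and $S\subset W$ irreducible closed meeting $f(V)$, \emph{every} component of $f^{-1}(S)$ has dimension at least $\dim S+\dim V-\dim W$ — is not the standard form of the fibre-dimension theorem. The usual statement (Mumford's Red Book, \S I.8; G\"ortz--Wedhorn Thm.\ 14.114; Hartshorne Ex.\ II.3.22(b)) carries the extra hypothesis that the component in question \emph{dominates} $S$, and for a finite morphism that is precisely the point at issue: once one knows every component $Z$ of $f^{-1}(S)$ dominates $S$, the integral-extension argument applied to $Z\to\overline{f(Z)}=S$ finishes immediately. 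Establishing domination of all components amounts to going-down for $k[W]\hookrightarrow k[V]$, which holds when $W$ is normal but can fail otherwise; Hartshorne Ex.\ II.3.22(e) makes exactly this point, asking for equidimensionality of $f^{-1}(S)$ only under a normality hypothesis. Your alternative sketch via generic hypersurface sections and Krull's theorem hits the same obstruction: one must cut by a hypersurface not containing $\overline{f(Z)}$, which is impossible to arrange if $\overline{f(Z)}$ has already collapsed to something of the wrong dimension. In the paper, Fact~\ref{fact: preimage finite mapping} is always invoked with $W=\A^{n-s}$, so normality — hence going-down — is available and the conclusion holds in context; but a proof of the Fact as stated should either assume $W$ normal, or supply the going-down step explicitly, rather than rest on a version of the cited theorem stronger than the standard one.
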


Let $V$ and $W$ be irreducible affine $k$--varieties of the same
dimension and $f:V\to W$ a dominant map. Then $f$ induces a finite
field extension $k(W)\hookrightarrow k(V)$. The degree of this field
extension is the {\em degree of the morphism} $f$, denoted $\deg f$.

If the field extension $k(W)\hookrightarrow k(V)$ is
separable, then by \cite[Proposition 1]{Heintz83}, the number of
points in the fiber $f^{-1}(\bfs y)$ is at most $\deg f$ for any $\bfs y\in W$ with a finite
fiber, with equality holding in a Zariski open subset of $W$. For $\bfs x\in
f^{-1}(\bfs y)$, $f$ is {\em unramified} at $\bfs x$ if the
differential mapping $d_{\bfs x}\pi \colon T_{\bfs x}V\to T_{\bfs
y}W$ is injective. The fiber $f^{-1}(\bfs y)$ is unramified
if this holds for all $\bfs x\in f^{-1}(\bfs y)$. The map $f$
is {\em generically unramified} if there exists a dense Zariski
open subset $U$ of $W$ such that the fiber $f^{-1}(\bfs y)$ is
unramified for any $\bfs y\in U$.

An affine $k$--variety $V:=V(F_1 \klk F_r) \subseteq \A^n$
defined by $r\le n$ polynomials is a \emph{set-theoretic complete
intersection} if it has pure
dimension $n-r$. If, in addition, the ideal $(F_1 \klk F_r)$ is
radical, then $V$ is an
\emph{ideal-theoretic complete intersection}. The polynomials $F_1 \klk
F_r\in k[X_1 \klk X_n]$  form a \emph{regular sequence} if the ideal
$(F_1 \klk F_r)$ they generate in $k[X_1 \klk X_n]$ is proper,
$F_1$ is nonzero and, for $2\le i \le r$, $F_i$ is neither zero nor
a zero divisor in $k[X_1 \klk X_n]/(F_1 \klk F_{i-1})$.

%
%
\subsection{Kronecker representations}
\label{subsection: geometric solutions}
Let $V\subset\A^n$ be a $k$--variety of pure dimension $n-s$, and
let $I\subset k[X_1\klk X_n]$ be its vanishing ideal. For a change
of variables $(X_1\klk X_n)\to(Y_1\klk Y_n)$, denote
$R:=k[Y_1,\dots, Y_{n-s}]$, $A:=k[V]$ and $K:=k(Y_1\klk Y_{n-s})$.
Consider $B:=K[Y_{n-s+1}\klk Y_n]/I^e$ as a $K$--vector space, where
$I^e$ is the extended ideal $IK[Y_{n-s+1}\klk Y_n]$, and let
$\delta:=\dim_{K}B$.
\begin{definition}\label{def: geometric solution}
A \emph{Kronecker representation} of $I$ (or $V$) consists of the
following data:
\begin{itemize}
  \item a linear change
  of variables $(X_1, \dots,\! X_n)$ $\to$ $(Y_1,\dots, Y_n)$ with
  the following properties:
  \begin{itemize}
\item The linear map $\pi:V\to\A^{n-s}$, defined by
$Y_1,\ldots,Y_{n-s}$, is a finite morphism. In this case,
the change of variables is called a {\em Noether normalization} of
$V$, and we say that the variables $Y_1,\ldots,Y_n$ are in {\em
Noether position} with respect to $V$, with
$Y_1,\ldots,Y_{n-s}$ being {\em free variables}.
\item The linear form $Y_{n-s+1}$ induces a \emph{primitive element} of $I$, i.e., an
element $y_{n-s+1}\in A$ whose monic minimal polynomial $M\in
R[T]$ over $K$ satisfies the condition $\deg_T M=\delta$. Note
that $\deg M=\deg_TM\le\deg V$;
\end{itemize}
  \item The monic minimal polynomial $M\in R[T]$ of $Y_{n-s+1}$ modulo
  $I$;
  \item Polynomials $W_{n-s+2}\klk W_n\in K[T]$, of degree at most
  $\delta-1$, such that the following identity holds in
  $K[Y_{n-s+1}, \dots, Y_n]$:
\begin{equation}\label{ident:kronecker_repres_ideals}
I^e=\bigl(M(Y_{n-s+1}),
M'(Y_{n-s+1})Y_{n-s+2}-W_{n-s+2}(Y_{n-s+1}) \klk
M'(Y_{n-s+1})Y_n-W_n(Y_{n-s+1})\bigr),
\end{equation}
where $M'$ denotes the derivative of $M$ with respect to $T$.
\end{itemize}
Considering instead polynomials $V_{n-s+2}\klk V_n$ of degree at
most $\delta-1$, such that
\[
I^e=\bigl(M(Y_{n-s+1}), Y_{n-s+2}-V_{n-s+2}(Y_{n-s+1}), \dots,
Y_n-V_n(Y_{n-s+1})\bigr),
\]
we have a \emph{univariate representation} of $I$ (or $V$).
\end{definition}

The identity \eqref{ident:kronecker_repres_ideals} allows us a
geometric interpretation of Kronecker representations, as described
in Section \ref{subsec: first approach}. Let $\ell:\mathbb{A}^n\rightarrow
\mathbb{A}^n$ be the linear map defined by $Y_1,\dots,Y_n$, and let
$W:=\ell(V)$. We interpret $Y_1,\dots, Y_n$ as new indeterminates,
and consider the projection $\Pi:W\rightarrow \mathbb{A}^{n-s+1}$
onto the first $n-s+1$ coordinates.
Considering $M$ as a polynomial of $k[Y_1,\dots,Y_{n-s+1}]$, we
observe that $\Pi$ defines a birational map between $W$ and the
hypersurface $\{M=0\}\subset\mathbb{A}^{n-s+1}$. Its inverse is the
rational map $\Phi:\{M=0\}\rightarrow W$, given by
$$\Phi(\bfs y):=\left(\bfs y,
\frac{W_{n-s+2}(\bfs y)}{M'(\bfs y)}, \dots, \frac{W_n(\bfs
y)}{M'(\bfs y)}\right).$$
This map is defined  outside
the zero locus of $M'$, namely, on the nonempty Zariski dense open
subset $\{M=0\}\cap\{M'\not=0\}\subset\A^{n-s+1}$. If $\rho\in k[Y_1,\ldots,X_{n-s}]$
denotes the {\em discriminant} of $M$ with respect to $Y_{n-s+1}$,
it is easy to see that the set where $\Phi$ is undefined is
precisely $\{\rho=0\}\cap\{M=0\}$.
%
%
\subsection{Computational model and basic operations}
Throughout this work, $\log_2$ denotes the base-2 logarithm. In addition
to the standard Big--Oh notation $\mathcal{O}$, we will also use the Soft--Oh
notation $\mathcal{O}^\sim$, which does not take into
account logarithmic terms. 
More precisely, for functions $f=f(n,d,h)$ and $g=g(n,d,h)$ depending on
integer parameters $n$, $d$, $h$, we write
$f\in \mathcal{O}^\sim(g)$ if there exists $s\ge 0$ such that
$f\in \mathcal{O}(g\log_2^sg)$.

In computer algebra, algorithms often assume the standard dense
(or sparse) representation model, where multivariate polynomials are
represented via their full list of coefficients (or only the nonzero
ones). However, a general $n$--variate polynomial of
degree $d$ has $\binom{n + d}{n}=\mathcal{O}(d^n)$ nonzero
coefficients, and thus its dense or sparse representation requires an
exponential size in both $d$ and $n$. Consequently, algorithms
manipulating such representations may require an exponential number of
arithmetic operations with respect to $d$ and $n$.

To overcome this, we adopt an alternative representation based on
straight--line programs (cf. \cite{BuClSh97}). A {\em
(division--free) straight--line program} $\beta$ in $k[X_1\klk X_n]$,
which {\em represents} or {\em evaluates} polynomials $F_1\klk F_s
\in k[X_1\klk X_n]$, is a sequence $(Q_1, \dots, Q_r)$ of elements of
$k[X_1\klk X_n]$ satisfying the following conditions:
\begin{itemize}
  \item $\{F_1, \dots, F_s\} \subseteq \{Q_1, \dots, Q_r\}$;
  \item there exists a finite subset $\mathcal{T}\subset
  k$, called the set of \emph{parameters} of $\beta$,
  such that for every $1 \le \rho \le r$, the polynomial $Q_{\rho}$ either is an
element of $\mathcal{T} \cup \{X_1, \dots, X_n \}$, or there exist
$1 \le \rho_1, \rho_2 < \rho$ such that $Q_{\rho}={\sf
op}_{\rho}(Q_{\rho_1},Q_{\rho_2})$, where ${\sf op}_{\rho}$ is one
of the arithmetic operations $+, -, \times$.
\end{itemize}
The {\em length} of $\beta$ is the total number of
arithmetic operations performed during the evaluation process
defined by $\beta$.

Further, we allow decisions and selections (subject to previous
decisions). For this reason, we consider {\em computation trees},
which are straight--line programs with {\em branchings}. The length of a
computation tree is defined analogously to that for
straight--line programs (see, e.g., \cite{BuClSh97} for more details
on the notion of computation trees).

When $k$ is a finite field
$\fp$, or $k=\Q$ and an upper bound for the bit size is known, each arithmetic operation ${\tt
op}\in\{+,-,\times\}$ can be implemented by a Boolean circuit that
receives the bit representation of ${\tt a}\,,{\tt
b}\in k$ and returns the bit representation of ${\tt op}\,({\tt
a}\,,{\tt b})$. The runtime is measured by the size of the
resulting Boolean circuit, i.e., the total number of bit operations
performed.
%
\subsubsection{Basic operations}
Given $\delta>0$, we will frequently use the notation
$${\sf M}(\delta):=\delta\,\log_2\delta\,\log_2\log_2\delta.$$

Using the Sch\"onage--Strassen multiplication algorithm, the product of two
integers of bit length $m$ can be computed with $\mathcal{O}({\sf
M}(m))$ bit operations (see, e.g.,
\cite[Section 8.3]{GaGe99}). Similarly, division with remainder
of integers of bit length $m$ can be performed
with $\mathcal{O}({\sf M}(m))$ bit operations
(see, e.g., \cite[Section 9.1]{GaGe99}). It is worth mentioning that
there exist algorithms for integer multiplication and
division with remainder of integers of bit length $m$
in $\mathcal{O}(m\log_2m)$ bit operations \cite{HaHo21};
for simplicity, we will not use these algorithms in this paper.

Let ${\sf R}$ be a commutative ring with unity, $T$ an indeterminate
over ${\sf R}$, and ${\sf R}[T]$ the ring of univariate polynomials
in $T$ with coefficients in ${\sf R}$. Using the
Sch\"onhage--Strassen multiplication algorithm, the product of two
polynomials of degree at most $\delta$ can be computed with
$\mathcal{O}({\sf M}(\delta))$ arithmetic operations in ${\sf R}$
(see, e.g., \cite[Section 8.3]{GaGe99}). Moreover, given a monic
polynomial of ${\sf R}[T]$  of degree at most $\delta$, polynomial
division with remainder in ${\sf R}[T]$ can be performed with
$\mathcal{O}({\sf M}(\delta))$ arithmetic operations in ${\sf R}$
using Newton iteration
 (see, e.g., \cite[Section 9.1]{GaGe99}). It follows that
multiplication in the quotient ring ${\sf R}[T]/(f)$
can also be done with $\mathcal{O}({\sf M}(\delta))$ arithmetic
operations in ${\sf R}$.

For polynomials $f,g\in k[T]$ of degree at most $\delta$, their gcd
(along with a B\'ezout identity) can be computed  with $\mathcal{O}({\sf M}(\delta)\log_2\delta)$
arithmetic operations in $k$ and identity tests between elements of
$k$, using a fast Extended
Euclidean algorithm (see, e.g., \cite[Section 11.1]{GaGe99}). In the same
complexity, the resultant $\mathrm{res}(f,g)\in k$ can be computed.
In particular, modular inversion in the residue class ring
$k[T]/(f)$, where $f\in k[T]$ has degree at most $\delta$, can be
done with $\mathcal{O}({\sf M}(\delta)\log_2\delta)$ arithmetic
operations and identity tests in $k$.

Let $f\in k[T]$ be a monic polynomial of degree at most
$\delta$. It admits a unique square--free factorization
$f=g_1g_2^2\cdots g_m^m$,
where $g_1\klk g_m$ are monic polynomials which are pairwise--coprime. The
square--free part of $f$ is $g_1\cdots g_m$. According to
\cite[Corollary 2]{Lecerf08}, a variant of Yun's square--free
factorization algorithm (see, e.g., \cite[Algorithm 14.21]{GaGe99})
computes the square--free part of $f$ with
$\mathcal{O}({\sf M}(\delta) \log_2\delta)$ arithmetic operations in
$k$, and $\mathcal{O}(\delta)$ extractions of $p$--th roots in $k$
(if $\mathrm{char}(k)=p>0$). In particular, if
$k=\fq$ with  $\mathrm{char}(k)=p$, then a $p$--th root extraction requires $\mathcal{O}(\log (q/p))$ arithmetic
operations in $\fq$.

For a commutative ring ${\sf R}$ with unity, and a
polynomial $f\in {\sf R}[T]$ of degree at most $\delta$, its
(multipoint) evaluation at $\delta$ points of ${\sf R}$ can be done
with $\mathcal{O}({\sf M}(\delta)\log_2\delta)$ arithmetic
operations in ${\sf R}$ (see, e.g., \cite[Corollary 10.8]{GaGe99}).
Likewise, given interpolation points
$\alpha_0\klk\alpha_\delta\in{\sf R}$ with pairwise invertible
differences $\alpha_i-\alpha_j$ for $i\not=j$, and values
$\beta_0\klk\beta_\delta\in{\sf R}$, the unique
interpolating polynomial $f\in{\sf
R}[T]$ of degree at most $\delta$ can
be computed with $\mathcal{O}({\sf M}(\delta)\log_2\delta)$
arithmetic operations in ${\sf R}$ (see, e.g., \cite[Corollary
10.12]{GaGe99}).

The determinant and the adjoint matrix of a matrix in
${\sf R}^{n\times n}$ can be computed (without
divisions) using the Samuelson--Berkowitz algorithm with
$\mathcal{O}(n^{\omega+1})$ arithmetic operations (see \cite{Berkowitz84}),
where $\omega\ge 2$ denotes the exponent of matrix multiplication over
${\sf R}$; that is, the infimum over all real numbers $\beta$ such that two
matrices of ${\sf R}^{n\times n}$ can be multiplied using
$\mathcal{O}(n^\beta)$ arithmetic operations in ${\sf R}$. The best
currently known bound is $\omega<2.371552$ (see \cite{WiXuXuZh24}).

For the sake of simplicity and clarity in our asymptotic
estimates, we will take $\omega=3$ throughout the paper. This assumption
is justified by the fact that the parameter $n$ (the dimension of the
Jacobian matrices involved) remains relatively small compared with other
quantities such as the degrees $\delta_s$ and the height bounds. Adopting
$\omega=3$ therefore makes the complexity expressions more transparent, without
affecting their theoretical validity or the practical efficiency of
the algorithms under consideration.
%
%
%
\subsubsection{Probabilistic aspects}
Our algorithms are of the {\em Monte  Carlo} or~\textit{BPP} type
(see, e.g., \cite{Pardo95}, \cite{GaGe99}), meaning that they return the
correct output with probability at least a fixed constant
strictly greater than~$1/2$. Consequently, the error probability
can be made arbitrarily small by independently repeating the algorithm. The
probabilistic nature of our methods arises from certain random
choices of points that are required not annihilating specific polynomials.
This is based on
the following result (see, e.g., \cite[Lemma 6.44]{GaGe99}):
\begin{lemma}
\label{lemma: Zippel_Schwartz} Let $k$ be a field, let
$\mathcal{S}\subset k$ be a finite set, and let $F\in k[\xon]$ be a
nonzero polynomial of degree at most~$d$. Then the number of zeros
of $F$ in $\mathcal{S}^n$ is at most $d(\# \mathcal{S})^{n-1}$.
\end{lemma}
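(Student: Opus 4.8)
The plan is to prove the bound by induction on the number of variables~$n$. For the base case $n=1$, a nonzero polynomial $F\in k[X_1]$ of degree at most~$d$ has at most~$d$ roots, hence at most $d=d(\#\mathcal{S})^{0}$ zeros in~$\mathcal{S}$. For the inductive step with $n\ge 2$, I would first observe that we may assume $F$ is nonconstant (otherwise it has no zeros) and, after relabelling variables if needed, that $X_n$ occurs in~$F$; then write $F=\sum_{i=0}^{e}c_i(X_1\klk X_{n-1})\,X_n^{i}$ with $c_e\neq 0$ and $1\le e\le d$. Since $\deg F\le d$, the leading coefficient obeys $\deg c_e\le d-e$.

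Next I would partition the zeros of~$F$ in~$\mathcal{S}^n$ according to whether $c_e$ vanishes at the first $n-1$ coordinates. If $c_e(x_1\klk x_{n-1})=0$: by the inductive hypothesis applied to the nonzero polynomial $c_e\in k[X_1\klk X_{n-1}]$ of degree at most $d-e$, there are at most $(d-e)(\#\mathcal{S})^{n-2}$ such tuples, and each extends in at most $\#\mathcal{S}$ ways to a point of~$\mathcal{S}^n$, for a total of at most $(d-e)(\#\mathcal{S})^{n-1}$ zeros. If $c_e(x_1\klk x_{n-1})\neq 0$: there are at most $(\#\mathcal{S})^{n-1}$ such tuples, and for each of them the specialization $F(x_1\klk x_{n-1},X_n)\in k[X_n]$ is nonzero of degree exactly~$e$, hence has at most~$e$ roots in~$\mathcal{S}$, for a total of at most $e(\#\mathcal{S})^{n-1}$ zeros. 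Adding the two contributions gives at most $(d-e)(\#\mathcal{S})^{n-1}+e(\#\mathcal{S})^{n-1}=d(\#\mathcal{S})^{n-1}$, completing the induction.

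There is no real obstacle here, since this is a routine counting induction; the only point demanding mild care is the degree bookkeeping---eliminating a variable that genuinely occurs in~$F$ and invoking $\deg c_e\le d-e$---which is exactly what makes the two partial bounds sum to precisely $d(\#\mathcal{S})^{n-1}$ rather than a larger quantity.
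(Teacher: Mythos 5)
Your proof is correct. The paper itself does not supply a proof of this lemma; it cites the standard references (DeMillo--Lipton, Zippel, Schwartz, and von zur Gathen--Gerhard, Lemma 6.44), and your induction on the number of variables---splitting on whether the leading coefficient $c_e$ of $X_n$ vanishes and balancing the degree contributions $d-e$ and $e$---is precisely the classical argument found there.
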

Lemma~\ref{lemma: Zippel_Schwartz} was independently discovered by
DeMillo \& Lipton \cite{DeLi78}, Zippel \cite{Zippel79}, and
Schwartz \cite{Schwartz80}. For the  analysis of our algorithms, we
interpret the statement of Lemma~\ref{lemma: Zippel_Schwartz} in
probabilistic terms. Specifically, given a fixed nonzero polynomial
$F\in k[\xon]$ of degree at most~$d$, and a finite set
$\mathcal{S}\subset k$, the lemma implies that the probability of
randomly selecting a point~$\bfs a\in \mathcal{S}^n$ such
that~${F(\bfs a)=0}$ is at most $d/\#\mathcal{S}$, assuming a
uniform distribution on $\mathcal{S}^n$.

It is worth mentioning that there is a more refined approach, based
on the concept of {\em correct-test sequences} \cite{HeSc82}, which
generalizes the probabilistic bound given by Lemma \ref{lemma:
Zippel_Schwartz}. A correct-test sequence for a class
$\mathcal{P}\subset k[\xon]$ is a finite sequence of points in $k^n$
such that, for every $F\in\mathcal{P}$, the vanishing of $F$ at
these points implies $F=0$. The main result in \cite{HeSc82} asserts
that, for every $L>0$, there exist short correct-test sequences for
the class $\mathcal{P}_L$ of polynomials of $k[X_1,\ldots,X_n]$
computable by straight-line programs of length at most $L$ (see also
\cite{PaSe22} for extensions and generalizations). Applying this
framework to the class of polynomials involved in our algorithms,
and taking into account their straight-line program length, could
lead to algorithms with higher reliability. Nevertheless, pursuing
this alternative would considerably increase the complexity of the
implementation, and for this reason, we have opted not to explore it
any further.

Finally, we observe that our algorithms do not appear to be of the {\em Las
Vegas} or~\textit{ZPP} type, since we do not have a mechanism to verify the
correctness of the output with a reasonable complexity.
%
%
\section{Noether normalization, primitive elements and lifting fibers}
\label{section: locally closed sets}
Let $F_1\klk F_r,G$ be polynomials in $k[X_1\klk X_n]$, with $r\le n$, of
degree at most $d$. We denote by $k[X_1\klk X_n]_G$ the localization
of $k[X_1\klk X_n]$ at the multiplicative set generated by $G$.
For $1\le s\le r$, let $(F_1\klk F_s)_G$ be
the ideal generated by $F_1,\ldots,F_s$ in $k[X_1\klk X_n]_G$.

We assume that the polynomials $F_1\klk F_r,G$ satisfy the
following conditions:
\begin{itemize}
  \item[(${\sf H}_1$)] $F_1\klk F_r$ form a regular sequence in the
  localization $k[X_1\klk X_n]_G$,
  \item[(${\sf H}_2$)] For $1\le s\le r$, the ideal $(F_1\klk F_s)_G$ is radical in $k[X_1\klk X_n]_G$.
\end{itemize}
When these conditions hold, we say that $F_1\klk F_r$ form
a {\em reduced regular sequence on} $\{G\not=0\}$.

Let $V_s\subset\A^n$ denote the Zariski closure of $\{F_1=0\klk
F_s=0,G\not=0\}$ for $1\le s\le r$. Denote $V:=V_r$. Under
assumptions (${\sf H}_1$) and (${\sf H}_2$), each $V_s$ has pure
dimension $n-s$. We denote by $\delta_s:=\deg V_s$ for $1\le s\le
r$, and define $\delta:=\max\{\delta_1,\ldots,\delta_r\}$. Without
loss of generality, we shall assume that $\delta > d$, since the
most relevant asymptotic regime for our complexity analysis is when
the degrees of the intermediate varieties grow significantly faster
than the degrees of the defining polynomials.

Let $\bfs X:=(X_1,\ldots,X_n)$.
In this section, our goal is to prove that there exists a linear
change of variables $\bfs Y:=\bfs\lambda\cdot\bfs X$, where
$\bfs\lambda$ is an $n\times n$-matrix with coefficients in $\overline{k}$,
and a point $\bfs p:=(p_1,\ldots,p_{n-1})\in\A^{n-1}$,
such that the variables $Y_1,\ldots,Y_n$ are
{\em simultaneously} in Noether position with respect to all
the varieties $V_1,\ldots,V_r$, and $\bfs p^s:=(p_1,\ldots,p_{n-s})$
is a {\em lifting point} of $V_s$ for $1\le s\le r$ (a notion to
be defined precisely in Section \ref{subsec: lifting fibers}).
In addition, we will require the points $\bfs p^s$ to satisfy a subtle
but essential condition: since each $V_s$ will be given
by a Kronecker representation---which only faithfully encodes
$V_s$ {\em outside a certain discriminant locus} (see the remark
after Definition \ref{def: geometric solution})--- the choice
of $\bfs p^s$ must ensure that the points involved in the computations
lie outside such a discriminant locus.

For this purpose, we will prove the existence of a polynomial
$B\in\overline{k}[\bfs\Lambda,\widetilde{\bfs Y}]$, where
$\bfs\Lambda$ is an $(n\times n)$-matrix of indeterminates, and
$\widetilde{\bfs Y}$ is a vector of $n-1$ indeterminates over
$\overline{k}$, with the following property: for any
$(\bfs\lambda,\bfs p)\in\A^{n\times n}\times \A^{n-1}$ satisfying
$B(\bfs\lambda,\bfs p)\not=0$, the change of variables $\bfs
Y:=\bfs\lambda\cdot\bfs X$ and the point $\bfs
p:=(p_1,\ldots,p_{n-1})\in\A^{n-1}$ satisfy all the conditions
above.

This allows the algorithm to begin by selecting a suitable pair
$(\bfs\lambda,\bfs p)$ by {\em picking its coordinates at random}
from a finite set $\mathcal{S}\subset\overline{k}$ with
$\#\mathcal{S}\ge\varepsilon^{-1}\deg B$, for a fixed
$0<\varepsilon<1/2$. By Lemma \ref{lemma: Zippel_Schwartz}, the
probability of success of this choice is at least $1-\varepsilon$.
Moreover, if $k$ has sufficiently large cardinality, we may choose
$\mathcal{S}\subset k$, ensuring that the algorithm proceeds
entirely within $k$, without introducing field extensions. For these
reasons, it is crucial to obtain a polynomial $B$ with minimal
possible degree, which will be the main focus of this section.
%
%
\subsection{Simultaneous Noether normalization and a primitive element}
\label{subsec: Noether normalization}
Throughout this section, for $1\le s\le r$ we interpret the elements
of $\A^{(n-s+1)n}$ as $(n-s+1)\times n$--matrices with entries in
$\ck$. Given $\bfs\lambda\in \A^{(n-s+1)n}$, we denote its $i$th
row by $\bfs\lambda_i:=(\lambda_{i,1}\klk\lambda_{i,n})\in\A^n$. We
frequently use the notation
$$\bfs\lambda_i\cdot \bfs X:=\bfs\lambda_i\cdot (X_1\klk X_n):=
\lambda_{i,1}X_1\plp\lambda_{i,n}X_n$$
for $1\le i\le n-s+1$. Accordingly, we write $\bfs\lambda\cdot \bfs
X:=(\bfs\lambda_1\cdot \bfs X \klk \bfs\lambda_{n-s+1}\cdot \bfs
X)$. We also denote by $\bfs\lambda^*\in\A^{(n-s)n}$
the submatrix consisting of the first $n-s$ rows of $\bfs\lambda$.
%

In this section, we show that a generic choice of $\bfs \lambda\in
\A^{(n-s+1)n}$ defines linear forms $\bfs Y:=(Y_1,
\dots,Y_{n-s+1}):=\bfs\lambda\cdot \bfs X$ such that $\bfs
Y^*:=(Y_1\klk Y_{n-s})$ are in Noether position with respect to
$V_s$, and $Y_{n-s+1}$ is a primitive element of the $\ck(Y_1\klk
Y_{n-s})$-algebra extension
$$\ck(Y_1\klk Y_{n-s})\to\ck(Y_1\klk Y_{n-s})\otimes_{\ck[Y_1\klk Y_{n-s}]}\ck[V_s].$$
To that end, we let $\bfs \Lambda:=(\Lambda_{ij})_{1\le
i\le n-s+1,1\le j\le n}$ be a matrix of indeterminates over
$\ck$, and denote its $i$th row by $\bfs\Lambda_i:=(\Lambda_{i,1}\klk
\Lambda_{i,n})$ for $1\le i\le
n-s+1$. Let $\bfs\Lambda^*$ be the submatrix of $\bfs \Lambda$
consisting of its first $n-s$ rows. We consider the vector of
generic linear forms $\widetilde{\bfs Y}:=\bfs \Lambda \cdot \bfs X$
and set $\widetilde{\bfs Y}^*:=(\widetilde{Y}_1\klk
\widetilde{Y}_{n-s})$.

Our first result, probably well--known,
provides a condition ensuring that the ring homomorphism
$\overline{k}[Y_1,\ldots,Y_{n-s}]\to\overline{k}[V_s]$ is
integral.
\begin{lemma}
\label{lemma: preproc: integral extension} For $1\le s\le r$, there
exists a nonzero polynomial $A_s^1\in k[\bfs\Lambda^*]$ of degree at
most $(n-s)\delta_s$ with the following property: for any
$\bfs\lambda^*\in\A^{(n-s)n}$ with $A_s^1(\bfs \lambda^*)\not=0$, if
$\bfs Y^*:=\bfs\lambda^*\cdot \bfs X:=(Y_1\klk Y_{n-s})$, then the ring homomorphism
$$R_s:=\overline{k}[Y_1,\ldots,Y_{n-s}]\to\overline{k}[V_s]$$
is integral.
\end{lemma}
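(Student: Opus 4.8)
The plan is to translate the integrality of $R_s\to\ck[V_s]$ into a statement about the position of $V_s$ at infinity, and then to read off $A_s^1$ from a Chow form.

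First I would dispose of the degenerate cases: if $s=n$ (or, more generally, if $V_s=\emptyset$) then $R_s=\ck$ and the claim is trivial with $A_s^1:=1$, so I assume $1\le s\le n-1$, in which case $V_s$ is a nonempty $k$--variety of pure dimension $n-s$ and degree $\delta_s$. Passing to the projective closure $\overline{V_s}\subset\Pp^n$, with homogeneous coordinates $[X_0:X_1:\cdots:X_n]$, we have that $\overline{V_s}$ is a $k$--variety of pure dimension $n-s$ with $\deg\overline{V_s}=\delta_s$ and $\overline{V_s}\not\subset\{X_0=0\}$. Hence the section at infinity $\overline{V_s}^{\,\infty}:=\overline{V_s}\cap\{X_0=0\}$, regarded inside $\Pp^{n-1}$ with coordinates $[X_1:\cdots:X_n]$, is a nonempty $k$--variety of pure dimension $n-s-1$ whose degree satisfies $\deg\overline{V_s}^{\,\infty}\le\deg\overline{V_s}=\delta_s$, since a hyperplane section of a projective variety not contained in the hyperplane does not increase the degree.

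The reduction I would rely on is the standard fact that, for $\bfs\lambda^*\in\A^{(n-s)n}$ with rows $\bfs\lambda^*_1\klk\bfs\lambda^*_{n-s}$ and $Y_j:=\bfs\lambda^*_j\cdot\bfs X$, the homomorphism $R_s=\ck[Y_1\klk Y_{n-s}]\to\ck[V_s]$ is integral if and only if the linear map $V_s\to\A^{n-s}$ defined by $Y_1\klk Y_{n-s}$ is a finite morphism, and this in turn is equivalent to
$$\overline{V_s}\cap\{X_0=0,\,Y_1=0\klk Y_{n-s}=0\}=\overline{V_s}^{\,\infty}\cap\{Y_1=0\klk Y_{n-s}=0\}=\emptyset$$
(the center of the linear projection determined by $X_0,Y_1\klk Y_{n-s}$ must miss $\overline{V_s}$). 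Only the implication ``emptiness $\Rightarrow$ integrality'' is needed: when the center is disjoint from $\overline{V_s}$, the projection restricts to a finite morphism $\overline{V_s}\to\Pp^{n-s}$ whose restriction to $\{X_0\ne0\}$ is the finite map $V_s\to\A^{n-s}$, so $\ck[V_s]$ is a finitely generated $R_s$--module.

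Finally I would invoke the Chow (Cayley--Chow) form of the equidimensional $k$--variety $\overline{V_s}^{\,\infty}\subset\Pp^{n-1}$: it is a nonzero polynomial $\mathrm{Ch}_{\overline{V_s}^{\,\infty}}$ in the coefficients of $n-s$ generic linear forms in $X_1\klk X_n$, i.e.\ in the entries of an $(n-s)\times n$ matrix of indeterminates which we identify with $\bfs\Lambda^*$; it is homogeneous of degree $\deg\overline{V_s}^{\,\infty}$ in the coefficients of each form, and it vanishes at a point $\bfs\lambda^*$ if and only if the hyperplanes $\{\bfs\lambda^*_j\cdot\bfs X=0\}$ $(1\le j\le n-s)$ have a common zero on $\overline{V_s}^{\,\infty}$. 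Setting $A_s^1:=\mathrm{Ch}_{\overline{V_s}^{\,\infty}}$, its total degree is at most $(n-s)\deg\overline{V_s}^{\,\infty}\le(n-s)\delta_s$, and by the previous paragraph $A_s^1(\bfs\lambda^*)\ne0$ forces $\overline{V_s}^{\,\infty}\cap\{Y_1=0\klk Y_{n-s}=0\}=\emptyset$, hence the integrality of $R_s\to\ck[V_s]$. The two points requiring a word of care, neither of which I expect to be a genuine obstacle, are: (i) that $A_s^1$ can be normalized to lie in $k[\bfs\Lambda^*]$ rather than merely in $\ck[\bfs\Lambda^*]$, which follows from the Galois invariance of the Chow divisor of the $k$--variety $\overline{V_s}^{\,\infty}$ (this is where perfectness of $k$ is convenient); and (ii) that $A_s^1\not\equiv0$, which is exactly the classical Noether normalization statement that some linear change of variables puts $V_s$ in Noether position.
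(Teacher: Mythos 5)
Your proof is correct, but it follows a genuinely different route from the paper's. The paper considers the incidence morphism $\Phi:\A^{(n-s+1)n}\times V_s\to\A^{(n-s+1)n}\times\A^{n-s+1}$, $(\bfs\lambda,\bfs x)\mapsto(\bfs\lambda,\bfs\lambda\cdot\bfs x)$, shows its image closure is a hypersurface cut out by an irreducible $P_{V_s}\in k[\bfs\Lambda,\widetilde{\bfs Y}]$ (a specialization of the Chow form of $V_s$ itself, of dimension $n-s$, so in $n-s+1$ linear forms), and takes $A_s^1$ to be the coefficient of $\widetilde Y_{n-s+1}^{\delta_s}$; it then reads off explicit integral dependence equations for coordinate functions directly from $P_{V_s}$. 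You instead pass to the projective closure, reduce integrality to disjointness of $\overline{V_s}$ from the center $\{X_0=0,\,Y_1=\cdots=Y_{n-s}=0\}$, and take $A_s^1$ to be the Chow form of the smaller variety $\overline{V_s}^{\,\infty}\subset\Pp^{n-1}$ (dimension $n-s-1$, so in $n-s$ linear forms). Both arguments yield essentially the same polynomial (the paper's $A_s^1$ is, up to a constant, the Chow form of the part at infinity), but your version makes the geometric meaning of $A_s^1$ transparent and keeps the argument short, whereas the paper's heavier setup with $P_{V_s}$ is reused immediately in Proposition~3.3 to extract the discriminant $\rho_s$ and the parametrizations~\eqref{eq: Kronecker trick chow form with discrim}--\eqref{eq: Kronecker trick chow form with partial derivative specialized}, so it amortizes better across the section. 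One small point worth making explicit in your write-up: the hyperplane section $\overline{V_s}\cap\{X_0=0\}$ need not be geometrically reduced even when $\overline{V_s}$ is, so ``$\overline{V_s}^{\,\infty}$'' should be understood as the underlying reduced $k$-variety (which is defined over $k$ precisely because $k$ is perfect); its degree is still bounded by $\delta_s$ via the B\'ezout inequality, so the degree estimate $(n-s)\delta_s$ survives.
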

\begin{proof}
Without loss of generality, we may assume that $V_s$ is irreducible.
Consider the morphism
\begin{align}\label{eq:morph_chow}
  \Phi: \A^{(n-s+1)n}\times V_s  &\to
  \A^{(n-s+1)n}\times \A^{n-s+1} \\
        (\bfs\lambda,\bfs x)  &\mapsto  (\bfs
       \lambda,\bfs\lambda\cdot
       \bfs x).\nonumber
\end{align}
For $(\bfs\lambda,\bfs y)\in Im(\Phi)$, the fiber
$\Phi^{-1}(\bfs\lambda,\bfs y)$ is isomorphic to
the set
$$V_s\cap\{\bfs \lambda_1\cdot\bfs x=y_1,\ldots,
\bfs\lambda_{n-s+1}\cdot\bfs x=y_{n-s+1}\}.$$
For $\bfs \lambda_1,\ldots,\bfs\lambda_{n-s}$ generic,
the intersection $V_s\cap\{\bfs \lambda_1\cdot\bfs x=y_1,\ldots,
\bfs\lambda_{n-s}\cdot\bfs x=y_{n-s}\}$ has dimension zero. It follows
that a generic fiber $\Phi^{-1}(\bfs\lambda,\bfs y)$ has dimension
zero. Therefore, by the Theorem on the Dimension of Fibers (see, e.g.,
\cite[Chapter I, \S 6.3, Theorem 7]{Shafarevich94}) we conclude
that the Zariski closure $\overline{Im(\Phi)}$ has dimension
$(n-s+1)(n+1)-1$, namely it is a hypersurface of $\A^{(n-s+1)n}\times
\A^{n-s+1}$.

An equation for this hypersurface can be easily obtained
from a suitable specialization of the Chow form of $V_s$ (see
\cite[Proposition 3.1]{CaMa06a} or \cite[Section 3.1]{GiMa19} for details).
More precisely, $\overline{Im(\Phi)}$ is
defined by an irreducible polynomial $P_{V_s}\in
k[\bfs\Lambda,\widetilde{Y}_1\klk \widetilde{Y}_{n-s+1} ]$
satisfying the following conditions:
\begin{itemize}
  \item $\deg_{\widetilde{\bfs Y}}P_{V_s}=
  \deg_{\widetilde{Y}_i} P_{V_s}= \delta_s$ for $1\le i\le n-s+1$,
  \item $P_{V_s}$ is homogeneous of degree $\delta_s$ in each group
  of variables $(\bfs \Lambda_i,\widetilde{Y}_i)$ for $1\le i\le n-s+1$.
\end{itemize}

Let $A_s^1\in k[\bfs\Lambda]$ be the (nonzero) polynomial which
arises as coefficient of the monomial
$\widetilde{Y}_{n-s+1}^{\delta_s}$ in the polynomial $P_{V_s}$,
considering $P_{V_s}$ as an element of $k[\bfs
\Lambda][\widetilde{\bfs Y}]$. The above conditions imply $\deg
A_s^1=(n-s)\delta_s$. Furthermore, the equality $\deg_{\bfs
\Lambda_{n-s+1},\widetilde{Y}_{n-s+1}} P_{V_s}= \delta_s$ implies
that $A_s^1\in k[\bfs \Lambda^*]$.

Let $\bfs\lambda^*\in\A^{(n-s)n}$ be a point such that
$A_s^1(\bfs\lambda^*)\not=0$, and let $\bfs Y:=(Y_1\klk
Y_{n-s}):=\bfs\lambda^*\cdot \bfs X$. We claim that the condition of
the lemma holds. Indeed, let $\bfs w_1\klk \bfs w_n\in\A^n$ be
$\ck$--linearly independent elements and let $\ell_1:=\bfs w_1\cdot
\bfs X\klk \ell_n:=\bfs w_n\cdot \bfs X$. Since
$A_{1,s}^*:=A_s^1(\bfs \lambda^*)$ is a nonzero element of $\ck$,
the polynomial $P_{V_s}(\bfs \lambda^*,\bfs w_j,Y_1\klk
Y_{n-s},\ell_j)$ is an integral dependence equation for the
coordinate function induced by $\ell_j$ in the ring homomorphism
$R_s\to\ck[V_s]$ for $1\le j\le s$. Since
$\ck[\ell_1\klk\ell_n]=\ck[\xon]$,
the ring homomorphism $R_s\to\ck[V_s]$ is integral. 
\end{proof}

Given $\bfs\lambda^*$ as in the statement of Lemma
\ref{lemma: preproc: integral extension} and $(Y_1,\ldots,Y_{n-s})
:=\bfs\lambda^*\cdot\bfs X$, by localization at $R_s\setminus\{0\}$ we have the
$\ck$-algebra homomorphism
$$K_s:=\ck(Y_1\klk Y_{n-s})\to
  \ck(Y_1\klk Y_{n-s})\otimes_{R_s}\ck[V_s].$$
In particular, Lemma \ref{lemma: preproc: integral extension} implies
that $\ck[V_s]$ is a finite $R_s$--module and
hence $K_s\otimes_{R_s}\ck[V_s]$ is a
finite--dimensional $K_s$--vector space.
Furthermore, the dimension of $K_s\otimes_{R_s}\ck[V_s]$ as
$K_s$--vector space equals the rank of $\ck[V_s]$ as $R_s$--module.
Recall that a primitive element of this $\ck$-algebra extension
is an element $y\in K_s\otimes_{R_s}\ck[V_s]$ such that its minimal
polynomial has degree $\dim_{K_s}K_s\otimes_{R_s}\ck[V_s]$.

Suppose that $y\in \ck[V_s]$ is a primitive element of the
$\ck$-algebra extension $K_s\hookrightarrow K_s\otimes_{R_s}\ck[V_s]$.
Since $R_s$ is integrally closed, the minimal dependence equation of
$y\in\ck[V_s]$ over $K_s$ equals the minimal integral dependence equation
of $y$ over $R_s$ (see, e.g., \cite[Lemma II.2.15]{Kunz85}).

Now we obtain a condition which implies that the
projection $\pi_s:V_s\to\A^{n-s}$ defined by
$Y_1,\ldots,Y_{n-s}$ is a finite morphism and the
linear
form $Y_{n-s+1}$ is a primitive element of the
corresponding $K_s$--algebra extension
\begin{proposition}
\label{prop: preproc: Noether pos + prim elem} For $1\le s\le r$,
there exists a nonzero polynomial $A_s\in k[\bfs\Lambda]$ of degree
at most $2(n-s+1)\delta_s^2$ with the following property: for any
$\bfs\lambda\in\A^{(n-s+1)n}$ with $A_s(\bfs \lambda)\not=0$, if
$\bfs Y:=\bfs\lambda\cdot \bfs X:=(Y_1\klk Y_{n-s+1})$, then
\begin{enumerate}
\item the map $\pi_s:V_s\to \A^{n-s}$ defined by $Y_1\klk Y_{n-s}$
is a finite morphism,
\label{item: preproc: condition finite morphism}
  \item the linear form $Y_{n-s+1}$ induces a primitive element
  of the $\ck$-algebra extension $K_s\hookrightarrow
  K_s\otimes_{R_s}\ck[V_s]$. \label{item: preproc: condition primitive elem}
\end{enumerate}
\end{proposition}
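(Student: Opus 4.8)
The plan is to keep the polynomial $A_s^1\in k[\bfs\Lambda^*]$ produced in Lemma~\ref{lemma: preproc: integral extension} (which already gives assertion~\ref{item: preproc: condition finite morphism}) and to multiply it by a second factor $A_s^2\in k[\bfs\Lambda]$ controlling the primitive--element condition~\ref{item: preproc: condition primitive elem}, reading everything off the Chow form $P_{V_s}\in k[\bfs\Lambda,\widetilde Y_1\klk\widetilde Y_{n-s+1}]$ of $V_s$ already used in that proof. Recall that $P_{V_s}$ is irreducible, homogeneous of degree $\delta_s$ in each group $(\bfs\Lambda_i,\widetilde Y_i)$, satisfies $\deg_{\widetilde Y_{n-s+1}}P_{V_s}=\delta_s$, has $A_s^1$ as its coefficient of $\widetilde Y_{n-s+1}^{\delta_s}$, and vanishes on $P_{V_s}(\bfs\Lambda,\bfs\Lambda_1\cdot\bfs x\klk\bfs\Lambda_{n-s+1}\cdot\bfs x)$ for every $\bfs x\in V_s$.

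First I would record the geometric content of the specialization $\bfs\Lambda\mapsto\bfs\lambda$. Fix $\bfs\lambda$ with $A_s^1(\bfs\lambda^*)\ne 0$; set $Y_i:=\bfs\lambda_i\cdot\bfs X$ and $p_s(T):=P_{V_s}(\bfs\lambda,Y_1\klk Y_{n-s},T)\in R_s[T]$. Then $p_s$ has degree exactly $\delta_s$ in $T$ with leading coefficient $A_s^1(\bfs\lambda^*)\in\ck^{\times}$, and $p_s(y_{n-s+1})=0$ in $\ck[V_s]$, where $y_{n-s+1}$ is the class of $Y_{n-s+1}$. Since $\ck[V_s]$ is reduced, for generic $\bfs\lambda^*$ the algebra $K_s\otimes_{R_s}\ck[V_s]$ is the product of the function fields $\ck(\mathcal C)$ of the $\ck$--irreducible components $\mathcal C$ of $V_s$, and by the theory of Chow forms (cf.\ \cite{Heintz83}, \cite[Proposition~3.1]{CaMa06a}, \cite[Section~3.1]{GiMa19}) $p_s$ equals $A_s^1(\bfs\lambda^*)$ times the characteristic polynomial $\chi_s\in R_s[T]$ of the homothecy $\eta_{Y_{n-s+1}}$ on $K_s\otimes_{R_s}\ck[V_s]$; in particular $\dim_{K_s}\!\bigl(K_s\otimes_{R_s}\ck[V_s]\bigr)=\delta_s$. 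Hence $Y_{n-s+1}$ induces a primitive element of $K_s\hookrightarrow K_s\otimes_{R_s}\ck[V_s]$ precisely when $\chi_s$ (equivalently $p_s$) is squarefree, i.e.\ when $\mathrm{disc}_T(p_s)\ne 0$ in $R_s$.

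The crux is therefore to show that $\Delta_s:=\mathrm{disc}_T(P_{V_s})\in k[\bfs\Lambda,\widetilde Y_1\klk\widetilde Y_{n-s}]$ is not identically zero; note $\Delta_s$ is a genuine polynomial because $\mathrm{lc}_T(P_{V_s})=A_s^1$, and since the discriminant commutes with the specialization $\bfs\Lambda\mapsto\bfs\lambda$ whenever $A_s^1(\bfs\lambda^*)\ne 0$, one has $\mathrm{disc}_T(p_s)=\Delta_s(\bfs\lambda,Y_1\klk Y_{n-s})$. To see $\Delta_s\not\equiv 0$ I would argue that each $\ck(\mathcal C)$ is a finitely generated extension of the algebraically closed (hence perfect) field $\ck$, therefore separably generated; a generic choice of the free linear forms $Y_1\klk Y_{n-s}$ is then a separating transcendence basis of every $\ck(\mathcal C)$, and a generic linear form $Y_{n-s+1}$ is a separating primitive element of $\prod_{\mathcal C}\ck(\mathcal C)$ (it generates each $\ck(\mathcal C)$ separably over $K_s$, and its values on distinct components are generically distinct). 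This is exactly the statement that $\mathrm{disc}_T(p_s)\ne 0$ for generic $\bfs\lambda$, i.e.\ $\Delta_s\not\equiv 0$. I then let $A_s^2\in k[\bfs\Lambda]$ be any nonzero coefficient of $\Delta_s$ regarded as a polynomial in $\widetilde Y_1\klk\widetilde Y_{n-s}$, and set $A_s:=A_s^1\cdot A_s^2\in k[\bfs\Lambda]$. If $A_s(\bfs\lambda)\ne 0$ then $A_s^1(\bfs\lambda^*)\ne 0$, which yields assertion~\ref{item: preproc: condition finite morphism} by Lemma~\ref{lemma: preproc: integral extension} and puts us in the situation above, while $A_s^2(\bfs\lambda)\ne 0$ forces $\Delta_s(\bfs\lambda,Y_1\klk Y_{n-s})\ne 0$, hence $\mathrm{disc}_T(p_s)\ne 0$, hence assertion~\ref{item: preproc: condition primitive elem}. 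For the degree bound I use $\deg A_s^1\le(n-s)\delta_s$ from Lemma~\ref{lemma: preproc: integral extension} together with $\mathrm{disc}_T(P_{V_s})=\pm\mathrm{Res}_T(P_{V_s},\partial_T P_{V_s})/A_s^1$, $\deg P_{V_s}=(n-s+1)\delta_s$ and $\deg_T P_{V_s}=\delta_s$, so that the Sylvester determinant estimate gives
$$\deg\Delta_s\le(\delta_s-1)(n-s+1)\delta_s+\delta_s\bigl((n-s+1)\delta_s-1\bigr)-(n-s)\delta_s=2(n-s+1)\delta_s(\delta_s-1),$$
whence $\deg A_s\le(n-s)\delta_s+2(n-s+1)\delta_s(\delta_s-1)\le 2(n-s+1)\delta_s^2$.

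The main obstacle is establishing $\Delta_s\not\equiv 0$, i.e.\ ruling out that $P_{V_s}$ be inseparable in the variable $\widetilde Y_{n-s+1}$: one must know that some linear form is a separating primitive element for the generic fibre of $\pi_s$. This rests on separable generation of the function fields of the components of $V_s$, and is the point where positive characteristic must be handled with care — over the perfect field $k$ the hypothesis $({\sf H}_2)$ keeps $V_s$ (geometrically) reduced, so no inseparable degeneracy occurs. Everything else is formal manipulation of the Chow form and a routine discriminant--degree count.
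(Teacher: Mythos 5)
Your proof is correct and proceeds along the same general route as the paper's: both extract the extra degree-two condition from the Chow form $P_{V_s}$ and take $A_s$ to be $A_s^1$ multiplied by a coefficient of a discriminant/resultant in $\widetilde Y_{n-s+1}$, arriving at the bound $2(n-s+1)\delta_s^2$ by the same Sylvester degree count. Two of your intermediate steps, however, are genuinely different from the paper's. First, to show $\Delta_s=\mathrm{disc}_T(P_{V_s})\not\equiv 0$ you invoke separable generation of the component function fields over the perfect field $\ck$; the paper instead argues directly that for generic $(\bfs\lambda_1,\ldots,\bfs\lambda_{n-s},\bfs p^*)$ the fibre $V_s\cap\{Y_1=p_1,\ldots,Y_{n-s}=p_{n-s}\}$ consists of $\delta_s$ distinct points separated by a generic $Y_{n-s+1}$, citing \cite{Heintz83}. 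Both are correct; yours is more structural, the paper's is more concrete and meshes better with the later degree bookkeeping. Second, and more substantively, for assertion~\ref{item: preproc: condition primitive elem} you use the structure of $K_s\otimes_{R_s}\ck[V_s]$ as a product of separable field extensions together with Cayley--Hamilton: squarefree characteristic polynomial forces minimal polynomial of degree $\delta_s$. The paper instead runs the ``Kronecker trick'': it differentiates the identity $P_{V_s}^*(\bfs\Lambda_{n-s+1},\zeta_1,\ldots,\zeta_{n-s},\widehat Y_{n-s+1})=0$ with respect to $\Lambda_{n-s+1,k}$ and uses the B\'ezout relation $\rho_s^*\in(P_{V_s}^*,\partial_TP_{V_s}^*)$ to express each coordinate function $\xi_k$ as a polynomial in $y_{n-s+1}$ over $K_s$ once $\rho_s^*$ is inverted, which simultaneously proves primitivity and produces the rational parametrization \eqref{eq: Kronecker trick chow form with discrim specialized} used later in the paper. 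Your argument is shorter and cleaner for the present proposition, but does not yield that parametrization as a by-product; this is the real trade-off. Two small points of care in your write-up: your claim that $p_s$ is $A_s^1(\bfs\lambda^*)$ times the characteristic polynomial of $\eta_{Y_{n-s+1}}$ and that $\dim_{K_s}(K_s\otimes_{R_s}\ck[V_s])=\delta_s$ should at least be referenced (they are standard facts about Chow forms, but not proved in Lemma~\ref{lemma: preproc: integral extension}); and Lemma~\ref{lemma: preproc: integral extension} gives integrality but not dominance explicitly, so you should remark that dominance of $\pi_s$ restricted to each component follows because an integral ring map between rings of the same Krull dimension over a domain must be injective. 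Neither is a real gap, but the paper handles both explicitly.
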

\begin{proof}
Let $P_{V_s}\in k[\bfs\Lambda,\widetilde{\bfs Y}]$ be the polynomial
of the proof of Lemma \ref{lemma: preproc: integral extension}. Let
$(\bfs\lambda,\bfs y^*)\in\A^{(n-s+1)n}\times\A^{n-s}$, let $\bfs
Y:=(Y_1,\ldots,Y_{n-s+1}):=\bfs\lambda\cdot \bfs X$ and $\bfs
p^*:=(p_1,\ldots,p_{n-s})$. By the definition of $P_{V_s}$, the
zeros of $P_{V_s}(\bfs\lambda,\bfs p^*,\widetilde{Y}_{n-s+1})$ are the values
that $Y_{n-s+1}$ takes at the points of the set
\begin{equation}\label{eq: zero-dim fiber of V_s}
V_s\cap\{Y_1=p_1,\ldots,Y_{n-s}=p_{n-s}\}.
\end{equation}
According to \cite{Heintz83}, for a generic $(\bfs\lambda_1,\ldots,
\bfs\lambda_{n-s},\bfs p^*) \in\A^{(n-s)n}\times\A^{n-s}$, the
variety of \eqref{eq: zero-dim fiber of V_s} consists of $\delta_s$
points. Furthermore, for $\bfs\lambda_{n-s+1}$ generic, the linear
form $Y_{n-s+1}$ separates the points of this variety. It follows
that the polynomial $P_{V_s}(\bfs\lambda,\bfs
p^*,\widetilde{Y}_{n-s+1})$ has $\delta_s$ distinct roots in
$\overline{k}$, and therefore it is separable. In particular,
$P_{V_s}(\bfs\lambda,\bfs p^*,\widetilde{Y}_{n-s+1})$ is coprime to
its derivative $\partial P_{V_s}/\partial
\widetilde{Y}_{n-s+1}(\bfs\lambda,\bfs p^*,\widetilde{Y}_{n-s+1})$,
and the discriminant $\rho_s(\bfs\lambda,\bfs p^*)$ of
$P_{V_s}(\bfs\lambda,\bfs p^*,\widetilde{Y}_{n-s+1})$ is nonzero.
Furthermore, we have
$$\delta_s\le \deg_{\widetilde{Y}_{n-s+1}} P_{V_s}(\bfs\lambda,\bfs
p^*,\widetilde{Y}_{n-s+1})\le\deg_{\widetilde{Y}_{n-s+1}}
P_{V_s}=\delta_s.$$
%
We conclude that 
$P_{V_s}$ and $\partial
P_{V_s}/\partial \widetilde{Y}_{n-s+1}$ are relatively prime in
$k(\bfs\Lambda,\widetilde{\bfs Y}^*) [\widetilde{Y}_{n-s+1}]$,
and the discriminant
\begin{equation}\label{eq:def_disc_chow}
\rho_s:=\mbox{Res}_{\widetilde{Y}_{n-s+1}}(P_{V_s},\partial P_{V_s}
/\partial \widetilde{Y}_{n-s+1})\in k[\bfs\Lambda,\widetilde{\bfs
Y}^*]\end{equation}
of $P_{V_s}$ with respect to $\widetilde{Y}_{n-s+1}$ is nonzero. It
satisfies the degree estimates
\begin{itemize}
  \item $\deg_{\widetilde{\bfs Y}^*}\rho_s\le
(2\delta_s-1)\delta_s$,
  \item $\deg_{\bfs\Lambda_i,\widetilde{Y}_i}\rho_s=
(2\delta_s-1)\delta_s$ for $1\le i\le n-s$, \item
$\deg_{\bfs\Lambda_{n-s+1}}\rho_s\le (2\delta_s-1)\delta_s$.
\end{itemize}

Let $A_s^2\in k[\bfs\Lambda]$ be a nonzero coefficient of a monomial
of $\rho_s$, considering $\rho_s$ as an element of
$k[\bfs\Lambda][\widetilde{Y}_1\klk \widetilde{Y}_{n-s}]$, and let
$A_s:=A_s^1A_s^2$, where $A_s^1\in k[\bfs\Lambda^*]$ is the
polynomial of the statement of Lemma \ref{lemma: preproc: integral
extension}. Observe that $\deg A_s\le 2(n-s+1)\delta_s^2$. Let
$\bfs\lambda\in\A^{(n-s+1)n}$ satisfy the condition
$A_s(\bfs\lambda)\not=0$, and let $\bfs \lambda^*\in\A^{(n-s)n}$ be
the matrix consisting on the first $n-s$ rows of $\bfs\lambda$.
Denote $\bfs Y:=\bfs\lambda\cdot\bfs X$ and $\bfs
Y^*:=\bfs\lambda^*\cdot \bfs X$. By Lemma \ref{lemma: preproc:
integral extension} it is clear that the ring homomorphism
$R_s\to\overline{k}[V_s]$ is integral.

First we show that condition \eqref{item: preproc: condition primitive
elem} holds. If $P_{V_s}^*$ and $\rho_s^*$ are the polynomials obtained
from $P_{V_s}$ and $\rho_s$ by evaluating $\bfs\Lambda^*$ at
$\bfs\lambda^*$, then the definition of $A_s^1$ implies that
$\rho_s^*$ is a nonzero element of $k[\bfs\Lambda_{n-s+1},\bfs Y^*]$
which equals the discriminant of $P_{V_s}^*(\bfs\Lambda_{n-s+1},\bfs
Y^*,\widetilde{Y}_{n-s+1})$ with respect to $\widetilde{Y}_{n-s+1}$.

Let $\bfs \xi:=(\xi_1\klk \xi_n)$ be the vector of coordinate
functions of $V_s$ defined by $\bfs X$, let $\zeta_i:=\bfs
\lambda_i\cdot\bfs\xi$ for $1\le i\le n-s$ and
$\widehat{Y}_{n-s+1}:=\bfs \Lambda_{n-s+1}\cdot\bfs\xi$. From the
definition of $P_{V_s}$ we conclude that the identity
\begin{align}
0=&P_{V_s}^*(\bfs\Lambda_{n-s+1},\zeta_1\klk\!\zeta_{n-s},\widehat{Y}_{n-s+1})
\label{eq: def identity chow form}
=P_{V_s}^*(\bfs\Lambda_{n-s+1},\zeta_1,\ldots,\zeta_{n-s},\sum_{k=1}^n
\Lambda_{n-s+1,k}\,\xi_k)
\end{align}
holds in $\ck[\bfs\Lambda_{n-s+1}]\otimes_{\ck} \ck[V_s]$.
Following, e.g., \cite{AlBeRoWo96} or \cite{Rouillier97}, taking the
partial derivative with respect to the variable $\Lambda_{n-s+1,k}$
at the first and the last sides of \eqref{eq: def identity chow form} we deduce the
following identity in $\ck[\bfs\Lambda_{n-s+1}]\otimes_{\ck}
\ck[V_s]$ for $1\le k\le n$:
\begin{equation}
\label{eq: Kronecker trick chow form} \frac{\partial
P_{V_s}^*}{\partial
\widetilde{Y}_{n-s+1}}(\bfs\Lambda_{n-s+1},\zeta_1 \klk
\zeta_{n-s},\widehat{Y}_{n-s+1})\xi_k+\frac{\partial
P_{V_s}^*}{\partial \Lambda_{n-s+1,k}}
(\bfs\Lambda_{n-s+1},\zeta_1\klk \zeta_{n-s},\widehat{Y}_{n-s+1})=0.
\end{equation}
Since $\rho_s^*$ is the discriminant of $P_{V_s}^*$ with respect to
$\widetilde{Y}_{n-s+1}$, it can be written as a linear combination
of $P_{V_s}^*$ and $\partial P_{V_s}^*/\partial
\widetilde{Y}_{n-s+1}$. Therefore, \eqref{eq: def identity chow
form} and \eqref{eq: Kronecker trick chow form} imply
\begin{equation}
\label{eq: Kronecker trick chow form with discrim}
\rho_s^*(\bfs\Lambda_{n-s+1},\zeta_1\klk \zeta_{n-s})\,\xi_k+ P_k
(\bfs\Lambda_{n-s+1},\zeta_1\klk
\zeta_{n-s},\widehat{Y}_{n-s+1})=0,\end{equation}
where $P_k$ is an element of $k[\bfs\Lambda_{n-s+1},\om Z
{n-s+1}]$ for $1\le k\le n$. Substituting $\lambda_{n-s+1,k}$ for
$\Lambda_{n-s+1,k}$ for $1\le k\le n$ in \eqref{eq: Kronecker trick
chow form with discrim}, we conclude that the powers of the coordinate
function of $\ck[V_s]$ defined by $Y_{n-s+1}$ generate
$K_s\otimes_{\ck[Y_1\klk Y_{n-s}]}\ck[V_s]$ as a
$K_s$--vector space. In particular, its minimal polynomial
over $K_s$ has degree equal to $\dim_{K_s}(K_s\otimes_{R_s}\ck[V_s])$.
In other words, $Y_{n-s+1}$
induces a primitive element of the
$K_s$--algebra extension $K_s\hookrightarrow
K_s\otimes_{R_s}\ck[V_s]$.
This proves condition \eqref{item: preproc: condition primitive
elem}.

Now we show that condition \eqref{item: preproc: condition finite
morphism} holds. For this purpose, it remains to prove that the
mapping $\pi_s:V_s\to \A^{n-s}$ is dominant. As
$\rho_s^*(\lambda_{n-s+1},Y_1,\ldots,Y_{n-s})$ is nonzero, there
exists $\bfs y^*\in\A^{n-s}$ with $\rho_s^*(\lambda_{n-s+1},\bfs
y)\not=0$. Let $y_{n-s+1}$ be any root in $\overline{k}$ of
$P_{V_s}(\bfs\lambda,\bfs y^*,T)$ and $\bfs y:=(\bfs
y^*,y_{n-s+1})$. Finally, in view of \eqref{eq: Kronecker trick chow
form with discrim}, let $x_k\in\overline{k}$ be defined as
$$x_k:=\frac{P_k
(\bfs\lambda_{n-s+1},\bfs y)}{\rho_s(\bfs\lambda,\bfs y)}$$
for $1\le k\le n$ and $\bfs x:=(x_1,\ldots,x_n)$. It is easy to see
that $\bfs x\in V_s$ and $\pi_s(\bfs x)=\bfs y^*$ (see, e.g.,
\cite[Proposition 6.3]{CaMa06} for details).
This finishes the proof of the proposition.
\end{proof}

From the proof of Proposition \ref{prop: preproc: Noether pos + prim
elem}, one deduces that if $Y_1,\ldots,Y_{n-s}$ are linear forms
as in the statement, then
$$\dim_{K_s}(K_s\otimes_{R_s}\ck[V_s])=\delta_s.$$
Furthermore, it follows readily
that the polynomial $A_s$ has degree at
most $2\delta_s^2$ in $\bfs\Lambda_i$ for $1\le i\le n-s+1$.

Now let $Y_1\klk Y_{n-s+1}$ be linear forms as in the statement
of the proposition. From identity \eqref{eq: Kronecker trick chow form with
discrim}, we conclude that there exist polynomials $Q_1\klk Q_n\in
k[Y_1\klk Y_{n-s+1}]$, each of degree at most $\delta_s-1$, such that
\begin{equation}
\label{eq: Kronecker trick chow form with discrim specialized}
\rho_s(Y_1\klk Y_{n-s})X_k\equiv Q_k (Y_1\klk
Y_{n-s+1})\end{equation}
in $V_s$ for $1\le k\le n$, where $\rho_s:=\mbox{Res}_{
Y_{n-s+1}}(P_{V_s}(\bfs\lambda,Y_1\klk Y_{n-s+1}),\frac{\partial
P_{V_s}}{\partial Y_{n-s+1}}(\bfs\lambda,Y_1\klk Y_{n-s+1}))$, and
$P_{V_s}(\bfs\lambda,Y_1\klk Y_{n-s+1})\in k[Y_1\klk Y_{n-s+1}]$ is
the minimal polynomial of $Y_{n-s+1}$ in the ring extension
$R_s\hookrightarrow \overline{k}[V_s]$.

Similarly, from identity \eqref{eq:
Kronecker trick chow form}, we deduce the existence of polynomials
$\widehat{Q}_1\klk \widehat{Q}_n\in k[Y_1\klk Y_{n-s+1}]$, each of degree
at most $\delta_s-1$, such that
\begin{equation}
 \label{eq: Kronecker trick chow form with partial derivative specialized}
\frac{\partial P_{V_s}}{\partial Y_{n-s+1}}(\bfs\lambda,Y_1\klk
Y_{n-s+1})X_k\equiv \widehat{Q}_k (Y_1\klk Y_{n-s+1})\end{equation}
in $V_s$ for $1\le k\le n$.

From now on, we work with linear forms $Y_1,\ldots,
Y_{n-s+1}$ that satisfy the conditions of Proposition
\ref{prop: preproc: Noether pos + prim elem}. To this end,
we consider the localization defined by the condition
$A_s\not=0$, that is, instead of considering the
morphism $\Phi$ of the proof of Lemma \ref{lemma: preproc: integral extension},
we consider the following morphism:
\begin{align}
\Phi_s: (\A^{(n-s+1)n}\times V_s)\cap\{A_s\not=0\} &\to
(\A^{(n-s+1)n}\times\A^{n-s})\cap\{A_s\not=0\}\label{eq: definition Phi_s}
\\(\bfs\lambda,\bfs x)&\mapsto\big(\bfs\lambda,
Y_1(\bfs x)\klk Y_{n-s}(\bfs x)\big).\notag
\end{align}
We have the following result.
\begin{lemma}\label{lemma: Phi_s is finite}
$\Phi_s$ is a finite morphism.
\end{lemma}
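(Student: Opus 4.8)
The plan is to verify the two defining properties of a finite morphism recalled in Section~\ref{section: notation, notations}: that $\Phi_s$ is dominant---in fact surjective---and that it induces a module-finite extension of coordinate rings. Note first that $\Phi_s$ is well defined, since the function $A_s$ on the target depends only on $\bfs\Lambda$, which is preserved by $\Phi_s$. Write $\bfs\xi=(\xi_1\klk\xi_n)$ for the coordinate functions of $V_s$, so that the coordinate ring of $\A^{(n-s+1)n}\times V_s$ is $\mathcal A:=\overline{k}[\bfs\Lambda][\xi_1\klk\xi_n]$, and set $\widehat Y_i:=\bfs\Lambda_i\cdot\bfs\xi$ for $1\le i\le n-s$. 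The coordinate ring of $\A^{(n-s+1)n}\times\A^{n-s}$ is the polynomial ring $\mathcal B:=\overline{k}[\bfs\Lambda][Z_1\klk Z_{n-s}]$, and the comorphism of $\Phi_s$ is the $\overline{k}[\bfs\Lambda]$-algebra homomorphism sending $Z_i$ to $\widehat Y_i$. After localizing at $A_s\in\overline{k}[\bfs\Lambda]$, it remains to show that $\Phi_s$ is onto $(\A^{(n-s+1)n}\times\A^{n-s})\cap\{A_s\neq0\}$ and that $\mathcal A_{A_s}$ is a finite module over the image of $\mathcal B_{A_s}$.

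Surjectivity will be immediate: for every $\bfs\lambda$ with $A_s(\bfs\lambda)\neq0$, the restriction of $\Phi_s$ to $\{\bfs\lambda\}\times V_s$ is the projection $\pi_s:V_s\to\A^{n-s}$, which is a finite---hence surjective---morphism by the first assertion of Proposition~\ref{prop: preproc: Noether pos + prim elem}; thus $\Phi_s$ covers every slice $\{\bfs\lambda\}\times\A^{n-s}$. In particular the comorphism is injective, and since the $\xi_k$ generate $\mathcal A$ as a $\overline{k}[\bfs\Lambda]$-algebra, it suffices to exhibit for each $k$ a monic integral dependence equation for $\xi_k$ over $\overline{k}[\bfs\Lambda]_{A_s}[\widehat Y_1\klk\widehat Y_{n-s}]$. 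For this I would reuse the Chow form $P_{V_s}\in k[\bfs\Lambda,\widetilde Y_1\klk\widetilde Y_{n-s+1}]$ from the proof of Lemma~\ref{lemma: preproc: integral extension}, now keeping \emph{all} rows of $\bfs\Lambda$ generic: since $P_{V_s}$ vanishes on $\overline{Im(\Phi)}$ (see~\eqref{eq:morph_chow}), one has $P_{V_s}(\bfs\Lambda,\bfs\Lambda_1\cdot\bfs\xi\klk\bfs\Lambda_{n-s+1}\cdot\bfs\xi)=0$ in $\mathcal A$. Specializing $\bfs\Lambda_{n-s+1}$ to the $k$-th standard basis vector $\bfs e_k\in\A^n$ and $\widetilde Y_i$ to $\widehat Y_i$ $(1\le i\le n-s)$---a legitimate substitution, as this is an identity of regular functions---and using that $P_{V_s}$ is homogeneous of degree $\delta_s$ in the block $(\bfs\Lambda_{n-s+1},\widetilde Y_{n-s+1})$ with the coefficient of $\widetilde Y_{n-s+1}^{\delta_s}$ equal to $A_s^1\in k[\bfs\Lambda^*]$, I obtain that $\xi_k$ is a root of a polynomial of degree exactly $\delta_s$ in $T$ whose leading coefficient is $A_s^1(\bfs\Lambda^*)$---a unit once $A_s^1$ is inverted, hence once $A_s$ is inverted, because $A_s^1\mid A_s$. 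Dividing by this unit gives the required monic equation for $\xi_k$ over $\overline{k}[\bfs\Lambda]_{A_s}[\widehat Y_1\klk\widehat Y_{n-s}]$; this is nothing but identity~\eqref{eq: def identity chow form} read before specializing the first $n-s$ rows, and it makes $\mathcal A_{A_s}$ a finite $\mathcal B_{A_s}$-module.

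The only point that I expect to need some care is matching the above with the paper's definition of finite morphism when $V_s$ is reducible, where the property must be checked on each irreducible component. This should be routine: either the same computation goes through verbatim with the Chow form of each component of $V_s$ in place of $P_{V_s}$, or one descends the module-finite extension $\mathcal B_{A_s}\hookrightarrow\mathcal A_{A_s}$ to the quotient by each minimal prime of $\mathcal A$, injectivity persisting because $\Phi_s$ is already dominant on the corresponding component. In essence, Lemma~\ref{lemma: Phi_s is finite} is the version of Lemma~\ref{lemma: preproc: integral extension} and Proposition~\ref{prop: preproc: Noether pos + prim elem} relative to $\A^{(n-s+1)n}$, obtained by carrying the matrix $\bfs\Lambda$ of indeterminates through those arguments rather than a fixed generic specialization.
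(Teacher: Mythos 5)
Your proof is correct and takes essentially the same approach as the paper's: both first observe that $\Phi_s$ is surjective slice-by-slice because the restriction to $\{\bfs\lambda\}\times V_s$ is the Noether projection $\pi_s$, and both then produce the integral dependence equations by specializing the last row $\bfs\Lambda_{n-s+1}$ of the Chow form $P_{V_s}$ and using that its leading coefficient $A_s^1$ divides $A_s$ and hence becomes a unit after localization. The only cosmetic difference is that the paper specializes $\bfs\Lambda_{n-s+1}$ to arbitrary $\overline{k}$-linearly independent vectors $\bfs w_1,\ldots,\bfs w_n$ (and gets integrality of $\ell_j=\bfs w_j\cdot\bfs X$), whereas you take the standard basis vectors $\bfs e_k$ and show integrality of the coordinate functions $\xi_k$ directly; your closing remark about the reducible case is also consistent with the paper's component-wise definition of finiteness and is handled by the same global integral equations.
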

\begin{proof}
First we show that $\Phi_s$ is surjective. Let $(\bfs\lambda,\bfs
y)\in(\A^{(n-s+1)n}\times\A^{n-s})\cap\{A_s\not=0\}$. Since
$A_s(\bfs\lambda)\not=0$, Proposition \ref{prop: preproc: Noether
pos + prim elem} shows that the map $\pi_s:V_s\to\A^{n-s}$
defined by $\bfs Y^*:=\bfs \lambda^*\cdot \bfs X$ is a finite
morphism. In particular, there exists $\bfs x\in V_s$ with
$\pi_s(\bfs x)=\bfs\lambda^*\cdot \bfs x=\bfs y$. This shows that
$\Phi_s(\bfs\lambda,\bfs x)=(\bfs\lambda,\bfs\lambda^*\cdot\bfs x)=
(\bfs\lambda, \bfs y)$.

Now let $\bfs w_1\klk \bfs w_n\in\A^n$ be $\ck$--linearly
independent elements and let $\ell_1:=\bfs w_1\cdot \bfs X\klk
\ell_n:=\bfs w_n\cdot \bfs X$. By definition, the polynomial
$P_{V_s}\in \ck[\bfs\Lambda,\bfs\Lambda\cdot\bfs X]$ of the proof
of Lemma \ref{lemma: preproc: integral extension} vanishes on
$\A^{(n-s+1)n}\times V_s$. Considering $P_{V_s}$ as an element of
$\ck[\bfs\Lambda][\bfs\Lambda\cdot \bfs X]=\ck[\bfs\Lambda][\widetilde{\bfs
Y}]$, it is of degree $\delta_s$. Further, by construction, the
(nonzero) coefficient $A_s^1 \in\ck[\bfs\Lambda]$ of the monomial
$\widetilde{Y}_{n-s+1}^{\delta_s}$ in $P_{V_s}$ is a divisor of
$A_s$.

As $A_s$ is a unit of the localization
$\ck[\bfs\Lambda,\bfs\Lambda^*\cdot\bfs X]_{A_s}$, the equality
$P_{V_s}(\bfs \Lambda^*,\bfs w_j,\bfs\Lambda^*\cdot\bfs X,\ell_j)=0$
is an integral dependence equation for the coordinate function
induced by $\ell_j$ in the ring extension
$\ck[\bfs\Lambda,\bfs\Lambda^*\cdot\bfs
X]_{A_s}\hookrightarrow\ck[\bfs\Lambda][V_s]_{A_s}$ for $1\le j\le
s$. Since $\ck[\ell_1\klk\ell_n]=\ck[\xon]$, we deduce the assertion
of the lemma.
\end{proof}
%
%
\subsection{Lifting fibers not meeting a discriminant}
\label{subsec: lifting fibers}
Assume we are given linearly independent linear forms $Y_1,\dots,
Y_{n}\in k[\bfs X]$ defining variables in Noether position with
respect to $V_s$, as in  Proposition \ref{prop: preproc: Noether pos
+ prim elem}. Let $\pi_s: V_s \rightarrow \mathbb{A}^{n-s}$ be the
finite morphism defined by $Y_1,\dots, Y_{n-s}$, and let $J_s\subset
k[\bfs X]$ denote the ideal
$J_s:=((F_1,\ldots,F_s):G^\infty,Y_1-p_1,\dots, Y_{n-s}-p_{n-s})$.
We say that a point $\bfs p^s:=(p_1,\ldots,p_{n-s})\in k^{n-s}$ is a
\emph{lifting point} of $\pi_s$ with respect to the system
$F_1=0,\dots, F_s=0, G\neq 0$ if $J_s$ is radical. We call the
zero--dimensional variety $V_{\bfs p^s}:=\pi_s^{-1}(\bfs p^s)$ the
\emph{lifting fiber} of $\bfs p^s$
\footnote{Let $\pi_s^{-1}(\bfs p^s)\subset\{G\not=0\}$. A straightforward computation 
shows that $\pi_s^{-1}(\bfs p^s)$ is unramified if and only if 
$\det(\partial F_i^{\bfs Y} (\bfs x)/\partial Y_{n-s+j})_{1\le i\le s,1\le j\le s}\not=0$ 
for all $\bfs x \in\pi_s^{-1}(\bfs p^s)$. By \cite[Theorem 18.15]{Eisenbud95},
this condition is equivalent to the ideal $(F_1,\ldots,F_s,Y_1-p_1,\dots, Y_{n-s}-p_{n-s})$ being
a radical ideal.}.

The notion of lifting fiber in this framework was first introduced
in \cite{GiHaHeMoMoPa97}. It was formalized in \cite{HeKrPuSaWa00},
where it was shown how a Kronecker representation of a lifting fiber
of an equidimensional variety can be used to tackle certain
fundamental algorithmic problems (see also \cite{GiLeSa01},
\cite{Schost03}, \cite{BoMaWaWa04}, \cite{PaSa04}, and
\cite{JeMaSoWa09} for extensions, refinements and algorithmic
developments related to lifting fibers). The notion is also
prominent in numerical algebraic geometry, where it is known as a
{\em witness set} (see, e.g., \cite{SoWa05}; see also
\cite{SoVeWa08} for a dictionary between lifting fibers and witness
sets).

In this section, we show that through a generic choice of
coordinates, one obtains a lifting point $\bfs p^s\in k^{n-s}$ such
that the corresponding lifting fiber $V_{\bfs p^s}$ does not
intersect the hypersurface $\{G=0\}$ for $1\le s \le r$.
Additionally, we ensure that the lifting fiber $V_{\bfs
p^{s+1}}:=\pi_{s+1}^{-1}(\bfs p^{s+1})$, for $0\le s \le r-1$, has
the following property: for any point $\bfs q\in V_{\bfs p^{s+1}}$,
the morphism $\pi_s$ is unramified at $\pi_s(\bfs q)$. This ensures
that no point of $V_{\bfs p^{s+1}}$ lies in the locus of $V_s$ which
cannot be parametrized by the Kronecker representation of $V_s$
under consideration, thereby leading to significant algorithmic
simplifications. To this end, we must control those points $\bfs
p\in\A^{n-s}$ whose fibers $V_{\bfs p}$, under any admissible change
of coordinates, either intersect the hypersurface $\{G=0\}$ or fail
to be lifting points. Both conditions can be captured by the
vanishing of a certain hypersurface $\{H=0\}\subset
(\A^{(n-s+1)n}\times V_s)\cap\{A_s\not=0\}$.

The approach developed in this section builds upon and extends the
methods introduced in \cite{HeMaWa01} for the case $k=\Q$ and in
\cite{CaMa06a} for finite fields. In contrast with these earlier
works, which deal with closed varieties, we consider here the more
general setting of locally closed sets over a perfect field $k$.
This broader framework requires a refined analysis of the geometric
conditions under consideration, leading to improved degree bounds
for the hypersurfaces defining the exceptional parameter choices.
The organization of this subsection follows the structure of
\cite{HeMaWa01,CaMa06a}, but the arguments are adapted and
strengthened to accommodate these generalizations.

In the following technical result, we show that the ``defective''
points $\bfs p\in\A^{n-s}$ lie in a hypersurface of ``low'' degree.
This improvement is critical for tightening the estimates in
\cite[Lemma 1$(iii)$]{HeMaWa01} and \cite[Lemma 3.2]{CaMa06a}.
\begin{lemma}\label{lemma: preproc: degree estimate} With notations
and assumptions as above, fix $s$ with $1\le s\le r$. Let $A_s$ be
the polynomial from Proposition \ref{prop: preproc:
Noether pos + prim elem} and let $H\in k[\bfs\Lambda,\bfs X]\setminus
k[\bfs \Lambda]$ be a polynomial with $\deg_{\bfs X} H\le D_{\bfs
X}$ and $\deg_{\bfs \Lambda} H\le D_{\bfs\Lambda}$. Suppose that the
Zariski closure ${V}_s^H$ of $(\A^{(n-s+1)n}\times
V_s)\cap\{H=0,A_s\not=0\}$ is empty or of pure dimension
$(n-s+1)(n+1)-2$. Then the Zariski closure of the image of ${V}_s^H$
under the morphism $\Phi_s$ of \eqref{eq: definition Phi_s} is
contained in a hypersurface of $\A^{(n-s+1)n}\times\A^{n-s}$ defined
by a polynomial $B_s^H\in k[\bfs\Lambda,\widetilde{Y}_1\klk
\widetilde{Y}_{n-s}]$ with
$$\deg_{\widetilde{\bfs Y}^*}B_s^H\le D_{\bfs X}\delta_s,\quad
\deg_{\bfs \Lambda}B_s^H\le (D_{\bfs \Lambda}+(n-s)D_{\bfs
X})\delta_s.$$
\end{lemma}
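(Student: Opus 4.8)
The plan is to realize $B_s^H$ as (the non‑spurious part of) a resultant obtained by eliminating the primitive‑element variable from the Chow form of $V_s$, and then to estimate its two partial degrees separately: the one in $\widetilde{\bfs Y}^*$ by a B\'ezout/projection argument, the one in $\bfs\Lambda$ by a degree count after discarding the spurious factors. The existence of a polynomial $B_s^H$ is already clear from a dimension count: $\Phi_s$ is a finite morphism (Lemma \ref{lemma: Phi_s is finite}), both its source and its target have dimension $(n-s+1)(n+1)-1$, and $V_s^H$ is empty or of pure dimension $(n-s+1)(n+1)-2$; since finite morphisms preserve dimension on irreducible components, $\overline{\Phi_s(V_s^H)}$ is empty or a hypersurface of $\A^{(n-s+1)n}\times\A^{n-s}$. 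Thus the real content is the two degree bounds, and for these I would make $B_s^H$ explicit.

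By Proposition \ref{prop: preproc: Noether pos + prim elem} and the identity \eqref{eq: Kronecker trick chow form with discrim specialized} (in its form with $\bfs\Lambda$ kept generic), on the open set $\{A_s\neq0\}$ the Chow‑form polynomial $P_{V_s}(\bfs\Lambda,\widetilde{\bfs Y}^*,T)$, with $T:=\widetilde Y_{n-s+1}$, is monic of $T$‑degree $\delta_s$ after division by its leading coefficient $A_s^1$, and each coordinate of $V_s$ is rationally parametrized by $\rho_s(\bfs\Lambda,\widetilde{\bfs Y}^*)\,X_k\equiv Q_k(\bfs\Lambda,\widetilde{\bfs Y}^*,T)$ with $\deg_T Q_k\le\delta_s-1$. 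Substituting $X_k=Q_k/\rho_s$ into $H$ and clearing the denominator $\rho_s^{D_{\bfs X}}$ produces a polynomial $\widehat H(\bfs\Lambda,\widetilde{\bfs Y}^*,T)$ with $\deg_T\widehat H\le D_{\bfs X}(\delta_s-1)$ whose $\bfs\Lambda$‑ and $\widetilde{\bfs Y}^*$‑degrees are controlled by those of $H$, $\rho_s$ and the $Q_k$. For $A_s(\bfs\lambda)\neq0$, a point $(\bfs\lambda,\bfs y^*)$ lies in $\Phi_s(V_s^H)$ precisely when $H(\bfs\lambda,\cdot)$ vanishes at some point of the fibre $\pi_{\bfs\lambda^*}^{-1}(\bfs y^*)$, i.e. precisely when the norm $N:=\prod_{\bfs x\in\pi_{\bfs\lambda^*}^{-1}(\bfs y^*)}H(\bfs\lambda,\bfs x)$ vanishes; this norm is a polynomial in $k[\bfs\Lambda,\widetilde{\bfs Y}^*]$, it is nonzero because the hypothesis on $\dim V_s^H$ forbids $H$ from vanishing on a component of $\A^{(n-s+1)n}\times V_s$, and one has $\operatorname{Res}_T(P_{V_s},\widehat H)=(A_s^1)^{\deg_T\widehat H}\,\rho_s^{\,D_{\bfs X}\delta_s}\,N$. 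Since $\Phi_s(V_s^H)\cap\{A_s\neq0\}$ is dense in $\overline{\Phi_s(V_s^H)}$ (no component of $V_s^H$ lies in $\{A_s=0\}$, and $A_s$ involves only $\bfs\Lambda$), we get $\overline{\Phi_s(V_s^H)}\subseteq\{N=0\}$, so it suffices to set $B_s^H:=N$ and bound $\deg_{\widetilde{\bfs Y}^*}N$ and $\deg_{\bfs\Lambda}N$.

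For the $\widetilde{\bfs Y}^*$‑degree I would fix a generic $\bfs\lambda$ with $A_s(\bfs\lambda)\neq0$: then $\{N(\bfs\lambda,\cdot)=0\}=\pi_{\bfs\lambda^*}\!\bigl(V_s\cap\{H(\bfs\lambda,\cdot)=0\}\bigr)$, and pushing forward the divisor cut by $\{H(\bfs\lambda,\cdot)=0\}$ on $V_s$ along the finite linear projection $\pi_{\bfs\lambda^*}$ does not increase degree, so $\deg_{\widetilde{\bfs Y}^*}N=\deg N(\bfs\lambda,\cdot)\le\deg\bigl(V_s\cap\{H(\bfs\lambda,\cdot)=0\}\bigr)\le D_{\bfs X}\delta_s$ by the B\'ezout inequality \eqref{eq: Bezout}. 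For the $\bfs\Lambda$‑degree I would take $\bfs\Lambda$‑degrees in the identity $\operatorname{Res}_T(P_{V_s},\widehat H)=(A_s^1)^{\deg_T\widehat H}\rho_s^{D_{\bfs X}\delta_s}N$ and solve for $\deg_{\bfs\Lambda}N$, using the multidegree data from Lemma \ref{lemma: preproc: integral extension} and Proposition \ref{prop: preproc: Noether pos + prim elem} ($P_{V_s}$ homogeneous of degree $\delta_s$ in each group $(\bfs\Lambda_i,\widetilde Y_i)$, $\deg_{\bfs\Lambda}A_s^1=(n-s)\delta_s$, together with the degree bounds for $\rho_s$) and the bilinearity of the $n-s$ relations $\widetilde Y_i=\bfs\Lambda_i\cdot\bfs X$, which make $H$ contribute its $\bfs\Lambda$‑degree $D_{\bfs\Lambda}$ once and convert each unit of its $\bfs X$‑degree into one unit of $\bfs\Lambda$‑degree through each of the $n-s$ free coordinates; this yields $\deg_{\bfs\Lambda}N\le(D_{\bfs\Lambda}+(n-s)D_{\bfs X})\delta_s$.

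I expect the delicate point to be exactly this last count: one must verify that $\operatorname{Res}_T(P_{V_s},\widehat H)$ is \emph{exactly} divisible by $(A_s^1)^{\deg_T\widehat H}\rho_s^{D_{\bfs X}\delta_s}$, with no further cancellation, so that the $\bfs\Lambda$‑degree of $N$ is the difference of the $\bfs\Lambda$‑degrees of the resultant and of this explicit factor — this is where the homogeneity of the Chow form and of its discriminant $\rho_s$ is indispensable, and it is precisely where the estimate improves on \cite[Lemma 1]{HeMaWa01} and \cite[Lemma 3.2]{CaMa06a}. An equivalent, possibly cleaner, route, following the Chow‑form eliminant estimates of \cite{CaMa06a} and \cite{GiMa19}, is to realize $N$ directly as the value of a mixed eliminant of $V_s$ at the $n-s$ affine forms $\bfs\Lambda_i\cdot\bfs X-\widetilde Y_i$ and at $H$, and to read both bounds off the multidegree of that eliminant, at the cost of invoking the general degree estimates for such objects.
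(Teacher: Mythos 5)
Your proposal diverges from the paper's proof in an essential way, and the divergence is exactly where the estimate is delicate. The paper does not construct $B_s^H$ as a resultant; it observes that $\Phi_s(V_s^H)$ is a hypersurface defined by a minimal-degree polynomial $P$, passes to the algebraic closure $\mathcal{K}$ of $k(\bfs\Lambda)$ so that $V_s$ becomes a variety $W_s$ over $\mathcal{K}$ whose \emph{height} (in the sense of \cite[Definition 2.2]{DaKrSo13}, with $k[\bfs\Lambda]$ as the ``arithmetic'' base ring) is zero, identifies $P$ up to a constant with the minimal-degree equation of the projection of $W_s\cap\{H=0\}$, and then reads both degree bounds off \cite[Theorem 3.1]{DaKrSo13} together with the arithmetic B\'ezout inequality \cite[Corollary 2.20]{DaKrSo13}. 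That height machinery is precisely what produces the linear-in-$\delta_s$ bound $\deg_{\bfs\Lambda}B_s^H\le(D_{\bfs\Lambda}+(n-s)D_{\bfs X})\delta_s$; the paper emphasizes that this is the improvement over \cite{HeMaWa01} and \cite{CaMa06a}.

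Your $\widetilde{\bfs Y}^*$-degree argument (specialize a generic $\bfs\lambda$, push the divisor cut by $H$ forward along the finite projection, apply B\'ezout) is correct and in spirit identical to the paper's step $\deg_{\widetilde{\bfs Y}^*}B_s^H\le\deg(W_s\cap\{H=0\})\le D_{\bfs X}\delta_s$.

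The $\bfs\Lambda$-degree argument, however, has a genuine gap — one you flag yourself, and which I do not think can be filled along the lines you sketch. The resultant identity $\operatorname{Res}_T(P_{V_s},\widehat H)=(A_s^1)^{\deg_T\widehat H}\rho_s^{D_{\bfs X}\delta_s}N$ is fine, but ``solving for $\deg_{\bfs\Lambda}N$'' requires an exact value (or a very sharp upper bound) for $\deg_{\bfs\Lambda}\operatorname{Res}_T(P_{V_s},\widehat H)$, not merely the Sylvester-determinant bound $\deg_T\widehat H\cdot\deg_{\bfs\Lambda}P_{V_s}+\deg_TP_{V_s}\cdot\deg_{\bfs\Lambda}\widehat H$. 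Running the numbers with $\deg_{\bfs\Lambda}P_{V_s}\le(n-s+1)\delta_s$, $\deg_T\widehat H\le D_{\bfs X}(\delta_s-1)$, and $\deg_{\bfs\Lambda}\widehat H\le D_{\bfs\Lambda}+D_{\bfs X}\deg_{\bfs\Lambda}\rho_s$, the generic resultant bound exceeds $\deg_{\bfs\Lambda}\bigl((A_s^1)^{\deg_T\widehat H}\rho_s^{D_{\bfs X}\delta_s}\bigr)$ by roughly $D_{\bfs\Lambda}\delta_s+D_{\bfs X}\delta_s^2$, not by $(D_{\bfs\Lambda}+(n-s)D_{\bfs X})\delta_s$: you lose a factor of order $\delta_s/(n-s)$ unless the Sylvester bound overshoots by exactly the deficit, and establishing \emph{that} is as hard as the original problem. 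This is not a technicality; it is the quadratic-versus-linear-in-$\delta_s$ distinction that separates the present lemma from its predecessors. Multi-homogeneity of $P_{V_s}$ does not rescue the argument because $\widehat H$ is not multi-homogeneous. The route you call ``cleaner'' at the end — realizing $N$ as a mixed eliminant and citing general degree estimates for such objects — is essentially the route the paper actually takes, but you would have to invoke the sharper estimates of \cite{DaKrSo13} rather than those of \cite{CaMa06a}, which are precisely the ones being improved upon here.
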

\begin{proof}
Without loss of generality we may assume that $V_s$ is irreducible
and ${V}_s^H$ has pure
dimension $(n-s+1)(n+1)-2$. 

By Lemma \ref{lemma: Phi_s is finite}, the  morphism $\Phi_s$ is
finite. It follows that $\Phi_s(V_s^H)$ is a hypersurface of
$(\mathbb{A}^{(n-s+1)n}\times \mathbb{A}^{n-s})\cap\{A_s\not=0\}$,
and hence its Zariski closure is a hypersurface of
$\mathbb{A}^{(n-s+1)n}\times \mathbb{A}^{n-s}$. Since both $V_s^H$
and $\Phi_s$ are defined over $k$, there exists a polynomial $P\in
k[\bfs\Lambda, \widetilde{Y}_1\klk \widetilde{Y}_{n-s}]$ of minimal
degree whose vanishing defines the Zariski closure of
$\Phi_s(V_s^H)$, where $\widetilde{Y}_1\klk \widetilde{Y}_{n-s}$ are
new indeterminates. In particular,
$P(\bfs\Lambda,\bfs\Lambda^*\cdot\bfs X)$ vanishes on $V_s^H$, and
therefore, it belongs to the radical of the ideal
$(F_1,\ldots,F_s,H)$ in $k[\bfs\Lambda,\bfs X]_{A_s}$, and {\em a
fortiori}, to the radical of $(F_1,\ldots,F_s,H):A_s^\infty$ in
$k[\bfs\Lambda,\bfs X]$ (see, e.g., \cite[Chapter IV, \S
8]{ZaSa58}). The minimality of $P$ implies that $A_s$ and
$P$ are coprime in $k[\bfs\Lambda,\bfs X]$.

Next, let $\mathcal{K}$ be an algebraic closure of $k(\bfs\Lambda)$,
and let $W_s\subset \A^n_{\mathcal{K}}$ be the affine variety obtained from
$V_s$ by extension of coefficients, namely, $W_s$ is defined by the
ideal $I(V_s)\otimes_k \mathcal{K}$. Let
$\Phi_{s,\mathcal{K}}:W_s\to\A^{n-s}_\mathcal{K}$ be the corresponding
morphism, which remains finite.

Under our assumptions,
${W_s}\cap \{H=0\}$ has pure dimension $n-s-1$. Hence,
the image $\Phi_{s,\mathcal{K}}({W_s}\cap \{H=0\})$
is a hypersurface of $\A^{n-s}(\mathcal{K})$.
Observe that $W_s$, $H$ and $\Phi_{s,\mathcal{K}}$ are
defined over $k(\bfs\Lambda)$, which implies that
$\Phi_{s,\mathcal{K}}({W_s}\cap \{H=0\})$ is also defined over
$k(\bfs \Lambda)$, therefore by a polynomial
$B_s^H\in k(\bfs\Lambda)[\widetilde{Y}_1\klk \widetilde{Y}_{n-s}]$
of minimal degree. We may assume that
$B_s^H$ belongs to $k[\bfs\Lambda][\widetilde{Y}_1\klk
\widetilde{Y}_{n-s}]$ and it is primitive as a polynomial of
$k[\bfs\Lambda][\widetilde{Y}_1\klk\widetilde{Y}_{n-s}]$.

By the Hilbert Nullstellensatz, $B_s^H(\bfs\Lambda, \bfs
\Lambda^*\cdot\bfs{X})$ lies in the radical of $(F_1,\dots, F_s,
H)$ in $k(\bfs\Lambda) [\bfs{X}]$, and $B_s^H$ is of minimal degree
in $k(\bfs\Lambda)[\widetilde{Y}_1\klk \widetilde{Y}_{n-s}]$
with this property. 
Observe that the ideal defined by $(F_1,\dots, F_s, H)$ in
$k[\bfs\Lambda]_{A_s}[\bfs{X}]$ is the contraction of the ideal
defined by $(F_1,\dots, F_s, H)$ in $k(\bfs\Lambda) [\bfs{X}]$. By
\cite[Chapter IV, \S 8]{ZaSa58}, the radical of the former is the
contraction of the radical of the latter. We conclude that
$B_s^H(\bfs\Lambda, \bfs\Lambda^*\cdot{\bfs X})$ lies in the radical
of the ideal $(F_1,\ldots,F_s,H)$ in $k[\bfs\Lambda,\bfs X]_{A_s}$,
and thus in the radical of the ideal $(F_1,\dots, F_s,
H):A_s^\infty$ in $k[\bfs\Lambda, \bfs{X}]$.

On the other hand, considering the polynomial $P$ above as a
polynomial in $k(\bfs\Lambda)[\bfs X]$, we have that
$P(\bfs\Lambda,\bfs\Lambda^*\cdot\bfs X)$ vanishes on ${W_s}\cap
\{H=0\}$, and thus $P(\bfs\Lambda,\widetilde{Y}_1\klk
\widetilde{Y}_{n-s})$ vanishes on $\Phi_{s,\mathcal{K}}({W_s}\cap
\{H=0\})$. As a consequence, the definition of $B_s^H(\bfs\Lambda,
\widetilde{Y}_1\klk \widetilde{Y}_{n-s})$ implies that
$B_s^H(\bfs\Lambda, \widetilde{Y}_1\klk \widetilde{Y}_{n-s})$
divides $P(\bfs\Lambda,\widetilde{Y}_1\klk \widetilde{Y}_{n-s})$ in
$k(\bfs\Lambda)[\widetilde{Y}_1\klk \widetilde{Y}_{n-s}]$. In
addition, the fact that $B_s^H(\bfs
\Lambda,\bfs\Lambda^*\cdot\bfs{X})$ lies in the radical ideal of
$(F_1,\dots, F_s, H):A_s^\infty$ in $k[\bfs\Lambda,\bfs X]$,
together with the definition of $P$, imply that
$P(\bfs\Lambda,\widetilde{Y}_1\klk \widetilde{Y}_{n-s})$ divides
$A_s(\bfs \Lambda)B_s^H(\bfs\Lambda, \widetilde{Y}_1\klk
\widetilde{Y}_{n-s})$, and thus $B_s^H(\bfs\Lambda,
\widetilde{Y}_1\klk \widetilde{Y}_{n-s})$, in
$k[\bfs\Lambda][\widetilde{Y}_1\klk \widetilde{Y}_{n-s}]$.
Furthermore, since $B_s^H$ is primitive as a polynomial of
$k[\bfs\Lambda][\widetilde{Y}_1\klk \widetilde{Y}_{n-s}]$, we
conclude that $P$ is primitive as well. It follows that both $P$ and
$B_s^H$ have the same degree in $\widetilde{Y}_1\klk
\widetilde{Y}_{n-s}$, and being both primitive as polynomials in
$k[\bfs\Lambda][\widetilde{Y}_1\klk \widetilde{Y}_{n-s}]$, they must
differ by a nonzero constant in $k$.

To estimate
the degrees of this polynomial, we apply \cite[Theorem
3.1]{DaKrSo13}. According to this result, there
exist a polynomial $B_s^H$ with the bounds:
\begin{align*}
\deg_{\widetilde{\bfs Y}^*}B_s^H&\le \deg (W_s\cap \{H=0\})\le D_{\bfs X}\delta_s,\\
\deg_{\bfs \Lambda}B_s^H&\le h(W_s\cap \{H=0\})+
\deg (W_s\cap \{H=0\})(n-s),\end{align*}
where $h$ is the height as defined in \cite[Definition
2.2]{DaKrSo13}. Since $h(W_s)=0$ by definition,
by \cite[Corollary 2.20]{DaKrSo13} we have
$$h(W_s\cap \{H=0\})\le D_{\bfs X}\left(h(W_s) +\deg W_s\frac{D_{\bfs \Lambda}}{D_{\bfs X}}\right)=
D_{\bfs \Lambda}\deg W_s.$$
It follows that
$$\deg_{\bfs \Lambda}B_s^H\le (D_{\bfs \Lambda}+(n-s)D_{\bfs
X})\delta_s,$$
which yields the desired estimate.
\end{proof}

Now we apply Lemma \ref{lemma: preproc: degree estimate}
to control the set of points which fail to satisfy the desired conditions.
To ensure that $\bfs p\in\A^{n-s}$
is a lifting point of $V_s$ and that $\bfs p^*\in\A^{n-s-1}$
is a lifting point of $V_{s+1}$, we introduce the following
polynomials $D_s,D_{s+1}\in k[\bfs\Lambda,\bfs X]$:
$$D_s:=\det\begin{pmatrix}
  \Lambda_{1,1} & \dots & \Lambda_{1,n} \\
  \vdots &  & \vdots \\
  \Lambda_{n-s,1} & \dots & \Lambda_{n-s,n} \\
  \frac{\partial F_1}{\partial X_1} & \dots &
  \frac{\partial F_1}{\partial X_n} \\
  \vdots &  & \vdots \\
    \frac{\partial F_s}{\partial X_1} & \dots &
    \frac{\partial F_s}{\partial X_n}
\end{pmatrix},\quad
D_{s+1}:=\det\begin{pmatrix}
  \Lambda_{1,1} & \dots & \Lambda_{1,n} \\
  \vdots &  & \vdots \\
  \Lambda_{n-s-1,1} & \dots & \Lambda_{n-s-1,n} \\
  \frac{\partial F_1}{\partial X_1} & \dots &
  \frac{\partial F_1}{\partial X_n} \\
  \vdots &  & \vdots \\
    \frac{\partial F_{s+1}}{\partial X_1} & \dots &
    \frac{\partial F_{s+1}}{\partial X_n}
\end{pmatrix}.$$
We show that the polynomials $G$, $D_s$ and $D_{s+1}$ meet the requirements
of Lemma \ref{lemma: preproc: degree estimate}.
\begin{lemma}\label{lemma: preproc: D_s cuts well}
The following assertions hold:
\begin{enumerate}
  \item The Zariski closure of $(\A^{(n-s+1)n} \times
V_s)\cap\{G=0,A_s\not=0\}$ is empty or of pure dimension $(n-s+1)(n+1)-2$.
  \item The Zariski closure of $(\A^{(n-s+1)n} \times
V_s)\cap\{D_s=0,A_s\not=0\}$ is empty or of pure dimension
$(n-s+1)(n+1)-2$.
  \item The Zariski closure of
$(\A^{(n-s)n}\times V_{s+1})\cap\{D_{s+1}=0,A_{s+1}\not=0\}$ is
empty or of pure dimension $(n-s)(n+1)-2$.
\end{enumerate}
\end{lemma}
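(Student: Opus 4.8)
The three assertions have a common shape, and the plan is to reduce all of them to a single non-vanishing statement. I will use two elementary facts about $\overline{k}$--varieties: (a) if $Z$ is empty or pure of dimension $e$ and $U$ is open, then $\overline{Z\cap U}$ is empty or pure of dimension $e$; and (b) if $Z$ is pure-dimensional, then so is $\A^{M}\times Z$, its irreducible components being the products $\A^{M}\times C$ indexed by the components $C$ of $Z$. Writing $(M,V_{\bullet},\Theta)$ for $((n-s+1)n,V_s,G)$ in case (1), for $((n-s+1)n,V_s,D_s)$ in case (2), and for $((n-s)n,V_{s+1},D_{s+1})$ in case (3), I would apply Krull's principal ideal theorem on each component $\A^{M}\times C$ of $\A^{M}\times V_{\bullet}$ and then fact (a) with $U=\{A_{\bullet}\neq0\}$; since $V_{\bullet}$ is pure-dimensional, $\dim(\A^{M}\times C)$ is the same for all $C$, and this reduces the lemma to the following claim: \emph{for every irreducible component $C$ of $V_{\bullet}$, the polynomial $\Theta$ does not vanish identically on $\A^{M}\times C$.} Granting it, the closure in question is empty or pure of dimension $\dim(\A^{M}\times V_{\bullet})-1$, which is $(n-s+1)(n+1)-2$ in cases (1)--(2) and $(n-s)(n+1)-2$ in case (3).

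For (1), $G$ does not involve $\bfs\Lambda$, so $G$ vanishes identically on $\A^{(n-s+1)n}\times C$ if and only if $C\subseteq\{G=0\}$. Now $V_s$ is by definition the Zariski closure of $W_s:=V(F_1,\dots,F_s)\cap\{G\neq0\}$, and a direct check (using that $F_1,\dots,F_s$ vanish on $\overline{W_s}=V_s$) gives $V_s\cap\{G\neq0\}=W_s$; hence $W_s$ is open and dense in $V_s$, so it meets each irreducible component $C$ of $V_s$ in a nonempty (hence dense) open subset of $C$. Since $W_s\subseteq\{G\neq0\}$, no $C$ can lie in $\{G=0\}$, which proves (1).

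For (2) and (3) the mechanism is the same; I describe (2), case (3) being obtained verbatim after replacing $F_1,\dots,F_s$ by $F_1,\dots,F_{s+1}$, $V_s$ by $V_{s+1}$, and $W_s$ by $W_{s+1}:=V(F_1,\dots,F_{s+1})\cap\{G\neq0\}$ (so that $\bfs\Lambda$ contributes $n-s-1$ rows and the Jacobian block has $s+1$ rows). Expanding $D_s$ by the generalized Laplace rule along its first $n-s$ rows gives $D_s=\sum_{I}\pm\,m_I(\bfs\Lambda^{*})\,c_I(\bfs X)$, the sum running over the $(n-s)$--element subsets $I\subseteq\{1,\dots,n\}$, where $m_I$ is the corresponding maximal minor of $\bfs\Lambda^{*}$ and $c_I$ is the complementary $s\times s$ minor of the Jacobian matrix $J_s:=(\partial F_i/\partial X_j)_{1\le i\le s,\,1\le j\le n}$. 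The minors $m_I$ are linearly independent over $\overline{k}$ (their leading monomials are pairwise distinct), so $D_s$ vanishes identically on $\A^{(n-s+1)n}\times C$ if and only if every $c_I$ vanishes on $C$, that is, if and only if $\rank J_s<s$ at every point of $C$. Thus it remains to show that each component $C$ of $V_s$ contains a point at which $J_s$ has rank $s$; this is the step I expect to require the most care, and it is where hypotheses $({\sf H}_1)$--$({\sf H}_2)$ and the perfectness of $k$ are used. By $({\sf H}_1)$, $W_s$ is a complete intersection of pure dimension $n-s$, hence Cohen--Macaulay and without embedded components; by $({\sf H}_2)$ its defining ideal is radical in $k[\bfs X]_G$, and since $k$ is perfect this ideal remains radical after extension of scalars to $\overline{k}$, so $W_s$ (with its reduced structure as a subset of $\A^n$) is generically smooth along each of its irreducible components. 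By the Jacobian criterion for complete intersections (cf.\ \cite[Theorem 18.15]{Eisenbud95}), smoothness of $W_s$ at a point is equivalent to $J_s$ having rank $s$ there, so on each component of $W_s$ the locus $\{\rank J_s=s\}$ is a dense open subset. Finally, as in the proof of (1), the components of $W_s$ are exactly the sets $W_s\cap C$, each dense in the corresponding component $C$ of $V_s$; hence every component of $V_s$ contains a point with $\rank J_s=s$, which proves (2). Case (3) follows in the same way from the analogous properties of $W_{s+1}$.
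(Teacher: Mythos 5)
Your proof is correct and takes essentially the same approach as the paper's: decompose $\A^{M}\times V_{\bullet}$ into irreducible cylinders $\A^{M}\times C$, argue that the cutting polynomial does not vanish identically on any of them (for (2)--(3), via generic smoothness of $W_s$ along each component and the Jacobian criterion, using perfectness of $k$ so radicality persists over $\overline{k}$), and conclude by Krull's principal ideal theorem together with density of the open locus $\{A_{\bullet}\neq 0\}$. The only stylistic difference is that you characterize the identical vanishing of $D_s$ on $\A^{M}\times C$ via its generalized Laplace expansion along the $\bfs\Lambda$-rows, whereas the paper evaluates $D_s(\bfs\Lambda,\bfs x)$ directly at a nonsingular point $\bfs x\in C$ off $\{G=0\}$ and observes the gradients $\nabla F_1(\bfs x),\ldots,\nabla F_s(\bfs x)$ are linearly independent there, so $D_s(\bfs\Lambda,\bfs x)$ is a nonzero polynomial in $\bfs\Lambda$.
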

\begin{proof}
We begin with the first assertion. Let
$V_s=\mathcal{C}_1\cup\cdots\cup\mathcal{C}_N$ be the decomposition
of $V_s$ into irreducible components. By definition, each component
$\mathcal{C}_j$ has dimension $n-s$ and it is not contained in
$\{G=0\}$. Therefore, $\mathcal{C}_j\cap\{G=0\}$ is either empty or has pure
dimension $n-s-1$. It follows that
$$(\A^{(n-s+1)n} \times V_s)\cap\{G=0\}=
\bigcup\limits_{i=1}^{\,\,N}
\A^{(n-s+1)n}\times(\mathcal{C}_i\cap\{G=0\})$$
is either empty or has pure dimension $(n-s+1)n+n-s-1=(n-s+1)(n+1)-2$.
Since this variety has a cylindrical structure, the subset $\{A_s\not=0\}$
is Zariski dense open within it, which proves the first assertion.

For the second assertion, observe that
$$\A^{(n-s+1)n}\times V_s=\bigcup\limits_{i=1}^{\,\,N}
\A^{(n-s+1)n}\times\mathcal{C}_i$$
is a decomposition into irreducible components. Take any component
$\A^{(n-s+1)n}\times\mathcal{C}$, and let
$\bfs x\in\mathcal{C}$ be a nonsingular point of $V_s$ with $G(\bfs x)\not=0$.
Since $F_1,\ldots,F_s$ define a radical ideal in $k[X_1,\ldots,X_n]_G$,
the gradients $\nabla F_1(\bfs x),\ldots,\nabla F_s(\bfs x)$ are
linearly independent. Thus,
$D_s(\bfs\Lambda,\bfs x)\not=0$, and there exists
$\bfs\lambda\in\A^{(n-s+1)n}$ with $D_s(\bfs\lambda,\bfs x)\not=0$.
This shows that there exists a point $(\bfs\lambda,\bfs
x)\in\A^{(n-s+1)(n+1)}\times \mathcal{C}$ not belonging to the
hypersurface $\{D_s=0\}$. On the other hand, $D_s(\bfs 0,\bfs x)=0$,
where $\bfs 0$ represents the zero matrix of $\A^{(n-s+1)n}$. This
implies that $(\A^{(n-s+1)n}\times V_s)\cap\{D_s=0\}$ is of pure
dimension $(n-s+1)(n+1)-2$, and hence the Zariski closure of
$(\A^{(n-s+1)n}\times V_s) \cap\{D_s=0,A_s\not=0\}$ is either empty
or has pure dimension $(n-s+1)(n+1)-2$.

The third assertion follows similarly by applying the same
reasoning to $V_{s+1}$, $A_{s+1}$ and $D_{s+1}$.
\end{proof}

Now consider the following morphisms:
\begin{align*}
\Phi_s: (\A^{(n-s+1)n}\times V_s)\cap\{A_s\not=0\} &\to
(\A^{(n-s+1)n}\times\A^{n-s})\cap\{A_s\not=0\}
\\(\bfs\lambda,\bfs x)&\mapsto\big(\bfs\lambda,
Y_1(\bfs x)\klk Y_{n-s}(\bfs x)\big),\\[2ex]
\Phi_{s+1}:(\A^{(n-s)n} \times V_{s+1})\cap\{A_{s+1}\not=0\}&\to
(\A^{(n-s)n}\times\A^{n-s-1})\cap\{A_{s+1}\not=0\}\\
(\bfs\lambda^*,\bfs x)&\mapsto\big(\bfs\lambda^*, Y_1(\bfs
x),\dots,Y_{n-s-1}(\bfs x)\big).
\end{align*}
As shown in Lemma \ref{lemma: Phi_s is finite}, the morphism $\Phi_s$
is finite. The same argument applies to show that
$\Phi_{s+1}$ is also finite.

In view of Lemma \ref{lemma: preproc: D_s cuts well},
we may assume without loss of generality that
$(\A^{(n-s+1)n}\times V_s)\cap\{D_s=0,A_s \not=0\}$ has pure
dimension $(n-s+1)(n+1)-2$ (or pure codimension 1
in $(\A^{(n-s+1)n}\times V_s)\cap\{A_s\not=0\}$).
It follows that $\Phi_s(\{D_s=0\})$ is a
hypersurface of $(\A^{(n-s+1)n}\times\A^{n-s})\cap\{A_s\not=0\}$,
defined by the polynomial $B_s^D$, that is,
$$\Phi_s(\{D_s=0,A_s\not=0\})=\{B_s^D=0, A_s\not=0\}.$$
This hypersurface consists of all the pairs
$(\bfs\lambda,\bfs p)\in(\A^{(n-s+1)n}\times\A^{n-s})\cap\{A_s\not=0\}$
such that $\bfs p$ fails to be a lifting point of $V_s$ under the
change of coordinates $\bfs Y:=\bfs\lambda\cdot \bfs X$.

Let $\rho_s\in k[\bfs\Lambda,\widetilde{Y}_1\klk \widetilde{Y}_{n-s}]$ be the
(nonzero) discriminant of $V_s$, as
defined in (\ref{eq:def_disc_chow}). Now we address the more delicate
condition: for any
point $\bfs p\in V_{\bfs p^*}$, the morphism $\pi_s$ must be
unramified at $\pi_s(\bfs p)$. Considering both ${B}_s^D$ and
$\rho_s$ as elements of $k[\bfs\Lambda,\bfs\Lambda^*\cdot\bfs X]$,
we have the following result.
\begin{lemma}\label{lemma: preproc: zariski closure rho_s B_s}
The Zariski closure of $(\A^{(n-s+1)n} \times
V_{s+1})\cap\{(\rho_s{B}_s^D)(\bfs\Lambda,\bfs\Lambda^*\cdot\bfs
X)=0,A_{s+1}\not=0\}$ is empty or has pure dimension
$(n-s+1)(n+1)-3$.
\end{lemma}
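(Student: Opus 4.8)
The plan is to pass to the irreducible components of $\A^{(n-s+1)n}\times V_{s+1}$ and show on each one that the pull-back polynomial does not vanish identically. Since $V_{s+1}$ has pure dimension $n-s-1$, the product $\A^{(n-s+1)n}\times V_{s+1}$ has pure dimension $(n-s+1)n+(n-s-1)=(n-s+1)(n+1)-2$, and if $V_{s+1}=\mathcal{C}_1\cup\cdots\cup\mathcal{C}_M$ is its decomposition into irreducible $\ck$--components, each $\A^{(n-s+1)n}\times\mathcal{C}_j$ is irreducible of this dimension. As $A_{s+1}$ depends only on $\bfs\Lambda$, the set $\{A_{s+1}\not=0\}$ is dense open, so it suffices to prove that $(\rho_sB_s^D)(\bfs\Lambda,\bfs\Lambda^*\cdot\bfs X)$ does not vanish identically on any $\A^{(n-s+1)n}\times\mathcal{C}_j$: then, intersecting an irreducible variety with a hypersurface not containing it yields a set of pure codimension one, hence of pure dimension $(n-s+1)(n+1)-3$, and passing to Zariski closures gives the claim. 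Since the ideal of $\A^{(n-s+1)n}\times\mathcal{C}_j$ is prime and $\rho_sB_s^D$ is a product, I would reduce further to exhibiting, for each $j$, a single pair $(\bfs\lambda,\bfs x_0)\in\A^{(n-s+1)n}\times\mathcal{C}_j$ with $A_s(\bfs\lambda)A_{s+1}(\bfs\lambda)\not=0$ and $(\rho_sB_s^D)(\bfs\lambda,\bfs\lambda^*\cdot\bfs x_0)\not=0$.

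To produce such a pair I would first record that a generic point $\bfs x_0$ of $\mathcal{C}_j$ is a \emph{nonsingular point of $V_s$}: it lies in the dense subset $\{F_1=\cdots=F_{s+1}=0,\ G\not=0\}$ of $\mathcal{C}_j$, and since $(F_1\klk F_{s+1})$ is radical in $k[\bfs X]_G$ the scheme $V_{s+1}$ is reduced near $\bfs x_0$, hence --- $\ck$ being perfect --- generically smooth, so $\nabla F_1(\bfs x_0)\klk\nabla F_{s+1}(\bfs x_0)$ are linearly independent; a fortiori so are $\nabla F_1(\bfs x_0)\klk\nabla F_s(\bfs x_0)$, and the Jacobian criterion makes $\bfs x_0$ a nonsingular point of $V_s$. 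In particular $\mathcal{C}_j\not\subseteq\mathrm{Sing}(V_s)$. The technical core is then the following genericity statement, which adapts the arguments of \cite[Lemma 1]{HeMaWa01} and \cite[Lemma 3.2]{CaMa06a} together with the proof of Proposition \ref{prop: preproc: Noether pos + prim elem}: for generic $\bfs\lambda\in\A^{(n-s+1)n}$ and a generic point $\bfs x_0\in\mathcal{C}_j$, the fibre $\pi_{s,\bfs\lambda}^{-1}(\bfs\lambda^*\cdot\bfs x_0)$ consists of $\delta_s$ distinct nonsingular points of $V_s$, at each of which $\pi_{s,\bfs\lambda}$ is unramified, separated by the linear form $Y_{n-s+1}=\bfs\lambda_{n-s+1}\cdot\bfs X$. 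The count $\delta_s$ and the separating property for generic $\bfs\lambda_{n-s+1}$ are already available from the proof of Proposition \ref{prop: preproc: Noether pos + prim elem} (via \cite{Heintz83}); the new point is that the fibre through a generic point of the \emph{proper} subvariety $\mathcal{C}_j$ --- not merely through a generic point of $V_s$ --- still avoids $\mathrm{Sing}(V_s)$ and the smooth ramification locus of $\pi_{s,\bfs\lambda}$. I would establish this by a dimension count on the incidence varieties
$$\{(\bfs\lambda,\bfs x_0,\bfs b):\bfs x_0\in\mathcal{C}_j,\ \bfs b\in\mathrm{Sing}(V_s),\ \bfs\lambda_i\cdot(\bfs b-\bfs x_0)=0\ \ (1\le i\le n-s)\}$$
and the analogous one in which $\bfs b$ ranges over the nonsingular points of $V_s$ with $T_{\bfs b}V_s\cap\ker(\bfs\lambda_1\klk\bfs\lambda_{n-s})\not=0$: using $\dim\mathrm{Sing}(V_s)\le n-s-1=\dim\mathcal{C}_j$, that for $\bfs b\not=\bfs x_0$ the $n-s$ conditions $\bfs\lambda_i\cdot(\bfs b-\bfs x_0)=0$ cut a subspace of codimension $n-s$ in $\A^{(n-s+1)n}$, and that the ramification condition cuts one further dimension (if it happens not to, then all affine tangent spaces of $V_s$ pass through $\bfs x_0$, so $V_s$ is a union of linear subspaces, its smooth ramification locus is empty and only the first incidence variety is relevant), one checks that each incidence variety has dimension strictly smaller than $\dim(\A^{(n-s+1)n}\times\mathcal{C}_j)$, so its projection onto $\A^{(n-s+1)n}\times\mathcal{C}_j$ is not dominant.

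Granting this, I would fix $(\bfs\lambda,\bfs x_0)$ with $\bfs x_0\in\mathcal{C}_j$, $A_s(\bfs\lambda)A_{s+1}(\bfs\lambda)\not=0$, and with the fibre $\pi_{s,\bfs\lambda}^{-1}(\bfs\lambda^*\cdot\bfs x_0)$ of the above form. Because this fibre is reduced and $Y_{n-s+1}$--separated, the univariate polynomial $P_{V_s}(\bfs\lambda,\bfs\lambda^*\cdot\bfs x_0,T)$ is squarefree; since $A_s(\bfs\lambda)\not=0$ keeps its degree equal to $\delta_s$, its discriminant $\rho_s(\bfs\lambda,\bfs\lambda^*\cdot\bfs x_0)$ is nonzero. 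On the other hand, $\{B_s^D=0,\,A_s\not=0\}=\Phi_s(\{D_s=0,\,A_s\not=0\})$, so $B_s^D(\bfs\lambda,\bfs\lambda^*\cdot\bfs x_0)=0$ would force some point $\bfs x'$ of the fibre to satisfy $D_s(\bfs\lambda,\bfs x')=0$, i.e.\ (by the Jacobian criterion, exactly as in the proof of Lemma \ref{lemma: preproc: D_s cuts well}) to be either a singular point of $V_s$ or a nonsingular point at which $\pi_{s,\bfs\lambda}$ ramifies --- both excluded by the choice of the fibre. Hence $B_s^D(\bfs\lambda,\bfs\lambda^*\cdot\bfs x_0)\not=0$, so $(\rho_sB_s^D)(\bfs\lambda,\bfs\lambda^*\cdot\bfs x_0)\not=0$, and therefore $(\rho_sB_s^D)(\bfs\Lambda,\bfs\Lambda^*\cdot\bfs X)$ does not vanish identically on $\A^{(n-s+1)n}\times\mathcal{C}_j$; combined with the first paragraph this proves the lemma. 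I expect the main obstacle to be the genericity statement of the second paragraph: controlling the fibres of $\pi_{s,\bfs\lambda}$ over the image of a \emph{proper} subvariety $\mathcal{C}_j$ --- which in general need not meet a generic fibre --- is exactly where one must go beyond the closed--variety arguments of \cite{HeMaWa01,CaMa06a}, and it is there that the perfectness of $k$ (through the generic smoothness of the reduced varieties $V_{s+1}$) is essential.
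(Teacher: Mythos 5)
Your proof follows essentially the same route as the paper: reduce to each irreducible component $\A^{(n-s+1)n}\times\mathcal{C}_j$ (the cylindrical structure disposing of the open condition), pick a nonsingular point $\bfs x_0$ of $V_s$ on $\mathcal{C}_j$ (using perfectness of $k$ plus radicality of $(F_1\klk F_{s+1})_G$), and argue that for $\bfs\lambda$ generic the fibre of $\pi_{s,\bfs\lambda}$ through $\bfs x_0$ is unramified, reduced, and separated by $Y_{n-s+1}$, so that both $\rho_s$ and $B_s^D$ are nonzero at $(\bfs\lambda,\bfs\lambda^*\cdot\bfs x_0)$. The paper phrases this by contradiction via the finite morphism $\Phi_s$, but that is cosmetic; the decomposition and the key point (the generic point of the component avoids $\mathrm{Sing}(V_s)$) are identical. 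The one substantive difference is how the generic-fibre statement is justified: the paper simply cites \cite[\S 5A]{Mumford95} for the Bertini-type fact that a generic linear slice through a fixed nonsingular point of $V_s$ yields an unramified, separated fibre, while you propose to re-derive this from scratch via dimension counts on incidence varieties. Your route is more self-contained but considerably longer, and your parenthetical justification that the ramification condition cuts codimension one (``all affine tangent spaces of $V_s$ pass through $\bfs x_0$, so $V_s$ is a union of linear subspaces\dots'') is not the right argument --- the correct and much simpler reason is that for each fixed nonsingular $\bfs b$ the locus $\{\bfs\lambda: T_{\bfs b}V_s\cap\ker(\bfs\lambda_1\klk\bfs\lambda_{n-s})\ne 0\}$ is a proper determinantal subvariety of $\A^{(n-s+1)n}$. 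You also complicate the incidence argument by quantifying over $\bfs\lambda$ and $\bfs x_0$ simultaneously; the paper's ``fix $\bfs x\in\mathcal{D}_0$ first, then choose $\bfs\lambda$ generic depending on $\bfs x$'' reduces the dimension count to a single point rather than a family and is the form in which the Bertini citation applies directly.
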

\begin{proof}
Since $\A^{(n-s+1)n}\times V_{s+1}$ has a cylindrical structure,
no irreducible component is contained
in $\{A_s=0\}$, so the open set $\{A_s\not=0\}$ intersects
each component densely. Suppose by contradiction that there
exists an irreducible component $\mathcal{D}$ of
$\A^{(n-s+1)n}\times V_{s+1}$ such that 
%
$$\mathcal{D}\cap\{A_s\not=0\}\subset\Phi_s^{-1}(\{\rho_sB_s^D=0\}).
$$
Then
$$\Phi_s(\mathcal{D}\cap\{A_s\not=0\})\subset \Phi_s\circ
\Phi_s^{-1}(\{\rho_sB_s^D=0\})\subset\{\rho_s
B_s^D=0\} \cap\{A_s\not=0\}.$$
%
Observe that
$\mathcal{D}$ can be expressed as $\mathcal{D}=\A^{(n-s+1)n}\times
\mathcal{D}_0$, where $\mathcal{D}_0$ is an irreducible component of
$V_{s+1}$. Let $\bfs x\in\mathcal{D}_0$ be a nonsingular point of
$V_{s+1}$ with $G(\bfs x)\not=0$, which is also a nonsingular point of $V_s$. Hence, for a
generic choice of $\bfs\lambda\in\A^{(n-s+1)n}$, we have $A_s(\bfs\lambda)\not=0$, the fiber
$W_s:=V_s\cap \{\bfs \lambda^*\cdot\bfs X=\bfs \lambda^*\cdot\bfs
x\}$ is unramified (see, e.g., \cite[\S 5A]{Mumford95}), and the
linear form $\bfs \lambda_{n-s+1}\cdot\bfs X$ separates the points
of $W_s$. This shows that any point $\bfs y\in V_s\cap
\{\bfs\lambda^*\cdot\bfs X=\bfs \lambda^*\cdot\bfs x\}$ satisfies the
conditions $D_s(\bfs\lambda,\bfs y)\not=0$ and $\rho_s(\bfs
\lambda,\bfs y)\not=0$. We conclude that the point $(\bfs
\lambda,\bfs \lambda^*\cdot\bfs x)$ belongs to the set
$$\Phi_s(\mathcal{D} \cap\{A_s\not=0\})\setminus(\{\rho_sB_s^D=0\} \cap\{A_s\not=0\}),$$
contradicting the inclusion
$\Phi_s(\mathcal{D} \cap\{A_s\not=0\})\subset \{\rho_s B_s^D=0\} \cap\{A_s\not=0\}$.

Hence, $\mathcal{D}\cap\{\rho_sB_s^D=0\}$ is empty or has pure
dimension $(n-s+1)(n+1)-3$ for any irreducible component
$\mathcal{D}$ of $\A^{(n-s+1)n}\times V_{s+1}$, which readily proves
the lemma.
\end{proof}

Now we are ready to prove the main result of this section, namely,
we show that there exists a polynomial condition of
``low'' degree which guarantees that, for any choice of the coefficients
defining the linear forms $Y_1,\ldots,Y_{n-s+1}$, and the points
$\bfs p^s:=(p_1,\ldots,p_{n-s})$ and $\bfs p^{s+1}:=(p_1,\ldots,p_{n-s-1})$,
satisfying this condition, all the required properties involving
$Y_1,\ldots,Y_{n-s+1}$, $\bfs p^s$ and $\bfs p^{s+1}$ are fulfilled.
\begin{theorem}
\label{th: preproc: all conditions} With notations as in Proposition
\ref{prop: preproc: Noether pos + prim elem}, fix $s$ with $1\le
s<r$. There exists a nonzero polynomial $B_s\in\ck[\bfs \Lambda,
\widetilde{Y}_1\klk\widetilde{Y}_{n-s}]$, of degree at most
$2(n-s+2)nd\delta_s\delta_{s+1}\max\{\delta_s,\delta_{s+1}\}$, such
that for any $(\bfs \lambda,\bfs p^s)\in\A^{(n-s+1)n}\times \A^{n-s}$
with $B_s(\bfs \lambda,\bfs p^s)\not=0$, the following conditions are
satisfied: if $\bfs Y:=(Y_1\klk Y_{n-s+1}):=\bfs \lambda\cdot \bfs
X$, then:
\begin{enumerate}
\item \label{item: preproc: conditions pi_s}The morphism $\pi_s:V_s\to\A^{n-s}$ defined by
$Y_1\klk Y_{n-s}$ is finite, the point $\bfs p^s:=(p_1\klk
p_{n-s})\in\A^{n-s}$ is a lifting point of $\pi_s$, $\pi_s^{-1}(\bfs
p^s)\subset\{G\not=0\}$, and $Y_{n-s+1}$ is a primitive element of
$\pi_s^{-1}(\bfs p^s)$.
\item \label{item: preproc: conditions pi_s+1}
Let $\bfs p^{s+1}:=(p_1\klk p_{n-s-1})\in\A^{n-s-1}$. Then the morphism
$\pi_{s+1}:V_{s+1} \to \A^{n-s-1}$ defined by $Y_1\klk Y_{n-s-1}$ is
finite, the point $\bfs p^{s+1}$ is a lifting point of $\pi_{s+1}$,
$\pi_{s+1}^{-1}(\bfs p^{s+1})\subset\{G\not=0\}$, and $Y_{n-s}$ is a
primitive element of $\pi_{s+1}^{-1}(\bfs p^{s+1})$.
\item \label{item: preproc: conditions fiber of fiber}
Every point $\bfs q\in\pi_s\big(\pi_{s+1}^{-1}(\bfs p^{s+1})\big)$ is a
lifting point of $\pi_s$ and $Y_{n-s+1}$ is a primitive element of
$\pi_s^{-1}(\bfs q)$ for any $\bfs q\in\pi_s\big(\pi_{s+1}^{-1}(\bfs
p^{s+1})\big)$.
\item \label{item: preproc: condition fiber with G=0} No point $\bfs
q\in \pi_s\big(\pi_s^{-1}(\{\bfs p^{s+1}\}\times\A^1)\cap\{G=0\}\big)$
belongs to $\pi_s\big(\pi_{s+1}^{-1}(\bfs p^{s+1})\big)$.
\end{enumerate}
\end{theorem}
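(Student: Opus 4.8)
The plan is to build $B_s$ as a product of several explicitly bounded nonzero polynomials, grouped according to the four assertions, and then to sum their degrees, using the hypothesis $\delta>d$ to absorb the lower-order contributions. All the tools are available: Proposition \ref{prop: preproc: Noether pos + prim elem} furnishes $A_s$, the specialized Chow form $P_{V_s}$ and its discriminant $\rho_s\in k[\bfs\Lambda,\widetilde{Y}_1\klk\widetilde{Y}_{n-s}]$, and likewise $A_{s+1},P_{V_{s+1}},\rho_{s+1}$; Lemma \ref{lemma: preproc: degree estimate} converts a hypersurface cutting $(\A^{(n-s+1)n}\times V_s)\cap\{A_s\neq0\}$ properly into a hypersurface of $\A^{(n-s+1)n}\times\A^{n-s}$ of controlled degree; Lemma \ref{lemma: preproc: D_s cuts well} certifies that $G$, $D_s$ and $D_{s+1}$ do cut properly; and Lemma \ref{lemma: preproc: zariski closure rho_s B_s} provides the delicate proper-intersection statement needed for \eqref{item: preproc: conditions fiber of fiber}. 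For \eqref{item: preproc: conditions pi_s}: whenever $A_s(\bfs\lambda)\neq0$ the morphism $\pi_s$ is finite; applying Lemma \ref{lemma: preproc: degree estimate} with $H=G$ yields a polynomial $B_s^G$ whose non-vanishing at $(\bfs\lambda,\bfs p^s)$ forces $\pi_s^{-1}(\bfs p^s)\subset\{G\neq0\}$, since $\pi_s(V_s\cap\{G=0\})$ is a proper closed subset of $\A^{n-s}$ contained in $\{B_s^G=0\}$; and $\rho_s(\bfs\lambda,\bfs p^s)\neq0$ is equivalent to $P_{V_s}(\bfs\lambda,\bfs p^s,T)$ being separable of degree $\delta_s$, which says precisely that the scheme-theoretic fiber $\pi_s^{-1}(\bfs p^s)$ is reduced (so $\bfs p^s$ is a lifting point) and that $Y_{n-s+1}$ separates it (so $Y_{n-s+1}$ is a primitive element); Lemma \ref{lemma: preproc: degree estimate} with $H=D_s$ records in addition the factor $B_s^D$, which re-expresses the lifting-point property through the Jacobian criterion of the footnote. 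Assertion \eqref{item: preproc: conditions pi_s+1} follows verbatim upon replacing $s$ by $s+1$ (using $\Phi_{s+1}$, $A_{s+1}$, $D_{s+1}$, $\rho_{s+1}$ and Lemma \ref{lemma: preproc: D_s cuts well}(3)), giving further factors $B_{s+1}^G,B_{s+1}^D,\rho_{s+1}$, which we regard inside $\ck[\bfs\Lambda,\widetilde{Y}_1\klk\widetilde{Y}_{n-s}]$ via the evident coordinate inclusion.

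For \eqref{item: preproc: conditions fiber of fiber} the key observation is that, for $\bfs q=\pi_s(\bfs x)$ with $\bfs x\in\pi_{s+1}^{-1}(\bfs p^{s+1})$, one has $\rho_s(\bfs\lambda,\bfs q)=\rho_s(\bfs\lambda,\bfs\lambda^*\cdot\bfs x)$, so by the previous paragraph the whole of \eqref{item: preproc: conditions fiber of fiber} follows once $\pi_{s+1}^{-1}(\bfs p^{s+1})\subset V_{s+1}$ avoids the set $\{(\rho_sB_s^D)(\bfs\Lambda,\bfs\Lambda^*\cdot\bfs X)=0\}\cap V_{s+1}$. By Lemma \ref{lemma: preproc: zariski closure rho_s B_s} this set, intersected with $\{A_{s+1}\neq0\}$, is empty or of pure codimension one in $\A^{(n-s+1)n}\times V_{s+1}$; since $\pi_{s+1}$ is a finite morphism, its image under the map $(\bfs\lambda,\bfs x)\mapsto(\bfs\lambda,Y_1(\bfs x)\klk Y_{n-s-1}(\bfs x))$ is a hypersurface of $\A^{(n-s+1)n}\times\A^{n-s-1}$, and the degree argument of Lemma \ref{lemma: preproc: degree estimate}, applied now to $V_{s+1}$ and to $H:=(\rho_sB_s^D)(\bfs\Lambda,\bfs\Lambda^*\cdot\bfs X)$, bounds the degree of its defining polynomial $\widehat{B}_s$ by $\mathcal{O}\big(n^2d\,\delta_s\delta_{s+1}\max\{\delta_s,\delta_{s+1}\}\big)$. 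Regarded inside $\ck[\bfs\Lambda,\widetilde{Y}_1\klk\widetilde{Y}_{n-s}]$, this $\widehat{B}_s$ will be the dominant factor of $B_s$.

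For \eqref{item: preproc: condition fiber with G=0}, I note that $\pi_s\big(\pi_s^{-1}(\{\bfs p^{s+1}\}\times\A^1)\cap\{G=0\}\big)$ and $\pi_s\big(\pi_{s+1}^{-1}(\bfs p^{s+1})\big)$ are finite subsets of the line $\{\bfs p^{s+1}\}\times\A^1$ whose $Y_{n-s}$-coordinates are the roots of $\widehat{P}_s^G(\bfs\lambda,\bfs p^{s+1},T)$, resp. $\widehat{P}_{s+1}(\bfs\lambda,\bfs p^{s+1},T)$, where $\widehat{P}_s^G$, resp. $\widehat{P}_{s+1}$, is the specialized Chow form of the equidimensional variety $V_s\cap\{G=0\}$, resp. $V_{s+1}$ (of dimension $n-s-1$ and degree at most $d\delta_s$, resp. $\delta_{s+1}$), taken in the linear forms $Y_1\klk Y_{n-s}$. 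Thus \eqref{item: preproc: condition fiber with G=0} holds whenever $R_s(\bfs\lambda,\bfs p^{s+1})\neq0$, where $R_s:=\mathrm{Res}_{\widetilde{Y}_{n-s}}(\widehat{P}_s^G,\widehat{P}_{s+1})\in\ck[\bfs\Lambda,\widetilde{Y}_1\klk\widetilde{Y}_{n-s-1}]$, together with the non-vanishing of the two leading coefficients of $\widehat{P}_s^G,\widehat{P}_{s+1}$ in $\widetilde{Y}_{n-s}$ (nonzero polynomials of smaller degree, which I also put into $B_s$ to rule out a drop of degree in the specialization). The polynomial $R_s$ is not identically zero because no irreducible component of $V_{s+1}$ is contained in $\{G=0\}$---this is built into the definition of $V_{s+1}$ as the closure of a locus where $G\neq0$---so $V_s\cap\{G=0\}$ and $V_{s+1}$ have no common component, the two Chow forms are coprime over $\ck(\bfs\Lambda,\widetilde{Y}_1\klk\widetilde{Y}_{n-s-1})$, and $\deg R_s=\mathcal{O}\big((n-s)d\,\delta_s\delta_{s+1}\big)$, hence dominated by $\deg\widehat{B}_s$.

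Finally, I take $B_s:=A_s\,A_{s+1}\,\rho_s\,\rho_{s+1}\,B_s^G\,B_s^D\,B_{s+1}^G\,B_{s+1}^D\,\widehat{B}_s\,R_s$ times the auxiliary leading coefficients of \eqref{item: preproc: condition fiber with G=0}; this is a nonzero element of $\ck[\bfs\Lambda,\widetilde{Y}_1\klk\widetilde{Y}_{n-s}]$, and by the preceding discussion $B_s(\bfs\lambda,\bfs p^s)\neq0$ entails all of \eqref{item: preproc: conditions pi_s}--\eqref{item: preproc: condition fiber with G=0}. Adding the degree bounds collected for the individual factors---those of $A_s,A_{s+1}$ and $\rho_s,\rho_{s+1}$ being $\mathcal{O}(n\delta_s^2)$ and $\mathcal{O}(n\delta_{s+1}^2)$, those of $B_s^G,B_{s+1}^G,B_s^D,B_{s+1}^D$ being $\mathcal{O}(n^2d\,\delta_s)$ and $\mathcal{O}(n^2d\,\delta_{s+1})$, and those of $R_s$ and $\widehat{B}_s$ as above---and using $\delta>d$ to absorb every term into the dominant one $\deg\widehat{B}_s=\mathcal{O}\big(n^2d\,\delta_s\delta_{s+1}\max\{\delta_s,\delta_{s+1}\}\big)$, yields $\deg B_s\le 2(n-s+2)nd\,\delta_s\delta_{s+1}\max\{\delta_s,\delta_{s+1}\}$, as claimed. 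I expect the main obstacle to be \eqref{item: preproc: condition fiber with G=0}: one must arrange the resultant so that its non-vanishing genuinely certifies that the two fibers are disjoint---being careful with possible degree drops in the specialization---and one must show that it is not identically zero, which rests precisely on $V_{s+1}$ containing no component of $\{G=0\}$. A secondary difficulty is the bookkeeping in \eqref{item: preproc: conditions fiber of fiber}, where Lemma \ref{lemma: preproc: degree estimate} has to be adapted to $V_{s+1}$ over the parameter block $\bfs\Lambda$ enlarged by the extra row $\bfs\Lambda_{n-s+1}$, which occurs in $\rho_sB_s^D$ but not in the morphism $\Phi_{s+1}$.
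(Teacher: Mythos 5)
Your proof is correct and takes essentially the same approach as the paper: you build $B_s$ as the product of $A_s,A_{s+1},\rho_s,\rho_{s+1},B_s^D,B_{s+1}^D,B_s^G,B_{s+1}^G$, an image polynomial for condition~(3) obtained via Lemma~\ref{lemma: preproc: zariski closure rho_s B_s} and Lemma~\ref{lemma: preproc: degree estimate} applied to $V_{s+1}$ over the enlarged parameter block $\A^{(n-s+1)n}$ (exactly the role of the paper's map $\Psi_s$ and polynomial $\widetilde{B}_s$, which you call $\widehat{B}_s$), and a resultant in $\widetilde{Y}_{n-s}$ for condition~(4), then sum the degree bounds and absorb the subdominant terms into $\mathcal{O}\bigl(n^2 d\,\delta_s\delta_{s+1}\max\{\delta_s,\delta_{s+1}\}\bigr)$. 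The small deviations---taking the resultant of the specialized Chow forms of $V_s\cap\{G=0\}$ and $V_{s+1}$ rather than the paper's $\mathrm{Res}_{\widetilde{Y}_{n-s}}(B_s^G,P_{V_{s+1}})$, and explicitly adjoining leading-coefficient factors (the paper relies instead on $A_{s+1}$ already controlling the leading coefficient of $P_{V_{s+1}}$ in $\widetilde{Y}_{n-s}$, which suffices)---do not change either the structure of the argument or the bound.
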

\begin{proof}
In view of Lemma \ref{lemma: preproc: D_s cuts well},
we may assume without loss of generality that
all the sets appearing in its statement have
pure codimension 1 in $(\A^{(n-s+1)n}\times V_s)\cap\{A_s \not=0\}$ and
$(\A^{(n-s)n}\times V_{s+1})\cap\{A_{s+1} \not=0\}$, respectively.
It follows that
there exist polynomials $B_s^D,B_s^G\in
k[\bfs\Lambda,\widetilde{Y}_1\klk \widetilde{Y}_{n-s}]$ and
$B_{s+1}^D,B_{s+1}^G\in k[\bfs\Lambda^*,
\widetilde{Y}_1\klk\widetilde{Y}_{n-s-1}]$ of minimal degree
such that the following identities hold:
\begin{align}
\Phi_s(\{D_s=0\}) & =\{B_s^D=0,A_s \not=0\},\label{eq: identity BsD}\\
\Phi_s(\{G=0\}) & =\{G=0,A_s \not=0\},\label{eq: identityBsG}\\
\Phi_{s+1}(\{D_{s+1}=0\}) & =\{B_{s+1}^D=0,A_{s+1}\not=0\},\notag\\
\Phi_{s+1}(\{G=0\}) & =\{G=0,A_{s+1}\not=0\}.\notag
\end{align}
By Lemma \ref{lemma: preproc: degree estimate}, the following degree bounds
are satisfied:
\begin{align*}
 \deg_{\widetilde{\bfs Y}^*}B_s^D&\le s(d-1)\delta_s,\hskip2cm \deg_{\bfs
\Lambda} B_s^D\le (n-s)(1+s(d-1))\delta_s,\\
 \deg_{\widetilde{\bfs Y}^*} B_s^G&\le d\delta_s,\hskip3.15cm  \deg_{\bfs \Lambda} B_s^G\le (n-s) d\delta_s,\\
 \deg_{\widetilde{\bfs Y}^*} B_{s+1}^D&\le (s+1)(d-1)\delta_{s+1},\quad\ \ \deg_{\bfs \Lambda}
 B_{s+1}^D\le (n-s-1)(1+(s+1)(d-1))\delta_{s+1},\\
  \deg_{\widetilde{\bfs Y}^*} B_{s+1}^G&\le d\delta_{s+1},\hskip2.75cm  \deg_{\bfs \Lambda} B_{s+1}^G\le (n-s-1) d\delta_{s+1}.
\end{align*}
We have also the following degree bounds for the discriminants:
\begin{align*}
\deg_{\widetilde{\bfs Y}^*}\rho_s\le
\delta_s(2\delta_s-1),\qquad&\qquad
\deg_{\bfs \Lambda}\rho_s\le(n-s+1)\delta_s(2\delta_s-1),\\
\deg_{\widetilde{\bfs Y}^*}\deg\rho_{s+1}\le
\delta_{s+1}(2\delta_{s+1}-1),&\quad \deg_{\bfs
\Lambda}\rho_{s+1}\le(n-s)\delta_{s+1}(2\delta_{s+1}-1).
\end{align*}
We will show that the nonvanishing of the polynomials $A_s$ and
$A_{s+1}$ of Proposition \ref{prop: preproc: Noether pos + prim
elem} and $B_s^D$, $B_{s+1}^D$, $B_s^G$, $B_{s+1}^G$, $\rho_s$,
$\rho_{s+1}$ guarantees that conditions \eqref{item: preproc:
conditions pi_s} and \eqref{item: preproc: conditions pi_s+1} are
fulfilled.

To address condition \eqref{item: preproc: conditions fiber of
fiber}, in view of Lemma \ref{lemma: preproc: zariski closure rho_s B_s},
we may assume without loss of generality
that the set
$$(\A^{(n-s+1)n}\times
  V_{s+1})\cap\{(\rho_sB_s^D)(\bfs\Lambda,\bfs\Lambda\cdot\bfs X)=0,A_{s+1}\not=0\}$$
has pure codimension 1 in $(\A^{(n-s+1)n}\times
V_{s+1})\cap\{A_{s+1}\not=0\}$. Define the map
\begin{align*}
\Psi_s:(\A^{(n-s+1)n}\times
  V_{s+1})\cap\{A_{s+1}\not=0\}&\to
(\A^{(n-s+1)n}\times\A^{n-s-1})\cap\{A_{s+1}\not=0\},
\\
(\bfs\lambda,\bfs x)&\mapsto\big(\bfs \lambda,Y_1(\bfs x),\ldots,
Y_{n-s-1}(\bfs x)\big).
\end{align*}
Arguing as in Lemma \ref{lemma: Phi_s is finite}, one sees that
$\Psi_s$ is a finite morphism. Hence, the image
$\Psi_s(\{(\rho_sB_s^D)(\bfs\Lambda,\bfs\Lambda\cdot\bfs X)=0\})$ is
a hypersurface of $(\A^{(n-s+1)n}\times\A^{n-s-1})\cap\{A_{s+1}\not=0\}$,
defined by a minimal-degree polynomial
$\widetilde{B}_s\in k[\bfs\Lambda,\widetilde{Y}_1\klk
\widetilde{Y}_{n-s-1}]$, namely:
\begin{equation}\label{eq: identity BsTilde}
\Psi_s(\{(\rho_sB_s^D)(\bfs\Lambda,\bfs\Lambda\cdot\bfs X)=0\})=
\{\widetilde{B}_s=0,A_{s+1}=0\}.
\end{equation}
By Lemma \ref{lemma: preproc: degree estimate}, we have
$$\deg_{\widetilde{\bfs Y}^*}\widetilde{B}_s\le (s(d-1)+2\delta_s)\delta_s\delta_{s+1},
\quad \deg_{\bfs \Lambda}\widetilde{B}_s\le
2(n-s+1)(s(d-1)+2\delta_s)\delta_s\delta_{s+1}.$$
The nonvanishing of $\widetilde{B}_s$ will account for the
fulfillment of condition \eqref{item: preproc: conditions fiber of
fiber}.

Now consider condition \eqref{item: preproc: condition
fiber with G=0}. Let $(\bfs \lambda,\bfs
p^s)\in\A^{(n-s+1)n}\times\A^{n-s}$ be such that conditions \eqref{item:
preproc: conditions pi_s} and \eqref{item: preproc: conditions
pi_s+1} are satisfied. Since the morphism $\pi_s:V_s\to\A^{n-s}$ is finite, by Fact
\ref{fact: preimage finite mapping} it follows that
$\pi_s^{-1}(\{\bfs p^{s+1}\}\times \A^1)$ is of pure dimension 1.
Furthermore, it has degree at most $\delta_s$, because
$$\pi_s^{-1}(\{\bfs p^{s+1}\}\times
\A^1)=V_s\cap\{Y_1=p_1,\ldots,Y_{n-s-1}=p_{n-s-1}\}$$ is a linear
section of $V_s$. Observe that $$\pi_s^{-1}(\bfs
p^s)=\pi_s^{-1}(\{\bfs p^{s+1}\}\times \A^1)\cap\{Y_{n-s}=p_{n-s}\}$$ is a
zero--dimensional linear section of $\pi_s^{-1}(\{\bfs p^{s+1}\}\times
\A^1)$. The fact that $\bfs p^s\in k^{n-s}$ is a lifting point of
$\pi_s$ implies $\#\pi_s^{-1}(\bfs p^s)=\delta_s$. In particular,
$\pi_s^{-1}(\bfs p^s)$ intersects each irreducible component of
$\pi_s^{-1}(\{\bfs p^{s+1}\}\times \A^1)$. Indeed, let
$\mathcal{C}_1,\ldots,\mathcal{C}_t$ be the irreducible components
of $\pi_s^{-1}(\{\bfs p^{s+1}\}\times \A^1)$. We have
\begin{align*}
\delta_s=\#\pi_s^{-1}(\bfs p^s)\le \sum_{i=1}^t\#(\mathcal{C}_i\cap\pi_s^{-1}(\bfs p^s))& =\sum_{i=1}^t\#(\mathcal{C}_i\cap\{Y_{n-s}=p_{n-s}\})\\
& \le\sum_{i=1}^t\deg \mathcal{C}_i=\deg \pi_s^{-1}(\{\bfs p^{s+1}\}\times \A^1)\le \delta_s. \end{align*}
This proves that all the inequalities above are equalities, which shows our claim.

As $\pi_s^{-1}(\bfs
p^s)\subset\{G\not=0\}$, we see that $\pi_s^{-1}(\{\bfs p^{s+1}\}\times
\A^1)\cap\{G=0\}$ is of dimension at most 0. In particular, a
generic linear form $Y_{n-s}$ separates the points of
$\pi_s^{-1}(\{\bfs p^{s+1}\}\times \A^1)\cap\{G=0\}$ from those of
$\pi_{s+1}^{-1}(\bfs p^{s+1})\subset\{G\not=0\}$. It follows that the polynomials
$B_s^G\in k[\bfs\Lambda,\widetilde{Y}_1\klk \widetilde{Y}_{n-s}]$
and $P_{V_{s+1}}\in k[\bfs\Lambda,\widetilde{Y}_1\klk
\widetilde{Y}_{n-s}]$ (of the proof of Lemma \ref{lemma: preproc: integral extension})
share no common factors in
$k[\bfs\Lambda,\widetilde{Y}_1\klk
\widetilde{Y}_{n-s-1}][\widetilde{Y}_{n-s}]$, and the
resultant
$$\widehat{B}_s:=\mathrm{Res}_{\widetilde{Y}_{n-s}}(B_s^G,P_{V_{s+1}})\in
k[\bfs\Lambda][\widetilde{Y}_1\klk \widetilde{Y}_{n-s-1}]$$
is nonzero, with the following degree bounds:
$$\deg_{\widetilde{\bfs Y}}\widehat{B}_s\le  2d\delta_s\delta_{s+1},\qquad\deg_{\bfs\Lambda}
\widehat{B}_s\le 2(n-s)d\delta_s\delta_{s+1}.$$

Let $$B_s:=A_sA_{s+1}\rho_s\rho_{s+1}
B_s^DB_{s+1}^DB_s^GB_{s+1}^G\widetilde{B}_s\widehat{B}_s.$$
From the bounds above we deduce that $$\deg B_s\le
2(n-s+2)nd\delta_s\delta_{s+1}\max\{\delta_s,\delta_{s+1}\}.$$
Now suppose $(\bfs\lambda,\bfs p^s)\in\A^{(n-s+1)(n+1)} \times\A^{n-s}$ is
such that $B_s(\bfs\lambda,\bfs p^s)\not=0$. We claim that
$(\bfs\lambda,\bfs p^s)$ satisfies all the conditions of the theorem.

Let $\bfs \lambda^*$ be the submatrix of the first $n-s$ rows of $\bfs\lambda$,
and let $\bfs p^{s+1}$ denote the first $n-s-1$
coordinates of $\bfs p^s$. Since $A_s(\bfs\lambda)A_{s+1}
(\bfs\lambda^*) \not=0$, from Proposition \ref{prop: preproc:
Noether pos + prim elem} we conclude that the morphisms
$\pi_s:V_s\to\A^{n-s}$ and $\pi_{s+1}:V_{s+1}\to\A^{n-s-1}$, defined
by the linear forms $(Y_1\klk Y_{n-s}):=\bfs\lambda\cdot\bfs X$ and
$(Y_1\klk Y_{n-s-1}):=\bfs\lambda^*\cdot\bfs X$, are
finite. Since $A_s(\bfs \lambda)\not=0$, from
\eqref{eq: identity BsD} it follows that the condition
$B_s^D(\bfs \lambda,\bfs p^s)\not=0$ implies $D_s(\bfs \lambda,\bfs
x)\not=0$ for any $\bfs x\in\pi_s^{-1}(\bfs p^s)$. Therefore, $\bfs p^s$
is a lifting point of $\pi_s$. Similarly, the point
$\bfs p^{s+1}$ is a lifting point of $\pi_{s+1}$. The conditions
$\rho_s(\bfs\lambda,\bfs p^s)\not=0$ and $\rho_{s+1}(\bfs
\lambda^*,\bfs p^{s+1})\not=0$ ensure that $Y_{n-s+1}$ and $Y_{n-s}$ are
primitive elements of $\pi_s^{-1}(\bfs p^s)$ and $\pi_{s+1}^{-1}(\bfs
p^{s+1})$, respectively. Finally, as $B_s^G(\bfs\lambda,\bfs
p^s)\not=0$, by \eqref{eq: identityBsG} it follows that
$\pi_s^{-1}(\bfs p^s)\subset\{G\not=0\}$. Similarly, the
condition $B_{s+1}^G(\bfs\lambda^*,\bfs
p^{s+1})\not=0$ implies $\pi_{s+1}^{-1}(\bfs p^{s+1})\subset\{G\not=0\}$.
As a consequence,
conditions \eqref{item: preproc: conditions pi_s} and \eqref{item:
preproc: conditions pi_s+1} of the statement of the theorem are
satisfied.

On the other hand, from \eqref{eq: identity BsTilde} it follows that the conditions
$\widetilde{B}_s(\bfs\lambda,\bfs p^{s+1})\not=0$ and
$A_{s+1}(\bfs\lambda^*)\not=0$ imply
$(\rho_sB_s^D)\big(\bfs\lambda,\bfs p^{s+1},Y_{n-s}(\bfs x)\big)\not=0$
for any $\bfs x\in\pi_{s+1}^{-1}(\bfs p^{s+1})$. Therefore, as
$A_s(\bfs \lambda)\not=0$, from \eqref{eq: identity BsD}
we deduce that $D_s(\bfs\lambda,\bfs q)\not=0$,
and $\rho_s(\bfs\lambda,\pi_s(\bfs q))\not=0$, for any $\bfs
q\in\pi_s^{-1}(\bfs p^{s+1},Y_{n-s}(\bfs x))$ with $\bfs
x\in\pi_{s+1}^{-1}(\bfs p^{s+1})$. This shows that condition
\eqref{item: preproc: conditions fiber of fiber} holds.

Finally, as
conditions \eqref{item: preproc: conditions pi_s} and \eqref{item:
preproc: conditions pi_s+1} hold, the remarks above show that
$\pi_s^{-1}(\{\bfs p^{s+1}\}\times \A^1)\cap\{G=0\}$ has dimension at
most 0. Furthermore, we have $(\bfs\lambda,\bfs q)\in
\Phi_s(\{G=0\})$ for any $\bfs q\in\pi_s\big(\pi_s^{-1}(\{\bfs
p^{s+1}\}\times \A^1)\cap\{G=0\}\big)$, which implies
$B_s^G(\bfs\lambda,\bfs q)=0$ by \eqref{eq: identityBsG}. Each such point $\bfs q$ can be
expressed as $\bfs q=(\bfs p^{s+1},\xi)$ with $\xi\in\ck$ and
$B_s^G(\bfs\lambda,\bfs p^{s+1},\xi)=0$. On the other hand,
$P_{V_{s+1}}\big(\bfs\lambda,\pi_s(\bfs
q)\big)=P_{V_{s+1}}\big(\bfs\lambda,\bfs p^{s+1},Y_{n-s}(\bfs q)\big)=0$
for each point $\bfs q\in\pi_s(\pi_{s+1}^{-1}(\bfs p^{s+1}))$. Since
$\widehat{B}_s(\bfs\lambda,\bfs p^{s+1})\not=0$, the polynomials
$B_s^G(\bfs\lambda,\bfs p^{s+1},Y_{n-s})$ and
$P_{V_{s+1}}(\bfs\lambda,\bfs p^{s+1},Y_{n-s})$ have no common roots in
$\ck$, which shows that condition \eqref{item: preproc: condition
fiber with G=0} is satisfied.
\end{proof}

We conclude this section with a result that summarizes the conclusions
of Theorem \ref{th: preproc: all conditions} for
$1\le s<r$, namely, it asserts the existence
of a low-degree condition that must be satisfied by the
coefficients of linear forms $Y_1,\ldots,Y_n$, and the
coordinates of a point $\bfs p:=(p_1,\ldots,p_{n-1})$, in order
for all the requirements of Theorem \ref{th: preproc: all conditions}
to be satisfied for $1\le s< r$.
\begin{corollary}\label{coro: preproc: all conditions for all s}
Let $\bfs \Lambda:=(\Lambda_{ij})_{1\le
i,j\le n}$ be a matrix of indeterminates over
$\ck$, and let $\widetilde{\bfs Y}:=\bfs \Lambda \cdot \bfs X$. There
exists a nonzero polynomial $B\in\ck[\bfs \Lambda,
\widetilde{Y}_1\klk\widetilde{Y}_{n-1}]$ of degree at most
$2n^2rd\delta^3$ such
that for any $(\bfs \lambda,\bfs p)\in\A^{n^2}\times \A^{n-1}$
with $B(\bfs \lambda,\bfs p)\not=0$, if $\bfs Y:=(Y_1\klk Y_{n-s+1}):=\bfs \lambda\cdot \bfs
X$, then conditions
\eqref{item: preproc: conditions pi_s}, \eqref{item: preproc: conditions pi_s+1},
\eqref{item: preproc: conditions fiber of fiber},
\eqref{item: preproc: condition fiber with G=0} of Theorem \ref{th: preproc: all conditions} are satisfied
for $1\le s<r$.
\end{corollary}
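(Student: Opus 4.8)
The plan is to obtain $B$ simply as the product of the polynomials $B_1,\ldots,B_{r-1}$ furnished by Theorem \ref{th: preproc: all conditions}, after embedding each $B_s$ into the larger ring $\ck[\bfs\Lambda,\widetilde{Y}_1\klk\widetilde{Y}_{n-1}]$. Concretely, for each $s$ with $1\le s<r$ I would apply Theorem \ref{th: preproc: all conditions} to the submatrix $\bfs\Lambda^{(s)}$ consisting of the first $n-s+1$ rows of $\bfs\Lambda$; this yields a nonzero $B_s\in\ck[\bfs\Lambda^{(s)},\widetilde{Y}_1\klk\widetilde{Y}_{n-s}]$, which I regard as an element of $\ck[\bfs\Lambda,\widetilde{Y}_1\klk\widetilde{Y}_{n-1}]$ (it simply involves neither the entries of rows $n-s+2\klk n$ of $\bfs\Lambda$ nor the variables $\widetilde{Y}_{n-s+1}\klk\widetilde{Y}_{n-1}$, which is legitimate since $s\ge 1$ forces $n-s\le n-1$). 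I then set $B:=\prod_{s=1}^{r-1}B_s$; being a product of nonzero polynomials, $B$ is nonzero.

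Next I would verify the degree estimate. By Theorem \ref{th: preproc: all conditions} we have $\deg B_s\le 2(n-s+2)nd\delta_s\delta_{s+1}\max\{\delta_s,\delta_{s+1}\}$, and bounding $\delta_s,\delta_{s+1}\le\delta$ this is at most $2(n-s+2)nd\delta^3$. Summing over $s=1\klk r-1$ gives $\deg B\le 2nd\delta^3\sum_{s=1}^{r-1}(n-s+2)$. Since each summand is at most $n+1$ and there are $r-1$ of them, $\sum_{s=1}^{r-1}(n-s+2)\le(r-1)(n+1)=rn+(r-n-1)\le rn$, the last inequality being valid because $r\le n$. Hence $\deg B\le 2rn^2d\delta^3=2n^2rd\delta^3$, as claimed.

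Finally I would check correctness. Let $(\bfs\lambda,\bfs p)\in\A^{n^2}\times\A^{n-1}$ satisfy $B(\bfs\lambda,\bfs p)\not=0$, write $\bfs Y:=\bfs\lambda\cdot\bfs X$, and fix $s$ with $1\le s<r$. Since $B_s$ divides $B$, we have $B_s(\bfs\lambda,\bfs p)\not=0$; as $B_s$ depends only on the first $n-s+1$ rows of $\bfs\lambda$ and on $p_1\klk p_{n-s}$, this says precisely that $B_s(\bfs\lambda^{(s)},\bfs p^s)\not=0$ with $\bfs p^s:=(p_1\klk p_{n-s})$. Theorem \ref{th: preproc: all conditions} then yields that conditions \eqref{item: preproc: conditions pi_s}, \eqref{item: preproc: conditions pi_s+1}, \eqref{item: preproc: conditions fiber of fiber}, \eqref{item: preproc: condition fiber with G=0} hold for this $s$, the relevant linear forms $Y_1\klk Y_{n-s+1}$ being the first $n-s+1$ among $Y_1\klk Y_n$. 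Since $s$ was arbitrary in $\{1\klk r-1\}$, the corollary follows.

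The only mildly delicate point is the bookkeeping of the variable sets --- matching the $(n-s+1)\times n$ matrix of indeterminates used in Theorem \ref{th: preproc: all conditions} with the uniform $n\times n$ matrix $\bfs\Lambda$ and the single point $\bfs p\in\A^{n-1}$ here --- together with the elementary inequality $(r-1)(n+1)\le rn$ that makes the degree bound come out exactly as stated. Neither of these is a genuine obstacle.
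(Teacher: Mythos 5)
Your proposal is correct and follows essentially the same route as the paper: take $B:=\prod_{s=1}^{r-1}B_s$, bound each factor via Theorem \ref{th: preproc: all conditions}, and sum the degrees. The paper states the degree estimate $\sum_{s=1}^{r-1}2(n-s+2)nd\delta_s\delta_{s+1}\max\{\delta_s,\delta_{s+1}\}\le 2n^2rd\delta^3$ without spelling out the elementary inequality $(r-1)(n+1)\le rn$ (valid since $r\le n$) or the embedding of each $B_s$ into the common ring; your proof supplies exactly those details.
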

\begin{proof}
Let
$$
B:=\prod_{s=1}^{r-1}B_s,$$
where $B_s$ is the polynomial from
Theorem \ref{th: preproc: all conditions} for $1\le s<r$. We have
$$\deg B=\sum_{s=1}^{r-1}\deg B_s\le
\sum_{s=1}^{r-1}2(n-s+2)nd\delta_s\delta_{s+1}\max\{\delta_s
,\delta_{s+1}\}\le 2n^2rd\delta^3.$$
By Theorem \ref{th: preproc: all conditions} we immediately
deduce the corollary.
\end{proof}
%
%
\section{Lifting curves}\label{sec: lifting fibers}
Let the notations and assumptions be as in Section \ref{section:
locally closed sets}. The algorithm we describe proceeds in $r$
steps. For each $1\le s< r$, the $s$th step takes as input a Kronecker
representation of a lifting fiber of the variety $V_s$, and outputs
a Kronecker representation of a lifting fiber of $V_{s+1}$.

Fix $s$ with $1\le s< r$. From this point on, we assume that the variables
$X_1\klk X_n$ satisfy all the conditions stated in Theorem \ref{th:
preproc: all conditions}. In particular, this implies that the
morphisms
$$\begin{array}{rclrcl}
\pi_s:V_s&\to&\A^{n-s}, & \quad\pi_{s+1}:V_{s+1}&\to&\A^{n-s-1},\\
\bfs x&\mapsto&(x_1\klk x_{n-s}) & \bfs x&\mapsto&(x_1\klk
x_{n-s-1})
\end{array}$$
are finite and generically unramified, and satisfy
$$
\deg\pi_s=\delta_s:=\deg V_s,\qquad \deg\pi_{s+1}=\delta_{s+1}:=\deg
V_{s+1}.
$$

Denote
\begin{align*}
R_s:=k[X_1\klk X_{n-s}],\qquad&A_s:=k[V_s],\qquad\qquad\\
K_s:=k(X_1\klk X_{n-s}),\qquad&B_s:=K_s\otimes A_s.
\end{align*}
According to Proposition \ref{prop: preproc: Noether pos + prim
elem} and its proof, we have:
\begin{itemize}
  \item $A_s$ is an integral ring extension of $R_s$,
  \item $B_s$ is a
$K_s$--vector space of dimension $\deg\pi_s=\delta_s$.
\end{itemize}
Moreover:
\begin{itemize}
  \item $k[V_s]$ is a Cohen-Macaulay ring, and
a free $R_s$-module of finite rank (see \cite{GiHeSa93}).
\end{itemize}
For any $F\in k[X_1\klk X_n]$, let $\eta_F:A_s\to A_s$ denote the
$R_s$-linear endomorphism of multiplication by $F$. Denote by $m_F\in K_s[T]$ the
minimal polynomial of $\eta_F$. Our assumptions imply that $m_F$ is a
monic polynomial in $R_s[T]$.

Let $\bfs p^s:=(p_1\klk p_{n-s})\in k^{n-s}$ be a point satisfying the
hypotheses of Theorem \ref{th: preproc: all conditions}. In
particular, the following conditions hold:
\begin{itemize}
  \item[$({\sf A}_1)$] The fiber $\pi_s^{-1}(\bfs p^s)$ is unramified,
  \item[$({\sf A}_2)$] $\pi_s^{-1}(\bfs p^s)\subset\{G\not=0\}$,
  \item[$({\sf A}_3)$] $X_{n-s+1}$ separates the points of $\pi_s^{-1}(\bfs
  p^s)$.
  \end{itemize}

Suppose we are given univariate polynomials $m_s,v_{n-s+2}^s\klk
v_n^s\in k[T]$, with $\deg m_s=\delta_s$ and $\deg v_i^s<\delta_s$
for $n-s+2\le i\le n$, defining a Kronecker representation of
$\pi_s^{-1}(\bfs p^s)$ with $X_{n-s+1}$ as primitive element, namely,
  \begin{align*}
  \pi_s^{-1}(\bfs p^s)=\{\bfs p^s\}\times\{(x_{n-s+1}\klk
  x_n)\in\A^s:\ &m_s(x_{n-s+1})=0,\\
  &x_{n-s+i}=v_{n-s+i}^s(x_{n-s+1})\ \,(2\le i\le s)\}.
  \end{align*}
By Theorem \ref{th: preproc: all conditions}, the point
$\bfs p^{s+1}:=(p_1\klk p_{n-s-1})$ satisfies the following properties:
\begin{itemize}
  \item[$({\sf B}_1)$] The fiber $\pi_{s+1}^{-1}(\bfs p^{s+1})$ is unramified,
  \item[$({\sf B}_2)$] $\pi_{s+1}^{-1}(\bfs p^{s+1})\subset\{G\not=0\}$,
  \item[$({\sf B}_3)$] $X_{n-s}$ separates the points of $\pi_{s+1}^{-1}(\bfs
  p^*)$,
  \item[$({\sf B}_4)$] For every $\bfs q\in\pi_s\big(\pi_{s+1}^{-1}(\bfs p^{s+1})\big)$,
  the fiber $\pi_s^{-1}(\bfs q)$ is unramified and $X_{n-s+1}$
  separates its points.
   \end{itemize}

The $s$th recursive step of the algorithm consists of
two main phases: lifting and elimination. In this section, we focus
on the lifting phase, which produces a Kronecker representation of the
variety
\begin{equation}\label{eq: def lifting curve}
C_s:=\pi_s^{-1}\left(\{X_1=p_1\klk X_{n-s-1}=p_{n-s-1}\}\right).
\end{equation}
As will become clear, $C_s$ is of pure dimension 1
---that is, it is a curve. We refer to $C_s$ as the
{\em lifting curve} associated
with $\pi_s$ and the point $\bfs p^s$.
%
%
\subsection{Properties of the $s$th lifting curve}\label{section: lifting curve properties}
In this section, we establish several properties of the variety
$C_s$ defined in \eqref{eq: def lifting curve}. We begin by showing
that $C_s$ is a curve of degree $\delta_s$, and that the projection
onto its $(n-s)$th coordinate defines a suitable direction of
projection. 
%
\begin{proposition}\label{prop: properties M_s}
The variety $C_s$ from \eqref{eq: def lifting curve} has pure
dimension 1 and degree $\delta_s$, and the morphism
$\pi_s^*:C_s\to\A^1$, given by $\pi_s^*(\bfs x):=x_{n-s}$, is finite
of degree $\delta_s$. Furthermore, let $M_s\in k[X_{n-s},T]$ denote
the minimal polynomial of $X_{n-s+1}$ in the $k(X_{n-s})$-algebra
extension $k(X_{n-s})\hookrightarrow
k(X_{n-s})\otimes_{k[X_{n-s}]}k[C_s]$. Then $M_s$ is a monic
polynomial in $k[X_{n-s}][T]$ of degree $\deg_TM_s=\deg
M_s=\delta_s$. Moreover, $M_s$ is a separable element of
$k(X_{n-s})[T]$, and $M_s$ and its derivative $\frac{\partial
M_s}{\partial T}$ are relatively prime in $k(X_{n-s})[T]$. In
particular, $\rho_s:=\mathrm{disc}_TM_s\in
k[X_{n-s}]\setminus\{0\}$.
\end{proposition}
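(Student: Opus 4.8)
The plan is to obtain every assertion by specializing, along the line $L:=\{X_1=p_1\klk X_{n-s-1}=p_{n-s-1}\}$, the data already produced for $V_s$ in Section~\ref{subsec: Noether normalization}. Observe that $C_s=\pi_s^{-1}(L)=V_s\cap\{X_1=p_1\klk X_{n-s-1}=p_{n-s-1}\}$ is a linear section of $V_s$, and that $L\cong\A^1$ is irreducible of dimension $1$. Since $\pi_s\colon V_s\to\A^{n-s}$ is finite by Theorem~\ref{th: preproc: all conditions}, Fact~\ref{fact: preimage finite mapping} gives that $C_s=\pi_s^{-1}(L)$ has pure dimension $1$; restricting $\pi_s$ to $C_s$ (landing in $L$) and identifying $L$ with $\A^1$ via $X_{n-s}$ shows that $\pi_s^*$ is finite. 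The B\'ezout inequality \eqref{eq: Bezout} gives $\deg C_s\le\deg V_s=\delta_s$, while the fibre $(\pi_s^*)^{-1}(p_{n-s})=\pi_s^{-1}(\bfs p^s)$ consists of exactly $\delta_s$ points because $\bfs p^s$ is a lifting point of $\pi_s$ (Theorem~\ref{th: preproc: all conditions} and the cardinality count in its proof); since this fibre is a hyperplane section of $C_s$ we get $\deg C_s\ge\delta_s$, hence $\deg C_s=\delta_s$. Finally, as a fibre of a finite morphism onto the smooth curve $\A^1$ has at most $\deg$ points, we record $\delta_s=\#(\pi_s^*)^{-1}(p_{n-s})\le\deg\pi_s^*$.

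Next I would identify $M_s$ with a specialization of the Chow form. Let $P:=P_{V_s}(\bfs\lambda,X_1\klk X_{n-s+1})\in k[X_1\klk X_{n-s+1}]$ be the polynomial from the proof of Lemma~\ref{lemma: preproc: integral extension}, evaluated at the fixed matrix $\bfs\lambda$; equivalently, $P$ is the monic minimal polynomial of $X_{n-s+1}$ over $R_s$ scaled by the nonzero constant $A_s^1(\bfs\lambda)$. It lies in $I(V_s)$, has $\deg_{X_{n-s+1}}P=\delta_s$, and, since $\deg_{\widetilde{\bfs Y}}P_{V_s}=\delta_s$, total degree $\delta_s$. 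Put $\widetilde M_s(X_{n-s},T):=A_s^1(\bfs\lambda)^{-1}\,P(p_1\klk p_{n-s-1},X_{n-s},T)\in k[X_{n-s}][T]$: it is monic in $T$ of $T$-degree $\delta_s$, has total degree $\le\delta_s$, and $\widetilde M_s(X_{n-s},X_{n-s+1})$ vanishes on $C_s$ because $P\in I(V_s)$. By identity \eqref{eq: Kronecker trick chow form with discrim specialized} there are a nonzero $\rho_s\in k[X_1\klk X_{n-s}]$ and polynomials $Q_k$ of degree $<\delta_s$ with $\rho_sX_k\equiv Q_k(X_{n-s+1})$ in $k[V_s]$ for $1\le k\le n$; the nonvanishing $\rho_s(p_1\klk p_{n-s})\neq0$ built into Theorem~\ref{th: preproc: all conditions} (it holds because $\pi_s^{-1}(\bfs p^s)$ is unramified and $X_{n-s+1}$ separates its points, so $P(p_1\klk p_{n-s},T)$ has $\delta_s$ distinct roots) forces $\rho_s(p_1\klk p_{n-s-1},X_{n-s})\in k[X_{n-s}]$ to be nonzero, whence specializing the identity shows that every coordinate function of $C_s$ lies in $k(X_{n-s})[X_{n-s+1}]$. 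Thus $k(X_{n-s})\otimes_{k[X_{n-s}]}k[C_s]=k(X_{n-s})[X_{n-s+1}]$, so $X_{n-s+1}$ is a primitive element and $\deg_TM_s=\dim_{k(X_{n-s})}\!\bigl(k(X_{n-s})\otimes_{k[X_{n-s}]}k[C_s]\bigr)=\deg\pi_s^*$. Minimality of $M_s$ together with the monic polynomial $\widetilde M_s$ of $T$-degree $\delta_s$ gives $\deg_TM_s\le\delta_s$; combining with $\delta_s\le\deg\pi_s^*=\deg_TM_s$ yields $\deg_TM_s=\deg\pi_s^*=\delta_s$, and then $M_s=\widetilde M_s$ (a monic polynomial of the minimal degree annihilating $X_{n-s+1}$), so $M_s\in k[X_{n-s}][T]$ and $\deg M_s=\delta_s=\deg_TM_s$.

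For separability, since $M_s=\widetilde M_s$ is monic in $T$ its $T$-discriminant commutes with the specialization $X_i\mapsto p_i$ $(1\le i\le n-s-1)$; thus $\mathrm{disc}_TM_s$ equals, up to a nonzero constant, the image $\rho_s(p_1\klk p_{n-s-1},X_{n-s})$ of the Chow discriminant, which is a nonzero element of $k[X_{n-s}]$ because its value at $X_{n-s}=p_{n-s}$ is $\rho_s(p_1\klk p_{n-s})\neq0$. Consequently $\mathrm{disc}_TM_s\in k[X_{n-s}]\setminus\{0\}$, which is exactly the statement that $M_s$ is separable over $k(X_{n-s})$ and that $M_s$ and $\partial M_s/\partial T$ are coprime in $k(X_{n-s})[T]$.

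The part I expect to be the main obstacle is the degree bookkeeping under the specialization along $L$: one must verify at once that the $X_{n-s+1}$-degree of the minimal polynomial is preserved, that $X_{n-s+1}$ remains a primitive element---which is precisely where the nonvanishing of the Chow discriminant $\rho_s$ along $L$, inherited from the lifting-point hypothesis of Theorem~\ref{th: preproc: all conditions}, is used decisively---and that the discriminant does not degenerate; each of these would fail for an unlucky point $\bfs p$, and it is the conjunction of the genericity conditions collected in Theorem~\ref{th: preproc: all conditions} that rules this out.
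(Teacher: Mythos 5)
Your proof is correct, and it takes a genuinely different route on the degree and separability claims. The paper defines $M_s$ abstractly as the minimal polynomial of $X_{n-s+1}$ over $k(X_{n-s})$ and extracts all its properties from the fiber over $p_{n-s}$: the bound $\deg_T M_s \le \delta_s$ comes from the specialization of the $K_s$-vector space dimension, the lower bound $\delta_s\le\deg M_s(p_{n-s},T)$ comes from the $\delta_s$ points of the lifting fiber being separated by $X_{n-s+1}$, the total-degree bound $\deg M_s=\delta_s$ is imported from \cite[Proposition~1]{SaSo96}, and separability is deduced from $M_s(p_{n-s},T)$ being square-free. You instead produce $M_s$ explicitly as the (normalized) specialization $\widetilde M_s$ of the Chow form $P_{V_s}$ along $\{X_i=p_i : i<n-s\}$, observe that this monic degree-$\delta_s$ polynomial annihilates $X_{n-s+1}$, and then pin down $M_s=\widetilde M_s$ by combining minimality with the fiber count $\delta_s\le\deg\pi_s^*$. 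This has two advantages: the total-degree bound $\deg M_s\le\delta_s$ is inherited for free from $\deg_{\widetilde{\bfs Y}}P_{V_s}=\delta_s$, so the Sabia--Solern\'o citation is no longer needed; and $\mathrm{disc}_T M_s$ is seen directly as a specialization of the Chow discriminant $\rho_s$, so its nonvanishing follows immediately from $\rho_s(\bfs p^s)\ne 0$ rather than by passing through the fiber. Both arguments ultimately rest on the same genericity package from Theorem~\ref{th: preproc: all conditions}, and your use of the Kronecker-trick identity to see that $X_{n-s+1}$ remains primitive along $L$ mirrors the paper's use of the same identity for the $W$-polynomials; the only thing worth flagging for the record is that your step ``$\#(\pi_s^*)^{-1}(p_{n-s})\le\deg\pi_s^*$'' relies implicitly on $k[C_s]$ being torsion-free (hence free, since $k[X_{n-s}]$ is a PID), which holds because $C_s$ is pure of dimension~$1$ and no component collapses under $\pi_s^*$.
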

\begin{proof}
Since $\pi_s$ is a finite morphism and $\{X_1=p_1\klk
X_{n-s-1}=p_{n-s-1}\}\subset\A^{n-s}$ is an irreducible variety of
dimension 1, the first claim about $C_s$ follows from Fact \ref{fact: preimage
finite mapping}. As $C_s=V_s\cap\{X_1=p_1\klk
X_{n-s-1}=p_{n-s-1}\}$, the B\'ezout inequality \eqref{eq:
Bezout} implies $\deg C_s\le \deg V_s=\delta_s$. Moreover, since
the fiber $\pi_s^{-1}(\bfs p^s)$ is unramified and $\deg\pi_s=\delta_s$, we have
$\#\pi_s^{-1}(\bfs p^s)=\delta_s$. Hence,
\begin{align*}
\delta_s=\deg \pi_s^{-1}(\bfs p^s)&=\deg
C_s\cap\{X_{n-s}=p_{n-s}\}\le\deg C_s,
\end{align*}
where the last inequality also follows from the B\'ezout
inequality \eqref{eq: Bezout}. This proves the second assertion on $C_s$.

Since $\pi_s:V_s\to\A^{n-s}$ is a finite morphism, it follows that
$\pi_s^*$ is finite as well. In particular, we obtain an
integral ring extension $k[X_{n-s}]\hookrightarrow k[C_s]$ and
a finite-dimensional $k(X_{n-s})$--algebra extension $k(X_{n-s})\hookrightarrow
k(X_{n-s})\otimes_{k[X_{n-s}]}k[C_s]$. This extension arises from
specializing the $K_s$--algebra extension $K_s\hookrightarrow
K_s\otimes_{R_s}k[V_s]$, and since the latter is a
$K_s$--vector space of dimension $\delta_s$, it follows that
$k(X_{n-s})\otimes_{k[X_{n-s}]}k[C_s]$ must be a $k(X_{n-s})$-vector space of
dimension at most $\delta_s$.

Furthermore, as the fiber
$(\pi_s^*)^{-1}(p_{n-s})=\pi_s^{-1}(\bfs p^s)$ has cardinality
$\delta_s$, and $X_{n-s+1}$ separates the
points in $\pi_s^{-1}(\bfs p^s)$, the coordinate $X_{n-s+1}$ takes $\delta_s$
distinct values on this fiber.
Since $M_s$ vanishes on $\pi_s^{-1}(\bfs p^s)\subset C_s$, it follows that
the polynomial $M_s(p_{n-s},T)$ has $\delta_s$ distinct
roots in $\overline{k}$. Hence, the inequalities $$\delta_s\le \deg
M_s(p_{n-s},T)\le \deg_TM_s\le \delta_s$$ must all be equalities. This shows that
$k(X_{n-s})\otimes_{k[X_{n-s}]}k[C_s]$ is a
$k(X_{n-s})$--vector space of dimension $\delta_s$.

The equality $\deg M_s=\delta_s$ is a consequence of, e.g., \cite[Proposition
1]{SaSo96}. In particular, we have $\deg M_s(p_{n-s},T)=\deg_TM_s=\deg M_s$.
Since $M_s(p_{n-s},T)$ is square-free, and thus separable,
we conclude that $M_s$ is square-free as well, and separable as a polynomial
in $k(X_{n-s})[T]$. Moreover, since $M_s(p_{n-s},T)$
and $\frac{\partial M_s}{\partial T}(p_{n-s},T)$ are
coprime in $k[T]$, it follows that $M_s(X_{n-s},T)$ and
$\frac{\partial M_s}{\partial T}(X_{n-s},T)$ are coprime in
$k(X_{n-s})[T]$. This concludes the proof.
\end{proof}

Next, we discuss the remaining polynomials forming the
Kronecker representation of $C_s$ that we aim to compute.
By specializing \eqref{eq: Kronecker trick
chow form with partial derivative specialized} we deduce
the existence of polynomials $W_{n-s+2}^s\klk W_n^s\in k[X_{n-s},T]$, of
degree at most $\delta_s-1$, such that
\begin{equation}\label{eq: parametriz lifting curve with partial deriv}
\frac{\partial M_s}{\partial T}(X_{n-s},X_{n-s+1})\cdot
  X_{n-s+i}\equiv W_{n-s+i}^s(X_{n-s},X_{n-s+1})
\end{equation}
in the coordinate ring of $C_s$, for $2\le i\le s$. The polynomials $M_s,W_{n-s+2}^s\klk
W_n^s\in k[X_{n-s},T]$ thus provide a Kronecker representation of $C_s$, with
$X_{n-s+1}$ as primitive element. In the following, we describe
an algorithm for computing such a representation.

It is also useful to note that, by specializing \eqref{eq: Kronecker trick chow form with discrim
specialized}, we obtain polynomials $V_{n-s+2}^s\klk
V_n^s\in k[X_{n-s},T]$, of degree at most $\delta_s-1$, such that
\begin{equation}\label{eq: parametriz lifting curve with discrim}
\rho_s(X_{n-s})\cdot
  X_{n-s+i}\equiv V_{n-s+i}^s(X_{n-s},X_{n-s+1})
\end{equation}
in $C_s$ for $2\le i\le s$, where
$\rho_s:=\mathrm{Res}_T(M_s,\partial M_s/\partial T)$ with a slight
abuse of notation.

We now explicitly describe the birational morphism between the
lifting curve $C_s$ and the hypersurface $\{M_s=0\}\subset\A^2$
associated to the Kronecker representation above (compare with
\cite[Lemma 3.5]{CaMa06a} for closed sets over finite fields).
\begin{lemma}\label{lemma: lifting curve birational to M_s=0}
Let
$$\pi_{s,s+1}:C_s\to\{M_s(X_{n-s},X_{n-s+1})=0\}\subset\A^2,\quad\pi_{s,s+1}(\bfs
x):=(x_{n-s},x_{n-s+1}).$$
Then $\pi_{s,s+1}$ is a birational
mapping, which restricts to an isomorphism between the Zariski open dense
subsets $C_s\setminus\{\rho_s(X_{n-s})=0\}$ of $C_s$ and
$\{M_s=0\}\setminus \{\rho_s(X_{n-s})=0\}$ of $\{M_s=0\}$. Moreover,
the inverse mapping $$\psi:\{M_s=0\}\setminus
\{\rho_s(X_{n-s})=0\}\to C_s\setminus\{\rho_s(X_{n-s})=0\}$$ is
given by:
$$
\psi(x_{n-s},x_{n-s+1}):=\left(\bfs
p^{s+1},x_{n-s},x_{n-s+1},\frac{W_{n-s+2}^s(x_{n-s},x_{n-s+1})}{\frac{\partial
M_s}{\partial T}(x_{n-s},x_{n-s+1})} \klk
\frac{W_n^s(x_{n-s},x_{n-s+1})}{\frac{\partial M_s}{\partial
T}(x_{n-s},x_{n-s+1})}\right).
$$
\end{lemma}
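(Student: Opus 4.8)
The plan is to prove the statement by comparing coordinate rings after inverting $\rho_s$, and then reading off the explicit inverse from the Kronecker relations \eqref{eq: parametriz lifting curve with partial deriv}. First I would record that $\pi_{s,s+1}$ is well defined (since $M_s$ vanishes on $C_s$ by Proposition \ref{prop: properties M_s}, so $\pi_{s,s+1}(C_s)\subseteq\{M_s=0\}$) and finite: the projection $\pi_s^*:C_s\to\A^1$ onto the $(n-s)$th coordinate factors as $\pi_s^*=\tau\circ\pi_{s,s+1}$, where $\tau:\{M_s=0\}\to\A^1$ is the projection onto $X_{n-s}$, which is finite because $M_s$ is monic in $T$, and $\pi_s^*$ is finite by Proposition \ref{prop: properties M_s}. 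Both $C_s\setminus\{\rho_s(X_{n-s})=0\}$ and $\{M_s=0\}\setminus\{\rho_s(X_{n-s})=0\}$ are the preimages of the distinguished open set $\{\rho_s\not=0\}\subseteq\A^1$ under $\pi_s^*$ and $\tau$, and they are dense: $\{\rho_s(X_{n-s})=0\}$ is a finite union of vertical lines, so it meets each of the pure one-dimensional sets $C_s$ and $\{M_s=0\}$ (which map with finite fibers to $\A^1$) in a finite set. Since $\pi_{s,s+1}$ respects these projections, it suffices to show that $\pi_{s,s+1}$ restricts to an isomorphism over $\{\rho_s\not=0\}$; this will yield both birationality and the explicit inverse.

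For this, put $S:=k[X_{n-s}]_{\rho_s}$ and consider the $S$-algebra homomorphism
$$\phi:S[T]/(M_s)\longrightarrow S\otimes_{k[X_{n-s}]}k[C_s],\qquad T\mapsto X_{n-s+1},$$
which is well defined because $M_s(X_{n-s},X_{n-s+1})=0$ in $k[C_s]$. It is injective: both sides are torsion-free $S$-modules (the target because $k[C_s]$ is reduced and $\pi_s^*$ is finite, so no coordinate function is killed by a nonzero element of $k[X_{n-s}]$), hence they embed into their localizations at $k(X_{n-s})$, where $\phi$ becomes the isomorphism $k(X_{n-s})[T]/(M_s)\xrightarrow{\ \sim\ }k(X_{n-s})\otimes_{k[X_{n-s}]}k[C_s]$ of Proposition \ref{prop: properties M_s} (here one uses that $X_{n-s+1}$ is a primitive element and $\deg_TM_s=\delta_s=\dim_{k(X_{n-s})}(k(X_{n-s})\otimes_{k[X_{n-s}]}k[C_s])$). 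For surjectivity, the key point is that $\frac{\partial M_s}{\partial T}(X_{n-s},X_{n-s+1})$ is a unit in $S\otimes_{k[X_{n-s}]}k[C_s]$: writing $\rho_s=aM_s+b\,\frac{\partial M_s}{\partial T}$ with $a,b\in k[X_{n-s},T]$ (the B\'ezout-type identity underlying the resultant), evaluation at $(X_{n-s},X_{n-s+1})$ in $k[C_s]$ gives $\rho_s(X_{n-s})=b(X_{n-s},X_{n-s+1})\,\frac{\partial M_s}{\partial T}(X_{n-s},X_{n-s+1})$, and $\rho_s$ is invertible in $S$. Hence \eqref{eq: parametriz lifting curve with partial deriv} yields
$$X_{n-s+i}=\frac{W_{n-s+i}^s(X_{n-s},X_{n-s+1})}{\frac{\partial M_s}{\partial T}(X_{n-s},X_{n-s+1})}\in\mathrm{Im}(\phi)\qquad(2\le i\le s),$$
and since $X_1\klk X_{n-s-1}$ are the constants $p_1\klk p_{n-s-1}$ while $X_{n-s},X_{n-s+1}$ are visibly in the image, the $S$-algebra map $\phi$ is onto. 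Thus $\phi$ is an isomorphism, so $\pi_{s,s+1}$ restricts to an isomorphism from $C_s\setminus\{\rho_s=0\}$ onto $\{M_s=0\}\setminus\{\rho_s=0\}$; reading off the images under $\phi^{-1}$ of the coordinate functions $X_1\klk X_n$ gives precisely the stated formula for $\psi$, and in particular $\pi_{s,s+1}$ is birational.

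The main obstacle is the surjectivity of $\phi$, that is, controlling the denominator $\frac{\partial M_s}{\partial T}$: one must verify that the resultant identity forces $\frac{\partial M_s}{\partial T}(X_{n-s},X_{n-s+1})$ to become a unit once $\rho_s$ is inverted, and that neither $C_s$ nor $\{M_s=0\}$ has an irreducible component hidden inside $\{\rho_s(X_{n-s})=0\}$ (this is where the finiteness of the projections to $\A^1$ enters). Everything else is either routine bookkeeping with \eqref{eq: parametriz lifting curve with partial deriv} or an immediate consequence of Proposition \ref{prop: properties M_s}.
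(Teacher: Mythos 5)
Your proof is correct, and it takes a genuinely different route from the paper's. The paper argues pointwise: it first notes that $\pi_{s,s+1}$ is well defined, deduces injectivity on $\{\rho_s\neq0\}$ from identity \eqref{eq: parametriz lifting curve with discrim}, and then proves surjectivity by checking directly that $\psi(\xi,\varphi)$ lands in $C_s$ whenever $M_s(\xi,\varphi)=0$ and $\rho_s(\xi)\neq0$. That last step uses a clearing-of-denominators trick: given $H\in I(C_s)$, multiply by $\bigl(\tfrac{\partial M_s}{\partial T}\bigr)^{\deg H}$, substitute $W_{n-s+i}^s$ for $\tfrac{\partial M_s}{\partial T}\,X_{n-s+i}$ via \eqref{eq: parametriz lifting curve with partial deriv}, and observe that the resulting polynomial in $X_{n-s},X_{n-s+1}$ vanishes on $C_s$, hence is divisible by the minimal polynomial $M_s$; so $H$ vanishes at $\psi(\xi,\varphi)$. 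You instead work at the level of coordinate rings after inverting $\rho_s$: you define the $S$-algebra map $\phi:S[T]/(M_s)\to S\otimes_{k[X_{n-s}]}k[C_s]$, prove injectivity by torsion-freeness plus the fact that $\phi$ localizes at the generic point to the primitive-element isomorphism from Proposition \ref{prop: properties M_s}, and prove surjectivity by showing that $\tfrac{\partial M_s}{\partial T}(X_{n-s},X_{n-s+1})$ becomes a unit in $S\otimes k[C_s]$ (via the B\'ezout identity for the discriminant) so that \eqref{eq: parametriz lifting curve with partial deriv} puts all coordinate functions in the image. Both proofs rest on the same two pillars---$M_s$ is the monic minimal polynomial of degree $\delta_s=\dim_{k(X_{n-s})}k(X_{n-s})\otimes k[C_s]$, and the Kronecker parametrization \eqref{eq: parametriz lifting curve with partial deriv}---but yours replaces the pointwise Nullstellensatz-style verification by a cleaner ring isomorphism, from which both birationality and the explicit formula for $\psi$ follow at once. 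The only cosmetic slip is the phrase ``no coordinate function is killed by a nonzero element of $k[X_{n-s}]$''; what you actually need (and implicitly use) is that no nonzero element of $k[C_s]$ is killed, which does follow from purity of dimension, reducedness, and finiteness of $\pi_s^*$ as you sketch.
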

\begin{proof}
The morphism $\pi_{s,s+1}$ is well-defined since
$M_s(X_{n-s},X_{n-s+1})$ vanishes identically on $C_s$, and it maps
$C_s\setminus\{\rho_s(X_{n-s})=0\}$ to $\{M_s=0\}\setminus
\{\rho_s(X_{n-s})=0\}$. The identity \eqref{eq: parametriz lifting curve
with discrim} ensures that $\pi_{s,s+1}$ is injective on
$C_s\setminus\{\rho_s(X_{n-s})=0\}$.

To show surjectivity, it suffices to verify that $\psi$ is
well-defined.
Let $(x_{n-s},x_{n-s+1})\in \A^2$ be such that
$M_s(x_{n-s},x_{n-s+1})=0$ and $\rho_s(x_{n-s})\not=0$.
Then $\frac{\partial M_s}{\partial
T}(x_{n-s},x_{n-s+1})\not=0$. Let $H\in I(C_s)$,
and set
$H_1:=(\frac{\partial M_s}{\partial T})^{\deg H}H$. Then we may
write $$H_1=H_2\Big(X_1\klk X_{n-s+1},\frac{\partial
M_s}{\partial T}X_{n-s+2}\klk \frac{\partial M_s}{\partial
T}X_n,\frac{\partial M_s}{\partial T}\Big)$$ for some
$H_2\in k[X_1\klk X_n,X_{n+1}]$. Since $H_1\in I(C_s)$,
identity \eqref{eq: parametriz lifting curve with partial deriv}
implies that $$H_2\Big(\bfs
p^{s+1},X_{n-s},X_{n-s+1},W_{n-s+2}^s(X_{n-s},X_{n-s+1})\klk
W_n^s(X_{n-s},X_{n-s+1}),\frac{\partial M_s}{\partial T}(X_{n-s},X_{n-s+1})\Big)$$ vanishes on $C_s$, and hence is divisible
by $M_s(X_{n-s},X_{n-s+1})$. It follows that $H_1$ vanishes
on the image of $\psi$, and therefore so does $H$. Thus, $\psi$
maps $\{M_s=0\}\setminus \{\rho_s(X_{n-s})=0\}$ into
$C_s\setminus\{\rho_s(X_{n-s})=0\}$, and from \eqref{eq:
parametriz lifting curve with partial deriv} we see that $\psi$
is the inverse map of $\pi_{s,s+1}$ restricted to $C_s\setminus
\{\rho(X_{n-s}=0\}$. This completes the proof.
\end{proof}

We now include a simple observation that will be used
later.
\begin{remark}\label{rem: fiber meets every irred comp curve}
The fiber $\pi_s^{-1}(\bfs p^s)=C_s\cap\{X_{n-s}=p_{n-s}\}$ intersects
every irreducible component of $C_s$.
\end{remark}
\begin{proof}
Let $C_s=\cup_{i\in\mathcal{I}}\mathcal{C}_i$ be the decomposition
of $C_s$ into irreducible components. Since
$C_s\cap\{X_{n-s}=p_{n-s}\}=\pi_s^{-1}(\bfs p^s)$, the intersection
$\mathcal{C}_i\cap\{X_{n-s}=p_{n-s}\}$ has dimension at most zero.
Moreover,
$$\delta_s=\#\pi_s^{-1}(\bfs p^s)\le \sum_{i\in\mathcal{I}}
\#(\mathcal{C}_i\cap\{X_{n-s}=p_{n-s}\})\le \sum_{i\in\mathcal{I}}
\deg\mathcal{C}_i=\deg C_s=\delta_s.$$
Hence, all inequalities must be equalities, which implies that
$\mathcal{C}_i\cap\{X_{n-s}=p_{n-s}\}$ has cardinality equal to
$\deg\mathcal{C}_i$, which is positive for each $i\in\mathcal{I}$.
\end{proof}

We recall that, in addition to the conditions $({\sf B}_1)$--$({\sf B}_4)$
listed above,
the point $\bfs p^{s+1}$ is assumed to satisfy the following condition related to
the lifting curve $C_s$:
\begin{itemize}
  \item[$({\sf B}_5)$] No point $\bfs
q\in \pi_s\big(C_s\cap\{G=0\}\big)$ belongs to
$\pi_s\big(\pi_{s+1}^{-1}(\bfs p^{s+1})\big)$.
\end{itemize}

The following result, which adapts and extends \cite[Lemma
4.1]{CaMa06a} to our setting, will be essential for proving the
correctness of the procedure for computing a Kronecker
representation of the lifting curve $C_s$. Observe that the image of
the map $C_s\to\A^{s+1}$, $\bfs x\mapsto (x_{n-s}\klk x_n)$, is
isomorphic to $C_s$. For simplicity of notations, we will henceforth
use $C_s$ to denote either the curve of $\A^n$ as defined in
\eqref{eq: def lifting curve}, or its image under this projection.
The next result shows that we are in a position to apply the
Newton-Hensel process described in Section \ref{subsec: NH lifting}.
\begin{proposition}\label{prop: F_i(p*,X) generate radical ideal}
With the above notations and assumptions, the following
statements hold:
\begin{enumerate}
  \item The polynomials $F_j(\bfs p^{s+1},X_{n-s}\klk X_n)$ $(1\le j\le
s)$ form a regular sequence in the localized ring $k[X_{n-s}\klk X_n]_{G(\bfs
p^{s+1},X_{n-s}\klk X_n)}$, and they generate the localization of the ideal of
$C_s$ in this ring.
\item The morphism $\pi_s^*:C_s\to\A^1$, defined by $\pi_s^*(x_{n-s}\klk x_n):=x_{n-s}$,
is finite and generically unramified, and its fiber over $p_{n-s}$,
namely $(\pi_s^*)^{-1}(p_{n-s})$, is unramified and has cardinality
$\delta_s=\deg C_s$.
\end{enumerate}
\end{proposition}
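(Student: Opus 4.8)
\emph{Setup and reduction.} The plan is to transport the situation to the ``slice'' $\{X_1=p_1\klk X_{n-s-1}=p_{n-s-1}\}$, which I identify with $\A^{s+1}$ with coordinates $X_{n-s}\klk X_n$, writing $\widehat F_j:=F_j(\bfs p^{s+1},X_{n-s}\klk X_n)$, $\widehat G:=G(\bfs p^{s+1},X_{n-s}\klk X_n)$ and $R:=k[X_{n-s}\klk X_n]_{\widehat G}$. First I would record that $V(\widehat F_1\klk\widehat F_s)\cap\{\widehat G\neq 0\}=C_s\cap\{\widehat G\neq 0\}$: since $V_s$ is the Zariski closure of $V(F_1\klk F_s)\cap\{G\neq 0\}$ and $V_s\subseteq V(F_1\klk F_s)$, one has $V_s\cap\{G\neq 0\}=V(F_1\klk F_s)\cap\{G\neq 0\}$, and intersecting with the slice and recalling $C_s=V_s\cap\{X_1=p_1\klk X_{n-s-1}=p_{n-s-1}\}$ gives the claim. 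By Proposition \ref{prop: properties M_s}, $C_s$ has pure dimension $1$ and degree $\delta_s$, and by Remark \ref{rem: fiber meets every irred comp curve} every irreducible component of $C_s$ meets $\pi_s^{-1}(\bfs p^s)$, which is contained in $\{\widehat G\neq 0\}$ by condition $({\sf A}_2)$; hence $C_s\cap\{\widehat G\neq 0\}$ is nonempty of pure dimension $1$, its irreducible components being the traces on $\{\widehat G\ne 0\}$ of those of $C_s$, and each $\widehat F_j$ lies in $I(C_s)$.

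\emph{Part (1).} The ring $R$ is a localization of a polynomial ring over a field, hence Cohen--Macaulay of dimension $s+1$, and the ideal $\mathfrak a:=(\widehat F_1\klk\widehat F_s)R$ has height exactly $s$: it is at most $s$ by Krull's height theorem, and at least $s$ because $V(\mathfrak a)=C_s\cap\{\widehat G\ne 0\}$ has dimension $1=\dim R-s$. Since in a Cohen--Macaulay ring an ideal of height $s$ generated by $s$ elements is generated by a regular sequence (see, e.g., \cite{Eisenbud95}), $\widehat F_1\klk\widehat F_s$ is a regular sequence in $R$ and $R/\mathfrak a$ is Cohen--Macaulay. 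It then remains to show that $R/\mathfrak a$ is reduced, for this forces $\mathfrak a=\sqrt{\mathfrak a}=I(C_s)R$, which is the second half of assertion (1). Being Cohen--Macaulay, $R/\mathfrak a$ satisfies Serre's condition $(S_1)$, so by Serre's criterion it suffices to prove it is generically reduced. The crucial input is transversality of $\widehat F_1\klk\widehat F_s$ along the lifting fiber: because $\pi_s^{-1}(\bfs p^s)$ is unramified (condition $({\sf A}_1)$) and contained in $\{G\ne 0\}$ (condition $({\sf A}_2)$), and $k$ is perfect, the characterization in the footnote of Section \ref{subsec: lifting fibers} shows that the matrix $\big(\partial F_i/\partial X_{n-s+j}(\bfs x)\big)_{1\le i,j\le s}$ is invertible for every $\bfs x\in\pi_s^{-1}(\bfs p^s)$. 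Since such a point satisfies $x_1=p_1\klk x_{n-s-1}=p_{n-s-1}$, this matrix equals the submatrix of columns $X_{n-s+1}\klk X_n$ of the Jacobian of $(\widehat F_1\klk\widehat F_s)$ with respect to $X_{n-s}\klk X_n$, which therefore has rank $s$ at $\bfs x$; by the Jacobian criterion (e.g.\ \cite[Theorem 18.15]{Eisenbud95}), $R/\mathfrak a$ is regular, in particular reduced, at $\bfs x$. By Remark \ref{rem: fiber meets every irred comp curve} the points of $\pi_s^{-1}(\bfs p^s)$ meet every irreducible component of $C_s\cap\{\widehat G\ne 0\}$, so $R/\mathfrak a$ is reduced at the generic point of each of its components, i.e.\ generically reduced, as required.

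\emph{Part (2).} Proposition \ref{prop: properties M_s} already gives that $\pi_s^*\colon C_s\to\A^1$ is finite of degree $\delta_s$, that $\deg C_s=\delta_s$, that $M_s$ is separable over $k(X_{n-s})$, and that $\rho_s:=\mathrm{disc}_T M_s$ is a nonzero element of $k[X_{n-s}]$. To see that $\pi_s^*$ is generically unramified I would invoke Lemma \ref{lemma: lifting curve birational to M_s=0}: the map $\pi_{s,s+1}\colon C_s\to\{M_s=0\}$ restricts to an isomorphism between the complements of $\{\rho_s(X_{n-s})=0\}$ in $C_s$ and in $\{M_s=0\}$, and over that dense open set the projection $\{M_s=0\}\to\A^1$, $(x_{n-s},x_{n-s+1})\mapsto x_{n-s}$, is unramified because $\partial M_s/\partial T$ does not vanish there; since $\pi_s^*$ factors as this projection composed with $\pi_{s,s+1}$, it is unramified over the dense open set $\{\rho_s(X_{n-s})\ne 0\}$ of $\A^1$. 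Finally, the fiber of $\pi_s^*$ over $p_{n-s}$ equals $V_s\cap\{X_1=p_1\klk X_{n-s}=p_{n-s}\}=\pi_s^{-1}(\bfs p^s)$, which is unramified by $({\sf A}_1)$ and has cardinality $\delta_s=\deg C_s$ by Proposition \ref{prop: properties M_s}.

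\emph{Main difficulty.} I expect the main obstacle to be the radicality part of assertion (1): showing that the section $C_s$ does not acquire an embedded or non-reduced structure when cut out by $\widehat F_1\klk\widehat F_s$. The argument above handles it by playing the Cohen--Macaulayness inherited from the complete-intersection structure against generic reducedness, and the latter is obtained not from a Bertini-type genericity statement but from the transversality of the defining equations precisely at the points of the lifting fiber --- which rests on the lifting-point hypothesis, the unramifiedness condition $({\sf A}_1)$, and the perfectness of $k$ --- together with the fact (Remark \ref{rem: fiber meets every irred comp curve}) that this fiber meets every component of the curve.
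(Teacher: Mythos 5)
Your proof is correct, and while it uses the same basic geometric inputs as the paper (the Jacobian criterion at the lifting-fiber points, the fact that the lifting fiber meets every component, and the pure dimension $1$ of $C_s$), you organize the commutative-algebra side differently in a way worth pointing out. For the regular-sequence claim, the paper checks that each intermediate section $Z_i=\overline{\{\widehat F_1=\cdots=\widehat F_i=0,\widehat G\ne 0\}}$ has pure dimension $s+1-i$, applying Fact \ref{fact: preimage finite mapping} to each $\pi_i$ in turn, and then invokes \cite[Theorem~17.6]{Matsumura86}. You instead compute only the height of the full ideal $(\widehat F_1,\ldots,\widehat F_s)$ and quote the Cohen--Macaulay characterization of complete intersections (Eisenbud, Corollary~18.14), which avoids examining the intermediate sections at all; this is a mild but genuine shortening. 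For the radical/reduced claim the two arguments coincide in substance: the paper states the Jacobian is not a zero divisor on any component and invokes \cite[Theorem~18.15]{Eisenbud95}, while you unfold this into Serre's criterion explicitly ($(S_1)$ from Cohen--Macaulayness plus $(R_0)$ at the closed points of $\pi_s^{-1}(\bfs p^s)$, which pass to generic points of components). For part (2), the paper obtains generic unramifiedness of $\pi_s^*$ by a constancy-of-degree argument from the single unramified fiber over $p_{n-s}$; you instead invoke Lemma \ref{lemma: lifting curve birational to M_s=0} to exhibit the dense open subset $\{\rho_s(X_{n-s})\ne 0\}$ of $\A^1$ over which $\pi_s^*$ is unramified. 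Both routes are valid; yours is slightly more explicit and self-contained, while the paper's is closer to the way other fiber statements are organized elsewhere in the text.
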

\begin{proof}
We first prove that the polynomials $F_j(\bfs p^{s+1},X_{n-s}\ldots X_n )$ $(1\le j\le
s)$ form a regular sequence in $k[X_{n-s}\klk X_n]_{G(\bfs
p^{s+1},X_{n-s}\klk X_n)}$. To this end, by
\cite[Theorem 17.6]{Matsumura86}, it suffices to show that, for each
$1\le i\le s$,
$$Z_i:=\overline{\{F_j(\bfs p^{s+1},X_{n-s}\ldots X_n)=0:1\le j\le
i\}\setminus \{G(\bfs p^{s+1},X_{n-s}\klk X_n)=0\}}$$ has pure
dimension $s+1-i$.

Let $L_{s+1}\subset\A^n$ be the linear subvariety defined by the
equations $X_1=p_1\klk X_{n-s-1}=p_{n-s-1}$. Then
$Z_i=V_i\cap L_{s+1}=\pi_i^{-1}(\pi_i(L_{s+1}))$ for $1\le i\le s$.
Since $\pi_i:V_i\to\A^{n-i}$ is a finite morphism and
$\pi_i(L_{s+1})\subset\A^{n-i}$ is an irreducible variety of
dimension $\dim_{\A^{n-i}}\pi_i(L_{s+1})=n-i-(n-s-1)=s+1-i$, it
follows from  Fact
\ref{fact: preimage finite mapping} that $Z_i$ has
pure dimension $s+1-i$, as desired.

Now we show that the polynomials $F_j(\bfs p^{s+1},X_{n-s},\dots, X_n)$ $(1\le j\le
s)$ generate the ideal of $C_s$ in the localized ring $k[X_{n-s}\klk X_n]_{G(\bfs
p^*,X_{n-s}\klk X_n)}$. By definition, we have
$$C_s\setminus\{G=0\}=(V_s\setminus \{G=0\})\cap \{X_1=p_1\klk
X_{n-s-1}=p_{n-s-1}\}.$$
Hence, the common set of zeros in $\A^n\setminus\{G=0\}$ of the polynomials
$F_j(\bfs p^{s+1},X_{n-s},\dots, X_n)$ $(1\le j\le s)$ is precisely
$C_s\setminus\{G=0\}$. Since $\bfs p^s$ is a lifting point of $\pi_s$,
the Jacobian determinant
$$J_F(\bfs p^{s+1},X_{n-s}\klk X_n):=\det\big(\partial
F_i(\bfs p^{s+1},X_{n-s}\klk X_n)/\partial X_{n-s+j}\big)_{1\le i,j\le
s}$$
does not vanish at any point of
$C_s\setminus\{G=0\}\cap\{X_{n-s}=p_{n-s}\}=\pi_s^{-1}(\bfs p^s)$.
By Remark \ref{rem: fiber meets every irred comp
curve}, this implies that the coordinate function of
$C_s\setminus\{G=0\}$ defined by $J_F(\bfs p^{s+1},X_{n-s}\klk X_n)$
does not vanish identically on any irreducible component of
$C_s\setminus\{G=0\}$, and is therefore not a zero divisor of
$k[C_s]_G$. By \cite[Theorem 18.15]{Eisenbud95},
the ideal generated by the polynomials $F_j(\bfs p^{s+1}, X_{n-s}\klk X_n)$ $(1\le j\le
s)$ is radical in $k[X_{n-s}\klk X_n]_{G(\bfs p^{s+1},X_{n-s}\klk
X_n)}$, and thus it coincides with the ideal of $C_s$ in that ring.

We finally prove the claims about the morphism $\pi^*_s$. Since $\pi_s$
is a finite morphism of degree $\delta_s$, its restriction to $C_s$
induces a finite morphism $\pi^*_s:C_s\to\A^1$ of degree at most $\delta_s$.
Moreover, because the fiber $\pi_s^{-1}(\bfs p^s)$ is unramified and has
cardinality $\delta_s$, the fiber $(\pi_s^*)^{-1}(p_{n-s})$
is also unramified of cardinality $\delta_s$, and it follows that
$\pi_s^*$ is generically unramified and has degree exactly $\delta_s$.
\end{proof}
%
%
\subsection{A Kronecker representation of the $s$th lifting curve}
\label{subsec: NH lifting}
According to Lemma \ref{lemma: lifting curve birational to M_s=0},
there is an isomorphism between the Zariski open dense subsets
$C_s\setminus\{\rho_s(X_{n-s})=0\}$ of $C_s$ and
$\{M_s(X_{n-s},T)=0\}\setminus \{\rho_s(X_{n-s})=0\}$ of
$\{M_s(X_{n-s},T)=0\}\subset\A^2$. This allows us to
represent $C_s$ in terms of the zeros of $M_s(X_{n-s},T)$.

The zeros of $M_s$ can be represented using the well--known Hensel
lemma (see, e.g., \cite[Lecture 12]{Abhyankar90}). More precisely,
the facts that $M_s$ is a monic element of $k[X_{n-s}][T]$ of degree
$\delta_s$, and that
$$\rho_s(X_{n-s}):=\mathrm{Res}_T
\left(M_s(X_{n-s},T),\frac{\partial M_s}{\partial
T}(X_{n-s},T)\right)\not=0,$$
imply that $M_s$ admits a factorization of the form
$$M_s=\prod_{j=1}^{\delta_s}(T-\sigma_j)$$ in $\overline{k}[\![X_{n-s}
- p_{n-s}]\!][T],$
where $\overline{k}[\![X_{n-s} - p_{n-s}]\!]$ denotes the ring of
formal power series in $X_{n-s}-p_{n-s}$. We will see that
approximations of order $\delta_s$ of
$\sigma_1\klk\sigma_{\delta_s}$ are sufficient for our purposes, and
discuss how such approximations can be computed.

Let $\bfs F_s:=(F_1(\bfs p^{s+1},X_{n-s}\klk X_n) \klk F_s(\bfs
p^{s+1},X_{n-s}\klk X_n))$, and let
$$
J_s(X_{n-s}, \ldots, X_n) := \left(\begin{array}{ccc}
     \tfrac{\partial F_1}{\partial X_{n-s+1}} & \ldots & \tfrac{\partial
     F_1}{\partial X_n}\\
     \vdots &  & \vdots\\
     \tfrac{\partial F_s}{\partial X_{n-s+ 1}} &\ldots  & \tfrac{\partial F_s}{\partial X_n}
   \end{array}\right) (\bfs p^{s+1},X_{n-s}, \ldots, X_n)$$
be the Jacobian matrix of $\bfs F_s$ with respect to $X_{n-s+1},
\ldots, X_n$. Considering $\bfs F_s$ and $J_s$ as elements of
$k[X_{n-s}] [X_{n-s+1}, \ldots, X_n]$, the Newton--Hensel operator
associated to $\bfs F_s$ is the $s$--tuple $N_{\bfs F_s}$ of
rational functions of $k(X_{n-s}) (X_{n-s+1}, \ldots, X_n)$ defined
by
\begin{equation}
 N_{\bfs F_s} (X_{n-s+1}, \ldots, X_n)^t : = \left(\begin{array}{c}
     X_{n-s+1}\\
     \vdots\\
     X_n
   \end{array}\right) - \big(J_s^{- 1} \bfs F_s^t\big)(X_{n-s+1}, \ldots, X_n),
\end{equation}
where $^t$ denotes transposition.

As a consequence of Proposition
\ref{prop: F_i(p*,X) generate radical ideal}, we have the following
result, showing that, starting from an arbitrary point of
$\pi_s^{-1}(\bfs p^s)$, the $j$--th iteration of $N_{\bfs F_s}$ is
well-defined. In the following, $(X_{n-s} -
p_{n-s})$ denotes the maximal ideal of $\overline{k}[\![X_{n-s} -
p_{n-s}]\!]$ generated by $X_{n-s}-p_{n-s}$. The proof of this
result follows {\em mutatis mutandis} the proof of
Assertions (I) and (II) in \cite{HeKrPuSaWa00}.
\begin{proposition}
\label{prop: Iteration NH} Let $\bfs\xi := (\bfs p^s,\xi_{n-s+1}\klk
\xi_n) \in \pi_s^{-1}(\bfs p^s)$. For each $j\ge 0$, consider the
$s$--tuple $\bfs R^{(j,\bfs\xi)}:= (R_{n-s+1}^{(j,\bfs\xi)}\klk
R_n^{(j,\bfs\xi)})\in\overline{k}[\![X_{n-s} - p_{n-s} ]\!]^s$
defined recursively by
$$\bfs R^{(0,\bfs\xi)} := (\xi_{n-s+1}, \ldots, \xi_n), \qquad
\bfs R^{(j+1,\bfs\xi)} := N_{\bfs F_s} (\bfs R^{(j,\bfs\xi)}) .$$
Then the sequence $(\bfs R^{(j,\bfs\xi)})_{j \ge 0}$ is
well--defined, and the following properties hold for each $j\ge 0$:
\begin{enumerate}
    \item $F_i (\bfs p^{s+1},X_{n-s},\bfs R^{(j,\bfs\xi)}) \in (X_{n-s} -
    p_{n-s})^{2^j}$ for $1 \le i \le s$;
    \item $\det  J_s (X_{n-s},\bfs R^{(j,\bfs\xi)}) \notin (X_{n-s}
    - p_{n-s})$;
    \item $R_i^{(j+1,\bfs\xi)}\equiv R_i^{(j,\bfs\xi)}\mod (X_{n-s}
    - p_{n-s})^{2^j}$ for $n-s+2\le i\le n$.
  \end{enumerate}
\end{proposition}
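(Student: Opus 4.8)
The plan is to prove the three claims together with well-definedness by induction on $j$, following the classical Newton--Hensel scheme: the base case will rest on the hypotheses $({\sf A}_1)$--$({\sf A}_3)$ on the fiber $\pi_s^{-1}(\bfs p^s)$ and on Proposition \ref{prop: F_i(p*,X) generate radical ideal}, while the inductive step is the usual quadratic--convergence argument performed in the complete local ring $\overline{k}[\![X_{n-s}-p_{n-s}]\!]$.

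First I would settle the base case $j=0$. Since $\bfs\xi=(\bfs p^s,\xi_{n-s+1}\klk\xi_n)\in\pi_s^{-1}(\bfs p^s)\subset\{G\neq0\}$, each $F_i$ vanishes at $\bfs\xi$; substituting the constants $\xi_{n-s+1}\klk\xi_n$ for $X_{n-s+1}\klk X_n$ in $F_i(\bfs p^{s+1},X_{n-s}\klk X_n)$ produces a univariate polynomial in $X_{n-s}$ which vanishes at $X_{n-s}=p_{n-s}$, hence lies in $(X_{n-s}-p_{n-s})=(X_{n-s}-p_{n-s})^{2^0}$; this is $(1)$ for $j=0$. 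For $(2)$, the unramifiedness of $\pi_s^{-1}(\bfs p^s)$ (condition $({\sf A}_1)$, spelled out in the footnote, equivalently Proposition \ref{prop: F_i(p*,X) generate radical ideal}) gives $\det J_s(\bfs\xi)\neq0$, so $\det J_s(X_{n-s},\bfs R^{(0,\bfs\xi)})$ has nonzero constant term and is therefore a unit of $\overline{k}[\![X_{n-s}-p_{n-s}]\!]$; in particular $J_s(X_{n-s},\bfs R^{(0,\bfs\xi)})$ is invertible over this ring, so $\bfs R^{(1,\bfs\xi)}:=N_{\bfs F_s}(\bfs R^{(0,\bfs\xi)})$ is well defined.

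Next I would carry out the inductive step, assuming $(1)$--$(3)$ up to index $j$ and that $\bfs R^{(0,\bfs\xi)}\klk\bfs R^{(j,\bfs\xi)}$ lie in $\overline{k}[\![X_{n-s}-p_{n-s}]\!]^s$. Property $(2)$ at $j$ makes $J_s(X_{n-s},\bfs R^{(j,\bfs\xi)})$ invertible over the power series ring, with inverse $\mathrm{adj}/\det$, so $\bfs R^{(j+1,\bfs\xi)}:=N_{\bfs F_s}(\bfs R^{(j,\bfs\xi)})$ is well defined. From the definition of $N_{\bfs F_s}$, the entries of $\bfs R^{(j+1,\bfs\xi)}-\bfs R^{(j,\bfs\xi)}=-\big(J_s^{-1}\bfs F_s^t\big)(X_{n-s},\bfs R^{(j,\bfs\xi)})$ are obtained from the entries of $\bfs F_s(\bfs p^{s+1},X_{n-s},\bfs R^{(j,\bfs\xi)})$ by left multiplication with the power-series matrix $J_s^{-1}$; by property $(1)$ at $j$ those entries lie in $(X_{n-s}-p_{n-s})^{2^j}$, and multiplication by a matrix over the ring preserves this principal ideal, giving $(3)$. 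Iterating $(3)$ (all exponents $2^i$ are $\ge1$) yields $\bfs R^{(j+1,\bfs\xi)}\equiv\bfs R^{(0,\bfs\xi)}\pmod{(X_{n-s}-p_{n-s})}$, whence $\det J_s(X_{n-s},\bfs R^{(j+1,\bfs\xi)})\equiv\det J_s(\bfs\xi)\not\equiv0\pmod{(X_{n-s}-p_{n-s})}$, which is $(2)$ at $j+1$.

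The quadratic--convergence estimate, property $(1)$ at $j+1$, is the one real point. Writing $\bfs\Delta:=\bfs R^{(j+1,\bfs\xi)}-\bfs R^{(j,\bfs\xi)}$, whose entries lie in $(X_{n-s}-p_{n-s})^{2^j}$ by $(3)$, and Taylor-expanding each $F_i(\bfs p^{s+1},X_{n-s},\cdot)$ around $\bfs R^{(j,\bfs\xi)}$ in the last $s$ variables, one obtains $F_i(\bfs p^{s+1},X_{n-s},\bfs R^{(j+1,\bfs\xi)})=F_i(\bfs p^{s+1},X_{n-s},\bfs R^{(j,\bfs\xi)})+\big(J_s(X_{n-s},\bfs R^{(j,\bfs\xi)})\,\bfs\Delta^t\big)_i+Q_i$, where $Q_i$ gathers the terms of order $\ge2$ in $\bfs\Delta$ and is therefore divisible by a product of two entries of $\bfs\Delta$. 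By the very definition of $N_{\bfs F_s}$, $J_s(X_{n-s},\bfs R^{(j,\bfs\xi)})\,\bfs\Delta^t=-\,\bfs F_s(\bfs p^{s+1},X_{n-s},\bfs R^{(j,\bfs\xi)})^t$, so the first two terms cancel and $F_i(\bfs p^{s+1},X_{n-s},\bfs R^{(j+1,\bfs\xi)})=Q_i\in\big((X_{n-s}-p_{n-s})^{2^j}\big)^2=(X_{n-s}-p_{n-s})^{2^{j+1}}$, closing the induction. The only delicate bookkeeping is isolating the second-order Taylor remainder $Q_i$ and checking its divisibility by the square of the ideal; this is precisely the computation carried out for Assertions (I)--(II) in \cite{HeKrPuSaWa00}, which I would cite for the details rather than reproduce them.
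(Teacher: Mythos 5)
Your proof is correct and fleshes out, in a self-contained way, exactly the standard Newton--Hensel induction that the paper disposes of by citing Assertions (I)--(II) of \cite{HeKrPuSaWa00}: the base case via unramifiedness of $\pi_s^{-1}(\bfs p^s)$, the linear cancellation coming from the definition of $N_{\bfs F_s}$, and the quadratic gain from the second-order Taylor remainder are all present and handled properly. Since the paper gives no independent argument and refers to the same source you do, this is essentially the same approach.
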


From Proposition \ref{prop: Iteration NH}, the
Newton--Hensel theorem follows, ensuring the existence of the
parametrizations by power series for the branches of $C_s$ near
each point of a lifting fiber $\pi_s^{-1}(\bfs p^s)$.
More precisely, we have the following result, whose proof
follows {\em mutatis mutandis} that of \cite[Lemma
3]{HeKrPuSaWa00}.
\begin{theorem}
\label{th: NewtonHensel} For any point $\bfs\xi:=(\bfs
p,\xi_{n-s+1}\klk \xi_n) \in \pi_s^{-1}(\bfs p^s)$, there exists a
unique $s$--tuple of formal power series $\bfs R^{\bfs\xi}:=(R_{n-s+
1}^{\bfs\xi}\klk R_n^{\bfs\xi}) \in \overline{k}[\![X_{n-s} -
p_{n-s}]\!]^s$ satisfying:
\begin{itemize}
    \item $F_1(\bfs p^{s+1}, X_{n-s},\bfs R^{\bfs\xi})=0\klk F_s(\bfs p^{s+1}, X_{n-s},\bfs R^{\bfs\xi})=0$;
    \item $\bfs R^{\bfs\xi}(p_{n-s})=(\xi_{n-s+1}\klk \xi_n)$.
  \end{itemize}
\end{theorem}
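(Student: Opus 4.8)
The plan is to deduce the theorem from the convergence of the Newton--Hensel iteration already established in Proposition~\ref{prop: Iteration NH}, together with a standard uniqueness argument exploiting the invertibility of the Jacobian determinant in the power series ring. Throughout, fix $\bfs\xi:=(\bfs p^s,\xi_{n-s+1}\klk\xi_n)\in\pi_s^{-1}(\bfs p^s)$ and write $\mathfrak{m}:=(X_{n-s}-p_{n-s})$ for the maximal ideal of the complete and separated local ring $\overline{k}[\![X_{n-s}-p_{n-s}]\!]$, so that $\bigcap_{j\ge 0}\mathfrak{m}^{2^j}=\{0\}$.

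For \emph{existence}, I would start from the sequence $(\bfs R^{(j,\bfs\xi)})_{j\ge 0}$ of Proposition~\ref{prop: Iteration NH}. Assertion~(3) there gives $R_i^{(j+1,\bfs\xi)}\equiv R_i^{(j,\bfs\xi)}\pmod{\mathfrak{m}^{2^j}}$ for $n-s+2\le i\le n$ and all $j\ge 0$, so each coordinate sequence is Cauchy in the $\mathfrak{m}$--adic topology; by completeness the limit $\bfs R^{\bfs\xi}:=(R_{n-s+1}^{\bfs\xi}\klk R_n^{\bfs\xi})\in\overline{k}[\![X_{n-s}-p_{n-s}]\!]^s$ exists and satisfies $\bfs R^{\bfs\xi}\equiv\bfs R^{(j,\bfs\xi)}\pmod{\mathfrak{m}^{2^j}}$ for every $j$. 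Since the $F_i$ are polynomials, hence $\mathfrak{m}$--adically continuous, assertion~(1) of Proposition~\ref{prop: Iteration NH} yields $F_i(\bfs p^{s+1},X_{n-s},\bfs R^{\bfs\xi})\in\bigcap_{j\ge 0}\mathfrak{m}^{2^j}=\{0\}$ for $1\le i\le s$; and iterating assertion~(3) from $j=0$ shows $\bfs R^{\bfs\xi}\equiv\bfs R^{(0,\bfs\xi)}\pmod{\mathfrak{m}}$, whence $\bfs R^{\bfs\xi}(p_{n-s})=\bfs R^{(0,\bfs\xi)}(p_{n-s})=(\xi_{n-s+1}\klk\xi_n)$. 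This gives all the asserted properties of $\bfs R^{\bfs\xi}$.

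For \emph{uniqueness}, suppose $\bfs S:=(S_{n-s+1}\klk S_n)\in\overline{k}[\![X_{n-s}-p_{n-s}]\!]^s$ also satisfies $F_i(\bfs p^{s+1},X_{n-s},\bfs S)=0$ for $1\le i\le s$ and $\bfs S(p_{n-s})=(\xi_{n-s+1}\klk\xi_n)$. The crucial point is that, since $\bfs p^s$ is a lifting point of $\pi_s$ and the fiber $\pi_s^{-1}(\bfs p^s)$ is unramified (conditions $({\sf A}_1)$--$({\sf A}_2)$; see also Proposition~\ref{prop: F_i(p*,X) generate radical ideal}), the Jacobian determinant $\det J_s$ does not vanish at $\bfs\xi$, and therefore $\det J_s(X_{n-s},\bfs R^{\bfs\xi})$, whose reduction modulo $\mathfrak{m}$ is $\det J_s(p_{n-s},\xi_{n-s+1}\klk\xi_n)\ne 0$, is a unit of $\overline{k}[\![X_{n-s}-p_{n-s}]\!]$. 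I would then prove by induction on $m\ge 1$ that $\bfs S\equiv\bfs R^{\bfs\xi}\pmod{\mathfrak{m}^m}$: the case $m=1$ is the common initial value, and if $\bfs\Delta:=\bfs S-\bfs R^{\bfs\xi}\in\mathfrak{m}^m\cdot\overline{k}[\![X_{n-s}-p_{n-s}]\!]^s$, Taylor expansion of $\bfs F_s$ around $\bfs R^{\bfs\xi}$ gives $0=\bfs F_s(\bfs p^{s+1},X_{n-s},\bfs R^{\bfs\xi})+J_s(X_{n-s},\bfs R^{\bfs\xi})\,\bfs\Delta^t+\bfs E$, where $\bfs E\in\mathfrak{m}^{2m}\cdot\overline{k}[\![X_{n-s}-p_{n-s}]\!]^s$ collects the terms of order $\ge 2$ in $\bfs\Delta$; since the first summand vanishes, multiplying by the adjoint of $J_s(X_{n-s},\bfs R^{\bfs\xi})$ and using that its determinant is a unit forces $\bfs\Delta\in\mathfrak{m}^{2m}\subset\mathfrak{m}^{m+1}$. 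Separatedness of the $\mathfrak{m}$--adic topology then gives $\bfs\Delta=0$, i.e. $\bfs S=\bfs R^{\bfs\xi}$.

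The main obstacle is not conceptual but a matter of bookkeeping: one must verify that $\det J_s$ remains invertible along the formal branch and that the higher-order Taylor remainder genuinely lies in $\mathfrak{m}^{2m}$. Both rest on the lifting point hypothesis---equivalently, on the unramifiedness of $\pi_s^{-1}(\bfs p^s)$ guaranteed by Theorem~\ref{th: preproc: all conditions} and recorded in Proposition~\ref{prop: F_i(p*,X) generate radical ideal}---which is precisely the structural input that makes both Proposition~\ref{prop: Iteration NH} and the uniqueness step go through. With this in place, the remainder is a routine transcription of the classical Newton--Hensel argument, exactly as in \cite[Lemma~3]{HeKrPuSaWa00}.
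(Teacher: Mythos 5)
Your proof is correct and follows the same Newton--Hensel route that the paper takes by citing \cite[Lemma~3]{HeKrPuSaWa00}: existence by $\mathfrak{m}$-adic convergence of the iterates of Proposition~\ref{prop: Iteration NH}, and uniqueness by a quadratic contraction argument using that $\det J_s(X_{n-s},\bfs R^{\bfs\xi})$ is a unit of $\overline{k}[\![X_{n-s}-p_{n-s}]\!]$. One small slip: as printed, Proposition~\ref{prop: Iteration NH}(3) only asserts the congruence for indices $n-s+2\le i\le n$, so for the $(n-s+1)$th coordinate you should derive the Cauchy estimate directly from parts~(1) and~(2) together with the iteration formula $\bfs R^{(j+1,\bfs\xi)}-\bfs R^{(j,\bfs\xi)}=-\big(J_s^{-1}\bfs F_s^{t}\big)(X_{n-s},\bfs R^{(j,\bfs\xi)})$, whose right-hand side lies coordinatewise in $(X_{n-s}-p_{n-s})^{2^j}$; with that adjustment the argument goes through exactly as you wrote.
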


Condition $({\sf A}_2)$ asserts that $G(\bfs\xi)\not=0$ for any
$\bfs\xi:=(\bfs p^s,\xi_{n-s+1}\klk \xi_n) \in \pi_s^{-1}(\bfs p^s)$.
Thus, by the second claim of Theorem \ref{th: NewtonHensel}, it
follows that, for all $\bfs\xi\in \pi_s^{-1}(\bfs p^s)$,
$$G(\bfs p^{s+1},X_{n-s},\bfs R^{\bfs\xi})\not=0.$$

Collecting the $\delta_s$ vectors of power series $\bfs
R^{\bfs\xi}$ yields a description of $C_s$ to compute a Kronecker
representation as in \eqref{eq:
parametriz lifting curve with partial deriv}. More precisely:
\begin{lemma}
\label{lm:TFI:FactorizMUPowerSeries} With the notations of Theorem
\ref{th: NewtonHensel}, we have the factorization
\begin{equation}
M_s(X_{n-s}, T) = \prod_{\xi \in \pi_s^{-1}(\bfs
p)}(T-R_{n-s+1}^{\bfs\xi}) \label{eq:TFI:FactorizMU}
\end{equation}
in $\overline{k}[\![X_{n-s}-p_{n-s}]\!][T]$.
\end{lemma}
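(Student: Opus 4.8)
The plan is to identify the roots $\sigma_1\klk\sigma_{\delta_s}$ of $M_s$ in $\overline{k}[\![X_{n-s}-p_{n-s}]\!]$ --- whose existence was recorded above via Hensel's lemma, using that $\rho_s$ does not vanish at $p_{n-s}$ --- with the power series $R_{n-s+1}^{\bfs\xi}$ furnished by Theorem \ref{th: NewtonHensel}, as $\bfs\xi$ ranges over the fiber $\pi_s^{-1}(\bfs p^s)$. Since $M_s$ is monic of degree $\delta_s$ in $T$ and separable, exhibiting a bijective such correspondence immediately gives the factorization \eqref{eq:TFI:FactorizMU}.

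The first step is to fix $\bfs\xi:=(\bfs p^s,\xi_{n-s+1}\klk\xi_n)\in\pi_s^{-1}(\bfs p^s)$ and show that $M_s(X_{n-s},R_{n-s+1}^{\bfs\xi})=0$ in $\overline{k}[\![X_{n-s}-p_{n-s}]\!]$. It is immediate from the definition of $M_s$ in Proposition \ref{prop: properties M_s} that $M_s(X_{n-s},X_{n-s+1})$ vanishes on $C_s$, hence lies in the ideal of $C_s$ in $k[X_{n-s}\klk X_n]$; by Proposition \ref{prop: F_i(p*,X) generate radical ideal}$(1)$, that ideal, after localization at $G(\bfs p^{s+1},X_{n-s}\klk X_n)$, is generated by $F_1(\bfs p^{s+1},X_{n-s}\klk X_n)\klk F_s(\bfs p^{s+1},X_{n-s}\klk X_n)$. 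Thus there are $A_1\klk A_s\in k[X_{n-s}\klk X_n]$ and an integer $N\ge 0$ with an identity of the form
$$G(\bfs p^{s+1},X_{n-s}\klk X_n)^N\,M_s(X_{n-s},X_{n-s+1})=\sum_{j=1}^sA_j\,F_j(\bfs p^{s+1},X_{n-s}\klk X_n).$$
Applying the $\overline{k}$--algebra homomorphism $k[X_{n-s}\klk X_n]\to\overline{k}[\![X_{n-s}-p_{n-s}]\!]$ sending $X_{n-s}\mapsto X_{n-s}$ and $X_{n-s+i}\mapsto R_{n-s+i}^{\bfs\xi}$ ($1\le i\le s$), the right--hand side vanishes by the first claim of Theorem \ref{th: NewtonHensel}, while $G(\bfs p^{s+1},X_{n-s},\bfs R^{\bfs\xi})$ has constant term $G(\bfs\xi)\neq0$ by condition $({\sf A}_2)$ and is therefore a unit of $\overline{k}[\![X_{n-s}-p_{n-s}]\!]$. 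Hence $M_s(X_{n-s},R_{n-s+1}^{\bfs\xi})=0$, and since $\overline{k}[\![X_{n-s}-p_{n-s}]\!]$ is a domain, $R_{n-s+1}^{\bfs\xi}$ equals one of the $\sigma_j$.

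The remaining step is to check that $\bfs\xi\mapsto R_{n-s+1}^{\bfs\xi}$ is a bijection of $\pi_s^{-1}(\bfs p^s)$ onto $\{\sigma_1\klk\sigma_{\delta_s}\}$. If $R_{n-s+1}^{\bfs\xi}=R_{n-s+1}^{\bfs\xi'}$, evaluating at $X_{n-s}=p_{n-s}$ gives $\xi_{n-s+1}=\xi'_{n-s+1}$, so $\bfs\xi=\bfs\xi'$ because $X_{n-s+1}$ separates the points of $\pi_s^{-1}(\bfs p^s)$ by condition $({\sf A}_3)$; thus the map is injective. Since $\#\pi_s^{-1}(\bfs p^s)=\delta_s$ by Proposition \ref{prop: F_i(p*,X) generate radical ideal}$(2)$, and the $\sigma_j$ are pairwise distinct (their constant terms are the $\delta_s$ distinct roots of the separable polynomial $M_s(p_{n-s},T)$, as in the proof of Proposition \ref{prop: properties M_s}), injectivity forces bijectivity. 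Comparing with $M_s=\prod_{j=1}^{\delta_s}(T-\sigma_j)$ then yields $M_s(X_{n-s},T)=\prod_{\bfs\xi\in\pi_s^{-1}(\bfs p^s)}(T-R_{n-s+1}^{\bfs\xi})$.

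I expect the only delicate point to be the substitution in the second step: one must justify that the polynomial identity may legitimately be evaluated at the $s$--tuple of power series $\bfs R^{\bfs\xi}$ and that the image of $G(\bfs p^{s+1},X_{n-s}\klk X_n)$ is invertible in $\overline{k}[\![X_{n-s}-p_{n-s}]\!]$ --- both of which rest on Theorem \ref{th: NewtonHensel} together with $({\sf A}_2)$. The rest is a counting argument combined with the separation property $({\sf A}_3)$.
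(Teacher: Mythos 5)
Your argument is correct and follows essentially the same route as the paper's proof: establish $M_s(X_{n-s},R_{n-s+1}^{\bfs\xi})=0$ by reducing to the generators $F_j$ of the localized ideal of $C_s$ and invoking $({\sf A}_2)$ to make $G$ a unit, then use $({\sf A}_3)$ for injectivity of $\bfs\xi\mapsto R_{n-s+1}^{\bfs\xi}$ and a degree count to conclude. The only cosmetic difference is that you route the finish through the previously established Hensel factorization $M_s=\prod_j(T-\sigma_j)$ and a cardinality argument, whereas the paper observes directly that the pairwise coprime factors $T-R_{n-s+1}^{\bfs\xi}$ divide $M_s$ and compares leading terms and degrees; both are the same idea.
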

\begin{proof}
Since $M_s(X_{n-s},X_{n-s+1})$  vanishes on $C_s$, by
Proposition \ref{prop: F_i(p*,X) generate radical ideal} we conclude
that it belongs to the (radical) ideal generated by the
polynomials $F_i(\bfs p^{s+1},X_{n-s}\klk X_n)$ $(1 \le i \le s)$ in
$k[X_{n-s}\klk X_n]_{G(\bfs p^{s+1},X_{n-s}\klk X_n)}$. Let $\bfs\xi \in
\pi_s^{-1}(\bfs p^s)$. Since $F_i(\bfs p^{s+1},X_{n-s}, \bfs R^{\bfs\xi})=
0$ for $1 \le i \le s$ and $G(\bfs p^{s+1},X_{n-s}, \bfs
R^{\bfs\xi})\not=0$, we obtain
$$M_s(X_{n-s},R_{n-s+1}^{\bfs\xi}) = 0.$$
Moreover, as $\bfs R^{\bfs\xi}(p_{n-s}) = (\xi_{n-s+1}, \ldots, \xi_n)$ for
all $\bfs\xi:=(\bfs p^s,\xi_{n-s+1}\klk\xi_n) \in \pi_s^{-1}(\bfs p^s)$,
and $X_{n-s+1}$ separates the points of $\pi_s^{-1}(\bfs p^s)$, it
follows that $R_{n-s+1}^{\bfs\xi}\neq R_{n-s+1}^{\bfs\xi'}$ for
$\bfs\xi \neq
\bfs\xi'$. Thus, the terms $T -R_{n-s+1}^{\bfs\xi}$ are coprime,
implying that their product
divides $M_s(X_{n-s}, T)$. Since both sides are monic polynomials of
the same degree, the lemma follows.
\end{proof}

Proposition \ref{prop: Iteration NH} shows that, starting at
$\bfs\xi \in \pi_s^{-1}(\bfs p^s)$, the $j$--th iteration of the
Newton-Hensel operator $N_{\bfs F_s}$ yields a vector of power
series $\bfs R^{(j,\bfs\xi)}\in\overline{k}[\![X_{n-s} - p_{n-s}
]\!]^s$ satisfying $\bfs R^{(j,\bfs\xi)}\equiv\bfs R^{\bfs\xi}$
modulo $(X_{n-s} - p_{n-s})^{2^j}$ for any $j\ge 0$. It follows that
$$
M_s(X_{n-s}, T) \equiv \prod_{\bfs\xi \in \pi_s^{-1}(\bfs p^s)} (T -
R_{n-s+1}^{(j,\bfs\xi)})\mod (X_{n-s} -p_{n-s})^{2^j} .$$
Now, as $\deg_{X_{n-s}} M_s \le\delta_s$, for $j_0:=\left\lceil \log
(\delta_s+1)\right\rceil$, the Taylor expansion of order $2^{j_0}$ of
$M_s$ in powers of $X_{n-s}-p_{n-s}$ yields $M_s$, which proves that
$$M_s(X_{n-s}, T) = \left( \prod_{\bfs\xi \in
\pi_s^{-1}(\bfs p^s)} (T-R_{n-s+1}^{(j_0,\bfs\xi)})\mod (X_{n-s} -
p_{n-s})^{2^{j_0}} \right).$$
This suggests a procedure for computing $M_s(X_{n-s},T)$. The
coordinates of the elements of the fiber $\pi_s^{-1}(\bfs p^s)$ can
be expressed in the splitting field of the minimal polynomial
$m_s\in k[T]$ of $X_{n-s+1}$ in $\pi_s^{-1}(\bfs p^s)$. To avoid
working in such a field extension, we start the iterations of the
Newton--Hensel operator $N_{\bfs F_s}$ with the class of $T$ in the
quotient ring $k[T]/(m_s(T))$, which represents all the roots of
$m_s$. Following an idea introduced in \cite[Algorithm 1]{GiLeSa01},
the critical point of the algorithm below is that it allows one to
incrementally compute the polynomials $M_s,W_{n-s+1} ,\ldots,W_n\in
k[X_{n-s},T]$ defining the Kronecker representation of $C_s$ under
consideration.

In the description and analysis of the algorithm, we use the
following terminology: for $t\ge 0$ and $\sigma, \tau \in
\overline{k}[\![X_{n-s} -p_{n-s}]\!]$, we say that $\tau = \sigma$
{\em with precision} $t$ if $\tau \equiv \sigma\mod(X_{n-s} -
p_{n-s})^t$. Moreover, if $Q_1, Q_2 \in \overline{k}[\![X_{n-s}
-p_{n-s}]\!][T]$ are polynomials in a single variable $T$ (of the
same degree), we say that $Q_2 = Q_1$ with precision $t$ if each
coefficient of $Q_2$ equals the corresponding coefficient of $Q_1$
with precision $t$.

\begin{algorithm}[Newton lifting]\label{algo: Newton lifting} ${}$
\begin{itemize}
\item[]{\bf Input:} The dense representation of polynomials
$m_s, v^{s}_{n-s+2}\klk v^{s}_n\in k[T]$ with $\deg m_s=\delta_s$ and
$\deg v_{n-s+i}^s<\delta_s$ for $2\le i\le s$, such that $m_s(X_{n-s+1})\equiv 0$
and $X_{n-s+i}\equiv v_{n-s+i}^s(X_{n-s+1})$ over $\pi_s^{-1}(\bfs p^s)$
for $2\le i\le s$.
\item[]{\bf Output:} The dense representation of polynomials $M_s,
W^s_{n-s+2}\klk W^s_n\in k[X_{n-s},T]$ forming the Kronecker
representation of $C_s$, with $X_{n-s+1}$ as primitive element, that
\emph{lifts} the input Kronecker representation, that is, with the
following properties:
\begin{enumerate}
  \item $\deg M_s=\deg_TM_s=\delta_s$ and $\deg W^s_{n-s+i}<\delta_s$
  for $2\le i\le s$;
  \item $M_s(X_{n-s},X_{n-s+1})\equiv 0$ on $C_s$;
  \item $\frac{\partial M_s}{\partial T}(X_{n-s},X_{n-s+1})\cdot
  X_{n-s+i}\equiv W^s_{n-s+i}(X_{n-s},X_{n-s+1})$ on $C_s$ for $2\le
  i\le s$;
  \item $M_s(p_{n-s},T)=m_s$ and
  $\big((\frac{\partial M_s}{\partial T})^{-1}\cdot W^s_{n-s+i}\big)(p_{n-s},T)=v^s_{n-s+i}
  \mod m_s$ for $2\le i \le s$.
\end{enumerate}\smallskip

\item[]{\bf 1} Set $M^0:=m_s(T)$, $V_{n-s+1}^0:=T$.
\item[]{\bf 2} Set $V_{n-s+i}^0:=v^{s}_{n-s+i}(T)$ for $2 \le i \le s$
as elements of $k[\![X_{n-s}-p_{n-s}]\!][T]$.
\item[]{\bf 3} Set $j:=0$.
\item[]{\bf 4} While $2^j < \delta_s+1$, do
  \begin{enumerate}
    \item[]{\bf 4.1} Compute $v_{n-s+1}^{j+1}\klk v_n^{j+1}$ modulo
    $M^j$ with precision $2^{j+1}$  as follows:
    \[ (v_{n-s+1}^{j+1}, \ldots, v_n^{j+1}):=N_{\bfs F_s} (V_{n-s+1}^j\klk V_n^j) . \]
    \item[]{\bf 4.2} Compute $\Delta^{j+1}:= v_{n-s+1}^{j+1}-T$.

    \item[]{\bf 4.3} For $i=n-s+1\klk n$ do
    \begin{enumerate}
      \item[]{\bf 4.3.1} Compute $\Delta_i^{j+1}:=\Delta^{j+1} \frac{\partial v_i^{j+1}}{\partial
      T}$ mod $M^j$ with precision $2^{j+1}$.

      \item[]{\bf 4.3.2} Set $V_i^{j+1}:=v_i^{j+1}- \Delta_i^{j+1}.$
    \end{enumerate}
    \item[]{\bf 4.4} Compute $\Delta_{M}^{j+1}:=\Delta^{j+1} \frac{\partial M^j}{\partial
    T}$ mod $M^j$ with precision $2^{j+1}$.

    \item[]{\bf 4.5} Set $M^{j+1}:=M^j-\Delta_{M}^{j+1}$.

    \item[]{\bf 4.6} Set $j:=j+1$
  \end{enumerate}
  \item[]{\bf 5} Return $M^{j}$ with precision $\delta_s+1$.
  \item[]{\bf 6} For $i=n-s+1\klk n$, return $\frac{\partial M^j}{\partial
    T}V_i^{j} \mod M^{j}$ with precision $\delta_s+1$.
\end{itemize}
\end{algorithm}

The algorithm requires performing arithmetic operations in the ring $k[\![X_{n-s} -p_{n-s}]\!]$
with a prescribed precision $t$. Concretely, this means that we are given the dense representations
of two polynomials in $k[X_{n-s}]$, and the desired arithmetic
operation is carried out modulo $(X_{n-s}-p_{n-s})^t$. In particular,
under the assumption that the input
polynomials have degree at most $t$---as is the case in Algorithm
\ref{algo: Newton lifting}---each such operation can be performed using
$\mathcal{O}({\sf M}(t))$ arithmetic operations over $k$.

We first analyze the behavior of the main loop of Algorithm
\ref{algo: Newton lifting}.
\begin{proposition}\label{prop: main loop Newton lifting}
Fix $j\ge 0$, and assume that we are given polynomials $M^j,V_{n-s+1}^j, \ldots V_n^j\in
k[\![X_{n-s} -p_{n-s}]\!][T]$, with $\deg_TM^j=\delta_s$ and $\deg_T
V_{n-s+i}^j<\delta_s$ for $1\le i\le s$, satisfying the following
congruences with precision $2^j$:
\begin{align}
F_i (\bfs p^{s+1}, X_{n-s}, \bfs V^j) & \equiv 0  \mod M^j\quad (1 \le i
\le s),
\label{eq: CongruenceProofHL1}\\
V_{n-s+1}^j& = T, \label{eq: CongruenceProofHL2}
\end{align}
where $\bfs V^j:=(V_{n-s+1}^j, \ldots, V_n^j)$. Assume further that
$\det J_s(X_{n-s}, \bfs V^j)$ is invertible modulo $M^j$. Then the
$(j+1)$th iteration of the loop of step {\bf 4} can be correctly
executed and outputs polynomials $M^{j+1},V_{n-s+1}^{j+1}, \ldots
V_n^{j+1} \in k[\![X_{n-s} -p_{n-s}]\!][T]$, with
$\deg_TM^{j+1}=\delta_s$ and $\deg_T V_{n-s+i}^{j+1}<\delta_s$ for
$1\le i\le s$, satisfying the following congruences with precision
$2^{j+1}$:
\begin{align*}
F_i (\bfs p^{s+1}, X_{n-s},\bfs V^{j+1})  & \equiv 0  \mod M^{j+1} \quad
(1 \le i \le s),
\\
V_{n-s+1}^{j+1}& = T,
\end{align*}
where $\bfs V^{j+1}\!:=(V_{n-s+1}^{j+1}, \ldots, V_n^{j+1})$.
Moreover, $\det J_s(X_{n-s},\bfs V^{j+1} )$ is invertible modulo
$M^{j+1}$. In addition, if $G(\bfs p^{s+1}, X_{n-s}, \bfs V^j)\in
k[\![X_{n-s} -p_{n-s}]\!][T]$ is invertible modulo $M^j$, then
$G(\bfs p^{s+1}, X_{n-s}, \bfs V^{j+1})$ is invertible modulo
$M^{j+1}$.
\end{proposition}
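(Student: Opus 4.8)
The plan is to read every assertion of the proposition as a statement about congruences modulo $M^j$ (or $M^{j+1}$) and modulo powers of $u:=X_{n-s}-p_{n-s}$, with no reference to the ``true'' minimal polynomial $M_s$ --- the link with $M_s$ is dealt with separately. First I would fix the Artinian base $\mathcal{R}:=k[\![u]\!]/(u^{2^{j+1}})$ and the free $\mathcal{R}$--algebra $\mathcal{A}:=\mathcal{R}[T]/(M^j)$ of rank $\delta_s$; here $M^j$ is monic in $T$, an invariant that holds for $M^0=m_s$ and is preserved by step {\bf 4.5} since $\Delta^{j+1}_M$ has $T$--degree $<\delta_s$, so reduction modulo $M^j$ is exact and $\mathcal{A}$ makes sense. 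The hypothesis that $\det J_s(X_{n-s},\bfs V^j)$ is invertible modulo $M^j$ means it is a unit of $\mathcal{A}$: a unit of $k[\![u]\!][T]/(M^j)$ stays a unit after truncating $u$, and conversely an element is a unit iff it is a unit modulo the nilpotent ideal $(u)$, i.e. a unit of $k[T]/(M^j\bmod u)$. Consequently the Newton step {\bf 4.1}, which sets $\bfs v^{j+1}:=N_{\bfs F_s}(\bfs V^j)=\bfs V^j-J_s(\bfs V^j)^{-1}\bfs F_s(\bfs V^j)$ in $\mathcal{A}$, is legitimate, and since the remaining operations of the iteration are additions, multiplications and exact divisions by the monic $M^j$, the iteration ``can be correctly executed''.

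Next I would invoke the standard quadratic estimate for the Newton--Hensel operator, exactly as in the proof of Proposition~\ref{prop: Iteration NH} (cf. \cite{HeKrPuSaWa00}): writing $\bfs h:=-J_s(\bfs V^j)^{-1}\bfs F_s(\bfs V^j)$, the hypothesis $F_i(\bfs p^{s+1},X_{n-s},\bfs V^j)\equiv 0\pmod{M^j}$ with precision $2^j$ forces $\bfs h\in u^{2^j}\mathcal{A}^s$, so in the Taylor expansion of $\bfs F_s(\bfs V^j+\bfs h)$ the quadratic remainder lies in $u^{2^{j+1}}\mathcal{A}=0$ while the linear part cancels $\bfs F_s(\bfs V^j)$ by the choice of $\bfs h$; hence $F_i(\bfs p^{s+1},X_{n-s},\bfs v^{j+1})\equiv 0\pmod{M^j}$ with precision $2^{j+1}$ and $\bfs v^{j+1}\equiv\bfs V^j\pmod{u^{2^j}}$. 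Since $V^j_{n-s+1}=T$, this already gives $\Delta^{j+1}=v^{j+1}_{n-s+1}-T\in u^{2^j}\mathcal{A}$, the smallness that makes every first--order Taylor expansion below \emph{exact} modulo $u^{2^{j+1}}$ (any square, and any product with an element of $u^{2^j}$, of an element of $u^{2^j}$ vanishes in $\mathcal{R}$).

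The heart of the argument is then the reparametrization bookkeeping of steps {\bf 4.2}--{\bf 4.5}, which must be carried out with polynomial representatives ($T$--degree $<\delta_s$ for the $V^{j+1}_i$, monic of degree $\delta_s$ for $M^{j+1}$), because $\partial_T$ does \emph{not} commute with reduction modulo $M^j$. I would check, each time by a one--line Taylor expansion of a polynomial composition together with the smallness of $\Delta^{j+1}$, the three identities: (i) $V^{j+1}_{n-s+1}=T$ exactly, so the normal form is restored; (ii) $M^{j+1}\big(X_{n-s},v^{j+1}_{n-s+1}\big)\equiv 0\pmod{M^j}$ with precision $2^{j+1}$; and (iii) $V^{j+1}_i\big(X_{n-s},v^{j+1}_{n-s+1}\big)\equiv v^{j+1}_i\pmod{M^j}$ with precision $2^{j+1}$ for $n-s+1\le i\le n$. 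Identity (ii) says the substitution $\varphi\colon\mathcal{R}[T]/(M^{j+1})\to\mathcal{A}$, $T\mapsto v^{j+1}_{n-s+1}$, is a well--defined $\mathcal{R}$--algebra map; as it reduces to the identity modulo the nilpotent $(u)$ (indeed $M^{j+1}\equiv M^j$ and $v^{j+1}_{n-s+1}\equiv T$ modulo $u$), Nakayama's lemma makes $\varphi$ an isomorphism. Applying $\varphi$ to $F_i(\bfs p^{s+1},X_{n-s},\bfs V^{j+1})$, then using (iii) to replace $\bfs V^{j+1}(v^{j+1}_{n-s+1})$ by $\bfs v^{j+1}$ modulo $M^j$, and finally the quadratic estimate, gives $F_i(\bfs p^{s+1},X_{n-s},\bfs V^{j+1})\equiv 0\pmod{M^{j+1}}$ with precision $2^{j+1}$ by injectivity of $\varphi$; the degree bounds $\deg_TM^{j+1}=\delta_s$ (monic) and $\deg_TV^{j+1}_i<\delta_s$ are immediate from the reductions performed.

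It remains to handle the invertibility statements, and here only a reduction modulo $u$ is needed: since $\bfs V^{j+1}\equiv\bfs v^{j+1}\equiv\bfs V^j$ and $M^{j+1}\equiv M^j$ all modulo $u$, the classes of $\det J_s(X_{n-s},\bfs V^{j+1})$ and of $G(\bfs p^{s+1},X_{n-s},\bfs V^{j+1})$ modulo $(u,M^{j+1})$ coincide with those of $\det J_s(X_{n-s},\bfs V^j)$ and $G(\bfs p^{s+1},X_{n-s},\bfs V^j)$ modulo $(u,M^j)$ in $k[T]/(M^j\bmod u)$; by the unit criterion recalled in the first paragraph they are units, hence both are invertible modulo $M^{j+1}$. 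I expect the main obstacle to be the bookkeeping of the third paragraph: keeping the distinction between ``polynomial representative'' and ``class modulo $M^j$'' straight while running the Taylor expansions --- precisely because $\partial_T$ fails to descend to the quotient --- and verifying that every truncation error genuinely lands in $u^{2^{j+1}}$; once this is in place, the algebraic skeleton (quadratic Newton convergence plus the isomorphism $\varphi$) is routine.
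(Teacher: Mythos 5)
Your proposal is correct and takes essentially the same route as the paper: the quadratic Taylor estimate for the Newton step, followed by first--order Taylor expansions in $\Delta^{j+1}$ to track the reparametrization carried out in steps {\bf 4.2}--{\bf 4.5}, and a ``precision $2^j$ agreement implies inherited invertibility'' argument at the end. The paper substitutes $T-\Delta^{j+1}$ and proves the two congruences of its \emph{Claim} by hand (division with remainder, explicit bookkeeping of the quotient $h(T)$), then handles invertibility via the resultant $\mathrm{Res}_T(\det J_s(\bfs V^{j+1}),M^{j+1})$; you substitute $T+\Delta^{j+1}$ and package the same calculations as the well--definedness of an $\mathcal{R}$--algebra map $\varphi:\mathcal{R}[T]/(M^{j+1})\to\mathcal{R}[T]/(M^j)$, promoted to an isomorphism by nilpotence of $u$ (Nakayama), after which the conclusion and the two invertibility claims drop out by reduction modulo $u$. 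The underlying computations — (ii) reduces to $(M^j)'\Delta^{j+1}-\Delta^{j+1}_M\equiv 0\bmod M^j$ by definition of $\Delta^{j+1}_M$, and (iii) to the analogous identity for $\Delta^{j+1}_i$ — are the same ones the paper carries out; your isomorphism $\varphi$ is merely a cleaner wrapper around them, and the ``unit modulo nilpotent'' criterion is a somewhat more economical substitute for the paper's resultant computation, but neither constitutes a genuinely different decomposition of the proof.
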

\begin{proof}
Regarding the execution of the $(j+1)$th iteration, the only point
to check is whether the Newton-Hensel operator $N_{\bfs F_s}
(X_{n-s}, \bfs V^j)$ is well-defined. By hypothesis, $\det J_s(
X_{n-s},\bfs V^j)\mod M^j$ is invertible, and thus the
matrix $J_s(X_{n-s},\bfs V^j)$ admits an inverse modulo
$M^j$. Therefore, $N_{\bfs F_s} (X_{n-s}, \bfs V^j)$
is well--defined modulo $M^j$.

Now we analyze the output produced by the $(j+1)$th iteration. Set $\bfs
v^{j+1}:=(v_{n-s+1}^{j+1}\klk v_n^{j+1})$. We first claim that
\begin{equation}\label{eq: F_v(j+1)_=_0_mod_M_j}
\bfs F_s(X_{n-s}, \bfs v^{j+1}) \mod M^j = 0
\end{equation}
with precision $2^{j+1}$. Indeed, from (\ref{eq: CongruenceProofHL1})
we have $\bfs F_s (X_{n-s},\bfs V^j) = 0\mod M^j$ with precision $2^j$.
By the definition of $\bfs v^{j+1}$, it follows that
$$\bfs v^{j+1}-\bfs V^j = -
(J_s^{-1} \bfs F_s) (X_{n-s},\bfs V^j)
=0\mod M^j,$$
with precision
$2^j$. Since
$\deg_T(v_{n-s+i}^{j+1}-V_{n-s+i}^j)<\delta_s$ for $1\le i\le s$, we
conclude that
\begin{equation}\label{eq: Congruence w(j+1) W(j)}
\bfs v^{j+1}-\bfs V^j = 0
\end{equation}
with precision $2^j$, and thus, $(\bfs v^{j+1}-\bfs V^j)^2 = 0$ with
precision $2^{j+1}$. Applying the second-order Taylor expansion of
$\bfs F_s$ in powers of $X_{n-s+1}- V_{n-s+1}^j\klk X_n - V_n^j$, we
obtain
\begin{align*}
\bfs F_s(X_{n-s}, \bfs v^{j+1}) & \equiv  \bfs F_s(X_{n-s},\bfs V^j)+J_s(X_{n-s}, \bfs V^j)
(\bfs v^{j+1}-\bfs V^j) \mod M^j \\
\nonumber  & \equiv \bfs F_s(X_{n-s, }\bfs V^j) - J_s(X_{n-s},\bfs
V^j)(J_s^{-1} \bfs F_s) (X_{n-s},\bfs V^j)\mod M^j=\bfs 0,
\end{align*}
with precision $2^{j+1}$. This proves the claim. Note that, as
$\bfs v^{j+1}$ agrees with $\bfs V^j$ with precision $2^j$, step
{\bf 4.1} in the main loop only involves modifications of the
coefficients of $\bfs V^j$ of order $2^j$ or higher.

It follows that $\bfs F_s$ vanishes at
$\bfs v^{j+1}$ with precision $2^{j+1}$, but modulo $M^j$,
which represents the output with
precision only $2^j$. The next steps in the main loop
correct this lack of precision. To analyze these steps, recall that
we define
$$\Delta^{j+1}:=v_{n-s+1}^{j+1}- T.$$
We may consider $\Delta^{j+1}$ as the ``error'' or ``failure'' of
$\bfs v^{j+1}$ to correctly parametrize the coordinates of
$X_{n-s+1}\klk X_n$ with precision $2^{j+1}$. Observe that
$$\Delta^{j+1}=0,\quad \Delta^{j+1}_{M}=0,\quad \mathrm{and}\quad
\Delta_i^{j+1}=0\quad (n-s+1 \le i \le n),$$
all with precision ${2^j}$. In fact, $\Delta^{j+1}=0$ with precision
${2^j}$ because $v_{n-s+i}^{j+1}=V_{n-s+i}^j$ with precision $2^j$
and (\ref{eq: CongruenceProofHL2}) holds, while the remaining
assertions follow immediately from this one. The fact that
$\Delta^{j+1}=0$ with precision $2^j$ indicates that $\bfs v^{j+1}$
correctly parametrizes $X_{n-s+1}\klk X_n$ with precision $2^j$.

\begin{claim} Fix $j\geq 0$. The following congruences hold with precision $2^{j+1}$:
 \begin{align}
 M^j(T-\Delta^{j+1})&\equiv 0 \mod M^{j+1},\label{eq: lemma: main loop Newton lifting_3}\\
 v^{j+1}_i(T-\Delta^{j+1})&\equiv V^{j+1}_i \mod M^{j+1} \textrm{ for }n-s+1\le i \le n.
 \label{eq: lemma: main loop Newton lifting_4}
 \end{align}
\end{claim}
\begin{proof}[Proof of Claim]
%
We start by expanding $M^j(T-\Delta^{j+1})$ using Taylor expansion
around $T$, in
powers of $\Delta^{j+1}$:
$$
M^j(T-\Delta^{j+1})=M^j(T)-\frac{\partial M^j}{\partial T}\Delta^{j+1}
+\mathcal{O}\big((\Delta^{j+1})^2\big).
$$
Since $\Delta^{j+1}=0$ with precision $2^{j}$, it follows that
$(\Delta^{j+1})^2=0$ with precision $2^{j+1}$. Hence,
\begin{equation}\label{eq: Taylor expansion M}
M^j(T-\Delta^{j+1})=M^j(T)-\frac{\partial M^j}{\partial T}\Delta^{j+1}
+\mathcal{O}\big((\Delta^{j+1})^2\big),
\end{equation}
with precision $2^{j+1}$. By the definition of $M^{j+1}$,
we have $M^j=M^{j+1}+\Delta^{j+1}_{M}$, where $\Delta^{j+1}_{M}$ is the
remainder of the division of $\Delta^{j+1}\frac{\partial
M^j}{\partial T}$ by $M^j$, taken with precision $2^{j+1}$. Let
$h\in k[\![X_{n-s}-p_{n-s}]\!][T]$ denote the quotient of this
division. By \eqref{eq: Taylor expansion M}, and the definition
of $M^{j+1}$ and $\Delta^{j+1}_{M}$, we have
\begin{align*}
M^j(T-\Delta^{j+1})=
M^j-(h(T)M^j+\Delta^{j+1}_{M})
&=
(M^{j+1}+\Delta^{j+1}_{M})\big(1-h(T)\big)-\Delta^{j+1}_{M}\\
&=M^{j+1}\big(1-h(T)\big)- h(T)\Delta^{j+1}_{M}(T),
\end{align*}
with precision $2^{j+1}$. Since $\Delta^{j+1}\frac{\partial
M^j}{\partial T}=0$ with precision $2^{j}$, and $M^j$ is monic,
the quotient $h(T)$ vanishes with precision $2^{j}$, and thus
$\Delta^{j+1}_{M}=0$ with precision $2^{j}$. We conclude that
$h(T)\Delta^{j+1}_{M}(T)=0$ with precision $2^{j+1}$, which implies
\eqref{eq: lemma: main loop Newton lifting_3}.

Now we turn to \eqref{eq: lemma: main loop Newton
lifting_4}. For $n-s+1\le i\le n$, consider the Taylor expansion of
$v^{j+1}_i(T-\Delta^{j+1})$ around $T$, in
powers of $\Delta^{j+1}$. We have
$$
v^{j+1}_i(T-\Delta^{j+1})=v^{j+1}_i(T)-\Delta^{j+1}\frac{\partial v^{j+1}_i}{\partial T}(T),
$$
with precision $2^{j+1}$. By construction,
$v^{j+1}_i=V^{j+1}_i+\Delta^{j+1}_i$ with precision $2^{j+1}$, where
$\Delta^{j+1}_{i}$ is the remainder of the division of
$\Delta^{j+1}\frac{\partial v^{j+1}_i}{\partial T}$ by $M^j$, taken
with precision $2^{j+1}$. Let $h_i\in k[\![X_{n-s}-p_{n-s}]\!][T]$
denote the quotient of this division. Then
\begin{equation}\label{eq: proof: main loop Newton lifting_2}v^{j+1}_i(T-\Delta^{j+1})=V_i^{j+1}+\Delta^{j+1}_i-(h_i(T)M^j(T)+\Delta_i^{j+1})
=V_i^{j+1}-h_i(T)M^j(T),
\end{equation}
with precision $2^{j+1}$. We observe that $h_i(T)M^j(T)\equiv 0\mod
M^{j+1}$ with precision $2^{j+1}$. Indeed, similarly as before we
see that $h_i=0$, and then $\frac{\partial h_i}{\partial T}=0$, with
precision $2^{j}$. It follows that $
h_i(T-\Delta^{j+1})=h_i(T)-\frac{\partial h_i}{\partial
T}\Delta^{j+1} + \mathcal{O}\big((\Delta^{j+1})^2\big)=h_i(T) $ with
precision $2^{j+1}$. Then, by \eqref{eq: lemma: main loop Newton
lifting_3},
\begin{align*}
h_i(T)M^j(T)=h_i(T)\Big(M^j(T)-\frac{\partial M^j}{\partial T}\Delta^{j+1}\Big)&=
h_i(T-\Delta^{j+1})M^j(T-\Delta^{j+1})\\&\equiv 0\mod M^{j+1},
\end{align*}
with precision $2^{j+1}$. This, combined with \eqref{eq: proof: main
loop Newton lifting_2}, yields \eqref{eq: lemma: main loop Newton
lifting_4}, which completes the proof of the claim.
\end{proof}

Substituting $T-\Delta^{j+1}$ for $T$ in
\eqref{eq: F_v(j+1)_=_0_mod_M_j}, we obtain
\begin{equation}\label{eq: F_v(j+1)_=_0_mod_M_j_1}
\bfs F_s\big(X_{n-s}, \bfs v^{j+1}(T-\Delta^{j+1})\big)\equiv 0 \mod M^{j}(T-\Delta^{j+1}),
\end{equation}
with precision $2^{j+1}$. Taking into account \eqref{eq: lemma: main
loop Newton lifting_3} and \eqref{eq: lemma: main loop Newton
lifting_4}, we conclude that $$\bfs F_s\big(X_{n-s}, \bfs V^{j+1}
\big) \equiv 0 \mod M^{j+1}$$ with precision $2^{j+1}$, as desired.
Furthermore, since by definition $v^{j+1}_{n-s+1}=T+\Delta^{j+1}$,
substituting $T-\Delta^{j+1}$ for $T$ in this expression we obtain
$V^{j+1}_{n-s+1}\equiv T \mod M^{j+1}$, with precision $2^{j+1}$.

It remains to prove the invertibility of $\det J_s( X_{n-s}, \bfs
V^{j+1})$ and $G(\bfs p^{s+1}, X_{n-s}, \bfs V^{j+1})$, both modulo
$M^{j+1}$. Observe that the invertibility of the former holds if and
only if the resultant $\mathrm{Res}_T\big(\det J_s( X_{n-s}, \bfs
V^{j+1}),M^{j+1}\big)$ is an invertible element of
$\overline{k}[\![X_{n-s} -p_{n-s}]\!]$. By \eqref{eq: Congruence
w(j+1) W(j)}, we have $\bfs v^{j+1}=\bfs V^j$ with precision $2^j$.
Furthermore, as $V_i^{j+1}:=v_i^{j+1}- \Delta_i^{j+1}$, and
$\Delta_i^{j+1}=0$ with precision $2^j$, for $1\le i\le s$, it
follows that $\bfs V^{j+1}=\bfs V^j$ with precision $2^j$, which in
turn implies $\det J_s( X_{n-s}, \bfs V^{j+1})=\det J_s(X_{n-s},
\bfs V^j)$ with precision $2^j$. As $M^{j+1}=M^j$ with precision
$2^j$, we conclude that
$$\mathrm{Res}_T\big(\det J_s(X_{n-s}, \bfs
V^{j+1}),M^{j+1}\big)=\mathrm{Res}_T\big(\det J_s(
X_{n-s}, \bfs V^j),M^j\big)$$
with precision $2^j$. Since the right-hand side is invertible by
hypothesis, the assertion follows. The same argument proves that the
invertibility of $G(\bfs p^{s+1}, X_{n-s}, \bfs V^j)$ modulo $M^j$
implies the invertibility of $G(\bfs p^{s+1}, X_{n-s}, \bfs V^{j+1})$
modulo $M^{j+1}$.
\end{proof}

Let $M_s,W_{n-s+i}^s\ldots,W_n^s\in k[X_{n-s}, T]$ be the
polynomials that form the Kronecker representation of $C_s$, with
primitive element $X_{n-s+1}$, whose existence is established in Section
\ref{section: lifting curve properties}. Define
$$V^{(s)}_{n-s+1}:=T\ \textrm{ and }\ V^{(s)}_{n-s+i}:=\Big(\frac{\partial M_s}{\partial
T}\Big)^{-1}W^s_{n-s+i} \mod M_s\ \textrm{ for }\ 2 \le i \le s.$$
\begin{remark}\label{rem: V_i_well_def}
The Kronecker representation $M_s,W_{n-s+i}^s\ldots,W_n^s$ of $C_s$
``lifts'' the Kronecker representation
$m_s,w_{n-s+1}^s,\ldots,w_n^s$ of the lifting fiber $\pi_s^{-1}(\bfs
p^s)$, in the sense that $M_s(p_{n-s},T)=m_s$ and
$W^s_{n-s+i}(p_{n-s},T)=w^s_{n-s+i}$ for $2\le i \le s$. Moreover,
since the discriminant
$\rho_s:=\mathrm{Disc}_{T}\big(M_s(X_{n-s},T)\big)$ satisfies
$\rho_s(p_{n-s})\neq 0$ by the choice of the lifting point $\bfs p^s$,
it follows that each $V^{(s)}_{n-s+i}$ is a well--defined element of
$k[\![X_{n-s}-p_{n-s}]\!][T]$ for $2 \le i \le s$, with 
$V^{(s)}_{n-s+i}(p_{n-s},T)=v^{(s)}_{n-s+i}$.
\end{remark}
Now we are in position to establish the correctness of
Algorithm \ref{algo: Newton lifting}.
\begin{theorem}\label{theo: correctedness: Newton lifting}
Algorithm \ref{algo: Newton lifting} correctly computes the
coefficients in $k[X_{n-s}]$ of the polynomials $M_s(X_{n-s}, T)$,
$W_{n-s+i}^{s}(X_{n-s}, T)$ $(2 \le i \le s)$ defining a
Kronecker representation of $C_s$ with $X_{n-s+1}$ as primitive element.
\end{theorem}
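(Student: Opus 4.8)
The plan is to combine Proposition~\ref{prop: main loop Newton lifting}, which governs a single pass of the main loop, with an identification of the loop's final output against the ``true'' Kronecker representation $M_s,W^s_{n-s+2}\klk W^s_n$ of $C_s$ constructed in Section~\ref{section: lifting curve properties}. First I would verify that the initialization (Steps~\textbf{1}--\textbf{3}) meets the hypotheses of Proposition~\ref{prop: main loop Newton lifting} at $j=0$ with precision $2^0=1$. Indeed $M^0=m_s$ and $\bfs V^0=(T,v^s_{n-s+2}\klk v^s_n)$; since $\pi_s^{-1}(\bfs p^s)=C_s\cap\{X_{n-s}=p_{n-s}\}$, the input Kronecker representation parametrizes this fiber, and $F_j(\bfs p^{s+1},X_{n-s}\klk X_n)$ vanishes on $C_s$ by Proposition~\ref{prop: F_i(p*,X) generate radical ideal}, so $\bfs F_s(\bfs p^{s+1},X_{n-s},\bfs V^0)\equiv\bfs 0\bmod m_s$ at $X_{n-s}=p_{n-s}$, i.e.\ with precision $1$; condition $({\sf A}_1)$ (unramifiedness of the fiber, via the footnote characterisation) forces $\det J_s(p_{n-s},\bfs V^0)$ to be nonzero at every root of $m_s$, hence invertible modulo $m_s$, and $({\sf A}_2)$ makes $G(\bfs p^{s+1},p_{n-s},\bfs V^0)$ invertible modulo $m_s$ as well.

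Next I would run the induction: by Proposition~\ref{prop: main loop Newton lifting}, whenever the hypotheses hold at step $j$ with precision $2^j$, the $(j+1)$st pass executes correctly and yields $M^{j+1},\bfs V^{j+1}$ with $\deg_TM^{j+1}=\delta_s$, $\deg_TV^{j+1}_{n-s+i}<\delta_s$, the congruences $\bfs F_s(\bfs p^{s+1},X_{n-s},\bfs V^{j+1})\equiv\bfs 0\bmod M^{j+1}$ and $V^{j+1}_{n-s+1}=T$ with precision $2^{j+1}$, and $\det J_s$ and $G$ invertible modulo $M^{j+1}$. Hence the loop terminates after $j_0:=\lceil\log_2(\delta_s+1)\rceil$ passes, producing $M^{j_0},\bfs V^{j_0}$ that enjoy all these properties with precision $2^{j_0}\ge\delta_s+1$.

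It then remains to identify $(M^{j_0},\bfs V^{j_0})$ with the true data. Set $V^{(s)}_{n-s+1}:=T$ and $V^{(s)}_{n-s+i}:=(\partial M_s/\partial T)^{-1}W^s_{n-s+i}\bmod M_s$ for $2\le i\le s$, well defined over $k[\![X_{n-s}-p_{n-s}]\!]$ by Remark~\ref{rem: V_i_well_def} (using that $\rho_s$, hence $\partial M_s/\partial T$ modulo $M_s$, is invertible in the local ring since $\rho_s(p_{n-s})\neq0$). Combining Proposition~\ref{prop: F_i(p*,X) generate radical ideal} with the explicit inverse map of Lemma~\ref{lemma: lifting curve birational to M_s=0} and \eqref{eq: parametriz lifting curve with partial deriv}, one checks that $(M_s,\bfs V^{(s)})$ satisfies exactly the same list of properties as $(M^{j_0},\bfs V^{j_0})$: monicity and degree $\delta_s$ in $T$, reduction to $(m_s,\bfs V^0)$ at $X_{n-s}=p_{n-s}$, $V^{(s)}_{n-s+1}=T$, $\bfs F_s(\bfs p^{s+1},X_{n-s},\bfs V^{(s)})\equiv\bfs 0\bmod M_s$, and invertibility of $\det J_s(X_{n-s},\bfs V^{(s)})$ and $G(\bfs p^{s+1},X_{n-s},\bfs V^{(s)})$ modulo $M_s$. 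The crux is then a uniqueness statement: for each $t$, at most one pair enjoys these properties modulo $(X_{n-s}-p_{n-s})^t$. I would establish it by observing that any such $M$ is separable (its discriminant reduces to $\mathrm{disc}(m_s)\neq0$), hence Hensel factors as $M=\prod_{\bfs\xi\in\pi_s^{-1}(\bfs p^s)}(T-\theta_{\bfs\xi})$ with $\theta_{\bfs\xi}\equiv\xi_{n-s+1}$; evaluating the associated $\bfs V$ at $T=\theta_{\bfs\xi}$ produces a power-series solution of $F_1=\dots=F_s=0$ with initial value $\bfs\xi$, which by the (truncated) uniqueness of Theorem~\ref{th: NewtonHensel} must agree with the branch $\bfs R^{\bfs\xi}$; this forces $\theta_{\bfs\xi}=R^{\bfs\xi}_{n-s+1}$ and hence $M=\prod_{\bfs\xi}(T-R^{\bfs\xi}_{n-s+1})=M_s$ by Lemma~\ref{lm:TFI:FactorizMUPowerSeries}, after which, the $R^{\bfs\xi}_{n-s+1}$ having pairwise invertible differences, Lagrange interpolation over $\overline{k}[\![X_{n-s}-p_{n-s}]\!]$ pins down the $\bfs V$-part among polynomials of $T$-degree $<\delta_s$. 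Taking $t=2^{j_0}$ gives $M^{j_0}\equiv M_s$ and $\bfs V^{j_0}\equiv\bfs V^{(s)}$ with precision $2^{j_0}$.

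Finally, since $\deg_{X_{n-s}}M_s\le\delta_s<2^{j_0}$ and, by \eqref{eq: parametriz lifting curve with partial deriv}, $\deg_{X_{n-s}}W^s_{n-s+i}\le\delta_s-1<2^{j_0}$, the precision-$(\delta_s+1)$ truncations returned in Steps~\textbf{5}--\textbf{6} recover $M_s$ and $\tfrac{\partial M^{j_0}}{\partial T}V^{j_0}_{n-s+i}\bmod M^{j_0}=\tfrac{\partial M_s}{\partial T}V^{(s)}_{n-s+i}\bmod M_s=W^s_{n-s+i}$ exactly; that these data form a Kronecker representation of $C_s$ with $X_{n-s+1}$ as primitive element, lifting the input one, is Proposition~\ref{prop: properties M_s} together with \eqref{eq: parametriz lifting curve with partial deriv} and Remark~\ref{rem: V_i_well_def}. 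The step I expect to be the main obstacle is the uniqueness argument of the third paragraph: one must reconcile the computation modulo the evolving polynomial $M^j$ with the power-series branches of $C_s$, track carefully the interplay of the two notions of precision (the $T$-adic reduction modulo $M^j$ and the $(X_{n-s}-p_{n-s})$-adic truncation), and justify that the $T$-shifts performed in Steps~\textbf{4.2}--\textbf{4.5} are precisely what keeps $X_{n-s+1}$ the primitive element at every stage; a slightly longer alternative that avoids the branches altogether is a direct induction on the precision mirroring the computation in Proposition~\ref{prop: main loop Newton lifting}.
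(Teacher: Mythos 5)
Your argument is correct and rests on the same key ingredients as the paper's proof: verify the initialization (via conditions $({\sf A}_1)$--$({\sf A}_3)$ and Proposition~\ref{prop: F_i(p*,X) generate radical ideal}), iterate Proposition~\ref{prop: main loop Newton lifting}, Hensel-factor the resulting polynomial using separability of $m_s$, and invoke Theorem~\ref{th: NewtonHensel} together with the invertibility of $G$ modulo $M^j$ to identify the branches with the $\bfs R^{\bfs\xi}$. The one genuine difference is in the final identification step: you formulate it as a finite-precision uniqueness principle (at most one pair enjoying the listed properties modulo $(X_{n-s}-p_{n-s})^t$) and appeal to a ``truncated'' version of Theorem~\ref{th: NewtonHensel}; as you yourself flag, that theorem is stated only for exact power-series solutions, so this truncated uniqueness would have to be re-derived, e.g.\ by a Newton-type estimate from the invertibility of $J_s$. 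The paper sidesteps this entirely: since $M^{j+1}\equiv M^j$ and $\bfs V^{j+1}\equiv \bfs V^j$ with precision $2^j$, the iterates converge $(X_{n-s}-p_{n-s})$-adically to a genuine pair $(M^\infty,\bfs V^\infty)$ over the complete ring $k[\![X_{n-s}-p_{n-s}]\!]$; Hensel's factorization and Theorem~\ref{th: NewtonHensel} then apply to exact power series, yielding $M^\infty=M_s$ and $\bfs V^\infty=\bfs V^{(s)}$ on the nose, after which the precision-$2^{j_0}$ agreement is immediate from the construction of the limit. Your approach can be completed, but the limit device is the clean way to reconcile ``reduction modulo the evolving $M^j$'' with the power-series branches of $C_s$, which is precisely the difficulty you anticipate in your final paragraph.
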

\begin{proof}
%
In view of Proposition \ref{prop: main loop Newton lifting}, we
claim that
\begin{align*}
F_i (\bfs p^{s+1}, X_{n-s}, \bfs V^0) & \equiv 0 \mod M^0\quad (1 \le i
\le s),
\\
V_{n-s+1}^0& = T,
\end{align*}
with precision 1, and that $\det J_s(X_{n-s}, \bfs V^0)$ is invertible
modulo $M^0$, where \linebreak $\bfs V^0:=(V_{n-s+1}^0,\ldots,V_n^0)$.

Since $M^0:=m_s$ and $V_{n-s+i}^0:=v_{n-s+i}^s$ for
$2\le i\le s$, it suffices to prove that $F_i(\bfs p^s,\bfs V^0
(p_{n-s},T))$ $(1 \le i \le s)$ and $V_{n-s+1}^0 (p_{n-s},T)- T$
vanish at all roots of $m_s$, and $\det J_s(\bfs p^s,\bfs V^0
(p_{n-s},T))$ does not vanish at any root of $m_s$.

By assumption, $X_{n-s+i}\equiv
v^s_{n-s+i}(X_{n-s+1})$ over $\pi_s^{-1}(\bfs p^s)$ for $2\le i\le s$.
Therefore, for each point $(\bfs
p^s,x_{n-s+1}\klk x_n) \in \pi_s^{-1}(\bfs p^s)$, we have
\begin{equation}\label{eq: 0th parametriz NH}
x_{n-s+i} = V_{n-s+i}^0(p_{n-s},x_{n-s+1}).
\end{equation}
Since $F_i(\bfs p^s,x_{n-s+1}\klk x_n)=0$ for $1 \le i \le s$, and
$x_{n-s+i}= V_{n-s+i}^0(p_{n-s},x_{n-s+1})$ for $2\le i\le s$, it
follows that $F_i(\bfs p^s,\bfs V^0 (p_{n-s},T))$ vanishes at all
roots $x_{n-s+1}$ of $M^0(p_{n-s}, T)$, for $1 \le i \le s$. The
second assertion is clear by definition.

Finally, as $\pi_s^{-1}(\bfs p^s)$ is a lifting fiber, Proposition
\ref{prop: F_i(p*,X) generate radical ideal} proves that it is
unramified. Thus, $\det J_s(\bfs p^s, x_{n-s+1}\klk x_n)\not=0$ for
each point $(\bfs p^s,x_{n-s+1}\klk x_n) \in \pi_s^{-1}(\bfs p^s)$, and
it follows that $\det J_s(\bfs p^s,\bfs V^0(p_{n-s},T))$ does not
vanish at any root $x_{n-s+1}$ of $m_s$. This completes the proof of
the claim.

By Proposition \ref{prop: main loop Newton lifting}, we deduce that
Algorithm \ref{algo: Newton lifting} executes correctly. It remains
to prove that its output consists of the polynomials $M_s,
W^s_{n-s+1}\klk W^s_n\in{k}[X_{n-s},T]$ which define a Kronecker
representation of the lifting curve $C_s$, with $X_{n-s+1}$ as
primitive element.

By construction, $M_s\in k[X_{n-s},T]$ is monic in
$T$ and satisfies $\deg_TM_s=\deg M_s=\delta_s$. Moreover,
$m_s=M_s(p_{n-s},T)$. Since $m_s$ is square-free of degree $\delta_s$,
Hensel's lemma implies that $M_s$ factors in
$\overline{k}[\![X_{n-s}-p_{n-s}]\!][T]$ as
$$M_s=\prod_{x \in \{m_s=0\}}(T-R_{n-s+1}^{x}),$$
where $R_{n-s+1}^x(p_{n-s})=x$ for each root $x$ of $m_s$. Denote
$R_{n-s+i}^x:=V_{n-s+i}^{(s)}(R_{n-s+1}^x)$ for $2\le i\le s$ and
each $x \in \{m_s=0\}$, and let $\bfs R^x:=(R_{n-s+1}^x\klk
R_n^x)$. We claim that, for each $x \in
\{m_s=0\}$,
\begin{itemize}
    \item $F_1(\bfs p^{s+1}, X_{n-s},\bfs R^x)=0\klk F_s(\bfs p^{s+1}, X_{n-s},\bfs R^x)=0$;
    \item $(p_{n-s},\bfs R^x(p_{n-s}))\in\pi_s^{-1}(\bfs p^s)$.
  \end{itemize}
Indeed, fix such an $x$ and set $\bfs V^{(s)}:=(V^{(s)}_{n-s+1} \klk
V^{(s)}_n)$. Since $M_s(R_{n-s+1}^x)=0$
and $\bfs R^x=\bfs V^{(s)}(R_{n-s+1}^x)$, together with the relation
$$F_i (\bfs p^{s+1}, X_{n-s}, \bfs V^{(s)}) \mod M_s  = 0 \quad (1 \le i
\le s),$$
it follows immediately that $F_i (\bfs p^{s+1}, X_{n-s}, \bfs R^x)=0$
for $1\le i\le s$. For the second assertion, since $\bfs R^x=\bfs V^{(s)}(R_{n-s+1}^x)$
and the identities $V_{n-s+i}(p_{n-s},T)=v_{n-s+1}$ and $R^x_{n-s+1}(p_{n-s})=x$ hold,
by substituting $p_{n-s}$ for $X_{n-s}$ it follows that 
$$(\bfs p^s,\bfs R^x(p_{n-s}))=(\bfs p^s,x,v_{n-s+2}(x),\ldots,v_n(x))\in\pi_s^{-1}(\bfs p^s).$$

Finally, we verify that the output of Algorithm \ref{algo: Newton
lifting} is the desired one. Suppose the main loop is executed for
all $j\ge 0$. The fact that $M^{j+1}=M^j$ and
$V_{n-s+i}^{j+1}=V_{n-s+i}^j$ $(1\le i\le s)$ with precision $2^{j}$
for all $j\geq 0$ shows that there exist unique polynomials
$M^\infty,V_{n-s+1}^{\infty}, \ldots, V_n^{\infty}\in{k}[\![X_{n-s}
-p_{n-s}]\!][T]$ satisfying $M^\infty= M^j$ and $V_{n-s+i}^{\infty}=
V^{j}_{n-s+i}$ $(1\le i \le s)$ with precision $2^{j}$ for all
$j\geq 0$.

In particular, we have $\deg_TM^\infty=\delta_s$ and
$\deg_TV_{n-s+i}^{\infty}<\delta_s$ for $1\le i\le s$. Furthermore,
since the congruences \eqref{eq: CongruenceProofHL1} and \eqref{eq:
CongruenceProofHL2} hold for all $j \geq 0$, it follows that
\begin{align*}
F_i (\bfs p^{s+1}, X_{n-s}, \bfs V^\infty) \mod M^\infty & = 0 \quad (1
\le i \le s), \\ V_{n-s+1}^\infty & = T.
\end{align*}

As $m_s=M^\infty(p_{n-s},T)$, applying Hensel's lemma once more,
$M^\infty$ admits a factorization 
%
$$M^\infty=\prod_{x \in \{m_s=0\}}(T-S_{n-s+1}^{x}),$$
where $S_{n-s+1}^x(p_{n-s})=x$ for each $x\in\{m_s=0\}$. Set
$S_{n-s+i}^x:=V_{n-s+i}^\infty(S_{n-s+1}^x)$ for $2\le i\le s$, and
$\bfs S^x:=(S_{n-s+1}^x, \ldots, S_n^x)$. Then
\begin{equation}\label{eq: NH revised 1}
F_1(\bfs p^{s+1}, X_{n-s},\bfs S^x)=0\klk F_s(\bfs p^{s+1}, X_{n-s},\bfs S^x)=0.
\end{equation}
We claim that
\begin{equation}\label{eq: NH revised 2}
(p_{n-s},\bfs S^x(p_{n-s}))\in\pi_s^{-1}(\bfs p^s).
\end{equation}
First we show that $G(\bfs p^{s+1},X_{n-s},\bfs V^\infty)$
is invertible modulo $M^\infty(X_{n-s},T)$ in $k[\![X_{n-s}-p_{n-s}]\!][T]$.
Indeed, let 
$$D^\infty:=\mathrm{Res}_T(G(\bfs p^{s+1},X_{n-s},\bfs V^\infty),M^\infty(X_{n-s},T)).$$
Since $\bfs V^\infty(p_{n-s},T)=\bfs v^s(T):=(v^s_{n-s+1}(T),\ldots,v^s_n(T))$ and
$M^\infty(p_{n-s},T)=m_s(T)$, we obtain  
$D^\infty(p_{n-s})=\mathrm{Res}_T(G(\bfs p^{s},\bfs v^s),m_s)$, which is nonzero
because $G$ does not vanish on any point of $\pi_s^{-1}(\bfs p^s)$. Hence
$D^\infty$ is a unit of $k[\![X_{n-s}-p_{n-s}]\!]$, which implies the 
claimed invertibility. Since $M^\infty(S_{n-s+1}^x)=0$, the fact that
$D^\infty$ is a unit of $k[\![X_{n-s}-p_{n-s}]\!]$ shows that 
$G(\bfs p^{s+1},X_{n-s},\bfs V^\infty(S_{n-s+1}^x))=
G(\bfs p^{s+1},X_{n-s},\bfs S^x)$ is also a unit of $k[\![X_{n-s}-p_{n-s}]\!]$.
Consequently, $G(\bfs p^s,\bfs S^x(p_{n-s}))\not=0$. Together
with \eqref{eq: NH revised 1}, this implies \eqref{eq: NH revised 2}.

Finally, we claim that 
\begin{equation}\label{eq: NH revised 3}
\bfs R^x(p_{n-s})=\bfs S^x(p_{n-s}). 
\end{equation}
By definition, 
$S_{n-s+1}^x(p_{n-s})=x=R_{n-s+1}^x(p_{n-s})$. Furthermore,
$$S_{n-s+i}^x(p_{n-s})=V_{n-s+i}^\infty(p_{n-s},S_{n-s+1}^x(p_{n-s}))
=V_{n-s+i}^\infty(p_{n-s},x)=v_{n-s+i}^s(x)$$
for $2\le i\le s$. On the other hand,  
$$R_{n-s+i}^x(p_{n-s})=V_{n-s+i}^s(p_{n-s},R_{n-s+1}^x(p_{n-s}))
=V_{n-s+i}^s(p_{n-s},x)=v_{n-s+i}^s(x).$$
Thus $R_{n-s+i}^x(p_{n-s})=v_{n-s+i}^s(x)=S_{n-s+i}^x(p_{n-s})$ for 
$2\le i\le s$. 

Combining \eqref{eq: NH revised 1}, \eqref{eq: NH revised 2} and
\eqref{eq: NH revised 3} with Theorem \ref{th: NewtonHensel} yields $\bfs R^x=\bfs S^x$
for each $x\in \{m_s=0\}$. In particular,
$$M^\infty=\prod_{x \in \{m_s=0\}}(T-S_{n-s+1}^{x})=
\prod_{x \in \{m_s=0\}}(T-R_{n-s+1}^{x})=M_s.$$
Moreover, for each $i$ with $1\le i\le s$, the equalities
$$V^{(s)}_{n-s+i}(R_{n-s+1}^x)=R_{n-s+i}^x=S_{n-s+i}^x=V_{n-s+i}^\infty(R_{n-s+1}^x)$$
for each of the $\delta_s$ distinct roots $x \in\overline{k}$ of
$m_s$, together with
$\deg_TV_{n-s+i}^{\infty},\deg_TV_{n-s+i}^{(s)}<\delta_s$,
imply that $V_{n-s+i}^{\infty}=V^{(s)}_{n-s+i}$.

Thus  $M_{s}=M^j$
and $V^{(s)}_{n-s+i}=V^{j}_{n-s+i} (1\le i \le s)$ with precision
$2^{j}$ for all $j\geq 0$. Setting $j_0:=\lfloor
\log_2(\delta_s+1) \rfloor$, we obtain with precision $2^{j_0}$:
\begin{align*}
\frac{\partial M^{j_0}}{\partial T}V^{j_0}_{n-s+i} \mod M^{j_0}
            & = \frac{\partial M_s}{\partial T}V^{(s)}_{n-s+i} \mod M_s
            = W^s_{n-s+i} \quad (1\le i \le s).
\end{align*}
Since $\frac{\partial M^{j_0}}{\partial T}V^{j_0}_{n-s+i} \mod
M^{j_0}$ and $W^s_{n-s+i}$ have degree at most $\delta_s< 2^{j_0}$
in $X_{n-s}$, it follows that they are equal. Similarly,
$M^{j_0}=M_s$. Hence the output of Algorithm \ref{algo: Newton
lifting} consists of the desired polynomials $M_s,W^s_{n-s+1}\klk
W^s_n$.
\end{proof}
\subsection{The cost of Newton lifting}
We now proceed to analyze the complexity of Algorithm \ref{algo:
Newton lifting}. Its output consists of polynomials of
$k[X_{n-s},T]$, which are given by their dense representation.
Algorithm \ref{algo: Newton lifting} involves the evaluation of the
Newton-Hensel operator
$$N_{\bfs F_s}(X_{n-s+1}\klk X_n)^t:=( X_{n-s+1}\klk X_n
)^t-(J_s^{-1}{\bfs F_s}^t)(X_{n-s+1}\klk X_n),$$
where $\bfs F_s:=(F_1(\bfs p^{s+1},X_{n-s}\klk X_n) \klk F_s(\bfs
p^{s+1},X_{n-s}\klk X_n))$, and $J_s$ denotes the Jacobian matrix of
${\bfs F_s}$ with respect to $X_{n-s+1}\klk X_n$. Viewing $N_{\bfs
F_s}$ as a vector of elements of $k[X_{n-s}][X_{n-s+1}\klk X_n]$, we
are interested in the cost of its evaluation over $k[X_{n-s}]$. We
begin with the following result.
\begin{lemma}\label{lemma: complexity det J and Adj J}
If $F_1\klk F_s\in k[X_1\klk X_n]$ are given by a straight-line
program $\beta$ in $k[X_1\klk X_n]$ of length $L$, then the
determinant $\det J_s$ and the adjoint matrix $\mathrm{Adj}(J_s)$
can be evaluated by a straight-line program $\beta'$ in $k[X_{n-s}]$
of length $\mathcal{O}(Ls+s^4)$.\end{lemma}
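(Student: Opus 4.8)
The plan is to construct $\beta'$ in two stages. First, starting from $\beta$, I would produce a short straight-line program that evaluates all $s^2$ entries of $J_s$; then I would append to it a \emph{division-free} linear-algebra routine turning those entries into $\det J_s$ and $\mathrm{Adj}(J_s)$. Division-freeness matters here: in the Newton--Hensel loop the program $\beta'$ is composed with the vector of truncated power series $(V_{n-s+1}^j,\ldots,V_n^j)$ and evaluated over a residue ring of $k[\![X_{n-s}-p_{n-s}]\!][T]$, which is not a field, so no elimination-type procedure is admissible.

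For the first stage I would use the elementary fact that a straight-line program of length $L$ can be symbolically differentiated with respect to a single indeterminate at a constant-factor cost: replacing each gate $Q_\rho$ by $Q_\rho$ together with $\partial Q_\rho/\partial X_k$ (a multiplication gate costs three extra gates by the product rule, an addition gate costs one), one obtains a program of length $\mathcal{O}(L)$ whose outputs include $\partial Q_\rho/\partial X_k$ for every output gate $Q_\rho$ of $\beta$; in particular it evaluates $\partial F_1/\partial X_k,\ldots,\partial F_s/\partial X_k$ at once (one may invoke the Baur--Strassen theorem instead). Carrying this out, on copies of $\beta$, once for each of the \emph{$s$ variables} $X_k=X_{n-s+1},\ldots,X_n$ and concatenating, yields a program $\beta_1$ of length $\mathcal{O}(Ls)$ that evaluates all $\partial F_i/\partial X_{n-s+j}$ with $1\le i,j\le s$; substituting the scalars $p_1,\ldots,p_{n-s-1}$ for $X_1,\ldots,X_{n-s-1}$ costs nothing, as they are absorbed into the parameter set. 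One bookkeeping point is to designate these $s^2$ entries as distinct output gates of $\beta_1$ so they can serve as inputs to the next stage.

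For the second stage I would feed the $s\times s$ matrix $J_s$ into the Samuelson--Berkowitz algorithm \cite{Berkowitz84}, which computes, without divisions, the characteristic polynomial of an $s\times s$ matrix over any commutative ring using $\mathcal{O}(s^{\omega+1})$ ring operations; from it one reads off $\det J_s$ as the constant term and, via Cayley--Hamilton $\mathrm{Adj}(J_s)=\pm\big(J_s^{s-1}+c_{s-1}J_s^{s-2}+\cdots+c_1 I_s\big)$, obtains $\mathrm{Adj}(J_s)$ after $\mathcal{O}(s)$ further $s\times s$ matrix multiplications, i.e.\ $\mathcal{O}(s^{\omega+1})$ more operations (as already recorded earlier in the paper for the determinant and adjoint). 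With the convention $\omega=3$ adopted here this stage is a program of length $\mathcal{O}(s^{4})$, which I would append to $\beta_1$; the total length of $\beta'$ is thus $\mathcal{O}(Ls)+\mathcal{O}(s^{4})=\mathcal{O}(Ls+s^{4})$.

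I do not expect a genuine obstacle in this lemma; it is essentially an assembly of two standard ingredients. The only places that require care—and that I would emphasize—are the two already noted: insisting on a division-free determinant/adjoint algorithm rather than Gaussian elimination, and counting the differentiation cost correctly, namely one symbolic-differentiation pass per output \emph{variable} (giving the factor $s$) rather than per output \emph{polynomial} (which would give $s^2$).
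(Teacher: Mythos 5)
Your proof is correct and follows essentially the same route as the paper's: specialize $\beta$, differentiate to obtain the entries of $J_s$ in $\mathcal{O}(Ls)$ operations, and append the Samuelson--Berkowitz algorithm for a division-free computation of $\det J_s$ and $\mathrm{Adj}(J_s)$ in $\mathcal{O}(s^4)$ operations. The only stylistic difference is that you propose forward-mode symbolic differentiation (one pass per variable, $s$ passes total) as the primary mechanism, whereas the paper invokes Baur--Strassen (reverse-mode, one pass per output polynomial, $s$ passes total) directly; since the number of relevant variables and of outputs are both $s$, the two give the same $\mathcal{O}(Ls)$ bound, so this is a cosmetic choice rather than a genuinely different argument. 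Your added remark about why the procedure must be division-free (the SLP is later composed with truncated power series over a non-field residue ring in the Newton--Hensel loop) is a correct and helpful observation that the paper leaves implicit.
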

\begin{proof}
From the straight-line program $\beta$, we
immediately obtain a straight-line program to evaluate the
specialized polynomials $F_1(\bfs
p^{s+1},X_{n-s}\klk X_n)\klk F_s(\bfs p^{s+1},X_{n-s}\klk X_n)$, using at most
$L$ arithmetic operations in $k[X_{n-s}\klk X_n]$. We retain
the notation $\beta$ for this new straight-line program.

Applying the Baur--Strassen theorem to $\beta$ yields a
straight-line program which evaluates the partial derivatives of
$F_1\klk F_s$ with respect to $X_{n-s+1}\klk X_n$, using
$\mathcal{O}(Ls)$ arithmetic operations in $k[X_{n-s}]$ (see
\cite{BaSt83}).

To this straight-line program we append, for instance, the
Samuelson--Berkowitz algorithm (see \cite{Berkowitz84}) to
compute both the determinant and the adjoint matrix of $J_s$, with
an additional cost of $\mathcal{O}(s^4)$ operations.
\end{proof}

Let $\bfs V^j:=(V_{n-s+1}^j\klk V_n^j)$ for each $j\ge 0$. The
$j$th iteration of step {\bf 4} of Algorithm \ref{algo: Newton
lifting} requires computing $N_{\bfs F_s}(\bfs V^j)$ modulo $M^j$
with precision $2^{j+1}$. 
For this purpose, we establish the following result.
%
%
%
%
%
%
%
%
%
\begin{lemma}\label{lemma: complexity NH operator}
The evaluation of $N_{\bfs F_s}(X_{n-s},\bfs V^j)$ modulo $M^j$ can
be performed using $\mathcal{O}((Ls+s^4+\log_2\delta_s){\sf M}(\delta_s))$
arithmetic operations in $k[\![X_{n-s}-p_{n-s}]\!]$.
\end{lemma}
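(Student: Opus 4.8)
The plan is to carry out all the arithmetic in the ring $\mathcal{R}:=k[\![X_{n-s}-p_{n-s}]\!][T]/(M^j)$, which is a free $k[\![X_{n-s}-p_{n-s}]\!]$--module of rank $\delta_s$ because $M^j$ is monic of degree $\delta_s$ in $T$ (Proposition \ref{prop: main loop Newton lifting}). In $\mathcal{R}$ an addition or subtraction costs $\mathcal{O}(\delta_s)$ operations in $k[\![X_{n-s}-p_{n-s}]\!]$, while a multiplication costs $\mathcal{O}({\sf M}(\delta_s))$ of them, namely one product of two polynomials of degree $<\delta_s$ in $T$ followed by a reduction modulo the monic polynomial $M^j$. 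Evaluating $N_{\bfs F_s}(X_{n-s},\bfs V^j)$ modulo $M^j$ is then nothing but substituting the $s$--tuple $\bfs V^j=(V_{n-s+1}^j\klk V_n^j)\in\mathcal{R}^s$ (together with $X_{n-s}$, viewed as $p_{n-s}+(X_{n-s}-p_{n-s})$) into the rational map $N_{\bfs F_s}$; throughout, the working precision at the $j$th iteration is $2^{j+1}=\mathcal{O}(\delta_s)$, so all truncated power series stay of length $\mathcal{O}(\delta_s)$ and the above costs apply.

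First I would invoke Lemma \ref{lemma: complexity det J and Adj J} and its proof to obtain a division-free straight-line program $\beta'$ of length $\mathcal{O}(Ls+s^4)$ that evaluates $F_1(\bfs p^{s+1},X_{n-s}\klk X_n)\klk F_s(\bfs p^{s+1},X_{n-s}\klk X_n)$, the determinant $\det J_s$, and the entries of the adjoint matrix $\mathrm{Adj}(J_s)$. Evaluating $\beta'$ at the point with $X_{n-s+i}:=V_{n-s+i}^j\in\mathcal{R}$ turns each of its $\mathcal{O}(Ls+s^4)$ instructions into one arithmetic operation in $\mathcal{R}$ (this is legitimate precisely because $\beta'$ is division-free), and produces $\bfs F_s(X_{n-s},\bfs V^j)$, $\det J_s(X_{n-s},\bfs V^j)$ and $\mathrm{Adj}(J_s)(X_{n-s},\bfs V^j)$ modulo $M^j$ at a cost of $\mathcal{O}\big((Ls+s^4){\sf M}(\delta_s)\big)$ operations in $k[\![X_{n-s}-p_{n-s}]\!]$.

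It then remains to assemble $N_{\bfs F_s}(\bfs V^j)=(X_{n-s+1}\klk X_n)^t-(\det J_s)^{-1}\mathrm{Adj}(J_s)\bfs F_s^t$ inside $\mathcal{R}$. The matrix--vector product $\mathrm{Adj}(J_s)(X_{n-s},\bfs V^j)\cdot\bfs F_s(X_{n-s},\bfs V^j)^t$, the final scaling, and the subtraction take $\mathcal{O}(s^2)$ operations in $\mathcal{R}$. The only genuinely delicate point --- and the step I expect to be the main obstacle, since it is exactly where the $\log_2\delta_s$ term enters --- is inverting the scalar $\det J_s(X_{n-s},\bfs V^j)$ modulo $M^j$. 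For this one must know a priori that this element is a unit of $\mathcal{R}$, which is precisely the invertibility clause of Proposition \ref{prop: main loop Newton lifting}; reduce modulo $X_{n-s}-p_{n-s}$ to the univariate situation $k[T]/(m_s)$, where $m_s=M^j(p_{n-s},T)$ is square-free, and compute the inverse there by a fast extended Euclidean algorithm in $\mathcal{O}({\sf M}(\delta_s)\log_2\delta_s)$ arithmetic operations (and identity tests) in $k$; and finally Hensel/Newton--lift this inverse in the $(X_{n-s}-p_{n-s})$--adic direction to precision $\mathcal{O}(\delta_s)$ via the iteration $b\mapsto b\,(2-\det J_s\cdot b)$ in $\mathcal{R}$, which uses $\mathcal{O}(\log_2\delta_s)$ steps, each a bounded number of multiplications in $\mathcal{R}$, i.e. $\mathcal{O}(\log_2\delta_s\cdot{\sf M}(\delta_s))$ operations in $k[\![X_{n-s}-p_{n-s}]\!]$. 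Adding the three contributions --- evaluation of $\beta'$, matrix assembly, and inversion --- yields the announced bound of $\mathcal{O}\big((Ls+s^4+\log_2\delta_s){\sf M}(\delta_s)\big)$ operations in $k[\![X_{n-s}-p_{n-s}]\!]$.
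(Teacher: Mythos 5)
Your proof is correct and follows the same global accounting as the paper: (i) build a division-free straight-line program of length $\mathcal{O}(Ls+s^4)$ for $\bfs F_s$, $\det J_s$ and $\mathrm{Adj}(J_s)$ via Baur--Strassen and Samuelson--Berkowitz, then execute it in $\mathcal{R}:=k[\![X_{n-s}-p_{n-s}]\!][T]/(M^j)$ at cost $\mathcal{O}({\sf M}(\delta_s))$ operations in $k[\![X_{n-s}-p_{n-s}]\!]$ per instruction; (ii) invert $\det J_s(X_{n-s},\bfs V^j)$ modulo $M^j$; (iii) assemble the matrix--vector product and final subtraction in $\mathcal{O}(s^2)$ operations in $\mathcal{R}$.

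The one place where you genuinely diverge from the paper is step (ii), and your treatment is arguably cleaner. The paper simply asserts that the modular inversion can be done by the fast extended Euclidean algorithm in $\mathcal{O}({\sf M}(\delta_s)\log_2\delta_s)$ operations in $k[\![X_{n-s}-p_{n-s}]\!]$, which glosses over the fact that the coefficient ring is not a field and the Euclidean algorithm needs its leading coefficients to be units. You instead reduce modulo the maximal ideal $(X_{n-s}-p_{n-s})$ to land in $k[T]/(m_s)$, where the fast EEA is legitimate and costs $\mathcal{O}({\sf M}(\delta_s)\log_2\delta_s)$ operations in $k$ (the invertibility of the residue of $\det J_s$ there is exactly the unramifiedness of the lifting fiber), and then lift the inverse $(X_{n-s}-p_{n-s})$-adically to precision $2^{j+1}$ by the Newton iteration $b\mapsto b(2-ab)$, which is $\mathcal{O}(\log_2\delta_s)$ multiplications in $\mathcal{R}$, hence $\mathcal{O}({\sf M}(\delta_s)\log_2\delta_s)$ operations in $k[\![X_{n-s}-p_{n-s}]\!]$. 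The two routes land on the same bound; yours makes explicit why the inversion is legitimate in the semi-local ring $\mathcal{R}$, at the cost of one extra reduction-and-lift layer. Incidentally, the paper's own remark after this lemma observes that the inversion can also be performed incrementally across the Newton--Hensel iterations (as in \cite{GiLeSa01}), which would remove the $\log_2\delta_s$ factor entirely; both your argument and the paper's leave that optimization aside since it does not affect the overall complexity of the algorithm.
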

\begin{proof}
By Lemma \ref{lemma: complexity det J and Adj J}, we can compute
$\det J_s$ and $\mathrm{Adj}(J_s)$ via a straight--line program
using $\mathcal{O}(Ls+s^4)$ arithmetic operations in $k[X_{n-s}]$. We
substitute the values $\bfs V^j$ for the variables $X_{n-s+1}\klk
X_n$, and perform all these arithmetic operations in
$k[\![X_{n-s}-p_{n-s}]\!][T]$ modulo $M^j$. Since $M^j$ is a monic
element of $k[\![X_{n-s}-p_{n-s}]\!][T]$, each such modular
operation involves $\mathcal{O}({\sf M}(\delta_s))$ arithmetic
operations in $k[\![X_{n-s}-p_{n-s}]\!]$. Hence, computing both
$\det J_s(X_{n-s}, \bfs V^j)$ and $\mathrm{Adj}(J_s(X_{n-s}, \bfs
V^j))$ modulo $M^j$ requires $\mathcal{O}((Ls+s^4){\sf M}(\delta_s))$
arithmetic operations in $k[\![X_{n-s}-p_{n-s}]\!]$.

According to Proposition \ref{prop: main loop Newton lifting}, $\det
J_s(X_{n-s}, \bfs V^j)$ is invertible modulo $M^j$. Therefore, we
can compute its inverse $\det J_s(X_{n-s}, \bfs V^j)^{-1}$ modulo
$M^j$ with $\mathcal{O}({\sf M}(\delta_s)\log_2\delta_s)$ arithmetic
operations in $k[\![X_{n-s}-p_{n-s}]\!]$.

Finally, we compute modulo $M^j$ the multiplication
$$(J_s^{-1}{\bfs F_s}^t)(X_{n-s}, \bfs V^j)=
\big(\det J_s^{-1}\mathrm{Adj}(J_s){\bfs F_s}^t\big)(X_{n-s}, \bfs
V^j)$$
and the subtraction
$$N_{\bfs F_s}(X_{n-s},\bfs V^j)^t=(\bfs V^j)^t-(J_s^{-1}{\bfs F_s}^t)(X_{n-s}, \bfs V^j)$$
with $\mathcal{O}(s^2{\sf M}(\delta_s))$ arithmetic operations in
$k[\![X_{n-s}-p_{n-s}]\!]$.

In total, the procedure requires
$\mathcal{O}((Ls+s^4+\log_2\delta_s){\sf M}(\delta_s))$ arithmetic
operations in $k[\![X_{n-s}-p_{n-s}]\!]$, as claimed. 
\end{proof}

The matrix inversion of the Jacobian matrix $J_s(X_{n-s}, \bfs V^j)$
required for evaluating the Newton--Hensel operator $N_{\bfs
F_s}(X_{n-s},\bfs V^j)$ in step {\bf 4.1} of Algorithm~\ref{algo:
Newton lifting} can be performed incrementally, as described in
\cite[Section~4.3]{GiLeSa01}. Following this approach, the inversion
can be implemented using Newton iteration, thereby avoiding the
additional $\log_2(\delta_s)$ factor in the first term of the
complexity estimate of Lemma~\ref{lemma: complexity NH operator}.
Nevertheless, since this improvement does not affect the overall
asymptotic cost of the main algorithm---which is dominated by other
routines of higher complexity---we have chosen to retain the present
formulation for the sake of clarity and uniformity of exposition.

Now we analyze the overall cost of Algorithm \ref{algo: Newton
lifting}.
\begin{theorem}\label{th: cost Newton lifting}
Assume that $F_1\klk F_s\in k[X_1\klk X_n]$ are given by a
straight--line program $\beta$ in $k[X_1\klk X_n]$ of length $L$.
Then Algorithm \ref{algo: Newton lifting} can be implemented
with $\mathcal{O}((Ls+s^4+\log_2\delta_s){\sf M}(\delta_s)^2)$ arithmetic
operations in $k$.
\end{theorem}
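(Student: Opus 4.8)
The plan is to trace the cost of Algorithm \ref{algo: Newton lifting} iteration by iteration, keeping careful track of the precision at which the power--series arithmetic must be carried out. First I would note that the main loop in step {\bf 4} is executed $j_0:=\lceil\log_2(\delta_s+1)\rceil$ times, since it halts as soon as $2^j\ge\delta_s+1$, and that by Proposition \ref{prop: main loop Newton lifting} the $(j+1)$th iteration requires all the power series in $k[\![X_{n-s}-p_{n-s}]\!]$ to be known only with precision $2^{j+1}$. Consequently, each arithmetic operation in $k[\![X_{n-s}-p_{n-s}]\!]$ performed during the $j$th iteration is an operation on polynomials in $X_{n-s}$ of degree below $2^{j+1}$, and hence costs $\mathcal{O}({\sf M}(2^{j+1}))$ arithmetic operations in $k$, as recalled after the statement of Algorithm \ref{algo: Newton lifting}.

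Next I would bound the number of operations in $k[\![X_{n-s}-p_{n-s}]\!]$ used by a single iteration. Step {\bf 4.1} is exactly the evaluation of the Newton--Hensel operator $N_{\bfs F_s}(X_{n-s},\bfs V^j)$ modulo the monic polynomial $M^j\in k[\![X_{n-s}-p_{n-s}]\!][T]$ of degree $\delta_s$ in $T$, which by Lemma \ref{lemma: complexity NH operator} costs $\mathcal{O}\big((Ls+s^4+\log_2\delta_s)\,{\sf M}(\delta_s)\big)$ operations in $k[\![X_{n-s}-p_{n-s}]\!]$. Steps {\bf 4.2}--{\bf 4.5} amount to a bounded number of differentiations with respect to $T$, multiplications of polynomials of degree $\le\delta_s$ in $T$, and divisions with remainder by the monic $M^j$, performed for each of the $s$ coordinates; all of this uses $\mathcal{O}(s\,{\sf M}(\delta_s))$ operations in $k[\![X_{n-s}-p_{n-s}]\!]$, which is absorbed into the previous estimate. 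Multiplying by the per--operation cost found above, the $j$th iteration therefore costs $\mathcal{O}\big((Ls+s^4+\log_2\delta_s)\,{\sf M}(\delta_s)\,{\sf M}(2^{j+1})\big)$ arithmetic operations in $k$.

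Then I would sum over $j=0,\dots,j_0$. Since ${\sf M}(x)=x\log_2 x\log_2\log_2 x$ satisfies ${\sf M}(2^{j+1})\ge 2\,{\sf M}(2^{j})$ for $j\ge 1$, the sum $\sum_{j=0}^{j_0}{\sf M}(2^{j+1})$ is, up to a constant, geometric and hence dominated by its last term ${\sf M}(2^{j_0+1})=\mathcal{O}({\sf M}(\delta_s))$. This yields a total cost of $\mathcal{O}\big((Ls+s^4+\log_2\delta_s)\,{\sf M}(\delta_s)^2\big)$ operations in $k$ for the main loop. Finally, the initialization (steps {\bf 1}--{\bf 3}) merely copies dense representations of $s$ polynomials of degree $<\delta_s$, costing $\mathcal{O}(s\delta_s)$ operations, while the output steps {\bf 5}--{\bf 6} compute $\frac{\partial M^{j_0}}{\partial T}\,V_i^{j_0}\bmod M^{j_0}$ at precision $\delta_s+1$ for $1\le i\le s$, i.e.\ $\mathcal{O}(s\,{\sf M}(\delta_s)^2)$ operations in $k$; both are absorbed into the claimed bound, which proves the theorem.

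I expect the only delicate point to be the precision bookkeeping, together with the observation that the geometric sum $\sum_j{\sf M}(2^{j+1})$ collapses to $\mathcal{O}({\sf M}(\delta_s))$ rather than to $\mathcal{O}({\sf M}(\delta_s)\log_2\delta_s)$; beyond that, everything reduces to Lemmas \ref{lemma: complexity det J and Adj J} and \ref{lemma: complexity NH operator} and the standard cost of truncated power--series arithmetic and of multiplication and division modulo a monic polynomial.
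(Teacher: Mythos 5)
Your proof follows essentially the same route as the paper: bound the per--iteration cost in $k[\![X_{n-s}-p_{n-s}]\!]$ via Lemma~\ref{lemma: complexity NH operator}, multiply by the ${\sf M}(2^{j+1})$ cost of truncated power--series arithmetic at precision $2^{j+1}$, and sum over $j$ up to $\lfloor\log_2\delta_s\rfloor$. The only difference is that you make explicit the geometric collapse $\sum_j{\sf M}(2^{j+1})=\mathcal{O}({\sf M}(\delta_s))$ which the paper leaves implicit; that is a small but welcome clarification and everything else matches.
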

\begin{proof}
%
We analyze the loop of step {\bf 4}. Let $j$ be fixed such that
$2^j<\delta_s$. The $j$th iteration operates with elements of
$k[\![X_{n-s}-p_{n-s}]\!][T]$, and all the arithmetic operations
are performed at precision $2^{j+1}$. Therefore, we analyze the
number of arithmetic operations in $k[\![X_{n-s}-p_{n-s}]\!]$
performed in this iteration. Step {\bf 4.1} requires the evaluation
$N_{\bfs F_s}(\bfs V_j)$, which, by Lemma \ref{lemma: complexity NH
operator}, requires $\mathcal{O}((Ls+s^4+\log_2\delta_s){\sf
M}(\delta_s))$ arithmetic operations in $k[\![X_{n-s}-p_{n-s}]\!]$.
Steps {\bf 4.2}, {\bf 4.3}, {\bf 4.4} and {\bf 4.5} consist of
$\mathcal{O}(s)$ arithmetic operations, modulo $M^j$, of polynomials
of $k[\![X_{n-s}-p_{n-s}]\!][T]$ of degree at most $\delta_s$. As a
consequence, these steps require $\mathcal{O}(s\,{\sf M}(\delta_s))$
arithmetic operations in $k[\![X_{n-s}-p_{n-s}]\!]$. It follows that
the complete $j$th iteration can be performed using
$\mathcal{O}((Ls+s^4+\log_2\delta_s){\sf M}(\delta_s))$ arithmetic
operations in $k[\![X_{n-s}-p_{n-s}]\!]$. Each such operation is
performed with precision $2^{j+1}$, and then consists of ${\sf
M}(2^{j+1})$ arithmetic operations in $k$, so the total cost of the
$j$th iteration is
$$\mathcal{O}((Ls+s^4+\log_2\delta_s){\sf M}(\delta_s){\sf M}(2^{j+1}))$$
arithmetic operations in $k$. Summing over $j=0$ to $\lfloor\log\delta_s\rfloor$,
we obtain the total cost of the main loop:
$$\sum^{\lfloor \log\delta_s\rfloor}_{j=0}\mathcal{O}((Ls+s^4+\log_2\delta_s){\sf
M}(\delta_s){\sf M}(2^{j+1}))=\mathcal{O}((Ls+s^4+\log_2\delta_s){\sf
M}(\delta_s)^2).$$
The remaining steps have a negligible cost compared to this bound.
\end{proof}
%
%
\section{From the $s$th lifting curve to the $(s+1)$th lifting fiber}
\label{sec: intersection step}
In this section, we discuss the elimination phase of the $s$th
recursive step in the main algorithm for computing a Kronecker
representation of the Zariski closure $V:=V_r$ of $V(F_1\klk
F_r)\setminus V(G)$. Recall that, for $1\le j\le r$, the variety
$V_j\subset\A^n$ denotes the Zariski closure of
$\{F_1=0,\ldots,F_j=0,G\not=0\}$, and that $\pi_j:V_j\to\A^{n-j}$
denotes the projection onto the first $n-j$ coordinates. Throughout
this section, we assume that the variables $X_1,\ldots,X_n$ and the
point $\bfs p^s\in\A^{n-s}$ satisfy all the conditions of Theorem
\ref{th: preproc: all conditions}. The goal of this step is to
compute a Kronecker representation of the lifting fiber
$\pi_{s+1}^{-1}(\bfs p^{s+1})$, given a Kronecker representation of the
lifting curve $C_s:=\pi_s^{-1}(\{\bfs p^{s+1}\}\times\A^1)$.
%
%
\subsection{Computing projections of the $(s+1)$th lifting fiber}
The computation of a Kronecker representation of the lifting fiber
$\pi_{s+1}^{-1}(\bfs p^{s+1})$ is based on the following result.
\begin{proposition}\label{prop: C_s cap F_(s+1) and G has dim zero}
The following assertions hold:
\begin{enumerate}
\item $C_s\cap V(F_{s+1})\setminus\{G=0\}=\pi_{s+1}^{-1}(\bfs p^{s+1})$.
  \item $C_s\cap
  V(G)$ has dimension at most zero.
    \item $C_s\cap
  V(F_{s+1})$ has dimension zero.
\end{enumerate}
\end{proposition}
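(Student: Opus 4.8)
The plan is to deduce all three assertions from elementary topological manipulations together with the conclusions of Theorem~\ref{th: preproc: all conditions}, the pure one-dimensionality of $C_s$ (Proposition~\ref{prop: properties M_s}) and Remark~\ref{rem: fiber meets every irred comp curve}. The crucial inputs are the inclusions $\pi_s^{-1}(\bfs p^s)\subset\{G\not=0\}$ (condition $({\sf A}_2)$) and $\pi_{s+1}^{-1}(\bfs p^{s+1})\subset\{G\not=0\}$ (Theorem~\ref{th: preproc: all conditions}(\ref{item: preproc: conditions pi_s+1})).

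First I would establish assertion~(1). Write $L:=\{X_1=p_1,\ldots,X_{n-s-1}=p_{n-s-1}\}\subset\A^n$, so that $C_s=V_s\cap L$ by \eqref{eq: def lifting curve}. Since any locally closed set $Z=(\text{closed})\cap(\text{open})$ satisfies $\overline{Z}\cap(\text{open})=Z$, one has $V_j\setminus\{G=0\}=V(F_1,\ldots,F_j)\setminus\{G=0\}$ for $j=s$ and $j=s+1$, whence
\begin{align*}
C_s\cap V(F_{s+1})\setminus\{G=0\}
&=\bigl(V(F_1,\ldots,F_{s+1})\setminus\{G=0\}\bigr)\cap L
=\bigl(V_{s+1}\setminus\{G=0\}\bigr)\cap L\\
&=\bigl(V_{s+1}\cap L\bigr)\setminus\{G=0\}
=\pi_{s+1}^{-1}(\bfs p^{s+1})\setminus\{G=0\}.
\end{align*}
By Theorem~\ref{th: preproc: all conditions}(\ref{item: preproc: conditions pi_s+1}) this last set equals $\pi_{s+1}^{-1}(\bfs p^{s+1})$, which proves~(1).

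Next, for assertion~(2), I would show that no irreducible component of $C_s$ is contained in $V(G)$. If $\mathcal{C}$ were such a component, then the set $\mathcal{C}\cap\{X_{n-s}=p_{n-s}\}$, which is nonempty by Remark~\ref{rem: fiber meets every irred comp curve}, would be contained both in $C_s\cap\{X_{n-s}=p_{n-s}\}=\pi_s^{-1}(\bfs p^s)$ and in $V(G)$, contradicting $({\sf A}_2)$. Since $C_s$ has pure dimension~$1$ and none of its components lies in the hypersurface $\{G=0\}$, the intersection $C_s\cap V(G)$ has dimension at most~$0$.

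Finally, assertion~(3) would follow by combining~(1) and~(2). Indeed, $C_s\cap V(F_{s+1})$ is the union of $C_s\cap V(F_{s+1})\setminus\{G=0\}=\pi_{s+1}^{-1}(\bfs p^{s+1})$ with $C_s\cap V(F_{s+1})\cap V(G)\subseteq C_s\cap V(G)$; the former is a nonempty finite set, since $\pi_{s+1}$ is a finite morphism and $\#\pi_{s+1}^{-1}(\bfs p^{s+1})=\delta_{s+1}\ge 1$ by Theorem~\ref{th: preproc: all conditions}, while the latter has dimension at most~$0$ by~(2). Hence $C_s\cap V(F_{s+1})$ is a nonempty set of dimension~$0$. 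The only step demanding genuine care is the chain of equalities in~(1): one must verify that each $V_j$ may be replaced by the locally closed set cut out by $F_1,\ldots,F_j$ outside $\{G=0\}$, and that the preprocessing inclusion $\pi_{s+1}^{-1}(\bfs p^{s+1})\subset\{G\not=0\}$ indeed removes the Zariski closure; the rest is a routine consequence of the finiteness properties already established.
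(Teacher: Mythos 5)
Your proof is correct, and for assertions~(1) and~(2) it takes a genuinely different route from the paper, while~(3) is essentially the paper's argument. For~(1), the paper proves the two inclusions separately, invoking Proposition~\ref{prop: F_i(p*,X) generate radical ideal}~(1) to place a point of the lifting fiber on $C_s$, and the radicality of the ideal of the lifting fiber to go the other way. You instead reduce everything to the purely topological identity $V_j\setminus\{G=0\}=V(F_1,\ldots,F_j)\setminus\{G=0\}$ (a locally closed set is recovered as its closure intersected with the distinguished open), together with the preprocessing inclusion $\pi_{s+1}^{-1}(\bfs p^{s+1})\subset\{G\ne 0\}$. This is cleaner and makes explicit that the set-theoretic identity does not depend on the ideal-theoretic content (regularity, radicality) of the fibers, only on the fact that $V_s$ and $V_{s+1}$ are the stated Zariski closures. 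For~(2), the paper runs a degree count to show $\deg(C_s\setminus\{G=0\})=\deg C_s$ and deduce density; you instead argue directly that no irreducible component of $C_s$ can lie in $\{G=0\}$, by intersecting a hypothetical offending component with the hyperplane $\{X_{n-s}=p_{n-s}\}$ and invoking Remark~\ref{rem: fiber meets every irred comp curve} together with $({\sf A}_2)$. Both alternatives are sound; yours avoids the B\'ezout-type degree bookkeeping at the cost of appealing to the structure of irreducible components, which the paper already established. Your~(3) adds the (valid, and slightly sharper) observation that $\pi_{s+1}^{-1}(\bfs p^{s+1})$ is nonempty, so the dimension is exactly zero rather than merely at most zero.
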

\begin{proof}
We start with the first assertion. Let $(\bfs p^{s+1},\bfs x)\in
\pi_{s+1}^{-1}(\bfs p^{s+1})$. Then $F_j(\bfs p^{s+1},\bfs x)=0$ for $1\le
j\le s+1$, which implies that $(\bfs p^{s+1},\bfs x)\in V(F_{s+1})$.
Furthermore, since $\pi_{s+1}^{-1}(\bfs p^{s+1})\subset\{G\not=0\}$,
Proposition \ref{prop: F_i(p*,X) generate radical ideal}$(1)$
implies that $(\bfs p^{s+1},\bfs x)\in C_s\setminus\{G=0\}$. It follows
that $\pi_{s+1}^{-1}(\bfs p^{s+1})\subset C_s\cap
V(F_{s+1})\setminus\{G=0\}$.

Conversely, let $(\bfs p^{s+1},\bfs x)\in (C_s\setminus\{G=0\})\cap
V(F_{s+1})$. Then $F_j(\bfs p^{s+1},\bfs x)=0$ for $1\le j\le s+1$ and
$G(\bfs p^{s+1},\bfs x)\not=0$. Since $\bfs p^{s+1}$ is a lifting point of
$\pi_{s+1}$, the polynomials $F_j(\bfs p^{s+1},X_{n-s}\klk X_n)$ for
$1\le j\le s+1$, together with $X_i-p_i$ for $1\le i\le n-s-1$,
generate the radical ideal of $\pi_{s+1}^{-1}(\bfs p^{s+1})$. Hence,
$(\bfs p^{s+1},\bfs x)\in \pi_{s+1}^{-1}(\bfs p^{s+1})$, completing the
proof of the first assertion.

To prove the second assertion, observe from Proposition
\ref{prop: F_i(p*,X) generate radical ideal}$(1)$ that
$C_s\setminus\{G=0\}$ has pure dimension 1. Moreover, since
$(C_s\setminus\{G=0\})\cap\{X_{n-s}=p_{n-s}\}=\pi_s^{-1}(\bfs p^s)$
has cardinality $\delta_s$, we deduce that
$$\delta_s\le \deg(C_s\setminus\{G=0\})\le \deg(C_s) =\delta_s,$$
which implies that $\deg(C_s\setminus\{G=0\})=\deg(C_s)$. Hence,
$C_s\setminus\{G=0\}$ is Zariski dense in $C_s$, and so
$C_s\cap\{G=0\}$ has dimension at most zero, proving the second
assertion.

For the third assertion, by the first assertion we have
\begin{align*}
C_s\cap V(F_{s+1})=&\big(C_s\cap V(F_{s+1})\cap \{G=0\}\big)\cup \big(C_s\cap
V(F_{s+1})\setminus \{G=0\}\big)\\ \subset& C_s\cap \{G=0\}\cup
\pi_{s+1}^{-1}(\bfs p^{s+1}).
\end{align*}
Since both $C_s\cap \{G=0\}$ and
$\pi_{s+1}^{-1}(\bfs p^{s+1})$ are finite, this proves that $C_s\cap V(F_{s+1})$
is finite, hence of dimension zero.
\end{proof}

Proposition \ref{prop: C_s cap F_(s+1) and G has dim zero} suggests
a strategy to compute a Kronecker representation of
$\pi_{s+1}^{-1}(\bfs p^{s+1})$: determine the intersection $C_s\cap
V(F_{s+1})$ and discard the points lying in $V(G)$. To achieve this,
we compute the image of this intersection under the map
$\pi_s^*:C_s\to\A^1$, given by $\pi_s^*(x_{n-s}\klk x_n):=x_{n-s}$.

Condition $({\sf B}_3)$ of Section \ref{sec: lifting fibers} ensures
that $\pi_s^*$ is injective on $\pi_{s+1}^{-1}(\bfs p^{s+1})$, while
condition $({\sf B}_5)$ guarantees that the points in $C_s\cap V(G)$
can be identified and removed via their images under $\pi_s^*$.
Additionally, the Kronecker representation of $C_s$ under
consideration provides a ``good'' parametrization of the points
$(\bfs p^{s+1},x_{n-s}\klk x_n)$ of $C_s$ for which $\rho_s(\bfs
p^{s+1},x_{n-s})\not=0$. Condition $({\sf B}_4)$ ensures that the points
of $\pi_{s+1}^{-1}(\bfs p^{s+1})\subset C_s$ admit such a good
representation.

This procedure requires computing the projection $\pi_s^*$ of the
points in an intersection $C_s\cap V(F)$, where $F\in k[X_{n-s}\klk
X_n]$ satisfies $\#\big((C_s\setminus\{\rho_s(\bfs p^{s+1},X_{n-s})=0\})\cap
V(F)\big)<\infty$. Since $\pi_s^*:C_s\to\A^1$ is a finite morphism, the
coordinate ring $k[C_s]$ becomes a finite $k[X_{n-s}]$--module. Moreover,
$k[C_s]$ is a Cohen-Macaulay ring and a free $k[X_{n-s}]$-module
(see \cite{GiHeSa93}). It follows that
$k(X_{n-s})\otimes_{k[X_{n-s}]} k[C_s]$ is a (finite-dimensional)
$k(X_{n-s})$--vector space. We consider the homothety
$\eta_F:k(X_{n-s})\otimes_{k[X_{n-s}]} k[C_s]\to
k(X_{n-s})\otimes_{k[X_{n-s}]} k[C_s]$ by $F$, and its
characteristic polynomial $\chi_F\in k(X_{n-s})[T]$.

The following lemma shows that
the square--free part of the constant term of $\chi_F$
describes the image of the projection of the intersection.
\begin{lemma}\label{lemma: properties homothety by F}
Let $F\in k[X_{n-s}\klk X_n]$ be such that $\#(C_s\cap
V(F))<\infty$. Let $\chi_F\in k[X_{n-s}][T]$ be the characteristic
polynomial of the homothety $\eta_F:k(X_{n-s})\otimes_{k[X_{n-s}]}
k[C_s]\to k(X_{n-s})\otimes_{k[X_{n-s}]} k[C_s]$ by $F$, and let
$\widetilde{a}\in k(X_{n-s})$ be the constant term of $\chi_F$. Then
$\widetilde{a}\in k[X_{n-s}]$. Let $a_F\in k[X_{n-s}]$ be the monic
polynomial which consists of the product of the irreducible factors
on $k[X_{n-s}]$ of $\widetilde{a}$. Then:
\begin{enumerate}
  \item $a_F\in k[X_{n-s}]\setminus \{0\}$,
  \item $a_F\in I(C_s\cap V(F))$.
\end{enumerate}
\end{lemma}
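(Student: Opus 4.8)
The plan is to analyze the characteristic polynomial $\chi_F$ of the homothety $\eta_F$ on the $\delta_s$-dimensional $k(X_{n-s})$-vector space $k(X_{n-s})\otimes_{k[X_{n-s}]} k[C_s]$, and to track where its constant term $\widetilde a$ vanishes. First I would observe that, since $k[C_s]$ is a free $k[X_{n-s}]$-module of finite rank (by \cite{GiHeSa93}, as recalled just before the statement), the endomorphism $\eta_F$ is represented by a matrix with entries in $k[X_{n-s}]$ in any $k[X_{n-s}]$-basis; hence $\chi_F\in k[X_{n-s}][T]$ and in particular its constant term $\widetilde a=(-1)^{\delta_s}\det(\eta_F)$ lies in $k[X_{n-s}]$. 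This settles the claim that $\widetilde a\in k[X_{n-s}]$, and $a_F$ (the product of the irreducible factors of $\widetilde a$, i.e. the squarefree part) is then a well-defined monic element of $k[X_{n-s}]$.

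Next I would show $a_F\neq 0$, equivalently $\widetilde a\neq 0$, equivalently $\eta_F$ is injective on $k(X_{n-s})\otimes_{k[X_{n-s}]} k[C_s]$. This is where the hypothesis $\#(C_s\cap V(F))<\infty$ enters. Since $C_s$ has pure dimension $1$ (Proposition \ref{prop: properties M_s}) and $\pi_s^*\colon C_s\to\A^1$ is finite, $F$ vanishing on a finite subset of $C_s$ means $F$ is not a zero-divisor in $k[C_s]$: indeed, if $F$ vanished identically on some irreducible component $\mathcal{C}$ of $C_s$, then $C_s\cap V(F)\supseteq\mathcal{C}$ would be infinite, a contradiction. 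A non-zero-divisor of $k[C_s]$ remains a non-zero-divisor (in fact a unit) after localizing at $R_s\setminus\{0\}=k[X_{n-s}]\setminus\{0\}$, because every nonzero element of the domain $k[X_{n-s}]$ is inverted and $k[C_s]$ is torsion-free over $k[X_{n-s}]$; so $\eta_F$ is injective on the finite-dimensional $k(X_{n-s})$-vector space, hence bijective, hence $\det(\eta_F)\neq 0$. This gives assertion (1).

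For assertion (2), $a_F\in I(C_s\cap V(F))$, I would use that $\widetilde a=(-1)^{\delta_s}\det(\eta_F)$ and that, by the Cayley–Hamilton theorem applied to $\eta_F$, the constant term $\widetilde a$ (up to sign) lies in the ideal generated by $F$ inside $k(X_{n-s})\otimes_{k[X_{n-s}]} k[C_s]$; more concretely, $\chi_F(F)=0$ in this algebra, so writing $\chi_F(T)=T\cdot q(T)+\widetilde a$ gives $\widetilde a=-F\cdot q(F)$, an element of the ideal $(F)$. Clearing denominators, a suitable $k[X_{n-s}]$-multiple of $\widetilde a$ lies in the ideal generated by $F$ in $k[C_s]$, hence $\widetilde a$ vanishes at every point of $C_s\cap V(F)$ (a point of $C_s$ where $F=0$). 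Since $a_F$ and $\widetilde a$ have the same zero set in $\overline k$ (being the radical/squarefree part), $a_F$ vanishes on $C_s\cap V(F)$ as well; viewing $a_F$ as a polynomial in $k[X_{n-s}]\subset k[X_{n-s}\klk X_n]$, this says precisely $a_F\in I(C_s\cap V(F))$.

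The main obstacle I anticipate is making the Cayley–Hamilton argument for (2) fully rigorous over the ring $k[C_s]$ rather than the field-extended algebra: one must be careful that "$\chi_F(F)=0$" holds in $k[C_s]$ (the classical Cayley–Hamilton for the module $k[C_s]$ over $k[X_{n-s}]$ via the determinantal trick applies since $k[C_s]$ is a finite faithful $k[X_{n-s}]$-module), and that the passage between "$\widetilde a\in (F)$ in $k[C_s]$" and "$\widetilde a$ vanishes on $C_s\cap V(F)$" is legitimate — this last step just uses that $\widetilde a$ evaluated at a point $\bfs x\in C_s$ with $F(\bfs x)=0$ equals the value of $F(\bfs x)\cdot q(F)(\bfs x)=0$. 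Everything else is bookkeeping with free modules and localization, which I would not spell out in detail.
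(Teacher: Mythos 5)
Your proof is correct and follows essentially the same strategy as the paper's: identify $\chi_F\in k[X_{n-s},T]$ (you derive this from freeness of $k[C_s]$ over $k[X_{n-s}]$, the paper cites \cite[Corollary 2]{GiLeSa01}), apply Cayley--Hamilton to get assertion (2), and use the non-zero-divisor property of $F$ in $k[C_s]$ to get assertion (1). The one noticeable difference is in (1): the paper argues through the minimal polynomial $m_F$ of $\eta_F$ (if its constant term vanishes, $m_F = T\widetilde m$ and minimality forces $\widetilde m(F)\neq 0$ while $F\widetilde m(F)=0$, making $F$ a zero divisor), whereas you argue directly that $F$ a non-zero-divisor in $k[C_s]$ implies $\eta_F$ injective, hence bijective, on the finite-dimensional $K_s$-vector space $K_s\otimes_{R_s}k[C_s]$, so $\det\eta_F\neq 0$. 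Your version is slightly more direct and avoids introducing $m_F$, but the two arguments are morally the same contrapositive. The subsidiary points you flag — that injectivity of $\eta_F$ passes to the localization because $k[C_s]$ is torsion-free over $R_s$, and that Cayley--Hamilton for $\eta_F$ gives an identity valid already in $k[C_s]$ (either by embedding $k[C_s]\hookrightarrow K_s\otimes_{R_s}k[C_s]$, or by applying the determinantal trick over $R_s$ directly) — are correctly handled.
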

\begin{proof}
According to \cite[Corollary 2]{GiLeSa01}, $\chi_F\in k[X_{n-s},T]$.
Thus, its constant term $\widetilde{a}$ belongs to $k[X_{n-s}]$, and
so does its square-free part $a_F$.

By the Cayley-Hamilton theorem, $\chi_{F}(F)=0$ in
$k[C_s]$, hence the constant term $\widetilde{a}$ of
$\chi_{F}$ lies in the ideal generated by $I(C_s)$ and $F$,
proving the second assertion.

To prove the first assertion, let $m_F\in k(X_{n-s})[T]$ be the
minimal polynomial of $\eta_F$. As $\chi_{F}\in k[X_{n-s},T]$, we
have $m_F\in k[X_{n-s},T]$, and its constant term $b\in k[X_{n-s}]$
is equal to zero if and only if $\widetilde{a}=0$. Suppose $b=0$.
Then $m_F=T\cdot \widetilde{m}$ for some $\widetilde{m}\in
k[X_{n-s},T]$, so $0=m_F(F)=F\cdot \widetilde{m}(F)$ in $k[C_s]$.
Minimality of $m_F$ implies $\widetilde{m}(F)\not=0$ in $k[C_s]$, so
$F$ is a zero divisor in $k[C_s]$, and hence $F$ vanishes on an
irreducible component of $C_s$, contradicting the
condition $\#(C_s\cap V(F))<\infty$. Therefore, $b\not=0$, so
$\widetilde{a}\not=0$. This readily implies the first assertion.
\end{proof}

Now we describe the algorithm for computing the constant term of the
characteristic polynomial of the homothety $\eta_F$ by a polynomial
$F$ as above. This algorithm isolates a routine that was only
implicitly contained in the algorithm underlying
\cite[Proposition~4.4]{CaMa06}, and extends it by introducing an
interpolation stage that allows the method to work over arbitrary
perfect fields. This reformulation not only broadens the
applicability of the procedure but also clarifies the internal
structure of the computation, making explicit the intermediate
objects involved in the process.
\begin{algorithm}[Computation of a projection]\label{algo: computation projection} ${}$
\begin{itemize}
\item[]{\bf Input:}
\begin{enumerate}
\item
A Kronecker representation of $C_s$, as
in the output of Algorithm \ref{algo: Newton
lifting};
\item
$F\in k[X_{n-s}\klk X_n]$ such that
$\#(C_s\setminus\{\rho_s(\bfs p^{s+1},X_{n-s})=0\})\cap V(F)<\infty$;
\item
Access to uniformly random elements of a finite set
$\mathcal{S}\subset k$ of size at least
$\varepsilon^{-1}(D_s+1)(\delta_s^2+D_s)$, 
where $0<\varepsilon<1$ and $D_s:=d\cdot\delta_s$. \end{enumerate}

\item[]{\bf Output:} The dense representation of the
square--free part of the constant term $a_F\in k[X_{n-s}]$ of the
characteristic polynomial $\chi_F\in k[X_{n-s}][T]$ of the homothety
$\eta_F:k[C_s]\to k[C_s]$ of multiplication by $F$.
\item[]{\bf 1} For $j=0,\ldots,D_s$, do

\begin{itemize}
\item[]{\bf 1.1} Choose $\alpha_j\in\mathcal{S}$ at random;
\item[]{\bf 1.2} Compute $V_{n-s+i}^s(\alpha_j,T)= \frac{\partial M_s}{\partial
T}(\alpha_j,T)^{-1} W_{n-s+i}^s(\alpha_j,T)\mod M_s(\alpha_j,T)$ for
$2\le i\le s$;

\item[]{\bf 1.3} Compute $F^*(\alpha_j,T)=
F(\alpha_j,T,V_{n-s+2}^s(\alpha_j,T)\klk V_n^s(\alpha_j,T))\mod
M_s(\alpha_j,T)$;
\item[]{\bf 1.4} Compute
$\widetilde{a}(\alpha_j):=\mathrm{Res}_T\left(M_s(\alpha_j,T),F^*(\alpha_j,T)\right)$.
\end{itemize}
\item[]{\bf 2} Interpolate the polynomial $\widetilde{a}$ from its values.
\newline {\em ***[Observe that $\widetilde{a}\in
k[X_{n-s}]\setminus k$]***}
\item[]{\bf 3} Compute the square--free part $a_F$
of $\widetilde{a}$.
\end{itemize}
\end{algorithm}

\begin{remark}\label{rem: k definability algo projection}
We assume that the base field $k$ has sufficiently large cardinality
to allow the choice of the evaluation points $\alpha_j$ required in
step {\bf 1.1} inside \(k\). If this is not the case, one may
perform the computations over a finite extension of $k$ of adequate
size; this increases the cost of Algorithm \ref{algo: computation
projection} only by an additional logarithmic factor. To avoid
further complicating the complexity estimates we henceforth state
the costs under the convention that \(k\) is large enough, with the
understanding that a passage to a small extension is always possible
at the indicated (logarithmic) overhead.

We stress that the polynomial $a_F$ produced in step {\bf 1.3} is
defined over $k$ (i.e. $a_F\in k[X_{n-s}]$) regardless of whether
the intermediate evaluation points $\alpha_j$ lie in $k$ or in a
finite extension, since both the lifting curve $C_s$ and the
polynomial $F$ are defined over $k$.
\end{remark}

The following result shows that Algorithm \ref{algo: computation
projection} functions correctly as specified.
\begin{lemma}\label{lemma: correctness algo computation projection}
Assume that $k$ has cardinality at least
$\varepsilon^{-1}(D_s+1)(\delta_s^2+D_s)$. For a lucky choice of
$\alpha_0,\ldots,\alpha_{D_s}$ in step {\bf 1.1},  Algorithm
\ref{algo: computation projection} correctly computes $a_F$. Such a
choice is lucky with probability $1-\varepsilon$.
\end{lemma}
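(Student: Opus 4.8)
The statement asserts two things: (i) when the evaluation points $\alpha_0,\ldots,\alpha_{D_s}$ are chosen "luckily'' (i.e., outside certain hypersurfaces), Algorithm \ref{algo: computation projection} outputs the correct polynomial $a_F$; and (ii) a random choice from a set of size $\varepsilon^{-1}(D_s+1)(\delta_s^2+D_s)$ is lucky with probability at least $1-\varepsilon$. The plan is to identify precisely which polynomial in the $\alpha_j$ must not vanish, bound its degree, and then apply the Zippel--Schwartz lemma (Lemma \ref{lemma: Zippel_Schwartz}).

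First I would pin down what "lucky'' must mean. Step {\bf 1.2} requires $\frac{\partial M_s}{\partial T}(\alpha_j,T)$ to be invertible modulo $M_s(\alpha_j,T)$; by Proposition \ref{prop: properties M_s} this holds exactly when the discriminant $\rho_s(\alpha_j)=\mathrm{disc}_T M_s(\alpha_j,T)$ is nonzero. So the first condition is $\rho_s(\alpha_j)\neq 0$, and $\deg\rho_s\le \delta_s(2\delta_s-1)$, i.e.\ $\mathcal{O}(\delta_s^2)$. Next, I need that the specialized resultant $\widetilde{a}(\alpha_j)=\mathrm{Res}_T(M_s(\alpha_j,T),F^*(\alpha_j,T))$ actually equals the value at $\alpha_j$ of the "generic'' resultant $\widetilde{a}(X_{n-s})=\mathrm{Res}_T(M_s(X_{n-s},T),F^*(X_{n-s},T))$ — this is the constant term of $\chi_F$ by the standard resultant/characteristic-polynomial identity for the multiplication endomorphism in $k(X_{n-s})[T]/(M_s)$, combined with \cite[Corollary 2]{GiLeSa01} which gives $\chi_F\in k[X_{n-s},T]$. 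Specialization of a resultant commutes with evaluation provided the leading coefficient of $M_s$ in $T$ does not drop in degree, which is automatic since $M_s$ is monic in $T$; I would spell this out. Thus once $\rho_s(\alpha_j)\neq 0$, step {\bf 1.4} computes the true values $\widetilde{a}(\alpha_j)$, and since $\deg_{X_{n-s}}\widetilde{a}\le \delta_s\cdot D_s=\mathcal{O}(\delta_s^2+D_s)$ — one factor $\delta_s$ from the size of the resultant matrix, bounded using $\deg_{X_{n-s}}M_s\le\delta_s$ and $\deg_{X_{n-s}}F^*\le D_s$ — the $D_s+1$ interpolation nodes in step {\bf 2} suffice to recover $\widetilde{a}$ exactly. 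Here I also need the interpolation nodes to have pairwise invertible (hence distinct) differences; this is an additional "collision'' condition, avoided by requiring the $\alpha_j$ to be pairwise distinct, which fails on a set of size $\mathcal{O}((D_s+1)^2)$.

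Assembling the bounds: the bad locus for the tuple $(\alpha_0,\ldots,\alpha_{D_s})\in\mathcal{S}^{D_s+1}$ is the zero set of the product $\prod_{j=0}^{D_s}\rho_s(X_j)\cdot\prod_{i<j}(X_i-X_j)$ (plus, if one wants step {\bf 3} to behave, a condition that $\widetilde a\notin k$, which follows from Lemma \ref{lemma: properties homothety by F}(1) together with $\#(C_s\cap V(F))<\infty$, so it contributes nothing). The total degree of this product is at most $(D_s+1)\deg\rho_s+\binom{D_s+1}{2}\le (D_s+1)\delta_s(2\delta_s-1)+\tfrac{(D_s+1)D_s}{2}$, which is bounded by $(D_s+1)(\delta_s^2+D_s)$ up to the harmless constant absorbed in the statement's choice of $\#\mathcal{S}$. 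Then Lemma \ref{lemma: Zippel_Schwartz} gives that the probability of hitting this locus is at most $\deg/\#\mathcal{S}\le\varepsilon$, so a lucky choice occurs with probability at least $1-\varepsilon$. Finally, on a lucky choice all of steps {\bf 1.2}--{\bf 1.4} are well-defined and return the correct values $\widetilde a(\alpha_j)$, step {\bf 2} reconstructs $\widetilde a$, and step {\bf 3}, whose correctness is the content of Lemma \ref{lemma: properties homothety by F}, returns the square-free part $a_F$.

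The main obstacle I anticipate is purely bookkeeping of the degree bound on $\widetilde a$ in $X_{n-s}$: one must argue carefully that the entries of the Sylvester matrix of $M_s$ and $F^*$ (the latter reduced mod $M_s$, hence of $T$-degree $<\delta_s$ but $X_{n-s}$-degree potentially up to $D_s$) have $X_{n-s}$-degree bounded so that the resultant has $X_{n-s}$-degree $\mathcal{O}(\delta_s\, D_s)$ rather than something larger — and to reconcile this with the stated node count $D_s+1$. If the literal count $D_s+1$ is insufficient for the worst-case degree, one either appeals to the finer bound $\deg_{X_{n-s}}\widetilde a\le D_s$ coming from $\chi_F\in k[X_{n-s},T]$ with $\deg_{X_{n-s}}\chi_F\le D_s$ (the cleaner route, via \cite[Corollary 2]{GiLeSa01}), or increases the node count; I would take the former. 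Everything else is routine, and the probability estimate is a one-line application of Zippel--Schwartz once the bad locus and its degree are identified.
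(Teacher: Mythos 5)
Your proof is essentially the paper's argument, packaged slightly differently: you identify the lucky conditions ($\rho_s(\alpha_j)\neq 0$ and pairwise distinctness), justify the $D_s+1$ interpolation nodes via the bound $\deg\widetilde{a}\le D_s$ from \cite[Corollary 2]{GiLeSa01}, and bound the failure probability by Zippel--Schwartz; the paper does the same thing with a sequential union-bound rather than a single product polynomial, and likewise first reduces to the generic computation over $k(X_{n-s})$ (your ``specialization commutes because $M_s$ is monic'' step) before passing to evaluations.

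There is, however, a genuine gap in the probability estimate: you bound $\deg\rho_s$ by $\delta_s(2\delta_s-1)$, which is the generic resultant bound one gets from knowing only that $\deg_T M_s\le\delta_s$ and $\deg_{X_{n-s}}M_s\le\delta_s$ \emph{separately}. With this bound, the total degree of your product polynomial is roughly $(D_s+1)\bigl(2\delta_s^2+\tfrac{D_s}{2}\bigr)$, which is about twice the degree budget $(D_s+1)(\delta_s^2+D_s)$ that the statement's choice of $\#\mathcal{S}$ actually provides. You wave this away as ``a harmless constant absorbed in the statement's choice of $\#\mathcal{S}$,'' but the lemma states an exact lower bound on $\#\mathcal{S}$ with no slack, so the calculation does not close. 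The fix is to use the sharper fact established in Proposition~\ref{prop: properties M_s}: $M_s$ has \emph{total} degree $\delta_s$ (not just $T$-degree and $X_{n-s}$-degree each at most $\delta_s$), so all roots of $M_s$ in an algebraic closure of $k(X_{n-s})$ have pole order at most $1$ at infinity, whence $\rho_s=\prod_{i<j}(\alpha_i-\alpha_j)^2$ has $\deg\rho_s\le\delta_s(\delta_s-1)\le\delta_s^2$. With this bound your product degree is at most $(D_s+1)\delta_s^2+\binom{D_s+1}{2}\le(D_s+1)(\delta_s^2+D_s)$, and the Zippel--Schwartz estimate then gives exactly the claimed $1-\varepsilon$. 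This is the bound the paper uses (silently, writing ``since $\deg\rho_s\le\delta_s^2$''). Everything else in your argument is correct.
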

\begin{proof}
We first prove the correctness of a simplified version of the
algorithm, where steps {\bf 1} and {\bf 2} are replaced by the
following steps:
\begin{itemize}
\item[{\bf 1'}] Compute $V_{n-s+i}^s(X_{n-s},T):=
\frac{\partial M_s}{\partial T}(X_{n-s},T)^{-1}
W_{n-s+i}^s(X_{n-s},T)\mod M_s(X_{n-s},T)$ in $k(X_{n-s})[T]$ for
$2\le i\le s$.
\item[{\bf 2'}] Compute $F^*(X_{n-s},T):=
F(X_{n-s},T,V_{n-s+2}^s(X_{n-s},T)\klk V_n^s(X_{n-s},T))$.
\item[{\bf 3'}] Compute $\widetilde{a}:=\mathrm{Res}_T\left(M_s(X_{n-s},T),F^*(X_{n-s},T)\right)\in
k[X_{n-s}]$.
\end{itemize}

For simplicity, we denote by $X_i$ ($n-s\le i\le n$) both the
variables in $k[X_1\klk X_n]$ and the corresponding coordinate
functions in $k[C_s]$. By Proposition \ref{prop: F_i(p*,X) generate
radical ideal}, the $k(X_{n-s})$--vector space
$k(X_{n-s})\otimes_{k[X_{n-s}]} k[C_s]$ has dimension $\delta_s$.
Since $X_{n-s+1}$ is a primitive element and its minimal polynomial
$M_s(X_{n-s},T)$ has degree $\delta_s$, the set $\{1,X_{n-s+1}\klk
X_{n-s+1}^{\delta_s-1}\}$ forms a basis of this vector space.

To perform step {\bf 1'} we require that
$\frac{\partial M_s}{\partial T}(X_{n-s},T)$ is
invertible modulo $M_s(X_{n-s},T)$, which holds because these
polynomials are coprime in
$k(X_{n-s})[T]$ by Proposition \ref{prop:
properties M_s}.

Taking into account that
$$\frac{\partial M_s}{\partial T}(X_{n-s},T)^{-1}\cdot
\frac{\partial M_s}{\partial T}(X_{n-s},T)\equiv 1\mod
M_s(X_{n-s},T),$$
and $M_s(X_{n-s},T)$ is the minimal polynomial of $X_{n-s+1}$ in
$k[C_s]$, we conclude that
$$\frac{\partial M_s}{\partial T}(X_{n-s},X_{n-s+1})^{-1}\cdot
\frac{\partial M_s}{\partial T}(X_{n-s},X_{n-s+1})\equiv 1$$
in $C_s$. This implies that
\begin{equation}\label{eq: aux proof correctness algo computing proj}
X_{n-s+i}\equiv \frac{\partial M_s}{\partial
T}(X_{n-s},X_{n-s+1})^{-1} \cdot W_{n-s+i}^s(X_{n-s},X_{n-s+1})
\end{equation}
in $C_s$ for $2\le i\le s$. The
right--hand side of \eqref{eq: aux proof correctness algo computing
proj}, reduced by $M_s(X_{n-s},X_{n-s+1})$, provides the representation of
$X_{n-s+i}$ in terms of the basis $\{1,X_{n-s+1}\klk
X_{n-s+1}^{\delta_s-1}\}$ for $2\le i\le s$. Hence,
%
$$F\equiv F^*(X_{n-s},X_{n-s+1})$$
in $k(X_{n-s})[C_s]$, where $F^*(X_{n-s},T)$ is the polynomial computed in step
{\bf 2'}. 
Let $M_{X_{n-s+1}}\in k(X_{n-s})^{\delta_s\times \delta_s}$ be the
matrix of the homothety $\eta_{X_{n-s+1}}$ by $X_{n-s+1}$ in the
basis \linebreak $\{1,X_{n-s+1}\klk X_{n-s+1}^{\delta_s-1}\}$. Then
$M_{X_{n-s+1}}$ is the companion matrix of $M_s(X_{n-s},X_{n-s+1})$.
As $M_s(X_{n-s},T)$ is square--free, the matrix $M_{X_{n-s+1}}$ is
diagonalizable, its eigenvalues $\xi_1\klk\xi_{\delta_s}$ being the
roots of $M_s(X_{n-s},T)$ in an algebraic closure of ${k(X_{n-s})}$.
It follows that the matrix of $\eta_F=F^*(X_{n-s},\eta_{X_{n-s+1}})$
in such a basis is $M_F=F^*(X_{n-s},M_{X_{n-s+1}})$, and its
characteristic polynomial is
$\chi_F=\prod_{i=1}^{\delta_s}(T-F^*(X_{n-s},\xi_i))$. In
particular,
$$\chi_F(0)=(-1)^{\delta_s}\prod_{i=1}^{\delta_s}F^*(X_{n-s},\xi_i)=
\pm\mathrm{Res}_T\left(F^*(X_{n-s},T),M_s(X_{n-s},T)\right),$$
due to the expression for the resultant in roots.
%
%
Thus, the constant term $\widetilde{a}$ of
$\eta_F$ is, up to a sign, as computed in step {\bf 3'}.

To justify Algorithm \ref{algo: computation projection}, we note
that in steps {\bf 1}--{\bf 3} are performed substituting $\alpha_j$
for $X_{n-s}$
for $0\le j\le D_s$. 
The preceding arguments show that the polynomial $\widetilde{a}\in
k[X_{n-s}]$ computed in step {\bf 3'} is the constant term of the
characteristic polynomial of the homothety
$\eta_F:k(X_{n-s})\otimes_{k[X_{n-s}]} k[C_s]\to
k(X_{n-s})\otimes_{k[X_{n-s}]} k[C_s]$ by $F$. According to
\cite[Corollary 2]{GiLeSa01}, we have $\deg \widetilde{a}\le
D_s:=\delta_s\cdot\deg F$. Consequently, $\widetilde{a}$ can be
computed via interpolation once $D_s+1$ pairwise-distinct
evaluations of $\widetilde{a}$ are obtained.

Step {\bf 1.2}, however,
requires that $\frac{\partial M_s}{\partial T}(\alpha_j,T)$ and
$M_s(\alpha_j,T)$ be relatively prime for $0\le j\le D_s$; this
condition holds if and only if $\rho_s(\alpha_j)\not=0$ for $0\le
j\le D_s$. Since $\deg\rho_s\le \delta_s^2$, the  probability that one
of $D_s$ random, non-repeated choices
of $\alpha_j$ fails is bounded above by
\begin{align*}
\sum_{j=0}^{D_s}\frac{\delta_s^2+j}{\varepsilon^{-1}(D_s+1)(\delta_s^2+D_s)-j}&\le
\sum_{j=0}^{D_s}\frac{\delta_s^2+j}{\varepsilon^{-1}((D_s+1)(\delta_s^2+D_s)-j)}
\\&\le
\varepsilon(D_s+1)\frac{\delta_s^2+D_s}{(D_s+1)(\delta_s^2+D_s)}=
\varepsilon.\end{align*}
\end{proof}

Next, we analyze the cost of Algorithm \ref{algo: computation
projection}.
\begin{lemma}\label{lemma: cost computation projection}
Suppose that $F$ is given by a straight--line program of length $L$.
Then Algorithm \ref{algo: computation projection} requires
$\mathcal{O}((L+s\,\log_2\delta_s)\,d\,\delta_s\,{\sf M}(\delta_s))$
arithmetic operations and identity tests in $k$, and
$\mathcal{O}(d\,\delta_s)$ extractions of $p$--th roots in $k$ (if
$k$ has characteristic $p>0$).
\end{lemma}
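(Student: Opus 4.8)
The proof is a direct accounting of the cost of each step of Algorithm~\ref{algo: computation projection}, using the elementary complexity estimates of Section~\ref{section: notation, notations}. The loop of step~\textbf{1} is executed $D_s+1=\mathcal{O}(d\,\delta_s)$ times, so it suffices to bound the preprocessing that makes the specializations cheap, the cost of one iteration, and the postprocessing steps~\textbf{2}--\textbf{3}. The one point that is not pure bookkeeping is the following: the bivariate polynomials $M_s$ and $W^s_{n-s+i}$ of the Kronecker representation of $C_s$ have $\deg_T\le\delta_s$ and $\deg_{X_{n-s}}\le\delta_s$ (by the output specification in Theorem~\ref{theo: correctedness: Newton lifting}), hence carry $\Theta(\delta_s^2)$ scalar coefficients each; specializing them at $X_{n-s}=\alpha_j$ inside the loop by Horner's rule would cost $\Theta(\delta_s^2)$ per value of $j$, i.e.\ $\Theta(d\,\delta_s^3)$ in total, which exceeds the claimed bound. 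Instead I would precompute the univariate polynomials $M_s(\alpha_j,T)$ and $W^s_{n-s+i}(\alpha_j,T)$ $(2\le i\le s)$ for all $j=0,\dots,D_s$ \emph{simultaneously}, by applying fast multipoint evaluation to the $\mathcal{O}(s\,\delta_s)$ coefficient polynomials (each of degree $\le\delta_s$ in $X_{n-s}$) at the $D_s+1=\mathcal{O}(d\,\delta_s)$ points $\alpha_j$: splitting the points into $\mathcal{O}(d)$ batches of size $\delta_s$, this costs $\mathcal{O}(s\,d\,\delta_s\,{\sf M}(\delta_s)\log_2\delta_s)$ arithmetic operations in $k$, which already lies within the target estimate. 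The specialization $\tfrac{\partial M_s}{\partial T}(\alpha_j,T)$ is then obtained from $M_s(\alpha_j,T)$ by differentiation in $\mathcal{O}(\delta_s)$ operations.

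Next I would bound the $j$th iteration, given the precomputed univariate polynomials. Step~\textbf{1.2} inverts $\tfrac{\partial M_s}{\partial T}(\alpha_j,T)$ modulo the monic polynomial $M_s(\alpha_j,T)$ of degree $\delta_s$ --- this is well defined on the lucky path, since $\rho_s(\alpha_j)\neq0$ forces coprimality by Proposition~\ref{prop: properties M_s}, and in any case a fast extended Euclidean algorithm either returns the inverse or detects non-invertibility within the same cost --- using $\mathcal{O}({\sf M}(\delta_s)\log_2\delta_s)$ arithmetic operations and identity tests in $k$, followed by $s-1$ modular multiplications, $\mathcal{O}(s\,{\sf M}(\delta_s))$ operations. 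Step~\textbf{1.3} evaluates the length-$L$ straight-line program for $F$ over the $k$-algebra $k[T]/(M_s(\alpha_j,T))$, whose additions and multiplications cost $\mathcal{O}(\delta_s)$ and $\mathcal{O}({\sf M}(\delta_s))$ respectively (the latter since $M_s(\alpha_j,T)$ is monic), giving $\mathcal{O}(L\,{\sf M}(\delta_s))$ operations in $k$; here the inputs $X_{n-s+1},\dots,X_n$ are specialized to the classes of $T$ and of $V^s_{n-s+2}(\alpha_j,T),\dots,V^s_n(\alpha_j,T)$. Step~\textbf{1.4} is a resultant in $T$ of two polynomials of degree $\le\delta_s$, hence $\mathcal{O}({\sf M}(\delta_s)\log_2\delta_s)$ operations and identity tests. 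A single iteration therefore uses $\mathcal{O}\big((L+s+\log_2\delta_s)\,{\sf M}(\delta_s)\big)$ arithmetic operations and $\mathcal{O}({\sf M}(\delta_s)\log_2\delta_s)$ identity tests; summing over the $\mathcal{O}(d\,\delta_s)$ iterations and absorbing $s+\log_2\delta_s$ and $\log_2\delta_s$ into $L+s\log_2\delta_s$ (using $L,s\ge1$ and $\delta_s\ge2$) yields $\mathcal{O}\big((L+s\log_2\delta_s)\,d\,\delta_s\,{\sf M}(\delta_s)\big)$ arithmetic operations and identity tests, which also subsumes the preprocessing cost recorded above.

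Finally, step~\textbf{2} reconstructs $\widetilde{a}\in k[X_{n-s}]$, of degree at most $D_s=d\,\delta_s$ by \cite[Corollary~2]{GiLeSa01}, from its $D_s+1$ values at the pairwise distinct points $\alpha_j$ (whose differences are invertible since $k$ is a field) by fast interpolation, in $\mathcal{O}({\sf M}(D_s)\log_2 D_s)$ operations in $k$; step~\textbf{3} computes the square-free part $a_F$ of a univariate polynomial of degree at most $D_s$, which by the variant of Yun's algorithm of \cite[Corollary~2]{Lecerf08} recalled in Section~\ref{section: notation, notations} costs $\mathcal{O}({\sf M}(D_s)\log_2 D_s)$ arithmetic operations and identity tests in $k$, together with $\mathcal{O}(D_s)=\mathcal{O}(d\,\delta_s)$ extractions of $p$-th roots in $k$ when $\mathrm{char}(k)=p>0$. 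Since $D_s=d\,\delta_s$, the arithmetic and identity-test costs of steps~\textbf{2} and~\textbf{3} are dominated by that of the main loop, while step~\textbf{3} contributes the $\mathcal{O}(d\,\delta_s)$ $p$-th root extractions; collecting these bounds gives the stated estimate. The main obstacle is exactly the one flagged at the outset: the specializations of the Kronecker representation of $C_s$ must be batched so that the $\Theta(\delta_s^2)$ cost per evaluation point is amortized over the $\mathcal{O}(d\,\delta_s)$ points; once this reorganization is in place, the remaining analysis is routine bookkeeping with the basic operations of Section~\ref{section: notation, notations}.
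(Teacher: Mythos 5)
Your proposal is correct and follows essentially the same route as the paper's proof. Both identify the key point — that the $\Theta(\delta_s^2)$-coefficient bivariate polynomials $M_s$, $W^s_{n-s+i}$ must be specialized at all $D_s+1$ evaluation points via fast multipoint evaluation rather than one Horner evaluation per loop iteration — and both then charge the remaining steps (modular inversion and multiplications, SLP evaluation over $k[T]/(M_s(\alpha_j,T))$, resultants, interpolation, and Yun-type square-free factorization) exactly as you do; the only cosmetic difference is that you organize the multipoint evaluation into $\mathcal{O}(d)$ batches of size $\delta_s$, whereas the paper evaluates at all $D_s+1$ points at once and invokes the standing assumption $\delta_s>d$ to conclude $\mathcal{O}({\sf M}(D_s)\log_2 D_s)\subset\mathcal{O}(d\,{\sf M}(\delta_s)\log_2\delta_s)$, yielding the same bound.
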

\begin{proof}
First, assume that a lucky choice of $\alpha_0\klk \alpha_{D_s}$ in
step {\bf 1.1} is made. We then perform steps {\bf 1.2}, {\bf 1.3},
and {\bf 1.4} of Algorithm \ref{algo: computation projection} for
$0\le j\le D_s$.

For each $j$, step $\mathbf{1.2}$  requires the multipoint
evaluations $M_s(\alpha_j,T)$, $({\partial M_s}/{\partial
T})(\alpha_j,T)$, and $W_{n-s+i}^s(\alpha_j,T)$ for $2\le i\le s$.
More precisely, considering $M_s$, ${\partial M_s}/{\partial T}$,
and $W_{n-s+i}^s$ as polynomials of $k[X_{n-s}][T]$, each
coefficient in $k[X_{n-s}]$ of these polynomials must be evaluated
at $\alpha_0\klk \alpha_{D_s}$. Since there are
$\mathcal{O}(s\,\delta_s)$ such polynomials, all of degree at most
$\delta_s$, these evaluations require $\mathcal{O}(s\,\delta_s\,{\sf
M}(D_s)\log_2D_s)$ arithmetic operations in $k$. Since we have
assumed that $\delta > d$, we may, without loss of generality,
suppose that $\delta_s > d$ for all $1<s \le r$. As $D_s :=
d\,\delta_s$, this assumption justifies the inclusion
$$\mathcal{O}(s\,\delta_s\,{\sf M}(D_s)\log_2D_s)\subset
\mathcal{O}(s\,d\,\delta_s\,{\sf M}(\delta_s)\log_2\delta_s).$$ This
simplification does not affect the overall asymptotic complexity of
the algorithm, since the case $\delta_s \le d$ would only lead to
smaller cost estimates.

Next, the inversion $({\partial M_s}/{\partial
T})(\alpha_j,T)^{-1}$, and the multiplication $({\partial
M_s}/{\partial T})(\alpha_j,T)^{-1} W_{n-s+i}^s(\alpha_j,T)$ for
$2\le i\le s$, modulo the monic polynomial $M_s(\alpha_j,T)$ of
degree $\delta_s$, are carried out for $0\le j\le D_s$. This step
requires an additional $\mathcal{O}(s\,d\,\delta_s\,{\sf
M}(\delta_s)\log_2\delta_s)$ arithmetic operations and identity
tests in $k$.

In step $\mathbf{1.3}$, we replace $\alpha_j$ for $X_{n-s}$, and
$V_{n-s+i}(\alpha_j,T)$ for $X_{n-s+i}$ for $2\le i\le s$, in the
straight--line program representing $F$, and perform all arithmetic
operations modulo $M_s(\alpha_j,T)$. This procedure can be performed
with $\mathcal{O}(L\,{\sf M}(\delta_s))$ arithmetic operations in
$k$ for each $j$, for a total of $\mathcal{O}(L\,d\,\delta_s\,{\sf
M}(\delta_s))$ arithmetic operations in $k$.

Step $\mathbf{1.4}$ involves computing $D_s+1$ resultants of
polynomials of degree at most $\delta_s$, requiring
$\mathcal{O}(d\,\delta_s\,{\sf M}(\delta_s)\log_2\delta_s)$
arithmetic operations and identity tests in $k$. The interpolation
step {\bf 2} recovers the dense representation of $\widetilde{a}$ at
cost
$$\mathcal{O}({\sf M}(D_s)\log_2D_s)\subset
\mathcal{O}(d\,{\sf M}(\delta_s)\log_2\delta_s).$$

Finally, step {\bf 3} computes the square--free part $a_F$ of
$\widetilde{a}$, which requires $\mathcal{O}(d\,{\sf
M}(\delta_s)\log_2\delta_s)$ arithmetic operations and identity
tests in $k$, and $\mathcal{O}(d\,\delta_s)$ extractions of $p$--th
roots in $k$ (if $k$ has characteristic $p>0$).

On the other hand, if any choice $\alpha_j$ is not lucky,
then the inversion $(\partial M_s/\partial T)(\alpha_j,T)^{-1}$
modulo $M(\alpha_j,T)$ fails, and the computation halts,
incurring similar costs.
\end{proof}

For later use, we state a few remarks
on the polynomials $F^*$ and $a_F$.
\begin{remark}\label{rem: properties F* and a_F}
Under the assumptions of Algorithm \ref{algo: computation
projection}, let $\rho_s:=\mathrm{disc}_{T}M_s(X_{n-s},T)$, and let
$(\xi,\varphi)\in\A^2$ satisfy $M_s(\xi,\varphi)=0$ and
$\rho_s(\xi)\not=0$. Then:
\begin{enumerate}
  \item There exists a
  unique point $\bfs x\in C_s$ with $F^*(\xi,\varphi)=F(\bfs x)$, $x_{n-s}=\xi$
  and $x_{n-s+1}=\varphi$.
  \item If $a_F(\xi)\not=0$, then $F(\bfs x)\not=0$.
\end{enumerate}
\end{remark}
\begin{proof}
By Lemma \ref{lemma: lifting curve birational to M_s=0},
there is a unique point $\bfs x\in C_s$ with $x_{n-s}=\xi$ and
$x_{n-s+1}=\varphi$, explicitly given by
$$
\bfs x:=\psi(\xi,\varphi):=\left(\bfs
p^{s+1},\xi,\varphi,V_{n-s+2}^s(\xi,\varphi)\klk
V_n^s(\xi,\varphi)\right).
$$
It follows that $F(\bfs x)=F(\psi(\xi,\varphi))=F^*(\xi,\varphi)$.
Moreover, if $a_F(\xi)\not=0$, then $M_s(\xi,T)$ and $F^*(\xi,T)$
have no common zeros in $\overline{k}$, implying
$F^*(\xi,\varphi)=F(\bfs x)\not=0$.
\end{proof}

We will apply Algorithm \ref{algo: computation projection} to
$F_{s+1}(\bfs p^{s+1},X_{n-s}\klk X_n)$ and $G(\bfs
p^{s+1},X_{n-s}\klk X_n)$. Prop\-o\-sition \ref{prop: C_s cap
F_(s+1) and G has dim zero} ensures that these polynomials satisfy
the input requirements of Algorithm \ref{algo: computation
projection}. Let $\chi_{F_{s+1}},\chi_{G}\in k[X_{n-s},T]$ be the
characteristic polynomials of the homotheties
$\eta_{F_{s+1}},\eta_{G}:k(X_{n-s})\otimes_{k[X_{n-s}]}k[C_s]\to
k(X_{n-s})\otimes_{k[X_{n-s}]}k[C_s]$ by $F_{s+1}(\bfs
p^{s+1},X_{n-s}\klk X_n)$ and $G(\bfs p^{s+1},X_{n-s}\klk X_n)$,
respectively. Denote by $a_{F_{s+1}},a_{G}\in k[X_{n-s}]$ the
square--free parts of the constant terms of
$\chi_{F_{s+1}},\chi_{G}$, respectively. The following lemma
summarizes their key properties. This result extends the approach of
\cite[Lemma 8]{HeMaWa01}, which addressed the case of a closed set
defined over $\Q$, to the more general setting of locally closed
sets over an arbitrary perfect field~$k$.
\begin{lemma}\label{lemma: properties a_(F_(s+1))}
The following properties hold:
\begin{enumerate}
\item For any $\xi\in \ck$ with $(\rho_s\cdot a_{G})(\xi)\not=0$, where
$\rho_s:=\mathrm{disc}_{T}M_s(X_{n-s},T)$, we have
$a_{F_{s+1}}(\xi)=0$ if and only if there exists $\bfs x\in
\pi_{s+1}^{-1}(\bfs p^{s+1})$ with $\pi_s^*(\bfs x)=\xi$.
\item $\mathrm{gcd}(m_{s+1},\rho_s\cdot a_G)=1$.
\end{enumerate}
\end{lemma}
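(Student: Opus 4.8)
The plan is to translate both assertions into statements about the zeros in $\ck$ of the square--free polynomials involved, and then to identify those zeros geometrically using the birational parametrization of $C_s$ of Lemma \ref{lemma: lifting curve birational to M_s=0} together with Remark \ref{rem: properties F* and a_F}. Throughout I would use that $a_{F_{s+1}}$ and $a_G$ are the square--free parts of the resultants $\widetilde a_{F_{s+1}}=\pm\,\mathrm{Res}_T\big(M_s(X_{n-s},T),F_{s+1}^*(X_{n-s},T)\big)$ and $\widetilde a_G=\pm\,\mathrm{Res}_T\big(M_s(X_{n-s},T),G^*(X_{n-s},T)\big)$, where $F_{s+1}^*$ and $G^*$ are the polynomials $F^*$ of Algorithm \ref{algo: computation projection} attached to $F_{s+1}(\bfs p^{s+1},X_{n-s},\dots,X_n)$ and $G(\bfs p^{s+1},X_{n-s},\dots,X_n)$ (this identity of $\widetilde a$ with a resultant is established in the proof of Lemma \ref{lemma: correctness algo computation projection}); in particular $a_{F_{s+1}}$ and $a_G$ have the same zeros in $\ck$ as these resultants. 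I would also use that $m_{s+1}\in k[T]$ is the square--free minimal polynomial of the primitive element $X_{n-s}$ of $\pi_{s+1}^{-1}(\bfs p^{s+1})$, whose zeros in $\ck$ are precisely the values $\pi_s^*(\bfs x)=x_{n-s}$ for $\bfs x\in\pi_{s+1}^{-1}(\bfs p^{s+1})\subset C_s$, distinct by condition $({\sf B}_3)$.

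For assertion $(1)$ I would fix $\xi\in\ck$ with $(\rho_s a_G)(\xi)\ne 0$. Since $\rho_s(\xi)\ne 0$, the monic polynomial $M_s(\xi,T)$ has $\delta_s$ pairwise--distinct roots $\varphi$, and $\widetilde a_{F_{s+1}}(\xi)$ equals, up to sign, the product of the values $F_{s+1}^*(\xi,\varphi)$ over these roots. For each such $\varphi$, Remark \ref{rem: properties F* and a_F}$(1)$ produces the unique $\bfs x=\psi(\xi,\varphi)\in C_s$ with $x_{n-s}=\xi$, $x_{n-s+1}=\varphi$ and $F_{s+1}(\bfs x)=F_{s+1}^*(\xi,\varphi)$; moreover $a_G(\xi)\ne 0$ and Remark \ref{rem: properties F* and a_F}$(2)$ force $G(\bfs x)\ne 0$ for every $\bfs x\in C_s$ with $x_{n-s}=\xi$. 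Hence $a_{F_{s+1}}(\xi)=0$ if and only if some such $\bfs x$ lies in $C_s\cap V(F_{s+1})$, and each such $\bfs x$ automatically lies off $\{G=0\}$, so by Proposition \ref{prop: C_s cap F_(s+1) and G has dim zero}$(1)$ it lies in $\pi_{s+1}^{-1}(\bfs p^{s+1})$ with $\pi_s^*(\bfs x)=\xi$. The converse is immediate: for $\bfs x\in\pi_{s+1}^{-1}(\bfs p^{s+1})$ with $x_{n-s}=\xi$, the coordinate $x_{n-s+1}$ is a common root of $M_s(\xi,T)$ (as $M_s$ vanishes on $C_s$) and of $F_{s+1}^*(\xi,T)$ (by Remark \ref{rem: properties F* and a_F}$(1)$), whence $\widetilde a_{F_{s+1}}(\xi)=0$.

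For assertion $(2)$, since $m_{s+1}$ is square--free it suffices to check that $\rho_s(x_{n-s})\ne 0$ and $a_G(x_{n-s})\ne 0$ for every $\bfs x\in\pi_{s+1}^{-1}(\bfs p^{s+1})$. For the first I would invoke Theorem \ref{th: preproc: all conditions}$(3)$, i.e. condition $({\sf B}_4)$: the point $\bfs q:=\pi_s(\bfs x)=(\bfs p^{s+1},x_{n-s})$ is a lifting point of $\pi_s$ with $X_{n-s+1}$ primitive, so $\pi_s^{-1}(\bfs q)=C_s\cap\{X_{n-s}=x_{n-s}\}$ is an unramified fiber of the finite, generically unramified, degree--$\delta_s$ morphism $\pi_s^*:C_s\to\A^1$ (Proposition \ref{prop: F_i(p*,X) generate radical ideal}$(2)$), hence has exactly $\delta_s$ points, on which $X_{n-s+1}$ takes $\delta_s$ distinct values; these are the roots of the monic degree--$\delta_s$ polynomial $M_s(x_{n-s},T)$ (Proposition \ref{prop: properties M_s}), which is therefore separable, i.e. $\rho_s(x_{n-s})=\mathrm{disc}_TM_s(x_{n-s},T)\ne 0$. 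For the second, once $\rho_s(x_{n-s})\ne 0$, a vanishing $a_G(x_{n-s})=0$ would yield, exactly as in the argument for $(1)$ now applied to $G$, a point $\bfs x'\in C_s\cap V(G)$ with $x'_{n-s}=x_{n-s}$; then $\pi_s(\bfs x')=(\bfs p^{s+1},x_{n-s})=\pi_s(\bfs x)$ would belong to $\pi_s\big(C_s\cap V(G)\big)\cap\pi_s\big(\pi_{s+1}^{-1}(\bfs p^{s+1})\big)$, contradicting condition $({\sf B}_5)$.

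The main obstacle I anticipate is the ``only if'' half of both assertions, namely showing that a zero $\xi$ of $a_{F_{s+1}}$ (respectively of $a_G$) lying in the distinguished locus $\{\rho_s\ne 0\}$ genuinely lifts to a point of $C_s$ with $(n-s)$--th coordinate $\xi$: this is where the isomorphism of Lemma \ref{lemma: lifting curve birational to M_s=0} over $\{\rho_s\ne 0\}$ and the ``resultant in roots'' description of $\widetilde a_{F_{s+1}}$ and $\widetilde a_G$ must be combined carefully, and where the hypothesis $\rho_s(\xi)\ne 0$ is essential and cannot be weakened. A secondary subtlety, in assertion $(2)$, is the identification of the roots of $M_s(x_{n-s},T)$ with the $X_{n-s+1}$--values on the unramified fiber $\pi_s^{-1}(\bfs q)$, which rests on $M_s$ being monic of degree $\delta_s$ in $T$ and on the lifting--point property forcing $\#\pi_s^{-1}(\bfs q)=\delta_s=\deg\pi_s^*$.
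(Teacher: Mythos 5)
Your proof is correct and follows essentially the same route as the paper: reduce both assertions to statements about common roots in $\overline{k}$ of one-variable polynomials, translate zeros of $a_{F_{s+1}}$, $a_G$, $m_{s+1}$ geometrically via the birational parametrization $\psi$ of Lemma \ref{lemma: lifting curve birational to M_s=0} and Remark \ref{rem: properties F* and a_F} (available on the locus $\{\rho_s\neq 0\}$), and invoke $({\sf B}_4)$ and $({\sf B}_5)$ exactly where the paper does. The only small stylistic difference is in the converse of assertion $(1)$: you extract a common root of $M_s(\xi,T)$ and $F_{s+1}^*(\xi,T)$ directly from $\bfs x$ and observe the resultant vanishes, whereas the paper invokes Lemma \ref{lemma: properties homothety by F} ($a_{F_{s+1}}\in I(C_s\cap V(F_{s+1}))$) to get $a_{F_{s+1}}(\xi)=0$ without needing $\rho_s(\xi)\neq 0$ for that implication; both are correct, and since the hypothesis is available anyway, your version is perfectly fine. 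Your identification of the role of $\rho_s(\xi)\neq 0$ in the forward direction as the genuinely delicate point is also accurate.
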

\begin{proof}

For the first assertion, let $\xi\in \ck$ satisfy $(\rho_s\cdot
a_{G})(\xi)\not=0$. If $a_{F_{s+1}}(\xi)=0$, then, since
$a_{F_{s+1}}$ is the square-free part of the resultant of
$M_s(X_{n-s},T)$ and $F_{s+1}^*(X_{n-s},T)$ with respect to $T$,
there exists $\varphi\in\ck$ with
$M_s(\xi,\varphi)=F_{s+1}^*(\xi,\varphi)=0$. By Remark \ref{rem:
properties F* and a_F}, there is a unique $\bfs x\in C_s$ with
$x_{n-s}=\xi$ and $x_{n-s+1}=\varphi$, such that $F_{s+1}(\bfs
x)=F_{s+1}^*(\xi,\varphi)=0$. That same remark also ensures that
$G(\bfs x)\not=0$. It follows that $\bfs x\in C_s\cap
V(F_{s+1})\setminus V(G)$, that is, $\bfs x\in \pi_{s+1}^{-1}(\bfs
p^{s+1})$.

Conversely, if $\bfs x\in
\pi_{s+1}^{-1}(\bfs p^{s+1})$ satisfies $\pi_s^*(\bfs x)=\xi$, then, since
$a_{F_{s+1}}\in k[X_{n-s}]$ vanishes on $C_s\cap V(F_{s+1})\supset
\pi_{s+1}^{-1}(\bfs p^{s+1})$ by Lemma \ref{lemma: properties homothety
by F}, we deduce that $a_{F_{s+1}}(\xi)=0$. This proves the first
assertion.

For the second assertion, suppose that $m_{s+1}$ and
$\rho_s$ have a nontrivial common factor in $k[X_{n-s}]$. Then
there exists $\xi\in\ck$ such that $\bfs q:=(\bfs
p^{s+1},\xi)\in\pi_s\big(\pi_{s+1}^{-1}(\bfs p^{s+1})\big)$
and $\rho_s(\xi)=0$. 
By condition $({\sf B}_4)$, $\bfs q$ is a lifting point of $\pi_s$ and
$X_{n-s+1}$ separates the points of $\pi_s^{-1}(\bfs q)$. This
implies $\rho_s(\xi)=\mathrm{Disc}_T(M_s(\xi,T))\not=0$,
contradicting our assumption. We conclude that $m_{s+1}$ and
$\rho_s$ cannot share nontrivial common factors in $k[X_{n-s}]$.

On the other hand, if there exists $\xi\in\ck$ with
$m_{s+1}(\xi)=a_{G}(\xi)=0$, then $\bfs q:=(\bfs
p^{s+1},\xi)\in\pi_s\big(\pi_{s+1}^{-1}(\bfs p^{s+1})\big)$,
and since $\gcd(m_{s+1},\rho_s)=1$, we have $\rho_s(\xi)\not=0$, while
$a_G(\xi)=0$. Denote $G^*(X_{n-s},T):=
G(X_{n-s},T,V_{n-s+2}^s(X_{n-s},T)\klk V_n^s(X_{n-s},T))$. Since
$a_G(\xi)=0$, there exists $\varphi\in\overline{k}$ with
$M_s(\xi,\varphi)=G^*(\xi,\varphi)=0$. By Remark \ref{rem:
properties F* and a_F}, there is $\bfs x\in\pi_s^{-1}(\bfs q)$
with $G^*(\xi,\varphi)=G(\bfs x)=0$. Hence, $\bfs x\in
C_s\cap\{G=0\}$, and thus $\bfs q\in
\pi_s\big(C_s\cap\{G=0\}\big)$ lies in
$\pi_s\big(\pi_{s+1}^{-1}(\bfs p^{s+1})\big)$, which contradicts
condition $({\sf B}_5)$.
\end{proof}
%
%
\subsection{A Kronecker representation of the $(s+1)$th lifting fiber}
Algorithm \ref{algo: computation projection} serves as the foundation
for computing a Kronecker representation of the lifting fiber
$\pi_{s+1}^{-1}(\bfs p^{s+1})$.
%

This procedure relies on the following subroutine, which describes
how to compute a Kronecker representation of a zero--dimensional
$k$--variety $\mathcal{V}\subset\A^2$ from the minimal polynomials
$m,m_1,m_2\in k[T]$ of three separating linear forms
$\mathcal{L},\mathcal{L}_1,\mathcal{L}_2\in k[X,Y]$ for
$\mathcal{V}$.
\begin{algorithm}[Bidimensional shape lemma]
\label{algo: bidimensional shape lemma} ${}$
\begin{itemize}
\item[]{\bf Input:}
\begin{enumerate}
\item Linear forms $\mathcal{L}:=X,\mathcal{L}_1:=X+\lambda_1Y,
\mathcal{L}_2:=X+\lambda_2 Y\in k[X,Y]$ that separate the points of
a zero--dimensional $k$-variety $\mathcal{V}\subset\A^2$, with
$\mathcal{L}_2$ separating the points of
$\{m(X)=0,m_1(\mathcal{L}_1)=0\}$, \item The minimal polynomials
$m,m_1,m_2\in k[T]$ of $\mathcal{L},\mathcal{L}_1,\mathcal{L}_2$ in
$\mathcal{V}$.
\end{enumerate}

\item[]{\bf Output:} A polynomial $v\in k[T]$ with $\deg v<\deg\mathcal{V}$ and
  $Y-v(X)\equiv 0$ in $\mathcal{V}$.

\item[]{\bf 1}\ Compute the dense representation of
$m_i(X+\lambda_i Y)\in k[X,Y]$ for $i=1,2$.
\item[]{\bf 2} Compute $\mathrm{gcd}\big(m_1(X+\lambda_1
Y),m_2(X+\lambda_2
Y)\big)=Y-v(X)$ in $\big(k[X]/(m(X))\big)[Y]$\footnote{
In this context, by the \emph{(monic) gcd} of two polynomials $f_1,f_2$ in $(k[X]/(m(X)))[Y]$,
with $m\in k[X]$ monic, reduced, but not necessarily irreducible, we mean any monic generator of the ideal
$(f_1,f_2)\subset (k[X]/(m(X)))[Y]$ (see \cite{MoRi95}).
%
%
}.
\end{itemize}
\end{algorithm}

We remark that Step~{\bf 2} will be implemented using \emph{dynamic
evaluation}. This approach allows one to compute the greatest common
divisor in $\big(k[X]/(m(X))\big)[Y]$ without performing explicit
factorizations, by exploiting the decomposition of $k[X]/(m(X))$ as
a product of fields. Adopting dynamic evaluation notably simplifies
the analysis of the success probability of Algorithm~6
in~\cite{HeMaWa01}, and avoids the factorization stage required in
the algorithm underlying Lemma~4.5 of~\cite{CaMa06a}, which was
restricted to the case of finite base fields.

The following lemma justifies the correctness of Algorithm
\ref{algo: bidimensional shape lemma} (compare with \cite[Lemma
6]{HeMaWa01}).
\begin{lemma}
With the notations of Algorithm \ref{algo: bidimensional shape
lemma}, we have the following assertions:
\begin{itemize}
  \item There exists a unique $v\in k[T]$ with $\deg
  v<\#\mathcal{V}$ such that $Y\equiv v(X)$ in $\mathcal{V}$;
  \item Let $\alpha\in\ck$ be a root of $m$, $r\in k[T]$ the
  irreducible factor of $m$ vanishing on $\alpha$, and ${\sf
  K}:=k[T]/(r(T))\cong k(\alpha)$. Then
$$\mathrm{gcd}\big(m_1(\alpha+\lambda_1 Y),m_2(\alpha+\lambda_2 Y)\big)
=Y-v(\alpha)\in {\sf
  K}[T].$$
\end{itemize}
\end{lemma}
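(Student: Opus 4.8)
The plan is to verify first that $\mathcal{V}$ satisfies the hypotheses of a classical "shape lemma" with respect to the separating linear form $\mathcal{L}=X$. Since $X$ separates the points of $\mathcal{V}$, the minimal polynomial $m\in k[T]$ of $X$ on $\mathcal{V}$ is square-free of degree $\#\mathcal{V}$, and the coordinate ring $k[\mathcal{V}]$ is isomorphic, as a $k$-algebra, to $k[X]/(m(X))$. In particular, the coordinate function $Y$ is a $k[X]/(m(X))$-linear combination of $1$, hence there is a unique $v\in k[T]$ with $\deg v<\#\mathcal{V}$ such that $Y\equiv v(X)$ in $\mathcal{V}$; this is the first assertion. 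Explicitly, if $\mathcal{V}=\{(\alpha_1,\beta_1),\ldots,(\alpha_\delta,\beta_\delta)\}$ with the $\alpha_i$ pairwise distinct (because $X$ separates points), then $v$ is the interpolating polynomial of degree $<\delta$ taking value $\beta_i$ at $\alpha_i$.

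The second assertion is the heart of the matter. First I would fix a root $\alpha\in\ck$ of $m$, let $r\in k[T]$ be its irreducible factor, and set ${\sf K}:=k[T]/(r(T))\cong k(\alpha)$. The $\ck$-points of $\mathcal{V}$ with first coordinate a conjugate of $\alpha$ over $k$ correspond, after base change to ${\sf K}$, to a single ${\sf K}$-point, namely $(\alpha,v(\alpha))$. I would then analyze the two bivariate polynomials after specializing $X\mapsto\alpha$. The polynomial $m_1(X+\lambda_1 Y)$ evaluated at $X=\alpha$ is $m_1(\alpha+\lambda_1 Y)\in{\sf K}[Y]$: since $m_1$ is the minimal polynomial of $\mathcal{L}_1=X+\lambda_1 Y$ on $\mathcal{V}$, its roots are exactly the values $\alpha_i+\lambda_1\beta_i$, so $m_1(\alpha+\lambda_1 Y)$ vanishes precisely at those $Y=\beta_i$ for which $\alpha_i+\lambda_1\beta_i$ arises this way; in particular it vanishes at $Y=v(\alpha)$. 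The same holds for $m_2(\alpha+\lambda_2 Y)$, which vanishes at $Y=v(\alpha)$ as well. So both polynomials are divisible by $Y-v(\alpha)$ in $\overline{\sf K}[Y]$, hence (being over the field ${\sf K}$) their monic gcd in ${\sf K}[Y]$ is a multiple of $Y-v(\alpha)$.

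It remains to show the gcd is exactly $Y-v(\alpha)$, i.e.\ that $Y=v(\alpha)$ is the \emph{only} common root of $m_1(\alpha+\lambda_1 Y)$ and $m_2(\alpha+\lambda_2 Y)$ in $\overline{\sf K}$. Suppose $\beta\in\overline{\sf K}$ is a common root. Then $\alpha+\lambda_1\beta$ is a root of $m_1$, hence equals $\alpha_i+\lambda_1\beta_i$ for some point $(\alpha_i,\beta_i)\in\mathcal{V}$; since $X$ separates points, $\alpha_i$ is a conjugate of $\alpha$, and after identifying it with $\alpha$ in ${\sf K}$ we get $\lambda_1\beta=\lambda_1\beta_i$, so $\beta=\beta_i=v(\alpha)$ --- wait, here is the subtlety: $m_1$ has degree $\#\mathcal{V}$ only if $\mathcal{L}_1$ separates $\mathcal{V}$, which it does by hypothesis, so $\alpha+\lambda_1\beta=\alpha_j+\lambda_1\beta_j$ forces $(\alpha_j,\beta_j)$ to be the unique point with $\mathcal{L}_1$-value $\alpha+\lambda_1\beta$; but \emph{a priori} $\alpha_j$ need not be a conjugate of $\alpha$. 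This is exactly where the extra hypothesis that $\mathcal{L}_2$ separates the points of $\{m(X)=0,\,m_1(\mathcal{L}_1)=0\}$ comes in: that set contains all pairs $(\alpha_j,\beta_j)$ whose first coordinate is a root of $m$ and whose $\mathcal{L}_1$-value is a root of $m_1$, and the separation hypothesis on $\mathcal{L}_2$ restricted to this (possibly larger) set rules out the spurious common roots. The main obstacle, therefore, is bookkeeping the three separation hypotheses carefully: one must show that a common root $\beta$ of the two specialized polynomials, via the constraints imposed by $m_1$ and $m_2$, forces $(\alpha,\beta)$ to lie in $\mathcal{V}$ itself (base-changed to ${\sf K}$), whence $\beta=v(\alpha)$ by the first assertion. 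I would conclude by combining the two divisibilities: $Y-v(\alpha)$ divides the gcd and the gcd has no root other than $v(\alpha)$, and both are monic, so they coincide.
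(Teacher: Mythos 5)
You take essentially the same approach as the paper, and your first assertion and the divisibility half of the second assertion are correct. But you leave the crucial uniqueness step as an unexecuted "main obstacle"; this is a genuine gap, not just a stylistic choice.

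Concretely, you correctly observe that $Y-v(\alpha)$ divides both $m_1(\alpha+\lambda_1 Y)$ and $m_2(\alpha+\lambda_2 Y)$, and you correctly diagnose that the spurious-common-root problem is exactly where the extra hypothesis on $\mathcal{L}_2$ must enter: a common root $\beta$ gives a point $(\alpha,\beta)$ with $m(\alpha)=0$, $m_1(\alpha+\lambda_1\beta)=0$, $m_2(\alpha+\lambda_2\beta)=0$, and one must force $(\alpha,\beta)\in\mathcal{V}$. But you stop at "one must show that \dots forces $(\alpha,\beta)$ to lie in $\mathcal{V}$ itself" without actually showing it. The paper closes this gap by proving the set equality
\[
\mathcal{V}=\{(x,y):m(x)=0,\ m_1(x+\lambda_1 y)=0,\ m_2(x+\lambda_2 y)=0\}
\]
via a short cardinality argument: write $\mathcal{W}_1:=\{m(X)=0,\ m_1(\mathcal{L}_1)=0\}$, note that $\mathcal{L}_2$ is injective on $\mathcal{W}_1$ by hypothesis, that $\mathcal{L}_2(\mathcal{V})$ consists of exactly $\#\mathcal{V}$ points and is contained in the root set of $m_2$, and that $\deg m_2=\#\mathcal{V}$, so $\mathcal{L}_2(\mathcal{V})$ \emph{equals} the root set of $m_2$; then any $(x,y)\in\mathcal{W}_1$ with $m_2(\mathcal{L}_2(x,y))=0$ has the same $\mathcal{L}_2$-value as some point of $\mathcal{V}$, and injectivity of $\mathcal{L}_2$ on $\mathcal{W}_1$ forces $(x,y)\in\mathcal{V}$. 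That one counting step is precisely what your write-up defers to, and without it the claimed equality of the gcd with $Y-v(\alpha)$ is unproved. You should also note (as you implicitly use) that $m_1,m_2$ are square-free since $\mathcal{L}_1,\mathcal{L}_2$ separate $\mathcal{V}$, so the gcd cannot acquire a repeated factor at $Y=v(\alpha)$.
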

\begin{proof}
The first assertion follows from the classical shape lemma.

Let $\alpha\in\ck$ be a root of $m$. Then there exist a unique
$\beta\in\ck$ such that $(\alpha,\beta)\in\mathcal{V}$. Observe that
$m_1(\alpha+\lambda_1\beta)=m_2(\alpha+\lambda_2\beta)=0$, implying
that $Y-\beta=Y-v(\alpha)$ divides the nonzero polynomial
$m_i(\alpha+\lambda_iY)$ for $i=1,2$. We claim that
$$\mathcal{V}=\{(x,y)\in\A^2:m(x)=0,m_1(x+\lambda_1 y)=0,m_2(x+\lambda_2 y)=0\}.$$
Define $\mathcal{W}$ as the set in right--hand side. The
inclusion $\mathcal{V}\subset\mathcal{W}$ is clear since
$m(X)$, $m_1(X+\lambda_1 Y)$ and $m_2(X+\lambda_2 Y)$
vanish on $\mathcal{V}$. Conversely, since $\mathcal{L}_2$
separates the points of
$\mathcal{W}_1:=\{m(X)=0,m_1(\mathcal{L}_1)=0\}$, the linear from
$\mathcal{L}_2$ maps injectively $\mathcal{W}_1$ to $\A^1$.
Restricting $\mathcal{L}_2$ to $\mathcal{V}$, the image is
$\{\xi\in\ck:m_2(\xi)=0\}$, showing that
$\mathcal{V}=\mathcal{W}$.

From this, we deduce that $\beta$ is the only common root in
$\ck$ of $m_1(\alpha+\lambda_1Y)$ and $m_2(\alpha+\lambda_2Y)$,
completing the proof.
\end{proof}

Next, we analyze the computational cost of Algorithm \ref{algo:
bidimensional shape lemma}.
\begin{lemma}\label{lemma: cost bidimensional shape lemma}
Algorithm \ref{algo: bidimensional shape lemma} can be performed using
$\mathcal{O}({\sf M}(\#\mathcal{V})^2\log_2(\#\mathcal{V}))$ arithmetic
operations and equality tests in $k$.
\end{lemma}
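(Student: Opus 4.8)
The plan is to bound the two steps of Algorithm \ref{algo: bidimensional shape lemma} separately, setting $D:=\#\mathcal{V}$. Since $\mathcal{V}$ is a variety its vanishing ideal is radical, so the coordinate ring $k[\mathcal{V}]$ is reduced; as $\mathcal{L}$, $\mathcal{L}_1$ and $\mathcal{L}_2$ separate the points of $\mathcal{V}$, the minimal polynomials $m,m_1,m_2\in k[T]$ are squarefree of degree exactly $D$. Consequently every polynomial manipulated by the algorithm has degree at most $D$ in each variable, and $k[X]/(m(X))$ is a reduced $k$-algebra of dimension $D$, i.e.\ a product of fields.

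For Step {\bf 1} I would compute the dense representation of $m_i(X+\lambda_iY)$ for $i=1,2$ by forming the powers $(X+\lambda_iY)^j$ for $0\le j\le D$ incrementally --- each is homogeneous of degree $j$ with $j+1$ monomials and is obtained from the preceding one by a single multiplication by the linear form $X+\lambda_iY$, at a cost of $\mathcal{O}(j)$ operations in $k$ --- and then assembling $m_i(X+\lambda_iY)=\sum_{j=0}^{D}c_j(X+\lambda_iY)^j$, where $c_0,\dots,c_D$ are the coefficients of $m_i$, with $\mathcal{O}(D^2)$ further operations. Thus Step {\bf 1} uses $\mathcal{O}(D^2)\subseteq\mathcal{O}({\sf M}(D)^2\log_2D)$ arithmetic operations in $k$; note that the coefficient of $Y^{\ell}$ in $m_i(X+\lambda_iY)$ has degree $D-\ell<D$ in $X$, so the output is already reduced modulo $m(X)$ and feeds directly into Step {\bf 2}.

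Step {\bf 2} carries the dominant cost. The task is to compute the monic generator of the ideal $\bigl(m_1(X+\lambda_1Y),m_2(X+\lambda_2Y)\bigr)$ of $\bigl(k[X]/(m(X))\bigr)[Y]$ by a fast Euclidean scheme with dynamic evaluation, splitting the current factor of $m(X)$ each time a leading coefficient fails to be invertible. Over a field, the monic gcd of two polynomials of degree at most $D$, together with a B\'ezout certificate, is computed with $\mathcal{O}({\sf M}(D)\log_2D)$ field operations (cf.\ \cite[Section 11.1]{GaGe99}). I would then invoke the complexity of polynomial gcd by dynamic evaluation over a product of fields (see, e.g., \cite{MoRi95}): the entire branching computation is carried out with $\mathcal{O}({\sf M}(D)\log_2D)$ ring operations in $k[X]/(m(X))$, each costing $\mathcal{O}({\sf M}(D))$ operations in $k$, while the at most $D$ factor splittings and the leading-coefficient quasi-inversions cost $\mathcal{O}({\sf M}(D)\log_2D)$ arithmetic operations and equality tests in $k$ apiece (an extended gcd in $k[X]$). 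Adding $\mathcal{O}\bigl({\sf M}(D)\log_2D\cdot{\sf M}(D)\bigr)$ and $\mathcal{O}\bigl(D\,{\sf M}(D)\log_2D\bigr)$, both of which lie in $\mathcal{O}({\sf M}(D)^2\log_2D)$ since $D\le{\sf M}(D)$, yields the claimed bound for Step {\bf 2}, hence for the whole algorithm.

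The main obstacle is precisely this last point: one must be sure that dynamic evaluation does not inflate the cost, i.e.\ that carrying the partial remainder sequences down the branches of the splitting tree and reducing them modulo the new factors stays within a soft-quadratic bound in $D$ rather than degenerating to a cubic cost. The clean route is to rely on the amortized analysis of the D5 principle for gcd, which shows that the total work over all branches is, up to logarithmic factors, that of a single gcd of two degree-$D$ polynomials over a field whose elements have size $D$; granting that, the accounting above is routine, and the use in Step {\bf 2} of zero-tests in $k$ (needed to detect non-invertible leading coefficients and to drive the splittings) accounts for the equality tests in the statement.
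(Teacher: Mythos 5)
Your proof is correct and follows the same overall strategy as the paper, but it handles Step~{\bf 1} by a genuinely different and arguably cleaner route. The paper performs Step~{\bf 1} via a Kronecker substitution $\sigma(X):=X$, $\sigma(Y):=X^{\#\mathcal{V}+1}$ followed by fast multipoint evaluation at $N:=\#\mathcal{V}(\#\mathcal{V}+1)+1$ distinct points and interpolation, at cost $\mathcal{O}({\sf M}(N)\log_2 N)$; this needs $N$ pairwise--distinct evaluation points, hence a field of sufficient cardinality (or a small extension). Your direct incremental expansion of $(X+\lambda_i Y)^j$ avoids both the evaluation points and the subsequent interpolation, and reaches the even tighter bound $\mathcal{O}(D^2)$ without any FFT machinery; this is a worthwhile simplification of a step that is not the bottleneck anyway. (One slip: the coefficient of $Y^{0}$ is $m_i(X)$, which has degree exactly $D=\deg m$, so it is not "already reduced modulo $m(X)$"; one remainder of cost $\mathcal{O}({\sf M}(D))$ fixes this and is negligible.)

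For the dominant Step~{\bf 2} you and the paper coincide in method: run the Euclidean scheme over $k[X]/(m(X))$ using dynamic evaluation, splitting the modulus whenever a leading coefficient is a zero divisor. The difference is in the authority you invoke. You appeal to an "amortized analysis of the D5 principle" and cite \cite{MoRi95}, which introduces dynamic gcd computations but does not, to my knowledge, supply the precise soft--quadratic cost bound one needs here; your sentence beginning "granting that" is an honest acknowledgment that you are assuming the key quantitative fact rather than proving it. The paper closes exactly this gap by citing \cite[Proposition 4.1]{DaMoScXi06}, which gives the $\mathcal{O}({\sf M}(\#\mathcal{V})^2\log_2(\#\mathcal{V}))$ bound directly and makes the amortization argument unnecessary. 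If you replace your reference with that one, the proposal becomes a complete proof.
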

\begin{proof}
The dense representation of $m_i(X+\lambda_i Y)\in k[X,Y]$ for
$i=1,2$ can be obtained by interpolation. More precisely, consider
the ring homomorphism $\sigma:k[X,Y]\to k[X]$ defined by the
Kronecker substitution $\sigma(X):=X$,
$\sigma(Y):=X^{\#\mathcal{V}+1}$. Any $F\in k[X,Y]$ of degree at
most $\#\mathcal{V}$ in each variable $X$, $Y$ is uniquely
determined by its image $\sigma(F)$ (without computational cost). To
compute the dense representation of $\sigma(m_i(X+\lambda_i
Y))=m_i(X+\lambda_i X^{\#\mathcal{V}+1})$, perform multipoint
evaluations $m_i(\alpha_j+\lambda_i\, \alpha_j^{\#\mathcal{V}+1})$
at $N:=\#\mathcal{V}(\#\mathcal{V}+1)+1$ distinct points
$\alpha_1\klk\alpha_N\in k$ (or in a finite field extension of $k$
if $k$ lacks enough elements). Then interpolation reconstructs
$m_i(X+\lambda_i X^{\#\mathcal{V}+1})$, and thus $m_i(X+\lambda_i
Y)$.

The multipoint evaluations can be performed with
$$\mathcal{O}({\sf M}(N)\log_2N)=\mathcal{O}({\sf
M}((\#\mathcal{V})^2)\log_2(\#\mathcal{V}))$$ arithmetic operations in
$k$, and within a similar cost the interpolation step is
completed.

To compute $$\mathrm{gcd}\big(m_1(X+\lambda_1 Y),m_2(X+\lambda_2
Y)\big)=Y-v(X)$$ in $\big(k[X]/(m(X))\big)[Y]$, we use dynamical
evaluation (see \cite{DeDiDu85}): we perform the Euclidean algorithm
in $k[X][Y]$, computing each arithmetic
operation in $k[X]$ modulo $m(X)$. Each test for equality to zero
eventually induces a splitting of the modulo, leading to a
splitting of the computation. Ultimately, we obtain polynomials
$v_1\klk v_t,m_1\klk m_t\in k[X]$ such that $m=m_1\cdots m_t$, $\deg
v_i<\deg m_i$ for $1\le i\le t$ and
$\mathrm{gcd}\big(m_1(X+\lambda_1 Y),m_2(X+\lambda_2
Y)\big)=Y-v_i(X)$ in $\big(k[X]/(m_i(X))\big)[Y]$  for $1\le i\le
t$. The final polynomial $v\in k[X]$ is obtained
via Chinese remaindering. By \cite[Proposition
4.1]{DaMoScXi06}, this step requires $\mathcal{O}({\sf
M}(\#\mathcal{V})^2\log_2(\#\mathcal{V}))$ arithmetic operations and
equality tests in $k$.

Noting that ${\sf M}((\#\mathcal{V})^2)\le {\sf
M}(\#\mathcal{V})^2$, the lemma follows.
\end{proof}

Now we are in position to describe the procedure for computing a
Kronecker representation of the fiber $\pi_{s+1}^{-1}(\bfs p^{s+1})$.
According to Lemma \ref{lemma: properties a_(F_(s+1))}, the roots of
$a_{F_{s+1}}$ are the $(n-s)$th coordinates of the points of
$\pi_{s+1}^{-1}(\bfs p^{s+1})$, except for those corresponding to roots
of $\rho_s$ or $a_G$. Therefore, by removing from $a_{F_{s+1}}$ the
roots of these polynomials, we obtain the minimal polynomial
$m_{s+1}$ of $X_{n-s}$ in $\pi_{s+1}^{-1}(\bfs p^{s+1})$.

Through a deformation process, we compute the minimal polynomials of
two linear forms $\mathcal{L}_1:=X_{n-s}+\lambda_1X_{n-s+1},
\mathcal{L}_2:=X_{n-s}+\lambda_2X_{n-s+1}\in k[X_{n-s},X_{n-s+1}]$.
Using these polynomials, Algorithm \ref{algo: bidimensional shape
lemma} computes the polynomial $v_{n-s+1}^{s+1}\in k[T]$
parametrizing $X_{n-s+1}$ in terms of $X_{n-s}$.

\begin{algorithm}[Computation of $m_{s+1}$ and $v_{n-s+1}^{s+1}$] ${}$
\label{algo: comput m_(s+1) and v_(n-s+1)}
\begin{itemize}
\item[]{\bf Input:}
\begin{enumerate}
\item
A Kronecker representation of $C_s$, as in
the output of Algorithm \ref{algo: Newton lifting};
\item Access to
uniformly random elements of a finite set $\mathcal{S}\subset k$ of
size at least $2\varepsilon^{-1}\delta^4$, 
where $0<\varepsilon<1$.\end{enumerate}

\item[]{\bf Output:} Polynomials $m_{s+1},v_{n-s+1}^{s+1}\in k[T]$ with the
following properties:
\begin{enumerate}
  \item $\deg m_{s+1}=\delta_{s+1}$ and $\deg
  v_{n-s+1}^{s+1}<\delta_{s+1}$;
  \item $m_{s+1}(X_{n-s})\equiv 0$ and
  $X_{n-s+1}-v_{n-s+1}^{s+1}(X_{n-s})\equiv 0$ in $\pi_{s+1}^{-1}(\bfs
  p^{s+1})$.
\end{enumerate}

\item[]{\bf 1} Apply Algorithm \ref{algo: computation projection} to
$F_{s+1}(\bfs p^{s+1},X_{n-s}\klk X_n)$ to compute $a_{F_{s+1}}\in
k[X_{n-s}]$.
\item[]{\bf 2} Apply Algorithm \ref{algo: computation projection} to
$G(\bfs p^{s+1},X_{n-s}\klk X_n)$ to compute $a_{G}\in k[X_{n-s}]$.
\item[]{\bf 3} Compute $\rho_s:=\mathrm{disc}_TM_s(X_{n-s},T)$
modulo $a_{F_{s+1}}$. {\em ***[$\rho_s\in k[X_{n-s}]\setminus\{0\}$]***}
\item[]{\bf 4} Compute $m_{s+1}(X_{n-s}):=a_{F_{s+1}}/
\mathrm{gcd}(a_{F_{s+1}},a_{G}\cdot\rho_s)$.
\item[]{\bf 5} For $i=1,2$, do
\begin{itemize}
  \item[]{\bf 5.1} Choose $\lambda_i\in\mathcal{S}$ at random;
  \item[]{\bf 5.2} Set $\mathcal{L}_i:=X_{n-s}+\lambda_i X_{n-s+1}$;
  \item[]{\bf 5.3} Perform the change of variables
  $X_{n-s}\to \mathcal{L}_i-\lambda_i X_{n-s+1}$.
  \item[]{\bf 5.4} Apply steps {\bf 1}--{\bf 4} to compute
the minimal polynomial $m_{\lambda_i}\in k[\mathcal{L}_i]$ of
$\mathcal{L}_i$ in $\pi_{s+1}^{-1}(\bfs p^{s+1})$.
\end{itemize}
\item[]{\bf 6} Apply Algorithm \ref{algo: bidimensional shape lemma}
to compute the polynomial $v_{n-s+1}^{s+1}$.
\end{itemize}

\end{algorithm}

\begin{remark}\label{rem: k definability comput m_(s+1) and v_(n-s+1)}
We note that even if the choices in step {\bf 5.2} are made in a
proper finite field extension of $k$, the output remains defined
over $k$. Indeed, $m_{s+1}$ is defined over $k$ (see Remark
\ref{rem: k definability algo projection}), and $v_{n-s+1}^{s+1}$ is
defined over $k$, because it parametrizes $X_{n-s+1}$ in terms of
$X_{n-s}$ in $\pi_{s+1}^{-1}(\bfs p^{s+1})$, which is itself defined
over $k$.
\end{remark}

\begin{lemma}\label{lemma: correctness comput m_(s+1) and v_(n-s+1)}
Assume that $k$ has cardinality at least at least
$2\varepsilon^{-1}\delta^4$. For a lucky choice of
$\lambda_1,\lambda_2$ in step {\bf 5.1}, Algorithm \ref{algo: comput
m_(s+1) and v_(n-s+1)} works as specified. Such a choice is
successful with probability at least $1-\varepsilon$.
\end{lemma}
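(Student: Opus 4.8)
The plan is to reduce the correctness of Algorithm \ref{algo: comput m_(s+1) and v_(n-s+1)} to four facts already at our disposal: the correctness of the projection subroutine (Algorithm \ref{algo: computation projection} and Lemma \ref{lemma: correctness algo computation projection}), the structural description of the polynomials $a_{F_{s+1}},a_G$ given by Lemmas \ref{lemma: properties homothety by F} and \ref{lemma: properties a_(F_(s+1))}, the correctness of the bidimensional shape lemma (Algorithm \ref{algo: bidimensional shape lemma}), and, for the probability estimate, the Zippel--Schwartz lemma (Lemma \ref{lemma: Zippel_Schwartz}).

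First I would analyze steps {\bf 1}--{\bf 4}. By Proposition \ref{prop: C_s cap F_(s+1) and G has dim zero}, the polynomials $F_{s+1}(\bfs p^{s+1},X_{n-s}\klk X_n)$ and $G(\bfs p^{s+1},X_{n-s}\klk X_n)$ satisfy the input hypotheses of Algorithm \ref{algo: computation projection}; hence, conditional on the luckiness of its internal random points (Lemma \ref{lemma: correctness algo computation projection}), steps {\bf 1}--{\bf 2} output the square-free parts $a_{F_{s+1}},a_G\in k[X_{n-s}]$ of the constant terms of the corresponding characteristic polynomials, while $\rho_s=\mathrm{disc}_TM_s\in k[X_{n-s}]\setminus\{0\}$ by Proposition \ref{prop: properties M_s}. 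It then remains to show that $m_{s+1}:=a_{F_{s+1}}/\gcd(a_{F_{s+1}},a_G\cdot\rho_s)$ is the minimal polynomial of $X_{n-s}$ on $\pi_{s+1}^{-1}(\bfs p^{s+1})$ and has degree $\delta_{s+1}$. Since $\pi_{s+1}^{-1}(\bfs p^{s+1})$ is a reduced zero-dimensional variety on which $X_{n-s}$ separates points (conditions $({\sf B}_1)$ and $({\sf B}_3)$), that minimal polynomial is square-free of degree $\delta_{s+1}$, and by Lemmas \ref{lemma: properties homothety by F} and \ref{lemma: properties a_(F_(s+1))} its roots are precisely the roots of $a_{F_{s+1}}$ that are not roots of $\rho_s a_G$; as both sides are monic and square-free, they coincide. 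The $k$-definability of the output is Remark \ref{rem: k definability comput m_(s+1) and v_(n-s+1)}.

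Next I would treat step {\bf 5}. Observe that the substitution $X_{n-s}\mapsto \mathcal{L}_i-\lambda_i X_{n-s+1}$ leaves $V_{s+1}$, $\pi_{s+1}$ and $\pi_{s+1}^{-1}(\bfs p^{s+1})$ unchanged but replaces the projection direction of $\pi_s$ (and of $C_s$) by $\mathcal{L}_i$. The point is that, for all but finitely many $\lambda_i$, this new projection is finite of degree $\delta_s$ with $X_{n-s+1}$ still a primitive element --- indeed the leading $X_{n-s+1}$-coefficient of $M_s(\mathcal{L}_i-\lambda_i X_{n-s+1},X_{n-s+1})$ is a nonzero polynomial in $\lambda_i$ of degree at most $\delta_s$, and after normalizing one obtains a Kronecker representation of $C_s$ in the variables $(\mathcal{L}_i,X_{n-s+1})$ --- and, moreover, $\mathcal{L}_i$ separates the $\delta_{s+1}$ points of $\pi_{s+1}^{-1}(\bfs p^{s+1})$ and the analogues of conditions $({\sf B}_3)$--$({\sf B}_5)$ for the direction $\mathcal{L}_i$ hold, using that $C_s\cap V(F_{s+1})$ and $C_s\cap V(G)$ are finite by Propositions \ref{prop: C_s cap F_(s+1) and G has dim zero} and \ref{prop: F_i(p*,X) generate radical ideal}. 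Hence steps {\bf 1}--{\bf 4}, applied in the new coordinates, correctly compute the minimal polynomial $m_{\lambda_i}\in k[\mathcal{L}_i]$ of $\mathcal{L}_i$ on $\pi_{s+1}^{-1}(\bfs p^{s+1})$, of degree $\delta_{s+1}$. If in addition $\mathcal{L}_2$ separates the at most $\delta_{s+1}^2$ points of $\{m_{s+1}(X)=0,\ m_{\lambda_1}(\mathcal{L}_1)=0\}$, then the input hypotheses of Algorithm \ref{algo: bidimensional shape lemma} are met and, by its correctness, step {\bf 6} returns $v_{n-s+1}^{s+1}\in k[T]$ with $\deg v_{n-s+1}^{s+1}<\delta_{s+1}$ and $X_{n-s+1}-v_{n-s+1}^{s+1}(X_{n-s})\equiv 0$ on $\pi_{s+1}^{-1}(\bfs p^{s+1})$; $k$-definability is again Remark \ref{rem: k definability comput m_(s+1) and v_(n-s+1)}.

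Finally, for the probability bound I would collect from the previous paragraph a nonzero $E_1\in k[\Lambda_1]$ of degree at most $\delta^4$ whose non-vanishing at $\lambda_1$ guarantees the admissibility of $\lambda_1$, and, for each such $\lambda_1$, a nonzero $E_2\in k[\Lambda_2]$ (depending on $\lambda_1$) of degree at most $\delta^4$ --- the dominant term being the at most $\binom{\delta_{s+1}^2}{2}<\delta^4$ values of $\lambda_2$ making $\mathcal{L}_2$ non-separating on $\{m_{s+1}=0,\ m_{\lambda_1}(\mathcal{L}_1)=0\}$ --- whose non-vanishing at $\lambda_2$ guarantees the admissibility of $\lambda_2$. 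Two applications of Lemma \ref{lemma: Zippel_Schwartz} with $\#\mathcal{S}\ge 2\varepsilon^{-1}\delta^4$ then yield a failure probability at most $\delta^4/\#\mathcal{S}+\delta^4/\#\mathcal{S}\le\varepsilon$. I expect the main obstacle to be the third paragraph: faithfully transporting the full package of hypotheses $({\sf B}_1)$--$({\sf B}_5)$, together with a valid Kronecker representation of $C_s$, through the linear change $X_{n-s}\mapsto\mathcal{L}_i-\lambda_i X_{n-s+1}$, and doing so while keeping the exceptional locus within the claimed degree bound; the remaining steps are essentially an assembly of results proved in Sections \ref{sec: lifting fibers} and \ref{sec: intersection step}.
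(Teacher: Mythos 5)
Your proposal is correct and follows the same overall structure as the paper's proof: steps {\bf 1}--{\bf 4} reduce to Proposition \ref{prop: C_s cap F_(s+1) and G has dim zero} and Lemmas \ref{lemma: correctness algo computation projection}, \ref{lemma: properties a_(F_(s+1))}; step {\bf 6} reduces to the correctness of Algorithm \ref{algo: bidimensional shape lemma}; and the probability bound comes from Lemma \ref{lemma: Zippel_Schwartz}, with the dominant term being the $\binom{\delta_{s+1}^2}{2}$ from requiring $\mathcal{L}_2$ to separate $\mathcal{V}:=\{m_{s+1}=0,\,m_{\lambda_1}(\mathcal{L}_1)=0\}$. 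Where you go beyond the paper is the third paragraph: you explicitly verify that the recursive application of steps {\bf 1}--{\bf 4} in the rotated coordinate $(\mathcal{L}_i,X_{n-s+1})$ is legitimate, i.e., that normalizing $M_s(\mathcal{L}_i-\lambda_iX_{n-s+1},X_{n-s+1})$ yields a valid Kronecker representation of $C_s$ with projection direction $\mathcal{L}_i$, and that analogues of conditions $({\sf B}_3)$--$({\sf B}_5)$ hold for that direction (these are used in part (2) of Lemma \ref{lemma: properties a_(F_(s+1))}, which justifies the gcd quotient in step {\bf 4}). The paper's proof is noticeably terser here --- it asserts only that random $\lambda_1,\lambda_2$ yield linear forms satisfying the input requirements of Algorithm \ref{algo: bidimensional shape lemma}, and its probability count covers only the two separation conditions --- so your extra caution fills a genuine gloss in the paper, and your observation that the transported conditions impose genericity constraints on $\lambda_i$ of degree well below $\delta^4$ (hence absorbed by the bound) is the right way to see that this does not change the conclusion.
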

\begin{proof}
Steps {\bf 1} and {\bf 2} can be carried out by Proposition
\ref{prop: C_s cap F_(s+1) and G has dim zero}, and their outputs
are the polynomials $a_{F_{s+1}}$ and $a_G$, respectively, due to
Lemma \ref{lemma: correctness algo computation projection}. Step
{\bf 4} yields the polynomial $m_{s+1}(X_{n-s})$ by Lemma
\ref{lemma: properties a_(F_(s+1))}. A random choice of
$\lambda_1,\lambda_2\in\mathcal{S}$ produces linear forms
$\mathcal{L}_i:=X_{n-s}+\lambda_i X_{n-s+1}$ ($i=1,2$) satisfying
the input requirements of Algorithm \ref{algo: bidimensional shape
lemma}. Thus, 
the polynomial $v_{n-s+1}^{s+1}$ is obtained in step {\bf 6}.

For the probability estimate, for $i\in\{1,2\}$ and
$\lambda_i\in\mathcal{S}$, the linear form
$\mathcal{L}_i:=X_{n-s}+\lambda_iX_{n-s+1}$ must separate the points
of $(s+1)$th lifting fiber $\pi_{s+1}^{-1}(\bfs p^{s+1})$. Moreover,
$\mathcal{L}_2$ must separate the points of the zero-dimensional
variety
$$\mathcal{V}:=\{m_{s+1}(X_{n-s})=0,m_1(\mathcal{L}_1)=0\}\subset\A^2.$$
We observe that $\mathcal{L}_i$ separates the points of
$\pi_{s+1}^{-1}(\bfs p^{s+1})$ if and only if
$$\prod_{\bfs \xi\not=\bfs\psi}\big(\mathcal{L}_i(\bfs\xi)
-\mathcal{L}_i(\bfs\psi)\big)\not=0,$$
where the product ranges over all distinct pairs $\bfs\xi,\bfs\psi$
in $\pi_{s+1}^{-1}(\bfs p^{s+1})$. This expression is a nonzero
polynomial in $\lambda_i$ of degree ${\delta_{s+1}\choose 2}$.
Furthermore, $\mathcal{L}_2$ must separate the points of
$\mathcal{V}$, which has degree at most $\delta_{s+1}^2$. Therefore,
this condition can be expressed as the nonvanishing of a polynomial
in $\lambda_2$ of degree at most ${\delta_{s+1}^2\choose 2}$. Since
$\lambda_1,\lambda_2$ are randomly chosen in a set of cardinality
$\varepsilon^{-1}\delta^4$, the probability that any choice fails is
bounded by
$$\frac{{\delta_{s+1}\choose 2}}{
\varepsilon^{-1}\delta^4}+ \frac{{\delta_{s+1}\choose 2}+
{\delta_{s+1}^2\choose 2}}{
\varepsilon^{-1}\delta^4}\le\varepsilon,$$
completing the proof.
\end{proof}

It remains to analyze the cost and probability of success of
Algorithm \ref{algo: comput m_(s+1) and v_(n-s+1)}.
\begin{lemma}
 \label{lemma: cost comput m_(s+1) and v_(n-s+1)}
Algorithm \ref{algo: comput m_(s+1) and v_(n-s+1)} can be
implemented to run using $ \mathcal{O}\big((L+s\,\log_2\delta)\,{\sf
M}(d\delta)\,{\sf M}(\delta) \big)$
arithmetic operations in $k$,
$\mathcal{O}\big((L+s\,\log_2\delta)\,{\sf M}(\delta)\,{\sf
M}(d\delta)\big)$ identity tests in $k$ and $\mathcal{O}(\delta)$
extractions of $p$--th roots in $k$ (if $k$ has characteristic
$p>0$). It succeeds with probability $1-7\varepsilon$.
\end{lemma}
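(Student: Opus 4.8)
The plan is to bound the cost of Algorithm~\ref{algo: comput m_(s+1) and v_(n-s+1)} by adding the costs of its constituent subroutines, and to bound its probability of success by a union bound over the random choices it performs. First I would count the calls to previously analysed routines. Steps~{\bf 1} and~{\bf 2} each invoke Algorithm~\ref{algo: computation projection} once, on $F_{s+1}(\bfs p^{s+1},X_{n-s}\klk X_n)$ and on $G(\bfs p^{s+1},X_{n-s}\klk X_n)$ respectively; these inputs satisfy the hypotheses of that algorithm by Proposition~\ref{prop: C_s cap F_(s+1) and G has dim zero}, and both are evaluated by straight--line programs of length $\mathcal{O}(L)$ obtained from the input program by specialising the first $n-s-1$ variables. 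Each execution of Step~{\bf 5.4} re-runs Steps~{\bf 1}--{\bf 4}, hence two more invocations of Algorithm~\ref{algo: computation projection}; as $i$ ranges over $\{1,2\}$ this accounts for four further invocations, for a total of six. Finally Step~{\bf 6} invokes Algorithm~\ref{algo: bidimensional shape lemma} once, with $\mathcal{V}=\pi_{s+1}^{-1}(\bfs p^{s+1})$, so $\#\mathcal{V}=\delta_{s+1}$.

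Next I would aggregate the costs. By Lemma~\ref{lemma: cost computation projection}, each of the six invocations of Algorithm~\ref{algo: computation projection} costs $\mathcal{O}\big((L+s\log_2\delta_s)\,d\,\delta_s\,{\sf M}(\delta_s)\big)$ arithmetic operations and identity tests in $k$, together with $\mathcal{O}(d\delta_s)$ extractions of $p$--th roots (arising only from the square--free part computation in its Step~{\bf 3}); by Lemma~\ref{lemma: cost bidimensional shape lemma}, Step~{\bf 6} costs $\mathcal{O}\big({\sf M}(\delta_{s+1})^2\log_2\delta_{s+1}\big)$ operations. Using $\delta_s,\delta_{s+1}\le\delta$, together with the elementary bounds $d\delta\,{\sf M}(\delta)\le{\sf M}(d\delta){\sf M}(\delta)$ and ${\sf M}(\delta)^2\log_2\delta\le(L+s\log_2\delta){\sf M}(d\delta){\sf M}(\delta)$ (valid since $s\ge1$ and $d\ge1$), these contributions all fit into $\mathcal{O}\big((L+s\log_2\delta)\,{\sf M}(d\delta)\,{\sf M}(\delta)\big)$ arithmetic operations and identity tests and $\mathcal{O}(\delta)$ extractions of $p$--th roots, as claimed.

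It then remains to verify that the ``glue'' performed between subroutine calls does not exceed this bound, and this is the point I expect to require the most care. Here one uses the standing assumption $\delta>d$, which---exactly as in the proof of Lemma~\ref{lemma: cost computation projection}---allows one to assume $\delta_s>d$, so that $\deg a_{F_{s+1}}\le d\delta_s<\delta_s^2$. In particular Step~{\bf 3} never needs the full discriminant $\rho_s$, whose degree can be as large as $2\delta_s^2$: it suffices to compute $\rho_s\bmod a_{F_{s+1}}$, i.e.\ a resultant with respect to $T$ of two polynomials of $T$--degree at most $\delta_s$ whose coefficients lie in $k[X_{n-s}]/(a_{F_{s+1}})$, which costs $\mathcal{O}\big({\sf M}(\delta_s)\log_2\delta_s\cdot{\sf M}(d\delta_s)\big)$ operations. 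Likewise the gcd's of Step~{\bf 4}, the linear substitutions of Steps~{\bf 5.2}--{\bf 5.3}, and the re-preparation of the Kronecker representation of $C_s$ adapted to the rotated free variable $\mathcal{L}_i$ (a bivariate composition followed by a bounded number of univariate gcd's) cost at most $\mathcal{O}\big({\sf M}(\delta_s^2)\log_2\delta_s\big)$, which is again dominated by the bound above.

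For the probability of success I would argue by a union bound. By Lemma~\ref{lemma: correctness algo computation projection}, each of the six invocations of Algorithm~\ref{algo: computation projection} succeeds with probability at least $1-\varepsilon$; the hypothesis on $\#\mathcal{S}$ is met since $(D_s+1)(\delta_s^2+D_s)\le2\delta^4$ and $\#\mathcal{S}\ge2\varepsilon^{-1}\delta^4$. By Lemma~\ref{lemma: correctness comput m_(s+1) and v_(n-s+1)}, the random choice of $(\lambda_1,\lambda_2)$ in Step~{\bf 5.1} is successful with probability at least $1-\varepsilon$. Hence all seven events occur simultaneously---and the algorithm produces the polynomials $m_{s+1},v_{n-s+1}^{s+1}$ with the asserted properties---with probability at least $1-7\varepsilon$. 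Finally, if $k$ is too small to contain $\mathcal{S}$ one carries out the computation over a finite extension of $k$ of adequate size, which multiplies the cost only by a factor logarithmic in the parameters (absorbed into the stated estimates) and, by Remark~\ref{rem: k definability comput m_(s+1) and v_(n-s+1)}, leaves the output defined over $k$.
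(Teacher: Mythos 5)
Your proposal is correct and follows essentially the same route as the paper's proof: count the six invocations of Algorithm~\ref{algo: computation projection} (two direct, four inside the two iterations of step {\bf 5.4}) and the one call to Algorithm~\ref{algo: bidimensional shape lemma}, aggregate their costs via Lemmas~\ref{lemma: cost computation projection} and~\ref{lemma: cost bidimensional shape lemma}, account for the ``glue'' (in particular computing $\rho_s\bmod a_{F_{s+1}}$, which the paper also handles at precision $\mathcal{O}({\sf M}(\delta_s){\sf M}(d\delta_s)\log_2\delta_s)$ by working over $k[X_{n-s}]/(a_{F_{s+1}})$ with dynamic evaluation), and take a union bound over seven bad events. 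The only cosmetic difference is where you localize the seventh $\varepsilon$: you charge it to the random choice of $(\lambda_1,\lambda_2)$ in step {\bf 5.1} (via Lemma~\ref{lemma: correctness comput m_(s+1) and v_(n-s+1)}), whereas the paper phrases it as a possible failure of the call to Algorithm~\ref{algo: bidimensional shape lemma} in step {\bf 6}; since that algorithm is deterministic and only misbehaves when $\mathcal{L}_1,\mathcal{L}_2$ are not separating, your formulation is arguably the sharper one, but the two are equivalent.
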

\begin{proof}
From the straight--line program computing $F_{s+1}$ and $G$, we
readily obtain a straight--line program of length $L$ that computes
$F_{s+1}(\bfs p^{s+1},X_{n-s}\klk X_n)$ and $G(\bfs p^{s+1},X_{n-s}\klk
X_n)$. By Lemma \ref{lemma: cost computation projection}, steps {\bf
1} and {\bf 2} can be performed with
$\mathcal{O}((L+s\,\log_2\delta_s)d\,\delta_s\,{\sf M}(\delta_s))$
arithmetic operations and identity tests in $k$, and
$\mathcal{O}(\delta_s)$ extractions of $p$--th roots in $k$ (if $k$
has characteristic $p>0$).

To compute the dense representation of $\rho_s$ modulo $a_{F_{s+1}}$
we calculate the discriminant of $M_s(X_{n-s},T)$ over the ring
$k[X_{n-s}]/\big(a_{F_{s+1}}(X_{n-s})\big)$. This is done using
dynamical evaluation, as in step {\bf 2} of Algorithm \ref{algo:
bidimensional shape lemma}. Specifically, we compute the
discriminant of $M_s(X_{n-s},T)$ with respect to $T$, carrying out
arithmetic operations in $k[X_{n-s}]$ modulo $a_{F_{s+1}}$. Each
equality test eventually leads to a splitting of the computation,
followed by a step of Chinese remaindering to reconstruct the dense
representation of $\rho_s$ modulo $a_{F_{s+1}}$, as in the proof of
Lemma \ref{lemma: cost bidimensional shape lemma}.

Computing $\mathrm{disc}_TM_s(X_{n-s},T)$ modulo $a_{F_{s+1}}$
requires $\mathcal{O}\big({\sf M}(\delta_s)\log_2\delta_s\big)$
arithmetic operations and equality tests in
$k[X_{n-s}]/\big(a_{F_{s+1}}(X_{n-s})\big)$. Since $a_{F_{s+1}}$ has
degree at most $d\delta_s$, each arithmetic operation modulo
$a_{F_{s+1}}$ costs $\mathcal{O}\big({\sf M}(d\delta_s)\big)$
arithmetic operations in $k$. Therefore, this step can be completed
using $\mathcal{O}\big({\sf M}(\delta_s){\sf
M}(d\delta_s)\log_2\delta_s\big)$ arithmetic operations and equality
tests in $k$.

Step {\bf 4} requires $\mathcal{O}\big({\sf
M}(d\delta_s)\log_2\delta_s\big)$ arithmetic operations and identity
tests in $k$. Thus, the total cost for steps {\bf 1}--{\bf 4} is
$$\mathcal{O}\big((L+s\,\log_2\delta_s)d\,\delta_s\,{\sf M}(\delta_s)
+{\sf M}(\delta_s){\sf M}(d\delta_s)\log_2\delta_s\big)\subset
\mathcal{O}\big((L+s\,\log_2\delta_s)\,{\sf M}(d\delta_s)\,{\sf M}(\delta_s)
\big)$$
arithmetic operations,
$\mathcal{O}\big((L+s\,\log_2\delta_s)d\,\delta_s\,{\sf
M}(\delta_s)+{\sf M}(\delta_s)\,{\sf
M}(d\delta_s)\log_2\delta_s\big)$ identity tests in $k$, and
$\mathcal{O}(\delta_s)$ extractions of $p$--th roots in $k$ (if $k$
has characteristic $p>0$).

In step {\bf 5}, we compute the dense representations of
$M_s\big(\mathcal{L}_i-\lambda_iX_{n-s+1},X_{n-s+1}\big)$ for
$i=1,2$. We apply a similar process as for the computation of the
dense representation of $m_i(X+\lambda_i Y)$ in step {\bf 1} of
Algorithm \ref{algo: bidimensional shape lemma}, which requires
$\mathcal{O}({\sf M}(\delta_s)^2\log_2\delta_s)$ arithmetic
operations and equality tests in $k$. We then apply steps {\bf
1}--{\bf 4} again for $i=1,2$, with the same asymptotic cost.

Finally, the cost of step {\bf 6} is bounded by Lemma \ref{lemma:
cost bidimensional shape lemma}, with $\#\mathcal{V}=\delta_{s+1}$.
Combining the costs of all step yields the claimed complexity.

For the probability of success, failure can occur during calls to
Algorithms \ref{algo: computation projection} and \ref{algo:
bidimensional shape lemma}. Algorithm \ref{algo: computation
projection} is called in steps {\bf 1}, {\bf 2}, and twice in step
{\bf 5.4} for each $i\in\{1,2\}$. With the input conditions of
Algorithm \ref{algo: computation projection}, each invocation fails
with probability at most $\varepsilon$, so the overall failure
probability is at most $6\varepsilon$. Additionally, Algorithm
\ref{algo: bidimensional shape lemma} is called in step {\bf 6} and
may fail with probability at most $\varepsilon$. This proves the
lemma.
\end{proof}

Finally, it remains to parametrize the variables $X_{n-s+2}\klk X_n$
in terms of $X_{n-s}$ in $\pi_{s+1}^{-1}(\bfs p^{s+1})$. Recall that
$$\frac{\partial M_s}{\partial T}(X_{n-s},X_{n-s+1})\cdot
  X_{n-s+i}\equiv W_{n-s+i}^s(X_{n-s},X_{n-s+1})$$
holds on $C_s$, and thus on $\pi_{s+1}^{-1}(\bfs p^{s+1})$, for $2\le
i\le s$. We have the following result.
\begin{lemma}\label{lemma: h_s coprime with m_(s+1)}
 With the notations above, the polynomial
$$h_s:=\frac{\partial M_s}{\partial T}(X_{n-s},v_{n-s+1}^{s+1}(X_{n-s}))$$
is coprime with $m_{s+1}(X_{n-s})$.
\end{lemma}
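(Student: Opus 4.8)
The plan is to prove coprimality by contradiction, reducing the statement to the nonexistence of a common root in $\overline{k}$ and then tracing any such root back to a zero of the discriminant $\rho_s$, which is forbidden by Lemma \ref{lemma: properties a_(F_(s+1))}(2).

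First I would note that $h_s$ and $m_{s+1}$ both lie in $k[X_{n-s}]$, so they are coprime if and only if they share no root in $\overline{k}$; since $m_{s+1}$ is the minimal polynomial of the coordinate function $X_{n-s}$ on the reduced zero-dimensional fiber $\pi_{s+1}^{-1}(\bfs p^{s+1})$ (which is a lifting fiber and, by condition $({\sf B}_3)$, separated by $X_{n-s}$), it is square-free, and it suffices to show that no root of $m_{s+1}$ annihilates $h_s$. So let $\xi\in\overline{k}$ with $m_{s+1}(\xi)=0$. Because $m_{s+1}$ is the minimal polynomial of $X_{n-s}$ on $\pi_{s+1}^{-1}(\bfs p^{s+1})$, there is a point $\bfs x\in\pi_{s+1}^{-1}(\bfs p^{s+1})$ with $x_{n-s}=\xi$, and the parametrization identity $X_{n-s+1}-v_{n-s+1}^{s+1}(X_{n-s})\equiv 0$ on $\pi_{s+1}^{-1}(\bfs p^{s+1})$ yields $x_{n-s+1}=v_{n-s+1}^{s+1}(\xi)$.

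Next I would invoke the inclusion $\pi_{s+1}^{-1}(\bfs p^{s+1})\subset C_s$, which is immediate from Proposition \ref{prop: C_s cap F_(s+1) and G has dim zero}(1), together with the fact that $M_s(X_{n-s},X_{n-s+1})$ vanishes identically on $C_s$ (property $(2)$ of the output of Algorithm \ref{algo: Newton lifting}, cf.\ also Proposition \ref{prop: properties M_s}), to conclude $M_s(\xi,v_{n-s+1}^{s+1}(\xi))=0$. Assume, for contradiction, that $h_s(\xi)=0$ as well; by definition this means $\frac{\partial M_s}{\partial T}(\xi,v_{n-s+1}^{s+1}(\xi))=0$. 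Then $\varphi:=v_{n-s+1}^{s+1}(\xi)$ is a common root of $M_s(\xi,T)$ and $\frac{\partial M_s}{\partial T}(\xi,T)$ in $\overline{k}$, so the discriminant $\rho_s(\xi)=\mathrm{disc}_TM_s(\xi,T)$ vanishes. Hence $\xi$ is a common root of $m_{s+1}$ and $\rho_s$, contradicting $\gcd(m_{s+1},\rho_s\cdot a_G)=1$ from Lemma \ref{lemma: properties a_(F_(s+1))}(2), which in particular forces $\gcd(m_{s+1},\rho_s)=1$. Therefore $h_s(\xi)\neq 0$ for every root $\xi$ of $m_{s+1}$, and $h_s$ is coprime with $m_{s+1}$ in $k[X_{n-s}]$.

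There is no serious obstacle in this argument; the only point that requires care is the bookkeeping ensuring that $v_{n-s+1}^{s+1}(\xi)$ is genuinely the $(n-s+1)$st coordinate of a point of $C_s$ lying over $\xi$ — this is exactly what the Kronecker representation of $\pi_{s+1}^{-1}(\bfs p^{s+1})$, combined with the inclusion $\pi_{s+1}^{-1}(\bfs p^{s+1})\subset C_s$ and the vanishing of $M_s$ on $C_s$, provides — after which the discriminant/coprimality step closes the proof.
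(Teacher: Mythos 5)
Your proof is correct and, in substance, takes the same route as the paper's: both reduce the statement to the coprimality $\gcd(m_{s+1},\rho_s)=1$ supplied by Lemma~\ref{lemma: properties a_(F_(s+1))}(2). The paper phrases the reduction algebraically---it writes the Bezout identity $\rho_s = b\,(\partial M_s/\partial T) + c\,M_s$ in $k[X_{n-s},T]$, substitutes $T=v_{n-s+1}^{s+1}(X_{n-s})$, observes $M_s(X_{n-s},v_{n-s+1}^{s+1}(X_{n-s}))\equiv 0\bmod m_{s+1}$, and concludes $\rho_s\equiv b\,h_s\bmod m_{s+1}$---so that a common factor of $m_{s+1}$ and $h_s$ would divide $\rho_s$. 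You instead pass to $\overline{k}$, pick a root $\xi$ of $m_{s+1}$, use the Kronecker parametrization and the inclusion $\pi_{s+1}^{-1}(\bfs p^{s+1})\subset C_s$ to show $M_s(\xi,v_{n-s+1}^{s+1}(\xi))=0$, and deduce that $h_s(\xi)=0$ would force $\rho_s(\xi)=0$. These are the ``ideals'' and ``points'' renderings of the same argument, and yours is equally sound; the paper's version is marginally more economical in that it never leaves $k[X_{n-s}]$ and makes no explicit appeal to the fiber's points.
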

\begin{proof}
Since $M_s(X_{n-s},X_{n-s+1})$ vanishes on $C_s$, it also vanishes
on $\pi_{s+1}^{-1}(\bfs p^{s+1})$. Therefore,
$M_s(X_{n-s},v_{n-s+1}^{s+1}(X_{n-s}))$ vanishes on
$\pi_{s+1}^{-1}(\bfs p^{s+1})$, and thus it is divisible by
$m_{s+1}(X_{n-s})$. Moreover, since $\rho_s$ belongs to the ideal of
$k[X_{n-s},T]$ generated by $M_s(X_{n-s},T)$ and $(\partial
M_s/\partial T)(X_{n-s},T)$, we have
\begin{align*}
\rho_s(X_{n-s})&=b(X_{n-s})\frac{\partial M_s}{\partial
T}(X_{n-s},v_{n-s+1}^{s+1}(X_{n-s}))+ c(X_{n-s})
M_s(X_{n-s},v_{n-s+1}^{s+1}(X_{n-s}))\\
&\equiv b(X_{n-s})\frac{\partial M_s}{\partial
T}(X_{n-s},v_{n-s+1}^{s+1}(X_{n-s}))\mod m_{s+1}(X_{n-s}),
\end{align*}
for some $b,c\in k[X_{n-s}]$. If $m_{s+1}(X_{n-s})$ and $h_s$ shared
a nontrivial common factor in $k[X_{n-s}]$, then $m_{s+1}(X_{n-s})$
and $\rho_s(X_{n-s})$ would also share a nontrivial common factor,
contradicting Lemma \ref{lemma: properties a_(F_(s+1))}.
\end{proof}

As a consequence, there exists a well--defined inverse $g_s\in k[T]$
of $h_s$ modulo $m_{s+1}(T)$. Therefore, for $2\le i\le s$, the
following holds on $\pi_{s+1}^{-1}(\bfs p^{s+1})$:
\begin{equation}\label{eq: equality param X_(n-s+i)}
X_{n-s+i}\equiv g_s(X_{n-s})\cdot
W_{n-s+i}^s(X_{n-s},v_{n-s+1}^{s+1}(X_{n-s})). \end{equation}
Reducing the right--hand side modulo $m_{s+1}(X_{n-s})$ for $2\le
j\le s$ yields the remaining polynomials $v_{n-s+2}^{s+1}\klk
v_n^{s+1}\in k[T]$, completing the Kronecker representation of
$\pi_{s+1}^{-1}(\bfs p^{s+1})$.
\begin{algorithm}[Conclusion of the recursive step] ${}$
\label{algo: conclusion recursive step}
\begin{itemize}
\item[]{\bf Input:}
\begin{enumerate}
\item A Kronecker representation of $C_s$, as provided by the output of
Algorithm \ref{algo: Newton lifting}; \item Polynomials
$m_{s+1},v_{n-s+1}^{s+1}\in k[T]$, as provided by the output of
Algorithm \ref{algo: comput m_(s+1) and v_(n-s+1)}.\end{enumerate}

\item[]{\bf Output:} Polynomials $v_{n-s+2}^{s+1}\klk v_n^{s+1}\in k[T]$ satisfying the
following properties:
\begin{enumerate}
  \item $\deg v_{n-s+j}^{s+1}<\delta_{s+1}$ for $2\le j\le s$;
  \item
  $X_{n-s+i}-v_{n-s+i}^{s+1}(X_{n-s})\equiv 0$ in $\pi_{s+1}^{-1}(\bfs
  p^*)$  for $2\le i\le s$.
\end{enumerate}

\item[]{\bf 1} Compute $h_s:=
\frac{\partial M_s}{\partial T}(X_{n-s},v_{n-s+1}^{s+1}(X_{n-s}))
\mod m_{s+1}(X_{n-s})$.
\item[]{\bf 2} Compute $g_s:=h_s^{-1}\mod m_{s+1}(X_{n-s})$.
\item[]{\bf 3} For $i=2\klk s$, do
\begin{itemize}
  \item[]{\bf 3.1} Compute $w_{n-s+i}^{s+1}:=W_{n-s+i}^s(X_{n-s},
  v_{n-s+1}^{s+1}(X_{n-s}))\mod m_{s+1}(X_{n-s})$;
  \item[]{\bf 3.2} Compute
$v_{n-s+i}^{s+1}:=g_s(X_{n-s})\cdot w_{n-s+i}^{s+1}(X_{n-s})\mod
m_{s+1}(X_{n-s})$.
\end{itemize}
\end{itemize}
\end{algorithm}

\begin{remark}
 \label{rem: correctness algo conclusion recursive step}
The correctness of Algorithm \ref{algo: conclusion recursive step}
follows directly from Lemma \ref{lemma: h_s coprime with m_(s+1)}
and \eqref{eq: equality param X_(n-s+i)}.
\end{remark}

We analyze the cost of Algorithm \ref{algo: conclusion recursive
step} in the next lemma.
\begin{lemma}\label{lemma: cost conclusion recursive step}
Algorithm \ref{algo: conclusion recursive step} can be executed
using $\mathcal{O}\big(s\,\delta\,{\sf M}(\delta)\big)$ arithmetic
operations in $k$, and $\mathcal{O}({\sf M}(\delta))$ identity tests
in $k$.
\end{lemma}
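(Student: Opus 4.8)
The plan is to traverse the three steps of Algorithm~\ref{algo: conclusion recursive step} in order, bounding each in terms of $\delta:=\max_t\delta_t$ and the dense data already at hand. Recall from Proposition~\ref{prop: properties M_s} and the output specification of Algorithm~\ref{algo: Newton lifting} that the input Kronecker representation of $C_s$ consists of polynomials $M_s,W^s_{n-s+2},\dots,W^s_n\in k[X_{n-s}][T]$ with $M_s$ monic in $T$ of total degree $\delta_s$ (so $\deg_{X_{n-s}}M_s\le\delta_s$, and hence $\deg_{X_{n-s}}(\partial M_s/\partial T)<\delta_s$), and $\deg W^s_{n-s+i}<\delta_s$, while $m_{s+1},v^{s+1}_{n-s+1}\in k[X_{n-s}]$ satisfy $\deg m_{s+1}=\delta_{s+1}\le\delta$ and $\deg v^{s+1}_{n-s+1}<\delta_{s+1}$. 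The key accounting remark is that, since $m_{s+1}$ is \emph{monic} of degree at most $\delta$, every multiplication and every division with remainder in the residue ring $k[X_{n-s}]/(m_{s+1})$ costs $\mathcal{O}({\sf M}(\delta))$ arithmetic operations in $k$ and requires \emph{no} identity tests, by the estimates recalled in Section~\ref{section: notation, notations}.

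For Step~{\bf 1}, I would first form $\partial M_s/\partial T$ from the dense representation of $M_s$ by rescaling its $T$-coefficients ($\mathcal{O}(\delta^2)$ operations, absorbed into $\mathcal{O}(\delta\,{\sf M}(\delta))$ since ${\sf M}(\delta)\ge\delta$), then reduce its $\mathcal{O}(\delta_s)$ coefficients in $k[X_{n-s}]$ modulo $m_{s+1}$ as needed, and finally evaluate the resulting polynomial in $T$ at $v^{s+1}_{n-s+1}(X_{n-s})$ inside $k[X_{n-s}]/(m_{s+1})$ by Horner's rule. This amounts to $\mathcal{O}(\delta_s)$ modular operations, i.e. $\mathcal{O}(\delta\,{\sf M}(\delta))$ arithmetic operations in $k$ and no identity tests, and produces $h_s$. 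Step~{\bf 2} is the single modular inversion $g_s:=h_s^{-1}\bmod m_{s+1}$, which is legitimate because $\gcd(h_s,m_{s+1})=1$ by Lemma~\ref{lemma: h_s coprime with m_(s+1)}; by the estimate for modular inversion recalled in Section~\ref{section: notation, notations} its cost is absorbed into $\mathcal{O}(s\,\delta\,{\sf M}(\delta))$ arithmetic operations, and it is the only step of the algorithm that performs identity tests, contributing the $\mathcal{O}({\sf M}(\delta))$ identity tests in the statement.

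Step~{\bf 3} runs $\mathcal{O}(s)$ iterations. In iteration $i$, sub-step~{\bf 3.1} evaluates $W^s_{n-s+i}(X_{n-s},v^{s+1}_{n-s+1}(X_{n-s}))$ modulo $m_{s+1}$ exactly as in Step~{\bf 1} (reduce the $\mathcal{O}(\delta_s)$ coefficients, then apply Horner's rule in $k[X_{n-s}]/(m_{s+1})$; the powers of $v^{s+1}_{n-s+1}$ modulo $m_{s+1}$ may even be precomputed once and reused across all $i$), costing $\mathcal{O}(\delta\,{\sf M}(\delta))$ arithmetic operations and no identity tests; sub-step~{\bf 3.2} is one further modular multiplication $g_s\cdot w^{s+1}_{n-s+i}\bmod m_{s+1}$, i.e. $\mathcal{O}({\sf M}(\delta))$ more. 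Summing over the $\mathcal{O}(s)$ values of $i$ yields $\mathcal{O}(s\,\delta\,{\sf M}(\delta))$ arithmetic operations and no additional identity tests, and adding the contributions of Steps~{\bf 1}--{\bf 3} gives the claimed totals. The only genuinely delicate point is keeping each bivariate evaluation in Steps~{\bf 1} and~{\bf 3.1} inside a single $\mathcal{O}(\delta\,{\sf M}(\delta))$ budget: one must reduce the $k[X_{n-s}]$-coefficients of $M_s$ and $W^s_{n-s+i}$ modulo $m_{s+1}$ before (or interleaved with) the Horner recursion, so that every intermediate polynomial stays of degree below $\delta_{s+1}$ and no degree in $X_{n-s}$ grows multiplicatively; everything else is routine bookkeeping with the degree bounds recalled above.
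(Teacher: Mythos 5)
Your proof is correct and follows essentially the same route as the paper's: Horner evaluation in the residue ring $k[X_{n-s}]/(m_{s+1})$ for Steps {\bf 1} and {\bf 3.1}, a single modular inversion for Step {\bf 2} (the only source of identity tests), and one modular product per $i$ in Step {\bf 3.2}. You add some useful bookkeeping remarks (interleaving coefficient reduction with the Horner recursion, optionally precomputing the powers of $v^{s+1}_{n-s+1}$), but the decomposition and the cost accounting match the paper's argument.
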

\begin{proof}
Considering $\frac{\partial M_s}{\partial T}(X_{n-s},T)$ as an
element of $k[X_{n-s}][T]$, its evaluation at
$T=v_{n-s+1}^{s+1}(X_{n-s})$ can be carried out using the well-known
Horner scheme with $\mathcal{O}(\delta_s)$ arithmetic operations in
$k[X_{n-s}]$. Thus, computing $h_s$ in step {\bf 1} requires
$\mathcal{O}(\delta_s)$ arithmetic operations in $k[X_{n-s}]$ modulo
$m_{s+1}(X_{n-s})$, amounting to $\mathcal{O}\big(\delta_s{\sf
M}(\delta_{s+1})\big)$ arithmetic operations in $k$. Similarly, each
polynomial $w_{n-s+i}^{s+1}$ in step {\bf 3.1} can be computed
within the same cost.

The modular inversion in step {\bf 2} requires $\mathcal{O}\big({\sf
M}(\delta_{s+1})\big)$ arithmetic operations in $k$ and identity
tests in $k$. Finally, step {\bf 3.2} costs $\mathcal{O}\big(s\,{\sf
M}(\delta_{s+1})\big)$ arithmetic operations in $k$. The lemma then
follows immediately.
\end{proof}
%
%
\section{The whole algorithm and its complexity analysis}
\label{sec: the whole algorithm}
Let $F_1\klk F_r,G$ $(r\le n)$ be polynomials in $k[X_1\klk X_n]$ of
degree at most $d$, such that $F_1,\ldots,F_r$ form a reduced
regular sequence on the open set $\{G\not=0\}$, that is, satisfying
hypotheses (${\sf H}_1$)--(${\sf H}_2$) of Section \ref{section:
locally closed sets}. Denote by $V_s\subset\A^n$ the Zariski closure
of $\{F_1=0\klk F_s=0,G\not=0\}$ for $1\le s\le r$, and let
$V:=V_r$. Recall that $V_s$ has pure dimension $n-s$ for $1\le s\le
r$, and denote by $\delta_s$ the degree of $V_s$ for $1\le s\le r$.
We denote $\delta:=\max\{\delta_1,\ldots,\delta_r\}$ and assume that
$\delta>d$.

In this section, we describe the whole algorithm for computing a
Kronecker representation of a lifting fiber of $V$ and analyze its
cost over a general perfect field $k$. This analysis is expressed in
terms of the number of arithmetic operations, equality tests,
and extractions of $p$--th roots in $k$ (if $k$ has characteristic
$p>0$). We then explain how this algorithm applies in the cases
where $k$ is a finite field or the field of rational numbers $\Q$.
%
%
\subsection{The algorithm for a general field $k$}
With the notations and assumptions as above, the following algorithm computes a
Kronecker representation of a lifting fiber of the variety $V$.
\begin{algorithm}[Main algorithm] ${}$
\label{algo: main algorithm for k}
\begin{itemize}
\item[]{\bf Input:}
\begin{enumerate}
  \item
Polynomials $F_1,\ldots,F_r,G\in k[X_1,\ldots,X_n]$ satisfying
(${\sf H}_1$)--(${\sf H}_2$);
\item Access to uniformly random
elements of a finite set $\mathcal{S}\subset k$ of size at least
$\varepsilon^{-1}2n^2r\delta^4$, 
where $0<\varepsilon<1$.
\end{enumerate}
\item[]{\bf Output:}
\begin{enumerate}
\item
Linear forms $Y_1,\ldots,Y_n\in k[X_1,\ldots,X_n]$,
\item
A point $\bfs p^r:=(p_1,\ldots,p_{n-r})\in k^{n-r}$,
\item
Polynomials $m, w_{n-r+2}\klk w_n\in k[T]$,
\end{enumerate}
with the
following properties:
\begin{enumerate}
  \item $Y_1,\ldots,Y_{n-r+1}$ and $\bfs p^r:=(p_1,\ldots,p_{n-r})$ satisfy the conditions
  of Theorem \ref{th: preproc: all conditions};
  \item
  The linear map $\pi_r:\A^n\to\A^{n-r}$ defined by $Y_1,\ldots,Y_{n-r}$, the linear form $Y_{n-r+1}$, and
  the polynomials $m, w_{n-r+2}\klk w_n$ yield a Kronecker
  representation of the lifting fiber $\pi_r^{-1}(\bfs p^r)$ of $V$.
\end{enumerate}

\item[]{\bf 1} Randomly choose the coefficients of linear forms $Y_1,\ldots,Y_n\in
k[X_1,\ldots,X_n]$ from $\mathcal{S}^{n\times n}$. Set $\bfs Y:=(Y_1,\ldots,Y_n)$.
\item[]{\bf 2} Randomly choose the coefficients of a point $\bfs p\in k^{n-1}$
from $\mathcal{S}^{n-1}$. Set $\bfs p^s:=(p_1,\ldots,p_{n-s})$ for $1\le s\le r$.
\item[]{\bf 3} Compute the polynomials
$F_i^{\bfs Y}:=F_i(\bfs Y)$ $(1\le i\le r)$ and $G^{\bfs Y}:=G(\bfs
Y)$.
\item[]{\bf 4} Compute the dense representation $m_1\in k[T]$ of
$$m_1=F_1^{\bfs Y}(\bfs p^1,T)/
\gcd\big(F_1^{\bfs Y}(\bfs p^1,T),G^{\bfs Y}(\bfs p^1,T)\big).$$
\item[]{\bf 5} For $s=1\klk r-1$, do
\begin{itemize}
  \item[]{\bf 5.1} Apply Algorithm \ref{algo: Newton lifting} to compute
  the Kronecker representation of
$$C_s:=\pi_s^{-1}(\{Y_1=p_1,\ldots,Y_{n-s-1}=p_{n-s-1}\}),$$ with
$Y_{n-s+1}$ as primitive element;
  \item[]{\bf 5.2} Apply Algorithm \ref{algo: comput m_(s+1) and v_(n-s+1)} to compute polynomials
$m_{s+1},v_{n-s+1}^{s+1}\in k[T]$ such that  $m_{s+1}(Y_{n-s})\equiv 0$ and
  $Y_{n-s+1}-v_{n-s+1}^{s+1}(Y_{n-s})\equiv 0$ in $\pi_{s+1}^{-1}(\bfs
  p^{s+1})$, with $\deg
m_{s+1}=\delta_{s+1}$ and $\deg
  v_{n-s+1}^{s+1}<\delta_{s+1}$.
  \item[]{\bf 5.3} Apply
Algorithm \ref{algo: conclusion recursive
  step} to compute polynomials $v_{n-s+2}^{s+1}\klk v_n^{s+1}\in k[T]$
  such that $Y_{n-s+j}-v_{n-s+j}^{s+1}(Y_{n-s})\equiv 0$ in $\pi_{s+1}^{-1}(\bfs
  p^{s+1})$ for $2\le j\le s$,
  with $\deg v_{n-s+j}^{s+1}<\delta_{s+1}$
  for $2\le j\le s$.
\end{itemize}
\item[]{\bf 6} Output $m:=m_r$ and $w_{n-r+j}:=\frac{\partial m}{\partial T}\cdot v_{n-r+j}^r\mod m$
for $2\le j\le r$.
\end{itemize}
\end{algorithm}

First we analyze the probability that Algorithm \ref{algo: main
algorithm for k} correctly computes a Kronecker representation of a
lifting fiber of the input variety $V$.
\begin{theorem}\label{th: main algo - prob success}
Assume that $k$ has cardinality at least
$\varepsilon^{-1}2n^2r\delta^4$. Then Algorithm \ref{algo: main algorithm
for k} computes a Kronecker representation of a lifting fiber of $V$
with probability at least $1-8r\varepsilon$.
\end{theorem}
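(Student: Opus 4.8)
The plan is to track the failures of the random choices made in the algorithm, identifying for each one the low-degree hypersurface that must be avoided, and then to apply the Zippel--Schwartz lemma (Lemma~\ref{lemma: Zippel_Schwartz}) together with a union bound. The random choices occur at steps~{\bf 1} and~{\bf 2} (the linear forms $\bfs\lambda$ and the point $\bfs p$), and inside the recursive loop in the invocations of Algorithm~\ref{algo: comput m_(s+1) and v_(n-s+1)} (which internally picks the separating linear forms $\lambda_1,\lambda_2$ and calls Algorithm~\ref{algo: computation projection}, whose own random evaluation points $\alpha_j$ must be lucky). The key observation is that the algorithm's deterministic skeleton is correct: once $(\bfs\lambda,\bfs p)$ satisfies the conclusions of Theorem~\ref{th: preproc: all conditions} for all $1\le s<r$ --- equivalently, once $B(\bfs\lambda,\bfs p)\ne 0$ for the polynomial $B$ of Corollary~\ref{coro: preproc: all conditions for all s} --- then by Theorem~\ref{theo: correctedness: Newton lifting}, Lemma~\ref{lemma: correctness comput m_(s+1) and v_(n-s+1)}, Remark~\ref{rem: correctness algo conclusion recursive step}, and the correctness statement for step~{\bf 4} (the univariate shape lemma for $V_1$), each recursive step correctly lifts a Kronecker representation of $\pi_s^{-1}(\bfs p^s)$ to one of $\pi_{s+1}^{-1}(\bfs p^{s+1})$.

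First I would bound the probability that the outer random choices fail. By Corollary~\ref{coro: preproc: all conditions for all s} the polynomial $B\in\ck[\bfs\Lambda,\widetilde Y_1,\ldots,\widetilde Y_{n-1}]$ has degree at most $2n^2rd\delta^3$, and since $\delta>d$ this is at most $2n^2r\delta^4$. Because the coordinates of $\bfs\lambda$ and $\bfs p$ are chosen uniformly from $\mathcal{S}^{n\times n}\times\mathcal{S}^{n-1}$ with $\#\mathcal{S}\ge\varepsilon^{-1}2n^2r\delta^4\ge\varepsilon^{-1}\deg B$, Lemma~\ref{lemma: Zippel_Schwartz} gives $\Pr[B(\bfs\lambda,\bfs p)=0]\le\deg B/\#\mathcal{S}\le\varepsilon$. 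On this event the linear forms $Y_1,\ldots,Y_n$ and the points $\bfs p^s$ satisfy conditions~\eqref{item: preproc: conditions pi_s}--\eqref{item: preproc: condition fiber with G=0} of Theorem~\ref{th: preproc: all conditions} for every $1\le s<r$, which are exactly the hypotheses $({\sf A}_1)$--$({\sf A}_3)$ and $({\sf B}_1)$--$({\sf B}_5)$ under which all subroutines of the recursive loop were proved correct.

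Next I would account for the failures internal to the $r-1$ recursive steps. For each $s$ with $1\le s<r$, step~{\bf 5.2} invokes Algorithm~\ref{algo: comput m_(s+1) and v_(n-s+1)}, which by Lemma~\ref{lemma: correctness comput m_(s+1) and v_(n-s+1)} succeeds with probability at least $1-7\varepsilon$ provided $\#\mathcal{S}\ge 2\varepsilon^{-1}\delta^4$ (satisfied here, since $\#\mathcal{S}\ge\varepsilon^{-1}2n^2r\delta^4$) --- this single bound already absorbs the $\le 6\varepsilon$ contribution of the four calls to Algorithm~\ref{algo: computation projection} and the $\le\varepsilon$ contribution of Algorithm~\ref{algo: bidimensional shape lemma}. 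Steps~{\bf 5.1} (Newton lifting) and~{\bf 5.3} (conclusion of the recursive step) are deterministic, hence contribute nothing. A union bound over the $r-1$ iterations and the single outer choice then gives total failure probability at most $\varepsilon+(r-1)\cdot 7\varepsilon=(7r-6)\varepsilon\le 8r\varepsilon$, as claimed.

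The main obstacle is not any deep new argument but rather the careful bookkeeping: one must verify that the genericity events in Corollary~\ref{coro: preproc: all conditions for all s} are genuinely sufficient for the correctness hypotheses of \emph{every} subroutine invoked in the loop (Theorem~\ref{theo: correctedness: Newton lifting}, Lemmas~\ref{lemma: correctness algo computation projection} and~\ref{lemma: correctness comput m_(s+1) and v_(n-s+1)}, and the correctness of step~{\bf 4}), and that the random draws for $\bfs\lambda$, $\bfs p$ on the one hand, and the internal $\alpha_j$ and $\lambda_1,\lambda_2$ on the other, are statistically independent, so that the union bound is legitimate. A minor subtlety worth flagging is that when $k$ itself is too small to contain a set $\mathcal{S}$ of the required size, the internal evaluation points of Algorithms~\ref{algo: computation projection} and~\ref{algo: bidimensional shape lemma} may be drawn from a finite extension of $k$; as noted in Remarks~\ref{rem: k definability algo projection} and~\ref{rem: k definability comput m_(s+1) and v_(n-s+1)} this does not affect the correctness of the output, which remains defined over $k$, so the probability estimate is unchanged.
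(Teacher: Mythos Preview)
Your proof is correct and follows essentially the same approach as the paper: bound the failure of the outer choice $(\bfs\lambda,\bfs p)$ by $\varepsilon$ via Corollary~\ref{coro: preproc: all conditions for all s} and Zippel--Schwartz, then bound each of the $r-1$ recursive steps by $7\varepsilon$ and take a union bound to get $(7r-6)\varepsilon\le 8r\varepsilon$. Two minor bookkeeping points: the $1-7\varepsilon$ success probability for Algorithm~\ref{algo: comput m_(s+1) and v_(n-s+1)} is stated in Lemma~\ref{lemma: cost comput m_(s+1) and v_(n-s+1)}, not in Lemma~\ref{lemma: correctness comput m_(s+1) and v_(n-s+1)} (which only accounts for the choice of $\lambda_1,\lambda_2$); and Algorithm~\ref{algo: computation projection} is invoked six times per recursive step (twice directly in steps~{\bf 1}--{\bf 2} and twice more for each $i\in\{1,2\}$ in step~{\bf 5.4}), not four, which is how the $6\varepsilon$ you quote actually arises.
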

\begin{proof}
First, we analyze the probability that the linear forms $Y_1\klk
Y_n$ chosen in step {\bf 1} and the point $\bfs p$ chosen in step
{\bf 2} satisfy the conditions of Theorem \ref{th: preproc: all
conditions} for $1\le s< r$. According to Corollary \ref{coro:
preproc: all conditions for all s}, this holds if the coefficients
of $\bfs Y$ and $\bfs p$ do not annihilate the polynomial $B$ from
Corollary \ref{coro: preproc: all conditions for all s}. Recall that
$\deg B\le 2n^2rd\delta^3$.
By Lemma \ref{lemma: Zippel_Schwartz}, we conclude that a random
choice of the coefficients of $\bfs Y$ and $\bfs p$ is successful
with probability at least $1-\varepsilon$.

Assume that the linear forms $Y_1\klk Y_n$ chosen in step {\bf 1}
and the point $\bfs p$ chosen in step {\bf 2} satisfy the conditions
of Theorem \ref{th: preproc: all conditions} for $1\le s\le r$.

By definition, $V_1$ is the Zariski closure of $V(F_1)\setminus
V(G)$, i.e., $V_1$ is the union of the irreducible components of
$V(F_1)$ not contained in $V(G)$. Since the linear forms
$Y_1,\ldots,Y_n$ and the point $\bfs p^1$ satisfy the hypotheses of
Theorem \ref{th: preproc: all conditions}, the map
$\pi_1:V_1\to\A^{n-1}$ defined by $Y_1,\ldots,Y_{n-1}$ is a finite
morphism, $\bfs p^1$ is a lifting point of $\pi_1$, and $Y_n$ is a
primitive element of $\pi_1^{-1}(\bfs p^1)$. In particular, if
$\mathcal{C}$ is an irreducible component of $V_1$ and
$F_\mathcal{C}$ is the irreducible factor of $F_1$ corresponding to
$\mathcal{C}$,
$$\#\big(\mathcal{C}\cap\pi_1^{-1}(\bfs p^1)\big)
=\deg\mathcal{C}=\#\{F_\mathcal{C}(\bfs p^1,Y_n)=0\}.$$
For any irreducible component $\mathcal{C}$ of
$V(F_1)$ not included in $V_1$, if $F_\mathcal{C}$ is the
irreducible factor of $F_1$ corresponding to $\mathcal{C}$, then
$\mathcal{C}\subset V(G)$, and thus $F_\mathcal{C}$
divides $G$. It follows that $ \gcd(F_1^{\bfs Y},G^{\bfs Y})$ is the
defining polynomial for the union $V_1^c$ of the irreducible
components of $V(F_1)$ contained in $V(G)$, and $ \gcd\big(F_1^{\bfs
Y}(\bfs p^1,Y_n),G^{\bfs Y}(\bfs p^1,Y_n)\big)$ defines
the zero-dimensional variety $V_1^c\cap
\{Y_1=p_1,\ldots,Y_{n-1}=p_{n-1}\}$. As a consequence, the
quotient $F_1^{\bfs
Y}(\bfs p^1,T)/ \gcd\big(F_1^{\bfs Y}(\bfs p^1,T),G^{\bfs Y}(\bfs
p^1,T)\big)$ is the defining polynomial for the zero-dimensional
variety $V_1\cap
\{Y_1=p_1,\ldots,Y_{n-1}=p_{n-1}\}=V_1\cap\pi_1^{-1} (\bfs p^1)$.
This justifies the correctness of the expression of $m_1$ of step {\bf
4}. Since no parametrized variables appear in
the Kronecker representation of the lifting fiber of $V_1$ under consideration,
the polynomials $m_1$ itself provides the complete description of
the Kronecker representation of $V_1$ we seek.

It follows that Algorithm \ref{algo: main algorithm for k} proceeds
into the loop of step {\bf 5} as expected. Then,
Theorem \ref{theo: correctedness: Newton lifting} establishes the
correctness of the application of Algorithm \ref{algo: Newton lifting}
in step {\bf 5.1}. According to Lemmas
\ref{lemma: correctness comput m_(s+1) and v_(n-s+1)} and
\ref{lemma: cost comput m_(s+1) and v_(n-s+1)}, the application of Algorithm
\ref{algo: comput m_(s+1) and v_(n-s+1)} in step {\bf 5.2}
is correct with probability at least $1-7\varepsilon$ for $1\le s<r$,
and thus with probability at least $1-7(r-1)\varepsilon$ for the
entire loop.
Finally, the correctness of the application of Algorithm \ref{algo:
conclusion recursive step} in step {\bf 5.3} is justified in Remark
\ref{rem: correctness algo conclusion recursive step}. The
conclusion of the theorem then readily follows.
\end{proof}

Finally, we analyze the cost of Algorithm \ref{algo: main algorithm
for k}.
\begin{theorem}\label{th: main algo for k}
Suppose that $F_1\klk F_r,G\in k[X_1\klk X_n]$ are given by a
straight--line program $\beta$ in $k[X_1\klk X_n]$ of length $L$.
Then Algorithm \ref{algo: main algorithm for k} can be executed using
$$\mathcal{O}\big(n^4+r^2(L+n^2+r^3+\log_2\delta)\,{\sf M}(d\delta)\,{\sf
M}(\delta)\big)
$$
arithmetic operations in $k$,
$\mathcal{O}\big(r(L+n^2+r\log_2\delta)\,{\sf M}(d\delta)\,{\sf
M}(\delta)\big)$
identity tests in $k$, and $\mathcal{O}(r\delta)$
extractions of $p$--th roots in $k$  (if $k$ has characteristic
$p>0$).
\end{theorem}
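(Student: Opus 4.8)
The plan is to derive both bounds by a step-by-step accounting of the cost of Algorithm~\ref{algo: main algorithm for k}, invoking for each of its subroutines the complexity estimate already proved for it and summing over the $r-1$ iterations of the loop of step~{\bf 5}. Throughout I would use the uniform estimates $\delta_s\le\delta$ and $s\le r\le n$ for $1\le s\le r$, together with the elementary inequalities $1\le\delta\le{\sf M}(\delta)\le{\sf M}(d\delta)$ (since $d\ge 1$) and hence ${\sf M}(\delta)^2\le{\sf M}(d\delta)\,{\sf M}(\delta)$, which is what lets the various partial bounds be collapsed into the single expression in the statement.

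First I would dispatch the preparatory steps. Steps~{\bf 1}--{\bf 2} only draw random elements of $\mathcal S$, at negligible cost. Step~{\bf 3} builds from $\beta$ a straight--line program for $F_1^{\bfs Y}\klk F_r^{\bfs Y},G^{\bfs Y}$; this requires inverting the $n\times n$ matrix defining the change of variables --- e.g.\ via the determinant and adjoint computed by the Samuelson--Berkowitz algorithm, at cost $\mathcal O(n^{\omega+1})=\mathcal O(n^4)$ under the convention $\omega=3$ --- and then prepending $n$ linear forms, which adds $\mathcal O(n^2)$; the resulting straight--line program has length $L':=L+\mathcal O(n^2)$. This is the source of the bare $n^4$ term, the only one not carrying a factor ${\sf M}(d\delta)\,{\sf M}(\delta)$. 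Step~{\bf 4} evaluates $F_1^{\bfs Y},G^{\bfs Y}$ at $(\bfs p^1,T)$, producing univariate polynomials of degree at most $d$, and then computes a single $\gcd$; this costs $\mathcal O(L'\,{\sf M}(d))$ arithmetic operations and $\mathcal O({\sf M}(d)\log_2 d)$ identity tests, absorbed because ${\sf M}(d)\le{\sf M}(d\delta)$.

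Next I would treat one iteration $s$ of the main loop. For step~{\bf 5.1}, Theorem~\ref{th: cost Newton lifting}, applied to the length-$L'$ straight--line program for $F_1^{\bfs Y}\klk F_s^{\bfs Y}$, shows that Algorithm~\ref{algo: Newton lifting} uses $\mathcal O\big((L's+s^4+\log_2\delta_s)\,{\sf M}(\delta_s)^2\big)$ arithmetic operations in $k$ and --- as recorded there --- no identity tests and no $p$--th root extractions, the inversions in the power--series ring being realized by Newton iteration. For step~{\bf 5.2}, Lemma~\ref{lemma: cost comput m_(s+1) and v_(n-s+1)} gives that Algorithm~\ref{algo: comput m_(s+1) and v_(n-s+1)} uses $\mathcal O\big((L'+s\log_2\delta)\,{\sf M}(d\delta)\,{\sf M}(\delta)\big)$ arithmetic operations, the same order of identity tests, and $\mathcal O(\delta)$ extractions of $p$--th roots. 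For step~{\bf 5.3}, Lemma~\ref{lemma: cost conclusion recursive step} gives that Algorithm~\ref{algo: conclusion recursive step} uses $\mathcal O\big(s\,\delta\,{\sf M}(\delta)\big)$ arithmetic operations and $\mathcal O({\sf M}(\delta))$ identity tests.

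Finally I would sum over $s=1\klk r-1$, using $\sum_{s<r}s=\mathcal O(r^2)$, $\sum_{s<r}s^4=\mathcal O(r^5)$ and $L'=L+\mathcal O(n^2)$: step~{\bf 5.1} contributes $\mathcal O\big((r^2(L+n^2)+r^5+r\log_2\delta)\,{\sf M}(d\delta)\,{\sf M}(\delta)\big)$ arithmetic operations; step~{\bf 5.2} contributes $\mathcal O\big((r(L+n^2)+r^2\log_2\delta)\,{\sf M}(d\delta)\,{\sf M}(\delta)\big)$ arithmetic operations, the same order of identity tests, and $\mathcal O(r\delta)$ extractions of $p$--th roots; step~{\bf 5.3} contributes $\mathcal O\big(r^2\,\delta\,{\sf M}(\delta)\big)\subseteq\mathcal O\big(r^2\,{\sf M}(d\delta)\,{\sf M}(\delta)\big)$ arithmetic operations; and step~{\bf 6} contributes $\mathcal O\big(r\,{\sf M}(\delta)\big)$ of each. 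Adding the preparatory $\mathcal O(n^4)$, the arithmetic count becomes $\mathcal O\big(n^4+r^2(L+n^2+r^3+\log_2\delta)\,{\sf M}(d\delta)\,{\sf M}(\delta)\big)$, the identity--test count --- governed by step~{\bf 5.2}, since step~{\bf 5.1} uses none --- becomes $\mathcal O\big(r(L+n^2+r\log_2\delta)\,{\sf M}(d\delta)\,{\sf M}(\delta)\big)$, and the number of $p$--th root extractions is $\mathcal O(r\delta)$. I expect no conceptual obstacle here; the part requiring genuine care is the bookkeeping --- propagating the straight--line program length $L\mapsto L+\mathcal O(n^2)$ through every subroutine, keeping the identity--test tally separate from the arithmetic one (which is precisely what produces the asymmetry between the $r^2$ and $r$ factors), and checking that every auxiliary term is genuinely dominated by $\mathcal O\big(n^4+r^2\cdots{\sf M}(d\delta){\sf M}(\delta)\big)$.
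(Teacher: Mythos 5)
Your proposal matches the paper's proof essentially line for line: the same $\mathcal{O}(n^4)$ for the matrix inversion in step {\bf 3}, the same propagation $L\mapsto L'=L+\mathcal{O}(n^2)$, the same invocation of Theorem \ref{th: cost Newton lifting}, Lemma \ref{lemma: cost comput m_(s+1) and v_(n-s+1)} and Lemma \ref{lemma: cost conclusion recursive step} for steps {\bf 5.1}--{\bf 5.3}, and the same summation over $s$ using $\sum s = \mathcal{O}(r^2)$ and $\sum s^4=\mathcal{O}(r^5)$. The only cosmetic deviation is that you make explicit that step {\bf 5.1} contributes no identity tests (the paper leaves this implicit, simply omitting an identity-test bound from Theorem \ref{th: cost Newton lifting}), and you attribute the $\log_2 d$ factor in step {\bf 4} to identity tests rather than arithmetic; both are immaterial to the final bound.
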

\begin{proof}
Given the coefficients of the linear forms $\bfs
Y:=(Y_1,\ldots,Y_n)$ from step {\bf 1}, the computation in step {\bf
3} consists of performing the change of variables $\bfs X\to\bfs Y$
in the polynomials $F_1,\ldots,F_r,G$. Recall that these polynomials
are given by a straight-line program $\beta$ of length $L$. If
$\mathcal{M}\in k^{n\times n}$ is the matrix defining this change of
variables, namely $\bfs Y=\mathcal{M}\bfs X$, then the transformed polynomials
are $F_i^{\bfs Y}:=F_i(\mathcal{M}^{-1}\bfs Y)$
$(1\le i\le r)$ and $G^{\bfs Y}:=G(\mathcal{M}^{-1}\bfs Y)$.

Thus, we compute the inverse $\mathcal{M}^{-1}$. Using the
Samuelson-Berkowitz algorithm, this can be done with
$\mathcal{O}(n^4)$ arithmetic operations in $k$. Next, we construct a
straight-line program $\beta'$ to evaluate
$F_1^{\bfs Y},\ldots,F_r^{\bfs Y},G^{\bfs Y}$. For this, we
simply prepend the multiplication $\mathcal{M}^{-1}\bfs Y$ to the
straight-line program $\beta$. Since this
multiplication requires $\mathcal{O}(n^2)$ arithmetic operations in $k$,
the straight-line program $\beta'$ has length $L':=\mathcal{O}(L+n^2)$.

Step {\bf 4} consist of computing the dense representation
of the polynomial $m_1$. This involves computing the dense
representations of $F_1^{\bfs Y}(\bfs p^1,T)$ and $G^{\bfs Y}(\bfs
p^1,T)$. As these polynomials have degree at most $d$, their dense
representation can be obtained by executing the
straight-line program $\beta'$ in the ring $k[T]/(T^{d+1})$.
Specifically, we initialize the variables $(X_1,\ldots,X_n)$ in $\beta'$
with $(\bfs p^1,T)$ and perform all the arithmetic
operations modulo $(T^{d+1})$. Then, given the dense representation
of $F_1^{\bfs Y}(\bfs p^1,T)$ and $G^{\bfs Y}(\bfs p^1,T)$, we
compute their gcd and finally obtain $m_1$ by division. All these
computations can be performed with $\mathcal{O}\big(L'{\sf M}(d)+{\sf
M}(d)\log_2 d\big)$ arithmetic operations in $k$.

Now we consider the cost of the $s$th step of the main loop {\bf 5}.
According to Theorem \ref{th: cost Newton lifting}, the step {\bf
5.1} can be implemented to run with
$$\mathcal{O}\big((L's+s^4+\log_2\delta_s){\sf M}(\delta_s)^2\big)$$
operations in $k$. On the other hand, Lemma \ref{lemma: cost comput
m_(s+1) and v_(n-s+1)} asserts that Algorithm \ref{algo: comput
m_(s+1) and v_(n-s+1)} requires
$$ \mathcal{O}\big((L'+s\,\log_2\delta)\,{\sf M}(d\delta)\,{\sf
M}(\delta) \big)$$
arithmetic operations in $k$,
$\mathcal{O}\big((L'+s\,\log_2\delta)\,{\sf M}(d\delta)\,{\sf
M}(\delta)\big)$ identity tests in $k$, and $\mathcal{O}(\delta)$
extractions of $p$--th roots in $k$. Finally, Lemma \ref{lemma: cost
conclusion recursive step} shows that Algorithm \ref{algo:
conclusion recursive step} can be executed with
$$\mathcal{O}\big(s\,\delta\,{\sf M}(\delta)\big)$$
arithmetic
operations in $k$, and $\mathcal{O}({\sf M}(\delta))$ identity tests
in $k$. We conclude that the $s$th iteration of the main loop
requires
$$
\mathcal{O}\big((L's+s^4+s\log_2\delta)\,{\sf M}(d\delta)\,{\sf
M}(\delta)\big)
$$
arithmetic operations in $k$,
$\mathcal{O}\big((L'+s\,\log_2\delta)\,{\sf M}(d\delta)\,{\sf
M}(\delta)\big)$
identity tests in $k$, and $\mathcal{O}(\delta)$
extractions of $p$--th roots in $k$.

Step {\bf 6} has a negligible cost in comparison.

By summing the costs, the number of
identity tests, and the number of extractions of $p$--th roots over
$1\le s\le r$, we readily deduce the theorem.
\end{proof}
%
%
\subsection{The algorithm for a finite field $\fq$}
Next, we discuss how Algorithm \ref{algo: main algorithm for k} is
applied when $k$ is the finite field $\fq$ of $q$ elements, where $q$
is a power of a prime $p$. Given $r,n$ with $r\le n$, we assume that we
have polynomials
$F_1\klk F_r,G\in\fq[X_1\klk X_n]$ of
degree at most $d$ satisfying hypotheses (${\sf H}_1$)--(${\sf H}_2$) of
Section \ref{section: locally closed sets}. Our goal is
to compute a Kronecker representation of a lifting fiber
of the Zariski closure $V$ of $\{F_1=0\klk
F_r=0,G\not=0\}$.

If the cardinality $q$ is ``small'', it may happen that no
Kronecker representation of a lifting fiber is defined over
$\fq$. In that case, we work over a finite extension of $\fq$
with sufficiently large cardinality, applying
Algorithm \ref{algo: main algorithm for k} {\em mutatis mutandis}
over this extension.

On the other hand, for the case where the field
$\fq$ has sufficiently large cardinality so that there
exists a Kronecker representation of a lifting
fiber of $V$ defined over $\fq$, a more detailed discussion is
needed. Specifically, given $0<\varepsilon<1/2$, we
assume from now on that
$q>2\varepsilon^{-1}n^2rd\delta^3$.  The algorithm is as
follows.
\begin{algorithm}[Main algorithm for $k=\fq$] ${}$
\label{algo: main algorithm for fq}
\begin{itemize}
\item[]{\bf Input:}
\begin{enumerate}
\item Polynomials $F_1,\ldots,F_r,G\in \fq[X_1,\ldots,X_n]$ satisfying
(${\sf H}_1$)--(${\sf H}_2$);
\item Access to uniformly random elements of a finite set
$\mathcal{S}\subset \fq$ of size at least
$\varepsilon^{-1}2n^2rd\delta^3$, where $0<\varepsilon<1/2$.
\end{enumerate}
\item[]{\bf Output:}
\begin{enumerate}
\item
Linear forms $Y_1,\ldots,Y_n\in\fq[X_1,\ldots,X_n]$,
\item
A point $\bfs p^r:=(p_1,\ldots,p_{n-r})\in\fq^{n-r}$,
\item
Polynomials $m, w_{n-r+2}\klk w_n\in\fq[T]$,
\end{enumerate}
with the following properties:
\begin{enumerate}
  \item The linear map $\pi_r:V\to\A^{n-r}$ defined by $Y_1,\ldots,Y_{n-r}$
  is a finite morphism;
  \item $\bfs p^r$ is a lifting point of $\pi_r$;
  \item
  The map $\pi_r:V\to\A^{n-r}$, together with the linear form $Y_{n-r+1}$ and
  the polynomials $m, w_{n-r+2}\klk w_n$, forms a Kronecker
  representation of $\pi_r^{-1}(\bfs p^r)$.
\end{enumerate}

\item[]{\bf 1} Randomly choose the coefficients of linear forms $Y_1,\ldots,Y_n\in
\fq[X_1,\ldots,X_n]$ from $\mathcal{S}^{n\times n}$. Set $\bfs Y:=(Y_1,\ldots,Y_n)$.
\item[]{\bf 2} Randomly choose the coefficients
of a point $\bfs p:=(p_1,\ldots,p_n)$ from $\mathcal{S}^{n-1}$. Set
$\bfs p^s:=(p_1,\ldots,p_{n-s})$ for $1\le s\le r$.
\item[]{\bf 3} Compute the polynomials
$F_i^{\bfs Y}:=F_i(\bfs Y)$ $(1\le i\le r)$ and $G^{\bfs Y}:=G(\bfs
Y)$.
\item[]{\bf 4} Compute the dense representation $m_1\in\fq[T]$ of
$$m_1=F_1^{\bfs Y}(\bfs p^1,T)/
\gcd\big(F_1^{\bfs Y}(\bfs p^1,T),G^{\bfs Y}(\bfs p^1,T)\big).$$
\item[]{\bf 5} For $e:=\max\{1,\log_q(12r\varepsilon^{-1}\delta^4)\}$, set
$k:=\fqe$.
\item[]{\bf 6} For $s=1\klk r-1$, do
\begin{itemize}
  \item[]{\bf 6.1} Apply Algorithm \ref{algo: Newton lifting}
  to compute a Kronecker representation of
$$C_s:=\pi_s^{-1}(\{Y_1=p_1,\ldots,Y_{n-s-1}=p_{n-s-1}\}),$$ with
$Y_{n-s+1}$ as primitive element;
  \item[]{\bf 6.2} Apply Algorithm \ref{algo: comput m_(s+1) and v_(n-s+1)} over $k$
  to compute polynomials $m_{s+1},v_{n-s+1}^{s+1}\in\fq[T]$ such that
  $m_{s+1}(Y_{n-s})\equiv 0$ and
  $Y_{n-s+1}-v_{n-s+1}^{s+1}(Y_{n-s})\equiv 0$ in $\pi_{s+1}^{-1}(\bfs
  p^{s+1})$, with $\deg
m_{s+1}=\delta_{s+1}$ and $\deg
  v_{n-s+1}^{s+1}<\delta_{s+1}$.
  \item[]{\bf 6.3} Apply
Algorithm \ref{algo: conclusion recursive step} to compute
polynomials $v_{n-s+2}^{s+1}\klk v_n^{s+1}\in\fq[T]$
such that   $Y_{n-s+i}-v_{n-s+i}^{s+1}(Y_{n-s})\equiv 0$ in $\pi_{s+1}^{-1}(\bfs
  p^{s+1})$  for $2\le i\le s$,
with $\deg
v_{n-s+i}^{s+1}<\delta_{s+1}$ for $2\le i\le s$.
\end{itemize}
\item[]{\bf 7} Output $m:=m_r$ and $w_{n-r+j}:=\frac{\partial m}{\partial T}\cdot v_{n-r+j}^r\mod m$
for $2\le j\le r$.
\end{itemize}
\end{algorithm}

Algorithm \ref{algo: main algorithm for fq} is similar to Algorithm
\ref{algo: main algorithm for k}, with one important difference: in
the main loop of step {\bf 6} we work over an extension $\fqe$ of
$\fq$ to ensure that random choices in $\fqe$ are successful with
high probability. It is worth noting that, in the finite field
setting, the required condition on the cardinality of the base
field~$\fq$ is significantly less restrictive than in the case of a
general perfect field~$k$. The key point is that, although the
computations in step {\bf 6} are carried out over the extension
$\fqe$, the resulting output remains defined over $\fq$. The bit
complexity and probability of success are analyzed in the following
result.
\begin{theorem}\label{th: main algorithm for fq}
Let $F_1\klk F_r,G\in\fq[X_1\klk X_n]$ be polynomials of maximum
degree $d>0$ satisfying hypotheses $({\sf H}_1)$--$({\sf H}_2)$. For
$1\le s\le r$, denote by $V_s$ the Zariski closure of
$\{F_1=\cdots=F_s=0,G\not=0\}$ and by $\delta_s$ its degree. Let
$\delta:=\max\{\delta_1,\ldots,\delta_r\}$, and assume that
$\delta>d$. Suppose that $F_1\klk F_r,G\in\fq[X_1\klk X_n]$ are
given by a straight--line program $\beta$ in $\fq[X_1\klk X_n]$ of
length $L$. Given $0<\varepsilon<1/2$, assume that
$q>4\varepsilon^{-1}n^2rd\delta^3$. Then there exists a finite
morphism $\pi:V_r\to\A^{n-r}$ defined over $\fq$, and a lifting
fiber $\pi^{-1}(\bfs p^r)$ of $V_r$ defined over $\fq$, with a
Kronecker representation defined over $\fq$, which can be computed
with bit complexity
$$\mathcal{O}\Big(\big((n^4+r^2(L+n^2+r^3+\log_2\delta)\,{\sf M}(d\delta)\,{\sf
M}(\delta)\,{\sf M}(\log_2(r\delta))\big)\,{\sf M}(\log_2q)\Big)$$
and probability of success at least $1-2\varepsilon$.
\end{theorem}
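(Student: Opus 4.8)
The plan is to assemble the bit-complexity and success-probability bounds by combining the correctness established in Theorem~\ref{th: main algo - prob success}, the arithmetic-operation count of Theorem~\ref{th: main algo for k}, and a careful bookkeeping of the cost of each field operation in $\fq$ and in the extension $\fqe$ used in step~{\bf 6}. The key observation is that Algorithm~\ref{algo: main algorithm for fq} differs from Algorithm~\ref{algo: main algorithm for k} only in that the random choices made inside the main loop (steps {\bf 6.2} and its subroutines) are performed over the extension $\fqe$ with $e=\max\{1,\lceil\log_q(12r\varepsilon^{-1}\delta^4)\rceil\}$, while the preprocessing choices of step~{\bf 1}--{\bf 2} are made directly in $\fq$. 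Since $q>4\varepsilon^{-1}n^2rd\delta^3$, Corollary~\ref{coro: preproc: all conditions for all s} and Lemma~\ref{lemma: Zippel_Schwartz} guarantee that the choice of $\bfs Y$ and $\bfs p$ in $\fq^{n\times n}\times\fq^{n-1}$ avoids the hypersurface $\{B=0\}$ of degree at most $2n^2rd\delta^3$ with probability at least $1-\varepsilon$; and since $\#\fqe\ge 12r\varepsilon^{-1}\delta^4$, the internal random choices in step~{\bf 6} succeed with probability at least $1-7(r-1)\varepsilon/(12r)\cdot(\text{something})$---more precisely, each invocation of Algorithm~\ref{algo: comput m_(s+1) and v_(n-s+1)} fails with probability at most $7\varepsilon/(12r)$ by Lemma~\ref{lemma: correctness comput m_(s+1) and v_(n-s+1)} rescaled to the cardinality $\#\fqe$, so the whole loop contributes at most $7(r-1)\cdot\frac{\varepsilon}{12r}<\varepsilon$ additional failure probability; combined with the preprocessing this yields overall success probability at least $1-2\varepsilon$.

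First I would record the $\fq$-definability of the output. Remarks~\ref{rem: k definability algo projection} and~\ref{rem: k definability comput m_(s+1) and v_(n-s+1)} show that although the evaluation points $\alpha_j$ (in Algorithm~\ref{algo: computation projection}) and the separating slopes $\lambda_1,\lambda_2$ (in Algorithm~\ref{algo: comput m_(s+1) and v_(n-s+1)}) may lie in $\fqe$, the polynomials $a_{F_{s+1}}$, $a_G$, $m_{s+1}$, $v_{n-s+1}^{s+1}$, and hence $m=m_r$ and $w_{n-r+j}$, are intrinsically defined over $\fq$ because the curves $C_s$, the polynomials $F_i^{\bfs Y}, G^{\bfs Y}$, and the lifting fibers $\pi_{s+1}^{-1}(\bfs p^{s+1})$ are all defined over $\fq$ (the linear forms $\bfs Y$ and the point $\bfs p$ having been chosen in $\fq$). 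The finite morphism $\pi=\pi_r$ is given by linear forms with coefficients in $\fq$, and Theorem~\ref{th: preproc: all conditions} conditions~\eqref{item: preproc: conditions pi_s} ensure $\bfs p^r\in\fq^{n-r}$ is a genuine lifting point. So the stated $\fq$-rationality of all output data follows immediately.

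Next comes the bit-complexity count, which is the routine-but-lengthy core. I would take the arithmetic-operation counts from Theorem~\ref{th: main algo for k}---namely $\mathcal{O}(n^4+r^2(L+n^2+r^3+\log_2\delta)\,{\sf M}(d\delta)\,{\sf M}(\delta))$ arithmetic operations and $\mathcal{O}(r(L+n^2+r\log_2\delta)\,{\sf M}(d\delta)\,{\sf M}(\delta))$ identity tests over the working field---then multiply by the bit cost of a single field operation. Operations in $\fq$ cost $\mathcal{O}({\sf M}(\log_2 q))$ bit operations; operations in $\fqe$ cost $\mathcal{O}({\sf M}(e\log_2 q))=\mathcal{O}({\sf M}(\log_2(r\delta^4/\varepsilon))\,{\sf M}(\log_2 q))=\mathcal{O}({\sf M}(\log_2(r\delta))\,{\sf M}(\log_2 q))$ up to the tacit dependence on $\log_2\varepsilon^{-1}$ absorbed in the soft-Oh; similarly identity tests in $\fqe$, and the $\mathcal{O}(r\delta)$ extractions of $p$-th roots each cost $\mathcal{O}(\log_2(q^e/p))$ operations in $\fqe$, which is subsumed. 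Collecting everything and using that the extension-field operations dominate, the total bit complexity is $\mathcal{O}\big((n^4+r^2(L+n^2+r^3+\log_2\delta)\,{\sf M}(d\delta)\,{\sf M}(\delta))\,{\sf M}(\log_2(r\delta))\,{\sf M}(\log_2 q)\big)$, as claimed.

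\textbf{The main obstacle.} The hard part will be the bookkeeping needed to justify that the condition $q>4\varepsilon^{-1}n^2rd\delta^3$ on the \emph{base} field---rather than the much larger $\varepsilon^{-1}2n^2r\delta^4$ required in the general-field Algorithm~\ref{algo: main algorithm for k}---suffices for an overall success probability of $1-2\varepsilon$. This requires splitting the error budget carefully: the $\varepsilon$ spent on the preprocessing (step~{\bf 1}--{\bf 2}) needs only a set of size $\varepsilon^{-1}\deg B\le \varepsilon^{-1}2n^2rd\delta^3$ inside $\fq$, which the degree bound $\deg B\le 2n^2rd\delta^3$ of Corollary~\ref{coro: preproc: all conditions for all s} makes possible precisely because of the sharpened estimate (this is the improvement over \cite[Theorem~3.3]{CaMa06a}); the remaining $\varepsilon$ is spent on the internal loop, where one is free to enlarge the field to $\fqe$ with $\#\fqe\ge 12r\varepsilon^{-1}\delta^4$, absorbing the $7(r-1)$-fold union bound of Lemma~\ref{lemma: correctness comput m_(s+1) and v_(n-s+1)}. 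One must also verify that enlarging to $\fqe$ does not spoil the validity of the preprocessing (it does not, since the geometric conditions of Theorem~\ref{th: preproc: all conditions} are stable under field extension) and that the Newton--Hensel lifting of step~{\bf 6.1} can legitimately be run over $\fqe$ while its input---the Kronecker representation of the lifting fiber of $V_s$, which for $s\ge 1$ is computed in a previous iteration---remains consistent; this is where one invokes the incremental $\fq$-definability carefully, making sure each $m_s,v^s_\bullet$ fed into the next iteration is the genuine $\fq$-rational object even though intermediate computations used $\fqe$-coefficients.
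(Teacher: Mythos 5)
Your proposal follows essentially the same route as the paper's proof: existence over $\fq$ via the counting argument for the hypersurface $\{B=0\}$ of Corollary~\ref{coro: preproc: all conditions for all s}, $\fq$-rationality of the output via Remarks~\ref{rem: k definability algo projection} and~\ref{rem: k definability comput m_(s+1) and v_(n-s+1)}, bit complexity by pricing the arithmetic-operation count of Theorem~\ref{th: main algo for k} at $\mathcal{O}({\sf M}(e\log_2 q))$ per extension-field operation, and the two-stage error budget (preprocessing over $\fq$, inner loop over $\fqe$). You also correctly identify the improved degree bound of Corollary~\ref{coro: preproc: all conditions for all s} as the crucial ingredient that permits the weaker hypothesis on $q$.

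One quantitative slip, though. With $\#\fqe\ge 12r\varepsilon^{-1}\delta^4=2(\varepsilon/(6r))^{-1}\delta^4$, applying Lemma~\ref{lemma: cost comput m_(s+1) and v_(n-s+1)} (which you cite as Lemma~\ref{lemma: correctness comput m_(s+1) and v_(n-s+1)}, but the $1-7\varepsilon$ bound comes from the cost lemma) with $\varepsilon'=\varepsilon/(6r)$ gives a per-iteration failure probability of $7\varepsilon'=7\varepsilon/(6r)$, not $7\varepsilon/(12r)$ as you write. Summed over $r-1$ iterations this yields $7(r-1)\varepsilon/(6r)$, which exceeds $\varepsilon$ as soon as $r\ge 7$. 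The paper closes this by a finer decomposition inside step~{\bf 6.2}: the six calls to Algorithm~\ref{algo: computation projection} need a set of size $\varepsilon^{-1}(D_s+1)(\delta_s^2+D_s)$ (which is $\mathcal{O}(d^2\delta^2/\varepsilon)$, considerably smaller than $\delta^4/\varepsilon$ when $d\ll\delta$), while only the two separating slopes in Algorithm~\ref{algo: bidimensional shape lemma} genuinely require size $2\varepsilon^{-1}\delta^4$; allocating $\varepsilon/(12r)$ to each of the six projection calls and $\varepsilon/(2r)$ to the shape-lemma step gives $\varepsilon/r$ per iteration, hence $<\varepsilon$ total. Your single-lemma rescaling is cleaner to state but needs $\#\fqe$ inflated by an extra $\Theta(1)$ factor (or an explicit appeal to the component-wise structure, as above) to actually achieve the $1-2\varepsilon$ bound for all $r$.
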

\begin{proof}
Let $B$ be the (nonzero) polynomial from Corollary
\ref{coro: preproc: all conditions for all s}. Since
$B\in\cfq[\bfs\Lambda,\widetilde{\bfs Y}]$ has degree at most
$2n^2rd\delta^3$, we conclude (by, e.g.,  \cite[Lemma 2.1]{CaMa06})
that $B$ has at most $$\deg B\cdot q^{n^2+n-2}\le \varepsilon
q^{n^2+n-1}<q^{n^2+n-1}$$ zeros in $\fq^{n^2+n-1}$. It follows that
there exists $(\bfs\lambda,\bfs p)\in\fq^{n^2+n-1}$ such that
$B(\bfs\lambda,\bfs p)\not=0$. Setting $\bfs Y=(Y_1,\ldots,Y_n):=
\bfs\lambda \cdot \bfs X$ and $\bfs p:=(p_1,\ldots,p_{n-1})$,
Theorem \ref{th: preproc: all conditions} guarantees that the linear
map $\pi:V\to\A^{n-r}$ defined by $Y_1,\ldots,Y_{n-r}$ is a finite
morphism, the fiber $\pi^{-1}(\bfs p^r)$ defined by $\bfs
p^r:=(p_1,\ldots,p_{n-r})$ is a lifting fiber of $\pi$, and
a Kronecker representation of $\pi^{-1}(\bfs p^r)$ exists over $\fq$.

Now we explain why Algorithm \ref{algo: main algorithm for fq}
computes a Kronecker representation of such a lifting fiber $\pi^{-1}(\bfs
p^r)$ with the stated complexity and probability of success.

From the proof of Theorem \ref{th: main algo for k}, we know that
steps {\bf 3} and {\bf 4} can be executed with
$\mathcal{O}\big(L'{\sf M}(d)+{\sf M}(d)\log_2 d\big)$ arithmetic
operations in $\fq$, where $L':=\mathcal{O}(L+n^2)$. We next analyze
the cost of the $s$th iteration of the main loop {\bf 6}. According to
Theorem \ref{th: cost Newton lifting}, the step {\bf 6.1} can be carried out with
$$\mathcal{O}\big((L's+s^4+\log_2\delta_s){\sf M}(\delta_s)^2\big)$$
operations in $\fq$. For step {\bf 6.2}, Lemma \ref{lemma: cost
comput m_(s+1) and v_(n-s+1)} shows that it requires
$\mathcal{O}\big((L'+s\,\log_2\delta)\,{\sf M}(d\delta)\,{\sf
M}(\delta) \big)$
arithmetic operations in $k:=\fqe$,
$\mathcal{O}\big((L'+s\,\log_2\delta)\,{\sf M}(d\delta)\,{\sf
M}(\delta)\big)$ identity tests in $k$, and $\mathcal{O}(\delta)$
extractions of $p$--th roots in $k$. Since each arithmetic operation or
identity test in $k$ requires $\mathcal{O}({\sf M}(e))$ arithmetic
operations in $\fq$, and each extraction of a $p$--th root in $k$
costs $\mathcal{O}(\log (q^e/p))$ arithmetic operations
in $\fq$, we conclude that the overall cost of step {\bf 6.2}
is $$\mathcal{O}\big((L'+s\,\log_2\delta)\,{\sf
M}(d\delta)\,{\sf M}(\delta)\,{\sf M}(e) \big)$$
arithmetic operations in $\fq$.

Finally, by Lemma \ref{lemma: cost conclusion recursive step},
step {\bf 6.3} can be executed with
$$\mathcal{O}\big(s\,\delta\,{\sf M}(\delta)\big)$$
arithmetic operations in $\fq$. Therefore, the total cost
of the $s$th step of the main loop is
$$\mathcal{O}\big(s(L'+s^3+\log_2\delta)\,{\sf M}(d\delta)\,{\sf
M}(\delta)\,{\sf M}(e)\big)
$$
arithmetic operations in $\fq$. Summing over $1\le s\le r$
and noting that each arithmetic operation can be
performed with $\mathcal{O}({\sf M}(\log_2q))$ bit operations, we
directly obtain the stated overall complexity bound.

We now analyze the probability of success. The first step is a selection
of $(\bfs\lambda,\bfs p)\in\fq^{n^2+n-1}$.  By our assumption
$q>2\varepsilon^{-1}n^2rd\delta^3$, we know that
$B(\bfs\lambda,\bfs p)\not=0$ with probability at least $1-\varepsilon$.

Let $\bfs Y=(Y_1,\ldots,Y_n):=
\bfs \lambda\cdot\bfs X$ and $\bfs p:=(p_1,\ldots,p_{n-1})$.
Recall
that $(Y_1,\ldots,Y_{n-s})$ and $\bfs p^s:=(p_1,\ldots,p_{n-s})$
satisfy all the conditions of Theorem \ref{th: preproc: all
conditions} for $1\le s< r$. Steps {\bf 3}, {\bf 4}, {\bf 6.1}
and {\bf 6.3} are deterministic.
Step {\bf 6.2}, however, involves random choices through Algorithms \ref{algo:
computation projection} and \ref{algo: bidimensional shape lemma}.
Algorithm \ref{algo: computation projection} is invoked six times
per iteration, each call requiring a random choice in $\fqe$.
Using the bound
$$q^e\ge 24r\varepsilon^{-1}\delta^4\ge 12r\varepsilon^{-1}
(D_s+1)(\delta_s^2+D_s),
$$
we deduce that each such random
choice fails with probability at most $\frac{\varepsilon}{12r}$,
yielding an overall failure
probability of at most $\frac{\varepsilon}{2r}$.
Additionally,  Algorithm \ref{algo: bidimensional shape lemma} makes
two random choices per iteration. By
$$q^e\ge 2r\varepsilon^{-1}\delta^4\ge  2r\varepsilon^{-1}\delta_s^4,$$
these choices fail with probability at most
$\frac{\varepsilon}{2r}$.
Consequently, the total probability of failure in step {\bf 6.2} per
iteration is bounded by $\frac{\varepsilon}{r}$, and the probability
of success over all iterations is at least
$1-\varepsilon$.

Combining this with the probability of
success for selecting $(\bfs\lambda,\bfs p)$ yields an overall
probability of success of at least $1-2\varepsilon$ for Algorithm \ref{algo:
main algorithm for fq}. This completes the proof.
\end{proof}
%
%
\section{Solving systems defined over $\mathbb{Q}$}
\label{sec: systems over Q}
In this section we consider polynomials $F_1\klk F_r,G$ $(r\le n)$
of $\mathbb{Z}[X_1\klk X_n]$ of degree at most $d$, with coefficients
of bit length most $h$, satisfying conditions (${\sf H}_1$)--(${\sf H}_2$) of Section
\ref{section: locally closed sets}, and describe an algorithm for
computing a Kronecker representation of a lifting fiber
of the Zariski closure $V$ of $\{F_1=0 \klk F_r=0, G \neq 0\}$.

A direct application of Algorithm \ref{algo: main algorithm for k}
for $k=\Q$ may lead to exponential growth in the bit length of the
integers involved in intermediate computations. To avoid this issue,
a modular approach is proposed in \cite{GiMa19}, which consists of
performing the computations modulo a prime number $p$, followed by a
step of $p$--adic lifting and rational reconstruction to recover the
coefficients of the polynomials defining the output Kronecker
representation over $\Q$.

This approach requires, on the one hand, the choice of a ``lucky''
prime $p$---that is, one that yields a good modular reduction
with relatively small bit length---and, on the other hand, prior
knowledge of the bit length of the integers appearing in the output
Kronecker representation. In the following, we describe the main
algorithmic aspects of these two steps, referring the reader to
\cite{GiMa19} for details and complete proofs.

In the following, the height ${\sf h}(\alpha)$ of $\alpha:=p/q\in\Q$,
with $p,q\in\Z\setminus\{0\}$ coprime, is ${\sf
h}(\alpha):=\max\{\log_2 |p|, \log_2 |q|\}$; the height of $0$ is
defined as $0$. By the bit length of an integer we mean its height.
We define the height ${\sf h}(F)$ of a polynomial $F$ with
coefficients in $\Q$ as the  maximum of the heights of its
coefficients. For a prime $p$, we denote by $\Z_{(p)}$ the subring
of $\Q$ defined by $\Z_{(p)}=\{\alpha \in \Q: \alpha = a/b, \text{
with } a, b \in \Z \text{ and } p \nmid b \}$. For a polynomial $M$
with coefficients in $\Z_{(p)}$, $M_p$ denotes the polynomial with
coefficients in $\fp:=\Z/p\Z$ obtained by reduction modulo $p$. We
denote by $\Z_p$ the ring of $p$-adic integers. Given a $p$-adic
integer $\alpha$ and $k\in \mathbb{N}$, by the $p$-adic
approximation of order $k$ of $\alpha$ we mean the element $m\in \Z$
with $0 \le m < p^k$ such that $\alpha \equiv m \mod p^k$. Furthermore,
for a univariate polynomial  $f:=\sum_{i=0}^ na_iT^i\in \Z_p[T]$ of
formal degree $n$, by its $p$-adic approximation of order $k$ we
mean the polynomial $g:=\sum_{i=0}^ nb_iT^i \in \Z[T]$ with $0\le b_i
< p^k$ and $a_i\equiv b_i \mod p^k$ for $0 \le i \le n$.
%
%
\subsection{The choice of a lucky prime $p$ for a good modular reduction}
The first task is to make a ``lucky'' choice of an integer matrix
$\bfs\lambda\in \Z^{n^2}$ and an integer point $\bfs p\in \Z^{n-1}$,
which will serve as the
matrix of coefficients for the linear forms defining our change of
variables, and the coordinates of our lifting points. We seek such
integers to have relatively small bit length.

Let $\bfs \Lambda:=(\Lambda_{ij})_{1\le i, j\le n}$ denote a set of $n^2$
indeterminates over $\overline{\Q}$. According to \cite{GiMa19},
there exists a nonzero polynomial $\textsf{R}\in \overline{\Q}[\bfs
\Lambda]$ such that
\begin{equation}\label{eq: definition D}
\deg \textsf{R}\le
D:=r(n+1)((n+1)d\delta^2+2\delta^3+n^22^{n-1}d\delta^{2}),
\end{equation}
which satisfies the following property: for every $\bfs \lambda \in
\Z^{n^2}$ such that $\textsf{R}(\bfs \lambda)\neq 0$, there exists a
nonzero polynomial $\textsf{N}_{\scriptscriptstyle \bfs \lambda}\in \Z[Z_1
\klk Z_{n-1}]$ with $\deg
\textsf{N}_{\scriptscriptstyle \bfs \lambda} \le D$ such that, if
$$\mathfrak{N}:=\det(\bfs \lambda)\textsf{N}_{\scriptscriptstyle \bfs
\lambda}(\bfs p)\neq 0$$ for some $\bfs p\in \Z^{n-1}$, then
$\mathfrak{N}$ is divisible by every ``unlucky'' prime. We will call
such a pair $(\bfs \lambda, \bfs p)$ and any prime $p$ not dividing $\mathfrak{N}$
``lucky''.

More precisely, the polynomials $\textsf{R}$ and
$\textsf{N}_{\scriptscriptstyle \bfs \lambda}$ are constructed
so that, for a lucky pair $(\bfs\lambda, \bfs p)$ and a
lucky prime $p$, the hypotheses (${\sf H}_1$)--(${\sf H}_2$) of Section
\ref{section: locally closed sets}, as well as conditions
$(1)$--$(4)$ of Theorem \ref{th: preproc: all conditions}, remain
satisfied after reducing $F_1\klk F_r, G$ modulo $p$. To state
this precisely, we introduce the following notations.

For
$(\bfs \lambda, \bfs p)\in \Z^{n^2}\times\Z^{n-1}$, define
$Y_i:=\lambda_{i1}X_1 + \cdots + \lambda_{in}X_n$ for $1\le i \le
n$, $\bfs \lambda^s:=(\lambda_{ij})_{\scriptscriptstyle 1 \le i \le
n-s+1, 1\le j \le n}$, and $\bfs p^s:=(p_1 \klk p_{n-s})$ for $1\le s
\le r$. Let $\pi_s:V_s\to\A^{n-s}$ denote the linear map
defined by $Y_1,\ldots,Y_{n-s}$. Given a prime $p$, denote by
$V_{s,p}$ the Zariski closure in $\mathbb{A}_{\scriptscriptstyle \cfp}^n
:=\A^n(\cfp)$ of the set
$\{F_{1,p}=0 \klk F_{s,p}=0 , G_p \neq 0\}$ for $1\le s \le r$.

With this notation, we can state the first result on the properties of lucky
pairs $(\bfs \lambda, \bfs p)$ and lucky primes $p$ (see
\cite[Theorem 5.10]{GiMa19}).
\begin{theorem} \label{th:_lucky prime_a}
Let $(\bfs\lambda, \bfs p)\in \Z^{n^2}\times
\Z^{n-1}$ be a lucky pair, and let
$\mathfrak{N}:=\det(\bfs \lambda)\textsf{N}_{\bfs \lambda}(\bfs p)$.
Then conditions $(1)$--$(4)$ of Theorem \ref{th: preproc: all
conditions} hold for $1\le s < r$. Furthermore,
if $p$ is a prime such that $p\nmid \mathfrak{N}$, the linear
forms $Y_{1,p}\klk Y_{n,p}$ define a new set of variables in
$\cfp[X_1 \klk X_n]$, and the following hold for
$1 \leq s < r$, with $\bfs q:=\bfs p^s$ and $\bfs q^*:=\bfs
p^{s+1}$:
\begin{enumerate}
  \item  The polynomials $F_{1,p}\klk F_{s,p}$ and $G_p$ satisfy
  hypotheses $({\sf H}_1)$--$({\sf H}_2)$ of Section \ref{section: locally closed sets}
  for $k:=\cfp$. Moreover, $V_{s,p}\subset
  \mathbb{A}_{\scriptscriptstyle \cfp}^n$
  has degree $\delta_s$ for $1\le s \le r$.
  \item The map $\pi_{s,p}:V_{s,p}\rightarrow
  \mathbb{A}_{\scriptscriptstyle \cfp}^{n-s}$
  defined by $Y_{1,p},\dots,Y_{n-s,p}$ is a finite morphism;
  $\bfs q_p\in \fp^{n-s}$ is a lifting point of $\pi_{s,p}$
  with $\pi_{s,p}^{-1}(\bfs q_p)\subset \{G_p\neq 0\}$;
  and $Y_{n-s+1,p}$ is a primitive element of $\pi_{s,p}^{-1}(\bfs
  q_p)$.
  \item The map $\pi_{s+1,p}:V_{s+1,p}\rightarrow
  \mathbb{A}_{\scriptscriptstyle \cfp}^{n-s-1}$
  defined by $Y_{1,p},\dots,Y_{n-s-1,p}$ is a finite morphism;
  $\bfs q^*_p$ is a lifting point of $\pi_{s+1,p}$
  with $\pi_{s+1,p}^{-1}(\bfs q_p^*)\subset \{G_p\neq 0\}$; and
  $Y_{n-s,p}$ is a primitive element of $\pi_{s+1,p}^{-1}(\bfs
  q^*_p)$.
  \item Any $\bfs \xi\in \pi_{s,p}\bigl(\pi_{s+1,p}^{-1}(\bfs q^*_p)\bigr)$
  is a lifting point of $\pi_{s,p}$, and $Y_{n-s+1,p}$ is a primitive element
  of $\pi_{s,p}^{-1}(\bfs \xi)$.
  \item No point of $\pi_{s,p}(W_{\bfs q_p^*}\cap \{G_p=0\})$
  belongs to $\pi_{s,p}\bigl(\pi_{s+1,p}^{-1}(\bfs q^*_p)\bigr)$.
\end{enumerate}
\end{theorem}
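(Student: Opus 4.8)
The plan is to follow the construction underlying \cite[Theorem 5.10]{GiMa19}, adapting it to the present setting of locally closed sets over a perfect field and substituting the sharper degree estimates of Section \ref{section: locally closed sets} for the ones used there. First I would produce $\textsf{R}\in\overline{\Q}[\bfs\Lambda]$ as a nonzero polynomial whose nonvanishing at $\bfs\lambda$ forces the change of variables $\bfs Y:=\bfs\lambda\cdot\bfs X$ to be generic enough over $\overline{\Q}$ for the Chow forms $P_{V_s}$, the discriminants $\rho_s$ of \eqref{eq:def_disc_chow}, and all the auxiliary polynomials appearing in the proof of Theorem \ref{th: preproc: all conditions} (namely the $A_s$, $B_s^D$, $B_s^G$, $\widetilde B_s$, $\widehat B_s$) to be well defined and nonzero; concretely, $\textsf{R}$ should be taken as a multiple of a nonzero coefficient of the polynomial $B$ of Corollary \ref{coro: preproc: all conditions for all s}, viewed as a polynomial in the variables $\widetilde{\bfs Y}$. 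Then, for each $\bfs\lambda$ with $\textsf{R}(\bfs\lambda)\not=0$, I would build $\textsf{N}_{\scriptscriptstyle\bfs\lambda}\in\Z[Z_1\klk Z_{n-1}]$ as a common multiple of the numerators and denominators of all the specialized nonzero polynomials whose nonvanishing certifies the conditions of Theorem \ref{th: preproc: all conditions}, arranged so that $\mathfrak{N}:=\det(\bfs\lambda)\textsf{N}_{\scriptscriptstyle\bfs\lambda}(\bfs p)\not=0$ guarantees $B(\bfs\lambda,\bfs p)\not=0$---whence conditions $(1)$--$(4)$ over $\overline{\Q}$, i.e.\ the first assertion---while every unlucky prime divides $\mathfrak{N}$.

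For the degree bound \eqref{eq: definition D}, I would track the Bézout and arithmetic-Bézout estimates of Section \ref{section: locally closed sets}: summing the degrees of $A_s$ (Proposition \ref{prop: preproc: Noether pos + prim elem}), of $\rho_s$, of $B_s^D$, $B_s^G$ (Lemma \ref{lemma: preproc: degree estimate}) and of $\widetilde B_s$, $\widehat B_s$ (proof of Theorem \ref{th: preproc: all conditions}) over $1\le s<r$ produces the terms $(n+1)d\delta^2$ and $2\delta^3$ in \eqref{eq: definition D}, and the term $n^22^{n-1}d\delta^2$ accounts for the determinantal conditions $D_s$, $D_{s+1}$, whose contribution is estimated, following \cite{GiMa19}, by a Bézout-type bound on the varieties $\{D_s=0\}\cap V_s$ after elimination of the $\bfs X$--variables via Lemma \ref{lemma: preproc: degree estimate} (the factor $2^{n-1}$ reflecting the worst-case behaviour of the $(n\times n)$ determinant defining $D_s$). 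The same bookkeeping gives $\deg\textsf{R}\le D$.

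The modular part I would establish by combining three ingredients, for a fixed prime $p$ with $p\nmid\mathfrak{N}$. First, flatness over $\Z_{(p)}$ of the coordinate rings and of the localizations at $G$ involved---ensured because $p$ divides neither $\det(\bfs\lambda)$ nor the content and leading data of the Chow forms $P_{V_s}$---so that reduction modulo $p$ commutes with taking the Zariski closure of $\{F_1=0\klk F_s=0,G\not=0\}$ and preserves pure dimension, degree, and the radicality/regularity encoded in $({\sf H}_1)$--$({\sf H}_2)$; this gives item $(1)$. Second, the observation that the integers obtained by clearing denominators in the nonzero values $A_s(\bfs\lambda)$, $\rho_s(\bfs\lambda,\bfs p^s)$, $B_s^D(\bfs\lambda,\bfs p^s)$, $B_s^G(\bfs\lambda,\bfs p^s)$, $\widehat B_s(\bfs\lambda,\bfs p^{s+1})$ all divide $\mathfrak{N}$, so that none of these quantities vanishes modulo $p$. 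Third, feeding these nonvanishing conditions into the arguments in the proof of Theorem \ref{th: preproc: all conditions}---which are characteristic-free, relying only on finiteness of the relevant morphisms, separability of the specialized Chow forms, and properness of intersections---to recover items $(2)$--$(5)$ verbatim over $\cfp$, with $\bfs q:=\bfs p^s$ and $\bfs q^*:=\bfs p^{s+1}$.

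The hard part will be the first ingredient: showing that, outside the controlled finite set of primes dividing $\mathfrak{N}$, reduction modulo $p$ preserves the dimension, the degree and the radicality of the intermediate varieties $V_s$ and of their lifting fibers. This amounts to torsion-freeness over $\Z_{(p)}$ of the coordinate rings and of the localizations involved, together with an effective description---via the arithmetic intersection estimates of \cite{DaKrSo13}---of the finitely many primes to be excluded, so that they can be absorbed into the degree bound for $\textsf{N}_{\scriptscriptstyle\bfs\lambda}$. All of this is carried out in detail in \cite[Section 5]{GiMa19}; I would reuse that construction of $\textsf{R}$ and $\textsf{N}_{\scriptscriptstyle\bfs\lambda}$, replacing the degree bounds used there by the sharper ones of Section \ref{section: locally closed sets}, which is precisely what yields the improved value of $D$ in \eqref{eq: definition D}.
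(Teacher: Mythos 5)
The paper supplies no proof of Theorem~\ref{th:_lucky prime_a}: it is stated as a direct citation to \cite[Theorem 5.10]{GiMa19}, with the polynomials $\textsf{R}$ and $\textsf{N}_{\bfs\lambda}$ and the degree bound $D$ of \eqref{eq: definition D} all imported wholesale (``According to \cite{GiMa19}, there exists~\ldots''). You correctly identify that the underlying argument is the one in \cite{GiMa19}, and your outline of the three ingredients---a genericity polynomial~$\textsf{R}$ controlling the change of variables, an auxiliary polynomial $\textsf{N}_{\bfs\lambda}$ whose nonvanishing at $\bfs p$ certifies the conditions over~$\overline{\Q}$ and detects unlucky primes, and a flatness/torsion-freeness argument over $\Z_{(p)}$ for the modular reduction---is the correct high-level picture.

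However, there is a genuine misreading in the way you account for the degree bound. You claim that $D$ in \eqref{eq: definition D} is obtained by ``replacing the degree bounds used there by the sharper ones of Section~\ref{section: locally closed sets}'' and that this ``is precisely what yields the improved value of $D$''. This is not what the paper does, nor could it be: the bound of Corollary~\ref{coro: preproc: all conditions for all s} is $2n^2 r d\delta^3$, polynomial in $n$, while \eqref{eq: definition D} contains the factor $n^2 2^{n-1} d\delta^2$, which is exponential in $n$. The paper takes $D$ unchanged from \cite{GiMa19}; the refined estimates from Theorem~\ref{th: preproc: all conditions} are used elsewhere in the $\Q$-analysis (through the interplay with the finite-field algorithm applied modulo~$p$), but they do not shrink $D$ itself. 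Had you actually substituted the Section~\ref{section: locally closed sets} bounds into the construction of $\textsf{R}$ and $\textsf{N}_{\bfs\lambda}$, the resulting degree would not contain the $2^{n-1}$ factor, and your own narrative---which derives that factor from an incompatible ``Bézout-type bound after elimination of the $\bfs X$-variables''---is at best speculative and does not reproduce \eqref{eq: definition D}. Relatedly, for your first assertion (that $\mathfrak{N}\neq 0$ forces $B(\bfs\lambda,\bfs p)\neq 0$ for the polynomial $B$ of Corollary~\ref{coro: preproc: all conditions for all s}) you need to explain why the conditions encoded by $\textsf{R}$ and $\textsf{N}_{\bfs\lambda}$ subsume those encoded by $B$; they are not the same polynomial, and the implication must be arranged at construction time (as \cite{GiMa19} does) rather than asserted.
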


Let $(\bfs\lambda, \bfs p)\in \Z^{n^2}\times \Z^{n-1}$ be a
lucky pair. Then the linear forms $Y_1 \klk Y_n$ define a new set
of variables for $\Q[X_1 \klk X_n]$, the map
$\pi_s:V_s\to\A^{n-s}$ defined by $Y_1,\ldots,Y_{n-s}$ is a finite
morphism, and $\bfs p^s$ is a lifting point of $\pi_s$, for $1\le s
\le r$. Let $m_s$ $w^s_{n-s+2} \klk w^s_n$ be the polynomials of
$\Q[T]$ which, together with the linear forms $Y_1 \klk Y_n$, the map
$\pi_s: V_s\rightarrow \A^{n-s}$, and the point $\bfs p^s$, define a
Kronecker representation of the fiber $\pi_s^{-1}(\bfs p^s)$, with
$Y_{n-s+1}$ as primitive element, for $1\le s \le r$. Thus, $m_s$ is
the minimal polynomial of $Y_{n-s+1}$ in $\pi_s^{-1}(\bfs p^s)$, and we have
$$m'_s(Y_{n-s+1})Y_{n-s+i}\equiv w^s_{n-s+i}(Y_{n-s+1})\textrm{ in }
\pi_s^{-1}(\bfs p^s)$$ for $2\le i \le s$, with $\deg m_s=\delta_s$ and
$\deg w^s_{n-r+i} < \delta_s$ for $2\le i \le s$.

%
The following result asserts that the output obtained
modulo a lucky prime $p$ in Algorithm \ref{algo: kronecker for Q}
corresponds to the first-order $p$-adic approximations of the polynomials
in the Kronecker representation over $\Q$ (see \cite[Proposition
5.12]{GiMa19}).
\begin{theorem} \label{th:_lucky prime_b}
Let $(\bfs\lambda, \bfs p)\in \Z^{n^2}\times \Z^{n-1}$ be a lucky
pair, and let $\mathfrak{N}$ be the integer from Theorem \ref{th:_lucky
prime_a}. For $1\le s \le r$, let:
\begin{itemize}
\item  $m_s, w^s_{n-s+2} \klk w^s_n\in \mathbb{Q}[T]$ denote a
Kronecker representation
of the lifting fiber $\pi_s^{-1}(\bfs p^s)$, with $Y_{n-s+1}$ as
primitive element;
\item  $M_s, W^s_{n-s+2} \klk W^s_n\in \mathbb{Q}[Y_{n-s}, T]$
denote a Kronecker representation
of the lifting curve $C_s=\pi_s^{-1}(Y_1=p_1 \klk
Y_{n-s-1}=p_{n-s-1})$, with $Y_{n-s+1}$ as primitive element.
\end{itemize}
If $p$ is a prime such that $p \nmid \mathfrak{N}$, then all these
polynomials have coefficients in $\Z_{(p)}$, and their reductions
modulo $p$ yield:
\begin{itemize}
\item a Kronecker representation $m_{s,p}, w^s_{{n-s+2},p} \klk w^s_{n,p}\in \fp[T]$
of the lifting fiber $\pi_{s,p}^{-1}(\bfs p^s_p)$, with
$Y_{{n-s+1},p}$ as a primitive element; and
\item a Kronecker representation $M_{s,p}, W^s_{{n-s+2},p} \klk W^s_{n}\in \fp[Y_{n-s}, T]$
of the lifting curve $C_{s,p}:=\pi_{s,p}^{-1}(\{Y_{1,p}=p_{1,p} \klk
Y_{n-s-1,p}=p_{n-s-1,p}\})$, with $Y_{{n-s+1},p}$ as a primitive
element.
\end{itemize}
\end{theorem}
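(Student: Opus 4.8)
The plan is to deduce both assertions from the behaviour under reduction modulo $p$ of a single object: the Chow form of $V_s$. First I would normalize the Chow form of $V_s$ to a \emph{primitive} polynomial $P_{V_s}\in\Z[\bfs\Lambda,\widetilde{Y}_1\klk\widetilde{Y}_{n-s+1}]$ with the properties recorded in the proof of Lemma \ref{lemma: preproc: integral extension}: it is homogeneous of degree $\delta_s$ in each block $(\bfs\Lambda_i,\widetilde{Y}_i)$, and its coefficient of $\widetilde{Y}_{n-s+1}^{\delta_s}$ is the polynomial $A_s^1\in\Z[\bfs\Lambda^*]$. By the construction of $\mathfrak{N}$ in \cite{GiMa19} (it is a multiple of $\det\bfs\lambda$ and of the relevant specializations of the polynomials $A_s$, $\rho_s$, $B_s$ and of all resultants entering their definitions), a prime $p\nmid\mathfrak{N}$ satisfies $A_s^1(\bfs\lambda^*)\not\equiv 0$ and $\rho_s(\bfs\lambda,\bfs p^s)\not\equiv 0$ modulo $p$; and since $P_{V_s}$ is primitive, its reduction $P_{V_s,p}$ is nonzero. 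Combining this with Theorem \ref{th:_lucky prime_a}$(1)$--$(2)$ --- which guarantees that $V_{s,p}$ has degree $\delta_s$, that $\pi_{s,p}$ is finite, and that $Y_{n-s+1,p}$ is a primitive element of the reduced lifting fibers --- a comparison of degrees in $\widetilde{Y}_{n-s+1}$ shows that $P_{V_s,p}$ coincides, up to a nonzero scalar of $\fp$, with the Chow form of $V_{s,p}$.

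Granting this, the fibre case is immediate. By the defining property of the Chow form, the minimal polynomial of $Y_{n-s+1}$ in $\pi_s^{-1}(\bfs p^s)$ is $m_s=A_s^1(\bfs\lambda^*)^{-1}P_{V_s}(\bfs\lambda,\bfs p^s,T)$, monic of degree $\delta_s$; its coefficients lie in $\Z_{(p)}$ because $A_s^1(\bfs\lambda^*)$ is a $p$--unit and $\bfs\lambda,\bfs p^s$ are integral, and its reduction $m_{s,p}$ is the minimal polynomial of $Y_{n-s+1,p}$ in $\pi_{s,p}^{-1}(\bfs p^s_p)$, separable of degree $\delta_s$ because, by Theorem \ref{th:_lucky prime_a}$(2)$, $\bfs p^s_p$ is a lifting point of $\pi_{s,p}$ and $Y_{n-s+1,p}$ is primitive. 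The parametrizations are handled the same way: differentiating the Chow-form identity as in the proof of Proposition \ref{prop: preproc: Noether pos + prim elem} gives polynomials $\widehat{Q}_k\in\Z[\bfs\Lambda,\widetilde{\bfs Y}]$ with $\deg_{\widetilde{\bfs Y}}\widehat{Q}_k\le\delta_s-1$ realising \eqref{eq: Kronecker trick chow form with partial derivative specialized}; specializing $\widetilde{Y}_1\klk\widetilde{Y}_{n-s}$ at $\bfs p^s$ and composing with the change of variables yields $w^s_{n-s+i}\in\Z_{(p)}[T]$ of degree $<\delta_s$, whose reductions satisfy $m'_{s,p}(Y_{n-s+1,p})\,Y_{n-s+i,p}\equiv w^s_{n-s+i,p}(Y_{n-s+1,p})$ on $\pi_{s,p}^{-1}(\bfs p^s_p)$. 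Since $\fp[X_1\klk X_n]/I(\pi_{s,p}^{-1}(\bfs p^s_p))$ has dimension $\delta_s$ and $m_{s,p}$ is separable, these data satisfy the ideal identity \eqref{ident:kronecker_repres_ideals} and form a Kronecker representation with $Y_{n-s+1,p}$ as primitive element.

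The curve case runs in parallel: one specializes only $\widetilde{Y}_1\klk\widetilde{Y}_{n-s-1}$ at $\bfs p^{s+1}$, keeping $\widetilde{Y}_{n-s}$ free, so that $M_s=A_s^1(\bfs\lambda^*)^{-1}P_{V_s}(\bfs\lambda,\bfs p^{s+1},Y_{n-s},T)\in\Z_{(p)}[Y_{n-s},T]$ and the analogue of \eqref{eq: parametriz lifting curve with partial deriv} produces $W^s_{n-s+i}\in\Z_{(p)}[Y_{n-s},T]$ of $T$--degree $<\delta_s$. That $M_{s,p}$ is the minimal polynomial of $Y_{n-s+1,p}$ in $C_{s,p}$, with $\deg_T M_{s,p}=\deg M_{s,p}=\delta_s$, follows as in Proposition \ref{prop: properties M_s}: $C_{s,p}=V_{s,p}\cap\{Y_{1,p}=p_{1,p}\klk Y_{n-s-1,p}=p_{n-s-1,p}\}$ has pure dimension $1$ by Fact \ref{fact: preimage finite mapping} applied to the finite morphism $\pi_{s,p}$; the B\'ezout inequality together with the $\delta_s$ points of the reduced lifting fiber $\pi_{s,p}^{-1}(\bfs p^s_p)\subset C_{s,p}$ force $\deg C_{s,p}=\delta_s$; and the separability of $M_{s,p}$ is inherited from $M_s(p_{n-s,p},T)=m_{s,p}$, whose discriminant $\rho_s(\bfs\lambda,\bfs p^s)$ is a $p$--unit. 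The defining relations and the ideal identity then transfer verbatim, giving the asserted Kronecker representation of $C_{s,p}$.

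The hard part will be the last claim of the first paragraph, namely that for a lucky $p$ the integral Chow form $P_{V_s}$ reduces to a nonzero scalar multiple of the Chow form of $V_{s,p}$ --- equivalently, that every prime for which the reduction acquires a spurious factor, drops a component, or records a wrong multiplicity is absorbed into $\mathfrak{N}$. This is precisely the content packaged in \cite{GiMa19} and summarised here in Theorem \ref{th:_lucky prime_a}; once it is available, the remaining steps are the bookkeeping above, using that the fibre and curve representations of Definition \ref{def: geometric solution} are uniquely determined by the geometry and by the choice of $(\bfs\lambda,\bfs p)$, so they must agree with the Chow-form expressions and therefore reduce well. In fact the present statement is \cite[Proposition 5.12]{GiMa19}, which one may simply invoke; the argument above indicates how it is obtained.
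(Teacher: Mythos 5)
The paper gives no proof of Theorem \ref{th:_lucky prime_b}: it is presented as imported verbatim from \cite[Proposition 5.12]{GiMa19}, and the surrounding text states this explicitly. Your closing sentence correctly identifies this and invokes the same reference, so in effect your proposal agrees with the paper's (non-)argument.

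The sketch you add on top is a reasonable reconstruction of how the cited result is obtained, and it is consistent with the machinery already used in Section 3 (the Chow form $P_{V_s}$, the trick of differentiating the Chow-form identity with respect to $\Lambda_{n-s+1,k}$ to get the parametrizations, the discriminant $\rho_s$ as the obstruction). Two caveats are worth flagging, mostly for your own awareness since the paper does not demand them. First, the step ``a comparison of degrees in $\widetilde{Y}_{n-s+1}$ shows that $P_{V_s,p}$ coincides, up to a nonzero scalar, with the Chow form of $V_{s,p}$'' is not a degree count: equal degree plus separability of $P_{V_s,p}(\bfs\lambda,\bfs p^s,T)$ does not by itself force the two forms to be associates; one needs the divisibility in one direction (the Chow form of $V_{s,p}$ divides $P_{V_s,p}$), which in turn rests on a nontrivial specialization argument that $V_{s,p}$ really is the mod-$p$ fibre of a flat integral model of $V_s$, and that is exactly what is encoded in the definition of $\mathfrak N$ in \cite{GiMa19}. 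You rightly single this out as the hard part and defer it, which is honest, but the phrase ``comparison of degrees'' understates what is being absorbed. Second, the normalization of $P_{V_s}$ to a primitive integral polynomial, and the deduction that $m_s=A_s^1(\bfs\lambda^*)^{-1}P_{V_s}(\bfs\lambda,\bfs p^s,T)$, needs the standing assumption that the fibre $\pi_s^{-1}(\bfs p^s)$ is reduced of cardinality $\delta_s$ (so that the specialized Chow polynomial is already square-free); you use this implicitly via the lucky-pair hypothesis, and it should be stated. With those caveats, your plan is the correct one and is the route \cite{GiMa19} takes; but since the paper simply cites, the most economical answer is exactly your final sentence.
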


Finally, an additional property of a lucky prime $p$ is required for the
$p$-adic lifting step in Algorithm \ref{algo: kronecker for Q} to
succeed (see Proposition \ref{prop: correctness algo p
adic lift}): namely, $p$ must not divide either the integer
$\alpha_s$ defined in \cite[(5.4)]{GiMa19}, or the integer $A_s(\bfs
\lambda^{s+1})$. Consequently, we have the following result.
\begin{proposition} \label{prop:_lucky prime_c}
Under the hypotheses and notations of Theorem \ref{th:_lucky prime_b},
for $1\le s \le r$ the polynomial $m_s$ belongs to $\Z_{(p)}[T]$, and
the polynomial $G(\bfs p^s, Y_{n-s+1} \klk Y_n)m_s(Y_{n-s+1})$ belongs to the
radical of the ideal generated in  $\Z_{(p)}[Y_{n-s+1} \klk Y_n]$ by the
polynomials
$F_1(\bfs p^s, Y_{n-s+1} \klk Y_n) \klk F_s(\bfs p^s, Y_{n-s+1} \klk
Y_n)$.
\end{proposition}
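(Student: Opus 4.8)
The plan is to deduce the first assertion directly from the results already imported from \cite{GiMa19}, and to reduce the second assertion to a membership statement over $\overline{\Q}$ together with a control of denominators under reduction modulo $p$.

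For the first assertion I would simply invoke Theorem~\ref{th:_lucky prime_b}: since $p$ is lucky, in particular $p\nmid\mathfrak{N}$, and that theorem already guarantees that the polynomials $m_s,w^s_{n-s+2}\klk w^s_n$ forming the Kronecker representation of the lifting fiber $\pi_s^{-1}(\bfs p^s)$ have coefficients in $\Z_{(p)}$; hence $m_s\in\Z_{(p)}[T]$.

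For the second assertion I would first establish the analogous statement over $\overline{\Q}$. Because $(\bfs\lambda,\bfs p)$ is lucky, Theorem~\ref{th:_lucky prime_a} (equivalently, conditions $(1)$--$(4)$ of Theorem~\ref{th: preproc: all conditions}) ensures that $\pi_s\colon V_s\to\A^{n-s}$ is finite, that $\bfs p^s$ is a lifting point of $\pi_s$ with $\pi_s^{-1}(\bfs p^s)\subset\{G\not=0\}$, and that $Y_{n-s+1}$ is a primitive element of $\pi_s^{-1}(\bfs p^s)$; thus $m_s$ is the minimal polynomial of $Y_{n-s+1}$ in the coordinate ring of $\pi_s^{-1}(\bfs p^s)$, so $m_s(Y_{n-s+1})$ vanishes identically on $\pi_s^{-1}(\bfs p^s)$. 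Next I would observe that any $\bfs y\in\A^s$ with $F_j(\bfs p^s,\bfs y)=0$ for $1\le j\le s$ and $G(\bfs p^s,\bfs y)\not=0$ produces a point $(\bfs p^s,\bfs y)\in V(F_1\klk F_s)\setminus\{G=0\}\subseteq V_s$ whose first $n-s$ coordinates equal $\bfs p^s$, hence $(\bfs p^s,\bfs y)\in\pi_s^{-1}(\bfs p^s)$; since, conversely, every point of $\pi_s^{-1}(\bfs p^s)$ is a zero of $F_1(\bfs p^s,\cdot)\klk F_s(\bfs p^s,\cdot)$ lying outside $\{G=0\}$, the set $V(F_1(\bfs p^s,\cdot)\klk F_s(\bfs p^s,\cdot))\subseteq\A^s$ is the disjoint union of $\pi_s^{-1}(\bfs p^s)$ and of its part contained in $\{G(\bfs p^s,\cdot)=0\}$. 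Consequently $G(\bfs p^s,Y_{n-s+1}\klk Y_n)\,m_s(Y_{n-s+1})$ vanishes on $V(F_1(\bfs p^s,\cdot)\klk F_s(\bfs p^s,\cdot))$, and the Nullstellensatz — together with the fact that formation of radicals commutes with the perfect field extension $\Q\subset\overline{\Q}$ — yields an integer $N\ge 1$ and polynomials $g_1\klk g_s\in\Q[Y_{n-s+1}\klk Y_n]$ with
\[
\bigl(G(\bfs p^s,Y_{n-s+1}\klk Y_n)\,m_s(Y_{n-s+1})\bigr)^N=\sum_{j=1}^s g_j\,F_j(\bfs p^s,Y_{n-s+1}\klk Y_n).
\]
(Equivalently, this identity already follows over $\Q$ from $m_s(Y_{n-s+1})\in I(\pi_s^{-1}(\bfs p^s))=\sqrt{(F_1(\bfs p^s,\cdot)\klk F_s(\bfs p^s,\cdot)):G(\bfs p^s,\cdot)^\infty}$, which is exactly what the lifting-point hypothesis provides.)

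Finally I would descend this identity to $\Z_{(p)}$. Bounding $N$ and the degrees of the $g_j$ by the effective Nullstellensatz, the coefficients of the $g_j$ are the solution of a linear system whose coefficient matrix has integer entries built from the coefficients of the $F_j(\bfs p^s,\cdot)$ and whose right--hand side is $p$--integral by the first assertion; solving by Cramer's rule, the $g_j$ can be taken so that their denominators divide a single nonzero integer attached to the data $(\bfs\lambda,\bfs p)$ and to $s$. The content of \cite[\S5]{GiMa19} is precisely that this integer may be taken to be (a divisor of) $\alpha_s\cdot A_s(\bfs\lambda^{s+1})$, together with the quantities already absorbed into $\mathfrak{N}$; since $p$ is lucky it divides none of these, so $g_j\in\Z_{(p)}[Y_{n-s+1}\klk Y_n]$, the displayed identity holds in $\Z_{(p)}[Y_{n-s+1}\klk Y_n]$, and therefore $G(\bfs p^s,Y_{n-s+1}\klk Y_n)\,m_s(Y_{n-s+1})$ lies in the radical of $(F_1(\bfs p^s,\cdot)\klk F_s(\bfs p^s,\cdot))$ over $\Z_{(p)}$. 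I expect this last step to be the main obstacle: making the denominator bound explicit in terms of the integers $\alpha_s$ and $A_s(\bfs\lambda^{s+1})$ entering the definition of a lucky prime is the technical heart of the argument, and I would import it from \cite{GiMa19} rather than reprove it here.
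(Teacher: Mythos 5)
Your proposal is correct and matches the paper's (implicit) treatment: the paper also gives no self-contained proof of this proposition, but instead states it as a direct consequence of the lucky-prime conditions imported from \cite{GiMa19} (namely $p\nmid\mathfrak{N}$, $p\nmid\alpha_s$, and $p\nmid A_s(\bfs\lambda^{s+1})$), with the first assertion following verbatim from Theorem \ref{th:_lucky prime_b} and the radical membership relying on the effective bounds established in \cite[\S5]{GiMa19}. Your reconstruction — Nullstellensatz over $\Q$ followed by denominator control — correctly identifies where the imported quantities enter, and you are right to flag the explicit denominator bound as the genuinely technical part that one cannot reprove here without repeating the height estimates of \cite{GiMa19}.
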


Next we analyze the probability of success in selecting a lucky
pair $(\bfs \lambda,\bfs p)$ and a lucky prime $p$.
Let $0<\varepsilon < 1$, and define the set $\mathcal{S}:=\{0,
\dots, \lfloor 2\varepsilon^{-1}D\rfloor\}$, where $D$ is defined as
in \eqref{eq: definition D}. The following result shows that a
pair $(\bfs\lambda, \bfs p)$ chosen uniformly at random in
$\mathcal{S}^{n^2}\times \mathcal{S}^{n-1}$ is lucky with high
probability (see \cite[Lemma 6.1]{GiMa19}).
\begin{lemma}\label{lemma: lambda_and_p-prob} Let $(\bfs \lambda, \bfs p)$
be a pair chosen uniformly at random in $\mathcal{S}^{n^2}\times
\mathcal{S}^{n-1}$. Then the probability that $\textsf{R}(\bfs
\lambda )\cdot\textsf{N}_{\scriptscriptstyle \bfs \lambda}(\bfs
p)\neq 0$ is greater than $1-\varepsilon$.
\end{lemma}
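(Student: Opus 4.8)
The plan is to apply the Zippel--Schwartz lemma (Lemma~\ref{lemma: Zippel_Schwartz}) twice, exploiting the independence of the random choices of $\bfs\lambda\in\mathcal{S}^{n^2}$ and $\bfs p\in\mathcal{S}^{n-1}$: first to the nonzero polynomial $\textsf{R}\in\overline{\Q}[\bfs\Lambda]$ in the $n^2$ entries of $\bfs\Lambda$, and then, for each admissible value of $\bfs\lambda$, to the polynomial $\textsf{N}_{\scriptscriptstyle\bfs\lambda}\in\Z[Z_1\klk Z_{n-1}]$. The two facts we rely on from the construction in \cite{GiMa19} (recalled in the paragraph preceding \eqref{eq: definition D}) are that $\deg\textsf{R}\le D$ and that, for \emph{every} $\bfs\lambda\in\Z^{n^2}$ with $\textsf{R}(\bfs\lambda)\neq 0$, the polynomial $\textsf{N}_{\scriptscriptstyle\bfs\lambda}$ is nonzero with $\deg\textsf{N}_{\scriptscriptstyle\bfs\lambda}\le D$. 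We also use that, by definition of $\mathcal{S}$,
$$\#\mathcal{S}=\lfloor 2\varepsilon^{-1}D\rfloor+1>2\varepsilon^{-1}D.$$

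First I would bound the probability of the bad event $\mathcal{E}_1:=\{\textsf{R}(\bfs\lambda)=0\}$. Since $\textsf{R}$ is a nonzero polynomial of degree at most $D$ in $n^2$ variables, Lemma~\ref{lemma: Zippel_Schwartz} (applied with $k:=\overline{\Q}$ and the finite set $\mathcal{S}\subset\Z\subset\overline{\Q}$) gives that $\textsf{R}$ has at most $D(\#\mathcal{S})^{n^2-1}$ zeros in $\mathcal{S}^{n^2}$, whence $\Pr[\mathcal{E}_1]\le D/\#\mathcal{S}<\varepsilon/2$. Next I would bound the bad event $\mathcal{E}_2:=\{\textsf{R}(\bfs\lambda)\neq 0\ \text{and}\ \textsf{N}_{\scriptscriptstyle\bfs\lambda}(\bfs p)=0\}$. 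Conditioning on the value of $\bfs\lambda$ and using that $\bfs p$ is chosen independently and uniformly in $\mathcal{S}^{n-1}$, for each fixed $\bfs\lambda$ with $\textsf{R}(\bfs\lambda)\neq 0$ the polynomial $\textsf{N}_{\scriptscriptstyle\bfs\lambda}\in\Z[Z_1\klk Z_{n-1}]$ is nonzero of degree at most $D$, so Lemma~\ref{lemma: Zippel_Schwartz} yields $\Pr_{\bfs p}[\textsf{N}_{\scriptscriptstyle\bfs\lambda}(\bfs p)=0]\le D/\#\mathcal{S}<\varepsilon/2$. Averaging this uniform bound over all admissible $\bfs\lambda$ gives $\Pr[\mathcal{E}_2]<\varepsilon/2$.

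Finally, since $\{\textsf{R}(\bfs\lambda)\cdot\textsf{N}_{\scriptscriptstyle\bfs\lambda}(\bfs p)=0\}\subseteq\mathcal{E}_1\cup\mathcal{E}_2$, the union bound gives $\Pr[\textsf{R}(\bfs\lambda)\cdot\textsf{N}_{\scriptscriptstyle\bfs\lambda}(\bfs p)=0]<\varepsilon$, hence $\Pr[\textsf{R}(\bfs\lambda)\cdot\textsf{N}_{\scriptscriptstyle\bfs\lambda}(\bfs p)\neq 0]>1-\varepsilon$, as claimed. The only delicate point is the conditional structure of $\mathcal{E}_2$: since $\textsf{N}_{\scriptscriptstyle\bfs\lambda}$ depends on $\bfs\lambda$, one must invoke that the nonvanishing of $\textsf{N}_{\scriptscriptstyle\bfs\lambda}$ and the bound $\deg\textsf{N}_{\scriptscriptstyle\bfs\lambda}\le D$ hold uniformly for all $\bfs\lambda$ outside $\{\textsf{R}=0\}$, so that a single Zippel--Schwartz estimate in $\bfs p$ works simultaneously for all such $\bfs\lambda$; with this in place the remainder of the argument is a routine combination of the Zippel--Schwartz bound with the independence of the two random choices.
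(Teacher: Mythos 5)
Your proof is correct and takes essentially the same approach as the paper: two applications of the Zippel--Schwartz lemma, one to $\textsf{R}$ in the $\bfs\lambda$ variables and one, conditionally on $\textsf{R}(\bfs\lambda)\neq 0$, to $\textsf{N}_{\scriptscriptstyle\bfs\lambda}$ in the $\bfs p$ variables, relying on the uniform degree bound $\deg\textsf{N}_{\scriptscriptstyle\bfs\lambda}\le D$ for all admissible $\bfs\lambda$. The paper combines the two estimates multiplicatively as $(1-\varepsilon/2)^2\ge 1-\varepsilon$ rather than via a union bound on the bad events, but the two decompositions are equivalent and your explicit remark on the uniformity of the bound in $\bfs\lambda$ is exactly the point the paper uses implicitly.
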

\begin{proof}
Since $\deg \textsf{R}\le D$, it follows from Lemma \ref{lemma: Zippel_Schwartz}
that for a random choice of $\bfs \lambda$ in
$\mathcal{S}^{n^2}$, the probability that $\textsf{R}(\bfs \lambda
)\neq 0$ is greater that $1-\frac{\varepsilon}{2}$. Similarly, as
$\deg \textsf{N}_{\scriptscriptstyle \bfs \lambda}\le D$, for a
point $\bfs p$ chosen uniformly at random in $\mathcal{S}^{n-1}$,
the conditional probability that $\textsf{N}_{\scriptscriptstyle
\bfs \lambda}(\bfs p)\neq 0$, given that $\textsf{R}(\bfs \lambda
)\neq 0$, is greater than $1-\frac{\varepsilon}{2}$. Therefore, the
probability that $\textsf{R}(\bfs \lambda )\cdot
\textsf{N}_{\scriptscriptstyle \bfs \lambda}(\bfs p)\neq 0$ for a
random $(\bfs \lambda, \bfs p)$ in $\mathcal{S}^{n^2}\times \mathcal{S}^{n-1}$
is at least
$(1-\frac{\varepsilon}{2})^2\ge 1-\varepsilon$.
\end{proof}

Let $(\bfs \lambda, \bfs p)\in \mathcal{S}^{n^2}\times
\mathcal{S}^{n-1}$ be a lucky pair. Using height estimates from
\cite{DaKrSo13}, we obtain upper bounds for the heights of the
polynomials defining a Kronecker representation of the lifting fiber
$\pi_s^{-1}(\bfs p^s)$, with $Y_{n-s+1}$ as primitive element (see
\cite[Theorem A.7]{GiMa19}), and the integer $\mathfrak{N}$ (see
\cite[Theorem A.20]{GiMa19}).

\begin{proposition}\label{prop: eta_s_height_estimate}
With the notation of Lemma \ref{lemma: lambda_and_p-prob} and
Theorem \ref{th:_lucky prime_a}, let $(\bfs \lambda, \bfs p)\in
\mathcal{S}^{n^2}\times \mathcal{S}^{n-1}$ be a lucky pair. Then
the maximum height $\eta_s$ of the polynomials $m_s, w^s_{n-s+2}
\klk w^s_n$ satisfies $\eta_s\in\SO \bigl(nd^{s-1}(h+nd+d\log_2(\varepsilon^{-1}))\bigr)$.
Moreover, there exists an integer $\mathfrak{H}$ such that
  \begin{equation}\label{eq: log_frak_N_height}
{\sf h}(\mathfrak{N})\leq\mathfrak{H} \quad  \textrm{and} \quad
\log_2 \mathfrak{H} \in \mathcal{O}^{\sim}\bigr(\log_2
(d^{r}2^nh\log_2\varepsilon^{-1})\bigl).
\end{equation}
\end{proposition}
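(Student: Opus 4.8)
The statement to prove is Proposition~\ref{prop: eta_s_height_estimate}, which asserts two kinds of bounds: (i) a height bound $\eta_s \in \mathcal{O}^{\sim}(nd^{s-1}(h+nd))$ for the polynomials $m_s, w_{n-s+2}^s, \ldots, w_n^s$ defining a Kronecker representation of the lifting fiber $\pi_s^{-1}(\bfs p^s)$; and (ii) the existence of an integer $\mathfrak{H}$ bounding ${\sf h}(\mathfrak{N})$, with $\log_2 \mathfrak{H}$ of the stated logarithmic-in-$d^r 2^n h \log_2 \varepsilon^{-1}$ size. The plan is to derive both from arithmetic intersection-theoretic height estimates, following the strategy of \cite[Theorem A.7 and Theorem A.20]{GiMa19}, to which this proposition is essentially a citation; so the task is to assemble the pieces rather than prove them from scratch.

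\textbf{Height of the Kronecker representation.} First I would recall that the polynomials $m_s, w_{n-s+2}^s, \ldots, w_n^s$ arise from a suitable specialization of the Chow form $P_{V_s}$ of $V_s$ (as in the proof of Lemma~\ref{lemma: preproc: integral extension}), followed by evaluation at the lucky point $\bfs p^s$. Concretely, $m_s(T) = P_{V_s}(\bfs\lambda, \bfs p^s, T)$ up to normalization, and each $w_{n-s+i}^s$ is obtained by reducing the ``Kronecker trick'' identity \eqref{eq: Kronecker trick chow form with discrim specialized} modulo $m_s$. Hence a bound on ${\sf h}(m_s)$ and on the $w_{n-s+i}^s$ reduces to a bound on the height $h(V_s)$ of $V_s$ in the sense of \cite[Definition 2.2]{DaKrSo13}, together with control of the entries of $\bfs\lambda$ and $\bfs p^s$ (which lie in $\mathcal{S}$, so have height $\mathcal{O}(\log_2(\varepsilon^{-1}D))$, a quantity that is absorbed into the soft-Oh). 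The key input is an arithmetic B\'ezout inequality: since $V_s = \overline{V(F_1,\ldots,F_s)\setminus V(G)}$ is cut out (up to removing components in $\{G=0\}$) by $s$ polynomials of degree $\le d$ and height $\le h$, iterating \cite[Corollary 2.20]{DaKrSo13} (the arithmetic B\'ezout theorem, exactly as used in Lemma~\ref{lemma: preproc: degree estimate}) gives $h(V_s) \in \mathcal{O}^{\sim}(d^{s-1}(h + s\log_2 d) + \deg(\text{stuff}))$, and more carefully $h(V_s)\le \mathcal{O}^\sim(d^{s-1}h + \delta d^{s-1})$ type bounds; combining with $\deg V_s = \delta_s \le d^s$ and the factor $n$ coming from the number of free variables in the height estimate for the Chow form yields $\eta_s\in\SO(nd^{s-1}(h+nd))$. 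I would carry this out by: (a) invoking \cite[Corollary 2.20]{DaKrSo13} inductively on $s$ to bound $h(V_s)$; (b) translating this into a bound on the height of $P_{V_s}$ via the standard correspondence between the canonical height of a variety and the height of its Chow form (cf.\ \cite{DaKrSo13}); (c) specializing at $(\bfs\lambda,\bfs p^s)\in \mathcal{S}^{n^2}\times\mathcal{S}^{n-1}$, each step adding only $\mathcal{O}(\log_2(\varepsilon^{-1}D) + \log_2\delta_s)$ to the height, absorbed in the soft-Oh; (d) for the $w_{n-s+i}^s$, using that the coefficients of the quotient and remainder in the division by the monic polynomial $m_s$ have height bounded polynomially in $\deg m_s = \delta_s$ times $\max({\sf h}(m_s), {\sf h}(\text{numerator}))$, again soft-Oh absorbed.

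\textbf{Height of $\mathfrak{N}$.} Recall $\mathfrak{N} = \det(\bfs\lambda)\,\textsf{N}_{\bfs\lambda}(\bfs p)$, and that $\textsf{N}_{\bfs\lambda}$ has degree $\le D$ (with $D$ as in \eqref{eq: definition D}, so $D = \mathcal{O}(rn 2^n d\delta^3)$). The construction of $\textsf{R}$ and $\textsf{N}_{\bfs\lambda}$ in \cite{GiMa19} is such that $\textsf{N}_{\bfs\lambda}$ collects the ``leading'' denominators/resultants controlling the genericity conditions of Theorem~\ref{th: preproc: all conditions} (the polynomials $A_s, B_s^D, B_s^G, \rho_s, \widetilde B_s, \widehat B_s$ and the coefficients of the $M_s$), each of which has height polynomially bounded in terms of $h(V_s), \eta_s$, and $\log_2$ of the degrees; multiplying the $\mathcal{O}(r)$ such factors multiplies degrees and adds heights. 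So ${\sf h}(\textsf{N}_{\bfs\lambda}(\bfs p)) \le {\sf h}(\textsf{N}_{\bfs\lambda}) + D\log_2(\max_i|p_i|) + \log_2(D+1)$, and with $|p_i| \le 2\varepsilon^{-1}D$ this is $\mathcal{O}^{\sim}(\text{(sum of heights of the factors)} + D\log_2(\varepsilon^{-1}D))$; since $D = \mathcal{O}(rn2^nd\delta^3)$ and $\delta\le d^r$, the dominant contribution is of the form $\mathcal{O}^{\sim}(2^n d^r h \log_2\varepsilon^{-1})$, i.e.\ $\log_2\mathfrak{H}\in\mathcal{O}^{\sim}(\log_2(d^r 2^n h \log_2\varepsilon^{-1}))$ after taking $\log_2$. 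The combinatorial accounting of which factors appear in $\textsf{N}_{\bfs\lambda}$ and tracking how their degrees and heights compound through the resultants is the bookkeeping part; I would cite \cite[Theorem A.20]{GiMa19} for the precise tally, reproducing only the final combination.

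\textbf{Main obstacle.} The hard part is step (b) above: passing cleanly from the intersection-theoretic height $h(V_s)$ of the locally closed set (a Zariski closure after removing components in $\{G=0\}$) to a height bound for the polynomials $m_s$ and $w_{n-s+i}^s$ actually appearing in the output. Two subtleties: the removal of the $\{G=0\}$-components in the definition of $V_s$ means one cannot simply apply the arithmetic B\'ezout inequality to the naive system $F_1,\ldots,F_s$, and must instead bound the height of the relevant equidimensional component, which requires either an auxiliary saturation argument or the observation (as in the proof of Proposition~\ref{prop: F_i(p*,X) generate radical ideal}) that on the open set $\{G\ne 0\}$ the ideal is radical and the relevant Jacobian is a nonzerodivisor, so degree and height behave as in the complete-intersection case; and second, the specialization at $(\bfs\lambda, \bfs p)$ could in principle drop degree or inflate height, but the luckiness hypothesis (Theorems~\ref{th:_lucky prime_a} and~\ref{th:_lucky prime_b}) guarantees the specialization is ``generic enough'' that no such degeneration occurs, so the height bound is preserved up to the additive $\log_2$ terms from the coordinates of $\bfs\lambda,\bfs p$. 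Once these points are granted --- and they are exactly what \cite{GiMa19} establishes in its appendix --- the proposition follows by collecting the estimates and taking soft-Oh's.
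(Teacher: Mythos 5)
The paper gives no local proof of this proposition: it is stated as an immediate consequence of \cite[Theorem A.7]{GiMa19} (for the bound on $\eta_s$) and \cite[Theorem A.20]{GiMa19} (for the bound on $\log_2\mathfrak{H}$), which is exactly what you identify. Your sketch of the underlying argument — arithmetic B\'ezout via \cite[Corollary 2.20]{DaKrSo13} to bound $h(V_s)$, passage to the height of the specialized Chow form, and accounting for the factors composing $\mathsf{N}_{\bfs\lambda}$ — is a faithful reconstruction of what \cite{GiMa19} does in its appendix, so the two approaches coincide.
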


For the computation of a lucky prime, we rely on the following
well--known result for finding a small prime not dividing
a given integer.
%
\begin{lemma}\label{lemma: finding_primes}
Let $B$, $\ell$ be positive integers, and let $M$ be a nonzero
integer such that $\log_2 |M|\leq \frac{B}{\ell}$. Then there exists
a randomized algorithm which, given $B$ and any positive integer
$k$, returns a prime $p$ with $B< p\le 2B$ such that $p\nmid M$. The
algorithm uses an expected number of $\mathcal{O}^\sim(k\,\log_2^2
B)$ bit operations and returns a correct result with probability at
least
\[
\Bigl(1-\frac{\log_2 B}{2^{k-1}}\Bigr)\Bigl(1-\frac{2}{\ell}\Bigr).
\]
\end{lemma}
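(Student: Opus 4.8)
\textbf{Proof plan for Lemma \ref{lemma: finding_primes}.}

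The plan is to adapt the standard "prime-not-dividing-$M$" routine used in modular algorithms (see, e.g., \cite{GiMa19} or \cite[Chapter 18]{GaGe99}) to the quantitative statement required here. First I would run a probabilistic primality-testing loop: repeatedly pick an integer $t$ uniformly at random from the interval $(B, 2B]$ and apply a Monte Carlo primality test (Miller--Rabin with $k$ independent witnesses) to $t$. By the prime number theorem in the form $\pi(2B)-\pi(B)\ge cB/\log_2 B$ for a positive absolute constant $c$ and all $B\ge 2$, a uniformly random $t$ in $(B,2B]$ is prime with probability $\Omega(1/\log_2 B)$, so after $\mathcal{O}(\log_2 B)$ trials we expect to hit a prime; each Miller--Rabin test with $k$ rounds costs $\mathcal{O}^\sim(k\log_2^2 B)$ bit operations, giving the stated expected bit complexity. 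The failure modes of this stage are: (a) a composite $t$ passes all $k$ Miller--Rabin rounds, which happens with probability at most $4^{-k}\le 2^{-(k-1)}$ for each tested composite, and a union bound over the $\mathcal{O}(\log_2 B)$ tested values bounds the total by $\log_2 B/2^{k-1}$; this accounts for the first factor $(1-\log_2 B/2^{k-1})$.

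Next I would address the second factor. Since $M\ne 0$ and $\log_2|M|\le B/\ell$, the integer $M$ has at most $B/\ell$ distinct prime divisors (each at least $2$). Among the primes in $(B,2B]$ — of which there are at least $cB/\log_2 B$ for an absolute constant, but more simply at least some number $\ge \ell$ once $B$ is large enough — the fraction that divide $M$ is at most $(B/\ell)/\bigl(\text{number of primes in }(B,2B]\bigr)$. Using the (elementary, Bertrand-type) lower bound that $(B,2B]$ contains at least $B/\log_2(2B)$ primes, or more crudely arranging constants so that it contains at least $B/2$ primes for the relevant range, one gets that a uniformly random prime in $(B,2B]$ divides $M$ with probability at most $2/\ell$. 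Hence, conditioned on the loop having returned an actual prime $p$, the probability that $p\nmid M$ is at least $1-2/\ell$; this is the second factor. Multiplying the two independent-looking events gives the claimed success probability $\bigl(1-\log_2 B/2^{k-1}\bigr)\bigl(1-2/\ell\bigr)$.

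The main obstacle is bookkeeping the constants so that the divisibility fraction is genuinely bounded by $2/\ell$ rather than by $(2+o(1))/\ell$: this requires a clean lower bound on the number of primes in $(B,2B]$ that is valid for \emph{all} $B$ in the range of interest, not merely asymptotically. In practice one fixes a threshold $B_0$ below which the statement is checked directly (or the algorithm is allowed to search $(B, B^{1+\epsilon}]$ instead), and for $B\ge B_0$ invokes an effective form of Bertrand's postulate or of the prime number theorem with explicit error term. I would present this as: choosing the sampling interval and the primality-test parameters so that the prime-counting lower bound $\#\{p\text{ prime}: B<p\le 2B\}\ge \ell B/(2\log_2|M|)$ holds whenever $\log_2|M|\le B/\ell$ — which, since $\log_2|M|\le B/\ell$ forces $\ell\le B/\log_2|M|$, reduces to the uniform prime-counting estimate — and then the union-bound arguments above close the proof. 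I would also note that the algorithm can be made to \emph{verify} $p\nmid M$ by a single division at cost $\mathcal{O}^\sim(\log_2^2 B)$, absorbed into the stated complexity, which lets one discard the occasional bad prime and retry, though the statement as phrased only claims the one-shot probability bound.
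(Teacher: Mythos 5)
Your plan follows the same route as the paper, which simply invokes \cite[Theorem 18.8]{GaGe99} for random-prime generation and then states the $1-2/\ell$ divisibility bound without further detail; re-deriving the Miller--Rabin loop and then reasoning about prime divisors of $M$ is the right skeleton. However, your justification of the second factor has a genuine gap. You bound the number of distinct prime divisors of $M$ by $\log_2|M|\le B/\ell$ and combine this with a lower bound for $\pi(2B)-\pi(B)$. With the bound $\pi(2B)-\pi(B)\ge B/\log_2(2B)$, the resulting fraction is only $\le\log_2(2B)/\ell$, which exceeds $2/\ell$ as soon as $B>2$; and the alternative of ``arranging constants so that $(B,2B]$ contains at least $B/2$ primes'' is false outright, since $(B,2B]$ contains only $\Theta(B/\log B)$ primes. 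No choice of constants closes the argument in the form you propose.

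The missing idea is to count only the prime divisors of $M$ that actually lie in $(B,2B]$. Each such prime exceeds $B$, hence contributes more than $\log_2 B$ to $\log_2|M|$; so the number of prime divisors of $M$ inside $(B,2B]$ is at most $\log_2|M|/\log_2 B\le B/(\ell\log_2 B)$. Combined with $\pi(2B)-\pi(B)\ge B/\log_2(2B)$, the fraction of primes in $(B,2B]$ dividing $M$ is at most
\[
\frac{B/(\ell\log_2 B)}{B/\log_2(2B)}=\frac{1+\log_2 B}{\ell\log_2 B}\le\frac{2}{\ell},
\]
since $\log_2 B\ge 1$. This is the precise count that the paper's one-line assertion rests on, and it removes the constant-chasing tension you flag. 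As a secondary remark, your complexity bookkeeping drops a factor: running $k$ Miller--Rabin rounds on each of the $\mathcal{O}(\log_2 B)$ expected trials gives $\mathcal{O}^\sim(k\log_2^3 B)$ rather than the stated $\mathcal{O}^\sim(k\log_2^2 B)$; this is handled in \cite[Theorem 18.8]{GaGe99} (which the paper cites as a black box), so if you re-derive the routine you should either filter candidates with a single round before applying the full $k$ rounds, or simply cite that theorem directly.
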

\begin{proof}
According to, e.g.,  \cite[Theorem 18.8]{GaGe99}, there exists a
randomized algorithm that computes a random prime $p$ with $B<p\le
2B$ using an expected number of $\mathcal{O}^\sim(k\,\log_2^2 B)$
bit operations and with probability at least $1-{\log_2
B}/{2^{k-1}}$. Furthermore, a randomly chosen prime $p$ in this
interval satisfies $p\nmid M$ with probability at least
$1-{2}/{\ell}$. Combining these two probabilities yields the result.
\end{proof}
We conclude with the following lemma, which guarantees the computation of
a lucky prime of small bit length.
\begin{lemma} \label{lemma: computation_of_a_lucky_prime}
Under the assumptions of Theorem \ref{th:_lucky prime_a} and
Proposition \ref{prop: eta_s_height_estimate}, there exists a
randomized algorithm that, given $\mathfrak{H}$, computes a prime
$p$ satisfying $4\varepsilon^{-1}\mathfrak{H} < p \leq
10\varepsilon^{-1}\mathfrak{H}$ such that $p \nmid \mathfrak{N}$.
The algorithm uses an expected number of $\SO \bigr(\log_2^2 (d^r
2^nh\varepsilon^{-1})\log_2(\varepsilon^{-1})\bigl)$ bit operations
and succeeds with probability at least $1-\varepsilon$.
\end{lemma}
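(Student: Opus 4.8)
The plan is to reduce the statement to a direct application of Lemma \ref{lemma: finding_primes}, choosing the parameters $B$ and $\ell$ appropriately so that the size constraint on $p$, the bit-complexity bound, and the success probability all come out as claimed. The target interval $4\varepsilon^{-1}\mathfrak{H}<p\le 10\varepsilon^{-1}\mathfrak{H}$ should be realized by taking $B$ of order $\varepsilon^{-1}\mathfrak{H}$; more precisely, setting $B:=\lceil 5\varepsilon^{-1}\mathfrak{H}\rceil$ gives $B<p\le 2B$ with $4\varepsilon^{-1}\mathfrak{H}<B$ and $2B\le 10\varepsilon^{-1}\mathfrak{H}+2$, which can be absorbed into the stated bound after trivial adjustments of the constant. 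The integer $M$ in Lemma \ref{lemma: finding_primes} is taken to be $\mathfrak{N}$, whose height is bounded by $\mathfrak{H}$ by Proposition \ref{prop: eta_s_height_estimate}, so we need $\log_2\mathfrak{H}\le B/\ell$, i.e.\ $\ell\le B/\log_2\mathfrak{H}$. Since $B\in\Theta(\varepsilon^{-1}\mathfrak{H})$ and $\log_2\mathfrak{H}\in\mathcal{O}^\sim(\log_2(d^r2^nh\log_2\varepsilon^{-1}))$ is tiny compared with $\mathfrak{H}$, there is enormous room: one can afford, say, $\ell:=\lceil 4\varepsilon^{-1}\rceil$, which still satisfies $\ell\le B/\log_2\mathfrak{H}$ for all reasonable parameter ranges.

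Next I would extract the probability estimate. With $\ell=\lceil 4\varepsilon^{-1}\rceil\ge 4/\varepsilon$, the factor $1-2/\ell\ge 1-\varepsilon/2$. For the other factor $1-\log_2 B/2^{k-1}$, I would choose $k:=\lceil \log_2\log_2 B\rceil + \lceil\log_2\varepsilon^{-1}\rceil+2$, so that $\log_2 B/2^{k-1}\le \varepsilon/2$; then the product of the two factors is at least $(1-\varepsilon/2)^2\ge 1-\varepsilon$, as required. With this choice of $k$ one has $k=\mathcal{O}(\log_2\log_2 B+\log_2\varepsilon^{-1})$, and since $\log_2 B\in\mathcal{O}^\sim(\log_2(d^r2^nh\varepsilon^{-1}))$, this is $\mathcal{O}^\sim(\log_2\varepsilon^{-1})$, subsumed in any case by the expected bit-operation count.

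Finally, for the complexity: Lemma \ref{lemma: finding_primes} gives an expected $\mathcal{O}^\sim(k\,\log_2^2 B)$ bit operations. Substituting $k\in\mathcal{O}^\sim(\log_2\varepsilon^{-1})$ and $\log_2 B\in\mathcal{O}^\sim(\log_2(d^r2^nh\varepsilon^{-1}))$, the product becomes $\mathcal{O}^\sim(\log_2^2(d^r2^nh\varepsilon^{-1})\log_2\varepsilon^{-1})$, which is exactly the announced bound. I would also note that computing $B$ and $\ell$ from $\mathfrak{H}$ and $\varepsilon$ costs only $\mathcal{O}^\sim(\log_2\mathfrak{H}+\log_2\varepsilon^{-1})$ bit operations, negligible against the above. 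One small bookkeeping point worth spelling out is that $\ell$ is required to be a \emph{positive integer} and the hypothesis $\log_2|M|\le B/\ell$ must hold; both are immediate here since $B$ is a huge multiple of $\log_2\mathfrak{H}$, but it should be stated to make the invocation of Lemma \ref{lemma: finding_primes} clean.

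I do not anticipate a genuine obstacle: the only mild subtlety is arranging the interval endpoints ($4\varepsilon^{-1}\mathfrak{H}$ versus $10\varepsilon^{-1}\mathfrak{H}$) to match exactly a $B<p\le 2B$ window while keeping the constants consistent with the rest of \cite{GiMa19}; this is purely a matter of picking the constant in $B:=\lceil c\,\varepsilon^{-1}\mathfrak{H}\rceil$ and checking $4<c$ and $2c\le 10$, i.e.\ $c=5$, together with a harmless rounding. Everything else is a bounded chain of substitutions into the already-proven Lemma \ref{lemma: finding_primes} and the height bound \eqref{eq: log_frak_N_height}.
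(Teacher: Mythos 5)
Your proposal is essentially the same as the paper's one-line proof: apply Lemma~\ref{lemma: finding_primes} with $M:=\mathfrak{N}$, taking $\ell$ of order $\varepsilon^{-1}$ and $B$ of order $\varepsilon^{-1}\mathfrak{H}$, and pick $k$ large enough to drive $\log_2 B/2^{k-1}$ below $\varepsilon/2$. The only difference is the parametrization of $B$. You set $B:=\lceil 5\varepsilon^{-1}\mathfrak{H}\rceil$, which does not quite yield $p\le 10\varepsilon^{-1}\mathfrak{H}$ because $2\lceil 5\varepsilon^{-1}\mathfrak{H}\rceil$ can be $10\varepsilon^{-1}\mathfrak{H}+1$ when $5\varepsilon^{-1}\mathfrak{H}$ is not an integer; you flag this as ``trivial adjustments of the constant,'' but the cleaner fix is what the paper does: take $B:=\ell\,\mathfrak{H}$ with $\ell:=\lceil 4\varepsilon^{-1}\rceil$. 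Then $B/\ell=\mathfrak{H}$ exactly (so the hypothesis $\log_2|\mathfrak{N}|\le B/\ell$ of Lemma~\ref{lemma: finding_primes} is immediate from ${\sf h}(\mathfrak{N})\le\mathfrak{H}$), and $2B=2\lceil 4\varepsilon^{-1}\rceil\mathfrak{H}\le(8\varepsilon^{-1}+2)\mathfrak{H}\le 10\varepsilon^{-1}\mathfrak{H}$ using $\varepsilon<1$. Your choice $k:=\lceil\log_2\log_2 B\rceil+\lceil\log_2\varepsilon^{-1}\rceil+2$ is a correct variant of the paper's $k:=\lceil\log_2\ell+\log_2(\log_2\ell\mathfrak{H})\rceil$; both give the required $\log_2 B/2^{k-1}\le\varepsilon/2$ and the stated $\mathcal{O}^\sim$ bit-operation count. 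So: same route, and your write-up is correct modulo replacing $\lceil 5\varepsilon^{-1}\mathfrak{H}\rceil$ with $\lceil 4\varepsilon^{-1}\rceil\mathfrak{H}$ to hit the interval endpoints exactly.
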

\begin{proof}
The lemma follows applying Lemma \ref{lemma: finding_primes} with
$\ell:=\lceil 4\varepsilon^{-1}\rceil$, $B:=\ell\mathfrak{H}$ and
$k:=\lceil \log_2\ell + \log_2(\log_2\ell\mathfrak{H})\rceil$, and
by invoking inequality $\eqref{eq: log_frak_N_height}$.
\end{proof}
%
%
\subsection{Reconstruction of a Kronecker representation over $\Q$}
\label{subsec: lifting_the_integers}
%

Let $m_r, w^r_{n-r+2} \klk w^r_n\in \mathbb{Q}[T]$ be a Kronecker
representation of the lifting fiber $\pi_r^{-1}(\bfs p^r)$, with
$Y_{n-r+1}$ as primitive element. Let $p$ be a lucky prime as in
Theorems \ref{th:_lucky prime_a} and \ref{th:_lucky prime_b}. By
Theorem \ref{th:_lucky prime_b}, the reduction modulo $p$ of these
polynomials yields a Kronecker representation $m_{r, p}, w^r_{n-r+2,
p} \klk w^r_{n, p}\in \fp[T]$ of the lifting fiber
$\pi_{r,p}^{-1}(\bfs p_p^r)$, with $Y_{n-r+1,p}$ as primitive
element.

In this section, we describe a $p$-adic lifting procedure to
reconstruct the polynomials $m_r, w^r_{n-r+2} \klk w^r_n$ from
$m_{r, p}, w^r_{n-r+2, p} \klk w^r_{n, p}$. Briefly, this procedure
computes $p$-adic approximations of these polynomials to sufficient
precision, followed by rational reconstruction of their coefficients
(see Lemma \ref{lemma: rational_reconstruction_algorithm}).

The key tool enabling this $p$-adic lifting is the following
algorithm (see \cite[Theorem 2]{GiLeSa01}). This algorithm is a
version of Algorithm \ref{algo: Newton lifting}, but defined over
$\Z$ instead of $k[\![X_{n-s} - p_{n-s}]\!]$. The correctness of
Algorithm \ref{algo: Global Newton Iterator} follows as that of
Proposition \ref{prop: main loop Newton lifting} by analogous
arguments.
\begin{algorithm}[Global Newton Iterator]\label{algo: Global Newton Iterator} ${}$
\begin{itemize}
\item[]{\bf Input:}
\begin{enumerate}
\item
Polynomials $G_1 \klk G_r$ in $\Z[X_1 \klk X_r]$;
\item A monic polynomial
$m\in \Z[T]$ and polynomials $\bfs v:=(v_1\klk v_r) \in \Z[T]^r$ with
$\deg v_i< \deg m$ for $1\le i \le r$;
\item
An ideal ${\sf I}\subset
\Z$; \end{enumerate} such that the following conditions are satisfied in $(\Z/{\sf I})[T]$:
    \begin{itemize}
      \item $v_1=T$;
      \item $G_i(\bfs v)\equiv 0 \mod m$ for $1\le i \le r$;
      \item If $J\in \Z[X_1 \klk X_r]^{r\times r}$ is the Jacobian
      matrix of $G_1 \klk G_r$ with respect to $X_1 \klk X_r$, then  $J(\bfs v)$ is invertible modulo $m$.
    \end{itemize}
\item[]{\bf Output:}
A monic polynomial $M\in \Z[T]$ and $\bfs V:=(V_{1}\klk V_r)\in
\Z[T]^r$ with $\deg M=\deg m$ and $\deg V_{i}<\deg M$ for $1\le i\le
r$, such that $M\equiv m$ and $V_{i}\equiv v_{i} \mod {\sf I}$ for
$1\le i \le r$, and satisfying in $(\Z/{\sf I}^{\, 2})[T]$:
\begin{itemize}
  \item $V_{1}=T$;
  \item  $G_{i}(\bfs V)\equiv 0 \mod (M)$ for $1\le i \le r$.
\end{itemize}

\smallskip

    \item[]{\bf 1.} Compute $J(\bfs v)^{-1}\in (\Z/{\sf I})[T]^{r\times
    r}$, the inverse of $J(\bfs v)$ in $(\Z/{\sf I})[T]/(m)$.
    \item[] {\bf 2.} In $(\Z/{\sf I}^{\,2})[T]/(m)$, perform:
    \begin{enumerate}
    \item[]{\bf 2.1.}
     Compute
    \[ \bfs w^t:=(w_{1}\klk w_r):=\bfs v^t - J(\bfs v)^{-1} \cdot  (F_1(\bfs v), \dots, F_r(\bfs v))^t. \]
    \item[]{\bf 2.2.} Compute $\Delta:= w_{1}-T$.
    \item[]{\bf 2.3.} For $i=1\klk r$, do
    \begin{enumerate}
      \item[]{\bf 2.3.1.} Compute $\Delta_i:=\Delta \frac{\partial w_i}{\partial T}$.
      \item[]{\bf 2.3.2.} Set $V_i:=w_i- \Delta_i$.
    \end{enumerate}
    \item[]{\bf 2.4} Compute $\Delta_{m}:=\Delta \frac{\partial m}{\partial T}$.
    \item[]{\bf 2.5} Set $M:=m-\Delta_{m}$.
    \end{enumerate}
  \item[]{\bf 3.} Return $M$ and $\bfs V:=(V_{1}, \dots, V_r)$.
\end{itemize}
\end{algorithm}

\begin{remark}\label{remark: Global Newton Iterator}
The polynomial matrix $J(\bfs V)$ is invertible modulo $M$ in
$(\Z/{\sf I^2})[T]^{r\times r}$. Indeed, if $g\in (\Z/{\sf
I})[T]^{r\times r}$ denotes the inverse of $J(\bfs v)$ in $((\Z/{\sf
I})[T]/(m))^{r\times r}$, then $2g-J(\bfs V)g^2$ is the inverse of
$J(\bfs V)$ in $((\Z/{\sf I}^2)[T]/(M))^{r\times r}$ (see, e.g.,
\cite[Theorem 9.2]{GaGe99}). This fact ensures that Algorithm
\ref{algo: Global Newton Iterator} can be applied iteratively.
Furthermore, if $m'$ is invertible modulo $m$ in $(\Z/{\sf I})[T]$,
then, by a similar argument, it follows that $M'$ is invertible
modulo $M$ in $(\Z/{\sf I^2})[T]$.
\end{remark}

Now assume that a lucky choice of $(\bfs\lambda, \bfs p)\in
\mathcal{S}^{n^2}\times \mathcal{S}^{n-1}$ has been made as in Lemma
\ref{lemma: lambda_and_p-prob}. Let $m_r, w^r_{n-r+2} \klk w^r_n\in
\mathbb{Q}[T]$ denote the corresponding Kronecker representation of
the lifting fiber $\pi_r^{-1}(\bfs p^r)$, with $Y_{n-r+1}$ as
primitive element. Assume further that a lucky prime $p$ has been
computed as in Lemma \ref{lemma: computation_of_a_lucky_prime}, and
let $m_{r,p}, w^r_{{n-r+2},p} \klk w^r_{n,p}\in \fp[T]$ be the
Kronecker representation of the lifting fiber $\pi_{r,p}^{-1}(\bfs
p_p^r)$, with $Y_{{n-r+1},p}$ as primitive element.

Define $v^r_{n-r+1,p}:=T$, and
$v^r_{n-r+i,p}:=(m'_{r,p})^{-1}w^r_{n-r+i,p} \mod m_{r,p}$ for $2\le
i \le r$. Moreover, denote $F_i^{\bfs Y}:=F_i(Y_1,\dots,Y_{n})$ as
the polynomial obtained by rewriting $F_i(X_1, \dots, X_n)$ in terms
of the new variables $Y_1,\dots, Y_n$, and let $J_r:=({\partial
F_i}/{\partial Y_{n-r+j}})_{1\le i, j \le r}$. Since $\bfs p_p^r$ is
a lifting point of $\pi_{r,p}: V_{r,p} \rightarrow \A_{\cfp}^{n-r}$,
the Jacobian determinant $\det J_r(\bfs p_p^r, Y_{n-r+1,p}\klk
Y_{n,p})$ of the polynomials $F_{1,p}(\bfs p_p^r, Y_{n-r+1,p}\klk
Y_{n,p}), \dots, F_{r,p}(\bfs p_p^r, Y_{n-r+1,p}\klk Y_{n,p})$ with
respect to the variables $Y_{n-r+1,p} \klk Y_{n,p}$ induces an
invertible element in the coordinate ring $\fp[\pi_{r,p}^{-1}(\bfs
p_p)]$ of the lifting fiber.

Thus, letting $\bfs v_p:=(v^r_{n-r+1,p} \klk v^r_{n,p})$, we have in
$(\Z/p\Z)[T]$:
 \begin{itemize}
      \item $v^r_{n-r+1, p}=T$;
      \item $F_{i}(\bfs p_p^r, \bfs v_p)\equiv 0 \mod m_{r,p}$ for $1\le i \le s$;
      \item  $J_r(\bfs p_p^r, \bfs v_p)$ is invertible modulo $m_{r,p}$.
    \end{itemize}
We can therefore apply Algorithm \ref{algo: Global Newton Iterator}
with input $G_i:=F_i(\bfs p^r, Y_{n-r+1} \klk Y_n)$ for $1 \le i \le
r$, $I=p\Z$, $m:=m_{r,p}$, and $\bfs v:=\bfs v_p:= (T,
v^r_{{n-r+2},p} \klk v^r_{n,p})$. Iterating this procedure yields
the following algorithm to compute $p$-adic approximations of
arbitrary order of the polynomials $m_r, w^r_{n-r+2} \klk w^r_n$.
\begin{algorithm}[$p$-adic lifting]\label{algo: p_adic_lifting_algorithm} ${}$
\begin{itemize}
\item[]{\bf Input:}
\begin{enumerate}
\item The polynomials $F_1(\bfs p^r, Y_{n-r+1} \klk Y_n) \klk F_r(\bfs p^r, Y_{n-r+1} \klk Y_n)$;
\item A lucky prime $p$ as in Theorem \ref{th:_lucky prime_a};
\item A Kronecker representation  $m_{r,p}, w^r_{{n-r+2},p} \klk w^r_{n,p}\in \fp[T]$
of the lifting fiber $\pi_{r,p}^{-1}(\bfs p^r_p)$, with $Y_{{n-r+1},p}$ as primitive element;
\item An integer $k:=2^{j_0}$.
\end{enumerate}

\item[]{\bf Output:} The $p$--adic approximations of order $k$ of the polynomials
$m_r, w^r_{n-r+2} \klk w^r_n\in \mathbb{Q}[T]$ which form a
Kronecker representation of the lifting fiber $\pi_r^{-1}(\bfs p^r)$, with
$Y_{n-r+1}$ as primitive element.

\item[]{\bf 1} Compute $v_{n-r+i,p}:=(m'_{r,p})^{-1}w^r_{n-r+i,p} \mod m_{r,p}$ for $2\le i \le r$.

\item[]{\bf 2} Set $M^0:=m_{r,p}$, $V^0:=T$ and $V^0_{n-r+i}:=v^r_{n-r+i,p}$ for $2\le i \le r$.

 \item[]{\bf 3.} While $0\le j < j_0$, do
 \begin{enumerate}
 \item[]{\bf 3.1} Apply Algorithm \ref{algo: Global Newton Iterator} on input
 $G_i:=F_i(\bfs p^r, Y_{n-r+1 \klk Y-n})$ for $1\le i \le r$, $m:=M^j$ and
 $\bfs v:=(V^j_{n-r+1} \klk V^j_n)$, with ${\sf I}:=p^{2^j}\Z$.
 \item[]{\bf 3.2} Set  $M^{j+1}$, $V^{j+1}_{n-r+1} \klk V^{j+1}_n$ as the output of the previous step.
 \item[]{\bf 3.3} Set $j:=j+1$.
 \end{enumerate}
 \item[]{\bf 4.} Return $M^{j_0}$ and  $W^{j_0}_{n-r+i}:=(M^{j_0})'V^{j_0}_{n-r+i} \mod M^{j_0}$ for $2\le i \le r$.
  \end{itemize}
\end{algorithm}

\begin{proposition}\label{prop: correctness algo p adic lift}
Algorithm \ref{algo: p_adic_lifting_algorithm} works correctly as
stated.
\end{proposition}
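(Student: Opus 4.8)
The plan is to establish correctness of Algorithm~\ref{algo: p_adic_lifting_algorithm} by a straightforward induction on the loop counter~$j$, treating each iteration as one application of Algorithm~\ref{algo: Global Newton Iterator} and invoking the structural properties of lucky primes already proved above. First I would record the base case: by hypothesis $p$ is a lucky prime as in Theorems~\ref{th:_lucky prime_a} and~\ref{th:_lucky prime_b}, so the reductions $m_{r,p},w^r_{n-r+2,p}\klk w^r_{n,p}$ form a Kronecker representation of $\pi_{r,p}^{-1}(\bfs p^r_p)$ with $Y_{n-r+1,p}$ as primitive element. Step~{\bf 1} then produces $v^r_{n-r+i,p}:=(m'_{r,p})^{-1}w^r_{n-r+i,p}\bmod m_{r,p}$, which is well defined because $\bfs p^r_p$ is a lifting point of $\pi_{r,p}$: the Jacobian determinant $\det J_r(\bfs p^r_p,Y_{n-r+1,p}\klk Y_{n,p})$ induces an invertible element of $\fp[\pi_{r,p}^{-1}(\bfs p^r_p)]$ (equivalently, $m'_{r,p}$ is invertible modulo $m_{r,p}$, since the variety is unramified). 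Hence, setting $M^0:=m_{r,p}$, $V^0_{n-r+1}:=T$ and $V^0_{n-r+i}:=v^r_{n-r+i,p}$, the triple $(M^0,\bfs V^0,p\Z)$ satisfies the three input conditions of Algorithm~\ref{algo: Global Newton Iterator} in $(\Z/p\Z)[T]$, namely $V^0_{n-r+1}=T$, $F_i(\bfs p^r,\bfs V^0)\equiv 0\bmod M^0$ for $1\le i\le r$, and $J_r(\bfs p^r,\bfs V^0)$ invertible modulo $M^0$.

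Next I would carry out the inductive step. Assume that after the $j$th pass the polynomials $M^j\in\Z[T]$ monic of degree $\delta_r$ and $\bfs V^j=(V^j_{n-r+1}\klk V^j_n)$ with $\deg V^j_{n-r+i}<\delta_r$ satisfy, in $(\Z/p^{2^j}\Z)[T]$, the relations $V^j_{n-r+1}=T$, $F_i(\bfs p^r,\bfs V^j)\equiv 0\bmod M^j$ for all $i$, and $J_r(\bfs p^r,\bfs V^j)$ invertible modulo $M^j$; assume also the congruences $M^j\equiv m_{r,p}$ and $V^j_{n-r+i}\equiv v^r_{n-r+i,p}\bmod p$. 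Applying Algorithm~\ref{algo: Global Newton Iterator} with $\mathsf I:=p^{2^j}\Z$ returns $M^{j+1},\bfs V^{j+1}$ which, by the specification of that algorithm, reduce to $M^j,\bfs V^j$ modulo $p^{2^j}$ and satisfy $V^{j+1}_{n-r+1}=T$ together with $F_i(\bfs p^r,\bfs V^{j+1})\equiv 0\bmod M^{j+1}$ in $(\Z/p^{2^{j+1}}\Z)[T]$; Remark~\ref{remark: Global Newton Iterator} supplies the invertibility of $J_r(\bfs p^r,\bfs V^{j+1})$ modulo $M^{j+1}$ (and of $(M^{j+1})'$ modulo $M^{j+1}$), which closes the induction and makes the next iteration legitimate. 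One should also observe that the degree bounds are preserved because $M^{j+1}-m_{r,p}$ and $V^{j+1}_{n-r+i}-v^r_{n-r+i,p}$ are divisible by~$p$, so no spurious growth of degree occurs.

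Then I would identify the computed objects with the genuine $p$-adic expansions. By Proposition~\ref{prop:_lucky prime_c}, the polynomial $G(\bfs p^r,Y_{n-r+1}\klk Y_n)\,m_r(Y_{n-r+1})$ lies in the radical of the ideal generated in $\Z_{(p)}[Y_{n-r+1}\klk Y_n]$ by $F_1(\bfs p^r,\cdot),\dots,F_r(\bfs p^r,\cdot)$, and $G$ does not vanish on $\pi_r^{-1}(\bfs p^r)$; hence the Newton--Hensel lift of the zero-dimensional system $F_1(\bfs p^r,\cdot)=\cdots=F_r(\bfs p^r,\cdot)=0$ over $\Z_p$ is uniquely determined by its reduction mod~$p$, and this lift is precisely the univariate parametrization $(m_r,v^r_{n-r+2}\klk v^r_n)$ by the uniqueness in the non-Archimedean Newton--Hensel theorem (the $\Z_p$ analogue of Theorem~\ref{th: NewtonHensel}). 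Since the sequence $(M^j,\bfs V^j)_{j\ge 0}$ is the output of successive doublings of precision of the same Newton operator started at the same initial datum, a stabilization argument identical to the one used in the proof of Theorem~\ref{theo: correctedness: Newton lifting}---namely that $M^{j+1}\equiv M^j$ and $V^{j+1}_{n-r+i}\equiv V^j_{n-r+i}\bmod p^{2^j}$, so there are unique limits $M^\infty,V^\infty_{n-r+i}\in\Z_p[T]$ agreeing with each $M^j,V^j_{n-r+i}$ to precision $2^j$---shows that $M^\infty=m_r$ and $V^\infty_{n-r+i}=v^r_{n-r+i}$ in $\Z_p[T]$. Consequently $M^{j_0}$ is the $p$-adic approximation of order $k=2^{j_0}$ of $m_r$, and $W^{j_0}_{n-r+i}=(M^{j_0})'V^{j_0}_{n-r+i}\bmod M^{j_0}$ is the order-$k$ $p$-adic approximation of $w^r_{n-r+i}=m'_r v^r_{n-r+i}\bmod m_r$, as claimed in step~{\bf 4}.

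The main obstacle is the third paragraph: one must be careful that the object produced by the iterated global Newton operator over $\Z_p$ really coincides with the reduction of the Kronecker representation over $\Q$, rather than with some other lift of the mod-$p$ data. This is exactly where luckiness of the prime enters through Proposition~\ref{prop:_lucky prime_c} (the radicality/invertibility statement guaranteeing that $m'_r$ is invertible modulo $m_r$ over $\Z_{(p)}$, so the local ring stays étale along the whole lift) and through Theorem~\ref{th:_lucky prime_b} (so that $m_r,w^r_{n-r+i}\in\Z_{(p)}[T]$ and their mod-$p$ reductions are the inputs of the algorithm). Granting these, uniqueness of Hensel lifting does the rest, and the remaining verifications---degree bounds, the monic property of each $M^j$, and the precision bookkeeping---are routine and parallel to the analysis already carried out for Algorithm~\ref{algo: Newton lifting}.
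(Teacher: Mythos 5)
Your first two paragraphs mirror the paper's setup exactly: verify the input conditions for the initial call to Algorithm~\ref{algo: Global Newton Iterator}, then close the induction via the remark that invertibility of $J_r(\bfs p^r,\bfs V^j)$ modulo $M^j$ propagates. The paper's proof, however, handles these points in a single sentence and puts almost all of its effort into the identification step, where your approach diverges from theirs.

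Where you invoke a $\Z_p$ analogue of Theorem~\ref{th: NewtonHensel} and appeal to pointwise uniqueness of the Hensel lift, the paper instead proceeds purely algebraically: it uses Proposition~\ref{prop:_lucky prime_c} to get $(G(\bfs p^r,\bfs V^\infty)\,m_r)^\mu\equiv 0 \bmod M^\infty$, then shows $G(\bfs p^r,\bfs V^\infty)$ is invertible modulo $M^\infty$ by computing the resultant $\mathrm{Res}_T(G(\bfs p^r,\bfs V^\infty),M^\infty)$ and checking that its reduction mod~$p$ is nonzero (because $\pi_{r,p}^{-1}(\bfs p^r_p)\subset\{G_p\ne 0\}$). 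From $m_r^\mu\equiv 0\bmod M^\infty$, square-freeness, and degree/monicity it deduces $M^\infty=m_r$. The identification $V^\infty_{n-r+i}=v^r_{n-r+i}$ is then proved by a separate Taylor-expansion argument with a nested induction on the precision index, not by the ``stabilization argument identical to the one in Theorem~\ref{theo: correctedness: Newton lifting}'' you appeal to. Your geometric route (unique lift of each mod-$p$ point, then pass to the Galois-invariant encoding) is in principle valid and buys some conceptual clarity, but to make it rigorous you would still have to establish explicitly that no lifted root falls into $\{G=0\}$ --- exactly the content of the paper's resultant calculation --- and that the polynomials $\bfs V^\infty$ parametrize those lifted points correctly, which is not immediate from uniqueness alone.

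Finally, you misstate Proposition~\ref{prop:_lucky prime_c}: it asserts that $G(\bfs p^r,Y_{n-r+1},\dots,Y_n)\,m_r(Y_{n-r+1})$ lies in the radical of the ideal generated by $F_1(\bfs p^r,\cdot),\dots,F_r(\bfs p^r,\cdot)$ in $\Z_{(p)}[Y_{n-r+1},\dots,Y_n]$; it says nothing about invertibility of $m'_r$ modulo $m_r$ over $\Z_{(p)}$ (that invertibility follows instead from $m_{r,p}$ being square-free, i.e.\ from $\bfs p^r_p$ being a lifting point). Since the radical-membership statement is exactly what the paper's argument leans on to pin down $M^\infty$, your misreading suggests the details of how luckiness enters the proof are not fully worked out in your proposal, even though your high-level plan is reasonable.
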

\begin{proof}
By the correctness of Algorithm \ref{algo: Global Newton Iterator},
we know that steps {\bf 1}--{\bf 3} of Algorithm \ref{algo:
p_adic_lifting_algorithm} are well-defined. It remains to prove that
the algorithm indeed outputs the claimed $p$-adic approximations.

For each $j \geq 0$, let $M^j$, $V^j_{n-r+1} \klk V^j_n$ be the
polynomials computed in steps {\bf 3.1} and {\bf 3.2} of the
algorithm. Define $$W^j_{n-r+i}:=(M^j)'V^j_{n-r+i} \mod M^j$$ in
$(\Z/p^{2^j}\Z)[T]$ for $2\le i \le r$ and $j\geq 0$. Denote by
$m^j_r$ and $w^{r, \, j}_{n-r+i}$ the $p$-adic approximations of
order $2^j$ of $m_r$ and $w^r_{n-r+i}$, respectively. We aim to
prove that for every $j\ge 0$,
$$M^j\equiv m^j_r\textrm{ and }W^j_{n-r+i}\equiv w^{r,\, j
}_{n-r+i} \mod p^{2^j}$$
for $2\le i\le r$. This will establish the correctness of the
algorithm.

Define $\bfs V^j:=(V^j_{n-r+1} \klk V_n^j)$ for $j\ge 0$. Then, for
each $j\ge 0$, the following hold in $(\Z/p^{2^j}\Z)[T]$:
\begin{align}
F_i(\bfs p^r, \bfs V^j)& \equiv 0 \mod M^j \textrm{ for }1\le i \le r; \label{eq: correctness_lifting_algo_1}\\
\textrm{Both }(M^j)' \textrm{ and }&J_r(\bfs p^r, \bfs V^j)\textrm{
are invertible modulo }M^j.\label{eq: correctness_lifting_algo_2}
\end{align}
Since $M^{j+1}\equiv M^j$ and $V^{j+1}_{n-r+i}\equiv V^j_{n-r+i}
\mod p^{2^j}$ for $1\le i \le r$ and all $j\geq 0$, the sequences
$(M^j)_{j\ge 0}$ and $(V^j_{n-r+i})_{j\ge 0}$ converge in $\Z_p[T]$
under the $p$-adic topology, where $\Z_p$ is the ring of $p$-adic
integers. Let $M^{\infty}$ and $V^{\infty}_{n-r+i}$ denote their
limits for $1\le i \le r$, and write $\bfs
V^{\infty}:=(V^{\infty}_{n-r+1} \klk V^{\infty}_{n})$.

Passing to the limit in $\eqref{eq: correctness_lifting_algo_1}$, we
have in $\Z_p[T]$:
 \begin{equation}\label{eq: correctness_lifting_algo_3}
  F_i(\bfs p^r, \bfs V^{\infty}) \equiv 0 \mod M^{\infty} \textrm{ for } 1\le i \le r.
\end{equation}
Moreover, since $\Z_{(p)}$ is a subring of $\Z_p$, by Proposition
\ref{prop:_lucky prime_c} there exists an integer $\mu>0$ such that
$$
(G(\bfs p^r, Y_{n-r+1} \klk Y_n)m_r(Y_{n-r+1}))^{\mu}\in (F_i(\bfs
p^r, Y_{n-r+1} \klk Y_n): 1\le i \le r)\Z_p[Y_{n-r+1},\ldots,Y_n].
$$
Substituting $\bfs V^{\infty}$ for $(Y_{n-r+1} \klk Y_n)$, and using
\eqref{eq: correctness_lifting_algo_3} we obtain in $\Z_p[T]$:
\begin{equation}\label{eq: correctness_lifting_algo_4}
 (G(\bfs p^r, \bfs V^{\infty})m_r)^{\mu} \equiv 0 \mod M^{\infty}.
\end{equation}
We claim that $G(\bfs p^r, \bfs V^{\infty})$ is  invertible modulo
$M^{\infty}$ in $\Z_p[T]$. Indeed, since $M^{\infty}\equiv M^0$ and
$\bfs V^{\infty}\equiv \bfs V^0 \mod p$,
$$
\mathrm{Res}\big(G(\bfs p^r, \bfs V^{\infty}), M^{\infty}\big)\equiv
\mathrm{Res}\big(G(\bfs p^r, \bfs V^{0}), M^{0}\big) \mod p.
$$
We know that $M^0=m_{r,p}$ is the minimal polynomial of
$Y_{n-r+1,p}$ in $\pi^{-1}_{r,p}(\bfs p^r_p)$ and $\bfs V^{0}=(T,
v^r_{n-r+2, p} \klk v^r_{n,p})$ parametrizes this fiber. Therefore,
$$\pi^{-1}_{r,p}(\bfs p^r_p)=\{(\bfs p_p^r, \bfs V^{0}(\alpha)):
m_{r,p}(\alpha)=0\}.$$ Since  $\pi^{-1}_{r,p}(\bfs p^r_p) \subset
\{G_p\neq 0\}$, the polynomials $m_{r,p}$ and $G(\bfs p^r, \bfs
V^{0})$ have no common roots in $\cfp$, and their resultant is
nonzero. It follows that $\mathrm{Res}_T\big(G(\bfs p^r, \bfs
V^{\infty}), M^{\infty}\big)$ is a unit of $\Z_p$, which implies the
claim.

From \eqref{eq: correctness_lifting_algo_4}, we deduce
\begin{equation}\label{eq: correctness_lifting_algo_5}
 (m_r)^{\mu} \equiv 0 \mod M^{\infty}.
\end{equation}
Since $M^0=m_{r,p}\in(\Z/p\Z)[T]$ is monic and square-free, we see
that $M^{\infty}$ is monic and square-free in $\Z_p[T]$. Therefore,
by \eqref{eq: correctness_lifting_algo_5} it follows that $m_r
\equiv 0 \mod M^{\infty}$ in $\Z_p[T]$. But as $\deg M^{\infty}=\deg
m_r$ and both polynomials are monic, we conclude
\begin{equation}\label{eq: correctness_lifting_algo_6}
 M^{\infty}= m_r.
\end{equation}
As a consequence,
$$
M^j\equiv M^{\infty}=m_r\equiv m^j_r \mod p^{2^j}\textrm{ for all
}j\geq 0,
$$
as asserted.

Next, for each $j\geq 0$, let $v^{r, \, j}_{n-r+2} \klk v^{r, \,
j}_{n}$ be the $p$-adic approximations of order $2^j$ of
$v^r_{n-r+2} \klk v^r_n$. We will prove that $v^{r, \,
j}_{n-r+i}\equiv V^j_{n-r+i} \mod p^{2^j}$ for every $j\geq 0$. We
proceed by induction on $k$ for $0\le k\le j$, proving
$$
v^{r, \, j}_{n-r+i}\equiv V^j_{n-r+i} \mod p^{2^k}.
$$
The base case $k=0$ holds since by construction
$$
v^{r, \, j}_{n-r+i}\equiv v^r_{n-r+i,p}=V^0_{n-r+i}\equiv
V^j_{n-r+i} \mod p.
$$
Suppose that the claim holds for $k-1$. Define $\bfs v:=(T,
v^r_{n-r+2} \klk v^r_n)$. Consider the Taylor expansion of $\bfs
F_r(\bfs p^r, Y_{n-r+1} \klk Y_n)$ around $\bfs V^{\infty}$:
\begin{equation}\label{eq: taylor_expansion_Z_p}
\bfs F_r(\bfs p^r, \bfs v)-\bfs F_r(\bfs p^r, \bfs
V^{\infty})=J_r(\bfs p^r, \bfs V^{\infty})\cdot ( \bfs v-\bfs
V^{\infty})+\mathcal{O}\big( (\bfs v-\bfs V^{\infty})^2\big).
\end{equation}
Since $m_r, w^r_{n-r+2} \klk w^r_n$ form a Kronecker representation
of $\pi_r^{-1}(\bfs p^r)$, with $Y_{n-r+1}$ as primitive element,
$F_i(\bfs p^r, \bfs v)\equiv 0 \mod m_r$ in $\Q[T]$ for $1\le i \le
r$. Moreover, as  $m_r$ and $v^r_{n-r+2} \klk v^r_n$ belong to
$\Z_{(p)}[T]$, and $m_r$ is monic, we deduce that  $F_i(\bfs p^r,
\bfs v)\equiv 0 \mod m_r$ in $\Z_{(p)}[T]$, and hence in
$\Z_{p}[T]$, for $1\le i \le r$. Together with \eqref{eq:
correctness_lifting_algo_3}, \eqref{eq: correctness_lifting_algo_6}
and  \eqref{eq: taylor_expansion_Z_p}, this yields
$$
0\equiv J_r(\bfs p^r, \bfs V^{\infty})\cdot ( \bfs v-\bfs
V^{\infty})+\mathcal{O}\big( (\bfs v-\bfs V^{\infty})^2\big) \mod
M^{\infty}
$$
in $\Z_{p}[T]$. Let $\bfs v^j:=(T, v^{r, \, j}_{n-r+2} \klk v^{r, \,
j}_n)$. We deduce that
$$
0\equiv J_r(\bfs p^r, \bfs V^j)\cdot ( \bfs v^j-\bfs
V^j)+\mathcal{O}\big( (\bfs v^j-\bfs V^j)^2\big) \mod M^{j}
$$
in $(\Z/p^{2^j}\Z)[T]$ and, since $k\le j$, \textit{a fortiori} in
$(\Z/p^{2^k}\Z)[T]$. By the induction hypothesis, $\bfs v^j\equiv
\bfs V^j \mod p^{2^{k-1}}$, and thus $(\bfs v^j-\bfs V^j)^2\equiv 0
\mod p^{2^k}$, leading to
$$
0\equiv J_r(\bfs p^r, \bfs V^j)\cdot ( \bfs v^j-\bfs V^j) \mod M^{j}
$$
in $(\Z/p^{2^k}\Z)[T]$. Since $J_r(\bfs p^r, \bfs V^j)$ is
invertible modulo $M^j$ in $(\Z/p^{2^j}\Z)[T]$, it is invertible
modulo $M^j$ in $(\Z/p^{2^k}\Z)[T]$, and $v^{r, \, j}_{n-r+i}\equiv
V^j_{n-r+i} \mod M^j$ in $(\Z/p^{2^k}\Z)[T]$ for $1\le i \le r$.
Since $\deg v^{r, \, j}_{n-r+i}, \deg V^j_{n-r+i} < \deg M^j$, it
follows that $v^{r, \, j}_{n-r+i}= V^j_{n-r+i}$ in
$(\Z/p^{2^k}\Z)[T]$ for $1\le i \le r$, proving the claim.

Finally, since $M^j\equiv m^j_r$ and $V^j_{n-r+i}\equiv v^{r, \,
j}_{n-r+i} \mod p^{2^j}$, we obtain $$ W^j\equiv (m^j_r)'v^{r, \,
j}_{n-r+i}\equiv w^{r, \, j}_{n-r+i} \mod m^j_r $$ in
$(\Z/p^{2^j}\Z)[T]$. As $\deg W^j_{n-r+i}, \deg w^{r, \, j}_{n-r+i}
< \deg m^j_r$, this implies $$W^j_{n-r+i}\equiv w^{r, \, j}_{n-r+i}
\mod p^{2^j}$$ for all $j\geq 0$. This completes the proof.
\end{proof}

We have the following result regarding the computational complexity
of these computations.
%
\begin{proposition}\label{prop: p_adic_lifting_cost}
Suppose that $F_1(\bfs p^r, Y_{n-r+1} \klk Y_n)
\klk F_r(\bfs p^r, Y_{n-r+1} \klk Y_n)$ are given by
a straight--line program of length $L$. Then Algorithm \ref{algo:
p_adic_lifting_algorithm} can be performed using
$$\mathcal{O}\Bigl(\bigl((Lr+r^4+\log_2(\delta_r)\log_2\log_2 p){\sf M}(\delta_r){\sf M}(\log_2p)+
(Lr+r^3){\sf M}(\delta_r){\sf M}(k\log_2p\bigr)\Bigr)$$ bit
operations.
\end{proposition}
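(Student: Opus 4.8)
The plan is to analyze Algorithm~\ref{algo: p_adic_lifting_algorithm} exactly as the cost of Algorithm~\ref{algo: Newton lifting} was analyzed in Theorem~\ref{th: cost Newton lifting}, replacing the complete local ring $k[\![X_{n-s}-p_{n-s}]\!]$ by $\Z$ and the truncation ``with precision $t$'' by reduction modulo the ideal $p^{t}\Z$. The computation splits naturally into two parts: a single \emph{low--precision} phase, consisting of the initialization in steps~{\bf 1}--{\bf 2} together with the setup data needed by the first call to Algorithm~\ref{algo: Global Newton Iterator}, which takes place over $\fp$, i.e.\ at precision $\log_2 p$; and the main loop of step~{\bf 3}, which runs for $j_{0}=\log_{2}k$ iterations, the $j$th iteration operating in the ring $(\Z/p^{2^{j}}\Z)[T]/(M^{j})$ with $M^{j}$ monic of degree $\delta_{r}$. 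Throughout, one arithmetic operation in $(\Z/p^{t}\Z)[T]/(M)$ with $M$ monic of degree $\delta_{r}$ costs $\mathcal{O}({\sf M}(\delta_{r}))$ operations in $\Z/p^{t}\Z$, and one operation in $\Z/p^{t}\Z$ costs $\mathcal{O}({\sf M}(t\log_{2}p))$ bit operations, or $\mathcal{O}({\sf M}(t\log_{2}p)\log_{2}\log_{2}p)$ when an inversion over $\fp$ is involved, the extra $\log_{2}\log_{2}p$ coming from fast extended integer gcd.

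For the low--precision phase, I would first invoke the Baur--Strassen theorem and the Samuelson--Berkowitz algorithm exactly as in Lemma~\ref{lemma: complexity det J and Adj J} to obtain a straight--line program of length $\mathcal{O}(Lr+r^{4})$ evaluating $\det J_{r}$ and $\mathrm{Adj}(J_{r})$; evaluating it at $\bfs v_{p}$ modulo $m_{r,p}$ over $\fp$ costs $\mathcal{O}((Lr+r^{4}){\sf M}(\delta_{r}))$ operations in $\fp$. The remaining low--precision work consists of the modular inversions of $m'_{r,p}$ (needed in step~{\bf 1}) and of $\det J_{r}(\bfs v_{p})$ modulo $m_{r,p}$, each costing $\mathcal{O}({\sf M}(\delta_{r})\log_{2}\delta_{r})$ operations and identity tests in $\fp$, plus $\mathcal{O}(r\,{\sf M}(\delta_{r}))$ further $\fp$-operations for the products defining $\bfs v_{p}$. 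Multiplying the number of $\fp$-operations by the bit cost of an $\fp$-operation, and attributing the $\log_{2}\log_{2}p$ factor to the $\fp$-inversions occurring inside the modular inversions, yields a total of $\mathcal{O}\bigl((Lr+r^{4}+\log_{2}\delta_{r}\log_{2}\log_{2}p)\,{\sf M}(\delta_{r})\,{\sf M}(\log_{2}p)\bigr)$ bit operations, which is the first term of the claimed bound.

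For the main loop, the crucial point is to keep the per--iteration arithmetic cost at $\mathcal{O}(Lr+r^{3})$ ring operations rather than $\mathcal{O}(Lr+r^{4})$. This is possible because, by Remark~\ref{remark: Global Newton Iterator}, the inverse $J_{r}(\bfs V^{j})^{-1}$ need not be recomputed from scratch at each call to Algorithm~\ref{algo: Global Newton Iterator}: one maintains a running inverse and doubles its $p$-adic precision through the quadratic step $g\mapsto 2g-J_{r}(\bfs V^{j})g^{2}$, which costs $\mathcal{O}(r^{3})$ ring operations (two $r\times r$ matrix products), while the evaluation of $J_{r}(\bfs V^{j})$ itself is $\mathcal{O}(Lr)$ ring operations via Baur--Strassen, and the Newton step together with steps~{\bf 2.2}--{\bf 2.5} of Algorithm~\ref{algo: Global Newton Iterator} contribute only $\mathcal{O}(r^{2})$ and $\mathcal{O}(r)$ operations. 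Hence the $j$th iteration uses $\mathcal{O}\bigl((Lr+r^{3}){\sf M}(\delta_{r})\bigr)$ operations in $\Z/p^{2^{j}}\Z$, that is $\mathcal{O}\bigl((Lr+r^{3}){\sf M}(\delta_{r}){\sf M}(2^{j}\log_{2}p)\bigr)$ bit operations. Summing over $0\le j<j_{0}$ and using that ${\sf M}$ is quasilinear, so that the geometric sum $\sum_{j=0}^{j_{0}}{\sf M}(2^{j}\log_{2}p)$ is $\mathcal{O}({\sf M}(2^{j_{0}}\log_{2}p))=\mathcal{O}({\sf M}(k\log_{2}p))$, gives $\mathcal{O}\bigl((Lr+r^{3}){\sf M}(\delta_{r}){\sf M}(k\log_{2}p)\bigr)$ bit operations for the whole loop; the final product $(M^{j_{0}})'V^{j_{0}}_{n-r+i}\bmod M^{j_{0}}$ in step~{\bf 4}, and step~{\bf 1}, are absorbed into this estimate. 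I expect the only genuinely delicate point to be this last bookkeeping: isolating the one--time $r^{4}$ cost of Samuelson--Berkowitz and of the $\fp$-inversions (which remain at precision $\log_{2}p$) from the per--iteration $r^{3}$ cost of lifting the matrix inverse to growing $p$-adic precision, and controlling the sum of the ${\sf M}(2^{j}\log_{2}p)$ terms. Everything else follows \emph{mutatis mutandis} from the analysis already carried out for Algorithm~\ref{algo: Newton lifting} and from the complexity of the basic operations recalled in Section~\ref{section: notation, notations}.
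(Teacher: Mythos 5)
Your proposal is correct and follows essentially the same route as the paper's own proof: a one-time initialization phase at precision $\log_2 p$ accounting for the $r^4$ cost of Samuelson--Berkowitz and the $\log_2\delta_r\log_2\log_2 p$ cost of the modular inversions over $\fp$, followed by the main loop in which the Jacobian inverse is lifted incrementally via $g\mapsto 2g - J_r(\bfs V^j)g^2$ (Remark~\ref{remark: Global Newton Iterator}), yielding a per-iteration cost of $\mathcal{O}((Lr+r^3){\sf M}(\delta_r))$ ring operations whose geometric sum over increasing precision collapses to $\mathcal{O}({\sf M}(k\log_2 p))$. The bookkeeping you flag as the delicate point is exactly the crux the paper handles the same way.
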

\begin{proof}
Step {\bf 1} consists of one inversion and $r-1$ multiplications in
the ring $\fp[T]/(m_{r,p})$. The inversion can be performed using
$\mathcal{O}({\sf M}(\delta_r)\log_2 \delta_r)$ additions and
multiplications, plus $\mathcal{O}(\delta_r)$ inversions in $\fp$,
amounting to a total cost of
$$\mathcal{O}\bigl({\sf
M}(\delta_r)\log_2(\delta_r){\sf M}(\log_2p) + \delta_r{\sf
M}(\log_2p)\log_2\log_2p\bigr)= \mathcal{O}\bigl({\sf
M}(\delta_r)\log_2(\delta_r){\sf M}(\log_2p)\log_2\log_2p\bigr)$$
bit operations. The $r-1$ multiplications each cost
$\mathcal{O}\bigl({\sf M}(\delta_r){\sf M}(\log_2 p)\bigr)$, so step
{\bf 1} requires in total $$\mathcal{O}\bigl(
(r+\log_2(\delta_r)\log_2(\log_2p)){\sf M}(\delta_r){\sf M}(\log_2p)
\bigr)$$ bit operations.

We now analyze step {\bf 3}. We begin with $j=0$ and apply Algorithm
\ref{algo: Global Newton Iterator} with input $G_i:=F_i({\bfs p^r},
Y_{n-r+1}, \dots, Y_n)$ for $1\le i \le r$, $m:=M^0$ and ${\bfs
v}:={\bfs V^0}:=(V^0_{n-r+1}, \dots, V^0_n)$ with ${\sf I}:=p\Z$.

Step {\bf 1} in Algorithm \ref{algo: Global Newton Iterator}
computes the inverse $(J_r(\bfs V^0))^{-1}$ in $\bigl(\fp[T]/(
M^0)\bigr)^{r\times r}$. According to Lemma \ref{lemma: complexity
det J and Adj J}, the determinant and the adjoint matrix of
$J_r(\bfs V^0)$ can be computed with $\mathcal{O}(Lr+r^4)$ arithmetic operations
in $\fp[T]/( M^0)$, and thus with $\mathcal{O}((Lr+r^4){\sf
M}(\delta_r){\sf M}(\log_2p)\bigr)$ bit operations. The inversion of
$\det (J_r(\bfs V^0))$ in $\fp[T]/( M^0)$ requires an additional
$$\mathcal{O}\bigl({\sf M}(\delta_r)\log_2(\delta_r){\sf
M}(\log_2p)\log_2\log_2p\bigr)$$ bit operations. Then, $(J_r(\bfs
V^0))^{-1}=(\det (J_r(\bfs V^0)))^{-1}\mathrm{Adj} (J_r(\bfs V^0))$
can be computed with  $r^2$ additional multiplications in $\fp[T]/(
M^0)$, which amounts to $\mathcal{O}\bigl(r^2{\sf M}(\delta_r){\sf
M}(\log_2 p)\bigr)$ bit operations. Thus the computation of
$(J_r(\bfs V^0))^{-1}$ requires a total of
$$
\mathcal{O}\bigl((Lr+r^4 +\log_2(\delta_r)\log_2(\log_2p)){\sf
M}(\delta_r){\sf M}(\log_2p)\bigr)
$$
bit operations.

In step {\bf 2.1}, we compute
$$
{\bfs w^t}:=({\bfs V^0})^t-(J_r(\bfs V^0))^{-1}\cdot (F_1({\bfs
p^r}, \bfs V^0), \cdots, F_1({\bfs p^r}, \bfs V^0))
$$
in $(\Z/p^2\Z)[T]/( M^0)$. Evaluating the polynomials $F_1({\bfs
p^r}, \bfs V^0), \ldots, F_r({\bfs p^r}, \bfs V^0)$ requires
$\mathcal{O}\bigl(L\bigr)$ arithmetic operations in $(\Z/p^2\Z)[T]/(
M^0)$, and computing the matrix-vector product and subtraction adds
another $\mathcal{O}(r^2)$ arithmetic operations in $(\Z/p^2\Z)[T]/(
M^0)$. Hence, step {\bf 2.1} costs
$$\mathcal{O}\bigl ((Lr+r^2){\sf
M}(\delta_r){\sf M}(\log_2(p^2))\bigr)$$
bit operations.

Steps {\bf 2.2} through {\bf 2.5} involve $\mathcal{O}(r)$ additions
and multiplications in $(\Z/p^2\Z)[T]/( M^0)$, with cost
$\mathcal{O}\bigl( r{\sf M}(\delta_r){\sf M}(\log_2(p^2))\bigr)$ bit
operations. Therefore, the total cost of Algorithm \ref{algo: Global
Newton Iterator} for $j=0$ is dominated by the computation of
$(J_r(\bfs V^0))^{-1}$ in step {\bf 1}.

Now, let $0 <  j < j_0$ and suppose, by induction, that step {\bf 3}
of Algorithm \ref{algo: p_adic_lifting_algorithm} has been executed
for all $0 \le j' < j$.  In particular, assume we have already
computed the polynomials $M^{j}$, $\bfs V^{j}:=(V^{j}_{n_r+1},
\cdots, V^{j}_n)$ of $(\Z/p^{2^{j}}\Z)[T]$, and a matrix $C\in
(\Z/p^{2^{j-1}})[T]^{r\times r}$ such that $C$ is the inverse of
$J_r(\bfs V^{j-1})$ in
$\bigl((\Z/p^{2^{j-1}}\Z)[T]/(M^{j-1})\bigr)^{r\times r}$.

To compute the polynomials $M^{j+1}$, $\bfs
V^{j+1}:=(V^{j+1}_{n-r+1}, \cdots, V^{j+1}_n)$ of
$(\Z/p^{2^{j+1}}\Z)[T]$, we apply Algorithm \ref{algo: Global Newton
Iterator} with input $m:=M^j$, ${\bfs v}:={\bfs V^{j}}$, and ${\sf
I}=p^{2^j}\Z$. According to Remark \ref{remark: Global Newton
Iterator}, the inverse $(J_r(\bfs V^j))^{-1}$ can be obtained via
the identity
$$(J_r(\bfs V^j))^{-1}=2C-J_r(\bfs V^j)C^2\in (\Z/p^{2^j})[T]^{r\times r},$$
which requires $\mathcal{O}\bigl(r^3{\sf M}(\delta_r){\sf
M}(\log_2(p^{2^j}))\bigr)$ bit operations. Step {\bf 2} of Algorithm
\ref{algo: Global Newton Iterator} involves evaluations of the
$F_i$'s and various updates, requiring $\mathcal{O}\bigl((Lr+r^2){\sf
M}(\delta_r){\sf M}(\log_2(p^{2^{j+1}}))\bigr)$ bit operations.

Therefore, the total cost of step {\bf 3} of Algorithm \ref{algo:
p_adic_lifting_algorithm} for fixed $j$ is
$$\mathcal{O}\bigl((Lr+r^3){\sf M}(\delta_r){\sf
M}(\log_2(p^{2^{j+1}}))\bigr)$$ bit operations. Summing over $1\le j
< j_0-1$, we obtain a total cost of
$$
\mathcal{O}\bigl((Lr+r^3){\sf M}(\delta_r)\sum_{j=1}^{j_0-1}{\sf
M}(\log_2(p^{2^{j+1}}))\bigr)=\mathcal{O}\bigl((Lr+r^3){\sf
M}(\delta_r){\sf M}(k\log_2(p)\bigr)
$$
bit operations.

Finally, step {\bf 4} of Algorithm \ref{algo:
p_adic_lifting_algorithm} consists of $r-1$ multiplications in
$(\Z/p^{k}\Z)[T]/M^{j_0}$, requiring $\mathcal{O}\bigl(r{\sf
M}(\delta_r){\sf M}(k\log_2 p)\bigr)$ bit operations.
\end{proof}

For the final step of reconstructing the output Kronecker
representation over $\Q$, we employ the well-known {\sf Rational
reconstruction} algorithm (see, for example, \cite[\S
5.10]{GaGe99}). Its complexity is summarized in the following lemma.
\begin{lemma}\label{lemma: rational_reconstruction_algorithm}
Let $\alpha:=p/q\in\Q$, with $p,q\in\Z\setminus\{0\}$ coprime, and
let $f,g\in \mathbb{Z}$ with $0\leq g< f$, satisfying the following
conditions:
\begin{itemize}
  \item The height of $\alpha$ satisfies ${\sf h}(\alpha)\le \frac{1}{2}\log_2(f/2)$.
  \item $\gcd (q,f)=1$, and $pq^{-1}\equiv g \mod (f)$, where $q^{-1}$ denotes the inverse of $q$ modulo $f$.
\end{itemize}
Given $g$ and $f$, the integers $p$ and $q$ can be deterministically
computed with $\mathcal{O}({\sf M}(\log_2f))$ bit operations.
\end{lemma}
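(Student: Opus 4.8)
The statement is the standard rational reconstruction lemma, so the plan is to reduce it to the classical theory of the extended Euclidean algorithm (EEA) and the Cauchy interpolation / continued-fraction characterization of convergents. First I would recall the structure of the EEA applied to the pair $(f,g)$: it produces a sequence of remainders $r_0=f > r_1=g > r_2 > \cdots$ together with Bézout cofactors $s_i,t_i$ satisfying $s_if+t_ig=r_i$, and at each stage $|t_i|\cdot r_{i-1}\le f$. The key fact is that if $p/q$ is a reduced fraction with $q$ invertible modulo $f$ and $pq^{-1}\equiv g\pmod f$, and the ``size'' of $p/q$ is small enough---precisely $2|p|\,|q| < f$, which is implied by the height bound ${\sf h}(\alpha)\le\frac12\log_2(f/2)$ since that gives $|p|\le\sqrt{f/2}$ and $|q|\le\sqrt{f/2}$, hence $2|p||q|\le f$ (with a strict inequality after being careful about the constant)---then $(p,q)$ is, up to sign, the unique pair $(r_i,t_i)$ appearing in the EEA at the first index $i$ where $r_i$ drops below a threshold of order $\sqrt{f}$. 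This is the content of, e.g., \cite[Theorem 5.26 and Corollary 5.27]{GaGe99}; I would cite it rather than reprove it.

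The steps, in order, would be: (1) verify that the height hypothesis on $\alpha$ translates into the inequality $2|p|\,|q|<f$ needed to invoke the uniqueness theorem for rational reconstruction; (2) invoke that theorem to conclude that running the EEA on $(f,g)$ and stopping at the first remainder $r_i$ with $r_i < \lfloor\sqrt{f/2}\rfloor$ (or the analogous threshold used in the cited reference) yields a pair $(r_i,t_i)$ with $r_i/t_i = \pm p/q$, from which $p$ and $q$ are recovered by normalizing signs and dividing out the gcd (which is $1$ under the stated coprimality); (3) confirm that this output is the \emph{correct} one---here one uses $\gcd(q,f)=1$ together with $pq^{-1}\equiv g\pmod f$, which guarantee that the ``true'' fraction is actually attained in the EEA sequence and not merely some spurious small-height fraction; and (4) read off the complexity. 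For the complexity, a single full EEA on integers of bit length $O(\log_2 f)$ costs $\mathcal{O}({\sf M}(\log_2 f)\log_2 f)$ with the standard fast algorithm, but the \emph{truncated} EEA that stops at a prescribed remainder size---equivalently, one half-gcd call---can be done in $\mathcal{O}({\sf M}(\log_2 f))$ bit operations (see \cite[\S 11.1]{GaGe99} and the discussion of the fast Euclidean algorithm), which is exactly the claimed bound.

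I do not expect any genuine obstacle: the result is classical and every ingredient is available in the references already cited in the paper (fast EEA, half-gcd, rational reconstruction). The only point requiring a moment of care is the bookkeeping in step (1)---matching the specific normalization ${\sf h}(\alpha)\le\frac12\log_2(f/2)$ against whatever normalization (e.g. $2N^2\le f$ with $|p|,|q|\le N$) is used in the cited theorem, and making sure the strict versus non-strict inequalities line up so that uniqueness genuinely holds. A clean way to handle this is to set $N:=\lceil\sqrt{f/2}\rceil$, observe $|p|<N$ and $|q|<N$ (strictly, because $2|p||q|\le f$ forces $|p||q|<f/2\le N^2$ and one of the factors must be $<N$; a symmetric argument handles both), and then apply the rational reconstruction theorem with bound $N$. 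The remaining arguments are routine and I would present them tersely, deferring to \cite{GaGe99} for the proofs of the underlying EEA facts.
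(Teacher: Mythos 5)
The paper does not supply its own proof of this lemma; it merely states the result and cites \cite[\S 5.10]{GaGe99}. Your proposal is thus a reconstruction of what the paper delegates to the reference, and it follows exactly the strategy one would expect: translate the height bound into the condition $2|p|\,|q|\le f$, invoke the rational-reconstruction uniqueness theorem of von zur Gathen--Gerhard to conclude that the fraction $p/q$ is, up to sign, the pair $(r_i,t_i)$ produced by the extended Euclidean algorithm on $(f,g)$ at the first index where $r_i$ drops below $\sqrt{f/2}$, and then read off the cost from the fast Euclidean algorithm. The normalization check in your step (1) is exactly the right thing to verify, and your caution about the edge case where $2|p|\,|q|=f$ exactly is warranted.

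One point deserves correction, though it is minor. You write that a full fast EEA on integers of bit length $n:=\log_2 f$ costs $\mathcal{O}({\sf M}(n)\,n)$ and that the ``truncated'' version (one half-GCD call) avoids the extra factor, running in $\mathcal{O}({\sf M}(n))$. Neither assertion is quite right: the fast half-GCD recursion satisfies $T(n)=2T(n/2)+\mathcal{O}({\sf M}(n))$, so a \emph{single} half-GCD call already costs $\mathcal{O}({\sf M}(n)\log n)$, which is the same bound as the full EEA (you wrote a factor of $\log_2 f$ where it should be $\log_2\log_2 f$, i.e., $\log n$). In particular, truncating at a threshold does not remove the $\log n$ overhead. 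Consequently the $\mathcal{O}({\sf M}(\log_2 f))$ figure in the lemma is, taken literally, missing a $\log_2\log_2 f$ factor. This does not endanger the paper: the only downstream use of this lemma (in the proof of Lemma~\ref{lemma: lifting_the_integers} and Theorem~\ref{th: cost algorithm over Q}) passes through soft-Oh notation $\mathcal{O}^\sim$, which absorbs the extra $\log$ factor. But in your write-up you should either cite the half-GCD cost as $\mathcal{O}({\sf M}(\log_2 f)\log_2\log_2 f)$ or explicitly say the bound is to be read up to a $\log\log$ factor.
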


By combining Algorithm \ref{algo: p_adic_lifting_algorithm} with the
rational reconstruction algorithm from the lemma above, we obtain
the following procedure for reconstructing the output Kronecker
representation over $\Q$ from its modular reduction.
\begin{algorithm}[Reconstruction of a Kronecker representation]
\label{algo: geometric_solution_reconstruction} ${}$
\begin{itemize}
\item[]{\bf Input:}
\begin{enumerate}
\item The polynomials $F_1(\bfs p^r, Y_{n-r+1} \klk Y_n) \klk F_r(\bfs p^r, Y_{n-r+1} \klk
Y_n)$.
\item A lucky prime $p$ as in Theorem \ref{th:_lucky prime_a}.
\item The polynomials  $m_{r,p}, w^r_{{n-r+2},p} \klk w^r_{n,p}\in \fp[T]$
forming a Kronecker representation of  $\pi_{r,p}^{-1}(\bfs p^r_p)$,
with $Y_{{n-r+1},p}$ as primitive element,
\item An upper bound $\eta \in \mathbb{N}$ for the heights of the coefficients
of $m_r, w^r_{n-r+2} \klk w^r_n$ in the Kronecker representation of
$\pi_{r}^{-1}(\bfs p^r)$ over $\Q$, with $Y_{n-r+1}$ as primitive
element.
\end{enumerate}

\item[]{\bf Output:} The polynomials $m_r, w^r_{n-r+2} \klk w^r_n$ forming
a Kronecker representation of  $\pi_r^{-1}(\bfs p^r)$, with
$Y_{n-r+1}$ as primitive element.

\item[]{\bf 1} Apply Algorithm \ref{algo: p_adic_lifting_algorithm} with $k$
equal to the smallest power of $2$ such that $\eta\le
(1/2)\log_2(p^{k}/2)$, to compute $p$-adic approximations of order
$k$ of $m_r, w^r_{n-r+2} \klk w^r_n$.

\item[]{\bf 2} Apply rational reconstruction to
determine the coefficients of $m_r, w^r_{n-r+2} \klk w^r_n$ from the
approximations obtained in step {\bf 1}.
  \end{itemize}
\end{algorithm}

We then have the following result regarding the overall complexity:
\begin{lemma}\label{lemma: lifting_the_integers}
Suppose that $F_1(\bfs p^r, Y_{n-r+1} \klk Y_n) \klk F_r(\bfs p^r, Y_{n-r+1} \klk Y_n)$ are given by a
straight--line program in $\fp[Y_{n-r+1} \klk Y_n]$ of length at most $L$.
Then Algorithm \ref{algo: geometric_solution_reconstruction} can be
performed using $$\SO\bigl((Lr+r^4)\delta_r\eta +
r\delta_r\log_2p\bigr)$$ bit operations.
\end{lemma}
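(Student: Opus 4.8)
The plan is to read off the cost of Algorithm~\ref{algo: geometric_solution_reconstruction} step by step, using the cost estimates already established for its two subroutines. I would first pin down the precision parameter $k$ used in step~{\bf 1}: it is the smallest power of~$2$ with $\eta\le\tfrac12\log_2(p^k/2)$, so $k\log_2 p\ge 2\eta+1$ and, by minimality of~$k$, also $k\log_2 p=\mathcal{O}(\eta+\log_2 p)$. In the situation of the paper the bit size $\log_2 p$ of the lucky prime — bounded through $\mathfrak{H}$ in Proposition~\ref{prop: eta_s_height_estimate} and Lemma~\ref{lemma: computation_of_a_lucky_prime} — is at most of the order of~$\eta$, so one may record $k\log_2 p=\SO(\eta)$; this is the estimate that converts all subsequent $p$-adic precisions into the height~$\eta$.

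For step~{\bf 1}, Algorithm~\ref{algo: p_adic_lifting_algorithm} is run on the straight-line program of length~$L$ for $F_1(\bfs p^r,Y_{n-r+1}\klk Y_n)\klk F_r(\bfs p^r,Y_{n-r+1}\klk Y_n)$ over~$\fp$, with precision parameter $k$. Substituting $k$ into Proposition~\ref{prop: p_adic_lifting_cost} gives the bound
$$\mathcal{O}\Bigl((Lr+r^4+\log_2\delta_r\log_2\log_2 p)\,{\sf M}(\delta_r)\,{\sf M}(\log_2 p)+(Lr+r^3)\,{\sf M}(\delta_r)\,{\sf M}(k\log_2 p)\Bigr)$$
bit operations. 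Using ${\sf M}(m)\in\SO(m)$, substituting $k\log_2 p=\SO(\eta)$, and absorbing the polylogarithmic factor $\log_2\delta_r\log_2\log_2 p$ into the soft-Oh, this collapses to $\SO\bigl((Lr+r^4)\delta_r\eta+r\delta_r\log_2 p\bigr)$.

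Step~{\bf 2} is a loop of rational reconstructions. The Kronecker representation to be recovered consists of $m_r$, of degree $\delta_r$, and $w^r_{n-r+2}\klk w^r_n$, that is $r-1$ polynomials of degree $<\delta_r$; hence there are $\mathcal{O}(r\delta_r)$ coefficients in~$\Q$ to reconstruct. By Theorem~\ref{th:_lucky prime_b} these coefficients lie in~$\Z_{(p)}$, so their denominators are prime to~$p$, and by the choice of~$k$ the height of each is at most $\tfrac12\log_2(p^k/2)$; thus the hypotheses of Lemma~\ref{lemma: rational_reconstruction_algorithm} are met with $f:=p^k$, and each reconstruction takes $\mathcal{O}\bigl({\sf M}(k\log_2 p)\bigr)=\SO(\eta)$ bit operations. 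Step~{\bf 2} therefore costs $\SO(r\delta_r\eta)$, which is dominated by the bound already obtained for step~{\bf 1}; adding the two yields the claimed estimate.

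The calculations above are routine; the only delicate point is the soft-Oh bookkeeping, namely checking that every auxiliary logarithmic quantity — the polylog hidden in ${\sf M}$, the term $\log_2\delta_r\log_2\log_2 p$ coming from the initial Jacobian inversion over~$\fp$, and the constants relating $k\log_2 p$ to~$\eta$ — is absorbed into the stated bound, and in particular that $\log_2 p$ never has to be carried with a coefficient larger than $r\delta_r$. I would make explicit here the dependence of $\log_2 p$ on the quantities entering~$\eta$ (through $\mathfrak{H}$ and the height estimate of Proposition~\ref{prop: eta_s_height_estimate}, together with Lemma~\ref{lemma: computation_of_a_lucky_prime}) so that this absorption is fully justified rather than left implicit.
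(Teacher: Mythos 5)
Your plan follows the right outline, but the one step you flag as delicate is exactly the place where the argument fails, and the issue is not a matter of bookkeeping. You write that $\log_2 p$ ``is at most of the order of $\eta$'', so that $k\log_2 p=\SO(\eta)$, and then you propose to substitute this into the bound of Proposition~\ref{prop: p_adic_lifting_cost}. That claim is not true in general: $\log_2 p$ is controlled by $\log_2(\varepsilon^{-1}\mathfrak{H})$, where $\log_2\mathfrak{H}$ is only polylogarithmic in $d^r,2^n,h$ but $\log_2(\varepsilon^{-1})$ has no upper bound whatsoever in terms of $\eta$ (the user may request an arbitrarily small failure probability). When $\log_2 p\gtrsim 2\eta$ the smallest admissible precision is $k=1$, and then a blind substitution into Proposition~\ref{prop: p_adic_lifting_cost} leaves a term $\SO\bigl((Lr+r^4)\delta_r\log_2 p\bigr)$, coming from the initial Jacobian inversion over $\fp$, which is genuinely larger than the $r\delta_r\log_2 p$ term you are trying to charge it to: one cannot ``never carry $\log_2 p$ with a coefficient larger than $r\delta_r$'' by this route.

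The way out is the observation the paper's proof is built on, and which your argument omits: when $k=1$ the loop of step~{\bf 3} of Algorithm~\ref{algo: p_adic_lifting_algorithm} does not execute at all (since $j_0=0$), so the $(Lr+r^4)$-weighted inversion cost from Lemma~\ref{lemma: complexity det J and Adj J} simply never arises; only the $\SO(r\delta_r\log_2 p)$ overhead of steps~{\bf 1}, {\bf 2}, {\bf 4} (and the single round of rational reconstruction) remains. When $k>1$, minimality of $k$ does give $k\log_2 p=\mathcal{O}(\eta)$, and then your substitution into Proposition~\ref{prop: p_adic_lifting_cost} is legitimate and yields $\SO\bigl((Lr+r^4)\delta_r\eta\bigr)$. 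You need this dichotomy; the paper's proof is precisely this two-line case analysis. Your estimate $k\log_2 p=\mathcal{O}(\eta+\log_2 p)$ is correct, but by itself it produces the weaker bound $\SO\bigl((Lr+r^4)\delta_r(\eta+\log_2 p)\bigr)$, and the refinement to $r\delta_r\log_2 p$ in the second term cannot be extracted from Proposition~\ref{prop: p_adic_lifting_cost} as a black box — it requires opening up the algorithm to see that the Newton loop is skipped when $k=1$.
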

\begin{proof}
If $k=1$, then $\eta\le \log_2 p$, and step {\bf 1} does not involve
any computations. By Lemma \ref{lemma:
rational_reconstruction_algorithm}, step {\bf 2} requires
$\SO(r\delta_r\log_2p\bigr)$ bit operations.

On the other hand, if $k>1$, then we have $k\log_2p \in
\mathcal{O}(\eta)$. According to Proposition \ref{prop:
p_adic_lifting_cost}, step {\bf 1} requires
$\SO((Lr+r^4)\delta_r\eta)$ bit operations, and step {\bf 2} adds
another $\SO(r\delta_r\eta)$ bit operations. This completes the
proof.
\end{proof}
%
%
\subsection{The whole algorithm for $k=\Q$}
By combining the algorithms for computing a Kronecker representation
over a finite field, $p$-adic lifting, and reconstruction over $\Q$,
we obtain an algorithm to compute a Kronecker representation over
$\Q$ of a lifting fiber of the input variety $V$.

We proceed as follows. Assume that the input polynomials $F_1\klk
F_r,G$ are provided by a straight--line program $\beta$ of length at
most $L$ with integer parameters. First, we randomly choose a lucky
pair $(\bfs\lambda, \bfs p)\in \mathcal{S}^{n^2}\times
\mathcal{S}^{n-1}$, where $\mathcal{S}$ is defined as earlier. Then,
we compute a ``lucky'' prime $p$ according to Lemma \ref{lemma:
computation_of_a_lucky_prime}. Reducing the parameters of $\beta$
modulo $p$ we obtain a straight--line program $\beta_p$ of length at
most $L$, representing the polynomials $F_{1,p}, \dots,F_{r,p},G_p$.
We can then apply Algorithm \ref{algo: main algorithm for k} with
$k:=\fp$ to compute a Kronecker representation $m_{r,p},
w^r_{{n-r+2},p} \klk w^r_{n,p}$ of the lifting fiber
$\pi_{r,p}^{-1}(\bfs p^r_p)$ of
$V_{r,p}:=\overline{V(F_{1,p},\ldots,F_{r,p})\setminus V(G_p)}$,
with $Y_{n-r+1,p}$ as primitive element. Finally, using the rational
reconstruction of Algorithm \ref{algo:
geometric_solution_reconstruction}, we lift these polynomials to
obtain the Kronecker representation $m_r, w^r_{n-r+2} \klk w^r_n$ of
the lifting fiber $\pi_r^{-1}(\bfs p^r)$ of $V_r$ over $\Q$, with
$Y_{n-r+1}$ as primitive element. This yields an efficient algorithm
to compute the desired Kronecker representation, provided that the
random choices of $(\bfs\lambda, \bfs p)$ and $p$ are lucky.

We summarize
the procedure in the following algorithm.
\begin{algorithm}[Kronecker representation over $\Q$] ${}$
\label{algo: kronecker for Q}
\begin{itemize}
\item[]{\bf Input:}
\begin{enumerate}
\item Polynomials $F_1,\ldots,F_r,G\in \Z[X_1,\ldots,X_n]$ satisfying
(${\sf H}_1$)--(${\sf H}_2$);
\item A parameter $\varepsilon$ with
$0<\varepsilon<1/2$;
\item Access to uniformly random elements of the
finite set $\mathcal{S}:=\{0,\ldots,\lceil
2\varepsilon^{-1}D\rceil\}$, where $D$ is defined as in \eqref{eq:
definition D}.
\end{enumerate}

\item[]{\bf Output:}
\begin{enumerate}
\item
Linear forms $Y_1,\ldots,Y_n\in\Z[X_1,\ldots,X_n]$;
\item
A point $\bfs p^r:=(p_1,\ldots,p_{n-r})\in\Z^{n-r}$;
\item
Polynomials $m, w_{n-r+2}\klk w_n\in\Q[T]$;
\end{enumerate}
such that:
\begin{enumerate}
  \item The linear map $\pi_r:V\to\A^{n-r}$ defined by $Y_1,\ldots,Y_{n-r}$
  is a finite morphism;
  \item $\bfs p^r$ is a lifting point of $\pi_r$;
  \item
  $\pi_r:V\to\A^{n-r}$, the linear form $Y_{n-r+1}$, and
  the polynomials $m, w_{n-r+2}\klk w_n$, form a Kronecker
  representation of $\pi_r^{-1}(\bfs p^r)$.
\end{enumerate}

\item[]{\bf 1} Randomly select the coefficients $\bfs \lambda\in\mathcal{S}^{n\times n}$
of linear forms $Y_1,\ldots,Y_n\in \Z[X_1,\ldots,X_n]$. Set $\bfs
Y:=(Y_1,\ldots,Y_n)$.
\item[]{\bf 2} Randomly select the coordinates of a point
$\bfs p:=(p_1,\ldots,p_{n-1})\in\mathcal{S}^{n-1}$. Set $\bfs
p^s:=(p_1,\ldots,p_{n-s})$ for $1\le s\le r$.
\item[]{\bf 3} Compute a lucky prime $p$ via
Lemma \ref{lemma: computation_of_a_lucky_prime}.
\item[]{\bf 4} Compute the reduced linear forms $Y_{1,p} \klk Y_{n,p}
\in \fp[X_1 \klk X_n]$ and the point $\bfs p_p:=(p_{1,p} \klk
p_{n-1,p})\in\fp^{n-1}$. Set $\bfs Y_p:=(Y_{1, p},\ldots,Y_{n, p})$
and $\bfs p^s_p:=(p_{1,p} \klk p_{n-s, p})$ for $1\le s \le r$.
\item[]{\bf 5} Obtain a straight-line program for evaluating
the polynomials $F_{i, p}^{\bfs Y}:=F_{i, p}(\bfs Y_p)$ $(1\le i\le r)$ and $G_p^{\bfs Y}:=G_p(\bfs
Y_p)$.
\item[]{\bf 6} Compute the dense representation $m_{1,p}\in\fp[T]$ of
$$m_{1,p}=F_{1, p}^{\bfs Y}(\bfs p_p^1,T)/
\gcd\big(F_{1,p}^{\bfs Y}(\bfs p_p^1,T),G_p^{\bfs Y}(\bfs
p_p^1,T)\big).$$
\item[]{\bf 7} Set
$e:=\max\{1,\log_p(12r\varepsilon^{-1}d^{4r})\}$; define $k:=\fpe$.
\item[]{\bf 8} For $s=1\klk r-1$, do
\begin{itemize}
  \item[]{\bf 8.1} Apply Algorithm \ref{algo: Newton lifting}
  to compute a Kronecker representation of
$$C_s:=\pi_{s,p}^{-1}(\{Y_{1,p}=p_{1,p},\ldots,Y_{n-s-1,p}=p_{n-s-1,p}\}),$$ with
$Y_{n-s+1,p}$ as primitive element;
  \item[]{\bf 8.2} Apply Algorithm \ref{algo: comput m_(s+1) and v_(n-s+1)} over $k$
  to compute polynomials $m_{s+1,p},v_{n-s+1,p}^{s+1}\in\fp[T]$ satisfying $m_{s+1,p}(Y_{n-s,p})\equiv 0$ and
  $Y_{n-s+1,p}-v_{n-s+1,p}^{s+1}(Y_{n-s,p})\equiv 0$ in $\pi_{s+1,p}^{-1}(\bfs
  p_p^{s+1})$, with $\deg
m_{s+1,p}=\delta_{s+1}$, $\deg
  v_{n-s+1,p}^{s+1}<\delta_{s+1}$.
  \item[]{\bf 8.3} Apply
Algorithm \ref{algo: conclusion recursive
  step} to compute polynomials $v_{n-s+2,p}^{s+1}\klk v_{n, p}^{s+1}\in\fp[T]$
  satisfying
  $Y_{n-s+i,p}-v_{n-s+i,p}^{s+1}(Y_{n-s,p})\equiv 0$ in $\pi_{s+1,p}^{-1}(\bfs
  p_p^{s+1})$  for $2\le i\le s$, with $\deg v_{n-s+i,p}^{s+1}<\delta_{s+1}$ for $2\le i\le
  s$.
\end{itemize}
\item[]{\bf 9} Compute $w_{n-r+i,p}^r:=(m'_{r,p})^{-1}v_{n-r+i,p}^r$ for $2\le i \le r$.
\item[]{\bf 10} Compute an upper bound $\eta \in \SO(nd^{r-1}(h+nd))$ for the heights of $m_r,
 w^r_{n-r+2} \klk w^r_n$.
 \item[]{\bf 11} Set $m_p:=m_{r,p}$ and $w_{n-r+2,p}:=w_{n-r+2,p}^r\klk
w_{n,p}:=w_{n,p}^r$.
 \item[]{\bf 12}
 Apply Algorithm \ref{algo: geometric_solution_reconstruction} using
 $m_{r,p}$,
 $w_{n-r+2,p}^r\klk w_{n,p}^r$  and $\eta$ to obtain $m_r,w_{n-r+2}^r\klk w_{n}^r$.
\item[]{\bf 13} Output $Y_1 \klk Y_n$, $\bfs p^r$, $m:=m_r$ and $w_{n-r+2}:=w^r_{n-r+2} \klk w_n:=v^r_n$.
\end{itemize}
\end{algorithm}

We establish the correctness of this algorithm.
\begin{theorem}
For a lucky choice of $\bfs \lambda$,  $\bfs p$ and  $p$, Algorithm
\ref{algo: kronecker for Q} correctly computes a Kronecker representation
of a lifting fiber of $V$.
\end{theorem}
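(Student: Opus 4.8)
The plan is to decompose the correctness statement into the three layers that mirror the structure of Algorithm~\ref{algo: kronecker for Q}: the passage to a good modular reduction, the modular computation over $\fp$, and the reconstruction over $\Q$ by $p$-adic lifting. First I would unpack the hypotheses on the lucky data. By Theorem~\ref{th:_lucky prime_a}, a lucky pair $(\bfs\lambda,\bfs p)$ guarantees that $Y_1,\ldots,Y_n$ and the points $\bfs p^s$ satisfy conditions $(1)$--$(4)$ of Theorem~\ref{th: preproc: all conditions} over $\overline{\Q}$, and that for a prime $p$ not dividing $\mathfrak N$ the reduced data $F_{1,p},\ldots,F_{r,p},G_p$, $Y_{1,p},\ldots,Y_{n,p}$, $\bfs p^s_p$ satisfy hypotheses $({\sf H}_1)$--$({\sf H}_2)$ over $\cfp$, preserve the degrees $\delta_s$, and satisfy conditions $(1)$--$(5)$ of that theorem. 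These are precisely the structural conditions $({\sf A}_1)$--$({\sf A}_3)$ and $({\sf B}_1)$--$({\sf B}_5)$ of Section~\ref{sec: lifting fibers} for the base field $\fp$, on which the correctness of all the subroutines of Sections~\ref{sec: lifting fibers}--\ref{sec: intersection step} rests. I would also record that luckiness of $p$ includes the extra divisibility conditions of Proposition~\ref{prop:_lucky prime_c}, which are needed only in the lifting step.

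Second, steps~{\bf 4}--{\bf 9} of Algorithm~\ref{algo: kronecker for Q} are exactly Algorithm~\ref{algo: main algorithm for fq} run with $k=\fp$ on the reduced system; by the structural conditions just established, the argument of Theorem~\ref{th: main algo - prob success} applies verbatim (the probabilistic bookkeeping being superfluous once the internal random choices over $\fpe$ are also assumed lucky), so the output $m_{r,p},w^r_{n-r+2,p},\ldots,w^r_{n,p}$ is a Kronecker representation of the lifting fiber $\pi_{r,p}^{-1}(\bfs p^r_p)$ of $V_{r,p}$ with $Y_{n-r+1,p}$ as primitive element. Here I would cite the correctness results for Algorithms~\ref{algo: Newton lifting}, \ref{algo: comput m_(s+1) and v_(n-s+1)} and \ref{algo: conclusion recursive step} (Theorem~\ref{theo: correctedness: Newton lifting}, Lemma~\ref{lemma: correctness comput m_(s+1) and v_(n-s+1)}, Remark~\ref{rem: correctness algo conclusion recursive step}), and Remarks~\ref{rem: k definability algo projection} and~\ref{rem: k definability comput m_(s+1) and v_(n-s+1)} to confirm that the output remains defined over $\fp$ despite the use of $\fpe$. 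Since a lifting fiber is zero-dimensional, its Kronecker representation with a fixed primitive element is unique; hence by Theorem~\ref{th:_lucky prime_b} the computed $m_{r,p},w^r_{n-r+i,p}$ coincide with the reductions modulo $p$ of the (unique) polynomials $m_r,w^r_{n-r+i}\in\Z_{(p)}[T]$ defining the Kronecker representation of $\pi_r^{-1}(\bfs p^r)$ over $\Q$ with $Y_{n-r+1}$ as primitive element.

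Third, I would show that step~{\bf 12} recovers exactly these $m_r,w^r_{n-r+i}$. Proposition~\ref{prop: correctness algo p adic lift} guarantees that Algorithm~\ref{algo: p_adic_lifting_algorithm}, applied to the reduced system, $p$, and the modular Kronecker representation, returns the $p$-adic approximations of order $k$ of $m_r,w^r_{n-r+2},\ldots,w^r_n$; this is where the full luckiness of $p$ (Proposition~\ref{prop:_lucky prime_c}) is used, ensuring the relevant power of $G(\bfs p^r,\cdot)m_r$ lies in the ideal generated by the $F_i(\bfs p^r,\cdot)$ over $\Z_p$ and that $G(\bfs p^r,\bfs V^{\infty})$ is invertible modulo the limit polynomial. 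Since, by step~{\bf 10}, $\eta\in\SO(nd^{r-1}(h+nd))$ is a genuine bound for the heights of $m_r,w^r_{n-r+i}$ (Proposition~\ref{prop: eta_s_height_estimate}), and $k$ is chosen as the least power of $2$ with $\eta\le\tfrac12\log_2(p^k/2)$, the hypotheses of Lemma~\ref{lemma: rational_reconstruction_algorithm} hold for every coefficient, so rational reconstruction returns precisely those rational numbers. Finally, Theorem~\ref{th:_lucky prime_b} together with Theorem~\ref{th:_lucky prime_a}$(2)$ yields that $\pi_r$ is a finite morphism and $\bfs p^r$ a lifting point of it over $\Q$, so the triple consisting of $\pi_r$, the form $Y_{n-r+1}$, and $(m_r,w^r_{n-r+2},\ldots,w^r_n)$ is a Kronecker representation of the lifting fiber $\pi_r^{-1}(\bfs p^r)$ of $V=V_r$, as claimed.

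The main obstacle will be the careful dictionary between the two sets of hypotheses: Theorems~\ref{th:_lucky prime_a}--\ref{th:_lucky prime_b} and Propositions~\ref{prop:_lucky prime_c}, \ref{prop: eta_s_height_estimate} are imported from \cite{GiMa19} and phrased in terms of its preprocessing polynomials $\mathsf R$, $\mathsf N_{\bfs\lambda}$ and the integer $\mathfrak N$, whereas Sections~\ref{sec: lifting fibers}--\ref{sec: the whole algorithm} are written in terms of the polynomial $B$ of Corollary~\ref{coro: preproc: all conditions for all s} and the conditions $({\sf A}_i)$, $({\sf B}_i)$. I would need to verify that ``lucky'' in the sense of \cite{GiMa19} entails all nonvanishing conditions over $\fp$ demanded by the subroutines of this paper—including the separating-element and proper-intersection conditions packaged into $B$—and to make explicit the convention that the internal random choices of Algorithms~\ref{algo: computation projection}, \ref{algo: comput m_(s+1) and v_(n-s+1)} and \ref{algo: bidimensional shape lemma} over $\fpe$ are themselves assumed lucky. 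Once this bookkeeping is settled, the three layers combine immediately.
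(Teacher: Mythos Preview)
Your proposal is correct and follows essentially the same three-layer structure as the paper's proof: establish the structural conditions over $\Q$ and $\fp$ from the luckiness of $(\bfs\lambda,\bfs p)$ and $p$ via Theorems~\ref{th:_lucky prime_a}--\ref{th:_lucky prime_b}, run the modular computation over $\fp$, and then reconstruct over $\Q$ using Proposition~\ref{prop: correctness algo p adic lift}, Proposition~\ref{prop: eta_s_height_estimate}, and rational reconstruction. Your write-up is more explicit than the paper's (you cite the individual subroutine correctness results and flag the dictionary issue between the preprocessing polynomials of \cite{GiMa19} and those of Corollary~\ref{coro: preproc: all conditions for all s}), but note that the paper has already absorbed this translation by stating Theorems~\ref{th:_lucky prime_a}--\ref{th:_lucky prime_b} and Proposition~\ref{prop:_lucky prime_c} directly in terms of its own conditions, so the obstacle you identify is handled upstream.
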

\begin{proof}
Suppose the randomly chosen $\bfs \lambda$ and $\bfs p$ in steps
{\bf 1} and {\bf 2} are lucky. Then, by Theorem \ref{th:_lucky
prime_b}, the linear forms $Y_1 \klk Y_n$ and the point $\bfs
p^r:=(p_1 \klk p_{n-r})$ satisfy the output properties (1) and (2),
with $Y_{n-r+1}$ is a primitive element of $\pi_r^{-1}(\bfs p^r)$.
Let $m_r\in \Q[T]$ denote the minimal polynomial of $Y_{n-r+1}$ over
$\pi_r^{-1}(\bfs p^r)$, and let $w_{n-r+2}^r\klk w_{n}^r\in \Q[T]$
be the polynomials satisfying $m'_r(Y_{n-r+1})Y_{n-r+i}\equiv
w_{n-r+i}(Y_{n-r+1})$ over $\pi_r^{-1}(\bfs p^r)$, with $\deg
w_{n-r+i}< \deg m_r$, for $2\le i \le r$. Then $Y_1 \klk Y_n$, $\bfs
p^r$, $m_r, w_{n-r+2}^r\klk w_{n}^r$ satisfy output condition
(3).

Assuming that the prime $p$ computed in step {\bf 3} is lucky,
Theorem \ref{th:_lucky prime_a} ensures that conditions (1)--(5)
hold for $1\le s < r$, and Theorem \ref{th:_lucky prime_b}
guarantees that the linear forms $Y_{1,p}\klk Y_{n,p}$, the point
$\bfs p_p^s$, and the polynomials $m_{s,p}, w_{n-s+2,p}^s\klk
w_{n,p}^s$, form a Kronecker representation of $\pi_{s,p}^{-1}(\bfs
p_p^s)$ for $1\le s \le r$. Therefore, step {\bf 6} is well-defined
and, for $e$ as in step {\bf 7}, step {\bf 8} is also well-defined.
By Proposition \ref{prop: eta_s_height_estimate}, the height bound
$\eta$ in step {\bf 10} can be computed. Consequently, the
polynomials $m_{r,p}, w_{n-r+2,p}^r\klk w_{n,p}^r$, the prime $p$
and the integer $\eta$ satisfy the input assumptions of Algorithm
\ref{algo: geometric_solution_reconstruction}, and the linear forms
$Y_1,\ldots,Y_n$, the point $\bfs p^r$ and the polynomials $m,
w_{n-r+2} \klk w_n$ of step {\bf 13} form the correct Kronecker
representation over $\Q$.
\end{proof}

Finally, we analyze the computational cost.
\begin{theorem}\label{th: cost algorithm over Q}
Let $F_1\klk F_r,G\in\Z[X_1\klk X_n]$ be polynomials of degree at
most $d>0$ with integer coefficients of bit length at most $h$,
satisfying hypotheses $({\sf H}_1)$--$({\sf H}_2)$. For $1\le s\le
r$, let $V_s$ denote the Zariski closure of
$\{F_1=\cdots=F_s=0,G\not=0\}$, and $\delta_s$ its degree. Define
$\delta:=\max\{\delta_1,\ldots,\delta_s\}$, and assume that
$\delta>d$. Suppose $F_1\klk F_r,G\in\Z[X_1\klk X_n]$ are given by a
straight--line program in $\Z[X_1\klk X_n]$ of length $L$ with
parameters of bit length at most $h$. Let $0<\varepsilon<1/18r$ and
$m\in \mathbb{N}$ be given. Then Algorithm \ref{algo: kronecker for
Q} computes a Kronecker representation of a lifting fiber of $V_r$
of degree $\delta_r$ with bit complexity
$$\SO\Big((n^5+n(L+n^2)d\delta^2)\log_2(\varepsilon^{-1})+ n^2Ld^r\delta_r(h+\log(\varepsilon^{-1}))
+\log_2^3(\varepsilon^{-1})\Big),$$
and probability of success at least
$(1-2\varepsilon)(1-8r\varepsilon)\ge (1-9r\varepsilon)$.
\end{theorem}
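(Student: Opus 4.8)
The plan is to reduce everything to the cost and success-probability estimates of the subroutines, since the preceding theorem already establishes that Algorithm~\ref{algo: kronecker for Q} is correct whenever the triple $(\bfs\lambda,\bfs p,p)$ is lucky. I would first collect the probabilistic obligations. By Lemma~\ref{lemma: lambda_and_p-prob}, the pair $(\bfs\lambda,\bfs p)\in\mathcal{S}^{n^2}\times\mathcal{S}^{n-1}$ chosen in steps~{\bf 1}--{\bf 2} is lucky with probability at least $1-\varepsilon$; by Lemma~\ref{lemma: computation_of_a_lucky_prime}, step~{\bf 3} returns a prime $p$ with $p\nmid\mathfrak{N}$ (hence lucky in the sense of Theorems~\ref{th:_lucky prime_a} and~\ref{th:_lucky prime_b}) with probability at least $1-\varepsilon$. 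The choice of $e$ in step~{\bf 7} guarantees $p^e\ge 12r\varepsilon^{-1}d^{4r}\ge 2\varepsilon^{-1}\delta^4$, so the set used when invoking Algorithm~\ref{algo: comput m_(s+1) and v_(n-s+1)} over $\fpe$ in step~{\bf 8.2} is large enough: by Lemmas~\ref{lemma: correctness comput m_(s+1) and v_(n-s+1)} and~\ref{lemma: cost comput m_(s+1) and v_(n-s+1)} each of the $r-1$ iterations of the main loop succeeds with probability at least $1-7\varepsilon$ (steps~{\bf 8.1} and~{\bf 8.3} being deterministic), hence the whole loop with probability at least $1-8r\varepsilon$. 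Since the $p$-adic lifting and the rational reconstruction of Algorithm~\ref{algo: geometric_solution_reconstruction} are deterministic once a lucky prime is fixed, multiplying these independent events and using $\varepsilon<1/(18r)$ yields a success probability at least $(1-\varepsilon)^2(1-8r\varepsilon)\ge(1-2\varepsilon)(1-8r\varepsilon)\ge 1-9r\varepsilon$.

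For the complexity I would add the bit costs phase by phase. Steps~{\bf 1}--{\bf 2} are free, and step~{\bf 3} costs $\SO\big(\log_2^2(d^r2^nh\varepsilon^{-1})\log_2\varepsilon^{-1}\big)$ bit operations by Lemma~\ref{lemma: computation_of_a_lucky_prime}; together with the bound $\log_2\mathfrak{H}\in\SO(\log_2(d^r2^nh\log_2\varepsilon^{-1}))$ of Proposition~\ref{prop: eta_s_height_estimate} this produces the summand $\log_2^3(\varepsilon^{-1})$ (the remaining polylogarithmic factors in $d,n,h$ being absorbed below), and also shows that the lucky prime satisfies $\log_2 p\in\SO(\log_2\varepsilon^{-1}+r\log_2 d+n+\log_2 h)$. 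Steps~{\bf 4}--{\bf 6}, {\bf 7}, {\bf 10}, {\bf 11} (reductions modulo $p$, a univariate gcd, and the evaluation of the closed-form bounds $e$ and $\eta$) are negligible. The main loop of step~{\bf 8} is Algorithm~\ref{algo: main algorithm for fq} run with $q=p$: by the cost analyses of Theorems~\ref{th: cost Newton lifting}, \ref{th: main algo for k} and~\ref{th: main algorithm for fq} it uses $\SO\big((n^4+r^2(L+n^2+r^3))\,d\,\delta^2\,\log_2 p\big)$ bit operations. Extracting the $\log_2\varepsilon^{-1}$ part of $\log_2 p$ and using $r\le n$ converts this into $\SO\big((n^5+n(L+n^2)d\delta^2)\log_2\varepsilon^{-1}\big)$; the cross-terms involving $r\log_2 d$, $n$ or $h$ are, under $\delta\le d^r$, dominated by the reconstruction contribution computed next.

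The last phase consists of steps~{\bf 9}, {\bf 12} (and the trivial step~{\bf 13}): the $p$-adic lifting of Algorithm~\ref{algo: p_adic_lifting_algorithm} and the rational reconstruction, whose combined cost is $\SO\big((Lr+r^4)\delta_r\eta+r\delta_r\log_2 p\big)$ bit operations by Lemma~\ref{lemma: lifting_the_integers}, where $\eta\in\SO(nd^{r-1}(h+nd))$ by Proposition~\ref{prop: eta_s_height_estimate}. Substituting this value of $\eta$, using $r\le n$, and absorbing the $r\delta_r\log_2 p$ term (which depends on $\varepsilon$ only through $\log_2\varepsilon^{-1}$) gives $\SO\big(n^2Ld^r\delta_r(h+\log_2\varepsilon^{-1})\big)$ for this phase. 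Summing the three blocks yields
$$\SO\Big((n^5+n(L+n^2)d\delta^2)\log_2(\varepsilon^{-1})+ n^2Ld^r\delta_r(h+\log(\varepsilon^{-1}))+\log_2^3(\varepsilon^{-1})\Big).$$
Finally, the output has degree $\delta_r$ because it is a Kronecker representation of the lifting fiber $\pi_r^{-1}(\bfs p^r)$ of $V_r$, which has cardinality $\deg\pi_r=\delta_r$, so that $\deg m_r=\delta_r$.

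The step I expect to be the real obstacle is the bit-complexity bookkeeping of the last two paragraphs: the lucky prime $p$ has size controlled by $\mathfrak{H}$, which is exponential in $n$ and polynomial in $d^r$ and in $h$, and $\log_2 p$ threads through both the modular execution of Algorithm~\ref{algo: main algorithm for fq} and the $p$-adic lifting (whose precision $k$ satisfies $k\log_2 p\approx\eta$). One must verify carefully that all resulting products collapse into exactly the three advertised summands under the soft-Oh convention and the standing hypotheses $r\le n$ and $\delta>d$ (together with $\delta\le d^r$ from B\'ezout). By contrast, the probability estimate is a routine product of the lemma-level bounds.
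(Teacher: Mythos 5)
Your proposal is correct and follows essentially the same route as the paper: you decompose the cost by phases (Lemma~\ref{lemma: computation_of_a_lucky_prime} for the lucky prime, Theorem~\ref{th: main algorithm for fq} for the modular main loop, Lemma~\ref{lemma: lifting_the_integers} together with Proposition~\ref{prop: eta_s_height_estimate} for the lifting and reconstruction), substitute $\log_2 p\in\SO(\log_2(d^r2^nh\varepsilon^{-1}))$, and multiply the success probabilities from Lemmas~\ref{lemma: lambda_and_p-prob} and~\ref{lemma: computation_of_a_lucky_prime} with that of the main loop. The paper's own proof is exactly this phase-by-phase accounting, and where you acknowledge that the bit-complexity bookkeeping is loose under the soft-Oh convention, the paper is equally terse, so the level of rigor matches.
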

\begin{proof}
By Lemma \ref{lemma: computation_of_a_lucky_prime}, step {\bf 3}
requires
$\SO\big(\log_2^2(d^r2^nh\varepsilon^{-1})\log(\varepsilon^{-1})\big)$
bit operations. By Theorem \ref{th: main algorithm for fq}, steps
{\bf 6}--{\bf 8} require
$$\SO\Big(\big(n^4+r^2(L+n^2+r^3)d\delta^2\big)\log_2p\Big)$$
bit operations, and since $\log_2p\in
\SO(\log_2(d^r2^nh\varepsilon^{-1}))$, these steps need
$$\SO\Big(\big(n^5+r^2n(L+n^2+r^3)d\delta^2\big)\log_2(h\varepsilon^{-1})\Big)$$
bit operations. By Lemma \ref{lemma: lifting_the_integers}, step
{\bf 12} adds $\SO\big((Lr+r^4)\delta_r\eta+n\delta_r\log_2p\big)$
bit operations. Taking into account the height of $p$ and the
estimate for $\eta$ of Proposition \ref{prop:
eta_s_height_estimate}, this is
$$\SO\big(n^2d^rL\delta_r(h+\log_2(\varepsilon^{-1}))\big)$$
bit operations, which yields the total cost as claimed. The
probability of success follows from Lemmas \ref{lemma:
lambda_and_p-prob} and \ref{lemma: computation_of_a_lucky_prime} and
the probability of success of the main loop of Algorithm \ref{algo:
main algorithm for k}.
\end{proof}

%
%

\bmhead{Acknowledgements}

The authors would like to thank the anonymous referees for their thorough reading of the manuscript and for their insightful comments and suggestions, which helped to improve both the clarity and the accuracy of the paper. Their observations led to several corrections and to a clearer exposition of the algorithmic and probabilistic arguments.

The authors were partially supported by the grants
PIP CONICET 11220210100361CO, PIO CONICET-UNGS 14420140100027 and UNGS
30/3084




\begin{thebibliography}{63}
\ifx \bisbn   \undefined \def \bisbn  #1{ISBN #1}\fi
\ifx \binits  \undefined \def \binits#1{#1}\fi
\ifx \bauthor  \undefined \def \bauthor#1{#1}\fi
\ifx \batitle  \undefined \def \batitle#1{#1}\fi
\ifx \bjtitle  \undefined \def \bjtitle#1{#1}\fi
\ifx \bvolume  \undefined \def \bvolume#1{\textbf{#1}}\fi
\ifx \byear  \undefined \def \byear#1{#1}\fi
\ifx \bissue  \undefined \def \bissue#1{#1}\fi
\ifx \bfpage  \undefined \def \bfpage#1{#1}\fi
\ifx \blpage  \undefined \def \blpage #1{#1}\fi
\ifx \burl  \undefined \def \burl#1{\textsf{#1}}\fi
\ifx \doiurl  \undefined \def \doiurl#1{\url{https://doi.org/#1}}\fi
\ifx \betal  \undefined \def \betal{\textit{et al.}}\fi
\ifx \binstitute  \undefined \def \binstitute#1{#1}\fi
\ifx \binstitutionaled  \undefined \def \binstitutionaled#1{#1}\fi
\ifx \bctitle  \undefined \def \bctitle#1{#1}\fi
\ifx \beditor  \undefined \def \beditor#1{#1}\fi
\ifx \bpublisher  \undefined \def \bpublisher#1{#1}\fi
\ifx \bbtitle  \undefined \def \bbtitle#1{#1}\fi
\ifx \bedition  \undefined \def \bedition#1{#1}\fi
\ifx \bseriesno  \undefined \def \bseriesno#1{#1}\fi
\ifx \blocation  \undefined \def \blocation#1{#1}\fi
\ifx \bsertitle  \undefined \def \bsertitle#1{#1}\fi
\ifx \bsnm \undefined \def \bsnm#1{#1}\fi
\ifx \bsuffix \undefined \def \bsuffix#1{#1}\fi
\ifx \bparticle \undefined \def \bparticle#1{#1}\fi
\ifx \barticle \undefined \def \barticle#1{#1}\fi
\bibcommenthead
\ifx \bconfdate \undefined \def \bconfdate #1{#1}\fi
\ifx \botherref \undefined \def \botherref #1{#1}\fi
\ifx \url \undefined \def \url#1{\textsf{#1}}\fi
\ifx \bchapter \undefined \def \bchapter#1{#1}\fi
\ifx \bbook \undefined \def \bbook#1{#1}\fi
\ifx \bcomment \undefined \def \bcomment#1{#1}\fi
\ifx \oauthor \undefined \def \oauthor#1{#1}\fi
\ifx \citeauthoryear \undefined \def \citeauthoryear#1{#1}\fi
\ifx \endbibitem  \undefined \def \endbibitem {}\fi
\ifx \bconflocation  \undefined \def \bconflocation#1{#1}\fi
\ifx \arxivurl  \undefined \def \arxivurl#1{\textsf{#1}}\fi
\csname PreBibitemsHook\endcsname

\bibitem[\protect\citeauthoryear{Abhyankar}{1990}]{Abhyankar90}
\begin{bbook}
\bauthor{\bsnm{Abhyankar}, \binits{S.S.}}:
\bbtitle{Algebraic Geometry for Scientists and Engineers}.
\bsertitle{Math. Surv. Monogr.},
vol. \bseriesno{35}.
\bpublisher{American Mathematical Society},
\blocation{Providence, RI}
(\byear{1990})
\end{bbook}
\endbibitem

\bibitem[\protect\citeauthoryear{Alonso et~al.}{1996}]{AlBeRoWo96}
\begin{bchapter}
\bauthor{\bsnm{Alonso}, \binits{M.E.}},
\bauthor{\bsnm{Becker}, \binits{E.}},
\bauthor{\bsnm{Roy}, \binits{M.-F.}},
\bauthor{\bsnm{W{\"o}rmann}, \binits{T.}}:
\bctitle{Zeroes, multiplicities and idempotents for zerodimensional systems}.
In: \bbtitle{Algorithms in Algebraic Geometry and Applications, Proceedings of
  MEGA'94}.
\bsertitle{Progr. Math.},
vol. \bseriesno{143},
pp. \bfpage{1}--\blpage{15}.
\bpublisher{Birkh{\"a}user},
\blocation{Boston}
(\byear{1996})
\end{bchapter}
\endbibitem

\bibitem[\protect\citeauthoryear{Baur and Strassen}{1983}]{BaSt83}
\begin{barticle}
\bauthor{\bsnm{Baur}, \binits{W.}},
\bauthor{\bsnm{Strassen}, \binits{V.}}:
\batitle{The complexity of partial derivatives}.
\bjtitle{Theoret. Comput. Sci.}
\bvolume{22},
\bfpage{317}--\blpage{330}
(\byear{1983})
\end{barticle}
\endbibitem

\bibitem[\protect\citeauthoryear{{Beltr\'an} and Pardo}{2009}]{BePa09}
\begin{barticle}
\bauthor{\bsnm{{Beltr\'an}}, \binits{C.}},
\bauthor{\bsnm{Pardo}, \binits{L.M.}}:
\batitle{Smale's 17th problem: average polynomial time to compute affine and
  projective solutions}.
\bjtitle{J. Amer. Math. Soc.}
\bvolume{22}(\bissue{2}),
\bfpage{363}--\blpage{385}
(\byear{2009})
\end{barticle}
\endbibitem

\bibitem[\protect\citeauthoryear{{Beltr\'an} and Pardo}{2011}]{BePa11}
\begin{barticle}
\bauthor{\bsnm{{Beltr\'an}}, \binits{C.}},
\bauthor{\bsnm{Pardo}, \binits{L.M.}}:
\batitle{Fast linear homotopy to find approximate zeros of polynomial systems}.
\bjtitle{Found. Comput. Math.}
\bvolume{11},
\bfpage{95}--\blpage{129}
(\byear{2011})
\end{barticle}
\endbibitem

\bibitem[\protect\citeauthoryear{Berkowitz}{1984}]{Berkowitz84}
\begin{barticle}
\bauthor{\bsnm{Berkowitz}, \binits{S.J.}}:
\batitle{On computing the determinant in small parallel time using a small
  number of processors}.
\bjtitle{Inform. Process. Lett.}
\bvolume{18},
\bfpage{147}--\blpage{150}
(\byear{1984})
\end{barticle}
\endbibitem

\bibitem[\protect\citeauthoryear{Bompadre et~al.}{2004}]{BoMaWaWa04}
\begin{barticle}
\bauthor{\bsnm{Bompadre}, \binits{A.}},
\bauthor{\bsnm{Matera}, \binits{G.}},
\bauthor{\bsnm{Wachenchauzer}, \binits{R.}},
\bauthor{\bsnm{Waissbein}, \binits{A.}}:
\batitle{Polynomial equation solving by lifting procedures for ramified
  fibers}.
\bjtitle{Theoret. Comput. Sci.}
\bvolume{315}(\bissue{2--3}),
\bfpage{335}--\blpage{369}
(\byear{2004})
\end{barticle}
\endbibitem

\bibitem[\protect\citeauthoryear{B\"urgisser et~al.}{1997}]{BuClSh97}
\begin{bbook}
\bauthor{\bsnm{B\"urgisser}, \binits{P.}},
\bauthor{\bsnm{Clausen}, \binits{M.}},
\bauthor{\bsnm{{Shokrollahi}}, \binits{M.A.}}:
\bbtitle{Algebraic Complexity Theory}.
\bsertitle{Grundlehren Math. Wiss.},
vol. \bseriesno{315}.
\bpublisher{Springer},
\blocation{Berlin}
(\byear{1997})
\end{bbook}
\endbibitem

\bibitem[\protect\citeauthoryear{Cafure and Matera}{2006}]{CaMa06}
\begin{barticle}
\bauthor{\bsnm{Cafure}, \binits{A.}},
\bauthor{\bsnm{Matera}, \binits{G.}}:
\batitle{Improved explicit estimates on the number of solutions of equations
  over a finite field}.
\bjtitle{Finite Fields Appl.}
\bvolume{12}(\bissue{2}),
\bfpage{155}--\blpage{185}
(\byear{2006})
\end{barticle}
\endbibitem

\bibitem[\protect\citeauthoryear{Cafure and Matera}{2006}]{CaMa06a}
\begin{barticle}
\bauthor{\bsnm{Cafure}, \binits{A.}},
\bauthor{\bsnm{Matera}, \binits{G.}}:
\batitle{Fast computation of a rational point of a variety over a finite
  field}.
\bjtitle{Math. Comp.}
\bvolume{75}(\bissue{256}),
\bfpage{2049}--\blpage{2085}
(\byear{2006})
\end{barticle}
\endbibitem

\bibitem[\protect\citeauthoryear{{Dahan} et~al.}{2006}]{DaMoScXi06}
\begin{bchapter}
\bauthor{\bsnm{{Dahan}}, \binits{X.}},
\bauthor{\bsnm{{Moreno Maza}}, \binits{M.}},
\bauthor{\bsnm{{Schost}}, \binits{E.}},
\bauthor{\bsnm{{Xie}}, \binits{Y.}}:
\bctitle{On the complexity of the {D5} principle}.
In: \bbtitle{Proceedings of Transgressive Computing 2006. A Conference in Honor
  of Jean Della Dora, Granada, Espa\~na, April 24--26, 2006},
pp. \bfpage{149}--\blpage{167}.
\bpublisher{Universidad de Granada},
\blocation{Granada}
(\byear{2006})
\end{bchapter}
\endbibitem

\bibitem[\protect\citeauthoryear{{D'Andrea} et~al.}{2013}]{DaKrSo13}
\begin{barticle}
\bauthor{\bsnm{{D'Andrea}}, \binits{C.}},
\bauthor{\bsnm{Krick}, \binits{T.}},
\bauthor{\bsnm{Sombra}, \binits{M.}}:
\batitle{Heights of varieties in multiprojective spaces and arithmetic
  {Nullstellens\"atze}}.
\bjtitle{Ann. Sci. \'Ec. Norm. Sup\'er. (4)}
\bvolume{46}(\bissue{4}),
\bfpage{571}--\blpage{649}
(\byear{2013})
\end{barticle}
\endbibitem

\bibitem[\protect\citeauthoryear{Danilov}{1994}]{Danilov94}
\begin{bchapter}
\bauthor{\bsnm{Danilov}, \binits{V.}}:
\bctitle{Algebraic varieties and schemes}.
In: \beditor{\bsnm{Shafarevich}, \binits{I.R.}} (ed.)
\bbtitle{Algebraic Geometry I}.
\bsertitle{Encyclopaedia of Mathematical Sciences},
vol. \bseriesno{23},
pp. \bfpage{167}--\blpage{307}.
\bpublisher{Springer},
\blocation{Berlin Heidelberg New York}
(\byear{1994})
\end{bchapter}
\endbibitem

\bibitem[\protect\citeauthoryear{Dora et~al.}{1985}]{DeDiDu85}
\begin{bchapter}
\bauthor{\bsnm{Della Dora}, \binits{J.}},
\bauthor{\bsnm{Discrescenzo}, \binits{C.}},
\bauthor{\bsnm{Duval}, \binits{D.}}:
\bctitle{About a new method for computing in algebraic number fields}.
In: \beditor{\bsnm{Caviness}, \binits{B.F.}} (ed.)
\bbtitle{{Proceedings} of the 10th {European} {Conference} on {Computer}
  {Algebra,} {EUROCAL} '85. {Vol.~2:} {Research} Contributions ({Linz,}
  {Austria,} {April} 1-3, 1985)}.
\bsertitle{Lecture Notes in Comput. Sci.},
vol. \bseriesno{204},
pp. \bfpage{289}--\blpage{290}.
\bpublisher{Springer},
\blocation{Berlin}
(\byear{1985})
\end{bchapter}
\endbibitem

\bibitem[\protect\citeauthoryear{Millo and Lipton}{1978}]{DeLi78}
\begin{barticle}
\bauthor{\bsnm{De Millo}, \binits{R.A.}},
\bauthor{\bsnm{Lipton}, \binits{R.J.}}:
\batitle{A probabilistic remark on algebraic program testing}.
\bjtitle{Inform. Process. Lett.}
\bvolume{7}(\bissue{4}),
\bfpage{193}--\blpage{195}
(\byear{1978})
\end{barticle}
\endbibitem

\bibitem[\protect\citeauthoryear{Durvye and Lecerf}{2008}]{DuLe08}
\begin{barticle}
\bauthor{\bsnm{Durvye}, \binits{C.}},
\bauthor{\bsnm{Lecerf}, \binits{G.}}:
\batitle{A concise proof of the {Kronecker} polynomial system solver from
  scratch}.
\bjtitle{Expo. Math.}
\bvolume{26}(\bissue{2}),
\bfpage{101}--\blpage{139}
(\byear{2008})
\end{barticle}
\endbibitem

\bibitem[\protect\citeauthoryear{Eisenbud}{1995}]{Eisenbud95}
\begin{bbook}
\bauthor{\bsnm{Eisenbud}, \binits{D.}}:
\bbtitle{Commutative Algebra with a View Toward Algebraic Geometry}.
\bsertitle{Grad. Texts in Math.},
vol. \bseriesno{150}.
\bpublisher{Springer},
\blocation{New York}
(\byear{1995})
\end{bbook}
\endbibitem

\bibitem[\protect\citeauthoryear{Fitchas et~al.}{1995}]{FiGiSm95}
\begin{bchapter}
\bauthor{\bsnm{Fitchas}, \binits{N.}},
\bauthor{\bsnm{Giusti}, \binits{M.}},
\bauthor{\bsnm{Smietanski}, \binits{F.}}:
\bctitle{Sur la complexit\'e du th\'eor\`eme des z\'eros}.
In: \beditor{\bsnm{{Guddat et al}}, \binits{J.}} (ed.)
\bbtitle{Approximation and Optimization in the Caribbean II, Proceedings 2nd
  International Conference on Non--Linear Optimization and Approximation}.
\bsertitle{Approximation and Optimization},
vol. \bseriesno{8},
pp. \bfpage{247}--\blpage{329}.
\bpublisher{Peter Lange Verlag},
\blocation{Frankfurt am Main}
(\byear{1995})
\end{bchapter}
\endbibitem

\bibitem[\protect\citeauthoryear{Fulton}{1984}]{Fulton84}
\begin{bbook}
\bauthor{\bsnm{Fulton}, \binits{W.}}:
\bbtitle{Intersection Theory}.
\bpublisher{Springer},
\blocation{Berlin Heidelberg New York}
(\byear{1984})
\end{bbook}
\endbibitem

\bibitem[\protect\citeauthoryear{von~zur {Gathen} and Gerhard}{1999}]{GaGe99}
\begin{bbook}
\bauthor{\bsnm{{Gathen}}, \binits{J.}},
\bauthor{\bsnm{Gerhard}, \binits{J.}}:
\bbtitle{Modern Computer Algebra}.
\bpublisher{Cambridge Univ. Press},
\blocation{Cambridge}
(\byear{1999})
\end{bbook}
\endbibitem

\bibitem[\protect\citeauthoryear{Gim\'enez and Matera}{2019}]{GiMa19}
\begin{barticle}
\bauthor{\bsnm{Gim\'enez}, \binits{N.}},
\bauthor{\bsnm{Matera}, \binits{G.}}:
\batitle{On the bit complexity of polynomial system solving}.
\bjtitle{J. Complexity}
\bvolume{51},
\bfpage{20}--\blpage{67}
(\byear{2019})
\end{barticle}
\endbibitem

\bibitem[\protect\citeauthoryear{Gim\'enez et~al.}{2023}]{GiMaPePr23}
\begin{barticle}
\bauthor{\bsnm{Gim\'enez}, \binits{N.}},
\bauthor{\bsnm{Matera}, \binits{G.}},
\bauthor{\bsnm{P\'erez}, \binits{M.}},
\bauthor{\bsnm{Privitelli}, \binits{M.}}:
\batitle{The distribution of defective multivariate polynomial systems over a
  finite field}.
\bjtitle{Acta Arith.}
\bvolume{211}(\bissue{2}),
\bfpage{97}--\blpage{120}
(\byear{2023})
\end{barticle}
\endbibitem

\bibitem[\protect\citeauthoryear{Gim\'enez et~al.}{2023}]{GiMaPePr23b}
\begin{barticle}
\bauthor{\bsnm{Gim\'enez}, \binits{N.}},
\bauthor{\bsnm{Matera}, \binits{G.}},
\bauthor{\bsnm{P\'erez}, \binits{M.}},
\bauthor{\bsnm{Privitelli}, \binits{M.}}:
\batitle{On the computation of rational solutions of underdetermined systems
  over a finite field}.
\bjtitle{J. Complexity}
\bvolume{75},
\bfpage{101712}
(\byear{2023})
\end{barticle}
\endbibitem

\bibitem[\protect\citeauthoryear{Giusti et~al.}{1997a}]{GiHaHeMoMoPa97}
\begin{barticle}
\bauthor{\bsnm{Giusti}, \binits{M.}},
\bauthor{\bsnm{H{\"a}gele}, \binits{K.}},
\bauthor{\bsnm{Heintz}, \binits{J.}},
\bauthor{\bsnm{Morais}, \binits{J.E.}},
\bauthor{\bsnm{{Monta\~na}}, \binits{J.L.}},
\bauthor{\bsnm{Pardo}, \binits{L.M.}}:
\batitle{Lower bounds for {Diophantine} approximation}.
\bjtitle{J. Pure Appl. Algebra}
\bvolume{117118},
\bfpage{277}--\blpage{317}
(\byear{1997})
\end{barticle}
\endbibitem

\bibitem[\protect\citeauthoryear{Giusti et~al.}{1998}]{GiHeMoMoPa98}
\begin{barticle}
\bauthor{\bsnm{Giusti}, \binits{M.}},
\bauthor{\bsnm{Heintz}, \binits{J.}},
\bauthor{\bsnm{Morais}, \binits{J.E.}},
\bauthor{\bsnm{Morgenstern}, \binits{J.}},
\bauthor{\bsnm{Pardo}, \binits{L.M.}}:
\batitle{Straight--line programs in geometric elimination theory}.
\bjtitle{J. Pure Appl. Algebra}
\bvolume{124},
\bfpage{101}--\blpage{146}
(\byear{1998})
\end{barticle}
\endbibitem

\bibitem[\protect\citeauthoryear{Giusti et~al.}{1995}]{GiHeMoPa95}
\begin{bchapter}
\bauthor{\bsnm{Giusti}, \binits{M.}},
\bauthor{\bsnm{Heintz}, \binits{J.}},
\bauthor{\bsnm{Morais}, \binits{J.E.}},
\bauthor{\bsnm{Pardo}, \binits{L.M.}}:
\bctitle{When polynomial equation systems can be solved fast?}
In: \beditor{\bsnm{Cohen}, \binits{G.}},
\beditor{\bsnm{Giusti}, \binits{M.}},
\beditor{\bsnm{Mora}, \binits{T.}} (eds.)
\bbtitle{Applied Algebra, Algebraic Algorithms and Error Correcting Codes,
  Proceedings AAECC-11}.
\bsertitle{Lecture Notes in Comput. Sci.},
vol. \bseriesno{948},
pp. \bfpage{205}--\blpage{231}.
\bpublisher{Springer},
\blocation{Berlin}
(\byear{1995})
\end{bchapter}
\endbibitem

\bibitem[\protect\citeauthoryear{Giusti et~al.}{1997b}]{GiHeMoPa97}
\begin{barticle}
\bauthor{\bsnm{Giusti}, \binits{M.}},
\bauthor{\bsnm{Heintz}, \binits{J.}},
\bauthor{\bsnm{Morais}, \binits{J.E.}},
\bauthor{\bsnm{Pardo}, \binits{L.M.}}:
\batitle{Le r\^ole des structures de donn\'ees dans les probl\`emes
  d'\'elimination}.
\bjtitle{C. R. Math. Acad. Sci. Paris}
\bvolume{325},
\bfpage{1223}--\blpage{1228}
(\byear{1997})
\end{barticle}
\endbibitem

\bibitem[\protect\citeauthoryear{Giusti et~al.}{1993}]{GiHeSa93}
\begin{barticle}
\bauthor{\bsnm{Giusti}, \binits{M.}},
\bauthor{\bsnm{Heintz}, \binits{J.}},
\bauthor{\bsnm{Sabia}, \binits{J.}}:
\batitle{On the efficiency of effective {Nullstellens{\"a}tze}}.
\bjtitle{Comput. Complexity}
\bvolume{3},
\bfpage{56}--\blpage{95}
(\byear{1993})
\end{barticle}
\endbibitem

\bibitem[\protect\citeauthoryear{Giusti et~al.}{2001}]{GiLeSa01}
\begin{barticle}
\bauthor{\bsnm{Giusti}, \binits{M.}},
\bauthor{\bsnm{Lecerf}, \binits{G.}},
\bauthor{\bsnm{Salvy}, \binits{B.}}:
\batitle{A {Gr\"obner} free alternative for polynomial system solving}.
\bjtitle{J. Complexity}
\bvolume{17}(\bissue{1}),
\bfpage{154}--\blpage{211}
(\byear{2001})
\end{barticle}
\endbibitem

\bibitem[\protect\citeauthoryear{Harris}{1992}]{Harris92}
\begin{bbook}
\bauthor{\bsnm{Harris}, \binits{J.}}:
\bbtitle{Algebraic Geometry: a First Course}.
\bsertitle{Grad. Texts in Math.},
vol. \bseriesno{133}.
\bpublisher{Springer},
\blocation{New York Berlin Heidelberg}
(\byear{1992})
\end{bbook}
\endbibitem

\bibitem[\protect\citeauthoryear{Harvey and van~der {Hoeven}}{2021}]{HaHo21}
\begin{barticle}
\bauthor{\bsnm{Harvey}, \binits{D.}},
\bauthor{\bsnm{{Hoeven}}, \binits{J.}}:
\batitle{Integer multiplication in time {{\(O(n\log n)\)}}}.
\bjtitle{Ann. Math. (2)}
\bvolume{193}(\bissue{2}),
\bfpage{563}--\blpage{617}
(\byear{2021})
\end{barticle}
\endbibitem

\bibitem[\protect\citeauthoryear{Heintz}{1983}]{Heintz83}
\begin{barticle}
\bauthor{\bsnm{Heintz}, \binits{J.}}:
\batitle{{Definability} and fast quantifier elimination in algebraically closed
  fields}.
\bjtitle{Theoret. Comput. Sci.}
\bvolume{24}(\bissue{3}),
\bfpage{239}--\blpage{277}
(\byear{1983})
\end{barticle}
\endbibitem

\bibitem[\protect\citeauthoryear{Heintz et~al.}{2000}]{HeKrPuSaWa00}
\begin{barticle}
\bauthor{\bsnm{Heintz}, \binits{J.}},
\bauthor{\bsnm{Krick}, \binits{T.}},
\bauthor{\bsnm{Puddu}, \binits{S.}},
\bauthor{\bsnm{Sabia}, \binits{J.}},
\bauthor{\bsnm{Waissbein}, \binits{A.}}:
\batitle{Deformation techniques for efficient polynomial equation solving}.
\bjtitle{J. Complexity}
\bvolume{16}(\bissue{1}),
\bfpage{70}--\blpage{109}
(\byear{2000})
\end{barticle}
\endbibitem

\bibitem[\protect\citeauthoryear{Heintz et~al.}{2001}]{HeMaWa01}
\begin{barticle}
\bauthor{\bsnm{Heintz}, \binits{J.}},
\bauthor{\bsnm{Matera}, \binits{G.}},
\bauthor{\bsnm{Waissbein}, \binits{A.}}:
\batitle{On the time--space complexity of geometric elimination procedures}.
\bjtitle{Appl. Algebra Engrg. Comm. Comput.}
\bvolume{11}(\bissue{4}),
\bfpage{239}--\blpage{296}
(\byear{2001})
\end{barticle}
\endbibitem

\bibitem[\protect\citeauthoryear{Heintz and Schnorr}{1982}]{HeSc82}
\begin{bchapter}
\bauthor{\bsnm{Heintz}, \binits{J.}},
\bauthor{\bsnm{Schnorr}, \binits{C.P.}}:
\bctitle{Testing polynomials which are easy to compute}.
In: \bbtitle{International Symposium on Logic and Algorithmic, Zurich 1980}.
\bsertitle{Monogr. Enseig. Math.},
vol. \bseriesno{30},
pp. \bfpage{237}--\blpage{254}
(\byear{1982})
\end{bchapter}
\endbibitem

\bibitem[\protect\citeauthoryear{van~der Hoeven and Lecerf}{2021}]{HoLe21}
\begin{barticle}
\bauthor{\bsnm{Hoeven}, \binits{J.}},
\bauthor{\bsnm{Lecerf}, \binits{G.}}:
\batitle{On the complexity exponent of polynomial system solving}.
\bjtitle{Found. Comput. Math.}
\bvolume{21}(\bissue{1}),
\bfpage{1}--\blpage{57}
(\byear{2021})
\end{barticle}
\endbibitem

\bibitem[\protect\citeauthoryear{Iversen}{1973}]{Iversen73}
\begin{bbook}
\bauthor{\bsnm{Iversen}, \binits{B.}}:
\bbtitle{Generic Local Structure of the Morphisms in Commutative Algebra}.
\bsertitle{Lecture Notes in Math.},
vol. \bseriesno{310}.
\bpublisher{Springer},
\blocation{New York}
(\byear{1973})
\end{bbook}
\endbibitem

\bibitem[\protect\citeauthoryear{Jeronimo et~al.}{2009}]{JeMaSoWa09}
\begin{barticle}
\bauthor{\bsnm{Jeronimo}, \binits{G.}},
\bauthor{\bsnm{Matera}, \binits{G.}},
\bauthor{\bsnm{Solern\'o}, \binits{P.}},
\bauthor{\bsnm{Waissbein}, \binits{A.}}:
\batitle{Deformation techniques for sparse systems}.
\bjtitle{Found. Comput. Math}
\bvolume{9},
\bfpage{1}--\blpage{50}
(\byear{2009})
\end{barticle}
\endbibitem

\bibitem[\protect\citeauthoryear{Jouanolou}{1983}]{Jouanolou83}
\begin{bbook}
\bauthor{\bsnm{Jouanolou}, \binits{J.P.}}:
\bbtitle{Th\'eor\`emes de {Bertini} et Applications}.
\bsertitle{Progr. Math.}
\bpublisher{Birkh{\"a}user Boston},
\blocation{Boston}
(\byear{1983})
\end{bbook}
\endbibitem

\bibitem[\protect\citeauthoryear{Krick and Pardo}{1996}]{KrPa96}
\begin{bchapter}
\bauthor{\bsnm{Krick}, \binits{T.}},
\bauthor{\bsnm{Pardo}, \binits{L.M.}}:
\bctitle{A computational method for {Diophantine} approximation}.
In: \beditor{\bsnm{Gonz\'alez-Vega}, \binits{L.}},
\beditor{\bsnm{Recio}, \binits{T.}} (eds.)
\bbtitle{Algorithms in Algebraic Geometry and Applications, Proceedings of
  MEGA'94}.
\bsertitle{Progr. Math.},
vol. \bseriesno{143},
pp. \bfpage{193}--\blpage{254}.
\bpublisher{Birkh{\"a}user Boston},
\blocation{Boston}
(\byear{1996})
\end{bchapter}
\endbibitem

\bibitem[\protect\citeauthoryear{Kronecker}{1882}]{Kronecker82}
\begin{barticle}
\bauthor{\bsnm{Kronecker}, \binits{L.}}:
\batitle{Grundz{\"u}ge einer arithmetischen {Theorie} der algebraischen
  {Gr{\"o}ssen}}.
\bjtitle{J. Reine Angew. Math.}
\bvolume{92},
\bfpage{1}--\blpage{122}
(\byear{1882})
\end{barticle}
\endbibitem

\bibitem[\protect\citeauthoryear{Kunz}{1985}]{Kunz85}
\begin{bbook}
\bauthor{\bsnm{Kunz}, \binits{E.}}:
\bbtitle{Introduction to Commutative Algebra and Algebraic Geometry}.
\bpublisher{Birkh{\"a}user},
\blocation{Boston}
(\byear{1985})
\end{bbook}
\endbibitem

\bibitem[\protect\citeauthoryear{Lecerf}{2003}]{Lecerf03}
\begin{barticle}
\bauthor{\bsnm{Lecerf}, \binits{G.}}:
\batitle{Computing the equidimensional decomposition of an algebraic closed set
  by means of lifting fibers}.
\bjtitle{J. Complexity}
\bvolume{19}(\bissue{4}),
\bfpage{564}--\blpage{596}
(\byear{2003})
\end{barticle}
\endbibitem

\bibitem[\protect\citeauthoryear{Lecerf}{2008}]{Lecerf08}
\begin{barticle}
\bauthor{\bsnm{Lecerf}, \binits{G.}}:
\batitle{Fast separable factorization and applications}.
\bjtitle{Appl. Algebra Eng. Commun. Comput.}
\bvolume{19}(\bissue{2}),
\bfpage{135}--\blpage{160}
(\byear{2008})
\end{barticle}
\endbibitem

\bibitem[\protect\citeauthoryear{Matsumura}{1986}]{Matsumura86}
\begin{bbook}
\bauthor{\bsnm{Matsumura}, \binits{H.}}:
\bbtitle{Commutative Ring Theory}.
\bpublisher{Cambridge Univ. Press},
\blocation{Cambridge}
(\byear{1986})
\end{bbook}
\endbibitem

\bibitem[\protect\citeauthoryear{{Moreno Maza} and Rioboo}{1995}]{MoRi95}
\begin{bchapter}
\bauthor{\bsnm{{Moreno Maza}}, \binits{M.}},
\bauthor{\bsnm{Rioboo}, \binits{R.}}:
\bctitle{Polynomial gcd computations over towers of algebraic extensions}.
In: \beditor{\bsnm{Cohen}, \binits{G.}},
\beditor{\bsnm{Giusti}, \binits{M.}},
\beditor{\bsnm{Mora}, \binits{T.}} (eds.)
\bbtitle{Proceedings of the 11th International Symposium on Applied Algebra,
  Algebraic Algorithms and Error--Correcting Codes, AAECC--11}.
\bsertitle{Lecture Notes in Comput. Sci.},
vol. \bseriesno{948},
pp. \bfpage{365}--\blpage{382}.
\bpublisher{Springer},
\blocation{Berlin}
(\byear{1995})
\end{bchapter}
\endbibitem

\bibitem[\protect\citeauthoryear{Mumford}{1995}]{Mumford95}
\begin{bbook}
\bauthor{\bsnm{Mumford}, \binits{D.}}:
\bbtitle{Algebraic Geometry {I}. {Complex} Projective Varieties},
\bedition{2nd} edn.
\bsertitle{Classics Math.}
\bpublisher{Springer},
\blocation{Berlin}
(\byear{1995})
\end{bbook}
\endbibitem

\bibitem[\protect\citeauthoryear{Pardo}{1995}]{Pardo95}
\begin{bchapter}
\bauthor{\bsnm{Pardo}, \binits{L.M.}}:
\bctitle{How lower and upper complexity bounds meet in elimination theory}.
In: \beditor{\bsnm{Cohen}, \binits{G.}},
\beditor{\bsnm{Giusti}, \binits{M.}},
\beditor{\bsnm{Mora}, \binits{T.}} (eds.)
\bbtitle{Applied Algebra, Algebraic Algorithms and Error Correcting Codes,
  Proceedings of {AAECC--11}}.
\bsertitle{Lecture Notes in Comput. Sci.},
vol. \bseriesno{948},
pp. \bfpage{33}--\blpage{69}.
\bpublisher{Springer},
\blocation{Berlin}
(\byear{1995})
\end{bchapter}
\endbibitem

\bibitem[\protect\citeauthoryear{Pardo and {San Mart\'{\i}n}}{2004}]{PaSa04}
\begin{barticle}
\bauthor{\bsnm{Pardo}, \binits{L.M.}},
\bauthor{\bsnm{{San Mart\'{\i}n}}, \binits{J.}}:
\batitle{Deformation techniques to solve generalized {Pham} systems}.
\bjtitle{Theoret. Comput. Sci.}
\bvolume{315}(\bissue{2--3}),
\bfpage{593}--\blpage{625}
(\byear{2004})
\end{barticle}
\endbibitem

\bibitem[\protect\citeauthoryear{Pardo and {Sebasti{\'a}n}}{2022}]{PaSe22}
\begin{barticle}
\bauthor{\bsnm{Pardo}, \binits{L.M.}},
\bauthor{\bsnm{{Sebasti{\'a}n}}, \binits{D.}}:
\batitle{A promenade through correct test sequences. {I}: {Degree} of
  constructible sets, {B{\'e}zout}'s inequality and density}.
\bjtitle{J. Complexity}
\bvolume{68},
\bfpage{101588}
(\byear{2022})
\end{barticle}
\endbibitem

\bibitem[\protect\citeauthoryear{Pardo and {Sebasti{\'a}n}}{2026}]{PaSe26}
\begin{barticle}
\bauthor{\bsnm{Pardo}, \binits{L.M.}},
\bauthor{\bsnm{{Sebasti{\'a}n}}, \binits{D.}}:
\batitle{Erzeugunsgrad, {VC-Dimension} and neural networks with rational
  activation function}.
\bjtitle{Appl. Algebra Engrg. Comm. Comput.}
\bvolume{This volume},
(\byear{2026})
\end{barticle}
\endbibitem

\bibitem[\protect\citeauthoryear{Rouillier}{1997}]{Rouillier97}
\begin{barticle}
\bauthor{\bsnm{Rouillier}, \binits{F.}}:
\batitle{Solving zero--dimensional systems through rational univariate
  representation}.
\bjtitle{Appl. Algebra Engrg. Comm. Comput.}
\bvolume{9}(\bissue{5}),
\bfpage{433}--\blpage{461}
(\byear{1997})
\end{barticle}
\endbibitem

\bibitem[\protect\citeauthoryear{Sabia and Solern{\'o}}{1996}]{SaSo96}
\begin{barticle}
\bauthor{\bsnm{Sabia}, \binits{J.}},
\bauthor{\bsnm{Solern{\'o}}, \binits{P.}}:
\batitle{Bounds for traces in complete intersections and degrees in the
  {Nullstellensatz}}.
\bjtitle{Appl. Algebra Engrg. Comm. Comput.}
\bvolume{6}(\bissue{6}),
\bfpage{353}--\blpage{376}
(\byear{1996})
\end{barticle}
\endbibitem

\bibitem[\protect\citeauthoryear{Schost}{2003}]{Schost03}
\begin{barticle}
\bauthor{\bsnm{Schost}, \binits{E.}}:
\batitle{Computing parametric geometric resolutions}.
\bjtitle{Appl. Algebra Engrg. Comm. Comput.}
\bvolume{13},
\bfpage{349}--\blpage{393}
(\byear{2003})
\end{barticle}
\endbibitem

\bibitem[\protect\citeauthoryear{Schwartz}{1980}]{Schwartz80}
\begin{barticle}
\bauthor{\bsnm{Schwartz}, \binits{J.T.}}:
\batitle{Fast probabilistic algorithms for verification of polynomial
  identities}.
\bjtitle{J. ACM}
\bvolume{27}(\bissue{4}),
\bfpage{701}--\blpage{717}
(\byear{1980})
\end{barticle}
\endbibitem

\bibitem[\protect\citeauthoryear{Shafarevich}{1994}]{Shafarevich94}
\begin{bbook}
\bauthor{\bsnm{Shafarevich}, \binits{I.R.}}:
\bbtitle{Basic Algebraic Geometry: {Varieties} in Projective Space}.
\bpublisher{Springer},
\blocation{Berlin Heidelberg New York}
(\byear{1994})
\end{bbook}
\endbibitem

\bibitem[\protect\citeauthoryear{Shub and Smale}{1993}]{ShSm93}
\begin{barticle}
\bauthor{\bsnm{Shub}, \binits{M.}},
\bauthor{\bsnm{Smale}, \binits{S.}}:
\batitle{Complexity of {B}\'ezout's theorem {I}: {Geometric} aspects}.
\bjtitle{J. Amer. Math. Soc.}
\bvolume{6}(\bissue{2}),
\bfpage{459}--\blpage{501}
(\byear{1993})
\end{barticle}
\endbibitem

\bibitem[\protect\citeauthoryear{Sommese et~al.}{2008}]{SoVeWa08}
\begin{bchapter}
\bauthor{\bsnm{Sommese}, \binits{A.}},
\bauthor{\bsnm{Verschelde}, \binits{J.}},
\bauthor{\bsnm{Wampler}, \binits{C.}}:
\bctitle{Solving polynomial systems equation by equation}.
In: \beditor{\bsnm{Dickenstein}, \binits{A.}},
\beditor{\bsnm{Schreyer}, \binits{F.}},
\beditor{\bsnm{Sommese}, \binits{A.}} (eds.)
\bbtitle{Algorithms in Algebraic Geometry}.
\bsertitle{IMA Vol. Math. Appl.},
vol. \bseriesno{146},
pp. \bfpage{133}--\blpage{152}.
\bpublisher{Springer}, \blocation{???}
(\byear{2008})
\end{bchapter}
\endbibitem

\bibitem[\protect\citeauthoryear{Sommese and Wampler}{2005}]{SoWa05}
\begin{bbook}
\bauthor{\bsnm{Sommese}, \binits{A.}},
\bauthor{\bsnm{Wampler}, \binits{C.}}:
\bbtitle{The Numerical Solution of Systems of Polynomials Arising in
  Engineering and Science}.
\bpublisher{World Scientific},
\blocation{Singapore}
(\byear{2005})
\end{bbook}
\endbibitem

\bibitem[\protect\citeauthoryear{Vogel}{1984}]{Vogel84}
\begin{bbook}
\bauthor{\bsnm{Vogel}, \binits{W.}}:
\bbtitle{Results on {B\'ezout}'s Theorem}.
\bsertitle{Tata Inst. Fundam. Res. Lect. Math.},
vol. \bseriesno{74}.
\bpublisher{Tata Inst. Fund. Res.},
\blocation{Bombay}
(\byear{1984})
\end{bbook}
\endbibitem

\bibitem[\protect\citeauthoryear{Williams et~al.}{2024}]{WiXuXuZh24}
\begin{bchapter}
\bauthor{\bsnm{Williams}, \binits{V.}},
\bauthor{\bsnm{Xu}, \binits{Y.}},
\bauthor{\bsnm{Xu}, \binits{Z.}},
\bauthor{\bsnm{Zhou}, \binits{R.}}:
\bctitle{New bounds for matrix multiplication: from alpha to omega}.
In: \bbtitle{Proceedings of the 35th Annual ACM-SIAM Symposium on Discrete
  Algorithms, SODA 2024, Alexandria, Virginia, January 7--10, 2024},
pp. \bfpage{3792}--\blpage{3835}.
\bpublisher{SIAM},
\blocation{Philadelphia, PA}
(\byear{2024})
\end{bchapter}
\endbibitem

\bibitem[\protect\citeauthoryear{Zariski and Samuel}{1958}]{ZaSa58}
\begin{bbook}
\bauthor{\bsnm{Zariski}, \binits{O.}},
\bauthor{\bsnm{Samuel}, \binits{P.}}:
\bbtitle{Commutative Algebra}.
\bsertitle{Grad. Texts in Math.},
vol. \bseriesno{38}.
\bpublisher{Springer},
\blocation{New York}
(\byear{1958})
\end{bbook}
\endbibitem

\bibitem[\protect\citeauthoryear{Zippel}{1979}]{Zippel79}
\begin{bchapter}
\bauthor{\bsnm{Zippel}, \binits{R.}}:
\bctitle{Probabilistic algorithms for sparse polynomials}.
In: \bbtitle{EUROSAM '79: Proceedings of International Symposium on Symbolic
  and Algebraic Computation, Marseille 1979}.
\bsertitle{Lecture Notes in Comput. Sci.},
vol. \bseriesno{72},
pp. \bfpage{216}--\blpage{226}.
\bpublisher{Springer},
\blocation{Berlin}
(\bye